\definecolor{darkgreen}{rgb}{0.0, 0.7, 0.0}
\definecolor{purple}{rgb}{0.5, 0.0, 0.5}
\definecolor{brown}{rgb}{0.7, 0.4, 0.0}
\definecolor{greenblue}{rgb}{0.0, 0.7, 0.7}
\definecolor{figuregreen}{rgb}{0, 0.651, 0.318}
\numberwithin{equation}{section}
\numberwithin{figure}{section}
\def\@tocline#1#2#3#4#5#6#7{\relax
  \ifnum #1>\c@tocdepth % then omit
  \else
    \par \addpenalty\@secpenalty\addvspace{#2}%
    \begingroup \hyphenpenalty\@M
    \@ifempty{#4}{%
      \@tempdima\csname r@tocindent\number#1\endcsname\relax
    }{%
      \@tempdima#4\relax
    }%
    \parindent\z@ \leftskip#3\relax \advance\leftskip\@tempdima\relax
    \rightskip\@pnumwidth plus4em \parfillskip-\@pnumwidth
    #5\leavevmode\hskip-\@tempdima
      \ifcase #1
       \or\or \hskip 1em \or \hskip 2em \else \hskip 3em \fi%
      #6\nobreak\relax
    \dotfill\hbox to\@pnumwidth{\@tocpagenum{#7}}\par
    \nobreak
    \endgroup
  \fi}
\let\origsubsection\subsection
\renewcommand\subsection{\@ifstar{\starsubsection}{\nostarsubsection}}
\newcommand\nostarsubsection[1]
\subsectionprelude\origsubsection{#1}\subsectionpostlude}
\newcommand\starsubsection[1]
\newcommand\subsectionprelude{%
  \vspace{1em}
}
\newcommand\subsectionpostlude{%
 $ $ \vspace{1em}

}
\let\origsubsubsection\subsubsection
\renewcommand\subsubsection{\@ifstar{\starsubsubsection}{\nostarsubsubsection}}
\newcommand\nostarsubsubsection[1]
\subsubsectionprelude\origsubsubsection{#1}\subsubsectionpostlude}
\newcommand\starsubsubsection[1]
\newcommand\subsubsectionprelude{%
  \vspace{1em}
}
\newcommand\subsubsectionpostlude{%
 $ $ \vspace{1em}

}
\newtheorem{dummy}{dummy}[section]
\newtheorem{lemma}[dummy]{Lemma}
\newtheorem{theorem}[dummy]{Theorem}
\newtheorem{corollary}[dummy]{Corollary}
\newtheorem{proposition}[dummy]{Proposition}
\theoremstyle{definition}
\newtheorem{definition}[dummy]{Definition}
\newtheorem{convention}[dummy]{Convention}
\newtheorem{notation}[dummy]{Notation}
\newtheorem{example}[dummy]{Example}
\newtheorem{remark}[dummy]{Remark}
\newcommand{\AugBC}{\mathcal{A}ug_-}
\newcommand{\bF}{\mathbb{F}}
\newcommand{\bR}{\mathbb{R}}
\newcommand{\bZ}{\mathbb{Z}}
\newcommand{\cA}{\mathcal{A}}
\newcommand{\cC}{\mathcal{C}}
\newcommand{\cF}{\mathcal{F}}
\newcommand{\cG}{\mathcal{G}}
\newcommand{\cL}{\mathcal{L}}
\newcommand{\cR}{\mathcal{R}}
\newcommand{\cS}{\mathcal{S}}
\newcommand{\cT}{\mathcal{T}}
\newcommand{\hh}{\mathfrak{h}}
\newcommand{\sS}{\mathcal{S}}
\newcommand{\rR}{\mathcal{R}}
\newcommand{\tT}{\mathcal{T}}
\newcommand{\Ext}{\mathrm{Ext}}
\newcommand{\Hom}{\mathrm{Hom}}
\renewcommand{\SS}{\mathit{SS}}
\newcommand{\redact}[1]{}
\newcommand{\cross}[9]{\xymatrix{
{}\ar@{.}[ddrr]&#4&\ar@{.}[ddll]\\
#2\ar[ur]^{#7}&{}&#3\ar[ul]_{#8}\\
{}&#1\ar[ul]^{#5}\ar[ur]_{#6}\ar@{~>}[uu]_{#9}&{}\\
}}
\newcommand{\e}{\epsilon}
\newcommand{\coeffs}{\mathbbm{k}}
\newcommand{\field}{\mathbbm{k}}
\newcommand{\alg}{\mathcal{A}}
\newcommand{\dd}{\partial}
\newcommand{\lin}{\operatorname{lin}}
\newcommand{\Aug}{\mathcal{A}ug_+}
\newcommand{\Augpm}{\mathcal{A}ug_\pm}
\renewcommand{\hom}{\Hom_+}
\newcommand{\homBC}{\Hom_-}
\newcommand{\hompm}{\Hom_\pm}
\newcommand{\LCH}[1]{H^{#1}\Hom_+}
\newcommand{\LCHBC}[1]{H^{#1}\Hom_-}
\newcommand{\LCC}[1]{\Hom_+^{#1}}
\newcommand{\LCCBC}[1]{\Hom_-^{#1}}
\newcommand{\LCHpm}[1]{H^{#1}\Hom_\pm}
\newcommand{\LCCpm}[1]{\Hom_\pm^{#1}}
\newcommand{\dga}{DGA}
\newcommand{\dgas}{DGAs}
\newcommand{\hfp}{\Hom_{Fuk_\epsilon}}
\newcommand{\hfm}{\Hom_{Fuk_{-\epsilon}}}
\newcommand{\R}{\mathbb{R}}
\newcommand{\zz}{\mathbb{Z}}
\newcommand{\strip}{\bigsqcup\!\!\!\!\!\!\bigsqcap}
\newcommand{\Bmu}{\mbox{$\raisebox{-0.59ex}
  {$l$}\hspace{-0.18em}\mu\hspace{-0.88em}\raisebox{-0.98ex}{\scalebox{2}
  {$\color{white}.$}}\hspace{-0.416em}\raisebox{+0.88ex}
  {$\color{white}.$}\hspace{0.46em}$}{}}
\newcommand{\crossk}{  {}^k \!\! \mathrel{\vcenter{\offinterlineskip%
  \hbox{$-$}\vskip-.5ex \hbox{$\times$}\vskip-.5ex\hbox{$-$}}} }
\begin{document}

\title[Augmentations are sheaves]{Augmentations Are Sheaves}

\author{Lenhard Ng}
\address{Lenhard Ng \\ Department of Mathematics \\ Duke University}
\email{ng@math.duke.edu}

\author{Dan Rutherford}
\address{Dan Rutherford \\ Department of Mathematical Sciences \\ Ball State University}
\email{rutherford@bsu.edu}

\author{Vivek Shende}
\address{Vivek Shende \\ Department of Mathematics \\ UC Berkeley}
\email{vivek@math.berkeley.edu}

\author{Steven Sivek}
\address{Steven Sivek \\ Department of Mathematics \\ Imperial College London}
\email{s.sivek@imperial.ac.uk}

\author{Eric Zaslow}
\address{Eric Zaslow \\ Department of Mathematics \\ Northwestern University}
\email{zaslow@math.northwestern.edu}

\begin{abstract}
We show that the set of augmentations of the Chekanov--Eliashberg algebra of a Legendrian link
underlies the structure of a unital A-infinity category.  This differs from the non-unital category constructed
in \cite{BC}, but is related to it in the same way that cohomology is related to compactly supported cohomology.
The existence of such a category  was predicted by \cite{STZ}, who moreover conjectured its
equivalence to a category of sheaves on the front plane with singular support meeting infinity in the
knot.  After showing that the augmentation category forms a sheaf over
the $x$-line, we are able to prove this conjecture by calculating both categories on thin slices of the front plane.  In
particular, we conclude that every augmentation comes from geometry.
\end{abstract}

\maketitle

\thispagestyle{empty}

{\small \tableofcontents}

%!TEX root = augmain.tex

\section{Introduction}

A powerful modern approach to studying a Legendrian submanifold $\Lambda$ in a contact manifold $V$
is to encode Floer-theoretic data into a
differential graded algebra
$\alg(V, \Lambda)$, the Chekanov--Eliashberg \dga{}.
The generators of this algebra are indexed by
Reeb chords; its differential counts holomorphic disks
in the symplectization $\R\times V$ with boundary lying along the 
Lagrangian $\R\times \Lambda$ and
meeting the Reeb chords at infinity \cite{Eli,EGH}.
Isotopies of Legendrians induce homotopy equivalences of algebras, and
the homology of this algebra is called Legendrian contact homology.

A fundamental insight of Chekanov \cite{C} is that, in practice, these homotopy equivalence classes of infinite
dimensional algebras can often be distinguished by the techniques of algebraic geometry.
For instance,  the {\em functor of points}
\begin{eqnarray*}
%\mbox{commutative rings} 
\mbox{fields} & \to & \mbox{sets} \\
\coeffs & \mapsto & \{\mbox{\dga{} morphisms $\alg(V, \Lambda) \to \coeffs$}\}/\mbox{\dga{} homotopy}
\end{eqnarray*}
is preserved by homotopy equivalences of algebras $\alg(V, \Lambda)$ 
\cite[Lem. 26.3]{FHT}, and thus furnishes an invariant.
Collecting together the linearizations (``cotangent spaces'')
$\ker \e / (\ker \e)^2$ 
of the augmentations (``points'') $\e : \alg(V, \Lambda) \to \coeffs$ gives a stronger invariant:
comparison of these linearizations as differential graded vector spaces is one way that Legendrian knots have been distinguished in practice since the work of Chekanov.

As the structure coefficients of the \dga{} $\alg(V, \Lambda)$ come from the contact geometry of $(V, \Lambda)$,
it is natural to ask for direct contact-geometric interpretations of the algebro-geometric constructions above, and in particular
to seek the contact-geometric meaning of the -- a priori, purely algebraic -- augmentations.
In some cases, this meaning is known.
As in topological field theory, exact Lagrangian cobordisms
between Legendrians give rise (contravariantly) to morphisms of the
corresponding
\dgas{} \cite{EGH, Ekh09, EHK}.
In particular, exact Lagrangian fillings are cobordisms from the empty
set, and so give augmentations.

However, not all augmentations arise in this manner.  Indeed, consider
pushing an exact filling surface $L$ of a Legendrian knot $\Lambda$ in the Reeb direction: 
on the one hand, this is a deformation of $L$ inside $T^*L$,
and so intersects $L$ -- an exact Lagrangian --
in a number of points which, counted with signs,
is $-\chi(L)$.  On the other hand, this intersection can be computed
as the linking number at infinity, or in other words, the
Thurston--Bennequin number of $\Lambda$:
$tb(\Lambda) = -\chi(L)$.
Now there is a Legendrian figure eight knot 
with $tb=-3$ (see e.g. \cite{atlas} for this
and other examples); its \dga{} has augmentations, and yet any
filling surface would necessarily have genus $-2$. 

This obstruction has a categorification, originally due to Seidel and made
precise in this context by Ekholm \cite{Ekh09}.
Given an exact filling $(W, L)$ of $(V, \Lambda)$ (where we will
primarily focus on the case $V=\bR^3$ and $W=\bR^4$), consider
the Floer homology $HF_t(L, L)$, where the differential accounts only for disks
bounded by $L$ and a controlled Hamiltonian perturbation
of $L$ for time $<t$, i.e. loosely those disks with action bounded by $t$.
There is an inclusion $HF_{-\varepsilon}(L, L) \to HF_{\infty}(L, L)$.
The former has generators given by self-intersections of $L$ with a small
perturbation of itself, and the latter has generators given by these
together with Reeb chords of $\Lambda$.  The quotient
of these chain complexes 
leads to what is called ``linearized contact cohomology'' in the literature;
for reasons to be made clear shortly,
we write it as $\Hom_-(\e, \e)[1]$.  That is, we have:
\begin{equation} \label{eq:ekholm-seidel} HF_{-\varepsilon}(L, L) \to HF_{\infty}(L, L) \to \Hom_-(\e, \e)[1]  \xrightarrow{1}. \end{equation}
Finally, since the wrapped Fukaya category of $\bR^4$ is trivial, we get an isomorphism
$\Hom_-(\e, \e) \cong HF_{-\varepsilon}(L, L) $.  On the other hand, $HF_{-\varepsilon}(L, L) \cong H^*_c(L;\coeffs)$.
In particular, taking Euler characteristics recovers:
$$-tb(\Lambda) =  \chi(\Hom_-(\e, \e)) =  \chi( H^*_c(L;\coeffs)) = \chi(L).$$

One could try to construct the missing augmentations from more general objects in the derived Fukaya category.
To the extent that this is possible, the  above sequence implies that the {\em categorical} structures present in the
symplectic setting should be visible on  
the space of augmentations.
An important step in this direction
was taken by Bourgeois and Chantraine \cite{BC}, who define a \emph{non-unital}
$A_\infty$ category which we denote $\AugBC$.  Its objects are augmentations of the Chekanov--Eliashberg \dga{}, and its hom spaces $\homBC(\e, \e')$
have the property that the
self Homs are the linearized contact cohomologies.
The existence of this category
was strong evidence that augmentations could indeed be built from geometry.

On the other hand, when $V = T^{\infty} M$ is the cosphere bundle over a manifold, $\Lambda \subset V$ is a 
Legendrian, and $\coeffs$ is a field,
%commutative ring,
a new source of Legendrian invariants is provided by the category $Sh(M,\Lambda; \coeffs)$ of constructible sheaves of $\coeffs$-modules
on $M$ whose singular support meets $T^\infty M$ in $\Lambda$ \cite{STZ}. The introduction of this category is motivated by the
microlocalization equivalence of the category of sheaves on a manifold with the infinitesimally
wrapped Fukaya category of the cotangent bundle \cite{NZ, N}:
$$\mu: Sh(M; \coeffs) \xrightarrow{\sim} Fuk_\varepsilon(T^*M; \coeffs).$$
In particular, to a Lagrangian brane $L \subset T^* M$ ending on $\Lambda$, there corresponds a sheaf $\mu^{-1}(L)$ with the property that
$$\Hom_{Sh(M)}( \mu^{-1}(L), \mu^{-1}(L) ) = \Hom_{Fuk_\e(T^* M)}(L, L) = HF_{+\varepsilon}(L, L) = H^*(L; \coeffs),$$
and we write $Sh(M, \Lambda; \coeffs) := \mu^{-1}(L)$.

Like ordinary cohomology, the category $Sh(M, \Lambda; \coeffs)$ is unital; like compactly supported cohomology,
the Bourgeois--Chantraine augmentation category $\AugBC(\Lambda; \coeffs)$ is not.
In an augmentation category matching the sheaf category, the Hom spaces would fit
naturally into an exact sequence
\begin{equation} \label{eq:plus} HF_{+\varepsilon}(L, L) \to HF(L, L) \to \hom(\e, \e)[1]  \xrightarrow{1}. \end{equation}

Together these observations suggest the following modification to the Bourgeois--Chantraine construction.
As noted in \cite{BC}, $\AugBC$ can be defined from the $n$-copy of the Legendrian, ordered
with respect to the displacement in the Reeb direction.  To change the
sign of the perturbations,
in the front diagram of the Legendrian we
re-order the $n$-copy from top to bottom, instead of from bottom to top.  The first main result of this article, established
in Sections \ref{sec:augcatalg} and \ref{sec:augcat},
is that doing so yields a {\em unital} $A_\infty$ category.

\begin{theorem}[see Definition~\ref{def:augplus-xy-xz} and Theorem~\ref{prop:Invariance}]
Let $\Lambda$ be a Legendrian knot or link in $\R^3$.
We define a unital $A_\infty$ category $\Aug(\Lambda; \coeffs)$ whose objects are \dga{} maps
$\e : \alg(\R^3, \Lambda) \to \coeffs$, i.e., augmentations.
This category is invariant up to $A_\infty$ equivalence under Legendrian isotopies of $\Lambda$.
\end{theorem}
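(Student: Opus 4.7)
The plan is to construct $\Aug(\Lambda; \coeffs)$ from the Chekanov--Eliashberg DGA of a suitable $n$-copy of $\Lambda$, verify the $A_\infty$ relations and unitality, and then establish invariance under Legendrian isotopy. First, for each $n \geq 1$, consider the Legendrian $n$-copy $\Lambda^{(n)} \subset \R^3$ obtained by taking $n$ parallel copies of $\Lambda$ in the front projection, slightly perturbed in the Reeb direction; the essential feature distinguishing this from the Bourgeois--Chantraine setup is that the copies are ordered from top to bottom. Each Reeb chord of $\Lambda^{(n)}$ carries an ordered pair of indices $(i,j)$ recording which copies its endpoints lie on. Given augmentations $\e_0, \ldots, \e_{n-1}$ of $\alg(\R^3, \Lambda)$, assemble them into a mixed augmentation $\hat\e$ of $\alg(\R^3, \Lambda^{(n)})$ by sending every $(i,j)$-chord with $i \neq j$ to zero and acting as $\e_i$ on $(i,i)$-chords. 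The morphism space $\hom(\e_i, \e_j)$ is then defined as the $\coeffs$-span of Reeb chords of type $(i,j)$.

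Next, define the $A_\infty$ operations
\[
m_k \colon \hom(\e_0,\e_1) \otimes \cdots \otimes \hom(\e_{k-1}, \e_k) \to \hom(\e_0, \e_k)
\]
by structure constants extracted from the $\hat\e$-linearization of $\partial$: the coefficient of a chord $c$ of type $(0,k)$ in $m_k(a_1, \ldots, a_k)$ is the coefficient of the monomial $a_1 \cdots a_k$ in $\partial c$, where each $a_i$ is of type $(i-1,i)$. The $A_\infty$ relations then follow formally from $\partial^2 = 0$ applied to chords of type $(0,k)$, after grouping terms by which composition word appears in the middle; the sign conventions and gradings have to be tracked carefully against the Chekanov--Eliashberg differential.

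The third and most delicate step is unitality. The reversed ordering of the copies is essential here: near each marked point of $\Lambda$, the perturbation produces small Reeb chords between consecutive copies $\Lambda_i$ and $\Lambda_{i+1}$, and a specific linear combination of these ``short chords'' in $\hom(\e,\e)$ is declared to be the cohomological unit $e_\e$. The main obstacle, and the technical heart of the construction, is to verify the unital $A_\infty$ axioms: that $m_2(e_\e, -)$ and $m_2(-, e_\e)$ are homotopic to the identity, and that $m_k$ applied to any tuple containing $e_\e$ is null-homotopic for $k \geq 3$. This reduces to an explicit enumeration of rigid disks in a neighborhood of a perturbation region, and the top-to-bottom ordering is exactly what guarantees that the relevant ``thin strip'' contributions survive rather than cancel, in contrast with $\AugBC$.

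Finally, for invariance I would reduce via a generic-position argument to verifying that each Legendrian Reidemeister move of the front projection induces an $A_\infty$ equivalence of augmentation categories. For each move, the local change induces a DGA map between the respective $n$-copy algebras compatible with the mixed augmentations; I would then show this descends to an $A_\infty$-functor that is a bijection on objects and a quasi-isomorphism on all hom spaces, by matching disk counts on either side of the move. Combining these steps produces the desired invariant unital $A_\infty$ category $\Aug(\Lambda; \coeffs)$.
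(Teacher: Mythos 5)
Your construction of $\Aug(\Lambda;\coeffs)$ — dualizing the linearized differentials of the top-to-bottom ordered $n$-copy \dgas{}, with the unit built from the short chords created by the perturbation — is essentially the construction the paper uses, and your account of why the $A_\infty$ relations follow from $\partial^2=0$ is correct in spirit (the paper packages the needed compatibility across different values of $n$ as the ``consistency'' of the sequence of \dgas{}, which you implicitly assume). One small discrepancy: the paper proves \emph{strict} unitality of $-\sum_j (y_j^{12})^\vee$ by an algebraic computation with the matrix description of the $m$-copy differential (Propositions~\ref{prop:mcopyMultiple} and~\ref{thm:unital}), rather than only unitality up to homotopy; your weaker claim suffices for the statement but would actually be more awkward to verify, since you would have to produce the homotopies explicitly.

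The genuine gap is in the invariance step. You assert that each Reidemeister move ``induces a DGA map between the respective $n$-copy algebras compatible with the mixed augmentations,'' but this is precisely the hard point, not a routine consequence of the move. A stable tame isomorphism of $\alg(\Lambda)$ coming from an isotopy does not automatically yield maps of the perturbed $n$-copy algebras, let alone a family of such maps, one for each $n$, that is coherent enough to assemble into a single $A_\infty$ functor (the products $m_k$ for different $k$ live in different $n$-copy algebras, so an uncoordinated collection of quasi-isomorphisms proves nothing). The paper resolves this by showing the $n$-copy \dga{} is determined \emph{algebraically} by $\alg(\Lambda_f)$ with its internal weak link grading (Proposition~\ref{prop:xysequence} together with Proposition~\ref{prop:mcopyMultiple}), so that any link-grading--compatible \dga{} morphism of single copies extends canonically and consistently to all $n$-copies (Proposition~\ref{prop:functorialityprop}); invariance then reduces to Mishachev's link-graded stable tame isomorphism theorem plus a direct argument that stabilization induces an $A_\infty$ equivalence. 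Your route would force you to re-prove invariance for every $n$-copy simultaneously by matching disk counts, and it also omits two things the theorem requires: handling the stabilizations that Reidemeister moves introduce (where the induced functor is only essentially surjective on objects, not bijective), and proving independence of the perturbation data itself — the choice of Morse function $f$, the front-versus-Lagrangian projection $m$-copy, and the placement of base points — which occupies Propositions~\ref{prop:independentoff}--\ref{prop:invarianceBasePoints} of the paper and is a nontrivial part of what ``well-defined and invariant'' means here.
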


%\noindent
It turns out that the cohomology $H^*\hom(\epsilon,\epsilon)$ of the self-hom
spaces in $\Aug(\Lambda;\coeffs)$ is exactly (up to a grading shift)
what is called linearized Legendrian contact homology in the
literature; see Corollary~\ref{cor:hom-is-LCH}.  Moreover, if
$\Lambda$ is a knot with a single base point, then two objects of
$\Aug(\Lambda;\coeffs)$ are isomorphic in the cohomology category
$H^*\Aug$ if and only if they are homotopic as \dga{} maps
$\alg(\R^3,\Lambda) \to \coeffs$; see
Proposition~\ref{prop:Homotopy}. In particular, it follows from work
of Ekholm, Honda, and K\'alm\'an \cite{EHK} that augmentations
corresponding to isotopic exact fillings of $\Lambda$ are isomorphic.

There is a close relation  between $\AugBC(\Lambda)$ and $\Aug(\Lambda)$.  Indeed,
our construction gives both, and a morphism from one to the other.  We investigate these in Section \ref{sec:augprops}, and find:

\begin{theorem}[see Propositions~\ref{prop:nu-morphism},
  \ref{prop:pmexactseq}, and \ref{prop:duality}]
There is an $A_\infty$ functor $\AugBC \to \Aug$ carrying every augmentation to itself.
\label{thm:properties}
On morphisms, this functor extends to an exact triangle
$$\homBC(\e, \e') \to \hom(\e, \e') \to H^*(\Lambda; \coeffs) \xrightarrow{[1]}. $$
Moreover, there is a duality
$$\hom(\e, \e') \cong \homBC(\e', \e)^\dag[-2].$$
Here, the $^\dag$ denotes the cochain complex dual of a cochain
complex,
i.e., the underlying vector space is
dualized, the differential is transposed, and the degrees are negated.
\end{theorem}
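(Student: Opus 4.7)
The plan is to derive all three statements from a unified construction in which both $\Aug$ and $\AugBC$ appear inside a common setup based on perturbed $n$-copies, with the two categories arising from the complementary choices of ordering (top-to-bottom versus bottom-to-top) of the copies. Since the two categories have the same objects (augmentations of $\alg(\R^3,\Lambda)$), the functor is automatically the identity on objects, and all the content lies in the hom-level comparison.

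For the $A_\infty$ functor $\AugBC \to \Aug$: I would identify the linear part as a natural chain map $\homBC(\e,\e') \to \hom(\e,\e')$ coming from the common underlying DGA of the $n$-copy, with higher components produced by the same disk-counting structure that defines the two categories. The $A_\infty$ relations for the functor would then follow from the $A_\infty$ relations on the two sides together with the standard quadratic identities in the Chekanov--Eliashberg DGA of the $n$-copy. The key check is that no unexpected terms arise when passing between the two orderings, which is a combinatorial verification.

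For the exact triangle, the strategy is to compute the mapping cone of the linear part $\homBC(\e,\e') \to \hom(\e,\e')$ directly at the chain level. This cone should be generated by the Reeb chords present in the $+$-ordered $n$-copy but absent from the $-$-ordered one, namely the short mixed chords between adjacent copies of $\Lambda$. A local, perturbative (or Morse-theoretic) model for these short chords should identify the cone with $C^*(\Lambda;\coeffs)$, up to shift conventions. The main technical work is to show that the induced differential on the cone, and the connecting map to $\homBC(\e,\e')[1]$, really do compute $H^*(\Lambda;\coeffs)$. I expect this to be the main obstacle, since it requires showing that all augmentation-dependent contributions cancel out and only the intrinsic topology of $\Lambda$ survives in the cone.

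For the duality $\hom(\e,\e') \cong \homBC(\e',\e)^\dag[-2]$, the plan is to exhibit an explicit Sabloff-type involution that pairs Reeb chord generators of $\hom(\e,\e')$ with those of $\homBC(\e',\e)$. The involution would come from the order-reversing symmetry of the $n$-copy: reversing top-to-bottom into bottom-to-top simultaneously exchanges $\hom$ with $\homBC$, swaps the source and target augmentations, and sends each mixed Reeb chord to a complementary chord whose grading is determined by the standard Sabloff pairing (accounting for the shift of $-2$ in the codimension-one case). Verifying the duality on differentials then amounts to observing that a disk contributing to a matrix coefficient in $\hom(\e,\e')$ corresponds, after applying the involution, to a disk contributing to the transposed matrix coefficient in $\homBC(\e',\e)$. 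Once the bijection and grading formula are set up correctly, this last verification should be essentially formal.
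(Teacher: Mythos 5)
Your treatment of the exact triangle matches the paper's: there the inclusion $\homBC(\e,\e') \hookrightarrow \hom(\e,\e')$ is exhibited as a subcomplex inclusion whose quotient $\langle x^+,y^+\rangle$ is the Morse complex of $\Lambda$, exactly the ``short chords form the cone'' picture you describe (the one subtlety you should flag is that $m_1(y^+)=0$ requires $\e(t)=\e'(t)$, which is a hypothesis in the precise statement). For the functor $\AugBC\to\Aug$, your plan is vaguer than what is needed: the paper does not check combinatorially that a generator-level map is a chain map; it obtains the functor formally by composing with the unit, i.e.\ from the $3$-copy one has $m_2:\hom(\e_1,\e_1)\otimes\homBC(\e_2,\e_1)\to\hom(\e_2,\e_1)$, and plugging in $\mathrm{id}\in\hom(\e_1,\e_1)$ gives the linear component, with the higher components and the $A_\infty$ functor relations supplied automatically by the higher-copy compositions. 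Without some such mechanism your ``no unexpected terms arise'' step is not a proof.

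The genuine gap is in the duality. You propose a Sabloff-type involution pairing the Reeb chord generators of $\hom(\e,\e')$ with those of $\homBC(\e',\e)$ and claim the verification is then essentially formal. No such bijection exists: $\hom(\e,\e')$ is generated by the long chords \emph{together with} the short chords $x^+,y^+$ coming from the critical points of the perturbing Morse function (or the cusp chords in the front model), whereas $\homBC(\e',\e)$ is generated by the long chords alone, so the two complexes have different ranks and the statement is only a quasi-isomorphism, not a chain isomorphism. The actual proof (following Ekholm--Etnyre--Sabloff) compares the $2$-copy with the ``far-separated'' $2$-copy $\overline{\Lambda}^{(2)}$ in which all mixed chords point one way; the length filtration gives a short exact sequence with middle term the complex of $\overline{\Lambda}^{(2)}$, and the essential geometric input --- entirely missing from your proposal --- is that this middle complex is \emph{acyclic} because the two components can be displaced to have no Reeb chords between them. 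The quasi-isomorphism $\homBC(\e',\e)^\dag[-2]\to\hom(\e,\e')$ is the connecting map of that triangle, and the degree shift comes from the grading change $\mu(\overline{r})=-\mu(r)-1$ when a chord reverses direction, not from a formal transposition of disks.
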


When $\e=\e'$, Sabloff \cite{Sabloff} first constructed this duality, and
the exact sequence in this case is given in \cite{EESab}.
When the augmentation comes
from a filling $L$, the duality is Poincar\'e duality, and
the triangle is identified with the long exact sequence in cohomology
$$H^*_c(L; \coeffs) \to H^*(L; \coeffs) \to H^*(\Lambda; \coeffs) \xrightarrow{[1]}.$$
That is, there is a map of triangles \eqref{eq:ekholm-seidel} $\to$ \eqref{eq:plus}, so that the connecting homomorphism
identifies the inclusion $\homBC(\e, \e) \to \hom(\e, \e)$ with the inclusion
$HF_{-\varepsilon}(L, L) \to HF_{+\varepsilon}(L, L)$.

%{\color{red} This paragraph is now superfluous given the paragraph after Theorem 1.1, and should be deleted.}
%We have already indicated above that \dga{} homotopy is a natural equivalence relation of augmentations; isomorphism in the augmentation category $\Aug(\Lambda)$ is another.  In fact, we will show in Section \ref{sec:augprops} that these are the same equivalence relation.

The category $\Aug$ in hand, we provide the hitherto elusive connection between augmentations
and the Fukaya category.  We write $\cC_1(\Lambda;\coeffs)\subset
Sh(\bR^2,\Lambda; \coeffs)$ for the sheaves with ``microlocal rank one'' along $\Lambda$,
and with acyclic stalk when $z \ll 0$.

\begin{theorem}[see Theorem~\ref{thm:main}] \label{thm:intro-main}
Let $\Lambda \subset \R^3$ be a Legendrian knot, and let $\mathbbm{k}$ be a field.  Then there is an $A_\infty$ equivalence of categories
$$\Aug(\Lambda; \mathbbm{k}) \xrightarrow{\sim} \cC_1(\Lambda; \mathbbm{k}).$$
\end{theorem}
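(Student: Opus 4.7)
The plan is to reduce the global comparison to a finite collection of purely local model calculations, using the fact (established in the paper) that both sides form sheaves of $A_\infty$-categories over the $x$-line. More precisely, I would subdivide the front plane by vertical lines $x=x_0<x_1<\cdots<x_N$ chosen so that each strip $x_{i-1}\le x\le x_i$ contains at most one ``elementary'' feature of the front of $\Lambda$: a single crossing, a single left or right cusp, or no feature at all (just parallel strands). To any such strip one associates a \emph{bordered Legendrian tangle}, and one defines both $\Aug$ and $\cC_1$ on such tangles, together with restriction functors to the ``slice'' categories $\Aug(\Lambda_{x=x_i})$ and $\cC_1(\Lambda_{x=x_i})$ attached to the vertical boundaries. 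Then the construction of the equivalence proceeds in three stages: local models, globalization by descent, and verification of essential surjectivity.

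\textbf{Local models.} First I would compute $\Aug$ and $\cC_1$ explicitly on each elementary building block. For a trivial strip with $n$ parallel strands, both categories should recover tuples of $n$ objects in the coefficient ring (or $\GL_n$-like data), with Hom spaces given by upper- or lower-triangular matrices encoding the Reeb chords. For a crossing strip, the sheaf side gives a constructible sheaf whose generic stalks jump in the standard way dictated by the singular support condition, while the augmentation side imposes the Chekanov-style relation from the single Reeb chord in the tangle; one checks these give isomorphic categories. For a cusp strip one similarly verifies that the microlocal rank-one constraint matches the augmentation condition on the cusp generator. In each case one writes down the comparison functor explicitly and checks it is an equivalence of (small) $A_\infty$-categories, using the formulas for both sides developed in the earlier sections.

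\textbf{Globalization by descent.} Since $\Aug$ is a sheaf over the $x$-line (the categorical analogue of the Mayer--Vietoris property established in the paper's sheaf-of-categories section), and $Sh$ is a sheaf by its very nature, the global categories $\Aug(\Lambda;\mathbbm{k})$ and $\cC_1(\Lambda;\mathbbm{k})$ are recovered as homotopy limits of the diagrams of elementary-strip categories along their common vertical-slice boundaries. The local equivalences I constructed are compatible with the restriction functors to the vertical slices (both sides restrict to the same ``$n$ strands with $\GL_n$-like glue data'' model), and a straightforward diagram chase then yields an $A_\infty$-equivalence of the two homotopy limits. This is the step where the unitality of $\Aug$ (provided by the earlier theorem) is critical, since descent for non-unital $A_\infty$-categories would be problematic.

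\textbf{Essential surjectivity and the main obstacle.} The main obstacle is the local comparison at a crossing and the verification that the global functor so obtained is essentially surjective: every microlocal-rank-one sheaf arises, up to quasi-isomorphism, from some augmentation. The issue is that the $\Aug$ side is built from algebraic maps $\alg(\R^3,\Lambda)\to\mathbbm{k}$, whereas the $\cC_1$ side is built from constructible sheaf data, and although the local dictionaries match, showing that every collection of compatible slice data actually integrates to an honest augmentation of $\alg(\R^3,\Lambda)$ requires knowing that the Chekanov--Eliashberg differential, when read strip by strip in the $x$-direction, is governed exactly by the composition of the slice data. Establishing this requires the detailed combinatorial/holomorphic-disk formula for the differential in terms of the $x$-coordinate, i.e., the ``consistent sequence of MCS''-type description, and matching it on the nose to the sheaf-gluing data. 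Once that matching is in place, essential surjectivity is automatic from the local models and full faithfulness follows from descent applied to Hom complexes, completing the $A_\infty$-equivalence.
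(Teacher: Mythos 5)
Your overall strategy---cut the front into elementary strips (parallel lines, one crossing, cusps), compute both categories on each strip, and glue over the $x$-line---is exactly the paper's strategy, so the architecture is right. But there is a genuine gap in your globalization step: you assert that $\Aug$ ``is a sheaf over the $x$-line'' and then take homotopy limits on both sides. In fact the presheaf $U \mapsto \Aug(\Lambda|_{J^1(U)})$ is \emph{not} a sheaf in general; what the bordered/cosheaf construction gives for free is only that the comparison map from $\Aug$ to the global sections of its sheafification is fully faithful, not essentially surjective. The standard unknot with no base points is already a counterexample: the sheaf category $\cC_1$ has an object on every subinterval and these glue, but the gluing data involves a nontrivial isomorphism over the overlap that need not be the identity, and the resulting object of the homotopy fibre product need not come from an honest global augmentation. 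So your ``straightforward diagram chase'' would only prove $\Aug^{\sim}(\Lambda) \simeq \cC_1(\Lambda)$, where $\Aug^{\sim}$ is the sheafification, which a priori has strictly more objects than $\Aug(\Lambda)$.

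The missing ingredient is a strictification statement: an object of a fibre product of categories is a tuple of local objects \emph{together with} isomorphisms over the overlaps, and to identify this with the naive (strict) fibre product one needs the restriction functors to have an isomorphism-lifting property (any isomorphism $\rho(\xi) \cong \eta'$ downstairs lifts to an isomorphism $\xi \cong \xi'$ upstairs). The paper arranges this by placing a base point at every right cusp and then verifying the lifting property locally, via an intermediate ``Morse complex'' category in which isomorphism classes are transparent (invertibility or vanishing of particular matrix entries is an isomorphism invariant). Two further points your sketch glosses over: the local comparison functors commute with restriction to the left only \emph{up to a specified homotopy} (the right restriction at a crossing is an honest $A_\infty$ functor with a nontrivial second-order term, which is precisely where the $A_\infty$ structure of $\Aug$ interacts with the dg structure of $\cC_1$), and the matching of the Chekanov--Eliashberg differential with the strip decomposition is not something to be ``established'' inside this proof but is supplied by the bordered cosheaf theorem, which must be proved separately with signs and base points. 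Your identification of the crossing strip as the main local obstacle is correct, but the descent step is where the argument as written would fail.
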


\noindent
Via the equivalence between constructible sheaves and the Fukaya category, we view
this theorem as asserting that all augmentations come from geometry.  In total, we
have a host of relations among categories of sheaves, Lagrangians and augmentations.
These are summarized in Section \ref{sec:exseq}.

We remark that the first four authors \cite{NRSS} have shown that the groupoid of isomorphisms in the truncation $\pi_{\geq 0}\Aug(\Lambda; \bF_q)$ has homotopy cardinality $q^{tb(\Lambda)/2} R_\Lambda(q^{1/2}-q^{-1/2})$, where $R_\Lambda(z)$ is the ruling polynomial of $\Lambda$; thus the same is true of $\cC_1(\Lambda; \bF_q)$, resolving Conjecture~7.5 of \cite{STZ}.

The Bourgeois--Chantraine category ${\mathcal Aug}_-(\Lambda;  \mathbbm{k})$
can also be identified with a category of sheaves.  If we define
$\Hom_{Sh_-}(\cF, \cG) := \Hom_{Sh}(\cF, r_{-\varepsilon}^* \cG)$
where $r_t$ is the front projection of Reeb flow, then there is a non-unital dg category $Sh_{-}(\mathbb{R}^2, \Lambda; \mathbbm{k})$ whose
morphism spaces are $\Hom_{Sh_-}$.  We write $\cC_1^{(-)}$ for the sheaves with ``microlocal rank one'' along $\Lambda$
and with acyclic stalk when $z \ll 0$.  Similar arguments (which we do
not give explicitly in this paper) yield an equivalence
${\mathcal Aug}_{-}(\Lambda;  \mathbbm{k}) \xrightarrow{\sim} \cC_1^{(-)}(\Lambda;  \mathbbm{k})$.
Further properties and relations to existing constructions are discussed in Section \ref{sec:augprops}.

\vskip 0.2in
\noindent{\bf Summary of the Paper}
\vskip 0.2in
The preceding gives an account of the main results of this paper and their relevance
to the study of Legendrian knots.  Since much of the remainder of the paper is technical,
a straightforward summary in plain English may be helpful for the casual reader, or as a reference 
for those who get lost in the weeds.  We address only topics not already discussed above.

To create a category whose objects are augmentations, we must define the
morphisms and compositions.
At first glance, there seems to be little to do beyond 
adapting the definitions
that already appear in the work of Bourgeois and Chantraine \cite{BC}
to account for the reversal in ordering link components.
Yet there is an important distinction.  In ordering the perturbations
as we do,
we are forced to consider the presence of ``short'' Reeb chords,
traveling from the original Legendrian
to its perturbation.
These short chords were also considered in \cite{BC} and indeed
have appeared in a number of papers in contact topology; however,
Bourgeois and Chantraine ultimately do not need them to formulate their
augmentation category, whereas they are crucial to our formulation.

The
higher products in the augmentation category involve multiple perturbations and counts of disks bounding chords -- including
short chords --
traveling between the different perturbed copies.
The way to treat this scenario is to consider the Legendrian and its perturbed copies as a single link,
then to encode the data of which copies the chords connect with the notion of a
``link grading'' \cite{Mishachev}.
So we must consider the \dga{} of a link constructed
from a number of
copies of an original Legendrian, each with different perturbations --- and we must repeat
this construction for each natural number to define all the different higher products in the
$A_\infty$ category.  As the different products must interact with one another according
to the $A_\infty$ relations, we must organize all these copies and perturbations and \dgas{}
coherently, leading to the notion of a consistent sequence of \dgas{}.
We provide this definition and show that a consistent sequence of \dgas{} with a link
grading produces an $A_\infty$ category, which in the case described above
will be the augmentation category $\Aug(\Lambda)$.
To keep these general algebraic aspects distinct from the specific application, we
have collected them all in Section \ref{sec:augcatalg}.

In Section~\ref{sec:augcat}, we construct consistent sequences of
\dgas{} for Legendrian knots $\Lambda$ in $\bR^3$, resulting in the
category $\Aug(\Lambda)$.
It is important to note that the consistent sequence of \dgas{} that
we construct for a Legendrian knot
%albeit systematically constructed, is not canonical
%in that it
does
not apply to Legendrians in higher than one dimension: see Remark~\ref{rem:why-1d} for some brief discussion of this.
Accordingly, as distinct from the category of Bourgeois and Chantraine,
a general version of our category in higher dimensions would not be algebraically determined by the \dga{} of
the Legendrian in general, although we show that it is for
one-dimensional knots
(see Proposition \ref{prop:complete-aug-description}).
Another complication in the definition of the category
is that it includes ``base points,'' additional
generators of the \dga{} which are needed both for the comparison to sheaves,
i.e. to reduce \dga{} computations to purely local considerations,
and in order to prove independence of perturbation.
We have so far been vague about what ``perturbation'' means.
We can perturb a Legendrian in a $1$-jet bundle with a Morse function, and we do this,
but we might also take a copy of the front projection translated in the Reeb direction, and then
use the resolution procedure \cite{NgCLI}.  (If one simply translates
a Lagrangian projection in the Reeb direction, every point in the projection would correspond to a chord!)  Of course, one wants
to show independence of choices as well as invariance of the category
under Legendrian isotopy, all up to
$A_\infty$ equivalence.  This is done in Theorem \ref{prop:Invariance}.
The reader who wants to see how the definition plays out in explicit examples is
referred to Section \ref{ssec:exs}. We then establish a number of
properties of $\Aug$ in Section \ref{sec:augprops}, including the
exact triangle and duality stated in Theorem~\ref{thm:properties}.

With the category in hand, we are in a position to compare with sheaves.
Of course, Fukaya--Floer type categories are non-local, depending as they do on
holomorphic disks which may traverse large distances.  Sheaves, on the other hand,
are local.  Comparison is made possible by the bordered construction of the \dga{}
\cite{sivek-bordered}, where locality is proven:  the \dga{} of the union of two sets is determined
by the two pieces and their overlap.  These results are reviewed and extended
for the present application in Section \ref{sec:bordered}.  The idea of the
bordered construction is simple:  holomorphic disks exiting a
vertical strip would do so along a chord connecting two strands.  By including
such chords in the definition of the bordered algebra one shows that the \dga{} of
a diagram glued from a left half and a right half is the pushout of the \dga{} of
the two halves over the algebra of purely horizontal strands.

Now once we put the front diagram in plat position and slice it into horizontal strips,
we can apply the bordered construction and achieve locality as discussed above.
Since sheaves are by definition 
local --- this is the sheaf axiom --- we are in a position
to compare the two categories, and can do so strip by strip.  We can further prepare the
strips so that each is 
one
of the following four possibilities:  no crossings or cusps, one crossing,
all the left cusps, all the right cusps.
Note that
to ensure that the gluings are themselves compatible,
we also must compare the restriction functors from these cases to the left and right
horizontal strip categories.  Interestingly, while all these cases are dg categories, the
restriction functors are only equivalent as $A_\infty$ functors, and this accounts for
the difference in the glued-together categories at the end:  the augmentation category
is $A_\infty$ and the sheaf category is dg.
All these equivalences and compatibilities are shown in Section \ref{sec:aug=sh}.
The case-by-case nature means the proof is somewhat lengthy, but it is straightforward.
And that's it.

\begin{remark} \label{rem:why-1d}
It should be possible to construct the augmentation category
$\Aug(\Lambda)$ for general Legendrians in arbitrary (and in
particular higher-dimensional) $1$-jet spaces. 
Here we explain why we restrict ourselves in this paper to the setting
of $J^1(\R)$, in contrast to Bourgeois and Chantraine's more general
treatment of $\AugBC(\Lambda)$ in \cite{BC}.
The consistent
sequences of \dgas{} mentioned in the above summary are constructed
from the $n$-copies of $\Lambda$, each of which is built by using a
Morse function $f$ to perturb $n$ copies of $\Lambda$ and then further
perturbing at the critical points of $f$ to make the $xy$-projection
generic.  For Legendrians in $\R^3$, the latter perturbation can be
done explicitly so that the resulting \dgas{} are described in terms
of $\alg(\R^3,\Lambda)$ by the algebraic construction of
Subsection~\ref{ssec:unital-construction}, and this produces the
``short'' Reeb chords mentioned above.  Even with this algebraic
description, the proof of invariance of $\Aug(\Lambda)$ requires
substantial effort. In higher dimensions, one can define
$\Aug(\Lambda)$ by making choices for the necessary perturbations, but
these choices are non-canonical and proving invariance under all
choices of perturbations is much harder.
By contrast, the invariance of $\AugBC(\Lambda)$ is simpler since it omits the short Reeb chords, but as a consequence it fails to be a unital category.

In particular, most of the technical material in Sections~\ref{sec:augcatalg} and \ref{sec:augcat} is developed from scratch specifically to deal with the incorporation of the short Reeb chords into the augmentation category.  The extra trouble required to do so (in comparison with \cite{BC}) is worthwhile in the end, because many interesting properties of $\Aug(\Lambda)$ are either false or unknown for $\AugBC(\Lambda)$.  Most importantly, $\Aug(\Lambda)$ satisfies Theorem~\ref{thm:intro-main} while $\AugBC(\Lambda)$ does not; in addition, as mentioned above, we give a precise characterization of isomorphism in $\Aug(\Lambda)$ in Subsection~\ref{ssec:augisom}, and we show in \cite{NRSS} that the homotopy cardinality of $\Aug(\Lambda; \bF_q)$ recovers the ruling polynomial of $\Lambda$.
\end{remark}

\vskip 0.2in
\noindent {\bf Acknowledgments}
\vskip 0.2in
The work reported here stems from discussions at a SQuaRE sponsored by
the American Institute of Mathematics, and we are very grateful to AIM
for its support.
This meeting also included
David Treumann, whom we especially thank for his initial involvement with this
project.  We would also like to thank Mohammad Abouzaid, Denis Auroux,
Ben Davison, Tobias Ekholm, Sheel Ganatra, Paolo Ghiggini,
 and Emmy Murphy for helpful discussions.
The work of LN was supported by NSF grants DMS-0846346 and DMS-1406371.
The work of VS was supported by NSF grant DMS-1406871.
The work of SS was supported by NSF postdoctoral fellowship DMS-1204387.
The work of EZ was supported by NSF grants DMS-1104779 and DMS-1406024.

\newpage
%!TEX root = augmain.tex

\section{Background}
\label{sec:nrd}

\subsection{Contact geometry}

To denote a choice of coordinates,
we write $\bR_x$ to mean the space $\bR$ coordinatized by $x$, and similarly for $\bR^2_{xz},$ etc.
We consider Legendrian knots and links $\Lambda$ in $J^1(\bR_x) \cong T^*\bR_x \times \bR_z \cong \bR^3_{xyz}$
and their front projections $\Phi_\Lambda = \pi_{xz} (\Lambda)$ where $\pi_{xz} : \bR^3_{xyz} \to \bR^2_{xz}.$
We take the contact form for the standard contact structure on $J^1(\bR)$ to be $\alpha = dz - y\,dx$ with
Reeb vector field $R_\alpha = \partial_z$.
In higher dimensions one could take $\Lambda\subset J^1(\bR^n)\cong T^*\bR^n\times \bR_z$,
in which case $\alpha = dz - \sum_i y_i dx^i$ and $R_\alpha = \partial_z,$
but we focus on $1$-dimensional knots and links in this paper.

Consider $T^*\bR^2_{xz}$ with coordinates $(x,z,p_x,p_z)$ and exact symplectic structure
$\omega = d\theta$ defined by the primitive $\theta = -p_x dx -p_z dz.$
For any $\rho > 0$ the cosphere bundle $S^*_\rho\bR^2_{xz} := \{p_x^2+p_z^2=\rho^2\}\subset T^*\bR^2_{xz}$
with induced contact form $\alpha = -p_x dx - p_z dz$ defined by restricting $\theta$
is contactomorphic to the unit cosphere bundle $S^*_1\bR^2_{xz}$ via
dilation by $1/\rho$ in the fibers.  We define
$T^\infty\bR^2_{xz} := S^*_1\bR^2_{xz}$, thinking of $\rho$ large describing the ``cosphere at infinity.''
There is a contact embedding of $\bR^3_{xyz}$ as a hypersurface of $T^*\bR^2_{xz}$ by the map
$(x,y,z)\mapsto (x=x,z=z,p_x=y,p_z=-1)$.
By scaling $(x,z,p_x,p_z)\mapsto (x,z,\frac{p_x}{\sqrt{p_x^2+p_z^2}},\frac{p_z}{\sqrt{p_x^2+p_z^2}})$ this
hypersurface is itself contactomorphic to an open subset of $T^\infty\bR^2_{xz}$ which we call
$T^{\infty,-}\bR^2_{xz}$ or just $T^{\infty,-}\bR^2,$ the minus sign indicating the downward direction of
the conormal vectors.  In this way, we equate, sometimes without further mention, the standard
contact three-space with the open subset $T^{\infty,-}\bR^2$ of the cosphere bundle of the plane.
Our knots and links live in this open set.

Given a front diagram $\Phi_\Lambda$, we sometimes use planar
isotopies and Reidemeister II
moves to put the diagram in ``preferred plat'' position: 
with crossings at different values of $x$, all left cusps horizontal and at the
same value of $x$, and likewise for right cusps.
The maximal smoothly immersed
submanifolds of $\Phi_\Lambda$ are called \emph{strands}, maximal embedded submanifolds are called \emph{arcs},
and maximal connected components of the complement of $\Phi_\Lambda$ are called \emph{regions}.
A Maslov potential $\mu$ is a map from the set of strands to $\bZ/2k$ such that at a cusp, the
upper strand has value one greater than the lower strand.  Here $k$ is any integer dividing the gcd of the rotation numbers
of the components of $\Lambda$.

\subsection{The LCH differential graded algebra}
\label{ssec:dga-background}

In this subsection, we review the Legendrian contact homology \dga{} for Legendrian knots and links
in $\bR^3$. For a more detailed introduction we refer the reader, for example, to \cite{C,NgCLI, ENS}.  Here, we discuss a version of the \dga{} that allows for an arbitrary number of base points to appear, as in \cite{NgR}, and our sign convention follows \cite{ekholm-ng} (which essentially agrees with the one used in \cite{ENS}).

\subsubsection{The \dga{}}

Let $\Lambda$ be a Legendrian knot or link in the contact manifold $\bR^3 = J^1(\bR) = T^{\infty, -} \bR^2$.  The \dga{} of $\Lambda$ is most naturally defined via the {\it Lagrangian
 projection} (also called the $xy$-projection) of $\Lambda$, which is the image of $\Lambda$ via the projection $\pi_{xy}: J^1(\bR) \rightarrow \bR_{xy}$.
The image  $\pi_{xy}(\Lambda) \subset \R_{xy}$ is a union of immersed curves.  After possibly modifying $\Lambda$ by a small Legendrian isotopy, we  may assume that $\pi_{xy}|_\Lambda$ is one-to-one except for some finite number of transverse double points which we denote $\{a_1, \ldots, a_r\}$.  We note that the $\{a_i\}$ are in bijection with {\it Reeb
chords} of $\Lambda$, which are trajectories of the Reeb vector field $R_\alpha = \partial_z$ that begin and end on $\Lambda$.

To associate a \dga{} to $\Lambda$, we fix a Maslov potential $\mu$
for the front projection $\pi_{xz}(\Lambda)$, taking values in $\bZ/2r$ where $r$ is the gcd of the rotation numbers of the components of $\Lambda$.
In addition, we choose sufficiently many base points $*_1, \ldots, *_M \in \Lambda$ so that every component
of $\Lambda \setminus \{ *_i \}$ is contractible, i.e., at least one on each component of the link.

The {\it Chekanov--Eliashberg \dga{}} (C--E \dga{}), also called the {\it Legendrian contact homology \dga{}}, is denoted  simply $(\alg, \partial)$, although we may write $\alg(\Lambda, *_1, \ldots, *_M)$ when the choice of base points needs to be emphasized.
The underlying graded algebra, $\alg$, is the noncommutative unital (associative) algebra
generated over $\bZ$ by the symbols $a_1, \ldots, a_r, t_1, t_1^{-1}, \ldots, t_M, t_M^{-1}$ subject only to the relations $t_i t_i^{-1} = t_i^{-1} t_i = 1$.  (In particular, $t_i$ does not commute with $t_j^{\pm 1}$ for $j\neq i$ or with any of the $a_k$.)

A $\bZ/2r$-valued grading is given by assigning degrees to generators and requiring that for homogeneous elements $x$ and $y$, $x\cdot y$ is also homogeneous with $|x\cdot y| = |x|+ |y|$.   To this end, we set  $|t_i| = |t_i^{-1}| = 0$.  A Reeb chord $a_i$ has its endpoints on distinct strands of the front projection, $\pi_{xz}(L)$, and moreover the tangent lines to $\pi_{xz}(\Lambda)$ at the endpoints of $a_i$ are parallel.  Therefore, near the upper (resp. lower) endpoint of $a_i$, the front projection is a graph  $z = f_u(x)$ (resp. $z = f_l(x)$) where the functions $f_u$ and $f_l$ satisfy
\[
(f_u-f_l)'(x(a_i)) = 0,
\]
and the critical point at $x(a_i)$ is a nondegenerate local maximum or minimum (by the assumption that $a_i$ is a transverse double point of $\pi_{xy}(\Lambda)$).  The degree of $a_i$ is
\[
|a_i| =  \mu(a_i^u) - \mu(a_i^l) + \left\{ \begin{array}{cr} 0, & \mbox{if $f_u-f_l$ has a local maximum at $x(a_i)$,} \\ -1, & \mbox{if $f_u-f_l$ has a local minimum at $x(a_i)$,} \end{array} \right.
\]
where $\mu(a_i^u)$ and $\mu(a_i^l)$ denote the value of the Maslov
potential at the upper and lower endpoint of $a_i$. (For this index
formula in a more general setting, see
\cite[Lemma~3.4]{EES-nonisotopic}.)

\begin{remark} Note that adding an overall constant to $\mu$ does not change the grading of $\alg$.  In particular, when $\Lambda$ is connected, $|a|$ is independent of the Maslov potential and corresponds to
the Conley--Zehnder index associated to the Reeb
chord $a$.  This can be computed from  
the rotation
number in $\bR^2$ of the projection to the $xy$-plane of a path along
$\Lambda$ joining the endpoints of $a$; see \cite{C}.
\end{remark}

The differential $\partial :\thinspace \alg\to\alg$ counts holomorphic disks in the symplectization $\bR \times J^1(\bR)$ with boundary on the Lagrangian cylinder $\bR \times \Lambda$, with one boundary puncture limiting to a Reeb chord of $\Lambda$ at $+\infty$ and some nonnegative number of boundary punctures limiting to Reeb chords at $-\infty$.  For Legendrians in $J^1(\R)$, we have the following equivalent (see \cite{ENS}) combinatorial description.

At each crossing $a_i$ of $\pi_{xy}(\Lambda)$, we assign {\it Reeb signs} to the four quadrants at the crossing according to the condition that the two quadrants that appear counterclockwise (resp. clockwise) to the over-strand have positive (resp. negative) Reeb sign.  
In addition, to define $(\alg, \partial)$ with $\bZ$ coefficients, we have to make a choice of orientation signs as follows:  At each crossing, $a_i$, such that $|a_i|$ is even,
we assign negative {\it orientation signs} to the two quadrants that lie on a chosen side of the understrand at $a_i$.  All other quadrants have positive orientation signs.  See Figure \ref{fig:ReebSigns}.

\begin{figure}
\labellist
\small\hair 2pt
\pinlabel $-$  at 32 50
\pinlabel $-$  at 32 18
\pinlabel $+$  at 50 32
\pinlabel $+$  at 16 32
\pinlabel $a_i$  at 192 56
\pinlabel $a_i$  at 304 56
\endlabellist
\centering
\includegraphics[scale=0.6]{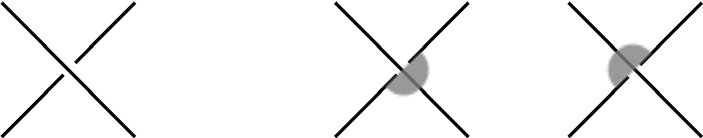}
\caption{
Left: the Reeb signs of the quadrants of a crossing of
$\pi_{xy}(\Lambda)$.  Right: he two possible choices of orientation signs at a crossing $a_i$ with $|a_i|$ even.  The shaded quadrants have negative orientation signs while the unshaded quadrants have positive orientation signs. At a crossing of odd degree, all quadrants have positive orientation signs. }
\label{fig:ReebSigns}
\end{figure}

For $l \geq 0$, let $D^2_l = D^2 \setminus \{p,q_1,\dots,q_l\}$ denote a disk with $l+1$ boundary punctures labeled $p,q_1,\dots,q_l$ in counterclockwise order.  Given generators $a, b_1, \ldots, b_l \in \alg$, we define $\Delta(a;b_1,\dots,b_l)$ to be the space of smooth, orientation-preserving immersions $u: (D^2_l, \partial D^2_l) \to (\R^2_{xy}, \pi_{xy}(\Lambda))$ up to reparametrization, such that
\begin{itemize}
\item $u$ extends continuously to $D^2$; and
\item $u(p) = a$ and $u(q_i)= b_i$  for each $1 \leq i \leq l$, and the image of a neighborhood of $p$ (resp. $q_i$) under $u$ is
a single quadrant at $a$ (resp. $b_i$)  with positive (resp. negative) Reeb sign.
\end{itemize}
We refer to the $u(p)$ and $u(q_i)$ as the corners of this disk.  Traveling counterclockwise around $\overline{u(\partial D_l)}$ from $a$, we encounter a sequence $s_1,\dots,s_m$ ($m \geq l$) of corners and base points, and we define a monomial
\[
w(u) = \delta \cdot w(s_1)w(s_2)\dots w(s_m),
\]
 where $w(s_i)$ is defined by:
\begin{itemize}
\item If $s_i$ is a corner $b_j$, then $w(s_i)=b_j$.
\item If $s_i$ is a base point $*_j$, then $w(s_i)$ equals $t_j$ or $t_j^{-1}$ depending on whether the boundary orientation of $u(\partial D^2_l)$ agrees or disagrees with the orientation of $\Lambda$ near $*_j$.
\item The coefficient $\delta = \pm1$ is the product of orientation signs assigned to the  quadrants that are occupied by $u$ near the corners at $a$, $b_1, \ldots, b_l$. (See also Remark~\ref{rmk:dgasigns} concerning sign choices.)
\end{itemize}

We then define the differential of a Reeb chord generator $a$ by
\[ \dd a = \sum_{u\in \Delta(a;b_1,\dots,b_l)} w(u) \]
where we sum over all tuples $(b_1,\dots,b_l)$, including possibly the empty tuple.  Finally, we let $\dd t_i = \dd t_i^{-1} = 0$ and extend $\dd$ over the whole \dga{} by the Leibniz rule $\dd(xy) = (\dd x)y + (-1)^{|x|}x(\dd y)$.

\begin{remark}
An equivalent definition with more of the flavor of Floer homology can
be made by taking $\Delta(a;b_1,\dots,b_l)$ to consist of holomorphic
disks in $\bR \times J^1(\bR)$, modulo conformal reparametrization and
vertical translation.  If this approach is taken, then the location of the boundary punctures $p, q_1, \ldots, q_l$ needs to be allowed to vary along $\partial D^2$ in a manner that preserves their cyclic ordering.  See \cite{ENS}.
\end{remark}

\begin{theorem}[\cite{C, ENS}] For any Legendrian $\Lambda \subset J^1(\bR)$ with base points $*_1, \ldots, *_M$, the differential $\partial : \alg(\Lambda, *_1, \ldots, *_M) \rightarrow \alg(\Lambda, *_1, \ldots, *_M)$ is well-defined, has degree $-1$, and satisfies $\partial^2=0$.
\end{theorem}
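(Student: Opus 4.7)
The plan is to establish the three claims (well-definedness, degree $-1$, and $\dd^2 = 0$) in sequence.

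For well-definedness, I would show that for each Reeb chord $a$, the moduli space $\Delta(a; b_1, \ldots, b_l)$ is finite, and nonempty for only finitely many tuples $(b_1,\ldots,b_l)$. The key estimate is that every immersed disk $u$ in $\Delta(a; b_1, \ldots, b_l)$ has positive symplectic area equal to the symplectic action of $a$ minus the sum of the actions of the $b_i$. Since Reeb-chord actions are strictly positive and bounded away from zero, finitely many tuples are feasible; for each such tuple, Gromov compactness bounds the number of immersed disks. Hence $\dd a$ is a finite $\bZ$-linear combination of monomials in $\alg$.

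For the degree claim, I would show that for any $u \in \Delta(a; b_1, \ldots, b_l)$ the monomial $w(u)$ has degree $|a| - 1$. Since $|t_j^{\pm 1}| = 0$, this reduces to proving $|a| = 1 + \sum_i |b_i|$. I would compute this combinatorially from the index formula for $|a_i|$ stated above: traversing $\partial D^2_l$ counterclockwise, the boundary alternates between pieces of $\pi_{xz}(\Lambda)$ along which the Maslov potential is constant, and cusps (or transitions between upper/lower strands at the corners $a, b_1,\ldots,b_l$) where $\mu$ jumps. Summing the local $\mu$-differences around the boundary yields the identity, with the $-1$ coming from the single positive puncture and the convention on local maxima/minima; this is the content of the index formula of \cite[Lemma 3.4]{EES-nonisotopic} applied in the planar setting.

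The third claim, $\dd^2 = 0$, is the substantive part. For a generator $a$, the coefficient of a monomial $M$ in $\dd^2 a$ is a signed sum over pairs $(u_1,u_2)$ of rigid disks such that $u_1 \in \Delta(a; \ldots, b, \ldots)$, $u_2 \in \Delta(b; \ldots)$ with $w(u_1)$, after replacing $b$ by $w(u_2)$ at the appropriate position, yielding $\pm M$. I would organize such pairs as boundary points of a $1$-dimensional moduli space $\cM_1$ obtained by allowing the nodal chord $b$ to open up. Gromov-type compactness in the combinatorial $\bR^3$ setting of \cite{ENS} shows $\cM_1$ is a compact $1$-manifold whose ends are precisely such broken pairs, so that the signed boundary count is zero, producing the required cancellation in $\dd^2 a$.

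The main obstacle will be the sign and base-point bookkeeping. I must verify that the orientation signs of Figure \ref{fig:ReebSigns} combine so that the two ends of each component of $\cM_1$ contribute with opposite signs in $\dd^2 a$; equivalently, one checks that the convention of placing negative signs on one side of the understrand at even-degree crossings is induced by a coherent orientation of moduli spaces compatible with gluing, as in \cite{ekholm-ng}. For base points, the cyclic monomial built from a broken pair must agree with the monomial read off from either limit of a $1$-parameter family in $\cM_1$; this follows from the observation that the gluing at $b$ does not traverse any $*_j$, so base-point letters on the two sides concatenate in the correct cyclic order with the correct exponents $\pm 1$. Once the sign analysis is in place, the cancellation of boundary contributions of $\cM_1$ gives $\dd^2 = 0$, and extending $\dd$ by the (signed) Leibniz rule together with $\dd t_j^{\pm 1} = 0$ produces a well-defined differential on all of $\alg$.
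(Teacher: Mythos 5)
The paper does not prove this theorem: it is stated as background and attributed to \cite{C, ENS}, so there is no internal proof to compare against. Judged on its own terms, your outline is the standard strategy from those references and is correct in broad strokes: the Stokes/action argument $\mathrm{Area}(u) = h(a) - \sum_i h(b_i) > 0$ for finiteness, the index formula for the degree count, and a breaking/gluing analysis for $\dd^2 = 0$. Two comparative remarks. First, for finiteness with a fixed tuple of corners you do not need Gromov compactness: an immersed polygon with convex corners at prescribed double points of a planar diagram is determined by its boundary word, and there are only finitely many candidate boundary words; this is how \cite{C} handles it. Second, Chekanov's original proof of $\dd^2=0$ is purely combinatorial -- pairs of disks $(u_1,u_2)$ with the positive corner of $u_2$ sitting at a negative corner of $u_1$ are glued to an immersed disk with one non-convex corner, and each such disk admits exactly two cuttings, giving an explicit fixed-point-free involution on the contributing pairs; your $1$-dimensional moduli space $\cM_1$ is the holomorphic-curve reformulation of that involution, which is the route of \cite{ENS} and is what one needs anyway to get the signs from coherent orientations as in \cite{ekholm-ng}.

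The one caveat is that the substantive content of the theorem is concentrated exactly in the steps you assert rather than carry out: that the ends of each component of $\cM_1$ are precisely the broken pairs (no other degenerations, e.g.\ boundary touching a cusp or a triple point, occur for generic diagrams), that gluing is unobstructed so every broken pair is an end, and that the coherent orientations reproduce the quadrant sign rule of Figure \ref{fig:ReebSigns}. As a proof plan this is fine and correctly locates the difficulties; as a proof it defers them to the cited literature, which is also what the paper itself does.
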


An {\it algebraic stabilization} of a \dga{} $(\alg, \partial)$  is a
\dga{} $(S(\alg), \partial')$ obtained as follows:  The algebra
$S(\alg)$ is obtained from $\alg$ by adding two new generators $x$ and
$y$ with $|x| = |y|+1$ (without additional relations), and the
differential $\partial'$ satisfies $\partial' x = y$, $\partial' y =
0$, and $\partial'|_\alg = \partial$.

\begin{theorem} \label{thm:CEDGAInvariance} Let $\Lambda_1, \Lambda_2
  \subset J^1(\R)$ be Legendrian links with base points chosen so that each component of $\Lambda_1$ and $\Lambda_2$ contains exactly one base point.
  If $\Lambda_1$ and $\Lambda_2$ are Legendrian isotopic, then for any choice of Maslov potential on $\Lambda_1$, there is a corresponding Maslov potential on $\Lambda_2$ such that the Legendrian contact homology \dgas{} $(\alg_1, \partial_1)$ and $(\alg_2, \partial_2)$ are stable tame isomorphic.
\end{theorem}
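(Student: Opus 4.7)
The plan is to reduce the global statement about Legendrian isotopy to a sequence of local moves, each of which produces an explicit stable tame isomorphism. By standard Legendrian singularity theory (as developed, e.g., by Chekanov in the original proof for one base point per component), any Legendrian isotopy can be $C^\infty$-approximated by a generic one-parameter family whose Lagrangian projection $\pi_{xy}$ passes through only finitely many non-transverse configurations. These correspond to a finite list of elementary moves on $\pi_{xy}(\Lambda)$: planar isotopies fixing the combinatorics (``safe'' moves), triple-point moves (Legendrian Reidemeister III), and moves in which a pair of double points is created or destroyed (Legendrian Reidemeister II, corresponding to a swallowtail or self-tangency in the front). In addition, we must allow a base point $*_i$ to pass through a crossing. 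First I would check that safe moves produce literally equal \dgas{} on the nose, so only the five or so generic singular moves actually need to be analyzed.

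Second, for each generic move I would exhibit the required stable tame isomorphism. The triple-point move is handled by the standard Chekanov tame isomorphism, written out explicitly in \cite{C, ENS, ekholm-ng}: one tracks how new holomorphic disks ``pass through'' the triple point and defines an elementary automorphism of the free algebra that conjugates one differential into the other, adjusted for signs using the orientation sign conventions recalled in Section~\ref{ssec:dga-background}. For the move creating a pair of crossings $x,y$ with $|x|=|y|+1$, one verifies that $\partial x = y + (\text{higher order})$ by counting the bigon that is born in the move; a change of variables kills the higher-order term, identifying the new \dga{} with the algebraic stabilization $S(\alg)$ of the old one. For a base point sliding through a crossing $a_i$, one verifies the needed conjugation by $t_j^{\pm 1}$ is a tame isomorphism on the nose. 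All of this extends the original Chekanov argument to the coefficient ring $\bZ[t_1^{\pm 1},\ldots,t_M^{\pm 1}]$ with the signs from \cite{ekholm-ng}; I would also check the base-case flypes of a base point across a cusp in the front, reducing to the Lagrangian picture via the resolution procedure of \cite{NgCLI}.

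Third, I need to track the Maslov potential. Because the rotation numbers of components are Legendrian isotopy invariants, the common target $\bZ/2r$ is well-defined along the isotopy. A Maslov potential lifts uniquely (up to a global constant on each component) through each elementary move, because the move is supported in a contractible region of the diagram and the grading of a generator is determined by the Maslov potential values on its upper and lower strands together with the max/min type of $f_u-f_l$; one checks case by case that the proposed stable tame isomorphism is degree-preserving. Finally, the hypothesis that each component carries exactly one base point matters only insofar as it fixes the isomorphism class of $\alg$ as a graded algebra; with this normalization the choice of base point on each component can be moved by the crossing-slide argument above, so it contributes no further ambiguity.

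The main obstacle will be the bookkeeping for signs and base-point labels in the Reidemeister~III move: one must verify that for each of the several combinatorial subcases (distinguished by the over/under data at the triple point and the locations of any base points nearby) the elementary automorphism proposed actually intertwines the differentials, including the sign contributions from the orientation-sign convention of Figure~\ref{fig:ReebSigns} and the $t_j^{\pm 1}$ decorations on any disk boundaries that sweep across a base point. Everything else is either formal (stabilization in the Reidemeister II case) or reduces to a direct comparison of disk counts before and after the move.
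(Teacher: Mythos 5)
This theorem appears in the paper's background section as a known result and is not proved there; it is the standard invariance theorem for the Chekanov--Eliashberg \dga{}, cited from \cite{C, ENS} (with the base-point and sign refinements from \cite{NgR, ekholm-ng}). Your outline — decomposing a generic isotopy into elementary Lagrangian-projection moves, producing a tame isomorphism for the triple-point move and an algebraic stabilization for the crossing birth/death move, conjugating by $t_j^{\pm 1}$ when a base point slides through a crossing, and tracking the Maslov potential through each local move — is exactly the argument given in those references, so there is nothing to compare beyond noting that your proof matches the cited one.
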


The meaning of the final statement is that after stabilizing both the \dgas{} $(\alg_1, \partial_1)$ and $(\alg_2, \partial_2)$ some possibly different number of times they become isomorphic.  Moreover, the \dga{} isomorphism may be assumed to be tame, which means that the underlying algebra map is a composition of certain elementary isomorphisms with have a particular simple form on the generators.  (We will not need to use the tame condition in this article.)

Allowing more than one base point on some components of $\Lambda$ provides essentially no new information, yet is convenient in certain situations.  The precise relationship between \dgas{} arising from the same link equipped with different numbers of base points is given in Theorems 2.21 and 2.22 of \cite{NgR}.  See also the proof of Proposition \ref{prop:independentoff} of this article where relevant details are discussed.

\subsubsection{The resolution construction}
Often, a Legendrian link $\Lambda \subset J^1(\bR)$ is most conveniently presented via its front projection.  For computing Legendrian contact homology, we can obtain the Lagrangian projection of a link $\Lambda'$ that is Legendrian isotopic to $\Lambda$ by resolving crossings so that the strand with lesser slope in the front projection becomes the overstrand, smoothing cusps, and adding a right-handed half twist before each right cusp; the half twists result in a crossing of degree $1$ appearing before each right cusp.  See Figure \ref{fig:trefoil-1}
 below for an example.  We say that $\Lambda'$ is obtained from $\Lambda$ by the {\it resolution construction}.  (See \cite{NgCLI} for more details.)

Thus, by applying the resolution procedure to a Legendrian $\Lambda$
with a given front diagram and Maslov potential $\mu$, we obtain a
\dga{} $(\alg, \partial)$ (for $\Lambda'$) with Reeb chord generators in bijection with the crossings and right cusps of $\pi_{xz}(\Lambda)$.  The grading of a crossing of $\pi_{xz}(\Lambda)$ is the difference in Maslov potential between the overstrand and understrand of the crossing (more precisely, overstrand minus understrand), and the grading of all right cusps is $1$.  Moreover, supposing that $\Lambda$ is in preferred plat position, the disks involved in computing $\partial$ have almost the same appearance on $\pi_{xz}(\Lambda)$ as they do on the Lagrangian projection of $\Lambda'$.  The exception here is that when computing the differential of a right cusp $c$, we count disks that have their initial corner at the cusp itself, and there is an ``invisible disk'' whose boundary appears in the Lagrangian projection as the loop to the right of the crossing before $c$ that was added as part of the resolution construction.  Invisible disks contribute to $\partial c$ a term that is either $1$ or the product of $t_i^{\pm1}$ corresponding to base points located on the loop at the right cusp.

\subsubsection{The link grading}   \label{sssec:linkG}

Assume now that $\Lambda$ is a Legendrian link with
\[
\Lambda = \Lambda_1 \sqcup \cdots \sqcup \Lambda_m,
\]
where each $\Lambda_i$ is either a connected component or a union of
connected components.
In this setting, there is an additional structure on the \dga{}
$\alg(\Lambda)$, the ``link grading'' of Mishachev
\cite{Mishachev}.

\begin{definition}
Write $\rR^{ij}$ for the collection of Reeb chords of $\Lambda$ that
\textit{end} on $\Lambda_i$ and
\textit{begin} on $\Lambda_j$, so that $\rR = \sqcup_{i,j=1}^m
\rR^{ij}$. The Reeb chords in
$\rR^{ij}$ are called \textit{pure chords} if $i=j$ and \textit{mixed
  chords} if $i\neq j$.
\end{definition}

In addition, write $\cT^{ii}$ for the collection of generators $t_j,t_j^{-1}$ corresponding to base points belonging to $\Lambda_i$, and set $\cT^{ij} = \emptyset$ for $i \neq j$.  Finally, put $\sS^{ij} = \rR^{ij} \sqcup \cT^{ij}$.

For $1 \leq i, j \leq m$, we say that a word $a_{\ell_1}\cdots
a_{\ell_k}$ formed from generators in $\sS = \sqcup \sS^{ij}$ is {\it
  composable} from $i$ to $j$ if there is some sequence of indices
$i_0,\ldots,i_k$ with $i_0=i$ and $i_k=j$, such that $a_{\ell_p}
\in \sS^{i_{p-1}i_p}$ for $p=1,\ldots,k$.
Observe that the LCH differential $\dd(a)$ of a Reeb chord $a\in\rR^{ij}$
is a $\bZ$-linear combination of composable words from $i$ to $j$.
One sees this by following the
boundary of the holomorphic disk: this is in $\Lambda_i$ between $a$
and $a_{\ell_1}$, in some $\Lambda_{i_1}$ between $a_{\ell_1}$ and
$a_{\ell_2}$, and so forth. Note in particular that a mixed chord cannot
contain a constant term (i.e., an integer multiple of $1$) in its
differential.  That the differentials of generators, $\dd(a)$, are sums of composable words allows various algebraic invariants derived from $(\alg, \partial)$ to be split into direct summands.  A more detailed discussion appears in a purely algebraic setting in Section \ref{sec:augcatalg}, and the framework developed there is a crucial ingredient for the construction of the augmentation category in Section  \ref{sec:augcat}.

The invariance result from Theorem \ref{thm:CEDGAInvariance} can be
strengthened to take link gradings into account.  Specifically, if
$(\alg, \partial)$ is the \dga{} of a link $\Lambda = \Lambda_1 \sqcup
\cdots \sqcup \Lambda_m$ with generating set $\sS = \amalg^m_{i,j=1}
\sS^{ij}$, then we preserve the decomposition of the generating set
when considering algebraic stabilizations by requiring that new
generators $x,y$ are placed in the same subset $\sS^{ij}$ for some $1
\leq i,j \leq m$.  We then have:

\begin{proposition}[\cite{Mishachev}]  \label{prop:InvarianceOfLinkGrading} If $\Lambda =  \Lambda_1 \sqcup \cdots \sqcup \Lambda_m$ and $\Lambda' = \Lambda'_1 \sqcup \cdots \sqcup \Lambda'_m$ are Legendrian isotopic via an isotopy that takes $\Lambda_i$ to $\Lambda'_i$ for $1\leq i \leq m$, then there exist (iterated) stabilizations of the \dgas{} of $\Lambda$ and $\Lambda'$, denoted $(S \alg, \partial)$ and $(S\alg', \partial')$, that are isomorphic via a \dga{} isomorphism $f: S\alg \rightarrow S\alg'$, with the property that for a generator $a \in \mathcal{S}^{ij}$ of $S\alg$, $f(a)$ is a $\bZ$-linear combination of composable words from $i$ to $j$ in $S\alg'$.  (Multiples of $1$ may appear if $i =j$.)  Moreover, if each $\Lambda_i$ and $\Lambda'_i$ contains a unique basepoint $t_i$ and the isotopy takes the orientation of $\Lambda_i$ to the orientation of $\Lambda'_i$, then we have $f(t_i) = t_i$.
\end{proposition}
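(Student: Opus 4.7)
The plan is to decompose the Legendrian isotopy into a finite sequence of elementary moves in the Lagrangian projection and track the link grading through each. After a small perturbation, any generic one-parameter family of Legendrian links in $J^1(\bR)$ can be arranged so that $\pi_{xy}(\Lambda_t)$ undergoes only ambient planar isotopies, Reidemeister II moves that birth or kill a transverse pair of crossings between two fixed strands, and triple-point (Reidemeister III) moves; we additionally allow base points to slide continuously along $\Lambda_t$ and, if necessary, to pass across crossings. For each such elementary move the Chekanov--Eliashberg invariance theorem (see \cite{C,ENS}) supplies an explicit stable tame isomorphism between the two \dgas{}, and our task is to verify that each such isomorphism takes composable words to composable words.

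The key geometric observation is that every disk counted by $\dd$, and every disk counted by the handle-slide maps appearing in the invariance proofs, has image bounded by $\pi_{xy}(\Lambda)$. Lifting such a disk's boundary to $\Lambda$ and following it between consecutive corners produces an arc lying on a single component $\Lambda_k$, since the components are disjoint above. Hence if the positive corner is a Reeb chord in $\sS^{ij}$, then the ordered sequence of negative corners together with any base-point labels $t_k^{\pm 1}$ picked up along the boundary spells out a composable word from $i$ to $j$, with base-point factors lying in $\sS^{kk}$. This observation applies uniformly to all the moves: the differential is automatically composable, every tame automorphism produced by a triple-point move is a sum of composable terms by the same boundary argument, and every Reidemeister II stabilization introduces a new pair $x,y$ between two strands lying on specific components $\Lambda_i,\Lambda_j$, so both generators can be placed consistently in a common $\sS^{ij}$ before the isomorphism is applied.

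The base-point statement follows because an orientation-preserving isotopy of each component that carries the unique base point of $\Lambda_i$ to the unique base point of $\Lambda'_i$ can be realized so that $t_i$ is never crossed by a Reidemeister move; the chain map for each elementary step then fixes $t_i$ by construction. The main obstacle, and the one that requires care rather than cleverness, is the bookkeeping: one must argue that composing the per-step isomorphisms yields a single $f : S\alg \to S\alg'$ whose composable-word property survives the composition. This reduces to the observation that composable words are closed under substitution, since substituting a composable word from $j$ to $k$ into each slot of a composable word from $i$ to $j$ again yields a composable word from $i$ to $k$. The detailed move-by-move verification is carried out in \cite{Mishachev}.
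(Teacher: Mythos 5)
The paper does not actually prove this proposition; it is stated as a citation to Mishachev, and your sketch is the standard argument that the cited reference (together with Chekanov's invariance proof) carries out: decompose the isotopy into Reidemeister II/III moves and base-point slides, observe that the differentials and the correction terms in the tame isomorphisms all count disks whose boundary arcs between consecutive corners lie on a single component, and close under composition using the fact that composable words are stable under substitution. The only part not literally in Mishachev is the base-point addendum $f(t_i)=t_i$ (his setting has no base points); your handling of it is fine, since even when a base point slides past a crossing the induced isomorphism multiplies Reeb-chord generators by $t_i^{\pm 1}$ but fixes $t_i$ itself, exactly as in the paper's Proposition~\ref{prop:independentoff}.
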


\subsection{$A_\infty$ categories}
\label{sec:a-infinity}

We follow the conventions of Keller \cite{Keller}, which are the same as the conventions of Getzler--Jones \cite{GJ} except
that in Keller the degree of $m_n$ is $2-n$ whereas in Getzler-Jones
it is $n-2$.  In particular we will use the Koszul sign rule: for graded vector spaces, we choose the identification $V \otimes W \to W \otimes V$ to
come with a sign $v\otimes w \mapsto (-1)^{|v||w|} w \otimes v$, or equivalently,
we demand $(f \otimes g)(v \otimes w) = (-1)^{|g||v|} f(v) \otimes g(w)$.
Note that the sign conventions that we use differ from, say, the conventions of Seidel \cite{Seidel}; so for instance, reading off the multiplication operations from the differential in Legendrian contact homology requires the introduction of a sign, see \eqref{eq:ms}.

An \textit{$A_\infty$ algebra} $A$ is a graded module equipped with operations $m_n: A^{\otimes n} \to A$ for $n \ge 1$.
These operations have degree $2-n$ and obey a complicated tower of relations.  The first is that $(m_1)^2 = 0$, and the
second ensures that $m_2$ is associative after passing to cohomology with respect to $m_1$.

The relations are nicely expressed in terms of the bar construction.  This goes as
follows.  Let $\overline{T}(A[1]) := \bigoplus_{k \ge 1} A[1]^{\otimes k}$ be the positive part of the tensor co-algebra.  Let
$b: \overline{T}(A[1]) \to \overline{T}(A[1])$ be a co-derivation --
i.e., a map satisfying the co-Leibniz rule -- of degree 1.  Then, by
the co-Leibniz rule,
$b$ is determined by the components $b_k: A[1]^{\otimes k} \to A[1]$.

Let $s: A \to A[1]$ be the canonical degree $-1$
identification $a \mapsto a$.
 Taking $m_k, b_k$ to be related by $s \circ m_k = b_k \circ s^{\otimes k}$, the $A_\infty$ relations
are equivalent to the statement that $b$ is a co-differential, i.e., $b^2 = 0$.  It is even more complicated to write, in terms of the
$m_k$, the definition of a morphism $A \to B$ of $A_\infty$ algebras; suffice it here to say that the definition is equivalent to asking for
a co-\dga{} morphism $\overline{T}(A[1]) \to \overline{T}(B[1])$.  That is:

\begin{proposition}[\cite{Stasheff-II, Kadeishvili}] \label{prop:bar}
Let $A$ be a graded free module, and let $\overline{T}A = \bigoplus_{k \geq 1} A^{\otimes k}$.
Then there is a natural bijection between $A_\infty$ algebra structures on $A$ and square zero degree 1
coderivations on the coalgebra $\overline{T}(A[1])$.  This equivalence extends to a bijection between $A_\infty$ morphisms
$A \to B$ and dg-coalgebra morphisms $\overline{T}(A[1]) \to \overline{T}(B[1])$, which preserves the underlying map $A \to B$.
\end{proposition}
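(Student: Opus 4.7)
The plan is to exploit the universal property of the tensor coalgebra $\overline{T}(V)$ as the cofree coassociative coalgebra on $V$: a coderivation out of it, or a coalgebra map into it, is uniquely determined by its composition with the projection $\pi \colon \overline{T}(V) \to V$ onto the cogenerators. Throughout set $V = A[1]$ and $W = B[1]$, and write $\Delta$ for the deconcatenation coproduct.

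First I would prove the following lifting statement: given any collection of maps $b_k \colon V^{\otimes k} \to V$ of a common degree $d$, there is a unique coderivation $b \colon \overline{T}(V) \to \overline{T}(V)$ of degree $d$ with $\pi \circ b|_{V^{\otimes k}} = b_k$, given by the Koszul-signed formula
\[
b(v_1 \otimes \cdots \otimes v_n) \;=\; \sum_{k=1}^{n}\sum_{i=0}^{n-k} (-1)^{d(|v_1|+\cdots+|v_i|)}\, v_1 \otimes \cdots \otimes v_i \otimes b_k(v_{i+1}\otimes\cdots\otimes v_{i+k}) \otimes v_{i+k+1}\otimes \cdots \otimes v_n.
\]
Existence is the direct check that this satisfies $\Delta \circ b = (b \otimes \mathrm{id} + \mathrm{id} \otimes b) \circ \Delta$; uniqueness follows because the kernel of $\pi$ contains no nonzero sub-coideal that is stable under a coderivation. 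Specializing to $d=1$ and using the suspension $s \colon A \to A[1]$ via $s \circ m_k = b_k \circ s^{\otimes k}$ converts sequences of degree-$1$ maps $b_k$ into sequences $m_k$ of degree $2-k$, establishing the underlying bijection at the level of data.

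Next I would compare the square-zero condition to the $A_\infty$ relations. Since the commutator of two coderivations is a coderivation, $b^2$ is a coderivation; by the uniqueness in Step~1 it vanishes iff $\pi \circ b^2$ vanishes on every $V^{\otimes n}$. Expanding $\pi \circ b^2$ via the explicit formula above gives, for each $n \ge 1$, an identity of the form
\[
\sum_{k+\ell = n+1}\sum_{i=0}^{n-\ell} \pm\, b_k\bigl(v_1\otimes \cdots \otimes v_i \otimes b_\ell(v_{i+1}\otimes\cdots\otimes v_{i+\ell}) \otimes v_{i+\ell+1}\otimes \cdots \otimes v_n\bigr) = 0,
\]
and transporting this back along $s^{\otimes n}$ yields precisely the $A_\infty$ relations for the $\{m_k\}$ in the Keller/Getzler–Jones sign convention used by the paper.

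For morphisms I would run the exactly analogous argument. A degree-$0$ map $F \colon \overline{T}(V) \to \overline{T}(W)$ is a coalgebra map iff it is determined by its components $F_k = \pi_W \circ F|_{V^{\otimes k}}$ via the cut-and-apply formula
\[
F(v_1\otimes\cdots\otimes v_n) \;=\; \sum_{r\ge 1}\;\sum_{0 = i_0 < i_1 < \cdots < i_r = n} F_{i_1-i_0}(v_{i_0+1}\otimes\cdots\otimes v_{i_1}) \otimes \cdots \otimes F_{i_r - i_{r-1}}(v_{i_{r-1}+1}\otimes\cdots\otimes v_n);
\]
the content is again unique extension from the cogenerators. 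Compatibility with the codifferentials, $F b_A = b_B F$, need only be checked after $\pi_W$, and unpacks through $s$ to the standard $A_\infty$ morphism relations for $f_k$ with $s \circ f_k = F_k \circ s^{\otimes k}$. The naturality of the correspondence with respect to the underlying $f_1 \colon A \to B$ is then immediate from $F_1 = s \circ f_1 \circ s^{-1}$.

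The main obstacle is not conceptual but bookkeeping: one has to track Koszul signs carefully enough through Step~1's explicit formula to verify that the abstract identities $b^2 = 0$ and $Fb_A = b_B F$ translate on the nose to the signed $A_\infty$ relations with the degree convention $|m_n| = 2-n$ adopted here. Once the cofreeness of $\overline{T}(V)$ on cogenerators is established, every other step is mechanical.
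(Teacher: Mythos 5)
The paper does not actually prove this proposition; it is quoted as a classical result with citations to Stasheff and Kadeishvili, so there is no in-paper argument to compare against. Your proposal is the standard and correct proof of that classical result: reduce everything to the cofreeness of the reduced tensor coalgebra on its cogenerators, lift the $b_k$ to a coderivation, observe that $b^2=\tfrac12[b,b]$ is again a coderivation (here you use that $b$ has odd degree) so that $b^2=0$ can be tested after corestriction to $V$, and similarly for morphisms. Two points deserve slightly more care than you give them. First, $\overline{T}(V)$ is cofree only among \emph{conilpotent} coalgebras, and your cut-and-apply formula for reconstructing a coalgebra map from its components implicitly uses conilpotency of the source to guarantee that the sum over compositions of $n$ is finite; this is harmless here since deconcatenation coalgebras are conilpotent, but it should be said. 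Second, the claim that $Fb_A=b_BF$ ``need only be checked after $\pi_W$'' requires the observation that $Fb_A-b_BF$ is an $(F,F)$-coderivation, i.e.\ satisfies $\Delta\circ(Fb_A-b_BF)=\bigl((Fb_A-b_BF)\otimes F+F\otimes(Fb_A-b_BF)\bigr)\circ\Delta$, and that such maps are likewise determined by their corestriction to the cogenerators; without naming this the reduction is unjustified. With those two remarks supplied, and granting the sign bookkeeping you defer (which is indeed mechanical in the Keller convention $s\circ m_k=b_k\circ s^{\otimes k}$ used by the paper), the argument is complete.
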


Because in practice our $A_\infty$ algebras will be given in terms of $b$ but we will want to make explicit calculations of the $m_k$, especially
$m_1$ and $m_2$, we record here the explicit formula relating their behavior on elements.    For elements $a_i \in A$, the Koszul sign rule
asserts
\begin{eqnarray*}
s^{\otimes k}(a_1 \otimes \cdots \otimes a_k) & = &  (-1)^{|a_{k-1}| + |a_{k-2}| + \cdots + |a_{1}|}  s^{\otimes k-1}(a_1 \otimes \cdots \otimes a_{k-1}) \otimes s(a_k) \\
& = &  (-1)^{|a_{k-1}| + |a_{k-3}| + |a_{k-5}| + \cdots }  s(a_1) \otimes s(a_2) \otimes \cdots \otimes s(a_k)
\end{eqnarray*}
so:
\begin{eqnarray*} m_k(a_1, a_2, \ldots, a_k) &  = &  s^{-1} \circ b_k \circ s^{\otimes k} (a_1 \otimes a_2 \otimes \cdots \otimes a_k) \\
& = & (-1)^{|a_{k-1}| + |a_{k-3}| + |a_{k-5}| + \cdots} s^{-1} b_k (s(a_1) \otimes s(a_2) \otimes \cdots \otimes s(a_k)).
\end{eqnarray*}

\noindent
In terms of the $m_k$, the first three $A_\infty$ relations are:
\begin{align*}
m_1(m_1(a_1)) &= 0 \\
m_1(m_2(a_1,a_2)) &= m_2(m_1(a_1),a_2) + (-1)^{|a_1|}
m_2(a_1,m_1(a_2)) \\
m_2(a_1,m_2(a_2,a_3)) - m_2(m_2(a_1,a_2),a_3) &=
m_1(m_3(a_1,a_2,a_3)) + m_3(m_1(a_1),a_2,a_3) \\
&\qquad + (-1)^{|a_1|} m_3(a_1,m_1(a_2),a_3) \\
&\qquad + (-1)^{|a_1|+|a_2|} m_3(a_1,a_2,m_1(a_3)).
\end{align*}
These are the standard statements that $m_1$ is a differential on $A$,
$m_1$ is a derivation with respect to $m_2$, and $m_2$ is associative
up to homotopy.  In general, the $A_\infty$ relations  are
\begin{equation} \label{eq:Ainftyrelations}
\sum (-1)^{r+s t}m_u (1^{\otimes r} \otimes m_s \otimes 1^{\otimes t}) = 0
\end{equation}
for $n \geq 1$, where we sum over all $r,s,t \geq 0$ with $r+s+t =n$ and put $u = r+ 1 + t$.  Note that when the left hand side is applied to elements, more signs appear from the Koszul convention.

The notion of an $A_\infty$ morphism of $A_\infty$ algebras
$f :\thinspace A\to B$ can also be described explicitly,
as a collection of maps
$f_n :\thinspace A^{\otimes n} \to B$ of degree $1-n$ satisfying
certain relations; see \cite{Keller}. We record the explicit expressions for the first two here:
\begin{align*}
f_1(m_1(a_1)) &= m_1(f_1(a_1)) \\
f_1(m_2(a_1,a_2)) &= m_2(f_1(a_1),f_1(a_2)) + m_1(f_2(a_1,a_2))
+ f_2(m_1(a_1),a_2) + (-1)^{|a_1|} f_2(a_1,m_1(a_2)).
\end{align*}
These assert that $f_1$ commutes with the differential, and respects the product
up to a homotopy given by $f_2$.

The notions of {\it $A_\infty$ categories} and {\it $A_\infty$ functors} are generalizations of $A_\infty$ algebras and their morphisms.  An $A_\infty$ category has,
for any two objects $\e_1, \e_2$, a graded module $\Hom(\e_1, \e_2)$.
For $n\geq 1$ and objects $\e_1, \ldots, \e_{n+1}$, there is a degree $2-n$ composition
\[
m_n: \Hom(\epsilon_n,\epsilon_{n+1}) \otimes \cdots \otimes \Hom(\epsilon_1,\epsilon_{2})
\to \Hom(\epsilon_1,\epsilon_{n+1})
\]
satisfying (\ref{eq:Ainftyrelations}) where the operations appearing on the left are understood to have appropriate sources and targets as determined by
$\epsilon_1, \ldots, \epsilon_{n+1}$.

\begin{remark} \label{rem:ainftycatrelations}
An equivalent way to formulate the $A_\infty$ condition on a category is as follows.  For a finite collection of objects $\epsilon_1, \ldots, \epsilon_n$, let
$A(\epsilon_1, \ldots, \epsilon_n) := \bigoplus \Hom(\epsilon_i, \epsilon_j)$ carry compositions $M_k$ defined
by first multiplying matrices and then applying the $m_k$.  (I.e., form $\mathrm{End}(\bigoplus \epsilon_i)$ without assuming
$\bigoplus \epsilon_i$ exists.)  The condition that the category is $A_\infty$ is just the requirement that all
$A(\epsilon_1, \ldots, \epsilon_n)$ are $A_\infty$ algebras.
\end{remark}

The definition of an $A_\infty$-functor $F$ is a similar generalization of morphism of $A_\infty$ algebras;
along with a correspondence of objects  $\e \mapsto F(\e)$ we have for any objects $\e_1, \ldots, \e_{n+1}$ a map
\[
F_n:  \Hom(\epsilon_n,\epsilon_{n+1}) \otimes \cdots \otimes \Hom(\epsilon_1,\epsilon_{2})
\to \Hom(F(\epsilon_1),F(\epsilon_{n+1}))
\]
satisfying appropriate relations.

Often, $A_\infty$ categories are not categories in the usual sense due to the absence of identity morphisms and the failure of associativity of composition (which only holds up to homotopy).
However, associativity does hold at the level of the {\it cohomology category} which is defined as follows.
The first $A_\infty$ relation shows that
\[
m_1 :\thinspace \Hom(\epsilon_1,\epsilon_2) \to
\Hom(\epsilon_1,\epsilon_2)
\]
is a differential: $m_1^2=0$. The cohomology category is defined to have the same objects as the underlying $A_\infty$ category, but with morphism spaces given by the cohomology
$H^*(\Hom(\epsilon_1,\epsilon_2))$. Composition is induced by $m_2$, which descends to
an associative multiplication map
\[
m_2 :
H^* \Hom(\epsilon_2,\epsilon_3) \otimes H^* \Hom(\epsilon_1,\epsilon_2) \to H^*\Hom(\epsilon_1,\epsilon_3).
\]

An $A_\infty$ category
is {\it strictly unital} if for any object $\epsilon$, there is a morphism
$e_\epsilon \in \Hom(\epsilon,\epsilon)$ of degree $0$ such that:
\begin{itemize}
\item
$m_1(e_\epsilon) = 0$;
\item
for any objects $\epsilon_1,\epsilon_2$, and any
$a\in\Hom(\epsilon_1,\epsilon_2)$,
$m_2(a,e_{\epsilon_1}) = m_2(e_{\epsilon_2},a) = a$;
\item
all higher compositions involving $e_\epsilon$ are $0$.
\end{itemize}

\begin{proposition}  \label{prop:ainftyunit}
For any $A_\infty$ category, the corresponding cohomology category is a
(usual, possibly non-unital) category, and it is unital if the $A_\infty$ category is strictly unital.
\end{proposition}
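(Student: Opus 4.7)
The plan is to verify the three things that a category requires: well-definedness of composition on $H^*$, associativity of that composition, and (under strict unitality) that the class $[e_\epsilon]$ really is a two-sided identity. All three follow by applying the first few $A_\infty$ relations to closed representatives, with the higher maps $m_n$ for $n\ge 3$ appearing only as explicit homotopies.

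First I would use the second $A_\infty$ relation
\[
m_1(m_2(a_1,a_2)) = m_2(m_1(a_1),a_2) + (-1)^{|a_1|} m_2(a_1,m_1(a_2)),
\]
which says that $m_1$ is a derivation for $m_2$. Applied to $m_1$-closed classes it shows $m_2(a_1,a_2)$ is closed, and applied when one input is exact (say $a_1 = m_1(b)$ and $a_2$ is closed) it shows that $m_2(a_1,a_2) = m_1(m_2(b,a_2))$ up to sign, hence exact. Thus $m_2$ descends to a well-defined pairing
\[
H^*\Hom(\epsilon_2,\epsilon_3) \otimes H^*\Hom(\epsilon_1,\epsilon_2) \longrightarrow H^*\Hom(\epsilon_1,\epsilon_3).
\]

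Next, I would read the third $A_\infty$ relation
\[
m_2(a_1,m_2(a_2,a_3)) - m_2(m_2(a_1,a_2),a_3) = m_1(m_3(a_1,a_2,a_3)) + (\text{terms with } m_1(a_i))
\]
as saying that when $a_1,a_2,a_3$ are closed, the associator of $m_2$ on the nose is an $m_1$-coboundary, namely $m_1(m_3(a_1,a_2,a_3))$. Hence on cohomology the induced $m_2$ is strictly associative. Together with the first step this proves the cohomology category is a (possibly non-unital) category. I would also note that the argument is local in the pair of objects, so one can either carry it out hom-space by hom-space or, following Remark~\ref{rem:ainftycatrelations}, reduce to the already-established $A_\infty$ algebra case by working inside $A(\epsilon_1,\ldots,\epsilon_n)$.

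Finally, for strict unitality, I would simply invoke the three defining properties of $e_\epsilon$ directly: $m_1(e_\epsilon)=0$ says $[e_\epsilon]$ makes sense in $H^0\Hom(\epsilon,\epsilon)$, and the identities $m_2(a,e_{\epsilon_1}) = a = m_2(e_{\epsilon_2},a)$ descend verbatim to cohomology, showing that $[e_\epsilon]$ is a two-sided identity. No real obstacle arises; the only thing to be careful about is bookkeeping of signs coming from the Koszul convention when one input to $m_2$ is exact, and (in the categorical rather than algebra setting) making sure the sources and targets of each $m_k$ match up, which is handled uniformly by Remark~\ref{rem:ainftycatrelations}. The higher coherences stored in $m_n$ for $n\ge 3$ play no further role here; they will only matter when one later refines this statement to, say, an $A_\infty$ equivalence inducing an equivalence of cohomology categories.
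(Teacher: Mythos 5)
Your proof is correct and is exactly the standard argument the paper has in mind: it states this proposition without proof, but the preceding discussion of the first three $A_\infty$ relations (that $m_1$ is a differential, a derivation with respect to $m_2$, and that $m_2$ is associative up to the homotopy $m_3$) is precisely the content you spell out, together with the direct descent of the strict unit axioms to cohomology. No gaps.
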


An $A_\infty$ functor $F$ induces an ordinary (possibly, non-unital) functor between the corresponding cohomology categories.  In the case that the two $A_\infty$ categories have unital cohomology categories, $F$ is called an {\it $A_\infty$ equivalence}
(or {\it quasi-equivalence})  if the induced functor on cohomology categories is an equivalence of categories in the usual sense, in particular preserving units.
The notion of $A_\infty$ equivalence satisfies the properties of an equivalence relation, cf. Theorem 2.9 of \cite{Seidel}.

To verify that a given $A_\infty$ functor $F$ is an equivalence, it suffices to check that, on cohomology categories, $F$ is essentially surjective (i.e. every object is isomorphic to one that appears in the image of $F$) and  fully faithful (i.e. induces isomorphisms on hom spaces).  The property of preserving units in cohomology follows as a consequence.

\subsection{Legendrian invariants from sheaves}

In this section we review some notions of sheaf theory, and how they are applied in 
 \cite{STZ} to the study of Legendrian knots.

First we recall the definition; explanations follow.
Put $M = \bR^2_{xz}$ and let $\Lambda \subset \bR^3 \cong T^{\infty,-}M$ be a Legendrian knot.
Then $Sh_\Lambda(M; \coeffs)$ is the dg category of sheaves 
%\sayVS{there is no need to impose
%constructibility, and in fact it may be strictly speaking incorrect to do so if $\Lambda$ is not 
%subanalytic} 
with coefficients in $\coeffs$,
singular support
at infinity contained in $\Lambda$, and with compact support in $M$.
In fact, we use a slight variant:  when we take $M=I_x\times \bR_z$
with $I_x\subset \bR_x$, we
will require only that sheaves have zero support for $z\ll 0$.
By \cite{GKS,STZ}, a Legendrian isotopy $\Lambda \rightsquigarrow \Lambda'$ induces
an equivalence of categories $Sh_\Lambda(M; \coeffs) \cong Sh_{\Lambda'}(M; \coeffs)$.  
%\sayVS{hopefully somewhere we have written that equivalence always means quasi-equivalence etc.} 

\subsubsection{Sheaves}
For a topological space\footnote{We always assume our topological spaces are locally compact Hausdorff; in fact in this article we will only be concerned with sheaves on manifolds.} $T$, we write $Op(T)$ for the category whose objects are open sets
of $T$ and whose morphisms are inclusions of open sets.  A presheaf on $T$ valued in some 
category $\mathcal{C}$ is by definition a functor $\cF: Op(T)^{op} \to \mathcal{C}$.  In particular,
when $U \subset V$ there is a restriction map $\cF(V) \to \cF(U)$. 

A presheaf is said to be a sheaf if the corresponding functor takes covers to limits.  More precisely, 
whenever given a collection of opens $U_i$ indexed by $i \in I$, the restriction maps induce 
a morphism
$$ \cF \left( \bigcup_{i \in I} U_i \right) \to \lim_{\emptyset \ne J \subset I} \cF \left( \bigcap_{j \in J} U_j \right)$$
One says $\cF$ is a sheaf assuming these morphisms are all isomorphisms.  When $\mathcal{C}$
is the category of sets or abelian groups, 
%\sayVS{probably any concrete category} 
the limit on the right is already determined as the 
equalizer of the diagram $\prod \cF(U_i) \rightrightarrows \prod \cF(U_i \cap U_j)$.  However, the definition 
above makes sense in more general settings, in particular for various sorts of homotopical
categories, e.g. the $(\infty, 1)$ categories of \cite{Lurie-HTT}.  In particular, this definition
of sheaf is appropriate to define sheaf of categories, or sheaf of dg categories, or sheaf of $A_\infty$ categories. 
%\sayVS{warning: there are different sorts of categories of dg categories
%depending on largeness issues.  if we demand perfect stalks and bounded amplitude, we can stick
%to small dg categories.  I guess we should put some citation here to something explaining why 
%said sorts of items form $(\infty,1)$ categories.  Lurie shows directly this is true for stable categories,
%and it's explained probably there and certainly elsewhere that dg or $A_\infty$ categories are
%models for $\coeffs$-linear stable categories.  Probably something useful is in Gaitsgory-Rozenblyum} 

In classical references such as \cite{KS},
%\sayVS{and many earlier references} 
the derived category of sheaves is defined by beginning
with the category of sheaves of $\coeffs$-modules,  
taking complexes of such objects, and then taking the Verdier localization along quasi-isomorphisms.  
The well-behavedness of this localization is underwritten by the existence of injective
resolutions.  One then showed a posteriori that the resulting category was `triangulated'.  

From a more modern point of view, the category of complexes of sheaves is a dg category, 
and thus so is its localization along quasi-equivalences \cite{D}.
%\footnote{The localization
%is accomplished by taking the quotient by cones of quasi-equivalences, i.e. the quotient
%by acyclic objects.  In turn, this can be done by adding,  for each
%acyclic complex $A$, an
%element $\varepsilon$ in ${\rm Hom}(A,A)$ of degree $-1$ with $d\varepsilon = {\rm id}_A.$  
%Note that the hom complexes between non-acyclic objects are left unchanged.
%}\sayVS{I'm not sure what's in the footnote is correct, in particular one better kill
%homs between an acyclic and something else; a hom between $A+C, B$ with $A$ acyclic is 
%certainly changed even though $A+C, B$ arent, etc.  Anyway why do we write that at all? It's not like
%we ever compute with it.} 
In this dg version, the 
natural hom space between objects is itself a complex, whose $H^0$ is the old hom.  
Some discussion of how to set up various sheaf-theoretic functors in the dg context 
can be found in \cite[Sec. 2.2]{N}. 

(From an even more modern point of view, one could just directly consider sheaves valued
in an appropriate $(\infty, 1)$ category of complexes.)

In any event, we write the resulting dg category of sheaves as $Sh(M; \coeffs)$. 
The extra information in the dg version is crucial in gluing arguments. 
%For a \red{field} 
%%commutative \red{unital} ring 
%$\coeffs$, and a manifold $M$, we
%write $Sh_{\mathit{naive}}^b(M; \coeffs)$ for the triangulated dg category whose objects are chain complexes of sheaves of $\coeffs$-modules on $M$ whose cohomology has bounded amplitude, 
%i.e. complexes $C^\bullet$ such that the sheaf $H^i(C^\bullet)$ is identically zero \sayVS{maybe just acyclic} except for $i$ in some bounded range.  \sayVS{No reason for constructibility hypotheses here.} 
%Morphisms are given by 
%with the usual complex of morphisms between two complexes of sheaves.
%We write
%$Sh(M; \coeffs)$ for the localization of this dg category with respect to acyclic complexes in the sense of \cite{D}.
%\red{More detail here on dg derived category required.  Here is an attempt. 
%The point is to keep some of the structure that is lost in the usual derived category.  (See also section 2.2 of \cite{N}.)}
In addition, we want to prove an 
equivalence between a category of sheaves and the $A_\infty$ category of augmentations.
As the latter is an $A_\infty$ category, we certainly need the $dg$ structure on the former.

\subsubsection{Microsupport}

To each complex $F$ of (not necessarily constructible) sheaves of $\coeffs$-modules is attached a closed conic subset $\SS(F)$, called the
``singular support'' of $F$.  This captures the failure of $F$ to be locally constant for a sheaf $F$, or cohomologically locally constant for a complex. This notion was introduced by Kashiwara and Schapira, and extensively developed in \cite{KS}.  We recall one of several equivalent definitions provided in \cite[Chap. 5]{KS}. 

Consider a covector $\xi \in T^*M$. 
If there is some $C^1$ function $f$, locally defined near $x$, with $f(x) = 0$ and $df_x = \xi$, such that 
$$\mathrm{colim}_{U \ni x} H^*(U;F) \to \mathrm{colim}_{V \ni x} H^*(f^{-1}(-\infty,0)\cap V;F)$$
is {\em not} an isomorphism, then we say $\xi$ is singular for $F$. 
(The %colimits are over  
map on colimits is induced by the evident restriction map for $V \subset U$.)  
The singular support is the closure of the locus of singular covectors.  
%\sayVS{Now actually the correct
%definition.} 

%For our purposes we are interested in the case where the sheaf $F$ is constructible.
%In this case, $\SS(F)$ is a conic \emph{Lagrangian,} i.e.~it is stable under dilation (in the cotangent fibers)
%by positive real numbers, and it is a Lagrangian subset of $T^* M$ wherever it is smooth.
%Moreover, if $F$ is $\cS$-constructible, then $\SS(F)$ is contained in the characteristic variety of $\cS$, defined as
%the union of conormals:  $V_{\cS} := \bigcup_{S \in \cS} T^*_S M.$
%Note that by conicality
%we have that $\Lambda_{\cS} := V_{\cS}\cap T^\infty M$ is a Legendrian subset.  
%\sayVS{We don't care about constructibility here.} 

%Before describing singular support any further,
%let's cut to the chase and say we will 
We define $Sh_\Lambda(M; \coeffs) \subset Sh(M; \coeffs)$ to be the full subcategory defined by
such $F$ with $SS(F)\subset \Lambda$ for a Legendrian subspace
$\Lambda$ of $T^\infty M$, and similarly
for $Sh_\Lambda(M; \coeffs)$.
%
%\red{Referees want us to discuss singular support more positively than negatively, I guess, and to highlight the definition
%does not require constructibility --- maybe reorder sections or mention SS before and after constructibility.}
%
%For our purposes it is useful to understand what a \emph{nonsingular} covector $\xi_x\in T_x^*M$ looks like.
%First denote by $B_r$ a ball of size $r$ around $x$ (fixing some Riemannian metric) and a function $f:B_r\to \bR$ so that
%$f(x) = 0$ and $df(x) = \xi_x.$  Then \cite[Corollary 5.4.19]{KS} states that if
%$\xi \notin SS(F)$ then for all $r$ and $\varepsilon$ small enough we have that
%$$\Gamma(f^{-1}(-\infty,\varepsilon)\cap B_r;F) \to \Gamma(f^{-1}(-\infty,-\varepsilon)\cap B_r;F)$$
%is a quasi-isomorphism and independent of $r$ and $\varepsilon$ small enough.
%

\subsubsection{Constructible sheaves and combinatorial models}

When $\Lambda$ is the union of conormals to a subanalytic stratification of $M$, then 
$Sh_\Lambda(M; \coeffs)$ consists of sheaves {\em constructible} with respect 
to the stratification -- i.e., locally constant when restricted to each stratum. 
The theory of constructible sheaves in this sense is developed in detail in \cite[Chap. 8]{KS}. 

For sufficiently fine stratifications, the category of constructible sheaves admits
a well known combinatorial description.

\begin{definition}
Given a stratification $\cS$, the star of a stratum $s \in \cS$ is the union of strata that contain $s$ in their closure.
We view $\cS$ as a poset category in which every stratum has a unique map (generization) to every stratum in its star.
We say that $\cS$ is a regular cell complex if every stratum is contractible and moreover
the star of each stratum is contractible.
\end{definition}

Now if $C$ is any category and $A$ is an abelian category,
we write $Fun_{\mathit{naive}}(C,A)$ for the dg category of functors from $C$ to the category
whose objects are cochain complexes in $A$,
and whose maps are the cochain maps.
We write $Fun(C,A)$ for the dg quotient \cite{D}
of  $Fun_{\mathit{naive}}(C, A)$ by the thick subcategory of functors taking values in acyclic complexes.
For a ring $\coeffs$, we abbreviate the case where $A$ is the abelian category of $\coeffs$-modules to $Fun(C,\coeffs)$.

\begin{proposition}[{\cite[Thm. 1.10]{kashiwara1984riemann}, \cite{Shepard},\cite[Lemma 2.3.3]{N}}]
 \label{prop:star}
Let $\cS$ be a Whitney stratification of the space $M$.  Consider the functor
\begin{equation}
\label{eq:luciustarquiniuspriscus}
 \Gamma_{\cS}: Sh_\cS(M;\coeffs)  \to  Fun(\cS,\coeffs) \qquad \qquad
F  \mapsto  [s \mapsto \Gamma(\text{\rm star of $s$};F) ].
\end{equation}
If $\cS$ is a regular cell complex, then $\Gamma_{\cS}$ is a quasi-equivalence.
\end{proposition}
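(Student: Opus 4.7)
The plan is to exhibit an explicit quasi-inverse $L : Fun(\cS, \coeffs) \to Sh_\cS(M; \coeffs)$ and verify that both $\Gamma_\cS \circ L$ and $L \circ \Gamma_\cS$ are quasi-isomorphic to the identity. The whole argument is made possible by the regular cell complex hypothesis, which ensures each $U_s := \Star(s)$ is contractible; this is precisely what lets sheaf-theoretic invariants be computed combinatorially from the poset $\cS$.

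The first step is the pointwise identification: for every $F \in Sh_\cS(M; \coeffs)$ and every stratum $s$ with base point $x_s \in s$, the natural map $\Gamma(U_s; F) \to F_{x_s}$ is a quasi-isomorphism, and these maps are natural for generization, in the sense that for $s \leq s'$ (so $U_{s'} \subset U_s$) they intertwine restriction $\Gamma(U_s; F) \to \Gamma(U_{s'}; F)$ with the generization $F_{x_s} \to F_{x_{s'}}$. The argument proceeds by induction on the stratified structure of $U_s$: contractibility of $U_s$ together with local constancy of $F|_{s'}$ on each contractible stratum $s' \subset U_s$ collapses the spectral sequence comparing $\Gamma$ to the stalk.

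The second step constructs the inverse. For $G \in Fun(\cS, \coeffs)$, define $L(G)$ as the cellular sheaf with stalks $L(G)_x := G(s(x))$, where $s(x)$ is the stratum through $x$, with generization transitions supplied by $G$. One concrete global model realizes $L(G)$ as a homotopy colimit over $\cS$ of extension-by-zero sheaves indexed by the cover $\{U_s\}$. Then $\Gamma_\cS \circ L \simeq \mathrm{id}$ follows from $\Gamma(U_s; L(G)) \simeq L(G)_{x_s} = G(s)$, where the first identification is step 1 applied to $L(G)$ and the second is by construction. Dually, $L \circ \Gamma_\cS \simeq \mathrm{id}$ is a stalk comparison: $L(\Gamma_\cS F)_x = (\Gamma_\cS F)(s(x)) = \Gamma(U_{s(x)}; F) \simeq F_x$, again by step 1.

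For full faithfulness on morphism complexes I would apply step 1 to the internal $\uhom(F, F')$---itself $\cS$-constructible---recovering $\RR\Hom(F, F')$ from $\Gamma(U_s; \uhom(F, F'))$ naturally in $s$. The main obstacle will be the final dg-categorical upgrade: the outline above yields essential surjection and cohomology-level isomorphisms, but the conclusion asserts a quasi-equivalence of dg categories after the Drinfeld localization by acyclics. To make everything dg-natural one must construct $L$ on functorial injective (or soft) resolutions, check that all intermediate quasi-isomorphisms assemble into a dg-natural transformation rather than merely a cohomology-level one, and verify that the equivalence descends correctly to the localized morphism complexes on both sides.
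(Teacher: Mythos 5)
The paper does not actually prove this proposition --- it is quoted from \cite{Shepard} and \cite[Lemma 2.3.2]{N}, with only the remark immediately afterward (that the restriction from star sections to the stalk is a quasi-isomorphism) recording the key input. So the comparison is really against those references, and your architecture matches theirs: the crux is the star-to-stalk quasi-isomorphism, and the quasi-inverse is the combinatorial/cellular sheaf built as a (homotopy) coend of the corepresenting objects $j_{s!}\coeffs_{\Star(s)}$, with the unit and counit checked stalkwise via co-Yoneda. That part of your plan is sound, and once $L$ is exhibited as a genuine left adjoint the full-faithfulness step via $\uhom$ becomes redundant.

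The one genuine gap is your justification of Step 1. Contractibility of $\Star(s)$ together with ``local constancy on each stratum'' is not sufficient to collapse anything: take $M=\bR$ stratified by $\{0\}$ and the two rays, let $F = j_!\coeffs_{(0,\infty)}$, and let $U=\bR$ (contractible, and $F$ is $\cS$-constructible); then $\RR\Gamma(U;F)=0$ while the stalk at $x=1$ is $\coeffs$. What makes the lemma true is not contractibility of the star but the fact that $U$ is the open star \emph{of the stratum containing $x$}: every stratum of $\Star(s)$ contains $s$ in its closure, and the regular cell complex (or Whitney) structure gives $\Star(s)$ a conical local form --- stratified-homeomorphic to $s$ times the open cone on the link of $s$ --- so that sections over the open cone of a conic constructible complex compute the stalk at the cone point. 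Equivalently, one runs an induction over the strata of $\Star(s)\setminus s$ using the recollement triangles $j_!j^*F \to F \to i_*i^*F$. Either of these replaces the phrase ``collapses the spectral sequence,'' which as written would equally ``prove'' the false statement in the example above. With that lemma correctly established, the rest of your outline (including the dg-level bookkeeping with K-injective resolutions and the descent to the localizations by acyclics) goes through as you describe.
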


\begin{remark} Note in case $\cS$ is a regular cell complex, the restriction map from
$ \Gamma(\text{star of $s$};F)$ to the stalk of $F$ at any point of $s$ is a
quasi-isomorphism. \end{remark}

We use these constructions as follows. 
Our $\Lambda$ is not a union of conormals; but it will be contained in such a union
(possibly after a small contact isotopy to make $\Lambda$ subanalytic), 
so $Sh_\Lambda(M; \coeffs)$ can be described as constructible sheaves satisfying 
certain extra conditions.  

Specifically, for us $\Lambda \subset \bR^3 \cong T^{\infty,-}\bR^2 \subset T^\infty \bR^2.$
If we take $\cS$ the stratification of $\bR^2$ in which the zero-dimensional strata are the cusps and crossings, the one-dimensional strata are the arcs, and the two-dimensional strata are the regions,
and $\Lambda_{\cS}$ the union of conormals to these strata, then 
\[
Sh_\Lambda(\bR^2; \coeffs) \subset Sh_{\Lambda_{\cS}}(\bR^2; \coeffs) = Sh_{\cS}(\bR^2; \coeffs).
\]

Because $\Lambda \subset \bR^3 \cong T^{\infty,-}\bR^2$, every covector with $p_z > 0$ is nonsingular,
which means that every local restriction map which is \emph{downward} is required
to be a quasi-isomorphism.  
The easiest objects to describe are those in which all downward morphisms are in fact required
to be {\em identities}.  In Section 3.4 of \cite{STZ}, we term such objects {\em legible}.  
Such objects can be described in terms of a diagram of maps between the stalks at top
dimensional strata, as depicted in Figure \ref{fig:legcats} near an arc, a cusp, or a crossing. 

\begin{figure}[H]
\begin{center}
\includegraphics[scale = .3]{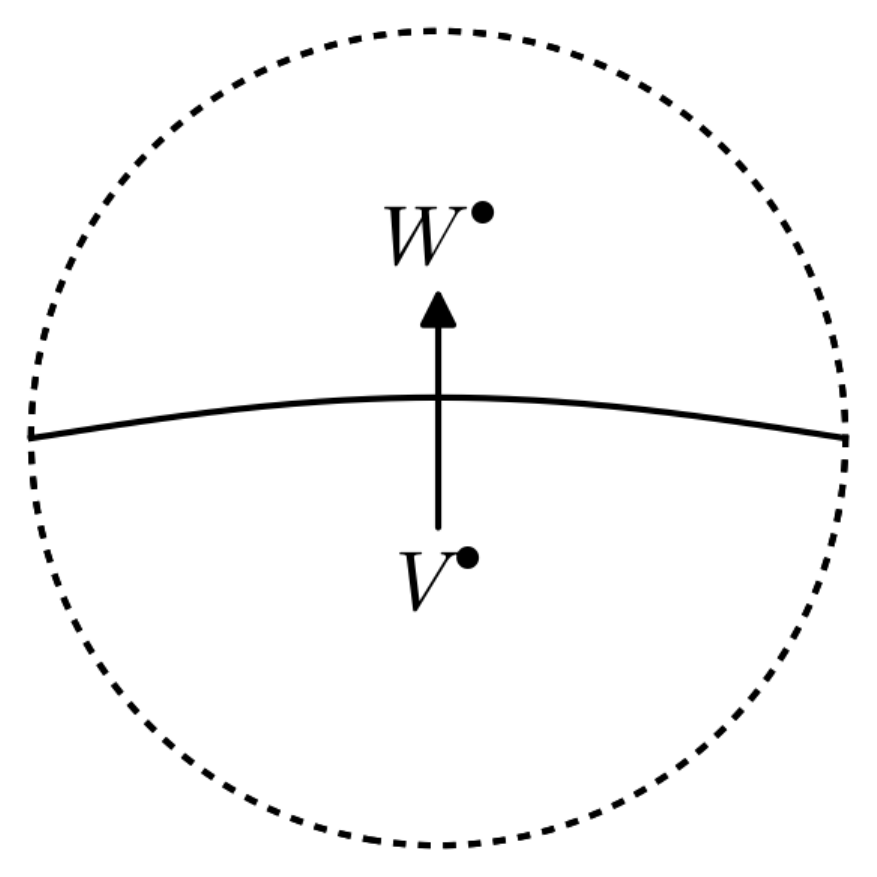}
\includegraphics[scale=.3]{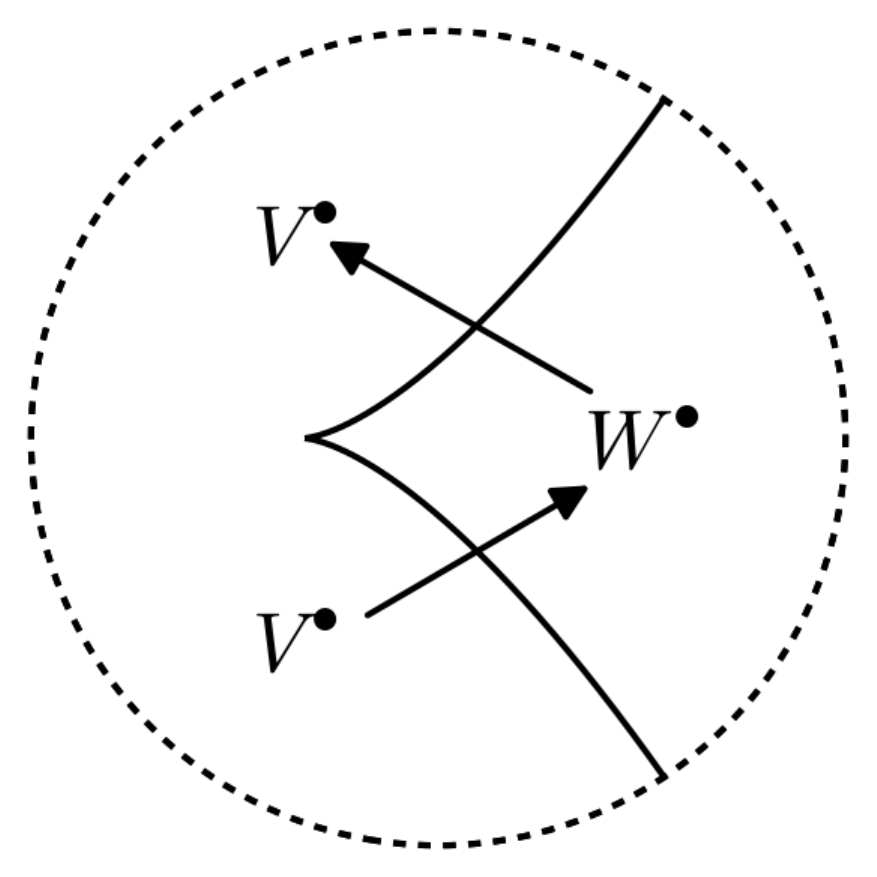}
\includegraphics[scale = .3]{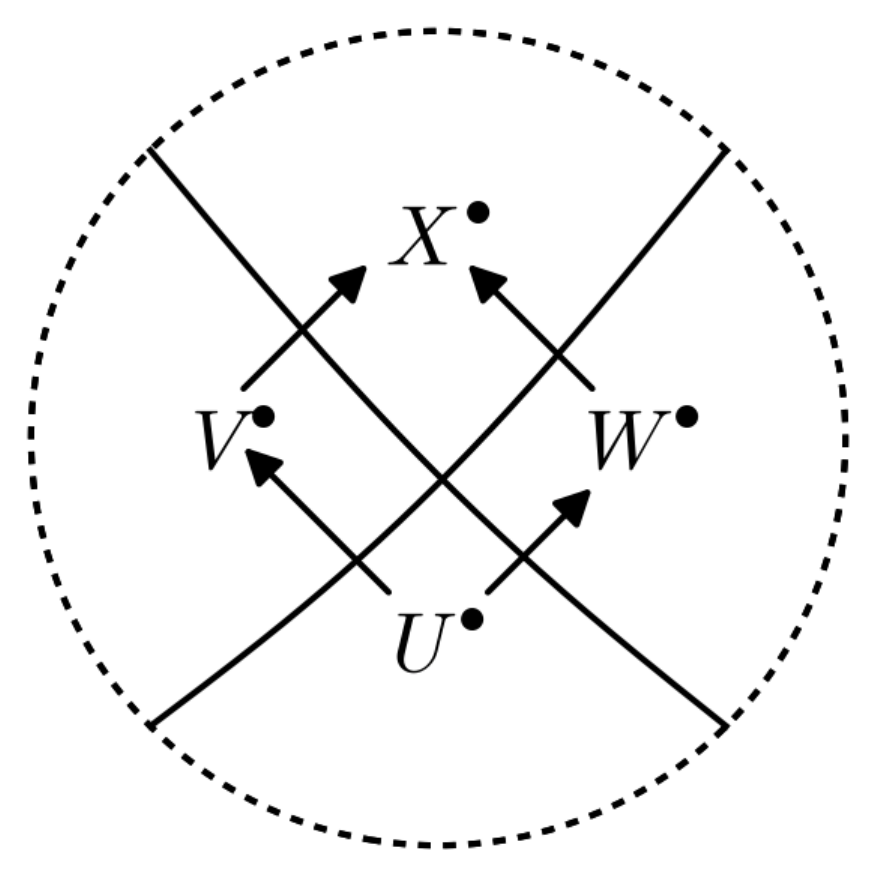}
\end{center}
\caption{Legible objects in various neighborhoods of a front diagram.}
\label{fig:legcats}
\end{figure}

To recover the sort of diagram described in the equivalence of Proposition \ref{prop:star} from
such a description of a legible object, one assigns to each stratum 
the chain complex placed in the region below, and takes the corresponding downward generization map 
to be the identity.  Then the upward generization maps are defined as the composition with this
downward equality and the map depicted in the legible diagram. 

Using the microsupport conditions, it is calculated in \cite[Section 3.4]{STZ} what additional
conditions must be satisfied by the maps indicated in Figure \ref{fig:legcats} in order 
that the corresponding sheaf have microsupport in the Legendrian lift of the depicted front.  
There is no condition along a line, as in the leftmost diagram.  At a cusp, 
the composition of the maps on the cusps is required to be the identity map of $V^\bullet$.  
At a crossing, the square around the crossing must commute and have acyclic total complex. 

For front diagrams of Legendrian tangles {\em with no cusps}, it is shown in \cite[Proposition 3.22]{STZ} that all sheaves with the corresponding microsupport are in fact quasi-isomorphic to sheaves
associated to legible objects.  The same is not true for arbitrary front diagrams. 

\subsubsection{Microlocal monodromy}
\label{sec:mumon}

Given an object $F\in Sh(\bR^2,\Lambda; \coeffs)\subset Sh(\bR^2,\Lambda_{\cS}; \coeffs)$,
there corresponds under $\Gamma_{\cS}$ of Proposition \ref{prop:star} a functor $\Gamma_{\cS}(F)$ from the poset
category of $\cS$ to chain complexes of $\coeffs$-modules.  Then to
a pair of an arc $a$ on a strand and a region $r$ above it (so $r = \text{star of } r$ is an
open subset of $\text{star of } a$), we have a morphism $a\to r$ and
there is an associated upward generization map $\rho = \Gamma_{\cS}(F)(a\to r)$
given by $\rho: \Gamma(\text{star of } a; F) \to \Gamma(r;F).$
If we take a legible representative for $\Gamma_{\cS}(F)$ then $\rho$ can also be
associated to a
map from the region $s$ below $a$ to the region $r$ above, as in Figure \ref{fig:legcats}.
The microlocal monodromy will be constructed from the map $\rho.$

Recall that a Maslov potential $\mu$ on the front diagram of a Legendrian knot
$\Lambda$ (with rotation number $0$) is a map from strands to $\bZ$ such that
the value assigned to the upper strand at a cusp is one more than the value
of the lower strand.
Now let $\Delta$ be the unique lift of $\cS\vert_{\pi_{xz}\Lambda},$ i.e. the induced
stratification of the knot $\Lambda$ itself.  Note there is one arc in $\Delta$ for each arc
of $\cS$, but two points for each crossing.
The microlocal monodromy of an object $F\in Sh(\bR^2,\Lambda)$,
denoted $\mu mon(F)$,
begins life as a functor from strata of $\Delta$ to chain complexes:
$\mu mon(F)(a) = {\sf Cone}(\rho)[-\mu(a)]$.  Note the Maslov potential
is used to determine the shift.
In \cite[Section 5.1]{STZ} it is shown how to treat the zero-dimensional
strata of $\Delta$ and that $\mu mon$ maps arrows of the $\Delta$ category
to quasi-isomorphisms --- see \cite[Proposition 5.5]{STZ}.
As a result, $\mu mon$ defines a functor from $Sh_c(\bR^2,\Lambda; \coeffs)$ to
local systems (of chain complexes) on $\Lambda:$
$$\mu mon:  Sh_c(\bR^2,\Lambda; \coeffs) \to Loc(\Lambda; \coeffs).$$

\begin{definition}
\label{def:rankone}
We define $\cC_1(\Lambda,\mu; \coeffs) \subset Sh_c(\bR^2,\Lambda)$ to be the full
subcategory consisting of objects $F$ such that $\mu mon(F)$ is a local system
of rank-one $\coeffs$-modules in cohomological degree zero.
\end{definition}

\begin{example}
Let $\equiv_n$ be the front diagram with $n$ infinite horizontal lines
labeled $1, 2, \ldots, n$ from top to bottom,
and let $\Lambda$ be the corresponding Legendrian.
Let $\mu$ be the Maslov potential $\mu(i) = 0$ for all $i$.
The associated stratification $\cS$ is a regular cell complex, and therefore
every object of $\cC_1(\Lambda,\mu; \coeffs)\subset Sh_c(\bR^2,\Lambda; \coeffs)$ has a legible representative.
To the bottom region we must assign $0$ due to the subscript ``$c$.''  If $V^\bullet$ is
assigned to the region above
the $n$-th strand, then the microlocal monodromy on the $n$th
strand is the cone of the unique map from $0$ to $V^\bullet$, i.e. $V^\bullet$ itself.
Microlocal rank one means then that $V^\bullet$ is a rank-one $\coeffs$-module in degree
zero.  Moving up from the bottom we get a complete flag in the rank-$n$ $\coeffs$-module
assigned to the top region.
For details and further considerations, see Section \ref{sec:locSh}.
\end{example}

In Theorem \ref{thm:main} we show that the category $\cC_1(\Lambda,\mu; \coeffs)$ is
equivalent to the category of augmentations to be defined in Section \ref{sec:augcat}.

\subsubsection{Sheaves of categories}
As mentioned above, it makes sense to consider sheaves valued in any category $\mathcal{C}$
in which the notion of limit makes sense; in particular, in a category of categories.  Here we want
to work with categories of dg or $A_\infty$ categories.  To do this we need some appropriate homotopical
framework for category theory, for instance the 
$(\infty,1)$-categories as developed in \cite{Lurie-HTT}.  It is also possible, and equivalent, to 
work in the older `model category' framework --- the model structures on the category of 
dg or $A_\infty$ categories present the corresponding $(\infty,1)$-category.  The relevant notion
of limit is what is called a homotopy limit in the model category setting, and just the limit in the 
setting of $(\infty, 1)$-categories.  

If $X$ is a 
%\sayVS{I forget exactly what assumptions are needed, locally compact hausdorff is certainly enough}
locally compact Hausdorff topological space, and $A$ is a 1-category such as sets or $\mathbb{Z}$-modules, 
it is a standard result %\sayVS{an exercise in Hartshorne} 
that the assignment $U \mapsto Sh(U)$ extends to a sheaf on $X$ valued in the $(2,1)$-category of categories.
(That is, restriction maps do not compose strictly, but only up to a homotopy.)  Such sheaves of categories
are sometimes called `stacks' in the old literature. 

In the $(\infty,1)$-categorical framework, there is a similar result for categories of sheaves which themselves
take values in a (presentable) $(\infty,1)$-category $C$, for instance in an appropriate 
$(\infty,1)$-category of $A_\infty$ categories.   The fact that a category of sheaves assembles itself into
a presheaf of categories is a tautology.
One must check that covers are carried to limits; we do not know of a reference for this result in the literature, so provide an argument here in a  %This result is so `obvious' that it may not actually appear in the literature; at least, we do
%not know a direct reference.
%\sayVS{I think this really belongs as a footnote, and definitely not stated as a theorem, because it is not our theorem (and not Nick's theorem, and probably also not Lurie's theorem), and also what is written below is a proof but definitely not the morally correct proof.}
footnote.\footnote{One way to extract this result from what is written (as explained to us by Nick Rozenblyum, errors in translation due to us):  
it suffices to check the `universal' example where $C$ is the category of spaces, since in general $Sh(X, C) \cong Sh(X, Spaces) \otimes C$ by \cite[Prop. 4.8.1.17]{Lurie-HA} and 
\cite[Prop. 1.1.12]{Lurie-DAGV}.     For an open inclusion $u: U \subset X$, the restriction 
$u^*: Sh(X, Spaces) \to Sh(U, Spaces)$ has a fully faithful 
left adjoint given by the extension by the empty set, denote it $u_!$.  Note that $u_!(Sh(U, Spaces))$ is
identified with the overcategory $Sh(X, Spaces)_{/ u_! pt}$; this is because the point is initial and
there are no maps to the empty set.  By adjunction, showing that the limit over the restriction maps of the 
$Sh(U, Spaces)$ is $Sh(X, Spaces)$ is equivalent to showing that the same for the 
colimit over the extension by zero maps.  A homotopy cover (i.e. including overlaps) $u_\alpha: U_\alpha \to X$ means literally that $X$ is the gluing of the $U_\alpha$ in the category of topological spaces; it
follows easily that $\mathrm{colim}\,  u_{\alpha!} pt = pt_X$.  The fact that the overcategories over these objects obey
the same colimit is \cite[Thm. 6.1.3.9(3); Prop 6.1.3.10(2)]{Lurie-HTT}.} 

This fact as stated, in terms of presentable categories, applies directly to the categories of all sheaves of 
unbounded complexes (localized along quasi-isomorphisms).  However the full subcategories with perfect
stalks, or constructible with respect to some prescribed stratification, or with some prescribed microsupport,
are all characterized locally, hence form subsheaves of full subcategories.  

In fact, we make little essential use of the above generalities, since for the most part we only work with
constructible sheaves of categories on $\mathbb{R}$. 
%\sayVS{My usual convention, which we may or
%may not be following in this paper: $\mathbf{R}$ is the line, $\mathbb{R}$ is the ring of real numbers.}   
One source of these is the following: 

\begin{lemma}
Let $\pi: \mathbb{R}^2 \to \mathbb{R}$ be the projection to the second factor. 
Let $\cS$ be a reasonable (e.g. Whitney B) 
stratification of $\mathbb{R}^2$, such that $\pi$ on each stratum has maximally
nondegenerate derivative.  Let $\cS'$ be a stratification of $\mathbb{R}$ such that 
the image under $\pi$ of any stratum in $\cS$ is $\cS'$-constructible.  
Then the assignment $U \mapsto Sh_{\cS}(\pi^{-1}(U))$ extends to a 
sheaf of dg categories
on $\mathbb{R}$, constructible with respect to $\cS'$. 
\end{lemma}

To compute with constructible sheaves of categories on $\mathbb{R}$, we note that 
Proposition \ref{prop:star} holds for sheaves of categories, so long as one understands
$Fun(\mathcal{S}, \mathcal{C})$ in the appropriate sense, i.e. as functors of quasicategories 
from (the nerve of) $\mathcal{S}$ to $\mathcal{C}$.  That is, 2-cells go to homotopies, etc.  

However, when $\mathcal{S}$ is a stratification of $\mathbb{R}$, the corresponding poset looks like 
$\bullet \leftarrow \bullet \rightarrow \bullet \leftarrow \cdots \leftarrow \bullet \rightarrow \bullet$, hence 
there are no nondegenerate 
2-cells in the nerve, so this complication can be essentially ignored for the purpose of 
describing objects of $Sh_{\mathcal{S}}(\mathbb{R})$.  

In describing morphisms between objects of $Fun(\mathcal{S}, \mathcal{C})$, it is important
to remember that such a morphism is a diagram which commutes up to specified homotopies 
(the possible homotopies being encoded by the 2-cells in $\mathcal{C}$).  When taking a 
stratification of $\mathbb{R}$, one has homotopy-commutative squares in the maps between diagrams, 
but never nontrivial compositions of such squares, so there is no need to consider higher homotopies. 
These considerations will arise later when describing maps of sheaves of $A_\infty$-categories. 

Finally, we will want to compute the global sections of a constructible sheaf of categories on 
$\mathbb{R}$, which is given as a functor out of a diagram 
$\cdots  \leftarrow \bullet \rightarrow \bullet \leftarrow \bullet \rightarrow \cdots$.  By definition
of sheaf (and recalling how this diagram is obtained from the cover by stars in Proposition \ref{prop:star}), 
the global sections is the limit of this diagram.  Evidently this can be computed by iterated pullbacks; 
we recall in the next subsection how to actually compute such pullbacks.

\subsubsection{Pullbacks of categories} 

To actually calculate limits in the $(\infty,1)$-category of dg or $A_\infty$ categories, the model
structure provided by Tabuada and To\"en \cite{Tabuada,Toen} is useful.  In fact
we will be only interested in calculating pullbacks, which are given by the following formula.  

Let $p:A\to C$ and $q:B\to C$ be two functors between dg categories $A, B, C.$
Objects of the fiber product dg category $A\times_C B$ are triples $x=(a,b,f)$, with $f \in {{hom}_C}^0(p(a),q(b))$ a closed isomorphism. 
Morphisms are
$$hom^k(x,x') = hom^k_A(a,a')\oplus hom^k_B(b,b')\oplus hom_C^{k-1}(p(a),q(b'))$$
with differential
$D = d + d'$ where $d = d_A \oplus d_B \oplus d_C$
and $d':hom_A^k(a,a')\oplus hom_B^k(b,b')\to hom_C^k(p(a),q(b'))$
defined by
$$d'(u\oplus w) =f' \circ p(u) - q(w)\circ f$$
The composition between $(u,w,v) \in hom(x,x')$ and $(u',w',v')\in hom(x',x'')$
is
$$u'\circ u \oplus w'\circ w \oplus  q(w')\circ v - v' \circ p(u)$$
which lies in $hom^k_A(a,a'')\oplus hom^k_B(b,b'')\oplus hom^{k-1}_C(p(a),q(a''))$ as required.
It is associative on the nose.

%\sayVS{Probably we need (and use) an $A_\infty$ version of this pullback formula.} 

In our application, we will prefer to require $f$ to be {\em the identity} rather than an isomorphism.  
We write $(A \times_C B)_{strict}$ for the full subcategory of the product whose objects can be obtained
in this manner.    In general, the inclusion $(A \times_C B)_{strict} \to A \times_C B$ is not essentially
surjective, and in fact we will see examples where this fails.  However we have the following: 

\begin{definition}
A morphism of categories $q: B \to C$ has the isomorphism lifting property if, whenever there is some 
isomorphism 
$\phi:q(b) \cong c$, then in fact there is some $b' \in B$ with $c = q(b')$ and some isomorphism
$\widetilde{\phi}: b \cong b'$ with $\phi = q(\widetilde{\phi})$. 

A morphism of dg or $A_\infty$ categories $q: B \to C$ has the weak isomorphism lifting property
if $ho(q): ho(B) \to ho(C)$ has the isomorphism lifting property.  It has the strict isomorphism lifting
property if for any closed degree zero map $\phi:q(b) \cong c$ which becomes an isomorphism
in $ho(C)$, there is some $b'$ with $q(b')=c$ and some closed degree zero map 
$\widetilde{\phi}: b \cong b'$ which
becomes an isomorphism in $ho(B)$, such that $q(\widetilde{\phi}) = \phi$. 
%\sayVS{I'm not sure if the weak
%isomorphism lifting property would be enough; but I think anyway we have the strict one.} 
\end{definition}

\begin{remark}
This is evidently some sort of fibrancy condition, but we do not know exactly how it relates to the 
model structures on dg categories. 
\end{remark}

\begin{lemma} \label{lem:strictify}
Given $p:A\to C$ and $q:B\to C$ morphisms of dg categories, suppose that $q$ has the strict 
isomorphism lifting property. Then the inclusion $A \times_C^{str} B \to A \times_C B$ 
is an equivalence. 
\end{lemma}
\begin{proof}
We need only check essential surjectivity.  Consider some object $(a, b, \phi:p(a) \cong q(b)) \in A \times_C B$. 
By the lifting property, there must be some $b' \in B$ with $q(b') = p(a)$, and a (closed degree zero quasi-) isomorphism $b \cong b'$ in $B$.  Consider the object
$(a, b', id: p(a) = q(b')) \in A \times_C^{str} B$. The map $b \cong b'$ induces an isomorphism 
$(a, b', id: p(a) = q(b'))  \cong (a, b, \phi:p(a) \cong q(b))$. 
\end{proof}

%The basic construction one wants out of a sheaf $F$ on a topological space $X$ is that it is determined by its local sections, so that
%$F(U\cup V) = F(U)\times_{F(U\cap V)} F(V),$ where the maps $F(U)\rightarrow F(U\cap V) \leftarrow F(V)$ are the restrictions.
%To build a \emph{category} of sheaves from local categories, one needs fiber products (which one gets from limits) and restriction functors.
%The model structure on dg categories provided by Tabuada and T\"oen XXX-REF-XXX ensures well-definedness of these definitions up to
%unique dg equivalence.  Concretely, one can take the following definition for the fiber product of a dg category.

We end this section by summarizing some properties of constructible sheaves of $A_\infty$ categories over a line.
%, which we have discussed over the last two sections. 

\begin{proposition} \label{prop:linesummary}
Let $\cC$ be a constructible sheaf of categories on a line, with respect to a stratification $\mathcal{Z}$ with zero dimensional strata $z_i$
and one-dimensional strata $u_{i, i+1} = (z_i, z_{i+1})$.  The associated diagram (as in Prop. \ref{prop:star}) has maps 
$$\cC(u_{i-1, i}) \xleftarrow{\rho_{L}} \cC_{z_{i}} \xrightarrow{\rho_{R}} \cC(u_{i, i+1}).$$

If $z_i <  z_{i+1} < \ldots < z_j$ are the zero dimensional strata in the interval $(a, b)$, then sections are 
calculated by 
$$\cC( (a, b) ) \equiv \cC_{z_{i}} \times_{\cC(u_{i, i+1})}  \cC_{z_{i+1}} \times \ldots \times \cC_{z_{j}}.$$

Objects of this fibre product are tuples $(\xi_i, \xi_{i+1}, \ldots, \xi_j; f_{i, i+1}, \ldots, f_{j-1, j})$ where
$\xi_k \in \cC_{z_{k}}$ and $f_{k, k+1}: \rho_R(\xi_k) \to \rho_L(\xi_{k+1})$ is an isomorphism in $\cC$.

This fibre product contains a full subcategory
$$(\cC_{z_{i}} \times_{\cC(u_{i, i+1})}  \cC_{z_{i+1}} \times \ldots \times \cC_{z_{j}})_{strict}$$
in which the $f_{k, k+1}$ must all be the identity morphism, i.e., $\rho_{R}(\xi_k) = \rho_L(\xi_{k+1})$.

If all $\rho_L$ have the ``isomorphism lifting property,'' i.e., that
any isomorphism $\phi: \rho_L(\xi) \sim \eta'$ is in fact the image under $\rho_L$ of some  isomorphism
$\psi: \xi \sim \xi'$.  Then the inclusion of the strict fibre product in the actual fibre product is an equivalence.
\end{proposition}
\begin{proof}
The only new thing is we have allowed many strata in the strictification; the result follows from Lemma \ref{lem:strictify} by induction. 
\end{proof}

\newpage
%!TEX root = augmain.tex

\section{Augmentation category algebra}
\label{sec:augcatalg}

In this section, we describe how to obtain a unital $A_\infty$ category from what we call a ``consistent sequence of differential graded algebras.'' Our motivation is the fact that if we start with a Legendrian knot or link $\Lambda$ in $\bR^3$ and define its $m$-copy $\Lambda^m$ to be the link given by $m$ copies of $\Lambda$ perturbed in the Reeb direction, then the collection of Chekanov--Eliashberg \dgas{} for $\Lambda^m$ ($m\geq 1$) form such a consistent sequence, as we will see in Section~\ref{sec:augcat}. First, however, we present a purely algebraic treatment, defining a consistent sequence of \dgas{} $\alg^{(\bullet)}$ and using it to construct the augmentation category $\Aug(\alg^{(\bullet)},\coeffs)$ along with a variant, the negative augmentation category $\AugBC(\alg^{(\bullet)},\coeffs)$. We then show that $\Aug(\alg^{(\bullet)},\coeffs)$ is unital, though $\AugBC(\alg^{(\bullet)},\coeffs)$ may not be (see Section~\ref{sec:augcat} or \cite{BC}).

\subsection{Differential graded algebras and  augmentations}
\label{ssec:dga-augs}

For the following definition, by a \dga{}, we mean an associative
$\bZ$-algebra $\alg$ equipped with a $\bZ/m$ grading for some even
$m \geq 0$, 
and a degree $-1$ differential $\partial$ that is a derivation.
The condition that $m$ is even is necessary for the Leibniz rule
$\partial(xy) = (\partial x)y+(-1)^{|x|}x(\partial y)$ to make sense,
though many of our results continue to hold if $m$ is arbitrary and
$\alg$ is instead an $R$-algebra where $R$ is a commutative unital
ring with $-1=1$ (e.g., $R=\bZ/2$).

\begin{definition}
A \textit{semi-free \dga{}} is a \dga{} equipped with a set $\sS = \rR \sqcup \tT$ of homogeneous generators
\begin{eqnarray*}
\rR & = & \{a_1, \ldots, a_r\} \\
\tT & = & \{t_1, t_1^{-1}, \ldots, t_{M}, t_{M}^{-1}\}
\end{eqnarray*}
such that $\alg$ is the result of taking the free noncommutative unital algebra over $\bZ$ generated by the elements of
$\sS$ and quotienting by the relations $t_i \cdot t^{-1}_i = t^{-1}_i \cdot t_i = 1$.
We require in addition that $|t_i| = 0$ and $\partial t_i = 0$.
\end{definition}

\noindent
We note that our use of ``semi-free'' is nonstandard algebraically but
roughly follows \cite{C}.

\begin{definition}
Let $\coeffs$ be a field;
%commutative ring;
we view it as a \dga{} by giving it the zero grading and differential.
A \textit{$\coeffs$-augmentation} of a semi-free \dga{} $\alg$ is a
\dga{} map $\epsilon :\thinspace \alg \to \coeffs$.  That is, it is a map of the underlying
unital algebras,
annihilating all elements of nonzero degree, and satisfying $\epsilon \circ \dd = 0$.
\end{definition}

\begin{remark}
An augmentation $\epsilon$ is uniquely determined by $\epsilon(a_i)
\in \coeffs$ for each $a_i\in\rR$, along with invertible elements $\epsilon(t_i) \in \coeffs$.
\end{remark}

Given an augmentation $\epsilon : \alg \to \coeffs$, we 
define the $\coeffs$-algebra
$$\alg^\epsilon := (\alg \otimes \coeffs) / (t_i = \epsilon(t_i) ).$$
Since $\partial t_i = 0$,
the differential $\partial$ descends to $\alg^\epsilon$.

We write $C$ for the free $\coeffs$-module with basis $\rR$.
We have
\[
\alg^{\epsilon} = \bigoplus_{k \ge 0} C^{\otimes k};
\]
and we further define $\alg^\epsilon_+ \subset \alg^\epsilon$ by
\[
\alg^\epsilon_+ := \bigoplus_{k \ge 1} C^{\otimes k}.
\]

\noindent
Note that $\partial$ need not preserve $\alg^\epsilon_+$.
A key observation, used extensively in Legendrian knot theory starting with Chekanov \cite{C}, is that this
can be repaired.

Consider the $\coeffs$-algebra automorphism $\phi_\epsilon :\thinspace \alg^\epsilon \to \alg^\epsilon$, determined by
$\phi_\epsilon(a) = a+\epsilon(a)$ for $a\in\rR$.
Conjugating by this automorphism gives rise to a new differential
\[
\dd_\epsilon := \phi_\epsilon \circ \dd \circ \phi_\epsilon^{-1} :
\alg^\epsilon \to \alg^\epsilon.
\]

\begin{proposition}
The differential $\dd_\epsilon$ preserves $\alg^\epsilon_+$, and in particular,
descends to a differential on $\alg^\epsilon_+/(\alg^\epsilon_+)^2 \cong C$.
\end{proposition}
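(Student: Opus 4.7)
The plan is to translate the claim about $\partial_\epsilon$ preserving $\alg^\epsilon_+$ into the chain-map property of the original augmentation $\epsilon$. Observe that $\alg^\epsilon_+$ is precisely the kernel of the \emph{trivial} augmentation $\bar\epsilon : \alg^\epsilon \to \coeffs$ determined by $\bar\epsilon(a) = 0$ for each $a \in \rR$ (and $\bar\epsilon(t_i) = \epsilon(t_i)$, as forced by the relation $t_i = \epsilon(t_i)$ in $\alg^\epsilon$). So the first half of the statement is equivalent to showing $\bar\epsilon \circ \dd_\epsilon = 0$.

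The key identity is $\bar\epsilon \circ \phi_\epsilon = \epsilon$. Indeed, both sides are $\coeffs$-algebra maps $\alg^\epsilon \to \coeffs$, and on a generator $a \in \rR$ we compute
\[
\bar\epsilon \circ \phi_\epsilon(a) = \bar\epsilon(a + \epsilon(a)) = \epsilon(a),
\]
while on $t_i$ both sides give $\epsilon(t_i)$. Consequently,
\[
\bar\epsilon \circ \dd_\epsilon = \bar\epsilon \circ \phi_\epsilon \circ \dd \circ \phi_\epsilon^{-1} = \epsilon \circ \dd \circ \phi_\epsilon^{-1} = 0,
\]
where the last equality uses that $\epsilon$ is a \dga{} map to a \dga{} with zero differential. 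Thus $\dd_\epsilon$ lands in $\ker\bar\epsilon = \alg^\epsilon_+$, and in particular preserves $\alg^\epsilon_+$.

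For the descent to $\alg^\epsilon_+/(\alg^\epsilon_+)^2$, note that $\dd_\epsilon$ is a derivation, since it is the conjugate of the derivation $\dd$ by the algebra automorphism $\phi_\epsilon$. Given $x,y \in \alg^\epsilon_+$, the Leibniz rule gives $\dd_\epsilon(xy) = \dd_\epsilon(x)\,y \pm x\,\dd_\epsilon(y)$, and both terms lie in $(\alg^\epsilon_+)^2$ by the first part. Hence $\dd_\epsilon$ preserves $(\alg^\epsilon_+)^2$ and induces a well-defined differential on the quotient $\alg^\epsilon_+/(\alg^\epsilon_+)^2 \cong C$.

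There is no serious obstacle here; the content of the proposition is really the single observation $\bar\epsilon \circ \phi_\epsilon = \epsilon$, which is precisely the reason Chekanov's trick of conjugating by $\phi_\epsilon$ repairs the failure of $\dd$ to preserve $\alg^\epsilon_+$.
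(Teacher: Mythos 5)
Your proof is correct and is essentially the paper's own argument: the paper writes $\alg^\epsilon = \coeffs \oplus \alg^\epsilon_+$ with projection $\pi$ (your $\bar\epsilon$) and computes $\pi\dd_\epsilon(a_i) = \pi\phi_\epsilon\dd(a_i) = \epsilon\dd(a_i) = 0$, which is exactly your key identity $\bar\epsilon\circ\phi_\epsilon = \epsilon$ combined with $\epsilon\circ\dd = 0$. Your explicit Leibniz-rule check for the descent to $\alg^\epsilon_+/(\alg^\epsilon_+)^2$ is a detail the paper leaves implicit, but the route is the same.
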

\begin{proof}
Write $\alg^\epsilon = \coeffs \oplus \alg^\epsilon_+$ and denote the projection map $\alg^\epsilon \to \coeffs$ by $\pi$;
then $\pi \dd_\epsilon(a_i) = \pi \phi_\epsilon \dd(a_i) = \epsilon\dd(a_i) = 0$, and it follows that $\pi\dd_\epsilon$ sends $\alg^\epsilon_+$ to $0$.
\end{proof}

Let $C^* := \Hom_\coeffs(C, \coeffs)$. 
The generating set $\rR = \{a_i\}$ for $C$ gives a dual generating
set $\{a_i^*\}$ for $C^*$ with $\langle a_i^*,a_j\rangle = \delta_{ij}$, and we grade $C^*$ by $|a_i^*| = |a_i|$.

Recall that for a $\coeffs$-module $V$,
we write $T(V):= \bigoplus_{n \ge 0} V^{\otimes n}$ for the tensor algebra, and $\overline{T}(V) := \bigoplus_{n \ge 1} V^{\otimes n}$.
The pairing
extends to a pairing between $T(C^*)$ and $T(C)$ determined by
\[
(a_{i_1}a_{i_2}\cdots a_{i_k})^* = (-1)^{\sum_{p<q}|a_{i_p}||a_{i_q}|} a_{i_k}^* \cdots a_{i_2}^* a_{i_1}^*:
\]
that is, $\langle a_{i_k}^*
\cdots a_{i_2}^* a_{i_1}^*,a_{i_1}a_{i_2}\cdots a_{i_k}\rangle = (-1)^{\sum_{p<q}|a_{i_p}||a_{i_q}|}$
and all other pairings are $0$.
(The sign comes from the fact that we are reversing the order of the
$a_i$'s, and is necessary for the dual of a derivation to be a
coderivation, which in turn we need for the correspondence between $A_\infty$ algebras and duals of \dgas{}.)
On the positive part $\overline{T}(C^*)$ of the tensor algebra $T(C^*)$, we define $\dd_\epsilon^*$ to be the co-differential dual to $\dd_\epsilon$:
\[
\langle \dd_\epsilon^* x, y \rangle = \langle x, \dd_\epsilon y\rangle.
\]

Shift gradings by defining
$C^\vee:= C^*[-1]$; then $\overline{T}(C^*) = \overline{T}(C^\vee[1])$.  By Proposition~\ref{prop:bar}, the co-differential
$\dd_\epsilon^*$ now determines an $A_\infty$ structure on $C^\vee$.  We write the corresponding
multiplications as
\[
m_k(\epsilon) :\thinspace (C^\vee)^{\otimes k} \to C^\vee.
\]

Concretely, $m_k(\epsilon)$ is given as follows. For $a\in\rR$, $a$ is a generator of $C$ with dual $a^*\in C^*$. Write the corresponding element of $C^\vee$ as $a^\vee = s^{-1}(a^*)$,
where $s:\thinspace C^\vee \to C^\vee[1]= C^*$ is the degree $-1$ suspension map, and note that
\[
|a^\vee| = |a^*| + 1 = |a| + 1.
\]
Now we have
\begin{align*}
m_k(\epsilon)(a_{i_1}^\vee, \cdots, a_{i_k}^\vee) &= (-1)^{|a_{i_{k-1}}^\vee| + |a_{i_{k-3}}^\vee| + \cdots} s^{-1} \dd_\epsilon^* (a_{i_1}^* \cdots a_{i_k}^*) \\
&= (-1)^{\sum_{p<q} |a_{i_p}||a_{i_q}| + |a_{i_{k-1}}^\vee| + |a_{i_{k-3}}^\vee| + \cdots} s^{-1}
\dd_\epsilon^* (a_{i_k} \cdots a_{i_1})^*,
\end{align*}
and also
$$ \langle \dd_\epsilon^* (a_{i_k} \cdots a_{i_1})^*, a \rangle = \langle (a_{i_k} \cdots a_{i_1})^*,
\dd_\epsilon a \rangle =
\mathrm{Coeff}_{a_{i_k} \cdots a_{i_1}}(\dd_\epsilon a).$$
Combining these, and using the fact that
\[
\sum_{1\leq p<q\leq k} |a_{i_p}||a_{i_q}| + |a_{i_{k-1}}^\vee| +
|a_{i_{k-3}}^\vee| + \cdots
\equiv
\sum_{1\leq p<q\leq k} |a_{i_p}^\vee| |a_{i_q}^\vee|  + \sum_j
(j-1)|a_{i_j}^\vee| + k(k-1)/2 \pmod{2},
\]
we obtain the following formula for $m_k$ in terms of the differential
$\dd_\epsilon$:
\begin{equation} \label{eq:ms}
m_k(\epsilon)(a_{i_1}^\vee, \cdots, a_{i_k}^\vee) =  (-1)^\sigma
\sum_{a \in \rR} a^\vee
\cdot \mathrm{Coeff}_{a_{i_k} \cdots a_{i_1}}(\dd_\epsilon a),\end{equation}
where
\[
\sigma =k(k-1)/2 + \left(\sum_{p<q}
  |a_{i_p}^\vee||a_{i_q}^\vee|\right) + |a_{i_2}^\vee| +
|a_{i_4}^\vee| + \cdots.
\]
For future reference, we note in particular that
\[
\sigma = \begin{cases} 0 & k=1 \\
|a_{i_1}^\vee||a_{i_2}^\vee| + |a_{i_2}^\vee|+1 & k=2.
\end{cases}
\]

We write $C^\vee_\epsilon := (C^\vee, m_1(\epsilon), m_2(\epsilon), \ldots)$ to mean $C^\vee$ viewed as an $A_\infty$ algebra, rather than just as a $\coeffs$-module.
In this context, and when there is no risk of confusion, we simply write $m_k$ for $m_k(\epsilon)$.

\subsection{Link grading}
\label{ssec:alglinkgrading}

Here we give several viewpoints on link grading, which is an additional structure on the \dga{} of a Legendrian link in the case where the link has multiple components; the notion and name are due to Mishachev \cite{Mishachev}. We then discuss how it interacts with the $A_\infty$ structure from Section~\ref{ssec:dga-augs}.

\begin{definition}
Let $(\alg,\dd)$ be a semi-free \dga{} with generating set $\sS = \rR \sqcup \tT$. An \textit{$m$-component weak link grading} on $(\alg,\dd)$ is a choice of a pair of maps
\[
r,c : \sS \to \{1,\ldots,m\}
\]
satisfying the following conditions: \label{def:linkgrading}
\begin{enumerate}
\item
for any $a \in \rR$ with $r(a) \neq c(a)$, each term in $\dd a$ is an integer multiple of a word of the form $x_1 \cdots x_k$ where $c(x_i)=r(x_{i+1})$ for $i=1,\ldots,k-1$ and $r(x_1) = r(a)$, $c(x_k) = c(a)$ (such a word is called ``composable'');
\item
for any $a \in \rR$ with $r(a) = c(a)$, each term in $\dd a$ is either composable or constant (an integer multiple of $1$);
\item for any $i$, we have $r(t_i) = c(t_i^{-1})$ and $c(t_i) = r(t_i^{-1})$.
\end{enumerate}
The maps $r,c$ form an \emph{$m$-component link grading} if they also satisfy
\begin{enumerate}
\setcounter{enumi}{3}
\item $r(t_i) = c(t_i) = r(t_i^{-1}) = c(t_i^{-1})$ for all $i$.
\end{enumerate}
We write $\sS^{ij} := (r \times c)^{-1}(i, j)$, and likewise $\rR^{ij}$ and $\tT^{ij}$.
We call elements of $\sS^{ii}$ {\em diagonal} and elements of $\sS^{ij}$ for $i \ne j$ {\em off-diagonal}.
Note that all elements of $\tT$ are required to be diagonal in a link grading.
\end{definition}

The motivation for this definition is that if $\Lambda = \Lambda_1 \sqcup \cdots \sqcup \Lambda_m$ is an $m$-component Legendrian link, then the \dga{} for $\Lambda$ has an $m$-component link grading: for each Reeb chord $a$, define $r(a)$ (respectively $c(a)$) to be the number of the component containing the endpoint (respectively beginning point) of $a$, and define $r(t_i)  = c(t_i) = r(t_i^{-1}) = c(t_i^{-1})$ to be the number of the component containing the corresponding base point. More generally, if $\Lambda$ is partitioned into a disjoint union of $m$ sublinks (where each may consist of more than one link component), then the \dga{} for $\Lambda$ similarly has a natural $m$-component link grading.

Given a \dga{} with an $m$-component weak link grading, a related \dga{} to consider is the ``composable \dga{}'' $(\alg',\dd')$, cf.\ \cite[\S 4.1]{BEE}. Here $\alg'$ is generated over $\bZ$ by
\[
\sS' = \rR \sqcup \tT \sqcup \{e_1,\ldots,e_m\}
\]
with $r,c$ extended to $\sS'$ by defining $r(e_i) = c(e_i) = i$, quotiented by the relations
\begin{itemize}
\item
$xy=0$ if $x,y \in \sS'$ with $c(x) \neq r(y)$
\item
$t_i \cdot t_i^{-1} = e_{r(t_i)}$ and $t_i^{-1} \cdot t_i = e_{c(t_i)}$
\item
for $x \in \sS'$, $x e_i = x$ if $c(x) = i$, and $e_i x = x$ if $r(x) = i$
\item $1 = \sum_{i=1}^m e_m$.
\end{itemize}
The differential $\dd'$ is defined identically to $\dd$, extended by $\dd'(e_i) = 0$, except that for each Reeb chord $a$ with $r(a) = c(a)$, each constant term $n\in\bZ$ in $\dd a$ is replaced by $n e_{r(a)}$: that is, the idempotent $e_i$ corresponds to the empty word on component $i$. We can now write
\[
\alg' = \bigoplus_{i,j=1}^m (\alg')^{ij}
\]
where $(\alg')^{ij}$ is generated by words $x_1\cdots x_k$ with $r(x_1) = i$ and $c(x_k) = j$, and $\dd'$ splits under this decomposition.

It will be useful for us to have a reformulation of the composability properties of $(\alg',\dd')$ in terms of matrices. To this end, consider the algebra morphism
\begin{align*}
\ell: \alg' & \to \alg \otimes \mathrm{End}(\bZ^m) && \\
x & \mapsto x \otimes |r(x) \rangle \langle c(x)| &&x \in \sS \\
e_i & \mapsto 1 \otimes |i \rangle \langle i| &&i=1,\ldots,m,
\end{align*}
where $|r \rangle \langle c|$ is the $m\times m$ matrix whose $(r,c)$ entry is $1$ and all other entries are $0$. Note that $\alg \otimes \mathrm{End}(\bZ^m)$, i.e., the $m\times m$ matrices with coefficients in $\alg$, is naturally a \dga{}: it is a tensor product of \dgas{}, where $\mathrm{End}(\bZ^m)$ carries the $0$ differential. (That is, the differential $\partial$ on
$\alg \otimes \mathrm{End}(\bZ^m)$ acts entry by entry.) The weak link grading property now just states that $\ell$ is a \dga{} map from $(\alg',\dd')$ to $(\alg \otimes \mathrm{End}(\bZ^m),\dd)$.

For a variant on this perspective, and the one that we will largely use going forward, suppose that $r,c$ is a weak link grading and that $\epsilon :\thinspace \alg \to \coeffs$ is an augmentation. We say that $\epsilon$ \textit{respects the link grading on $\alg$} if $\epsilon(a) = 0$ for all $a\in \rR$ with $r(a) \neq c(a)$ (``mixed Reeb chords''); note that $\epsilon(t_i) = \epsilon(t_i^{-1})^{-1} \neq 0$ for all $i$, so $r(t_i)=c(t_i)$ and thus $r,c$ must be an actual link grading.  In this case, the twisted differential $\dd_\epsilon = \phi_\epsilon \circ \dd \circ \phi_\epsilon^{-1}$ preserves the link grading, and we can drop the discussion of idempotents $e_i$ since $\dd_\epsilon$ contains no constant terms. More precisely, recall that $\alg^\epsilon$ is the $\coeffs$-algebra
$(\alg \otimes \coeffs) / (t_i = \epsilon(t_i) )$, and define the $\coeffs$-algebra map
\begin{align*}
\ell : \alg^\epsilon& \to \alg^\epsilon \otimes \mathrm{End}(\bZ^m) && \\
a & \mapsto a \otimes |r(a) \rangle \langle c(a)| &&a\in \rR.
\end{align*}
Then the structure of the $m$-component link grading implies that $\ell$ is a \dga{} map from $(\alg^\epsilon,\dd_\epsilon)$ to $(\alg^\epsilon \otimes \mathrm{End}(\bZ^m),\dd_\epsilon)$.\footnote{For yet another perspective, one can combine the twisted differential with the composable algebra. Consider the path algebra $\alg''$ over $\coeffs$ on the quiver whose vertices are $1,\ldots,m$ and whose edges are the Reeb chords $a$, where edge $a$ goes from vertex $i$ to vertex $j$ if $r(a) = i$ and $c(a)=j$. Then $\dd_\epsilon$ descends to a differential on $\alg''$ that respects the splitting $\alg'' = \oplus_{i,j} (\alg'')^{ij}$, where $(\alg'')^{ij}$ is generated as a $\coeffs$-module by paths beginning at $i$ and ending at $j$. In this context, the idempotent $e_i$ corresponds to the empty path at $i$.}

For the remainder of this subsection, we suppose that $(\alg, \partial)$ is a semi-free \dga{} equipped with a link grading.

\begin{proposition}
The two-sided ideal generated by the off-diagonal generators is preserved by $\dd$.  More generally,
if $\pi: \{1, \ldots, m \} = P_1 \sqcup \cdots \sqcup P_k$ is any partition, let $J_\pi$ be the two-sided ideal
generated by all elements $a$ with $r(a), c(a)$ in different parts.  Then $J_\pi$ is preserved by $\dd$.

\end{proposition}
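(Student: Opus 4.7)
The plan is to reduce to checking generators and then to invoke the composability axiom of the link grading. Since $\partial$ is a derivation on $\alg$ and $J_\pi$ is a two-sided ideal, it suffices to verify that $\partial a \in J_\pi$ for every generator $a \in \sS$ with $r(a)$ and $c(a)$ in different parts of $\pi$. Indeed, every element of $J_\pi$ is a finite sum of expressions $y a z$ for such $a$ and $y, z \in \alg$; the Leibniz rule decomposes $\partial(y a z)$ into three terms, two of which still contain $a$ as a factor and therefore lie in $J_\pi$ automatically, while the remaining term $\pm y(\partial a) z$ lies in $J_\pi$ as soon as $\partial a$ does. Note that the generators $t_j^{\pm 1}$ satisfy $r = c$ by axiom (4) of a link grading, so they are diagonal with respect to any partition and contribute nothing to a generating set of $J_\pi$; only off-diagonal Reeb chord generators need to be considered, and for these we additionally have $\partial t_j^{\pm 1} = 0$ in any case.

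So let $a \in \rR$ be a generator of $J_\pi$, with $r(a) \in P_s$ and $c(a) \in P_t$ and $s \neq t$. In particular $r(a) \neq c(a)$, so axiom (1) of the link grading asserts that $\partial a$ is a $\bZ$-linear combination of composable words $w = x_1 x_2 \cdots x_k$ satisfying $r(x_1) = r(a)$, $c(x_k) = c(a)$, and $c(x_i) = r(x_{i+1})$ for $i = 1, \ldots, k-1$ (with no constant terms). It suffices to show that any such $w$ lies in $J_\pi$. Consider the sequence of labels
\[
r(x_1),\ c(x_1) = r(x_2),\ c(x_2) = r(x_3),\ \ldots,\ c(x_k)
\]
in $\{1, \ldots, m\}$: it begins in $P_s$ and ends in $P_t$ with $s \neq t$. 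Because the transitions $c(x_i) = r(x_{i+1})$ are equalities, they preserve the part of $\pi$ containing the label, so the only way for the sequence to cross from $P_s$ to $P_t$ is for some intermediate $x_i$ to itself straddle the partition, i.e., to have $r(x_i)$ and $c(x_i)$ in different parts of $\pi$. That $x_i$ is then a generator of $J_\pi$, so $w = x_1 \cdots x_{i-1} \cdot x_i \cdot x_{i+1} \cdots x_k$ lies in $J_\pi$, and thus $\partial a \in J_\pi$.

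The first sentence of the proposition is the special case of the partition into singletons $\{1\}, \ldots, \{m\}$, and so is subsumed by the general statement. There is no serious obstacle in this argument; the only substantive observation is that within a composable word, the ``part'' carried by the running label $c(x_i) = r(x_{i+1})$ can only change at a factor $x_i$ that is itself off-diagonal for $\pi$, which is immediate from the defining equalities.
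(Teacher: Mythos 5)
Your proof is correct and takes essentially the same route as the paper's: reduce to generators via the Leibniz rule, then observe that in a composable word from $r(a)$ to $c(a)$ the running label can only change parts at a factor that is itself off-diagonal for $\pi$ (the paper phrases this contrapositively — if all factors were diagonal then $r(a)=c(a)$ — and leaves the general partition case as ``similar,'' which you spell out). No gaps.
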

\begin{proof} Let $g$ be an off-diagonal generator, and
$y_1 \cdots y_k$ be a word in $\dd g$.  Then $r(g) = r(y_1)$, $c(y_i) = r(y_{i+1})$, and $c(y_k) = c(g)$.
So if moreover $r(y_i) = c(y_i)$ for all $i$, we would have $r(g) = c(g)$, a contradiction.

The argument in
the more general case is similar.
\end{proof}

Note that $\alg / J_\pi$
remains a semi-free algebra with generators $\tT$ and some subset of $\rR$;
it moreover inherits the link grading.

\begin{definition}
For a partition $\pi$ of $\{1,\ldots,m\}$, we write $\alg_\pi := \alg/J_\pi$. In the special case where $\pi = I \sqcup I^c$ for some $I \subset \{1,\ldots,n\}$, we write $\alg_I$ for the subalgebra of $\alg_{I \sqcup I^c}$ generated
by the elements of $\coprod_{i,j \in I} \sS^{ij}$ (that is, further quotient by elements $a$ with $r(a),c(a) \in I^c$).  Finally, we will write $\alg_i := \alg_{\{i\}}$.
\end{definition}

\begin{proposition} For any $I \subset \{1, \ldots, m\}$, the algebra $\alg_I$ is preserved by the differential inherited
by $\alg_{I \sqcup I^c}$.
\end{proposition}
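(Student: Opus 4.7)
The plan is to check the statement generator by generator and then invoke the Leibniz rule. Since $\alg_I$ is, as an algebra, generated by the elements of $\coprod_{i,j \in I} \sS^{ij}$ together with the unit, and since $\partial t_k = 0$ for every $k$, it suffices to verify that for each Reeb chord generator $a \in \sS^{ij}$ with $i,j \in I$, the image of $\partial a$ in $\alg/J_{I,I^c}$ lies in $\alg_I$.

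First I would unpack what the link grading tells us. Fix such a generator $a$ with $r(a) = i, c(a) = j \in I$. By Definition~\ref{def:linkgrading}, every term appearing in $\partial a$ is either an integer constant (possible only when $i=j$, and constants already lie in $\alg_I$) or a composable word $y_1 \cdots y_k$ with each $y_p \in \sS$ satisfying $r(y_1)=i$, $c(y_k)=j$, and $c(y_p) = r(y_{p+1})$ for $1\le p \le k-1$.

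Next I would run the key inductive step. Passing to $\alg/J_{I,I^c}$ kills every generator whose row and column labels lie in different parts of the partition $\{I,I^c\}$, so any surviving monomial $y_1 \cdots y_k$ has each letter $y_p$ with $r(y_p)$ and $c(y_p)$ in the same part. Since $r(y_1)=i\in I$, the letter $y_1$ has both labels in $I$; since $r(y_2)=c(y_1)\in I$, the letter $y_2$ likewise has both labels in $I$; by induction every $y_p$ belongs to some $\sS^{i'j'}$ with $i',j'\in I$. Hence the surviving monomial lies in $\alg_I$, and summing over terms shows $\partial a \in \alg_I$.

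Finally, the Leibniz rule extends this to an arbitrary element of $\alg_I$: $\partial$ sends each algebra generator of $\alg_I$ into $\alg_I$, so it preserves the whole subalgebra. The only potential obstacle is checking that the quotient by $J_{I,I^c}$ does not destroy composability in a way that makes terms leak out of $\alg_I$, but the composability condition built into the link grading is exactly what makes the inductive propagation of the ``label in $I$'' property go through cleanly, so this is automatic.
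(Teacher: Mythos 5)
Your proof is correct, and it uses exactly the composability-chain argument that the paper deploys for the immediately preceding proposition (on $J_\pi$ being preserved by $\dd$); the paper in fact leaves this particular statement unproved, evidently regarding your inductive propagation of the ``both labels in $I$'' property along a surviving composable word as the evident extension of that argument. Nothing is missing.
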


\begin{proposition} \label{prop:restrictedfactors}
For any partition $\pi: \{1, \ldots, m \} = P_1 \sqcup \cdots \sqcup P_k$, we have
$\alg_\pi = \alg_{P_1} \star \cdots \star \alg_{P_k}$.
\end{proposition}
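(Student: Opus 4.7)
The plan is to exploit the semi-freeness of $\alg$ to identify both sides generator by generator. First I would describe $\alg_\pi = \alg/J_\pi$ concretely: since $\alg$ is the free unital $\bZ$-algebra on $\sS$ modulo $t_i t_i^{-1} = t_i^{-1} t_i = 1$, and $J_\pi$ is the two-sided ideal generated by the set
\[
\sS_\pi^{\mathrm{mix}} := \bigcup_{i \neq j} \bigcup_{\substack{a \in P_i \\ b \in P_j}} \sS^{ab},
\]
the quotient $\alg_\pi$ is itself the free unital algebra on the complementary generating set $\sS_\pi^{\mathrm{diag}} := \bigsqcup_{i=1}^{k} \bigsqcup_{a,b \in P_i} \sS^{ab}$, again modulo the relations $t_i t_i^{-1} = t_i^{-1} t_i = 1$. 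Similarly, since $\alg_{P_i}$ is by definition the subalgebra of $\alg/J_{P_i, P_i^c}$ generated by $\bigsqcup_{a,b \in P_i} \sS^{ab}$, and the ambient quotient is free on the generators with endpoints in $P_i$ together with those with endpoints in $P_i^c$, the subalgebra $\alg_{P_i}$ is itself free (modulo the $t$-relations within $P_i$) on $\bigsqcup_{a,b \in P_i} \sS^{ab}$.

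Next I would construct the comparison map. Because $J_{P_i, P_i^c} \subseteq J_\pi$ (any generator mixing $P_i$ with $P_i^c$ necessarily mixes two distinct blocks of $\pi$), there is a canonical projection $\alg/J_{P_i, P_i^c} \twoheadrightarrow \alg_\pi$. Restricting along the inclusion $\alg_{P_i} \hookrightarrow \alg/J_{P_i, P_i^c}$ gives an algebra map $\iota_i \colon \alg_{P_i} \to \alg_\pi$. By the universal property of the free product, the collection $(\iota_1, \ldots, \iota_k)$ assembles into a single algebra map
\[
\Phi \colon \alg_{P_1} \star \cdots \star \alg_{P_k} \longrightarrow \alg_\pi.
\]
Surjectivity of $\Phi$ is immediate, since every generator of $\alg_\pi$ lies in $\sS^{ab}$ for some $a, b$ in a common block $P_i$, hence in the image of $\iota_i$.

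For injectivity, I would observe that both the source and target of $\Phi$ are presented as free algebras (modulo the mutually commuting pairs $t_j t_j^{-1} = 1$) on the \emph{same} generating set $\sS_\pi^{\mathrm{diag}}$: in the free product $\alg_{P_1} \star \cdots \star \alg_{P_k}$ the generators of distinct factors are algebraically independent, which matches exactly the presentation of $\alg_\pi$ obtained in the first paragraph. Thus $\Phi$ is an isomorphism of graded algebras. Finally, compatibility with differentials is automatic: $\partial$ on $\alg_{P_i}$ is inherited from $\alg/J_{P_i,P_i^c}$, which in turn is inherited from $\alg$, and the coproduct differential on $\alg_{P_1} \star \cdots \star \alg_{P_k}$ is the unique derivation extending the differentials on the factors, which agrees with the differential on $\alg_\pi$ on each generator by construction.

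I expect no serious obstacle: the only point requiring a little care is the bookkeeping around the $t_j^{\pm 1}$ generators, which by axiom (4) of a link grading satisfy $r(t_j) = c(t_j)$ and so sit entirely inside a single $\alg_{P_i}$, ensuring that the relations $t_j t_j^{-1} = 1$ present in $\alg_\pi$ are already present in the appropriate factor of the free product and introduce no spurious identifications across factors.
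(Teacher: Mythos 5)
Your proof is correct, and it fills in details that the paper simply omits: the paper states Proposition \ref{prop:restrictedfactors} with no proof, treating it as immediate from the definitions, and your generator-by-generator identification (both sides are semi-free on the same set of block-diagonal generators, with differentials agreeing because composable words starting and ending in a block $P_i$ either stay in $P_i$ or die in both $J_{P_i,P_i^c}$ and $J_\pi$) is exactly the justification being taken for granted. The one step worth flagging as not quite ``automatic'' is the compatibility of differentials, which rests on the composability argument and on the preceding proposition that $\alg_{P_i}$ is preserved by the inherited differential; you invoke this correctly, so there is no gap.
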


In particular, an augmentation of $\alg$ which
annihilates generators $a$ with $r(a), c(a)$ in different parts is the same as a tuple of augmentations of the $\alg_{P_\alpha}$.

Let $\epsilon: \alg \to \coeffs$ be an augmentation.
We write $C^{ij}$ for the free $\coeffs$-submodule of $C$ generated by
$\rR^{ij}$, so that $C = \bigoplus_{i,j} C^{ij}$. Similarly we split $C^{\vee} = \bigoplus C^{\vee}_{ij}$.
The product then splits into terms
$$m_k(\epsilon): C_{i_1j_1}^\vee \otimes C_{i_2j_2}^\vee \otimes \cdots \otimes C_{i_kj_k}^\vee \to C_{ij}^\vee.$$
\begin{proposition}
Assume $\epsilon$ respects the link grading.
Then the product $m_k(\epsilon): C_{i_1j_1}^\vee \otimes C_{i_2j_2}^\vee \otimes \cdots \otimes C_{i_kj_k}^\vee \to C_{ij}^\vee$ vanishes
unless $i_k = i$, $j_1 = j$, and $i_r = j_{r+1}$.
\end{proposition}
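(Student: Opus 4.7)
The strategy is to read the statement directly off the explicit formula \eqref{eq:ms}, once we know that $\dd_\epsilon$ respects the link grading. So the plan has two pieces: first, invoke the link-grading preservation for the twisted differential; second, translate composability of words into the required index conditions.

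First I would recall from the discussion preceding the proposition that the hypothesis ``$\epsilon$ respects the link grading'' implies $r,c$ is actually a (strict) link grading, and that the map
\[
\ell : (\alg^\epsilon,\dd_\epsilon) \to (\alg^\epsilon \otimes \mathrm{End}(\bZ^m),\dd_\epsilon),
\qquad a \mapsto a \otimes |r(a)\rangle\langle c(a)|,
\]
is a \dga{} map. The content of this being a chain map is exactly that for any Reeb chord generator $a \in \rR^{ij}$, the differential $\dd_\epsilon a$ is a $\coeffs$-linear combination of composable words $b_k b_{k-1} \cdots b_1$ with $r(b_k) = i$, $c(b_1) = j$, and $c(b_{r+1}) = r(b_r)$ for $1 \le r \le k-1$ (no constant terms arise at length $k \ge 1$, and the length-$0$ case is irrelevant for $m_k$ with $k \ge 1$).

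Next I would plug this into the formula
\[
m_k(\epsilon)(a_{i_1}^\vee,\ldots,a_{i_k}^\vee) \;=\; (-1)^{\sigma} \sum_{a \in \rR} a^\vee \cdot \mathrm{Coeff}_{a_{i_k}\cdots a_{i_1}}(\dd_\epsilon a).
\]
Fix generators $a_{i_r} \in \rR^{i_r j_r}$. The coefficient $\mathrm{Coeff}_{a_{i_k}\cdots a_{i_1}}(\dd_\epsilon a)$ is automatically zero unless the word $a_{i_k} a_{i_{k-1}} \cdots a_{i_1}$ is composable and matches the endpoints of $a$. Composability of consecutive letters $a_{i_{r+1}} a_{i_r}$ forces $c(a_{i_{r+1}}) = r(a_{i_r})$, i.e., $j_{r+1} = i_r$ for $r = 1,\ldots,k-1$; matching the endpoints of $a \in \rR^{ij}$ forces $i_k = r(a_{i_k}) = r(a) = i$ and $j_1 = c(a_{i_1}) = c(a) = j$. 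Only in that case can the sum contribute, and the resulting $a^\vee$ lies in $C^\vee_{ij}$, as required.

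I do not expect any serious obstacle here — the proof is essentially a bookkeeping argument once one has the preservation of the link grading by $\dd_\epsilon$. The only subtle point worth flagging explicitly is the role of the hypothesis that $\epsilon$ annihilates off-diagonal generators: without it, $\phi_\epsilon^{\pm 1}$ could mix the link-graded decomposition and $\dd_\epsilon$ would fail to preserve composability, breaking the vanishing claim.
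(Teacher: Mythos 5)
Your proof is correct and follows essentially the same route as the paper's: read the coefficient of $a^\vee$ in $m_k$ off formula \eqref{eq:ms} as $\mathrm{Coeff}_{a_{i_k}\cdots a_{i_1}}(\dd_\epsilon a)$, and observe that since $\dd_\epsilon$ preserves the link grading this coefficient vanishes unless the word is composable with the endpoints of $a \in \rR^{ij}$, which is exactly the stated index condition. The paper's proof is just a terser version of your bookkeeping, relying on the preceding discussion for the fact that $\dd_\epsilon$ respects the link grading.
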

\begin{proof}
Up to a sign, the coefficient of $a^\vee$ in the product $m_k(a_{n_1}^\vee,\ldots,a_{n_k}^\vee)$ is the coefficient of $a_{n_k} \cdots a_{n_1}$ in
$\dd_\epsilon a$.
Since $a \in \rR^{ij}$, this vanishes unless $i_k = i$, $j_1 = j$, and $i_r = j_{r+1}$.
\end{proof}

\noindent
That is, the nonvanishing products are:
\[
m_k :\thinspace C_{i_k i_{k+1}}^\vee \otimes \cdots \otimes C_{i_1 i_2}^\vee  \to
C_{i_1 i_{k+1}}^\vee.
\]

\begin{proposition} \label{prop:linkcat}
Let $\alg$ be a semi-free \dga{} with an $m$-component link grading.  Let $\epsilon$ be an augmentation
which respects the link grading.
There is a (possibly nonunital)
$A_\infty$ category on the objects $\{1, \ldots, m\}$ with morphisms
$\Hom(i, j) = C_{ij}^\vee$, with multiplications $m_k$ as above.
\end{proposition}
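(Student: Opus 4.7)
The strategy is to reduce this to the $A_\infty$ algebra $C^\vee_\epsilon$ that we have already constructed in Section~\ref{ssec:dga-augs} by dualizing the twisted differential $\dd_\epsilon$. The bar construction (Proposition~\ref{prop:bar}) produced operations $m_k(\epsilon)$ on $C^\vee$ satisfying the $A_\infty$ relations \eqref{eq:Ainftyrelations}. The preceding proposition pinned down how these operations interact with the link grading: the map
\[
m_k(\epsilon) :\thinspace C^\vee_{i_1 j_1} \otimes \cdots \otimes C^\vee_{i_k j_k} \to C^\vee_{ij}
\]
vanishes unless $j = j_1$, $i = i_k$, and $i_r = j_{r+1}$ for each $1 \le r < k$. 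This is exactly the source-target matching pattern required to turn an $A_\infty$ algebra structure into an $A_\infty$ category structure on $\{1, \ldots, m\}$ with $\Hom(i,j) := C^\vee_{ij}$.

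The plan is then to invoke Remark~\ref{rem:ainftycatrelations}: an $A_\infty$ category structure amounts to exhibiting, for each finite tuple of objects $(j_1, \ldots, j_n)$, an $A_\infty$ algebra structure on $A := \bigoplus_{a,b=1}^n \Hom(j_a, j_b) = \bigoplus_{a,b} C^\vee_{j_a j_b}$, with compositions given by matrix-multiplying and then applying $m_k$. In our setting, the vanishing statement above shows that this matrix-multiplication recipe coincides with the restriction of the algebra operation $m_k(\epsilon)$ on $C^\vee_\epsilon$ to $A^{\otimes k} \subset (C^\vee)^{\otimes k}$, and that the output on composable inputs lies in the summand $C^\vee_{j_{a_1} j_{b_k}} \subset A$. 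Thus $A$ is a sub-$A_\infty$-algebra of $C^\vee_\epsilon$.

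Since $C^\vee_\epsilon$ satisfies the $A_\infty$ relations and $A$ is closed under all the operations, $A$ inherits an $A_\infty$ algebra structure. As this holds for every tuple $(j_1, \ldots, j_n)$, Remark~\ref{rem:ainftycatrelations} then yields the claimed $A_\infty$ category. There is no real obstacle here; once the grading compatibility of the $m_k(\epsilon)$ has been identified in the preceding proposition, the result is essentially a bookkeeping statement identifying the splitting of the algebra operations with the composition law in a category. Note that unitality is not asserted at this stage: producing strict units will require the additional geometric input (short Reeb chords) introduced later, and indeed will generally fail for the variant $\AugBC$.
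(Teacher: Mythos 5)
Your proof is correct and follows essentially the same route as the paper, which likewise invokes Remark~\ref{rem:ainftycatrelations} to deduce the categorical $A_\infty$ relations from those of the algebra $C^\vee_\epsilon$, using the vanishing pattern of the $m_k$ established in the preceding proposition. Your write-up simply spells out in more detail the bookkeeping that the paper's one-line proof leaves implicit.
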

\begin{proof}
The $A_\infty$ relations on the category follow from the $A_\infty$ relations on the algebra $C^\epsilon$,
as per Remark \ref{rem:ainftycatrelations}.
\end{proof}

\begin{proposition} \label{prop:purity}
Let $\epsilon: \alg \to \coeffs$ be an augmentation respecting the link grading.
Let $\pi$ be a partition of $\{1,\ldots,m\}$.
Suppose $i_0, \ldots, i_k$ are in the same part $P$ of $\pi$.
Then computing $m_k$ in $\alg$, $\alg_{\pi}$, and $\alg_P$ gives the same result.
\end{proposition}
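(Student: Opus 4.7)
The plan is to show that each coefficient appearing in the formula \eqref{eq:ms} for $m_k$ is insensitive to whether one works in $\alg$, $\alg_\pi$, or $\alg_P$. By \eqref{eq:ms}, the value of $m_k(a_{i_1}^\vee,\ldots,a_{i_k}^\vee)$, with every $a_{i_l}\in\rR^{pq}$ for $p,q\in P$, is completely determined by the coefficients of monomials $a_{i_k}\cdots a_{i_1}$ in the twisted differential $\dd_\epsilon a$, as $a$ ranges over $\rR^{rs}$ with $r,s\in P$. I therefore reduce the proposition to showing that each such coefficient agrees in the three \dgas{}.

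The technical core is to prove that only monomials of $\dd a$ whose letters all lie in $\sS^{pq}$ with $p,q\in P$ can contribute to the coefficient of $a_{i_k}\cdots a_{i_1}$ in $\dd_\epsilon a = \phi_\epsilon(\dd a)$. Expanding a word $x_1\cdots x_l$ in $\dd a$ via $\phi_\epsilon(x_j) = x_j + \epsilon(x_j)$, contributions come from choosing positions $j_1<\cdots<j_k$ at which to match $x_{j_r} = a_{i_{k+1-r}}$, with each unkept $x_j$ replaced by the scalar $\epsilon(x_j)$. Since $\epsilon$ respects the link grading, $\epsilon$ vanishes on every off-diagonal Reeb chord, so every unkept $x_j$ is either a $t$-generator or a diagonal Reeb chord; in either case $r(x_j)=c(x_j)$. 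Combined with the composability relation $c(x_j) = r(x_{j+1})$ and the boundary data $r(x_1) = r(a) \in P$ and $r(x_{j_r}), c(x_{j_r}) \in P$, a direct induction on $j$ forces $r(x_j), c(x_j) \in P$ for every $j$, so the whole word $x_1\cdots x_l$ lies in $\alg_P$.

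Given this, the rest is bookkeeping. The quotient $\alg \to \alg_\pi$ sends the monomial $a_{i_k}\cdots a_{i_1}$ to itself, since none of its letters are mixed across parts of $\pi$, and it is the identity on every contributing word (which, by the key step, lies entirely in $\alg_P$ and hence in a single part of $\pi$); so the coefficient of $a_{i_k}\cdots a_{i_1}$ in $\dd_\epsilon a$ is the same whether read off in $\alg$ or $\alg_\pi$. The same reasoning applied to $J_{P,P^c}$ shows that the image of $\dd_\epsilon a$ in $\alg/J_{P,P^c}$ already lies in the subalgebra $\alg_P$ and has the same coefficient of $a_{i_k}\cdots a_{i_1}$, giving equality with the computation in $\alg_P$. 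The one real obstacle is the technical core above: one must carefully combine the vanishing of $\epsilon$ on off-diagonal chords with the composability of words in $\dd a$ to confine every contributing word to $\alg_P$. Everything else then follows formally.
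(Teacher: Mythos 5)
Your proof is correct and follows essentially the same route as the paper's (much terser) argument: the paper simply observes that $m_k$ is read off from the length-$k$ terms of the twisted differential and that, because $\epsilon$ kills mixed chords, off-diagonal generators contribute nothing new to $\dd_\epsilon$. Your composability induction confining every contributing word to $\alg_P$ is exactly the detail the paper leaves implicit.
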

\begin{proof}
The element $m_k$ is computed using the length $k$ terms of the twisted differential in which
the terms above appear.  The assumption that the augmentation respects the link grading means
that off-diagonal terms will not contribute new things to the twisted differential.
\end{proof}

\subsection{$A_\infty$-categories from sequences of \dgas{}}

For bookkeeping, we introduce some terminology.
We write $\Delta_+$ for the category whose objects are the sets $[m] :=\{1, \ldots, m\}$ and whose morphisms are the order-preserving inclusions.
Such maps $[m] \to [n]$ are enumerated by $m$-element subsets of $[n]$; we denote the map corresponding to $I \subset [n]$ by $h_I: [m] \to [n]$.
We call a covariant functor $\Delta_+ \to \cC$ a co-$\Delta_+$ object of $\cC$.\footnote{Co-$\Delta_+$ is pronounced ``semi-cosimplicial.''
We only use $\Delta_+$  for bookkeeping -- while the following construction bears some family resemblance to taking a resolution of $\alg^{(1)}$, we have
been unable to express it in this manner.}
For a co-$\Delta_+$ object
$X: \Delta_+ \to \cC$, we write $X[m] := X(\{1, \ldots, m\})$.  We denote the structure map $X[m] \to X[n]$ corresponding to a subset $I \subset [n]$ also
by $h_I$.

For example, $\Delta_+$ itself, or more precisely the inclusion $\Delta_+ \to \mathrm{Set}$,
is a co-$\Delta_+$ set.    Another example of a
co-$\Delta_+$ set is the termwise square of this, $\Delta_+^2$, which has $\Delta_+^2[m] = \{1, \ldots, m\}^2$.

\begin{definition} \label{def:consistent}
A sequence  $\alg^{(\bullet)}$ of semi-free \dgas{} $(\alg^{(1)}, \partial)$, $(\alg^{(2)},\partial), \ldots$ with generating sets
$\sS_1, \sS_2, \ldots$ is \emph{consistent} if it comes equipped with the following additional structure:
\begin{itemize}
\item the structure of a co-$\Delta_+$ set $\sS$ with $\sS[m] = \sS_m$;
\item  link gradings $\sS_m \to \{1,\ldots, m\} \times \{1, \ldots, m\}$.
\end{itemize}
This structure must satisfy the following conditions.  First, the  link grading should give a morphism of co-$\Delta_+$ sets
$\sS \to \Delta_+^2$.  Second, for any $m$-element subset $I \subset [n]$, note that the map $h_I: \sS_m \to \sS_n$
induces a morphism of algebras $h_I: \alg^{(m)} \to \alg^{(n)}_{I}$.
We require this map be an isomorphism of \dgas{}.
\end{definition}

\begin{remark}
There is a co-$\Delta_+$ algebra $\alg$ with $\alg[m] = \alg^{(m)}$ and the structure maps induced from the structure maps
on the $\sS_m$.   This however is generally {\em not} a co-$\Delta_+$ \dga{} -- the morphisms do not respect the differential.
\end{remark}

\begin{lemma}
Let $\alg^{(\bullet)}$ be a consistent sequence of \dgas{}.  Then in particular:
\begin{itemize}
\item The map $h_i: \alg^{(1)} \to \alg^{(m)}_i$ is an isomorphism, and
$$\alg^{(m)}_{\{1\} \sqcup \{2\} \sqcup \cdots \sqcup \{m\}} = \alg^{(m)}_1 \star \alg^{(m)}_2 \star \cdots \star \alg^{(m)}_{m} =
h_1(\alg^{(1)}) \star \cdots \star h_m(\alg^{(1)})= \alg^{(1)} \star \cdots \star \alg^{(1)}.$$
In particular, an $m$-tuple of augmentations of $\alg^{(1)}$ induces a augmentation of $\alg^{(m)}$ which respects the link grading.
\item The map $h_{ij}: \sS_2 \to \sS_m$ induces a bijection $h_{ij}: \sS_2^{12} \to \sS_m^{ij}$ and hence an isomorphism
$h_{i,j}: C^{\vee}_{12} \to C^{\vee}_{ij}$.
\item Let $(\epsilon_1, \epsilon_2, \ldots, \epsilon_{m})$ be a tuple of augmentations of $\alg^{(1)}$, and let $\epsilon$ be the
corresponding diagonal augmentation of $\alg^{(m)}$.  Let $1 \le i_1 < i_2  < \cdots < i_{k+1} \le m$ be any increasing sequence.
Then the composite morphism
\begin{equation} \label{eq:composition}
C^\vee_{12} \otimes \cdots \otimes C^\vee_{12}
\xrightarrow{h_{i_{k}i_{k+1}} \otimes \cdots \otimes h_{i_1i_2}}
C^\vee_{i_ki_{k+1}} \otimes \cdots \otimes  C^\vee_{i_1i_2} \xrightarrow{m_k(\epsilon)} C^\vee_{i_1i_{k+1}} \xrightarrow{h_{i_1i_{k+1}}^{-1}} C^\vee_{12}
\end{equation}
does not depend on anything except the tuple $(\epsilon_{i_1}, \ldots, \epsilon_{i_{k+1}})$.   
\end{itemize}
\end{lemma}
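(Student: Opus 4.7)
The plan is to derive all three bullets from the two axioms in Definition \ref{def:consistent} combined with Propositions \ref{prop:restrictedfactors} and \ref{prop:purity}. The key consequence of the link grading being a morphism of co-$\Delta_+$ sets $\sS \to \Delta_+^2$ is that each structure map $h_I: \sS_n \to \sS_m$ sends $\sS_n^{ab}$ into $\sS_m^{h_I(a)\, h_I(b)}$, so the induced algebra map lands inside $\alg^{(m)}_{h_I([n])}$.

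For the first bullet, the inclusion $[1] \hookrightarrow [m]$ with image $\{i\}$ gives a map $h_i: \alg^{(1)} \to \alg^{(m)}_i$, which is a \dga{} isomorphism by consistency. Proposition \ref{prop:restrictedfactors} applied to the discrete partition $[m] = \{1\} \sqcup \cdots \sqcup \{m\}$ then yields the free product decomposition. An $m$-tuple $(\epsilon_1, \ldots, \epsilon_m)$ of augmentations of $\alg^{(1)}$ determines an augmentation of $\alg^{(m)}_1 \star \cdots \star \alg^{(m)}_m$ by the universal property of the free product; extending by zero on off-diagonal generators gives a map $\epsilon: \alg^{(m)} \to \coeffs$, and this map respects the differential because the link grading axiom forces every term in $\partial a$, for off-diagonal $a$, to be a composable word containing at least one off-diagonal factor, hence annihilated by $\epsilon$. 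By construction $\epsilon$ respects the link grading. The second bullet follows from the same strategy: the inclusion $[2] \hookrightarrow [m]$ with image $\{i,j\}$ yields a link-graded \dga{} isomorphism $h_{ij}: \alg^{(2)} \to \alg^{(m)}_{\{i,j\}}$ restricting to a bijection on generators $\sS_2^{12} \to \sS_m^{ij}$, and linearly extending and shifting produces the claimed $\coeffs$-module isomorphism $h_{ij}: C^\vee_{12} \to C^\vee_{ij}$ sending each $a^\vee$ to $h_{ij}(a)^\vee$.

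For the third bullet, set $I = \{i_1 < \cdots < i_{k+1}\}$ and let $h_I: \alg^{(k+1)} \to \alg^{(m)}_I$ be the \dga{} isomorphism furnished by consistency. Co-$\Delta_+$ functoriality gives, for each $1 \le s \le k$, the factorizations $h_{i_s i_{s+1}} = h_I \circ h_{s,s+1}$ and $h_{i_1 i_{k+1}} = h_I \circ h_{1,k+1}$, so the composition (\ref{eq:composition}) rewrites as an outer layer of maps depending only on the co-$\Delta_+$ structure, sandwiching $m_k(\epsilon)$ pulled back into $\alg^{(k+1)}$ via $h_I$. Proposition \ref{prop:purity} applied to the partition $\{I, I^c\}$ ensures that, since all of $i_1, \ldots, i_{k+1}$ lie in $I$, this sandwiched $m_k$ coincides with the one computed inside $\alg^{(m)}_I$; and $h_I$ transports it to the $m_k$ on $\alg^{(k+1)}$ associated to the pulled-back augmentation $h_I^*\epsilon$. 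The first bullet applied to $\alg^{(k+1)}$ then identifies $h_I^*\epsilon$ with the diagonal augmentation assembled from $(\epsilon_{i_1}, \ldots, \epsilon_{i_{k+1}})$, using the naturality identity $h_I \circ h_s = h_{i_s}$ governing the chain $[1] \hookrightarrow [k+1] \hookrightarrow [m]$. Hence the total composite depends only on $(\epsilon_{i_1}, \ldots, \epsilon_{i_{k+1}})$.

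The main obstacle is the bookkeeping at this final step --- matching $h_I^*\epsilon$ on $\alg^{(k+1)}$ with the diagonal augmentation built from the sub-tuple. This is the one place where two independent instantiations of the first bullet (one for $\alg^{(m)}$, one for $\alg^{(k+1)}$) must be compared, and the comparison reduces entirely to the naturality identity $h_I \circ h_s = h_{i_s}$ provided by the co-$\Delta_+$ structure on $\sS$.
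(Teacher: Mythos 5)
Your proof is correct and follows essentially the same route as the paper, which disposes of the lemma in three sentences by citing the definition of consistency together with Propositions \ref{prop:restrictedfactors} and \ref{prop:purity}; your write-up simply makes explicit the bookkeeping (the factorizations $h_{i_s i_{s+1}} = h_I \circ h_{s,s+1}$ and the identification of $h_I^*\epsilon$ with the diagonal augmentation of the sub-tuple) that the paper leaves implicit.
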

\begin{proof}
The first part of the first statement holds by definition; the equation following is Proposition \ref{prop:restrictedfactors}.  The second
statement again holds by definition.  The third statement is Proposition \ref{prop:purity}.
\end{proof}

We will associate an $A_\infty$-category to a consistent sequence of \dgas{}.

\begin{definition} \label{def:aug-from-consistent-sequence}
Given a consistent sequence of \dgas{} $(\alg^{(m)},\partial)$ and a coefficient field
%commutative  coefficient ring
$\coeffs$, we define the \textit{augmentation category}
$\Aug(\alg^{(\bullet)},\coeffs)$
as follows:
\begin{itemize}
\item
The objects are augmentations $\epsilon: \alg^{(1)} \to \coeffs$.
\item
The morphisms are
\[
\hom(\epsilon_1,\epsilon_2) := C_{12}^\vee \subset \alg^{(2)},
\]
where $\epsilon$ is the diagonal augmentation $(\epsilon_1,\epsilon_2)$.
\item
For $k\geq 1$, the
composition map
\[
m_k :\thinspace \hom(\epsilon_{k} ,\epsilon_{k+1}) \otimes \cdots \otimes \hom(\epsilon_{2},\epsilon_3) \otimes \hom(\epsilon_{1},\epsilon_{2})
\to \hom(\epsilon_{1},\epsilon_{k+1})
\]
is defined to be
the map  
of \eqref{eq:composition}.
\end{itemize}
\end{definition}

\begin{proposition} \label{prop:consistentC}
$\Aug(\alg^{(\bullet)},\coeffs)$
is an $A_\infty$ category.
\end{proposition}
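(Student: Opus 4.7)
The plan is to reduce the statement to Proposition \ref{prop:linkcat}, which already furnishes an $A_\infty$ category structure for each diagonal augmentation of $\alg^{(n)}$, and then to use the consistency of the sequence to identify the resulting operations with the composition maps of $\Aug(\alg^{(\bullet)},\coeffs)$ in a coherent way.

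First I would fix objects $\epsilon_1,\ldots,\epsilon_{n+1}$ of $\Aug(\alg^{(\bullet)},\coeffs)$ and work inside $\alg^{(n+1)}$. The tuple $(\epsilon_1,\ldots,\epsilon_{n+1})$ determines a diagonal augmentation $\epsilon$ of $\alg^{(n+1)}$ which respects the link grading (by the first bullet of the lemma). Proposition \ref{prop:linkcat} then produces a (possibly nonunital) $A_\infty$ category with object set $\{1,\ldots,n+1\}$ and morphism spaces $C^\vee_{ij}\subset \alg^{(n+1)}$, with the multiplications $m_k(\epsilon)$ coming from the twisted differential $\dd_\epsilon$.

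Next I would identify the compositions in $\Aug(\alg^{(\bullet)},\coeffs)$ involving $\epsilon_1,\ldots,\epsilon_{n+1}$ with those of this $A_\infty$ category. By the second bullet of the lemma, the isomorphisms $h_{ij}:C^\vee_{12}\to C^\vee_{ij}$ transport $C^\vee_{12}$ onto the $(i,j)$-hom space, and by the third bullet, the resulting composition on $C^\vee_{12}$ agrees with the map defined in \eqref{eq:composition} and depends only on the tuple of augmentations. Thus the operations of Definition \ref{def:aug-from-consistent-sequence} are well-defined (independent of the ambient $n+1$ chosen large enough to index all objects involved) and literally equal, after transport by the $h_{ij}$, to the $m_k$ of the $A_\infty$ category on $\{1,\ldots,n+1\}$.

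Finally, the $A_\infty$ relations for $\Aug(\alg^{(\bullet)},\coeffs)$ applied to any chain $\epsilon_1,\ldots,\epsilon_{n+1}$ become, after this identification, precisely the $A_\infty$ relations for the multiplications $m_k(\epsilon)$ on the algebra $A(1,\ldots,n+1)=\bigoplus_{i,j}C^\vee_{ij}$ as in Remark \ref{rem:ainftycatrelations}, and these hold by Proposition \ref{prop:linkcat} (equivalently, by the observation that they are equivalent to $\dd_\epsilon^2=0$ via Proposition \ref{prop:bar}). The only subtlety — really the main point of the bookkeeping with $\Delta_+$ — is verifying that the same composition is obtained no matter which $m\ge n+1$ and which increasing embedding $\{1,\ldots,n+1\}\hookrightarrow\{1,\ldots,m\}$ one uses; but this is exactly what the third bullet of the lemma (itself an application of Proposition \ref{prop:purity}) guarantees, so no further work is required.
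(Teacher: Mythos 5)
Your proposal is correct and takes essentially the same route as the paper: the paper's proof is a one-line observation that all compositions relevant to any finite $A_\infty$ relation can be computed in a single $A_\infty$ category of the form produced by Proposition~\ref{prop:linkcat}, which is exactly the reduction you carry out. Your additional care about well-definedness under the choice of ambient $\alg^{(m)}$ and increasing embedding, via the third bullet of the lemma and Proposition~\ref{prop:purity}, is a correct expansion of what the paper leaves implicit.
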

\begin{proof}
The $A_\infty$ relations can then be verified by observing that all compositions relevant to any finite relation can be computed
in some fixed $A_\infty$ category of the sort constructed in Proposition \ref{prop:linkcat}.
\end{proof}

\begin{remark}
We emphasize that the $A_\infty$ algebra $\Hom_+(\epsilon, \epsilon)$ {\em is not} the $A_\infty$ algebra obtained by
dualizing $(\alg^{(1)})^\epsilon$.  In particular, the former can be
unital when the latter is not.
\end{remark}

\begin{definition}  \label{def:consistentsequencemorphisms}
Given two consistent sequences $(\alg^{(\bullet)}, \partial)$ and $(\mathcal{B}^{(\bullet)},\partial)$,
we say a sequence of \dga{} morphisms
\[
f^{(m)} :(\alg^{(m)}, \partial) \rightarrow (\mathcal{B}^{(m)},\partial)
\]
is \emph{consistent} if:
\begin{enumerate}
\item Each $f^{(m)}$ preserves the subalgebra generated by the invertible generators.
\item The $f^{(m)}$ are compatible with the  link gradings in the following sense.  For any generator, $a_i \in \sS_m$, $f(a_i)$ is a $\bZ$-linear combination of  composable words in $\mathcal{B}^{(m)}$ from $r(a_i)$ to $c(a_i)$,  i.e. words of the form $x_1 \cdots x_k$ with $c(x_{i}) = r(x_{i+1})$ for $i =1, \ldots, k-1$, and $r(x_1) = r(a_i)$, $c(x_k) = c(a_i)$.  Note that constant terms are allowed if $r(a_i) = c(a_i)$.

As a consequence of this requirement, a well-defined \dga{} morphism
of composable algebras arises from taking $(f^{(m)})'(a_i)$ to be
$f^{(m)}(a_i)$ with all occurrences  $1$ replaced with the idempotent $e_{r(a_i)}$ for generators $a_i$ of $\alg^{(m)}$.
 Moreover, the following square commutes:
\begin{equation} \label{eq:composable}
\begin{gathered}
\xymatrix{
(\alg^{(m)})' \ar[r]^{(f^{(m)})'} \ar[d]^{\ell} & (\mathcal{B}^{(m)})' \ar[d]^{\ell} \\
\alg^{(m)} \otimes \mathrm{End}(\bZ^m)  \ar[r]^{f^{(m)} \otimes 1} &
\mathcal{B}^{(m)} \otimes \mathrm{End}(\bZ^m).
}
\end{gathered}
\end{equation}

\item
For any $I: [m] \hookrightarrow [n]$, 
note that, by the previous axiom, $f^{(n)}$ induces a well defined homomorphism $f^{(n)}_I: (\alg^{(n)}_I, \partial) \rightarrow (\mathcal{B}^{(n)}_I, \partial)$.  We require the following diagram to commute:
\begin{equation}
\begin{gathered}
\label{eq:consistency}
\xymatrix{
\alg^{(m)} \ar[r]^{f^{(m)}} \ar[d] & \mathcal{B}^{(m)} \ar[d] \\
\alg^{(n)}_I \ar[r]^{f^{(n)}_I} & \mathcal{B}^{(n)}_I,
}
\end{gathered}
\end{equation}
where the vertical arrows are the definitional isomorphisms $h_I$.
\end{enumerate}
\end{definition}

A consistent sequence of \dga{} morphisms
$f^{(m)}: (\alg^{(m)}, \partial) \rightarrow
({\mathcal{B}}^{(m)}, \partial)$
gives rise to an $A_\infty$-functor
\[
F:\Aug({\mathcal{B}}^{(\bullet)}, \coeffs) \rightarrow \Aug(\alg^{(\bullet)}, \coeffs)
\]
according to the following construction.  On objects, for an augmentation $\epsilon : ({\mathcal{B}}^{(1)}, \partial) \rightarrow (\coeffs, 0)$ we define
\[
F(\epsilon) = f^\ast \epsilon := \epsilon \circ f
\]
where $f := f^{(1)}: (\alg^{(1)}, \partial) \rightarrow ({\mathcal{B}}^{(1)}, \partial)$.  Next, we need to define maps
\[
F_k: \hom^{{\mathcal{B}}}(\e_k,\e_{k+1})  \otimes \cdots \otimes  \hom^{{\mathcal{B}}}(\e_1,\e_2)\rightarrow \hom^{\alg}(f^*\e_1, f^*\e_{k+1}).
\]
Consider the diagonal augmentation $\e= (\e_1, \ldots, \e_{k+1})$ of ${\mathcal{B}}^{(k+1)}$, and let
$f^{(k+1)}_{\e} := \Phi_\e \circ f^{(k+1)} \circ \Phi^{-1}_{(f^{(k+1)})^*\epsilon}$.  Here, we used that $f^{(k+1)}$ passes to a well defined map $(\alg^{(k+1)})^{(f^{(k+1)})^*\e}  \rightarrow ({\mathcal{B}}^{(k+1)})^\e$.
Observe that  $f^{(k+1)}_{\e} ( (\alg^{(k+1)})^{(f^{(k+1)})^*\e}_+ )\subset ({\mathcal{B}}^{(k+1)})^\e_+$,
i.e. no constant terms appear in the image of generators.  We then define $F_k$, up to the usual grading shift,
by dualizing the component of $f^{(k+1)}_{\e}$ that maps from
\[
C^{1,k+1} \rightarrow {C'}^{1,2} \otimes \cdots \otimes {C'}^{k,k+1}
\]
and making use of the consistency of the sequence to identify the grading-shifted duals ${C'}^\vee_{i,i+1}$ and  $C^\vee_{1,k+1}$ with
$\hom^{\mathcal{B}}(\e_{i},\e_{i+1})$ and $\hom^{\alg}(f^\ast \e_1, f^\ast \e_{k+1})$ respectively.

\begin{proposition}  \label{prop:consistentF}
If the sequence of \dga{} morphisms $f^{(m)}$ is consistent, then $F$ is an $A_\infty$-functor.  Moreover, this construction defines a functor from the category of consistent sequences of DGAs and DGA morphisms to $A_\infty$ categories.
\end{proposition}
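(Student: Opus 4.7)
The plan is to pass the problem into the bar-construction picture of Proposition \ref{prop:bar}, where DGA morphisms dualize to $A_\infty$ morphisms of $A_\infty$ algebras, and then extract the $A_\infty$ functor equations for $F$ by restricting to the off-diagonal blocks picked out by the link grading.

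First, I would package together all the $A_\infty$ operations. Fix augmentations $\epsilon_1, \ldots, \epsilon_{n+1}$ of $\mathcal{B}^{(1)}$, set $\epsilon := (\epsilon_1, \ldots, \epsilon_{n+1})$, and consider the twisted DGA $((\mathcal{B}^{(n+1)})^\epsilon, \dd_\epsilon)$. Since $\epsilon$ respects the link grading, $\dd_\epsilon$ preserves the positive part $(\mathcal{B}^{(n+1)})^\epsilon_+$ as well as the link-grading decomposition. Dualizing as in Section \ref{ssec:dga-augs} produces, in the sense of Remark \ref{rem:ainftycatrelations} and Proposition \ref{prop:linkcat}, an $A_\infty$ algebra $A^{\mathcal{B}}(\epsilon_1, \ldots, \epsilon_{n+1}) := \bigoplus_{i,j} (C')^\vee_{ij}$ whose $(i,j)$--off-diagonal blocks compute $\hom^{\mathcal{B}}(\epsilon_i, \epsilon_j)$ and whose operations (after using $h_{ij}$ to identify blocks with $\hom^{\mathcal{B}}(\epsilon_i,\epsilon_{i+1})$) are exactly the $m_k$'s of $\Aug(\mathcal{B}^{(\bullet)}, \coeffs)$. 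Do the same for $\alg$, with pulled-back augmentations $f^*\epsilon_i$.

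Next I would produce an $A_\infty$ morphism between these two $A_\infty$ algebras. Set
\[
f^{(n+1)}_{\epsilon} := \Phi_\epsilon \circ f^{(n+1)} \circ \Phi_{(f^{(n+1)})^*\epsilon}^{-1} : (\alg^{(n+1)})^{(f^{(n+1)})^*\epsilon} \longrightarrow (\mathcal{B}^{(n+1)})^\epsilon.
\]
Since $f^{(n+1)}$ is a DGA map and the $\Phi$'s are algebra automorphisms intertwining $\dd$ with the twisted differentials, $f^{(n+1)}_\epsilon$ is itself a DGA map; a direct check (using $\epsilon \circ f = f^*\epsilon$) shows that $f^{(n+1)}_\epsilon$ sends generators to elements with no constant term, hence preserves positive parts. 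By Proposition \ref{prop:bar}, dualizing this DGA map gives a morphism of $A_\infty$ algebras $A^{\mathcal{B}}(\epsilon_1, \ldots, \epsilon_{n+1}) \to A^{\alg}(f^*\epsilon_1, \ldots, f^*\epsilon_{n+1})$. The components landing in $\hom^{\alg}(f^*\epsilon_1, f^*\epsilon_{n+1})$ are, by construction, exactly the $F_k$. The $A_\infty$ relations for an $A_\infty$ morphism, when restricted to the block with inputs in $\hom^{\mathcal{B}}(\epsilon_i, \epsilon_{i+1})$ and output in $\hom^{\alg}(f^*\epsilon_1, f^*\epsilon_{n+1})$, are precisely the $A_\infty$ functor relations for $F$.

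The remaining point is that the $F_k$ extracted this way are genuinely well-defined, i.e.\ do not depend on the ambient $n+1$. This is where consistency enters. If one uses $f^{(m)}$ instead of $f^{(n+1)}$ with $m\geq n+1$, the commutative square \eqref{eq:consistency} together with compatibility of $f^{(m)}$ with the link grading and with the twisting automorphisms $\Phi$ (which are defined block-wise) ensures that the dualized block maps agree under the identifications $h_I$. Thus each $F_k$ is unambiguously defined and the collection satisfies the $A_\infty$ functor equations inside every $A^{\mathcal{B}}(\epsilon_1, \ldots, \epsilon_{n+1})$.

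For the second assertion (functoriality of the construction), given composable consistent sequences of DGA morphisms $f^{(m)} : \alg^{(m)} \to \mathcal{B}^{(m)}$ and $g^{(m)} : \mathcal{B}^{(m)} \to \mathcal{C}^{(m)}$, the sequence $(g \circ f)^{(m)}$ is again consistent (the axioms of Definition \ref{def:consistentsequencemorphisms} are closed under composition). On objects, $(g \circ f)^* = f^* \circ g^*$. For the higher maps, the key identity is
\[
(g \circ f)^{(m)}_\epsilon \;=\; g^{(m)}_\epsilon \circ f^{(m)}_{g^*\epsilon},
\]
which follows at once from the definition of the twists and from $(g \circ f)^*\epsilon = f^*(g^*\epsilon)$. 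Dualizing and using that the bar construction of Proposition \ref{prop:bar} turns composition of DGA maps into composition of $A_\infty$ morphisms, one sees that the $A_\infty$ functor induced by $g \circ f$ is the composition of the $A_\infty$ functors induced by $f$ and $g$. Identity morphisms (the identity sequence $\mathrm{id}^{(m)}$) clearly induce identity $A_\infty$ functors by the same dualization argument.

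The main obstacle I expect is sign bookkeeping. Proposition \ref{prop:bar} is clean, but translating between the concrete formula \eqref{eq:ms} for $m_k$ (with its sign $\sigma = k(k-1)/2 + \sum_{p<q}|a_{i_p}^\vee||a_{i_q}^\vee| + |a_{i_2}^\vee| + |a_{i_4}^\vee| + \cdots$) and the analogous explicit formula for $F_k$ as the Koszul-signed dualization of the matrix coefficients of $f^{(k+1)}_\epsilon$ requires care: one must verify that the signs produced by the Koszul rule on the codomain are precisely the signs demanded by the $A_\infty$ functor equations. Once this is done uniformly at the level of the bar differential $b$ and the co-\dga{} morphism dual to $f^{(m)}_\epsilon$, the $A_\infty$ functor relations follow with the correct signs, and functoriality of the construction follows formally from the compatibilities above.
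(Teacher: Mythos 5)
Your proposal is correct and follows essentially the same route as the paper: the paper's proof likewise reduces the $A_\infty$ functor relations to the single identity $f^{(k+1)}_\epsilon \circ \partial_{(f^{(k+1)})^*\epsilon} = \partial_\epsilon \circ f^{(k+1)}_\epsilon$ (i.e., that the twisted map is a \dga{} morphism), with dualization via Proposition~\ref{prop:bar} and the consistency square \eqref{eq:consistency} handling the block identifications, and treats functoriality as immediate from the definitions. Your write-up simply makes explicit the bar-construction dualization and the composition identity $(g\circ f)^{(m)}_\epsilon = g^{(m)}_\epsilon \circ f^{(m)}_{g^*\epsilon}$ that the paper leaves implicit.
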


\begin{proof}
Using the third stated property of a consistent sequence, we see that the required relation for the map
$F_k: \hom^{\mathcal{B}}(\e_k,\e_{k+1}) \otimes \cdots \otimes \hom^{\mathcal{B}}(\e_1,\e_2)  \rightarrow \hom^{\alg}(f^*\e_1, f^*\e_{k+1})$
follows from the identity $f^{(k+1)}_\epsilon \partial_{(f^{(k+1)})^\ast \epsilon} = \partial_{ \epsilon} f^{(k+1)}_\epsilon$.  That the construction preserves compositions and identity morphisms is clear from the definitions.
\end{proof}

\subsubsection{The negative augmentation category}

For a given consistent sequence of \dgas{}
$(\alg^{(\bullet)}, \partial)$ ,
there is a kind of dual consistent sequence obtained by reversing the order of components in the link grading.  That is, for each $m \geq 1$, we form a new link grading, $(r \times c)^*$, as the composition
\[
 \sS_m \stackrel{r\times c}{\rightarrow} \{1, \ldots, m\} \stackrel{\tau}{\rightarrow} \{m, \ldots, 1 \}
\]
where $\tau$ reverses the ordering:  $\tau(k) = m-k+1$.  The structure of a consistent sequence for this new link grading is then provided by altering the maps $h_I$ to $h_I^* = h_{\tau(I)}$.

\begin{definition} \label{def:augbc}
Given a consistent sequence of \dgas{} 
$(\alg^{(\bullet)},\partial)$ and a coefficient ring
$\coeffs$, we define the \textit{negative augmentation category}
$\AugBC(\alg^{(\bullet)}, \coeffs)$
to be the augmentation category associated, as in Definition \ref{def:aug-from-consistent-sequence},  to the sequence of \dgas{} $(\alg^{(m)}, \partial)$ with link grading $(r \times c)^*$ and co-$\Delta_+$ set structure on the $\sS_m$ given by the $h_I^*$.
\end{definition}

The category 
$\AugBC(\alg^{(\bullet)}, \coeffs)$
can also be described in a straightforward manner in terms of the original link grading and $h_I$ for $(\alg^{(m)}, \coeffs)$ as follows:
\begin{itemize}
\item
The objects are augmentations $\epsilon: \alg^{(1)} \to \coeffs$.
\item
The morphisms are
\[
\homBC(\epsilon_2,\epsilon_1) := C_{21}^\vee \subset \alg^{(2)}
\]
where $\epsilon$ is the diagonal augmentation
$(\epsilon_1,\epsilon_2)$
(note the reversal of the order of inputs).
\item For $k \geq 1$, let $(\epsilon_1, \epsilon_2, \ldots, \epsilon_{k+1})$ be a tuple of augmentations of $\alg^{(1)}$, and let $\epsilon$ be the
corresponding diagonal augmentation of $\alg^{(k+1)}$.
Then
\[
m_k :\thinspace \homBC(\epsilon_{2},\epsilon_{1}) \otimes \homBC(\epsilon_{3},\epsilon_2) \otimes \cdots \otimes
\homBC(\epsilon_{k+1} ,\epsilon_{k})  \to \homBC(\epsilon_{k+1},\epsilon_{1})
\]
is the composite morphism
\begin{equation} 
C^\vee_{21} \otimes \cdots \otimes C^\vee_{21}
\xrightarrow{h_{1 2} \otimes \cdots \otimes  h_{k,k+1} }
  C^\vee_{21} \otimes \cdots \otimes  C^\vee_{k+1,k}  \xrightarrow{m_k(\epsilon)} C^\vee_{k+1, 1} \xrightarrow{h_{1, k+1}^{-1}} C^\vee_{21}.
\label{eq:compositionminus}
\end{equation}
\end{itemize}

\begin{remark}
\begin{itemize}
\item[(i)]  In the preceding formulas, objects were indexed in a
  manner that is reverse to our earlier notations.  This is to allow
  for easy comparison of the operations in
$\Aug(\alg^{(\bullet)}, \coeffs)$ and $\AugBC(\alg^{(\bullet)}, \coeffs)$
that correspond to a common diagonal augmentation $\e = (\e_1, \ldots, \e_{k+1})$ of $(\alg^{(k+1)}, \partial)$.

\item[(ii)]  The subscripts of the $h$ maps are {\em the same} as in \eqref{eq:composition}.  However, in these two settings, they are applied to different generators from the $\sS_m$.
\end{itemize}
\end{remark}

\begin{proposition}\label{prop:bifunctor}
The map $(\epsilon_1, \epsilon_2) \mapsto \hom(\epsilon_1, \epsilon_2)$ underlies the structure of an $A_\infty$ bifunctor from $\AugBC$ to
chain complexes and likewise
the map $(\epsilon_1, \epsilon_2) \mapsto \homBC(\epsilon_1, \epsilon_2)$ underlies the structure of an $A_\infty$ bifunctor from $\Aug$ to
chain complexes.
\end{proposition}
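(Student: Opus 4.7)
The plan is to construct the claimed $A_\infty$ bifunctor structure on $\hom(-,-)$ directly from the higher products supplied by Proposition~\ref{prop:linkcat} on the multi-copies, and to derive the bimodule relations from the $A_\infty$ relations already in hand. The key observation is that $\Aug$ and $\AugBC$ extract only the products along strictly monotone index sequences, whereas the link-graded algebras in the consistent sequence carry products along arbitrary sequences; the bimodule operations relating $\hom$ and $\homBC$ will come from sequences that are monotone on two halves but jump across the middle.

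Concretely, given augmentations $\alpha_0,\ldots,\alpha_p,\beta_0,\ldots,\beta_q$ of $\alg^{(1)}$, I would form the diagonal augmentation $\epsilon$ of $\alg^{(p+q+2)}$ by assigning $\alpha_{p+1-i}$ to component $i$ for $1\le i\le p+1$ and $\beta_{q+1-j}$ to component $p+1+j$ for $1\le j\le q+1$, and then apply $m_{p+q+1}(\epsilon)$ of Proposition~\ref{prop:linkcat} to the index sequence $(p+1,p,\ldots,1,p+q+2,p+q+1,\ldots,p+2)$.  Under the consistent-sequence identifications $h_I$, each factor $C^\vee_{i_j,i_{j+1}}$ becomes a $\homBC$-space (when $i_j>i_{j+1}$) or the single ``jump'' factor a $\hom$-space, producing a map
\[
\mu^{p|1|q}\colon \homBC(\beta_{q-1},\beta_q)\otimes\cdots\otimes\homBC(\beta_0,\beta_1)\otimes \hom(\alpha_p,\beta_0)\otimes\homBC(\alpha_{p-1},\alpha_p)\otimes\cdots\otimes\homBC(\alpha_0,\alpha_1)\to\hom(\alpha_0,\beta_q).
\]
The case $p=q=0$ recovers the $m_1$-differential on $\hom$.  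Well-definedness and independence of the ambient copy number $n\ge p+q+2$ follow from the consistent-sequence axioms just as in the proof of Proposition~\ref{prop:consistentC}.

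The bimodule $A_\infty$ relations will then follow by expanding the ordinary $A_\infty$ relation for $m_{p+q+1}(\epsilon)$ on this input sequence and sorting terms according to which consecutive window of tensor factors is consumed by the inner product.  Three cases occur: (i) the window lies entirely within positions $1,\ldots,p$, giving a pure $\AugBC$-composition on the $\alpha$-side by Proposition~\ref{prop:purity} applied to the partition $\{\{1,\ldots,p+1\},\{p+2,\ldots,p+q+2\}\}$, fed into a shorter $\mu^{p'|1|q}$; (ii) symmetrically, the window lies within positions $p+2,\ldots,p+q+1$, giving an $\AugBC$-composition on the $\beta$-side; (iii) the window straddles the middle $\hom$ at position $p+1$, in which case the sub-index sequence again has the ``decreasing--jump--decreasing'' shape, so the inner product is itself a bimodule operation $\mu^{a|1|b}$ plugged into an outer $\mu^{p-a|1|q-b}$.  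Together with the unary inner $m_1$ terms acting on single inputs, these exhaust precisely the terms of the $A_\infty$ bimodule relations.

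The hardest part will be tracking the signs, since the Koszul conventions from Section~\ref{sec:a-infinity} and the $\tau$-reversal used to define $\AugBC$ must be matched carefully against the signs required by the bimodule axioms; this is essentially bookkeeping and I anticipate no substantive obstruction beyond what was already handled in the proofs of Propositions~\ref{prop:consistentC} and \ref{prop:consistentF}.  The companion statement that $\homBC(-,-)$ is an $A_\infty$ bifunctor from $\Aug$ is proved by the verbatim construction applied to the reversed consistent sequence underlying Definition~\ref{def:augbc}: the same diagonal augmentations on $\alg^{(p+q+2)}$ are used, but the relevant index pattern becomes ``increasing--downward jump--increasing'' and the roles of $\hom$ and $\homBC$ are interchanged.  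All structural ingredients---independence from the ambient copy, purity of the one-sided sub-operations, and derivation of the bimodule relations from the $A_\infty$ relations---transfer directly.
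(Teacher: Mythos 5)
Your proposal is correct and takes essentially the same approach as the paper: the paper's proof also reads the bimodule operations off of the products $m_k(\epsilon)$ on diagonal augmentations of the multi-copies (exhibiting the four $m_2$ maps on the $3$-copy, e.g.\ $C_{13}^\vee\otimes C_{21}^\vee\to C_{23}^\vee$, and invoking ``the analogous higher compositions'' from Proposition~\ref{prop:linkcat}). Your $\mu^{p|1|q}$ with the ``decreasing--jump--decreasing'' index pattern is exactly the explicit form of those higher compositions, and your case analysis of the bimodule relations makes precise what the paper leaves implicit.
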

\begin{proof}
Consider the diagonal augmentation $\epsilon$ on $\alg^{(3)}$ induced by the tuple $(\epsilon_1, \epsilon_2, \epsilon_3)$.
Then the composition on $C^\epsilon$ gives us in particular:
\begin{align*}
m_2&: \hom(\epsilon_1,\epsilon_3) \otimes \homBC(\epsilon_2,\epsilon_1)  =
C_{13}^\vee \otimes C_{21}^\vee  \to C_{23}^\vee = \hom(\epsilon_2, \epsilon_3) \\
m_2&: \homBC(\epsilon_3,\epsilon_2) \otimes \hom(\epsilon_1,\epsilon_3)   =
C_{32}^\vee \otimes C_{13}^\vee  \to C_{12}^\vee = \hom(\epsilon_1,
\epsilon_2) \\
m_2&: \hom(\epsilon_1,\epsilon_2) \otimes \homBC(\epsilon_3,\epsilon_1)  =
C_{12}^\vee \otimes C_{31}^\vee  \to C_{32}^\vee = \homBC(\epsilon_3, \epsilon_2) \\
m_2&: \homBC(\epsilon_3,\epsilon_1)  \otimes \hom(\epsilon_2,\epsilon_3)  =
C_{31}^\vee \otimes C_{23}^\vee  \to C_{21}^\vee = \homBC(\epsilon_2, \epsilon_1).
\end{align*}
The first two and the analogous higher compositions give $\hom$ the structure of a bifunctor on $\AugBC$, since the compositions
are taking place in an $A_\infty$ algebra as described in Proposition \ref{prop:linkcat}.
Similarly, the second two and their higher variants give $\homBC$ the structure of a bifunctor on $\Aug$.
\end{proof}

\begin{remark}
Note from the proof of Proposition~\ref{prop:bifunctor} that we have
maps
\[
m_2 : \hompm(\epsilon_2,\epsilon_3) \otimes
\hompm(\epsilon_1,\epsilon_2) \to \hompm(\epsilon_1,\epsilon_3)
\]
for all choices of $(\pm,\pm,\pm)$ except $(+,+,-)$ and
$(-,-,+)$. These six choices correspond to the six different ways to
augment the components of the $3$-copy with
$\epsilon_1,\epsilon_2,\epsilon_3$ in some order. For $(+,+,+)$ and
$(-,-,-)$, we recover the usual $m_2$ multiplication in the $A_\infty$
categories $\Aug$ and $\AugBC$.
\end{remark}

\subsection{A construction of unital categories.} \label{ssec:unital-construction}

Let $(\alg,\partial)$ be a semi-free \dga{} with generating set $\sS = \rR \sqcup \tT$ where $\rR = \{a_1, \ldots, a_r\}$ and $\cT= \{t_1, t_1^{-1}, \ldots, t_M, t_M^{-1}\}$.  Suppose further that $(\alg,\partial)$ is equipped with a weak link grading $(r \times c): \sS \to \{1, \ldots, l\} \times \{1, \ldots, l\}$.
(As in Definition \ref{def:linkgrading}, this means $r \times c$ satisfies all the conditions of a link grading {\em except} that the elements of $\tT$ are not required to be diagonal.)

We will construct a consistent sequence from the above data.\footnote{The following construction comes from the geometry
of the $m$-copies of a Lagrangian projection (cf.\ Proposition \ref{prop:mcopyMultiple}), but we require it in some non-geometric
settings in order to prove invariance.  Thus it is convenient to carry out the algebra first.
In the geometric case, the identity
$(\partial^m)^2 = 0$ is automatic because $\partial^m$ is the differential of a C--E \dga{}.}

\begin{proposition} \label{prop:xysequence}
Let $(\alg,\dd)$ be a semi-free DGA with a weak link grading as above.  We define a sequence of algebras $\alg^{(\bullet)}$ with $\alg^{(1)} = \alg$,
where
$\alg^{(m)}$ has the following generators:
\begin{itemize}
\item
$a_k^{ij}$, where $1 \leq k \leq r$ and $1\leq i,j\leq m$, with degree $|a_k^{ij}| = |a_k|$;
\item
$x_k^{ij}$, where $1 \leq k \leq M$ and $1\leq i<j\leq m$, with degree $|x_k^{ij}| = 0$;
\item
$y_k^{ij}$, where $1 \leq k \leq M$ and $1\leq i<j\leq m$, with degree $|y_k^{ij}| = -1$;
\item invertible generators $(t_k^{i})^{\pm 1}$
where $1 \leq k \leq M$ and $1 \le i \le m$.
\end{itemize}

We organize the generators with matrices.
Consider the following elements of $\operatorname{Mat}(m, \alg^{(m)})$ :
$A_k = (a^{ij}_k)$,  $\Delta_k = \operatorname{Diag}(t_k^1, \ldots, t_k^m)$,
\begin{align*}
X_k &= \left[
\begin{matrix}
1 & x_k^{12} & \cdots & x_k^{1m} \\
0 & 1 & \cdots & x_k^{2m} \\
\vdots & \vdots & \ddots & \vdots \\
0 & 0 & \cdots & 1\\
\end{matrix}
\right],
& 
Y_k &= \left[
\begin{matrix}
0 & y_k^{12} & \cdots & y_k^{1m} \\
0 & 0 & \cdots & y_k^{2m} \\
\vdots & \vdots & \ddots & \vdots \\
0 & 0 & \cdots & 0\\
\end{matrix}
\right].
\end{align*}
We introduce a ring homomorphism
\begin{align*}
\Phi: \alg &\rightarrow \operatorname{Mat}(m, \alg^{(m)}) \\
a_k &\mapsto A_k \\
t_k &\mapsto \Delta_k X_k \\
t_k^{-1} &\mapsto X_k^{-1} \Delta_k^{-1}
\end{align*}
and a $(\Phi, \Phi)$-derivation
\begin{align*}
\alpha_Y: \alg & \rightarrow \operatorname{Mat}(m, \alg^{(m)}) \\
s & \mapsto Y_{r(s)} \Phi(s) - (-1)^{|s|} \Phi(s) Y_{c(s)}, \hspace{6ex} s \in \sS.
\end{align*}
Then there is a unique derivation $\partial^m$ on $\alg^{(m)}$ such that (applying $\partial^m$ to matrices entry by entry):
\begin{align*}
\dd^m \Delta & = 0 \\
\dd^m Y_k & = Y_k^2 \\
\dd^m \circ \Phi & = \Phi \circ \dd + \alpha_Y. 
\end{align*}
Furthermore, this derivation is a differential: $(\partial^m)^2 = 0$.
\end{proposition}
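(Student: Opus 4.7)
The plan is to verify existence and uniqueness of $\dd^m$ first, then reduce $(\dd^m)^2 = 0$ to a single key identity that crucially uses the composability built into the weak link grading.

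For existence and uniqueness, I observe that a graded derivation on the free algebra $\alg^{(m)}$ is determined by its action on the generators $t_k^i, x_k^{ij}, y_k^{ij}, a_k^{ij}$. The condition $\dd^m \Delta_k = 0$ fixes $\dd^m (t_k^i)^{\pm 1}$, the formula $\dd^m Y_k = Y_k^2$ fixes $\dd^m y_k^{ij}$, and the relation $\dd^m \Phi = \Phi \dd + \alpha_Y$ applied to the generators $a_k$ and $t_k$ of $\alg$ fixes $\dd^m a_k^{ij}$ directly and $\dd^m x_k^{ij}$ after left-multiplying by $\Delta_k^{-1}$ (which commutes with $\dd^m$ since $\dd^m \Delta_k = 0$). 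Strict upper-triangularity of $\alpha_Y(t_k)$ and a degree count for $\Phi(\dd a_k) + \alpha_Y(a_k)$ ensure these recipes are compatible with the matrix shapes of $X_k$ and $A_k$. Extending to the rest of $\alg^{(m)}$ by the Leibniz rule then gives the required derivation. Before turning to the square, I would upgrade the relation $\dd^m \Phi = \Phi \dd + \alpha_Y$ from generators of $\alg$ to all of $\alg$: both sides are $(\Phi,\Phi)$-derivations (where $\alpha_Y$ is extended from its formula on generators as a $(\Phi,\Phi)$-derivation on all of $\alg$), so agreement on generators propagates by induction on word length.

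Because the graded commutator $(\dd^m)^2 = \tfrac{1}{2}[\dd^m,\dd^m]$ is itself a derivation, it suffices to check $(\dd^m)^2 = 0$ on the generators of $\alg^{(m)}$. On $t_k^i$ this is immediate, while on $y_k^{ij}$ one has
\[
\dd^m(Y_k^2) = (\dd^m Y_k)\,Y_k + (-1)^{|Y_k|} Y_k (\dd^m Y_k) = Y_k^3 - Y_k^3 = 0.
\]
For the remaining generators, since $A_k = \Phi(a_k)$ and $\Delta_k X_k = \Phi(t_k)$ with $\dd^m \Delta_k = 0$, it is enough to show $(\dd^m)^2 \Phi(s) = 0$ for every generator $s \in \sS$ of $\alg$. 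Applying the extended relation $\dd^m \Phi = \Phi \dd + \alpha_Y$ twice and using $\dd^2 = 0$ reduces this to the identity
\[
\dd^m \alpha_Y(s) + \alpha_Y(\dd s) = 0, \qquad s \in \sS.
\]

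I expect this last identity to be the main obstacle, since it is where the weak link grading is indispensable and where the signs must be kept under control. A direct expansion of $\dd^m \alpha_Y(s)$ using $\dd^m Y_k = Y_k^2$ and $\dd^m \Phi(s) = \Phi(\dd s) + \alpha_Y(s)$, together with the algebraic simplification
\[
Y_{r(s)}\alpha_Y(s) + (-1)^{|s|}\alpha_Y(s) Y_{c(s)} = Y_{r(s)}^2\Phi(s) - \Phi(s) Y_{c(s)}^2,
\]
collapses the $Y^2$ and $\alpha_Y(s)$ pieces and yields
\[
\dd^m\alpha_Y(s) = -Y_{r(s)}\Phi(\dd s) - (-1)^{|s|}\Phi(\dd s)\, Y_{c(s)}.
\]
On the other hand, for each composable word $w = x_1\cdots x_n$ appearing in $\dd s$, the $(\Phi,\Phi)$-Leibniz expansion of $\alpha_Y(w)$ telescopes because $c(x_i) = r(x_{i+1})$ by composability, leaving only the boundary contributions
\[
\alpha_Y(w) = Y_{r(s)}\Phi(w) - (-1)^{|w|}\Phi(w)\, Y_{c(s)} = Y_{r(s)}\Phi(w) + (-1)^{|s|}\Phi(w)\, Y_{c(s)},
\]
using $|w|=|s|-1$. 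Any constant term in $\dd s$ forces $r(s)=c(s)$ and, by grading, $|s|=1$, so the analogous contribution $c\,Y_i - c\,Y_i$ vanishes. Summing over all monomials in $\dd s$ gives $\alpha_Y(\dd s) = -\dd^m\alpha_Y(s)$, establishing the identity and completing the proof.
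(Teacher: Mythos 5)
Your proof is correct, and while its skeleton matches the paper's --- uniqueness and existence via the action on generators, then reducing $(\dd^m)^2=0$ to the identity $\dd^m\alpha_Y(s)=-\alpha_Y(\dd s)$ for $s\in\sS$ --- your verification of that key identity takes a genuinely different route. The paper passes to the composable algebra $\alg'$ and the map $\ell:\alg'\to\alg\otimes\mathrm{End}(\bZ^l)$, packages the $Y_i$ into a single element $\mathbb{Y}=\sum_i Y_i\otimes|i\rangle\langle i|$ satisfying $\dd^m\mathbb{Y}=\mathbb{Y}^2$, identifies $\tilde\alpha_Y$ with the graded commutator $[\mathbb{Y},\tilde\Phi(\cdot)]$, and closes with the Leibniz rule and the graded Jacobi identity. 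You instead expand $\dd^m\alpha_Y(s)$ directly, using the cancellation $Y_{r(s)}\alpha_Y(s)+(-1)^{|s|}\alpha_Y(s)Y_{c(s)}=Y_{r(s)}^2\Phi(s)-\Phi(s)Y_{c(s)}^2$ to absorb the $Y^2$ terms, and compute $\alpha_Y(\dd s)$ via the telescoping identity $\alpha_Y(w)=Y_{r(s)}\Phi(w)-(-1)^{|w|}\Phi(w)Y_{c(s)}$ for composable words $w$, with constant terms handled by $r(s)=c(s)$ and $|s|=1$; I checked the signs and they work out. Your telescoping lemma does appear in the paper, but only later, in the proof of Proposition \ref{prop:functorialityprop}, so importing it here makes the argument more elementary and self-contained; what you give up is the conceptual packaging that $-\mathbb{Y}$ is a Maurer--Cartan element and $\alpha_Y$ its associated twisting, which the paper highlights again after Proposition \ref{prop:complete-aug-description}. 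Both arguments invoke the weak link grading at exactly the same point: composability of the words in $\dd s$ is what makes the telescoping (equivalently, the fact that $\ell$ is a \dga{} map) go through.
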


\begin{proof}
The uniqueness of such a derivation follows  because taking $s = t_k$ determines
$\Delta^{-1} \dd^m \Phi(t_k) = \Delta^{-1} \dd^m (\Delta_k X_k) = \dd^m X_k$, and taking $s = a$ determines
$\dd^m \Phi(a_k) = \dd^m A_k$.  Existence follows because the above specifies its behavior on the generators,
and the equation $\partial^m \circ \Phi = \Phi \circ \partial + \alpha_Y$ need only be checked on generators
since both sides are $(\Phi, \Phi)$-derivations.  (Recall that $f$ is a $(\Phi, \Phi)$-derivation when
$f(ab) = f(a) \Phi(b) + (-1)^{|a|} \Phi(a) f(b)$.)

We turn to checking $(\partial^m)^2 =0$.  Evidently
\[
(\dd^m)^2(\Delta_k)=0, \quad (\dd^m)^2 Y_k = (\dd Y_k)Y_k + (-1)^{-1}Y_k(\dd Y_k) = Y_k^3 - Y_k^3 = 0,
\]
and we compute
\[
(\partial^m)^2 \circ \Phi =  \partial^m\circ( \Phi \circ \dd + \alpha_Y )  =
 \Phi \circ \dd^2 + \alpha_Y\circ \dd + \partial^m \circ\alpha_Y  = \alpha_Y\circ \dd + \partial^m \circ\alpha_Y
 \]
so it remains only to show, for any $s \in \sS$, that
\begin{equation} \label{eq:pmaYsaYps}
\partial^m \alpha_Y(s) = -\alpha_Y(\partial s).
\end{equation}

In order to verify this identity, recall from Definition \ref{def:linkgrading} the DGA homomorphism $\ell: \alg' \rightarrow \alg \otimes \mbox{End}(\bZ^l)$ arising from the weak link grading on $\alg$, where $(\alg', \partial')$ denotes the composable algebra and $\alg \otimes \operatorname{End}(\bZ^l)$ has differential $\partial \otimes 1$.
We compose the maps $\Phi \otimes 1$ and $\alpha_Y \otimes 1$ with $\ell$ to define maps
\begin{align*}
\tilde{\Phi} &: \alg' \xrightarrow{\ell} \alg \otimes  \operatorname{End}(\bZ^l) \xrightarrow{\Phi \otimes 1} \operatorname{Mat}(m, \alg^{(m)}) \otimes \operatorname{End}(\bZ^l) \\
\tilde{\alpha}_Y &: \alg' \xrightarrow{\ell} \alg \otimes \operatorname{End}(\bZ^l) \xrightarrow{\alpha_Y \otimes 1} \operatorname{Mat}(m, \alg^{(m)}) \otimes \operatorname{End}(\bZ^l).
\end{align*}
The identity $\dd^m \circ \Phi  = \Phi \circ \dd + \alpha_Y$ immediately implies $(\dd^m \otimes 1) \circ \tilde{\Phi} = \tilde{\Phi} \circ \dd' + \tilde{\alpha}_Y$.
Moreover, if we can show for any $s \in \sS$ that
\begin{equation} \label{eq:pmomidcirc}
(\partial^m \otimes 1) \circ \tilde{\alpha}_Y(s) = -\tilde{\alpha}_Y \circ \partial' (s),
\end{equation}
then \eqref{eq:pmaYsaYps} will follow.
This is because we can then compute
\begin{align*}
(\partial^m \circ \alpha_Y(s) ) \otimes |r(s)\rangle\langle c(s)| &=
(\partial^m \otimes 1) \circ (\alpha_Y\otimes 1) \circ \ell(s) \\
&= (\partial^m \otimes 1) \circ \tilde{\alpha}_Y(s) \\
(-\alpha_Y \circ \partial(s)) \otimes |r(s)\rangle\langle c(s)| &=
(-\alpha_Y \otimes 1) \circ (\partial \otimes 1) \circ \ell (s) \\
&= (-\alpha_Y \otimes 1) \circ \ell \circ \partial'(s) \\
&= -\tilde{\alpha}_Y \circ \partial'(s),
\end{align*}
and these last two quantities are equal.

To establish (\ref{eq:pmomidcirc}), we define an element of $\operatorname{Mat}(m, \alg^{(m)}) \otimes \operatorname{End}(\bZ^l)$ by the formula
\[ \mathbb{Y} = \sum_{i=1}^l Y_i \otimes |i\rangle\langle i| \]
and verify the identities
\begin{align*}
\dd^m \mathbb{Y} &= \mathbb{Y}^2, & \tilde{\alpha}_Y(s) &= [\mathbb{Y}, \tilde{\Phi}(s)]
\end{align*}
where $s\in \sS$ and $[x,y] = xy - (-1)^{|x||y|}yx$ denotes the graded commutator.
Note that $\tilde{\alpha}_Y$ and $[\mathbb{Y}, \tilde{\Phi}(\cdot)]$ are both $(\tilde{\Phi}, \tilde{\Phi})$-derivations from $\alg'$ to $\operatorname{Mat}(m, \alg^{(m)}) \otimes \operatorname{End}(\bZ^l)$.  Therefore, since they agree on a generating set for $\mathcal{A}'$, it follows that
$\tilde{\alpha}_Y(x) = [\mathbb{Y}, \tilde{\Phi}(x)]$ holds for any $x \in \alg'$.

Now the Leibniz rule $\dd[x,y] = [\dd x, y] + (-1)^{|x|}[x,\dd y]$, together with $|\mathbb{Y}|=-1$, gives
\begin{align*}
(\dd^m \otimes 1) \circ \tilde{\alpha}_Y(s) &= [(\dd^m \otimes 1)  \mathbb{Y}, \tilde{\Phi}(s)] - [\mathbb{Y}, (\dd^m \otimes 1) \tilde{\Phi}(s)] \\
&= [\mathbb{Y}^2, \tilde{\Phi}(s)] - [\mathbb{Y}, (\dd^m \otimes 1) \tilde{\Phi}(s)].
\end{align*}
Similarly, we compute that
\begin{align*}
\tilde{\alpha}_Y(\dd' s) &= [\mathbb{Y}, \tilde{\Phi}(\dd' s)] \\
&= [\mathbb{Y}, (\dd^m \otimes 1)  \tilde{\Phi}(s)] - [\mathbb{Y}, \tilde{\alpha}_Y(s)] \\
&= [\mathbb{Y}, (\dd^m \otimes 1)  \tilde{\Phi}(s)] - [\mathbb{Y}, [\mathbb{Y}, \tilde{\Phi}(s)]]
\end{align*}
and we can verify either directly or using the graded Jacobi identity that the last term on the right is equal to $[\mathbb{Y}^2, \tilde{\Phi}(s)]$.  Thus,  $(\dd^m \otimes 1) \circ \tilde{\alpha}_Y(s) = -\tilde{\alpha}_Y(\dd s)$ holds as desired.
\end{proof}

\begin{proposition}  \label{prop:Thealgmabove}
The $\alg^{(m)}$ above comes with a $m$-component link grading given by $(r \times c)(a_k^{ij}) = (r \times c)(x_k^{ij}) = (r \times c)(y_k^{ij})=  (i, j)$  and $(r \times c)(t_k^{i})=  (i, i)$.
Given $I: [m] \hookrightarrow [n]$, we define $h_I(s^{ij}) = s^{I(i), I(j)}$.
This gives $\alg^{(\bullet)}$ the structure of a consistent sequence of \dgas{}.
\end{proposition}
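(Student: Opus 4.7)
The plan is to verify the three ingredients required by Definition \ref{def:consistent}: (i) the proposed assignment $(r\times c)$ on $\sS_m := \sS(\alg^{(m)})$ is an actual $m$-component link grading on $(\alg^{(m)},\partial^m)$; (ii) the $h_I$ make $\sS$ into a co-$\Delta_+$ set and the grading map commutes with them, i.e.\ $\sS\to\Delta_+^2$ is a morphism of co-$\Delta_+$ sets; and (iii) for every $I:[m]\hookrightarrow [n]$, the induced map $h_I:\alg^{(m)}\to \alg^{(n)}_I$ is a \dga{} isomorphism. I will treat these in that order, since (iii) requires (i) to even make sense of $\alg^{(n)}_I$.

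For (i), because $\partial^m$ is a derivation it suffices to check composability on the generators $a_k^{ij}$, $x_k^{ij}$, $y_k^{ij}$, $t_k^i$. The cases $\partial^m t_k^i = 0$ and $\partial^m y_k^{ij} = \sum_{i<p<j} y_k^{ip}y_k^{pj}$ are immediate from the defining relations $\partial^m\Delta_k=0$ and $\partial^m Y_k = Y_k^2$. For $a_k^{ij}$, I will expand
\[
\partial^m a_k^{ij} = \bigl(\Phi(\partial a_k)\bigr)^{ij} + \bigl(\alpha_Y(a_k)\bigr)^{ij}
\]
and argue entry by entry: each $(i_{p-1},i_p)$-entry of $\Phi(s)$ for a generator $s\in\sS$ is either $0$ or a word composable from $i_{p-1}$ to $i_p$ in $\alg^{(m)}$ (explicitly, $a_q^{i_{p-1}i_p}$ when $s=a_q$; $t_q^{i_{p-1}}(x_q^{i_{p-1}i_p}$ or $1$ or $0)$ when $s=t_q$; a polynomial in the $x_q^{\bullet\bullet}$ times $(t_q^{i_p})^{-1}$ when $s=t_q^{-1}$). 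Multiplying such entries along a matrix product preserves composability; scalar constants from $\partial a_k$ only contribute on the diagonal $i=j$ via $\Phi(c)=cI$, as allowed. The term $\alpha_Y(a_k)^{ij}$ is manifestly a $\bZ$-combination of words $y_{r(a_k)}^{ip}a_k^{pj}$ or $a_k^{ip}y_{c(a_k)}^{pj}$, each composable from $i$ to $j$. The analogous analysis for $\partial^m x_k^{ij}$ comes from $\Delta_k\partial^m X_k = Y_{r(t_k)}\Delta_k X_k-\Delta_k X_k Y_{c(t_k)}$, using that $\Delta_k^{\pm1}$ is diagonal so that left/right multiplication does not mix row/column indices.

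For (ii), functoriality $h_J\circ h_I = h_{J\circ I}$ is immediate from $h_I(s^{pq})=s^{I(p),I(q)}$ on generators. The compatibility $(r\times c)\circ h_I = h_I^{\Delta_+^2}\circ(r\times c)$ follows because both sides send $s^{pq}$ to $(I(p),I(q))$.

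For (iii), $h_I$ is clearly a ring map, and on generators it bijects $\sS_m$ with the generating set of $\alg^{(n)}_I$ (those $s^{I(p),I(q)}$ lying in $\coprod_{i,j\in I}\sS_n^{ij}$), so it is a grade-preserving algebra isomorphism $\alg^{(m)}\xrightarrow{\sim}\alg^{(n)}_I$. To see it intertwines differentials, I will restrict the defining identity $\partial^n\circ\Phi^n = \Phi^n\circ\partial +\alpha_{Y^n}$ to the $I\times I$ principal submatrix: modulo $J_{I,I^c}$ every sum over intermediate indices collapses to a sum over $I$, which matches term-by-term the corresponding expression for $\partial^m$ on $\alg^{(m)}$ under $h_I$.

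The main obstacle is step (i), and within it the bookkeeping for the twisted generators $t_k,t_k^{-1}$: because $\Phi(t_k^{\pm1})$ is not a single-generator matrix but a product involving $X_k$, $X_k^{-1}$, and $\Delta_k^{\pm1}$, showing that every matrix entry of $\Phi$ applied to a composable word in $\alg$ remains composable in $\alg^{(m)}$ requires carefully tracking that the factors $t_k^i, x_k^{ij}, (t_k^j)^{-1}$ appearing successively in each summand form an admissible sequence of link-grading labels. Once this is in place, the remainder of the argument is essentially bookkeeping.
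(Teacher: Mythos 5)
Your proposal is correct and follows the same route as the paper, whose proof is essentially "by inspection": composability of each term of the differential is automatic because $\partial^m$ was defined by matrix formulas (so every entry of $\Phi(\partial a_k)$, $\alpha_Y(a_k)$, $Y_k^2$, etc.\ is a sum of words whose row/column superscripts chain correctly), and the consistency of the $h_I$ holds because those matrix formulas are literally identical for every $m$. You have simply expanded the entry-by-entry bookkeeping (including the $\Phi(t_k^{\pm1})=\Delta_kX_k$, $X_k^{-1}\Delta_k^{-1}$ cases) that the paper leaves implicit.
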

\begin{proof}
By inspection.  The fact that the above formula gives a link grading follows because the differential was defined by a matrix formula in the
first place.  Also, the matrix formulas are identical for all $m \geq 1$, so the identification of generators
extends to a \dga{} isomorphism $(\mathcal{A}^{(m)}, \partial^m) \rightarrow (\mathcal{A}^{(n)}_I, \partial^n)$.
\end{proof}

\begin{remark}
The link grading defined in Proposition \ref{prop:Thealgmabove}
is unrelated to the initial weak link grading on $\alg$ that was used in Proposition \ref{prop:xysequence} in defining differentials on the $\alg^{(m)}$.  In particular, for $\alg^{(1)} = \alg$ the two gradings are distinct if the initial weak link grading has $l > 1$.
\end{remark}

\begin{proposition}  \label{thm:unital}
Let $\alg$ be a DGA with weak link grading, and $\alg^{(\bullet)}$ the consistent sequence from
Proposition \ref{prop:xysequence}.
Then the $A_\infty$ category $\Aug(\alg^{(\bullet)})$ 
is strictly unital, with the unit being given by
\[
e_\epsilon = -\sum_{j =1}^M (y^{12}_j)^{\vee} \in \hom(\epsilon,\epsilon)
\]
for any $\epsilon \in \Aug(\alg^{(\bullet)})$.
\end{proposition}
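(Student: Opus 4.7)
The plan is to compute the products $m_k$ directly from the matrix formulas of Proposition~\ref{prop:xysequence} via the dualization formula \eqref{eq:ms}. The key structural observation is that every monomial appearing in $\partial^m$ of a Reeb generator of $\alg^{(m)}$ in $C^{1,m}$ and containing at least one $y$-generator has length at most two. Indeed, from Proposition~\ref{prop:xysequence},
\[
\partial^m a^{ij}_k = \Phi(\partial a_k)_{ij} + (Y_{r(a_k)} A_k)_{ij} - (-1)^{|a_k|}(A_k Y_{c(a_k)})_{ij},
\]
where $\Phi(\partial a_k)$ involves only $A_l$, $\Delta_l X_l$, and $X_l^{-1}\Delta_l^{-1}$ and hence contains no $y$'s; meanwhile $\partial^m x^{ij}_k = (\Delta_k^{-1} Y_k \Delta_k X_k)_{ij} - (X_k Y_k)_{ij}$ and $\partial^m y^{ij}_k = (Y_k^2)_{ij}$. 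Every matrix entry involving a $Y_k$ factor is a sum of two-factor monomials, and the twisting operator $\phi_\epsilon$ -- which only shifts diagonal $a$- and $t$-generators by augmentation values -- preserves this property.

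This immediately yields $m_k(\ldots,e_\epsilon,\ldots) = 0$ for $k \geq 3$: the formula \eqref{eq:ms} would require a length-$k$ monomial in some $\partial_\epsilon b$ containing a $y^{i,i+1}_j$-factor, but no such monomial exists.

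For $m_1(e_\epsilon) = -\sum_j m_1((y^{12}_j)^\vee)$, one checks that $\sum_j \mathrm{Coeff}_{y^{12}_j}(\partial_\epsilon b)$ vanishes for every Reeb generator $b \in C^{12}$ in the 2-copy with diagonal augmentation $(\epsilon,\epsilon)$: $\partial^2 y^{12}_k = 0$ outright; $\partial_\epsilon x^{12}_k = (\epsilon(t_k)^{-1}\epsilon(t_k)-1) y^{12}_k$ vanishes because the two copies share the same augmentation; and for $b = a^{12}_k$ the relevant sum equals $(1-(-1)^{|a_k|})\epsilon(a_k)$, which vanishes in every case since $\epsilon(a_k)=0$ whenever $|a_k| \neq 0$.

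For $m_2(e_{\epsilon_2},a) = a$ and $m_2(a,e_{\epsilon_1}) = a$, I compute the relevant entries of $\partial^3$ in the 3-copy. For each generator type $a^{12}_k$, $x^{12}_k$, $y^{12}_k$ of $C^{12}$, exactly one contribution from the $(A_k Y_{c(a_k)})_{13}$, $(X_k Y_k)_{13}$, or $(Y_k^2)_{13}$ entry produces a monomial of the form $a \cdot y^{23}_j$ carrying a Kronecker-delta coefficient in $j$ (and symmetrically for $y^{12}_j \cdot a$ from the $(Y_{r(a_k)}A_k)$, $(\Delta_k^{-1}Y_k\Delta_k X_k)$, or $(Y_k^2)$ entries). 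Summing over $j$ with the prefactor $-1$ from the definition of $e_\epsilon$, the Koszul sign $(-1)^\sigma$ from \eqref{eq:ms}, and the identification $h_{13}^{-1}: C^\vee_{13}\to C^\vee_{12}$, in each case the three sign factors combine to $+1$ and the output is exactly $a$. The main obstacle is simply the careful bookkeeping of signs across these three conventions; no further ideas are required beyond the structural restriction on $y$-monomials described above.
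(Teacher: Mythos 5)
Your proposal is correct and follows essentially the same route as the paper's proof: read off $\partial^2$ and $\partial^3$ from the matrix formulas of Proposition~\ref{prop:xysequence}, verify $m_1(e_\epsilon)=0$ via the cancellation $\epsilon(a_k)-(-1)^{|a_k|}\epsilon(a_k)=0$, check the $m_2$ identities with the sign conventions of \eqref{eq:ms}, and kill all higher products by observing that the $y$-generators only ever appear in words of degree at most $2$ in the non-invertible generators. The only quibble is your displayed formula for $\partial_\epsilon x_k^{12}$, which conflates $r(t_k)$ and $c(t_k)$; the correct cancellation is between the coefficients of $y_{r(t_k)}^{12}$ and $y_{c(t_k)}^{12}$ in the sum over $j$, but this does not affect the conclusion.
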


\begin{proof}
We recall the properties of a strict unit element:
we must show that $m_1(e_\epsilon)=0$, that $m_2(e_{\epsilon_1}, a) = m_2(a, e_{\epsilon_2}) = a$ for any $a \in \hom(\epsilon_1, \epsilon_2)$, and that all higher compositions involving $e_\epsilon$ vanish.

Inspection of the formula for $\dd^2: \alg^{(2)}  \rightarrow \alg^{(2)}$ yields
\begin{align*}
\dd^2(a_k^{12}) &= y_{r(a_k)}^{12} a_k^{22} - (-1)^{|a_k|} a_k^{11} y_{c(a_k)}^{12} + \cdots \\
\dd^2(x_k^{12}) &= (t_{k}^{11})^{-1} y_{r(t_k)}^{12} t_{k}^{22} - y_{c(t_k)}^{12} \\
\dd^2(y_k^{12}) &= 0,
\end{align*}
and so if we write $\dd_{(\epsilon,\epsilon)}$ for the differential $\phi_{(\epsilon,\epsilon)} \circ \dd \circ \phi_{(\epsilon,\epsilon)}^{-1}$ on $\alg^{(2)}$, then for $1 \leq k \leq r$
the coefficient of $(a_k^{12})^{\vee}$ in $-m_1 e_\epsilon$ is
\[
\langle m_1 \sum_{j=1}^M (y^{12}_j)^{\vee}, (a^{12}_k)^{\vee} \rangle = \langle \sum_{j=1}^M y^{12 }_j , \partial_{(\e,\e)} a^{12}_k \rangle = \langle y_{r(k)}^{12} + y_{c(k)}^{12}, \partial_{(\e,\e)} a^{12}_k \rangle =  \epsilon(a_k) - (-1)^{|a_k|} \epsilon(a_k) = 0.
\]
(In the final equality, we used the fact that $\epsilon(a_k)=0$ unless
$|a_k|=0$.)
A similar computation shows that $\langle m_1 \sum_{j=1}^M (y^{12}_j)^{\vee}, (x^{12}_k)^{\vee} \rangle = 0$, and $\langle m_1 \sum_{j=1}^M (y^{12}_j)^{\vee}, (y^{12}_k)^{\vee} \rangle = 0$ holds since $\partial y^{12}_k = 0$.  Thus $m_1(e_\e) = 0$.

The formula for $\dd^3: \alg^{(3)}  \rightarrow \alg^{(3)}$ yields
\begin{align*}
\dd^3(a_k^{13}) &= y_{r(k)}^{12} a_k^{23} - (-1)^{|a_k|} a_k^{12} y_{c(k)}^{23} + \cdots \\
\dd^3(x_k^{13}) &= (t_{k}^{11})^{-1} y_{r(k)}^{12} t_{k}^{22} x_k^{23} - x_k^{12} y_{c(k)}^{23} + \cdots \\
\dd^3(y_k^{13}) &= y_k^{12} y_k^{23}.
\end{align*}
Using \eqref{eq:ms}, we calculate that 
\[
m_2(e_\e,(a_k^{12})^{\vee}) = 
(-1)^{|a_k^\vee|+1}(-1)^{|a_k|} (a_k^{12})^{\vee} = (a_k^{12})^{\vee}
\]
and similarly $m_2((a_k^{12})^{\vee},e_\e) = (a_k^{12})^{\vee}$.
In the same manner, we find that
$m_2(e_\e,(x_k^{12})^{\vee}) = m_2((x_k^{12})^{\vee},e_\e) =
(x_k^{12})^{\vee}$
and $m_2(e_\e, (y_k^{12})^{\vee}) = (y_k^{12})^{\vee}$; note that for
$m_2((x_k^{12})^{\vee},e_\e) =
(x_k^{12})^{\vee}$, we have $e_\e \in \hom(\e,\e)$ and $x_k^\vee \in
\hom(\e,\e')$ for some $\e,\e'$, and the corresponding diagonal augmentation $(\e,\e,\e')$ of $\alg^{(3)}$ sends both $t_k^{11}$ and $t_k^{22}$ to $\e(t_k)$.

Finally, all higher order compositions involving $e_\e$ vanish for the following reason: In any differential of a generator in any of the $\alg^{(m)}$, the $y_k^{ij}$ appear only in words that have at most $2$ non-$t$ generators.  Therefore, when $\e$ is a pure augmentation of $\alg^{(m)}$, occurences of $y_k^{ij}$ in the differential of the other generators must be in words of length $2$ or less.
\end{proof}

\begin{proposition}  \label{prop:functorialityprop}
Let $f: (\alg, \dd) \rightarrow (\mathcal{B}, \dd)$ be a DGA morphism between
algebras with weak link gradings (with the same number of components),
 which respects the weak link gradings in the sense of (2) from Definition \ref{def:consistentsequencemorphisms}.
 Then $f$ extends, in a canonical way, to a consistent sequence of morphisms
\[
f^{(m)} :(\alg^{(m)}, \dd^m) \rightarrow (\mathcal{B}^{(m)}, \dd^m)
\]
inducing a unital $A_\infty$ morphism of categories
$\Aug(\mathcal{B}^{(\bullet)}) \to \Aug(\mathcal{A}^{(\bullet)})$.
This construction defines a functor, i.e. it preserves identity morphisms and compositions.
\end{proposition}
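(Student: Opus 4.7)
The plan is to define $f^{(m)}: \alg^{(m)} \to \mathcal{B}^{(m)}$ on generators using the matrix formulation of Proposition~\ref{prop:xysequence}, verify the DGA property and the three axioms of Definition~\ref{def:consistentsequencemorphisms}, and then invoke Proposition~\ref{prop:consistentF}. Concretely, writing $\Phi^\alg, \Phi^\mathcal{B}$ for the ring homomorphisms of Proposition~\ref{prop:xysequence} attached to $\alg$ and $\mathcal{B}$, and writing $F^{(m)}: \operatorname{Mat}(m, \alg^{(m)}) \to \operatorname{Mat}(m, \mathcal{B}^{(m)})$ for the entrywise extension, I define $f^{(m)}$ by requiring
\[
F^{(m)} \circ \Phi^\alg = \Phi^\mathcal{B} \circ f, \qquad F^{(m)}(Y_k) = Y_k.
\]
The second identity specifies $f^{(m)}(y_k^{ij}) = y_k^{ij}$. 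The first specifies $f^{(m)}(a_k^{ij}) := \bigl(\Phi^\mathcal{B}(f(a_k))\bigr)_{ij}$ for each non-invertible generator $a_k$. Since $f$ preserves the invertible subalgebra, $\Phi^\mathcal{B}(f(t_k))$ is upper triangular with invertible diagonal entries (being a product of matrices of this form), hence admits a unique factorization $D_k \cdot U_k$ with $D_k$ diagonal and $U_k$ upper unitriangular; one reads off $f^{(m)}(t_k^i)$ from $D_k$ and $f^{(m)}(x_k^{ij})$ from $U_k$.

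Next, I check that $f^{(m)}$ intertwines $\partial^m$. Since both sides are derivations, it suffices to verify the identity on generators, which via the matrix formulation reduces to three statements: $F^{(m)}(\partial^m \Delta_k) = \partial^m F^{(m)}(\Delta_k)$, $F^{(m)}(\partial^m Y_k) = \partial^m F^{(m)}(Y_k)$, and $F^{(m)}(\partial^m \Phi^\alg(s)) = \partial^m \Phi^\mathcal{B}(f(s))$ for $s \in \sS$. The first holds because both sides vanish; the second because $F^{(m)}(Y_k^2) = F^{(m)}(Y_k)^2 = Y_k^2$. For the third, apply $\partial^m \circ \Phi = \Phi \circ \partial + \alpha_Y$ on both sides; since $f$ is a DGA morphism, the $\Phi \circ \partial$ contributions agree, so the required statement is $F^{(m)}(\alpha_Y(s)) = \alpha_Y(f(s))$. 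The bracket formula $\tilde{\alpha}_Y = [\mathbb{Y}, \tilde{\Phi}(\cdot)]$ from the proof of Proposition~\ref{prop:xysequence}, together with $F^{(m)}(\mathbb{Y}) = \mathbb{Y}$ and the weak link-grading compatibility of $f$ (which ensures that the $Y$-twist behaves correctly on composable words), reduces this to the DGA property of $f$.

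Third, I verify the three axioms of Definition~\ref{def:consistentsequencemorphisms}. Preservation of the invertible subalgebra (axiom~(1)) is immediate. The link-grading compatibility (axiom~(2)) follows because $\bigl(\Phi^\mathcal{B}(f(a_k))\bigr)_{ij}$ is a sum of composable words from $i$ to $j$ in $\mathcal{B}^{(m)}$: matrix multiplication produces compositions along intermediate indices, and the weak link-grading hypothesis on $f$ guarantees that every summand lands in the correct sector. Axiom~(3), compatibility with the structure maps $h_I$, is transparent from the uniformity of the matrix formulas in $m$: the submatrix of $\Phi^\mathcal{B}(f(a_k))$ indexed by $I \subset [n]$ agrees with the $m$-sized matrix in $\mathcal{B}^{(m)}$, and in $\mathcal{B}^{(n)}/J_{I,I^c}$ any path through an index outside $I$ involves a generator killed by the ideal, so diagram~\eqref{eq:consistency} commutes.

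With the consistent sequence $f^{(\bullet)}$ established, Proposition~\ref{prop:consistentF} produces the $A_\infty$ functor $F: \Aug(\mathcal{B}^{(\bullet)}, \coeffs) \to \Aug(\alg^{(\bullet)}, \coeffs)$. Unitality follows from Proposition~\ref{thm:unital} together with $f^{(2)}(y_k^{12}) = y_k^{12}$: the strict unit $e_\epsilon = -\sum_k (y_k^{12})^\vee$ is carried by $F_1$ to the strict unit at $f^*\epsilon$, and no higher-order terms involving units appear since the $y$-generators enter the differentials only linearly. For functoriality of the assignment $f \mapsto F$: given $g: \mathcal{B} \to \mathcal{C}$, both $(g\circ f)^{(m)}$ and $g^{(m)}\circ f^{(m)}$ satisfy the defining identities with $\Phi^\mathcal{C}\circ (g\circ f)$ on the right, so they coincide by uniqueness, and the identity on $\alg$ manifestly extends to identities on $\alg^{(m)}$. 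The main obstacle is the second step: the $\alpha_Y$-compatibility is a genuinely matrix-level statement rather than an entrywise check, but the graded-bracket reformulation $\alpha_Y = [\mathbb{Y}, \Phi(\cdot)]$ together with the weak link-grading hypothesis on $f$ makes it tractable.
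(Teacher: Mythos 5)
Your proposal is correct and follows essentially the same route as the paper: define $f^{(m)}$ by the matrix identities $F^{(m)}\circ\Phi^{\alg}=\Phi^{\cB}\circ f$ and $F^{(m)}(Y_k)=Y_k$, reduce the chain-map property to $F^{(m)}\circ\alpha_Y=\alpha_Y\circ f$ using the composable-word hypothesis on $f$ (the paper proves the needed identity $\alpha_Y(w)=Y_{r}\,\Phi(w)-(-1)^{|w|}\Phi(w)\,Y_{c}$ by induction on word length rather than quoting the bracket form $[\mathbb{Y},\tilde{\Phi}(\cdot)]$, but the two are interchangeable), deduce consistency from the uniformity of the matrix formulas in $m$, and conclude via Proposition~\ref{prop:consistentF}. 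The only cosmetic difference is your $D_k U_k$ factorization for the images of the $t_k$: the paper simply imposes $f^{(m)}(\Delta_k)=\Delta_k$ and reads off $f^{(m)}(X_k)=\Delta_k^{-1}\Phi_{\cB}(f(t_k))$, which agrees with your prescription in the situations where the proposition is applied (where $f(t_k)=t_k$), and your version is if anything the more careful one in general.
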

\begin{proof}

Given $f: (\alg, \partial) \rightarrow (\mathcal{B}, \partial)$ we produce morphisms $f^{(m)}$, $m\geq1$, by requiring that the following matrix formulas hold
(again applying $f^{(m)}$ entry-by-entry):
\begin{align*}
f^{(m)}(\Delta_k) &= \Delta_k, & f^{(m)}(Y_k) &= Y_k,
\end{align*}
and when $x \in \alg$ is a generator,
\begin{equation} \label{eq:fmPhialg}
f^{(m)} \circ \Phi_\alg(x) = \Phi_{\mathcal{B}} \circ f(x).
\end{equation}
(Note that taking $x=t_k$ uniquely specifies $f^{(m)}(X_k)= \Delta_k^{-1} \cdot \Phi_\mathcal{B} \circ f(t_k) $.)  This characterizes the value of $f^{(m)}$ on generators, and we extend $f^{(m)}$ as an algebra homomorphism.  Equation (\ref{eq:fmPhialg}) then holds for all $x \in \alg$, as the morphisms on both sides are algebra homomorphisms.

Next, note that the $(\Phi,\Phi)$-derivation $\alpha_Y:\alg \rightarrow \operatorname{Mat}(m,\alg^{(m)})$ satisfies
\[
\alpha_Y(w) = Y_i \, \Phi(w) - (-1)^{|w|} \Phi(w)\, Y_j
\]
for any composable word in $\alg$ from $i$ to $j$.  This is verified by inducting on the length of $w$:  if $w = a \cdot b$ with $a$ composable from $i$ to $k$ and $b$ composable from $k$ to $j$, then
\begin{align*}
\alpha_Y(a b) &= \alpha_Y(a) \, \Phi(b) + (-1)^{|a|} \Phi(a) \, \alpha_Y(b) \\
&= (Y_i \, \Phi(a) - (-1)^{|a|} \Phi(a) \, Y_k) \Phi(b) + (-1)^{|a|}\Phi(a) (Y_k \, \Phi(a) - (-1)^{|b|} \Phi(a)\, Y_j) \\
&= Y_i \,\Phi(a b) -(-1)^{|a\cdot b|} \Phi(a b)\, Y_j.
\end{align*}
Because $f$ respects the link gradings, if $x \in \sS^{ij}$ is a generator of $\alg$ then $f(x)$ is a $\bZ$-linear combination of composable words from $i$ to $j$ in $\mathcal{B}$, so we  have
\begin{align}\label{eq:alphaYf}
\begin{split}
f^{(m)}\circ \alpha_Y(x) & =  f^{(m)}(Y_i \,\Phi_\alg(x) - (-1)^{|x|} \Phi_\alg(x) \, Y_j) \\
&= Y_i\, (f^{(m)}\circ \Phi_\alg)(x) - (-1)^{|x|} (f^{(m)} \circ \Phi_\alg)(x) \, Y_j \\
&= Y_i \, (\Phi_{\mathcal{B}}\circ f)(x) - (-1)^{|f(x)|}
(\Phi_{\mathcal{B}}\circ f)(x) \, Y_j \\
& =
\alpha_Y \circ f(x).
\end{split}
\end{align}

To verify that $f^{(m)}$ is a DGA map, we need to verify that $f^{(m)} \partial^m = \partial^m f^{(m)}$ holds when applied to any generator of $\alg^{(m)}$.  For the entries of $\Delta$ or $Y$, this is immediate.  For the remaining generators, it suffices to compute using (\ref{eq:fmPhialg}) and (\ref{eq:alphaYf})
that for $x \in \sS$,
\begin{align*}
f^{(m)} \circ \partial^m \circ\Phi (x) &= f^{(m)} \circ \Phi \circ\partial(x) + f^{(m)} \circ \alpha_Y(x) \\
&= \Phi \circ f \circ \partial(x) + \alpha_Y \circ f(x) \\
&= \Phi \circ \partial \circ f(x) + \alpha_Y \circ f(x) \\
&= \partial ^m \circ \Phi  \circ f (x) - \alpha_Y \circ f(x)+ \alpha_Y \circ f(x) \\
&= \partial^m \circ f^{(m)} \circ \Phi(x).
\end{align*}
The consistency of the $f^{(m)}$ follows since the matrix formulas used for different $m$ all appear identical; we get a morphism of $A_\infty$ categories
 by Proposition \ref{prop:consistentF}.  The construction preserves identities by inspection.

That the construction of this Proposition defines a functor is clear from the definitions combined with the functoriality of the construction in Proposition \ref{prop:consistentF}.
\end{proof}

\newpage
%!TEX root = augmain.tex

\section{The augmentation category of a Legendrian link}

\label{sec:augcat}

In this section, we apply the machinery from Section~\ref{sec:augcatalg} to
define a new category $\Aug(\Lambda)$ whose objects are
augmentations of a Legendrian knot or link $\Lambda$ in $\bR^3$. As mentioned in the Introduction, this category
is similar to, but in some respects crucially different from, the
augmentation category constructed by Bourgeois and Chantraine in
\cite{BC}, which we write as $\AugBC(\Lambda)$. Our approach in fact allows us to treat the two categories
as two versions of a single construction, and to investigate the
relationship between them.

We begin in Section~\ref{ssec:augcatdef} by considering the link consisting of $m$ parallel copies of $\Lambda$
for $m \geq 1$, differing from each other by translation in the
Reeb direction, and numbered sequentially. In the language of
Section~\ref{sec:augcatalg}, the \dgas{} for these $m$-copy links form a
consistent sequence of \dgas{}, and we can dualize, using Proposition~\ref{prop:consistentC}, to obtain an
$A_\infty$ category: $\Aug$ if the components are ordered from top to
bottom, and $\AugBC$ if from bottom to top.

Associating a \dga{} to the $m$-copy of $\Lambda$ requires a choice of
perturbation; the construction of $\AugBC$ is independent of this
perturbation, but $\Aug$ is not. For the purposes of defining $\Aug$,
we consider two explicit perturbations, the Lagrangian and the
front projection $m$-copies. In Section~\ref{ssec:dgas-unitality}, we show that the $A_\infty$ category
associated to the Lagrangian perturbation is constructed algebraically
from the \dga{} of $\Lambda$
using Proposition~\ref{prop:xysequence}, and conclude that $\Aug$ is
unital.

In Section~\ref{ssec:invariance}, we then proceed to prove invariance
of $\Aug$ under choice of perturbation and Legendrian isotopy of
$\Lambda$. In Section~\ref{ssec:exs}, we present computations of
$\Aug$ and $\AugBC$ for some examples.

\subsection{Definition of the augmentation category} \label{ssec:augcatdef}

We recall our
contact conventions.  For a manifold $M$, we denote the first jet space
by $J^1(M) = T^*M \times
\bR_z$, the subscript indicating that we use $z$ as the coordinate in the $\bR$ direction.
We choose the contact form $dz-\lambda$ on $J^1(M)$,
where $\lambda$ is the Liouville $1$-form on $T^*M$
(e.g. $\lambda=y\,dx$ on $T^*\bR = \bR^2$).  With these conventions, the Reeb
vector field is $\partial/\partial z$.

\begin{definition}
Let $\Lambda \subset J^1(M)$ be a Legendrian.
For $m\geq 1$, the \textit{$m$-copy} of $\Lambda$, denoted
$\Lambda^m$, is the disjoint union of $m$ parallel copies of
$\Lambda$, separated by small translations in the Reeb ($z$)
direction. We label the $m$ parallel copies $\Lambda_1,\ldots,\Lambda_m$ {\bf from top} (highest $z$ coordinate)
{\bf to bottom} (lowest $z$ coordinate).
\end{definition}

The $m$-copy defined above is not immediately suitable for Legendrian contact homology, as the space of
Reeb chords is not discrete; we need to perturb the $m$-copy so that there are finitely many Reeb chords.
A standard method for
perturbing a Legendrian is to work within a Weinstein
neighborhood of $\Lambda$, contactomorphic to a neighborhood of the
$0$-section in $J^1(\Lambda)$.  One then chooses a $C^1$-small
function $f: \Lambda \rightarrow \bR$, and replaces $\Lambda$ with the
$1$-jet of several small multiples of $f$ (along with another small perturbation to make the picture generic, see ``Lagrangian projection $m$-copy'' below).  In order to apply the algebraic constructions of the previous section, it will be important to perturb the $m$-copies of $\Lambda$ in a consistent manner, i.e, in a way that produces a consistent sequence of DGAs. We will do this only in the $1$-dimensional case; see Remark \ref{rem:higher-dim} for a discussion of issues involved with extending to higher dimensions.

%Strictly speaking, the $m$-copy defined above is unsuitable for Legendrian contact homology, as the space of
%Reeb chords is positive dimensional.
%A suitable choice of perturbation must be made.  A standard method for
%perturbing a Legendrian is to work  within a Weinstein tubular
%neighborhood of $\Lambda$, contactomorphic to a neighborhood of the
%$0$-section in $J^1(\Lambda)$.  One then chooses a $C^1$-small
%function $f: \Lambda \rightarrow \bR$, and replaces $\Lambda$ with the
%$1$-jet of $f$.
%This leads to what we will call the Lagrangian projection $m$-copy.

We now specialize to the case of a $1$-dimensional Legendrian
$\Lambda \subset J^1(\bR)$ where we use coordinates $(x,y,z) \in J^1(\bR) = T^*\R \times \R$.  
%Here we use $x$ and $y$ for the
%coordinates on the original $\bR$ and in the cotangent direction, respectively.  
In this case, we introduce two perturbation\footnote{Strictly speaking the resolution construction does not produce a $C^0$-small  perturbation of the original Legendrian, although we occasionally make this abuse in our terminology.} methods for the $m$-copies of $\Lambda$ that result in consistent sequences of DGAs, one described in terms of the Lagrangian ($xy$) projection of $\Lambda$ and other via the front projection ($xz$) projection of $\Lambda$. 
%, and will be used for  while the other is convenient when working with front projections. 
%While the Reeb chords are more visible in the Lagrangian ($xy$) projection, it is often more convenient to work with the
%front ($xz$) projection; moreover, the sheaf category of \cite{STZ} has to do with the front projection.
Recall from Section \ref{ssec:dga-background} that the resolution procedure \cite{NgCLI} gives a
 Legendrian isotopic link  whose Reeb chords (crossings in the $xy$ diagram)
are in one-to-one correspondence with the crossings and right cusps of
the front.

%We now specialize to the case of a $1$-dimensional Legendrian
%$\Lambda \subset J^1(\bR)$.  Here we use $x$ and $y$ for the
%coordinates on the original $\bR$ and in the cotangent direction, respectively.  In this case, 
%there is another perturbation\footnote{Strictly speaking the resolution construction does not produce a perturbation of the original Legendrian, although we occasionally make this abuse in our terminology.}  method which is useful.
%While the Reeb chords are more visible in the Lagrangian ($xy$) projection, it is often more convenient to work with the
%front ($xz$) projection; moreover, the sheaf category of \cite{STZ} has to do with the front projection.
%Recall from Section \ref{ssec:dga-background} that the resolution procedure \cite{NgCLI} gives a
% Legendrian isotopic link  whose Reeb chords (crossings in the $xy$ diagram)
%are in one-to-one correspondence with the crossings and right cusps of
%the front.

\begin{figure}[H]
\centering
\labellist
\small\hair 2pt
\pinlabel $c_1$ [l] at 288 145
\pinlabel $c_2$ [l] at 288 73
\pinlabel $(c_1)$ [r] at 0 145
\pinlabel $(c_2)$ [r] at 0 73
\pinlabel $a_1$ at 49 126
\pinlabel $a_2$ at 145 126
\pinlabel $a_3$ at 238 126
\endlabellist
\includegraphics[scale=0.6]{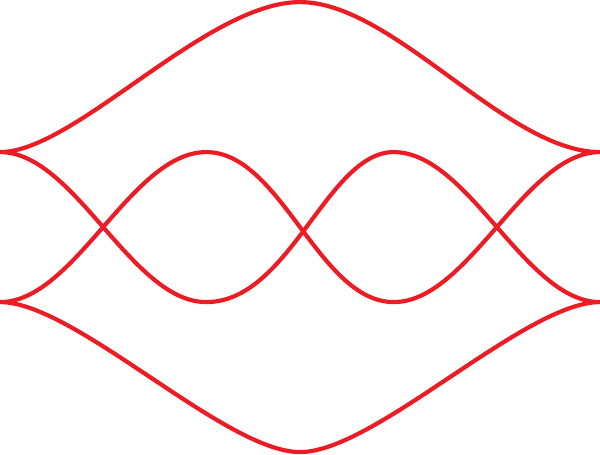}
\hspace{5ex}
\labellist
\small\hair 2pt
\pinlabel $a_1$ at 22 130
\pinlabel $a_2$ at 128 130
\pinlabel $a_3$ at 234 128
\pinlabel $a_4$ at 289 182
\pinlabel $a_5$ at 290 74
\endlabellist
\includegraphics[scale=0.5]{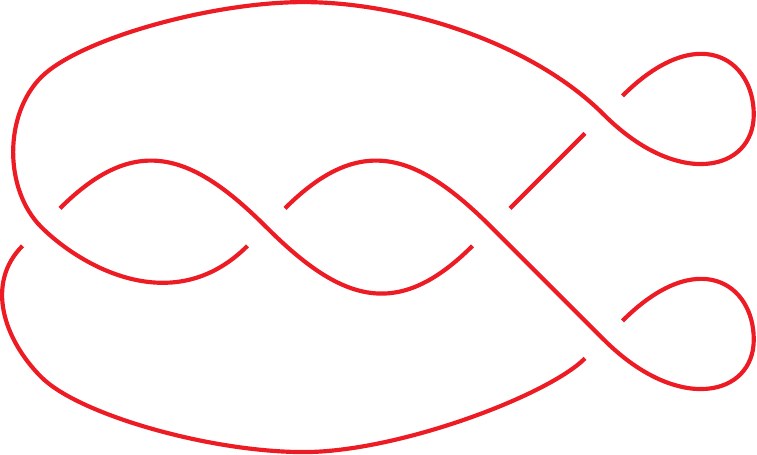}
\caption{
The Legendrian trefoil, in the front (left) and $xy$ (right)
projections, with Reeb chords labeled (and a correspondence chosen
between left and right cusps in the front projection).
}
\label{fig:trefoil-1}
\end{figure}

Here are our two perturbation schemes in more detail:
\begin{itemize}
\item ``Front projection $m$-copy.'' Beginning with a front projection
  for $\Lambda$, take $m$ copies of this front, separated by
  small translations in the Reeb direction, and labeled $1,\ldots,m$
  from top to bottom; then resolve to get an $xy$ projection, or
  equivalently use the formulation for the \dga{} for fronts from
  \cite{NgCLI}.  Typically, we denote this version of the $m$-copy as $\Lambda^m_{xz}$.

\begin{figure}[H]
\labellist
\small\hair 2pt
\pinlabel $\Lambda_1$ [r] at -4 136
\pinlabel $\Lambda_2$ [r] at -4 104
\pinlabel $\Lambda_3$ [r] at -4 72
\pinlabel $t_{k,1}$ [l] at 42 148
\pinlabel $t_{k,2}$ [l] at 42 116
\pinlabel $t_{k,3}$ [l] at 42 84
\pinlabel $x_k^{23}$ [tr] at 82 64
\pinlabel $x_k^{13}$ [bl] at 134 78
\pinlabel $x_k^{12}$ [tr] at 116 32
\pinlabel $\Lambda_1$ [r] at 332 8
\pinlabel $\Lambda_2$ [r] at 332 40
\pinlabel $\Lambda_3$ [r] at 332 72
\pinlabel $y_k^{23}$ [tl] at 460 64
\pinlabel $y_k^{13}$ [br] at 406 78
\pinlabel $y_k^{12}$ [tl] at 428 32
\endlabellist
\centering
\includegraphics[scale=0.6]{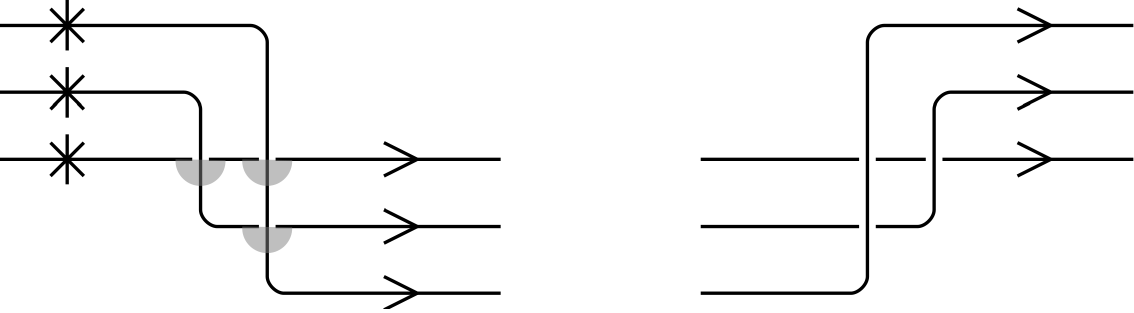}
\caption{
The $xy$
projection of $\Lambda^m_f$ pictured near local maxima (left) and local minima (right) of $f$. The shaded quadrants of the $x^{ij}_k$ indicate negative orientation signs.  Note that in intervals bordered on the left by a local minimum of $f$ and on the right by a local maximum of $f$ the components appear from top to bottom (with respect to the oriented normal to $\Lambda$) in order $\Lambda_1, \ldots, \Lambda_m$, and in remaining intervals the top to bottom ordering is $\Lambda_m, \ldots, \Lambda_1$.
}
\label{fig:HalfTwists}
\end{figure}

\item ``Lagrangian projection $m$-copy.'' Beginning with an $xy$
  projection for $\Lambda$ (which e.g.\ can be obtained by resolving a
  front projection), take $m$ copies separated by small translations
  in the Reeb ($z$) direction.
	Let $f: \Lambda \rightarrow \R$ be a Morse function whose critical points are distinct from the crossing points of the $xy$-projection. Use this
  function to perturb the copies in the
normal direction to the knot in the $xy$ plane. Away from critical points of $f$, the result appears as $m$
parallel copies of the $xy$ projection of $\Lambda$, 
%with the
%blackboard framing, 
while the $xy$ projection remains $m$-to-$1$ at critical points of $f$.  Finally, perturb the projection near critical points of $f$ so that a left-handed (resp. right-handed) half twist appears as in Figure \ref{fig:HalfTwists} when passing local minima (resp. local maxima) of $f$ according to the orientation of $\Lambda$.  We denote this perturbed $m$-copy as $\Lambda^m_f$.

An example of this construction where $f$ has only two critical points, with the local minimum placed just to the right of the local maximum appears, in Figure~\ref{fig:trefoil-3-xy}.  Here, the two half twists fit together to form what is commonly called a  ``dip'' in the $xy$-projection, cf. \cite{Sab}.
\end{itemize}

Associating a Legendrian contact homology \dga{} to the perturbed
$m$-copy $\Lambda^m_{xz}$ or $\Lambda^m_f$ requires a further choice
of Maslov potentials to determine the grading, as well as a choice of
orientation signs and base points.  Suppose that a choice of Maslov
potential, orientation signs, and base points has been made for $\Lambda$ itself.  As usual, we require that each component of $\Lambda$ contains at least one base point, and we further assume that the locations of base points are distinct from local maxima and minima of $f$.
Then,  we equip each of the parallel components of $\Lambda^m_{xz}$
and $\Lambda^m_f$ with the identical Maslov potential, and place base points on each of the copies of $\Lambda$ in $\Lambda^m_{xz}$ or $\Lambda^m_{f}$ in the same locations as the base points of $\Lambda$.  Finally, we
assign orientation signs as follows.  Any even-degree crossing of
$\Lambda^m_{xz}$ corresponds to an even-degree crossing of $\Lambda$
(the crossings that appear near cusps all have odd degree): we assign
orientation signs to agree with the orientation signs of $\Lambda$.  A
similar assignment of orientation signs to $\Lambda^m_f$ is made, with
the following addition for the crossings of $\pi_{xy}(\Lambda^m_f)$
that are created near critical points of $f$ during the perturbation
process, which do not correspond to any crossing of
$\pi_{xy}(\Lambda)$: only the crossings near local maxima of $f$ have even degree, and they are assigned  orientation signs as pictured in Figure \ref{fig:HalfTwists}.

\begin{proposition} \label{prop:mcopyseq}
Given a Legendrian $\Lambda \subset J^1(\bR)$, the following collections of \dgas{} underlie consistent sequences:
\begin{itemize}
\item The ``front projection $m$-copy'' algebras $(\alg(\Lambda^m_{xz}), \partial)$.
\item The ``Lagrangian projection $m$-copy'' algebras $(\alg(\Lambda^m_f), \partial)$, for a fixed Morse function $f$.
\end{itemize}
\end{proposition}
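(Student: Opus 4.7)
The plan is to verify the data and axioms of Definition~\ref{def:consistent} for each construction in turn. In each case, the generators of $\alg(\Lambda^m_f)$ (resp.\ $\alg(\Lambda^m_{xz})$) are naturally partitioned by ordered pairs of copy indices: a generator records a Reeb chord (or base point) traveling from a point on $\Lambda_j$ to a point on $\Lambda_i$, which we record by setting $r(s)=i$ and $c(s)=j$. In the Lagrangian case these are precisely the generators $a_k^{ij}$, $x_k^{ij}$, $y_k^{ij}$, and $(t_k^i)^{\pm 1}$ (with $t_k^i$ purely diagonal) described in the construction; in the front case the generators come from crossings between pairs of copies together with right cusps and base points on each individual copy. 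For an order preserving inclusion $I:[m]\hookrightarrow [n]$, define $h_I:\sS_m\to \sS_n$ on generators by $s^{ij}\mapsto s^{I(i)I(j)}$. The compatibility with the link grading and functoriality $h_{J\circ I}=h_J\circ h_I$ are immediate, which will give the morphism of co-$\Delta_+$ sets $\sS\to \Delta_+^2$ once the DGA statement is established.

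The heart of the proposition is thus the claim that the algebra morphism induced by $h_I$ is an isomorphism of \textit{differential} graded algebras $\alg^{(m)}\to \alg^{(n)}_I$. On the underlying algebras this is clear by construction: $\alg^{(n)}_I$ is by definition generated by those $s^{pq}$ with $p,q\in I$, and $h_I$ gives a bijection between these and the generators of $\alg^{(m)}$. The point is that both perturbation schemes are literally natural in the number of copies in the following sense: deleting the copies $\Lambda_k$ for $k\notin I$ from $\Lambda^n_f$ produces a Legendrian sublink which, after rescaling of the $z$-separations (an isotopy under which the \dga{} is invariant by Theorem~\ref{thm:CEDGAInvariance}), coincides with $\Lambda^m_f$ equipped with the same Morse function $f$, the same Maslov potential, the same orientation signs, and the same base points; and similarly for the front projection scheme.

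The technical content is therefore to match the differentials. By Proposition~\ref{prop:InvarianceOfLinkGrading} applied to the $n$-component link grading of $\alg^{(n)}$, the differential of any generator $a$ with $r(a),c(a)\in I$ is a $\bZ$-linear combination of composable words in generators whose superscripts also all lie in $I$; thus the subalgebra $\alg^{(n)}_I$ is in fact preserved by $\dd^n$. It then remains to see that the resulting differential on $\alg^{(n)}_I$ agrees, under $h_I$, with the one on $\alg^{(m)}$. This is a disk-by-disk comparison: each holomorphic disk contributing to $\dd^m$ has boundary lying on the copies indexed by the tuple of corner labels, and depends only on the local geometry near the corners, the orientation signs, and the base points traversed. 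All of these data are preserved by the tautological identification of $\Lambda^m_f$ with the sublink of $\Lambda^n_f$ on copies $I$, and in particular our choices to give each copy the same Maslov potential, orientation signs, and base point locations ensure that the signs and $t_k^i$-monomials match.

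The main obstacle is the last step: while conceptually transparent, it requires checking that no new disks appear in $\alg^{(n)}$ with corners in $I$ beyond those seen in $\alg^{(m)}$, and that the signs and base-point contributions agree. For the Lagrangian $m$-copy this involves the half-twist crossings $x_k^{ij},y_k^{ij}$ introduced near the critical points of $f$, where one must verify that the disks involving $x_k^{ij}$ or $y_k^{ij}$ with $i,j\in I$ are also visible as disks of the $|I|$-copy; for the front $m$-copy the analogous point concerns the extra crossings produced by the resolution near right cusps. In both cases this follows because the relevant local models are strip-like and intrinsic to the pair $(i,j)$ of copies involved.
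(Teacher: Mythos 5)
Your overall strategy is the same as the paper's: the link grading comes from the decomposition of $\Lambda^m$ into its $m$ copies, $h_I$ relabels superscripts, and the isomorphism $\alg^{(m)}\to\alg^{(n)}_I$ holds because deleting the copies indexed by $[n]\setminus I$ from $\Lambda^n$ leaves a diagram whose Lagrangian projection is combinatorially identical to that of $\Lambda^m$ (the paper's proof is exactly this observation, stated in one paragraph). Two points in your write-up need repair, one of them a genuine error. First, the claim that for $a$ with $r(a),c(a)\in I$ the differential $\dd^n a$ is a combination of composable words ``whose superscripts also all lie in $I$'' is false, and Proposition~\ref{prop:InvarianceOfLinkGrading} (which concerns isotopy invariance) does not say this. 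A composable word from $i$ to $j$ may pass through intermediate indices outside $I$: for the unknot $3$-copy with $I=\{1,3\}$, $\partial(a^{13})$ contains $x^{12}x^{23}(t^3)^{-1}$ and $y^{12}a^{23}$. Consequently the subalgebra of $\alg^{(n)}$ generated by the $s^{pq}$ with $p,q\in I$ is \emph{not} preserved by $\dd^n$. The correct mechanism, already supplied by the paper's algebraic setup, is that $\alg^{(n)}_I$ is a subalgebra of the quotient $\alg^{(n)}/J_{I,I^c}$, and the words with intermediate indices outside $I$ die in that quotient; your disk-by-disk comparison should then be phrased as matching the \emph{surviving} words (disks all of whose corners lie on copies indexed by $I$) with the disks of the $|I|$-copy.

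Second, invoking Theorem~\ref{thm:CEDGAInvariance} to handle the rescaling of $z$-separations is both unnecessary and too weak: that theorem only yields stable tame isomorphism, whereas the proposition requires an isomorphism on the nose carrying generators to generators. What makes the argument work is that the combinatorial \dga{} depends only on the $xy$-diagram (crossings, over/under data, signs, base points), and that diagram is literally unchanged when copies are deleted; the actual $z$-coordinates play no role. With these two corrections your proof is the paper's argument, spelled out in more detail on the disk comparison.
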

\begin{proof}
The data of a consistent sequence is an $m$-component link grading on the $m$-th algebra, plus the structure of a
co-$\Delta_+$ set on the generators.  Writing $\sS_m$ for the generators, i.e. Reeb chords and base points, of $\Lambda^m$,
the data of the link grading is associated to the decomposition
$\Lambda^m = \Lambda_1 \sqcup \cdots \sqcup \Lambda_m$ as discussed in
Section \ref{sssec:linkG}.  That is, the  map $r \times c:
\mathcal{S}_m \to \{1,\ldots,m\} \times \{1,\ldots, m\}$  sends a base
point on the $i$-th copy
to $(i,i)$, and a Reeb chord that {\bf ends on} the $i$-th copy and {\bf begins on} the $j$-th copy to $(i, j)$.
In both of the $m$-copy constructions above, the Lagrangian projection of the link resulting from removing any $n-m$ pieces of $\Lambda^n$ looks identical to $\Lambda^m$;
this gives the co-$\Delta_+$ set structure, and makes the desired isomorphisms obviously hold.
\end{proof}

\begin{definition}
We write $\Aug(\Lambda_f,\coeffs)$ for the $A_\infty$-category that is
associated by Definition~\ref{def:aug-from-consistent-sequence} to the
sequence of $m$-copy \dgas{} $(\alg(\Lambda^\bullet_f), \partial)$.
\label{def:augplus-xy-xz}
Likewise we write $\Aug(\Lambda_{xz},\coeffs)$ for the category associated to $(\alg(\Lambda^\bullet_{xz}), \partial)$.
\end{definition}

\begin{remark}[grading]
If $r(\Lambda)$ denotes the $\gcd$ of the rotation numbers of the
components of $\Lambda$, then recall from
Section~\ref{ssec:dga-background} that the \dga{} for $\Lambda$ is
graded over $\bZ/2r$.
\label{rmk:grading}
Later in this paper, when we prove the equivalence of augmentation and sheaf categories, we will assume that $r(\Lambda)=0$ and thus that the \dga{} is $\bZ$-graded. For the purposes of constructing the augmentation category, however, $r(\Lambda)$ can be arbitrary; note then that augmentations $\e$ must satisfy the condition $\e(a_i) = 0$ for $a_i \not\equiv 0 \bmod{2r}$. Indeed, we can further relax the grading on the \dga{} and on augmentations to a $\bZ/m$ grading where $m\,|\,2r$, as long as either $m$ is even or we work over a ring with $-1=1$, cf.\ the first paragraph of Section~\ref{ssec:dga-augs}.
\end{remark}

In Proposition \ref{prop:mcopyMultiple}, we will show the sequence
$(\alg(\Lambda^m_f), \partial)$ arises by applying the construction of Proposition \ref{prop:xysequence} to $\alg(\Lambda_f)$,
and deduce that $\Aug(\Lambda_f,\coeffs)$ is unital.  In Theorem
\ref{prop:Invariance}, we will show that, up to $A_\infty$ equivalence,
the category $\Aug(\Lambda_f,\coeffs)$ does not depend on the choice of $f$, and moreover is invariant under Legendrian isotopy.  In addition, if $\Lambda$ is assumed to be in plat position, $\Aug(\Lambda_f,\coeffs)$ and $\Aug(\Lambda_{xz},\coeffs)$ are shown to be equivalent.
Thus we will usually suppress the perturbation method from notation
and denote any of these categories simply by $\Aug(\Lambda,\coeffs)$, which we call the {\bf positive
  augmentation category} of $\Lambda$ (with coefficients in
$\coeffs$).

The category
$\Aug(\Lambda, \coeffs)$ is summarized in the following:
\begin{itemize}
\item
The objects are augmentations $\epsilon: \alg(\Lambda) \to \coeffs$.
\item
The morphisms are
\[
\hom(\epsilon_1,\epsilon_2) := C_{12}^\vee,
\]
the
$\coeffs$-module generated by Reeb chords that end on $\Lambda_1$ and begin on $\Lambda_2$
in the $2$-copy $\Lambda^2$.
\item
For $k\geq 1$, the
composition map
\[
m_k :\thinspace \hom(\epsilon_{k} ,\epsilon_{k+1})  \otimes \cdots \otimes \hom(\epsilon_{1},\epsilon_{2})
\to \hom(\epsilon_{1},\epsilon_{k+1})
\]
is defined to be
the map $m_k :\thinspace  C_{k,k+1}^\vee \otimes \cdots \otimes C_{12}^{\vee}
\to C_{1,k+1}^\vee$
given by the
diagonal
  augmentation $\epsilon = (\epsilon_1,\ldots,\epsilon_{k+1})$ on
the $(k+1)$-copy $\Lambda^{k+1}$. (Note that in the Legendrian
literature, diagonal augmentations are often called ``pure.'')
\end{itemize}
Here, one of the allowed perturbation methods, as in Sections \ref{sssec:frontcopy} and \ref{sssec:lagrcopy}, must be used when producing the \dgas{} of the $m$-copies $\Lambda^m$.

\begin{remark}  \label{rem:higher-dim}
It should be possible to define the augmentation category in an 
analogous manner for any Legendrian submanifold $\Lambda$ of a $1$-jet
space $J^1(M)$.  Some key technical points that would need to be addressed to rigorously establish the augmentation category in higher dimensions include: producing
a  consistent sequence of \dgas{} via appropriate perturbations of the
$m$-copies of $\Lambda$ (or showing how to work around this point); proving independence of choices made to produce such perturbations; and establishing Legendrian isotopy invariance.  The construction of augmentation categories for Legendrians in $J^1(\mathbb{R})$ given in this article is also valid for Legendrians in $J^1(S^1)$.  When $\dim(M) \geq 2$, we leave the rigorous construction of the positive augmentation category as an open problem.
%\sayDR{Edits here to be clear that we do not claim a complete construction of higher dimensional augmentation categories.  Feel free to modify.  Possibly could add a reference to [Ekholm-Lekili] as an alternate approach to constructing positive augmentation categories using chains on the loop space of $\Lambda$.  (Unfortunately, their construction of the ``multiple copies DGA'' which agrees with our approach is not correct...  Their perturbation scheme is not specific enough to do anything but guarantee that the generators of the $m$-copy DGAs form a consistent sequence.  Their differentials are not necessarily consistent as can be seen from easy 1-dim examples.)  For this reason, I might be in favor of just not mentioning [E-L] in this article.} \sayLN{Thanks, I was inclined to cite \cite{ekholm-lekili} here but perhaps we can refrain from doing so, for the reason you say.}
\end{remark}

Before turning to a more concrete description of the $m$-copy algebras
$(\alg(\Lambda^m_{xz}), \partial)$ and $(\alg(\Lambda^m_f),\partial)$
underlying the definition of $\Aug(\Lambda_{xz},\coeffs)$ and
$\Aug(\Lambda_f,\coeffs)$, we consider the corresponding negative augmentation
category.

\begin{definition} \label{def:AugBC}
Given a Legendrian submanifold $\Lambda \subset J^1(M)$ and a
coefficient ring $\coeffs$, we define the
\textit{negative augmentation category} to be the $A_\infty$ category
$\AugBC(\Lambda,\coeffs)$ obtained by applying Definition~\ref{def:augbc} to any of the consistent
sequences of \dgas{} introduced in Proposition~\ref{prop:mcopyseq}.
\end{definition}

The category $\AugBC(\Lambda,\coeffs)$ is summarized as follows:
\begin{itemize}
\item
The objects are augmentations $\epsilon: \alg(\Lambda) \to \coeffs$.
\item
The morphisms are
\[
\homBC(\epsilon_2,\epsilon_1) := C_{21}^\vee,
\]
the vector space generated by Reeb chords that end on $\Lambda_2$ and begin on $\Lambda_1$ in the $2$-copy $\Lambda^2$.
\item
For $k\geq 1$, the
composition map
\[
m_k :\thinspace \homBC(\epsilon_{2},\epsilon_{1}) \otimes \homBC(\epsilon_{3},\epsilon_2) \otimes \cdots \otimes
\homBC(\epsilon_{k+1} ,\epsilon_{k})  \to \homBC(\epsilon_{k+1},\epsilon_{1})
\]
is defined to be
the map $m_k :\thinspace  C_{2 1}^{\vee} \otimes \cdots \otimes \thinspace
C_{k+1,k}^\vee
\to C_{k+1,1}^\vee$
given by the pure augmentation $\epsilon = (\epsilon_1,\ldots,\epsilon_{k+1})$ on
the $(k+1)$-copy $\Lambda^{k+1}$.
\end{itemize}

The key distinction between augmentation categories $\Aug$ and
$\AugBC$ is that $\AugBC$ does not depend on the choice of
perturbation. This is because the short Reeb chords introduced in the
perturbation belong to $C_{ij}^\vee$ for $i<j$ but not for $i>j$.
Note that $C_{ij}^\vee$ is always a space of homs from $\epsilon_i$ to $\epsilon_j$, but is $\hom$ if $i < j$ and $\homBC$ if $i > j$.
One might ask about $C_{ii}^\vee$; one can show this to be the same as $\homBC(\epsilon_i, \epsilon_i)$.

The negative augmentation category $\AugBC(\Lambda,\coeffs)$ is
not new: it was defined by Bourgeois and Chantraine \cite{BC},
% in the
%case when $\coeffs$ is a field (though their construction works
%equally well for an arbitrary commutative ring), 
and was the principal
inspiration and motivation for our definition of $\Aug(\Lambda,\coeffs)$.

\begin{proposition} The category $\AugBC(\Lambda,\coeffs)$ is the
  augmentation category of
\label{prop:neg-is-BC}
Bourgeois and Chantraine
\cite{BC}.
\end{proposition}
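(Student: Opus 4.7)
The plan is to verify that, after unraveling definitions, the data packaged by $\AugBC(\Lambda,\coeffs)$ coincides with the data used by Bourgeois--Chantraine in \cite{BC}. Recall that their construction starts from the Chekanov--Eliashberg \dga{} of an $m$-copy $\Lambda^m$ of $\Lambda$ obtained by perturbing in the Reeb direction, together with a choice of pure augmentation $\e = (\e_1,\ldots,\e_m)$; they then extract, from the piece of the twisted differential landing in words whose letters are mixed chords between consecutive copies, an $A_\infty$ structure on the graded $\coeffs$-module spanned by mixed chord generators.

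First, I would compare the perturbation conventions. Bourgeois--Chantraine use an $m$-copy where the copy indexed by $i$ is placed {\em below} the copy indexed by $i+1$ in the Reeb direction (so their ordering is ``from bottom to top''), and their $\hom_{BC}(\e_i,\e_j)$ is spanned by mixed chords going from the $j$-th to the $i$-th copy. In Definition~\ref{def:AugBC} and the negative link grading of Definition~\ref{def:augbc}, the roles of the indices are precisely reversed relative to our $\Aug$, and the spaces $\homBC(\e_2,\e_1) = C^\vee_{21}$ are exactly the mixed chord generators ending on $\Lambda_2$ (below) and starting on $\Lambda_1$ (above), in agreement with \cite{BC}. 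Thus the underlying graded $\coeffs$-modules are canonically identified. The key observation enabling this identification is that since $\AugBC$ is built from the link grading reversed by $\tau$, the component $C^\vee_{ij}$ for $i>j$ that we use here is identified with the corresponding mixed-chord space in \cite{BC} once the labeling is reversed; in particular no ``short'' (perturbation) chords appear, which is precisely the source of perturbation-independence noted after Definition~\ref{def:AugBC}.

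Next I would compare the $A_\infty$ operations. In our framework, $m_k$ applied to a tuple of $\homBC$ generators extracts from the twisted differential $\dd_\e$ on $\alg(\Lambda^{k+1})$ the coefficient of a word $a_{i_k}\cdots a_{i_1}$ of mixed chords, each connecting {\em adjacent} copies in the correct direction, with an overall Koszul sign. This is exactly the count of rigid holomorphic disks used in \cite{BC} to define their $\mu^k$: both reduce to summing monomials in the twisted differential of the $(k{+}1)$-copy \dga{} whose letters read off mixed chords consecutively along the boundary of a disk with one positive puncture and $k$ negative punctures. Matching these therefore reduces to checking that the coefficient extraction (via the dualization maps $h_{i,j}$ in \eqref{eq:compositionminus}) is the same as the bar-dual construction employed in \cite{BC}; this is a direct consequence of Proposition~\ref{prop:bar} together with the compatibility statement \eqref{eq:ms} expressing the $m_k$ via the twisted differential.

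The main obstacle, and the only nontrivial bookkeeping, will be reconciling sign conventions: \cite{BC} works with its own Koszul/suspension conventions for dualizing the bar construction, whereas we follow Keller (see Section~\ref{sec:a-infinity}). I would therefore spell out the sign $\sigma$ appearing in \eqref{eq:ms}, compare with the sign appearing in \cite[Section~5]{BC}, and verify that any discrepancy is absorbed by an overall rescaling of generators by signs depending only on degree parity (i.e., a strict $A_\infty$ isomorphism rather than a mere quasi-isomorphism). Since both sides are determined term-by-term by the same twisted differential of the $m$-copy \dga{}, once the conventions are aligned the identification of $m_k$'s is automatic, and the proposition follows.
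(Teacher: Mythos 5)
Your proposal is correct and follows essentially the same route as the paper: the paper's proof likewise cites the Bourgeois--Chantraine construction of $m_{n-1}$ from the $n$-copy \dga{} (their Theorem~3.2) and observes that the short chords $x_k^{ij},y_k^{ij}$ all lie in components with $i<j$, hence do not contribute to $\homBC$, so the two definitions coincide. The only difference is that the paper does not attempt to reconcile the sign conventions as you propose; it simply records in a following remark that the signs differ from \cite{BC} owing to differing $A_\infty$ conventions.
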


\begin{proof}
This is
proven in Theorem~3.2 of \cite{BC} and the
discussion surrounding it. There it is shown that the \dga{} for the
$n$-copy of $\Lambda$, quotiented out by short Reeb chords
corresponding to critical points of the perturbing Morse function,
produces the $A_\infty$ operation $m_{n-1}$ on their augmentation
category. In our formulation for the Lagrangian projection $m$-copy in
Section~\ref{sssec:lagrcopy}, the critical points of the
perturbing Morse function are of the form $x^{ij}_k,y^{ij}_k$ with
$i<j$. It follows the short Reeb chords do not contribute in our
definition of $\AugBC(\Lambda,\coeffs)$, and thence that our
definition agrees with Bourgeois--Chantraine's.
\end{proof}

\begin{remark}
Our sign conventions for $\AugBC$ differ from the conventions of
Bourgeois and Chantraine, because of differing sign conventions
for $A_\infty$ operations. See the discussion at the beginning of
Section~\ref{sec:a-infinity}.
\end{remark}

\begin{remark}
To follow up on the previous discussion of short chords, the absence
of short chords in $\rR^{ij}$ when $i>j$ allows one to describe $\AugBC(\Lambda)$
algebraically from $(\alg(\Lambda),\partial)$  in a manner that is more direct than for $\Aug(\Lambda)$,
as the extra data of a perturbing function $f$ is unnecessary.  In fact, Bourgeois--Chantraine's original definition of
$\AugBC$ is purely algebraic.
\end{remark}

\begin{remark}Our choice of symbols $+$ and $-$ has to do with the interpretation that,
for augmentations which come from fillings, the first corresponds
to computing positively infinitesimally wrapped Floer homology, and
the second to computing negatively infinitesimally wrapped Floer
homology.
\label{rmk:infwrapped}
See Section \ref{sec:exseq}.
\end{remark}

\begin{remark}
Bourgeois and Chantraine prove invariance of $\AugBC$ in
\cite{BC}.
\label{rmk:BCinvariance}
One can give an alternate proof using the techniques of
the present paper, using the invariance of $\Aug$
(Theorem~\ref{prop:Invariance}), the existence of a morphism from
$\AugBC$ to $\Aug$ (Proposition~\ref{prop:nu-morphism}) and the exact
sequence relating the two (Proposition~\ref{prop:pmexactseq}), and the
fact that isomorphism in $\Aug$ implies isomorphism in $\AugBC$
(Proposition~\ref{prop:isom-pm}). We omit
the details here.
\end{remark}

\subsection{\dgas{} for the perturbations and unitality of $\Aug$}
\label{ssec:dgas-unitality}

We now turn to an explicit description of the \dgas{} for the $m$-copy
of $\Lambda$, in terms of the two perturbations introduced in
Section~\ref{ssec:augcatdef}. The front projection $m$-copy
$\Lambda^m_{xz}$ is useful
for computations (cf.\ Section~\ref{sssec:TrefoilEx}), while the
Lagrangian projection $m$-copy $\Lambda_f^m$ leads immediately to a proof that
$\Aug(\Lambda_f,\coeffs)$ is unital.

\subsubsection{Front projection $m$-copy}  \label{sssec:frontcopy}

For the front projection $m$-copy, we adopt matching notation for the Reeb chords of $\Lambda$ and $\Lambda^m_{xz}$.
Label the crossings of $\Lambda$ by $a_1,\ldots,a_p$ and the right cusps of $\Lambda$ by $c_1,\ldots,c_q$, and choose a pairing of right cusps of $\Lambda$ with left cusps of $\Lambda$. See the left side of Figure~\ref{fig:trefoil-1} for an illustration.
Then each crossing $a_k$ in the front for $\Lambda$ gives rise to $m^2$ crossings $a_k^{ij}$ in $\Lambda^m_{xz}$, where $a_k^{ij} \in \rR^{ij}$; note that the overstrand (more negatively sloped strand) at $a_k^{ij}$ belongs to component $i$, while the understrand (more positively sloped strand) belongs to component $j$. Each right cusp $c_k$ for $\Lambda$ similarly gives rise to $m^2$ crossings and right cusps $c_k^{ij}$ in $\Lambda^m_{xz}$, where $c_k^{ij} \in \rR^{ij}$:
\begin{itemize}
\item
$c_k^{ii}$ is the cusp $c_k$ in copy $\Lambda_i$;
\item
for $i>j$, $c_k^{ij}$ is the crossing between components $\Lambda_i$ and $\Lambda_j$ by the right cusp $c_k$;
\item
for $i<j$, $c_k^{ij}$ is the crossing between components $\Lambda_i$ and $\Lambda_j$ by the \textit{left} cusp paired with the right cusp $c_k$.
\end{itemize}
See Figure~\ref{fig:trefoil-2-front}.

\begin{figure}
\labellist
\small\hair 2pt
\pinlabel $\color{red}{\Lambda_1}$ at 145 29
\pinlabel $\color{blue}{\Lambda_2}$ at 145 10
\pinlabel $c_{1}^{12}$ at 25 160
\pinlabel $c_1^{21}$ at 262 160
\pinlabel $c_2^{12}$ at 25 93
\pinlabel $c_2^{21}$ at 262 93
\pinlabel $c_1^{11}$ [l] at 288 163
\pinlabel $c_1^{22}$ [l] at 288 145
\pinlabel $c_2^{11}$ [l] at 288 91
\pinlabel $c_2^{22}$ [l] at 288 72
\pinlabel $a_1^{11}$ at 49 139
\pinlabel $a_1^{12}$ at 67 117
\pinlabel $a_1^{21}$ at 31 117
\pinlabel $a_1^{22}$ at 49 97
\pinlabel $a_2^{11}$ at 145 139
\pinlabel $a_2^{12}$ at 163 117
\pinlabel $a_2^{21}$ at 127 117
\pinlabel $a_2^{22}$ at 145 97
\pinlabel $a_3^{11}$ at 239 139
\pinlabel $a_3^{12}$ at 257 117
\pinlabel $a_3^{21}$ at 221 117
\pinlabel $a_3^{22}$ at 239 97
\endlabellist
\centering
\includegraphics[scale=1.25]{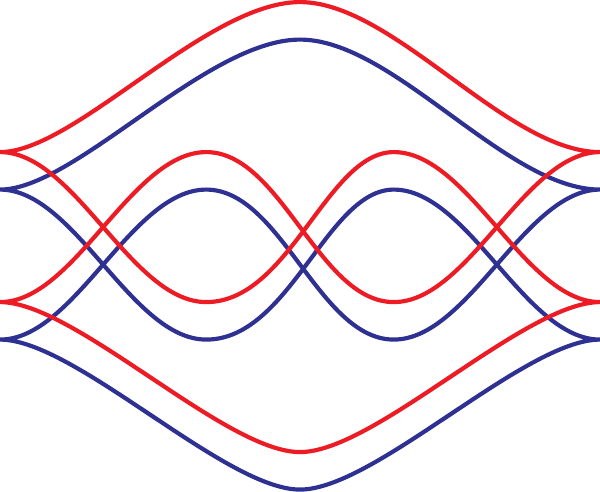}
\caption{
Reeb chords for the double of the Legendrian trefoil, in the front projection.
}
\label{fig:trefoil-2-front}
\end{figure}

\subsubsection{Lagrangian projection $m$-copy}
\label{sssec:lagrcopy}

\begin{figure}
\labellist
\small\hair 2pt
\pinlabel $\color{red}{\Lambda_1}$ at 257 478
\pinlabel $\color{blue}{\Lambda_2}$ at 257 457
\pinlabel $\color{figuregreen}{\Lambda_3}$ at 257 436
\pinlabel $\color{red}{\Lambda_1}$ at 93 190
\pinlabel $\color{blue}{\Lambda_2}$ at 56 164
\pinlabel $\color{figuregreen}{\Lambda_3}$ at 32 133
\pinlabel $\color{red}{\Lambda_1}$ at 229 132
\pinlabel $\color{blue}{\Lambda_2}$ at 209 167
\pinlabel $\color{figuregreen}{\Lambda_3}$ at 173 188
\pinlabel $\color{red}{\Lambda_1}$ at 323 134
\pinlabel $\color{blue}{\Lambda_2}$ at 315 107
\pinlabel $\color{figuregreen}{\Lambda_3}$ at 310 80
\pinlabel $a_5^{33}$ at 78 110
\pinlabel $a_5^{23}$ at 105 138
\pinlabel $a_5^{32}$ at 105 83
\pinlabel $a_5^{13}$ at 132 164
\pinlabel $a_5^{22}$ at 132 110
\pinlabel $a_5^{31}$ at 132 56
\pinlabel $a_5^{12}$ at 159 138
\pinlabel $a_5^{21}$ at 159 83
\pinlabel $a_5^{11}$ at 186 110
\pinlabel $x_{23}$ at 362 74
\pinlabel $x_{13}$ at 415 91
\pinlabel $x_{12}$ at 388 46
\pinlabel $y_{13}$ at 426 74
\pinlabel $y_{12}$ at 424 46
\pinlabel $y_{23}$ at 479 91
\endlabellist
\centering
\includegraphics[scale=0.8]{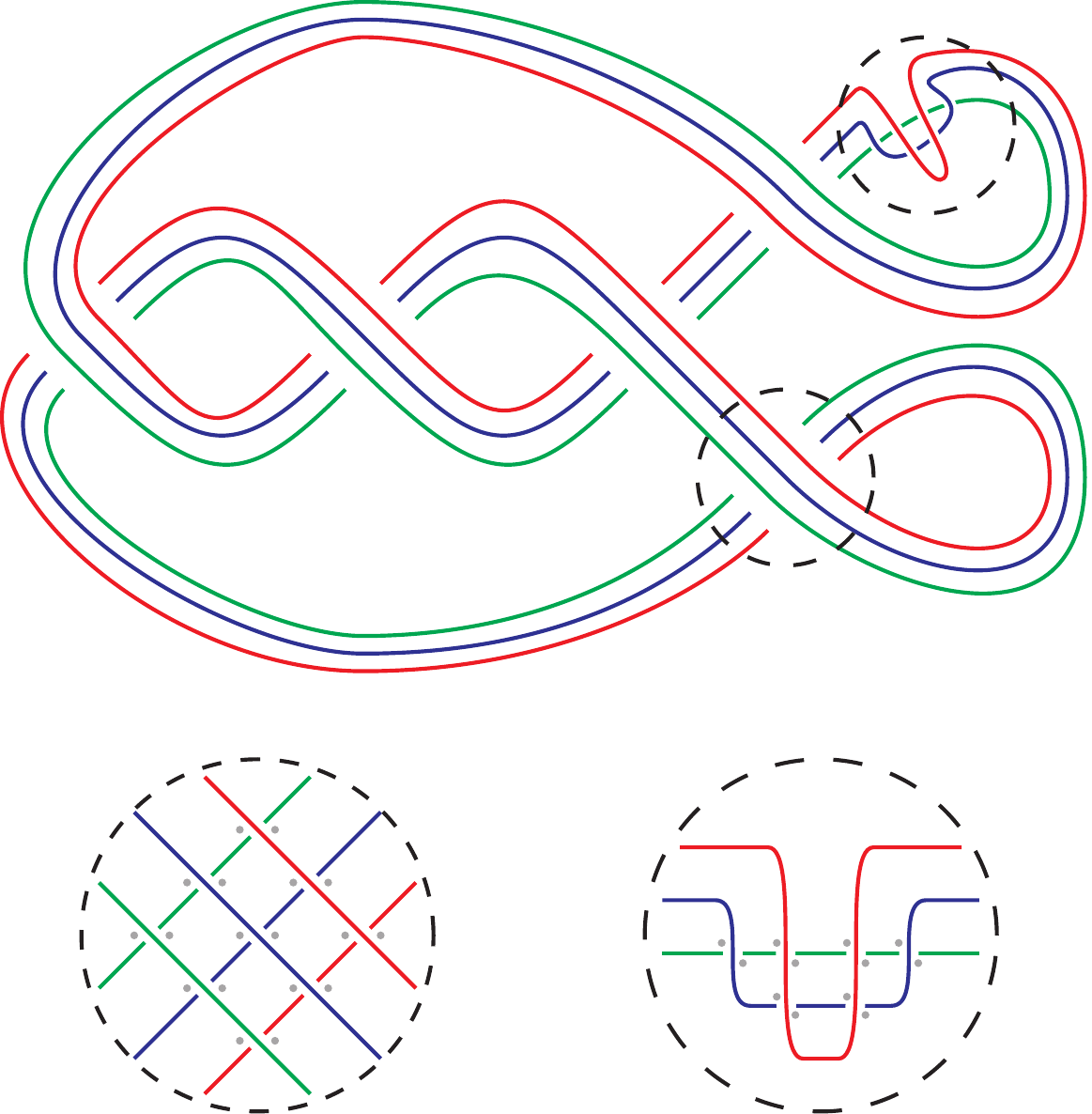}
\caption{
The $3$-copy of the Legendrian trefoil, in the $xy$
projection. Insets, with crossings labeled and positive quadrants
marked with dots: a neighborhood of the crossing labeled $a_5$ in
Figure~\ref{fig:trefoil-1}, and the dip. The $x$ crossings in the dip
correspond to the maximum of the Morse function on $S^1$, and the $y$
crossings to the minimum.
}
\label{fig:trefoil-3-xy}
\end{figure}

Label the crossings in the $xy$ projection of $\Lambda$ by
$a_1,\ldots,a_r$, and suppose that $f: \Lambda \rightarrow \R$ is a
Morse function with $M$ local maxima and $M$ local minima, enumerated so that the $k$-th local minimum follows the $k$-th maximum of $f$ with respect to the orientation of $\Lambda$.  Then the
$xy$ projection of $\Lambda^m_f$ has $m^2r+M m(m-1)$ crossings, which we can label as follows:
\begin{itemize}
\item
$a_k^{ij}$, $1\leq i,j\leq m$, between components $\Lambda_i$ and $\Lambda_j$ by crossing $a_k$;
\item
$x_k^{ij}$, $1\leq i<j\leq m$, between components $\Lambda_i$ and $\Lambda_j$ by the $k$th maximum of $f$;
\item
$y_k^{ij}$, $1\leq i<j\leq m$, between components $\Lambda_i$ and $\Lambda_j$ by the $k$th minimum of $f$.
\end{itemize}
Here the superscripts are chosen so that $a_k^{ij},x_k^{ij},y_k^{ij} \in \rR^{ij}$, i.e. upper strand belongs to $\Lambda_i$ and the lower strand belongs to $\Lambda_j$.
Since the $m$-copies are separated by a very small distance in the $z$ direction, the length of the Reeb chords $x_k^{ij},y_k^{ij}$ is much smaller than the length of the Reeb chords $a_k^{ij}$, and as a consequence we call the former chords ``short chords'' and the latter chords ``long chords.''

Both maxima and minima of $f$ give rise to Reeb chords of  $\Lambda_f^m$, but it will turn out that in fact
moving the local minima while leaving the locations of local maxima fixed does not change the differential of $\alg(\Lambda_f^m)$.
For this reason, we place base points $*_1, \ldots, *_M$ on $\Lambda$ at the local maxima of $f$, and denote the resulting base-pointed Legendrian as $\Lambda_f$ so that the \dga{} $\alg(\Lambda_f)$ has invertible generators $t_1^{\pm1}, \ldots, t^{\pm1}_{M}$.  For each one of these base points,
we place base points on all of the $m$-copies of $\Lambda^m_f$ preceding the corresponding half twist as pictured in Figure \ref{fig:HalfTwists}.
We label the corresponding invertible generators of $\alg(\Lambda^m_f)$ as $(t^i_k)^{\pm 1}$ for $1 \leq k \leq M$ and $1 \leq i \leq m$,
with $k$ specifying by the corresponding base point of $\Lambda_f$ and $i$ specifying the copy of $\Lambda$ where the base point appears.

Note that the generators of $\alg(\Lambda^m_f)$ are related to the generators of $\alg(\Lambda_f)$ as in the construction of Proposition \ref{prop:xysequence}.  In fact, with respect to a suitable weak link grading, the differentials will coincide as well.

\begin{definition}
Removing all base points of $\Lambda_f$ leaves a union of open intervals $\Lambda\setminus\{*_1, \ldots, *_M\} = \sqcup_{i = 1}^m U_i$ where we index the $U_i$ so that the initial endpoint of $U_i$ (with respect to the orientation of $\Lambda$) is at $*_i$.  Define $(r\times c):  \sS \rightarrow \{1, \ldots, m\}$ so that $(r\times c)(a_l) = (i,j)$  for a Reeb chord whose upper endpoint is on $U_i$ and whose lower endpoint is on $U_j$, and $(r\times c)(t_l) = (i,j)$ if the component of $\Lambda \setminus \{*_1, \ldots, *_M\}$ preceding (resp. following) $*_l$ is $U_i$ (resp. $U_j$).  We call $r \times c$ the {\it internal grading} of $\Lambda_f$.
\end{definition}

\begin{proposition}  \label{prop:mcopyWeakLinkGrading}
The internal grading is a weak link grading for $\alg(\Lambda_f)$.
\end{proposition}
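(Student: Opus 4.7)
The plan is to verify the three conditions of Definition~\ref{def:linkgrading} for the internal grading $(r\times c)$, with condition (3) being immediate and conditions (1)--(2) following from a boundary-tracing argument for holomorphic disks contributing to $\dd$.

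Condition (3) is direct from the definition: if $U_i$ precedes and $U_j$ follows $*_l$ along the orientation of $\Lambda$, then by definition $(r,c)(t_l) = (i,j)$. Setting $(r,c)(t_l^{-1}) = (j,i)$ (which corresponds to reversing the direction of traversal through $*_l$) immediately gives $r(t_l) = c(t_l^{-1})$ and $c(t_l) = r(t_l^{-1})$.

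For conditions (1) and (2), I would fix a Reeb chord $a \in \rR$ and consider a term $\delta \cdot w(s_1) \cdots w(s_n)$ of $\dd a$ arising from a disk $u: D_l^2 \to \bR^2_{xy}$ as in Section \ref{ssec:dga-background}. Here $s_1, \ldots, s_n$ is the sequence of Reeb-chord corners and basepoints encountered as one traverses $\overline{u(\partial D_l^2)}$ counterclockwise from $a$. The key geometric observation is that the arc of $\overline{u(\partial D_l^2)}$ between two consecutive events $s_p, s_{p+1}$ lies on $\Lambda$ and contains no basepoints and no Reeb-chord endpoints, hence is contained in a single interval $U_{k_p}$. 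I would then claim that $c(s_p) = k_p = r(s_{p+1})$, so that $w(s_1)\cdots w(s_n)$ is composable from $r(a)$ to $c(a)$.

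The verification of the claim is case-by-case. At a basepoint event $s_p = *_l$, the contribution is $t_l$ or $t_l^{-1}$ depending on whether the boundary crosses $*_l$ in the direction of, or against, the orientation of $\Lambda$; in either case, the intervals $U_{k_{p-1}}$ and $U_{k_p}$ immediately preceding and following the crossing match the row and column of the contributing monomial as defined in condition (3). At a negative corner $s_p = b_q$, the disk occupies a negative Reeb quadrant, and counterclockwise boundary traversal enters $b_q$ along one strand and exits along the other; a short case check using the Reeb-sign convention of Figure~\ref{fig:ReebSigns} shows that the entering strand lies in $U_{r(b_q)}$ and the exiting strand in $U_{c(b_q)}$, so $k_{p-1} = r(b_q)$ and $k_p = c(b_q)$. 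Finally, the same Reeb-sign analysis applied to the positive quadrant at $a$ shows that the boundary leaves $a$ into $U_{r(a)}$ and returns from $U_{c(a)}$, pinning down the endpoints of the composable word. Thus when $r(a) \neq c(a)$, every term of $\dd a$ is a composable word from $r(a)$ to $c(a)$, giving condition (1). When $r(a) = c(a)$, constant terms $\pm 1$ may also occur, for instance from invisible disks at right cusps whose cusp loops contain no basepoint; this is allowed by condition (2).

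The main obstacle is the Reeb-sign case analysis at the positive and negative corners, which is standard in Legendrian contact homology but must be done carefully to ensure counterclockwise traversal pairs strands with the correct $r$ or $c$ index; everything else is a direct unwinding of the definition of the intervals $U_i$.
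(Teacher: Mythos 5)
Your proposal is correct and takes the same approach as the paper, whose entire proof is the one-line observation that the claim "is verified by following along the boundaries of the disks used to define $\dd a_l$"; you have simply carried out that boundary-tracing in detail, including the corner and basepoint case checks. The only implicit point worth noting is that a constant term forces the whole disk boundary to lie in a single interval $U_k$, so $r(a)=c(a)$, which your argument already covers.
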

\begin{proof}
We need to check that if $(r\times c)(a_l) = (i,j)$, then $\partial a_l$ is a $\bZ$-linear combination of composable words in $\alg(\Lambda_f)$ from $i$ to $j$.  This is verified by following along the boundaries of the disks used to define $\partial a_l$.
\end{proof}

We now give a purely algebraic description of the \dga{} of the
$m$-copy $\Lambda^m_f$ in terms of the \dga{} $\alg(\Lambda_f)$ of a
single copy of $f$, as presaged by Proposition~\ref{prop:xysequence}.
We note that the algebraic content given here is probably well-known
to experts, and is in particular strongly reminiscent of constructions
in \cite{BEE} (see e.g. \cite[Section~7.2]{BEE}).

\begin{proposition}  \label{prop:mcopyMultiple}
The \dga{} $\alg(\Lambda^m_f)$ arises by applying the construction of Proposition \ref{prop:xysequence} to $\alg(\Lambda_f)$ equipped with its initial grading.
More explicitly, $\alg(\Lambda^m_f)$
is generated by:
\begin{itemize}
\item invertible generators $(t^i_k)^{\pm 1}$,  $1 \leq i \leq m$, $1 \leq k \leq M$;
\item
$a_k^{ij}$, $1\leq i,j\leq m$, $1\leq k\leq r$, with $|a_k^{ij}| = |a_k|$;
\item
$x_k^{ij}$, $1\leq i<j\leq m$, $1 \leq k \leq M$, with $|x_k^{ij}| = 0$;
\item
$y_k^{ij}$, $1\leq i<j\leq m$, $1 \leq k \leq M$, with $|y_k^{ij}| = -1$.
\end{itemize}
The differential $\dd^m$ of $\alg(\Lambda^m_f)$ can be described as follows.
Assemble the generators of $\alg(\Lambda^m_f)$
into $m\times m$ matrices $A_1,\ldots,A_r,X_1, \ldots, X_M,Y_1, \ldots Y_M, \Delta_1, \ldots, \Delta_M$, with $A_k = (a^{ij}_k)$,
\[
X_k = \left[
\begin{matrix}
1 & x_k^{12} & \cdots & x_k^{1m} \\
0 & 1 & \cdots & x_k^{2m} \\
\vdots & \vdots & \ddots & \vdots \\
0 & 0 & \cdots & 1\\
\end{matrix}
\right],
\hspace{6ex}
Y_k = \left[
\begin{matrix}
0 & y_k^{12} & \cdots & y_k^{1m} \\
0 & 0 & \cdots & y_k^{2m} \\
\vdots & \vdots & \ddots & \vdots \\
0 & 0 & \cdots & 0\\
\end{matrix}
\right],
\]
and $\Delta_k = \operatorname{Diag}(t_k^{1}, \ldots, t_k^{m})$.

Then, applying $\dd^m$ to matrices entry-by-entry, we have
\begin{align*}
\dd^m(A_k) &= \Phi(\dd(a_k))+Y_{r(a_k)} A_k-(-1)^{|a_k|} A_k Y_{c(a_k)} \\
\dd^m(X_k) &= \Delta_k^{-1} Y_{r(t_k)} \Delta_k X_k -X_k Y_{c(t_k)}\\
\dd^m(Y_k) &= Y_k^2,
\end{align*}
where $\Phi: \alg(\Lambda) \rightarrow \operatorname{Mat}(m,\alg(\Lambda^m_f))$ is the ring homomorphism determined by $\Phi(a_k) = A_k$,
$\Phi(t_k) = \Delta_k X_k$, and $\Phi(t_k^{-1}) = X_k^{-1}\Delta_k^{-1}$.

\end{proposition}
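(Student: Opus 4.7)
The plan is to verify that the geometric differential on $\alg(\Lambda^m_f)$, obtained by counting holomorphic disks in the $xy$-projection of the perturbed $m$-copy, matches the algebraic formula prescribed by Proposition~\ref{prop:xysequence} applied to $\alg(\Lambda_f)$ with its internal grading. The identification of generators and their gradings was essentially set up already in Section~\ref{sssec:lagrcopy}: each long chord $a_k$ gives $m^2$ chords $a_k^{ij}$ with $|a_k^{ij}| = |a_k|$ (the Maslov potentials on the copies agree), each local maximum of $f$ produces a right-handed half-twist contributing $\binom{m}{2}$ degree-$0$ short chords $x_k^{ij}$, and each local minimum produces a left-handed half-twist contributing $\binom{m}{2}$ degree-$(-1)$ short chords $y_k^{ij}$. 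The degrees for $x_k^{ij}$ and $y_k^{ij}$ can be read off directly from the local model of the half-twists using the standard combinatorial formula for the grading of a Reeb chord.

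The bulk of the argument is the analysis of holomorphic disks. First I would handle the short-chord differentials, which are local computations near the half-twists. Near a minimum half-twist, the disks contributing to $\partial y_k^{ij}$ are small triangles that pick out, for each $i<\ell<j$, the pair $(y_k^{i\ell}, y_k^{\ell j})$; matching signs via the prescribed orientation signs of Figure~\ref{fig:HalfTwists} yields precisely $\partial^m Y_k = Y_k^2$. Near a maximum half-twist, the disks contributing to $\partial x_k^{ij}$ split into two families: small bigons crossing the half-twist and engaging exactly one $y_k^{\cdot\cdot}$ chord (with the boundary arcs carrying the base points $t_k^i$ on the first family's side, giving the term $\Delta_k^{-1}Y_{r(t_k)}\Delta_k X_k$), and those on the opposite side (yielding $-X_k Y_{c(t_k)}$). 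The key point is that the base points $*_k$ on $\Lambda_f$ lie immediately past the maxima according to the orientation of $\Lambda$, so the boundary of a disk that crosses the half-twist on the ``early'' side passes across exactly one $t_k^i$ base point per copy involved, producing the conjugation by $\Delta_k$.

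Next I would handle the long-chord differential $\partial^m A_k$. By a standard neck-stretching/compactness argument for $C^1$-small Morse perturbations (as in \cite{BEE}), a holomorphic disk of $\Lambda_f^m$ with an initial corner at $a_k^{ij}$ decomposes into a ``large'' piece that is $C^0$-close to a holomorphic disk of $\Lambda_f$ with corners at the long chords $a_\ell$, together with ``small'' pieces concentrated near the critical points of $f$. The large pieces contribute $\Phi(\partial a_k)$: a disk of $\Lambda_f$ whose boundary, read off as a composable word, passes through copies $i_0 = i, i_1, \ldots, i_p = j$ determined by which short chord the boundary jumps across at each half-twist, together with the $t_k$ variables being replaced by $\Delta_k X_k$ to account for the choice of copy and the passage through the half-twists at the base points. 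The small pieces contribute the commutator terms $Y_{r(a_k)} A_k - (-1)^{|a_k|} A_k Y_{c(a_k)}$, coming from bigons with corners at $a_k^{ij}$ and at a single $y^{\cdot\cdot}$ chord on the incoming (respectively outgoing) strand; the weak link grading property ensures these terms are indexed by $r(a_k)$ and $c(a_k)$.

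The main obstacle is ensuring the signs and base point decorations work out exactly as in the matrix formula, especially for $\partial^m X_k$: the conjugation $\Delta_k^{-1}Y_{r(t_k)}\Delta_k$ is subtle because it requires a careful accounting of which boundary arcs of the small bigons near a maximum pick up $t_k^i$ versus $(t_k^i)^{-1}$, and matching this against the orientation-sign conventions on the quadrants of $x_k^{ij}$. Once this local sign analysis is complete, the rest of the verification is essentially a check that the neck-stretching decomposition is faithful (no unexpected disk families arise for sufficiently $C^1$-small $f$), which follows from positivity of the symplectic area and standard SFT compactness. Identifying the resulting formulas with those produced by the universal construction of Proposition~\ref{prop:xysequence}, and noting that both formulas uniquely determine a derivation once specified on generators, completes the proof.
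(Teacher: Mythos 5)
Your proposal is correct and follows essentially the same route as the paper: identify the generators and gradings, split the disks into ``thick'' (close to disks of $\Lambda_f$) and ``thin'' (concentrated near critical points of $f$) families, compute the thin disks locally at the half-twists to obtain $\dd^m Y_k = Y_k^2$, the formula for $\dd^m X_k$, and the commutator terms in $\dd^m A_k$, and show the thick disks contribute $\Phi(\dd a_k)$ with the $\Delta_k X_k$ substitution accounting for base points and half-twist corners. The only difference is framing: the paper carries out the thick/thin decomposition purely combinatorially (immersed polygons in the $xy$-diagram, with the height filtration isolating the short-chord subalgebra), whereas you invoke SFT neck-stretching, which is heavier machinery than is needed in this one-dimensional combinatorial setting but encodes the same degeneration.
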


\begin{remark}
Note that by this result (or by geometric considerations), short Reeb
chords form a sub-\dga{} of $\alg(\Lambda^m)$.
\end{remark}

\begin{proof}

The Reeb chords of $\Lambda^m_f$ are $a_k^{ij},x_k^{ij},y_k^{ij}$ as
described previously.  (See
Figure~\ref{fig:trefoil-3-xy} for an illustration for the trefoil from
Figure~\ref{fig:trefoil-1}, where there is a single base point in the loop to the
right of $a_4$ and the knot is oriented clockwise around this loop.) It is straightforward to calculate their
gradings as explained in Section \ref{ssec:dga-background}.

To associate signs to disks that determine the differential of $\alg(\Lambda^m_f)$, we use the choice of orientation signs given above Proposition \ref{prop:mcopyseq}.
The sign of a disk is then determined by the number of its
corners that occupy quadrants with negative orientation signs.  At each even-degree generator, two  quadrants, as in Figure \ref{fig:ReebSigns}, are assumed to
have been chosen for $\Lambda$ to calculate the differential on
$\alg(\Lambda)$. For even-degree generators of $\Lambda^m_f$, we have assigned the location of quadrants with negative orientation signs  as follows:
for $a_k^{ij}$, we take the
quadrants that correspond to the quadrants chosen for $a_k$; for
$x^{ij}$, we take the quadrants to the right of $\Lambda^j$ as we follow
the orientation of $\Lambda^j$ (in Figure~\ref{fig:HalfTwists},
these are the bottom two quadrants at each $x^{ij}$).

We next identify disks that contribute to the differential on $\alg(\Lambda^m_f)$. These disks
consist of two types, ``thick'' and ``thin'':
%\footnote{\begin{DR} I made an edit here in response to a Referee \#1 comment.  Feel free to change or revert back to the old version. \end{DR}} 
 viewed in the Lagrangian projection of $\Lambda^m_f$, thin disks are those disks whose images are entirely in the neighborhood of $\Lambda$ that contains the $m$-copies of $\Lambda^m_f$, and all other disks are thick, cf. \cite{Mishachev, NgR}.  It is not hard to see from the combinatorics of $\Lambda^m_f$ that  thick disks limit to disks for $\alg(\Lambda)$ in the limit
that the $m$ copies of $\Lambda^m$ approach each other, while thin
disks limit to curves along $\Lambda$ following the negative gradient flow
for the Morse function $f$.  

Since
the height of Reeb chords induces a
filtration on $\alg(\Lambda^m)$, the $x_k^{ij},y_k^{ij}$ form a differential
subalgebra and the differentials of these generators only involve thin
disks. An inspection of
Figure~\ref{fig:HalfTwists} shows that the only disks contributing
to $\partial(y_k^{ij})$ (i.e., with positive quadrant at $y_k^{ij}$ and
negative quadrants at all other corners) are triangles
that remain within the half twist, with two
negative corners at $y_k^{i\ell},y_k^{\ell j}$ for some $i<\ell<j$.  (See also the right-hand
inset in Figure~\ref{fig:trefoil-3-xy} where positive quadrants at crossings are decorated with dots.)
The disks contributing to
$\partial(x_k^{ij})$, which have a positive corner at $x_k^{ij}$, are of
four types, as follows.  There are bigons with
negative corner at $y_k^{ij}$, and triangles
with negative corners
at $x_k^{i\ell},y_k^{\ell j}$; both of these types of disks follow
$\Lambda$ from $*_k$ to the local minimum of $f$ that follows $*_k$.
In addition, there are bigons and triangles that follow the $\Lambda$
from $*_k$ to the preceding local minimum (which has the same
enumeration as $*_{c(t_k)}$); the bigons have
negative corner at $y_{c(t_k)}^{ij}$, and the
triangles have negative corners at
$y_{c(t_k)}^{i\ell},x_k^{\ell j}$. It follows that the differentials for $X_k,Y_k$ are as in
the statement of the proposition.

The disks for $\partial^m(a^{ij}_k)$ can be either thick or
thin. The thick disks are in many-to-one correspondence to the disks
for $\partial(a_k)$. The negative corners of a disk for
$\partial^m(a^{ij}_k)$ correspond to the negative corners of a disk
for $\partial(a_k)$, with one exception: where the boundary of the
disk passes through a maximum of the Morse function, there can be
one negative corner at an $x$ (if the boundary of the disk agrees with the
orientation of $\Lambda$ there) or some number of negative corners at
$x$'s (if it disagrees).  More precisely, if the boundary of a disk for $\Lambda^m_{f}$ lies on $\Lambda_i$ before passing the location of $*_k$ and lies on $\Lambda_j$ afterwards, then the possible products arising from negative corners and base points encountered when passing through the half twist are $t_{k}^i x^{i,j}_k$ if the orientations agree and $(-x_k^{i,i_1})(-x_k^{i_1,i_2}) \cdots (-x_k^{i_l, j}) (t_k^j)^{-1}$ for $i<i_1< \cdots < i_l<j$ when the orientations disagree.   The $(i,j)$ entries of $\Phi(t_k) = \Delta_k X_k$ and $\Phi(t_k^{-1}) = X_k^{-1} \Delta_k^{-1}$ are respectively
\[
t_{k}^i x^{i,j}_k \quad \mbox{and} \quad \sum_{i< i_1< \cdots < i_l <j} (-x^{i,i_1})(-x^{i_1,i_2}) \cdots (-x^{i_l, j}) (t_{k}^j)^{-1},
\]
so we see that the contribution of thick disks to $\partial^m(A_k)$ is precisely the term $\Phi(\partial(a_k))$.  (An alternate discussion of thick disks in a related setting may be found in Theorem 4.16 of \cite{NgR}, where the presence of the matrices $\Phi(t_k)$ and $\Phi(t_k^{-1})$ is established in a slightly more systematic manner using properties of the ``path matrix'' proved in \cite{Kal}.)

The thin disks contributing to $\partial a^{ij}_k$ have a
positive corner at $a^{ij}_k$ and two negative corners, one at a $y$
and the other in the
same $a_k$ region; in the limit as the copies approach each other,
these disks limit to paths
from the $a_k$ to a local minimum of $f$ that avoid local maxima.  When following $\Lambda$ along the upper strand (resp. lower strand) of $a_k$ in this manner, we reach the local minimum that follows  $*_{r(a_k)}$ (resp. $*_{c(a_k)}$).
The two corresponding disks have their negative
corners at  $y_{r(a_k)}^{i\ell},a^{\ell j}_k$ and $a^{i\ell}_k,y_{c(a_k)}^{\ell j}$. This
leads to the remaining $Y_{r(a_k)}A_k$ and $A_kY_{c(a_k)}$ terms in $\partial A_k$.  It is straightforward to verify that  the signs are as given in the
statement of the proposition.
\end{proof}

\begin{corollary} \label{cor:unital}
The augmentation category $\Aug(\Lambda_f,\coeffs)$ is strictly unital.
\end{corollary}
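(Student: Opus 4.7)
The plan is to deduce this immediately from the algebraic machinery already developed in Section~\ref{sec:augcatalg}. By Proposition~\ref{prop:mcopyWeakLinkGrading}, the internal grading on the set of generators of $\alg(\Lambda_f)$ (arising from the decomposition of $\Lambda \setminus \{*_1,\ldots,*_M\}$ into oriented arcs between base points) is a weak link grading. Therefore, starting from $\alg(\Lambda_f)$ together with this weak link grading, the purely algebraic construction of Proposition~\ref{prop:xysequence} produces a consistent sequence of \dgas{} $\alg^{(\bullet)}$, and Proposition~\ref{prop:Thealgmabove} endows it with the requisite co-$\Delta_+$ structure and link grading.

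The content of Proposition~\ref{prop:mcopyMultiple} is precisely that this algebraically constructed consistent sequence agrees, term-by-term as \dgas{} with link grading, with the geometric consistent sequence $\alg(\Lambda^\bullet_f)$ of Proposition~\ref{prop:mcopyseq}: the matrix formulas for $\dd^m A_k$, $\dd^m X_k$, $\dd^m Y_k$ produced by the construction of Proposition~\ref{prop:xysequence} match the geometric count of holomorphic disks in $\Lambda^m_f$. Consequently the augmentation categories built from the two sequences coincide as $A_\infty$ categories.

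Having identified $\Aug(\Lambda_f, \coeffs)$ with the augmentation category of the algebraically constructed sequence, the corollary follows from Proposition~\ref{thm:unital}: for each object $\epsilon$ of $\Aug(\Lambda_f,\coeffs)$, the element
\[
e_\epsilon = -\sum_{j=1}^{M}(y^{12}_j)^\vee \in \hom(\epsilon,\epsilon)
\]
satisfies $m_1(e_\epsilon)=0$, acts as a two-sided identity under $m_2$, and is annihilated by all higher $m_k$. There is essentially no obstacle; the only point worth emphasizing is that the geometric identification in Proposition~\ref{prop:mcopyMultiple} must be used precisely in the form that matches the algebraic construction (including signs and the placement of base points at the local maxima of $f$), so that the distinguished elements $y^{12}_j$ produced by the algebraic construction really are the short Reeb chords at the local minima of $f$ between the top two copies. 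Once this identification is in place, strict unitality is inherited verbatim.
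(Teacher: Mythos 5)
Your proposal is correct and follows exactly the paper's route: the paper's proof is a one-line citation of Propositions~\ref{prop:mcopyMultiple} and~\ref{thm:unital}, and your argument simply spells out why those two results combine. The extra care you take about the identification of the $y^{12}_j$ with the short chords at local minima is a reasonable elaboration but adds nothing beyond what those propositions already establish.
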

\begin{proof}
Follows from Propositions \ref{prop:mcopyMultiple} and \ref{thm:unital}.
\end{proof}

\begin{corollary}  \label{cor:unitalcohomology}
The (usual) category $H^*\Aug(\Lambda,\coeffs)$ is unital.
In particular, $\LCH{*}(\epsilon,\e)$ is a unital ring for any
augmentation $\epsilon$.
\end{corollary}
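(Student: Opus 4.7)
The plan is to derive the statement directly from Corollary~\ref{cor:unital} together with the general fact about $A_\infty$ categories recorded in Proposition~\ref{prop:ainftyunit}. Corollary~\ref{cor:unital} gives that $\Aug(\Lambda_f,\coeffs)$ is strictly unital, with unit $e_\epsilon = -\sum_j (y_j^{12})^\vee$ for each object $\epsilon$ (supplied by Proposition~\ref{thm:unital} after identifying the $m$-copy \dgas{} with the algebraic construction via Proposition~\ref{prop:mcopyMultiple}). Proposition~\ref{prop:ainftyunit} then asserts that the cohomology category $H^*\Aug(\Lambda_f,\coeffs)$ is a genuine unital category, with the cohomology class $[e_\epsilon]$ serving as the identity morphism at the object $\epsilon$.

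To pass from $\Aug(\Lambda_f,\coeffs)$ to $\Aug(\Lambda,\coeffs)$ (the notation without the subscript indicating any of the equivalent models), we invoke Theorem~\ref{prop:Invariance} to the effect that the $A_\infty$ categories built from the different choices of perturbation and front/Lagrangian conventions are $A_\infty$-equivalent. Since $A_\infty$-equivalences induce equivalences of cohomology categories that preserve units (as noted at the end of Section~\ref{sec:a-infinity}), unitality of $H^*\Aug(\Lambda,\coeffs)$ follows.

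For the second assertion, the self-hom space $\LCH{*}(\epsilon,\epsilon) = H^*\hom(\epsilon,\epsilon)$ is the endomorphism algebra of the object $\epsilon$ in the cohomology category. By Proposition~\ref{prop:ainftyunit} the cohomology category is a (possibly non-unital) category in which composition is induced by $m_2$; this composition is associative, making $\LCH{*}(\epsilon,\epsilon)$ an associative ring. Strict unitality of $\Aug(\Lambda_f,\coeffs)$ gives $m_2(e_\epsilon, a) = m_2(a, e_\epsilon) = a$ for every $a \in \hom(\epsilon,\epsilon)$, so $[e_\epsilon]$ is a two-sided multiplicative identity in $\LCH{*}(\epsilon,\epsilon)$.

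There is essentially no obstacle here: the work has all been done in Proposition~\ref{thm:unital} and Corollary~\ref{cor:unital}; this corollary is just the standard descent of strict $A_\infty$-unitality to ordinary unitality on cohomology, combined with invariance under the choice of model.
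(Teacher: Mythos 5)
Your proposal is correct and matches the paper's intended argument: the corollary is stated without an explicit proof precisely because it follows immediately from Corollary~\ref{cor:unital} (strict unitality of $\Aug(\Lambda_f,\coeffs)$) together with Proposition~\ref{prop:ainftyunit}, with Theorem~\ref{prop:Invariance} handling the independence of the chosen model. Nothing is missing.
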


\begin{remark} \label{rem:front-unit}
We will show in Proposition~\ref{prop:xyandxz} that
$\Aug(\Lambda_{xz},\coeffs) \simeq \Aug(\Lambda_f,\coeffs)$, whence
%\footnote{\begin{DR} Edits here due to change in statement of Proposition \ref{prop:xyandxz}.  Is it now worth it to actually prove that the $xz$-category is strictly unital since the Referee points out that we can't deduce it from the invariance up to quasi-equivalence. \end{DR}
%\begin{LN}I think it's ok just keeping this as a remark without a full-blown proof.\end{LN}
%} 
$\Aug(\Lambda_{xz},\coeffs)$ has unital cohomology category.
%strictly unital as
%well.
In fact, $\Aug(\Lambda_{xz},\coeffs)$ is also strictly unital: it is straightforward to calculate directly that 
there is a unit in the category $\Aug(\Lambda_{xz},\coeffs)$, given by $-\sum_{k} (c_k^{12})^{\vee}$ where the sum is over all Reeb chords in $\rR^{12}(\Lambda^2_{xz})$ located near left cusps of $\Lambda$.  See also the proof of Proposition \ref{prop:xyandxz} and the example in \ref{sssec:TrefoilEx}.
\end{remark}

\begin{remark}
We expect that Corollary \ref{cor:unital} holds in arbitrary $1$-jet spaces
%\footnote{\begin{DR} Although the remark seems reasonable, the wording may have been contributing to the Referee's confusion about how much of the construction/results about positive augmentation category we are claiming holds in higher dimensions. (I don't really think we claim anything.)  I think it is a good idea to either remove the remark or phrase it in a much more cautious manner.  For now, I went ahead and did the latter.  (But, feel free to adjust what is written.)   \end{DR}}
$J^1(M)$ as well, provided one has a suitable construction of $\Aug$, with the unit given by $\pm y^\vee$ where $y$ is the local minimum of a Morse function used to perturb the $2$-copy of $\Lambda$.  
We note by contrast that Proposition~\ref{prop:mcopyMultiple} does not hold in higher dimensions. In general, holomorphic disks $\overline{\Delta}$ for $\Lambda^m$ are in correspondence with holomorphic disks $\Delta$ for $\Lambda$ together with gradient flow trees attached along the boundary of $\Delta$; see \cite[Theorem~3.8]{ekholm-lekili} for a general statement, and \cite{EESab,EENS} for special cases worked out in more detail in the settings of Sabloff duality and knot contact homology, respectively. Some of the rigid holomorphic disks $\overline{\Delta}$ contributing to the differential in $\alg(\Lambda^m)$ come from rigid disks $\Delta$, but others come from disks $\Delta$ in some positive-dimensional moduli space and are rigidified by the flow trees. The disks in this latter case (which do not appear nontrivially when $\dim \Lambda=1$) are not counted by the differential in $\alg(\Lambda)$.
%Some rigid holomorphic disks contributing to the differential in $\alg(\Lambda^m)$ still correspond to rigid disks for $\Lambda$, but in general there are other rigid disks for $\Lambda^m$ that correspond to disks for $\Lambda$ in some positive-dimensional moduli space (and thus not counted by the differential in $\alg(\Lambda)$) and are rigidified by constraints involving passing through critical points of perturbing functions.
\end{remark}

\subsection{Invariance}  \label{ssec:invariance}

We now show that up to $A_\infty$ equivalence, our various
constructions of $\Aug$, $\Aug(\Lambda_f,\coeffs)$ and
$\Aug(\Lambda_{xz},\coeffs)$, are independent of choices and
Legendrian isotopy. We will
suppress the coefficient ring $\coeffs$ from the notation.

\begin{theorem} \label{prop:Invariance}
Up to $A_\infty$ equivalence, $\Aug(\Lambda_f)$ does not depend on the choice of $f$.
Moreover, if $\Lambda$ and $\Lambda'$ are Legendrian isotopic, then $\Aug(\Lambda_f)$ and $\Aug(\Lambda'_{f'})$ are $A_\infty$ equivalent.  In addition, if $\Lambda$ is in plat position, then $\Aug(\Lambda_f)$ and $\Aug(\Lambda_{xz})$ are $A_\infty$ equivalent.
\end{theorem}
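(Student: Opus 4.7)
The plan is to reduce all three invariance claims to Proposition \ref{prop:functorialityprop}, which canonically lifts a weak-link-grading-respecting DGA map between ground algebras $\alg(\Lambda_f)$ to a consistent sequence of morphisms between the $m$-copy algebras, and thereby to an $A_\infty$ functor between augmentation categories. The geometric input is Mishachev's Proposition \ref{prop:InvarianceOfLinkGrading}: a Legendrian isotopy preserving component labels induces a stable tame DGA isomorphism respecting the link grading. The plan is therefore to combine the two, applied to the internal weak link grading on $\alg(\Lambda_f)$ defined by the base points located at local maxima of $f$.

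The key technical step will be to extend Proposition \ref{prop:functorialityprop} to accept stable tame isomorphisms, not just honest DGA maps. For this, I will observe that an algebraic stabilization $\alg \rightsquigarrow \alg\langle x,y \mid \partial x = y\rangle$ in a single weak-link-grading class $(i,j)$ has a natural $m$-copy counterpart: apply the matrix construction of Proposition \ref{prop:xysequence} to the larger generating set, producing new generators $x_k^{ij}, y_k^{ij}$ whose differentials are given by the same matrix formulas $\partial^m Y = Y^2$, $\partial^m X = \Delta^{-1}Y\Delta X - XY$. The inclusion on ground algebras then lifts canonically to an inclusion of consistent sequences. Similarly, each elementary tame isomorphism $a_i \mapsto a_i + u$ (with $u$ a composable word of appropriate bigrading) lifts via the assignment $A_i \mapsto A_i + \Phi(u)$, matrix-entry-wise, and the resulting lift automatically satisfies the consistency diagram \eqref{eq:consistency} because the matrix formulas are identical across $m$. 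Composing these elementary lifts gives the desired functorial extension.

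With this tool in hand, the three invariance statements follow. For independence of $f$, any two Morse functions $f_0, f_1$ on $\Lambda$ (with base points at their maxima) give Legendrian isotopic $1$-copies, and the isotopy lifts to a component-label-preserving isotopy of $\Lambda^m_{f_t}$ for all $m$ simultaneously. Decomposing this family into generic elementary moves (Reidemeister moves and birth-death events of critical pairs of $f$), each move yields an elementary DGA morphism or stabilization on $\alg(\Lambda_f)$ respecting the weak link grading, which lifts as above to a consistent sequence. The induced $A_\infty$ functor on $\Aug$ is an equivalence because each lifted map is a quasi-isomorphism on morphism spaces — this is exactly the invariance of bilinearized Legendrian contact cohomology under the given isotopy, which follows from the $m=2$ case of Mishachev's theorem. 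Legendrian isotopy of $\Lambda$ itself is handled identically: decompose into Reidemeister moves, realize each move as an isotopy of $\Lambda^m$ preserving the labeling, and apply the same construction.

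Finally, for plat $\Lambda$, I will exhibit a specific $f$ whose $m$-copy $\Lambda^m_f$ is Legendrian isotopic to a resolution of the front $m$-copy $\Lambda^m_{xz}$, with component labels and orientations matched, by placing the maxima and minima of $f$ near the right cusps so that Figure \ref{fig:HalfTwists} reproduces the half-twists created by the resolution procedure. The equivalence $\Aug(\Lambda_f) \simeq \Aug(\Lambda_{xz})$ then follows from the isotopy argument just described. The main obstacle throughout is verifying that the elementary lifts of Reidemeister moves and stabilizations satisfy the consistency square \eqref{eq:consistency} of Definition \ref{def:consistentsequencemorphisms}; this amounts to a move-by-move check of compatibility between the matrix formulas of Proposition \ref{prop:xysequence} and the combinatorial prescriptions of the Chekanov-type chain maps associated to each Reidemeister move, which is routine but lengthy.
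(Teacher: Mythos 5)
Your treatment of the Legendrian-isotopy statement is essentially the paper's own argument (Proposition \ref{prop:xyinvariance}): identify $\Aug(\Lambda_f)$ with the algebraic construction via Proposition \ref{prop:mcopyMultiple}, apply Mishachev's Proposition \ref{prop:InvarianceOfLinkGrading} to the $1$-copy \dga{} only, lift the resulting link-grading-respecting isomorphism of stabilized \dgas{} through Proposition \ref{prop:functorialityprop}, and show stabilization invariance of $\Aug$. (Two small points there: decomposing into elementary tame isomorphisms is unnecessary, since Proposition \ref{prop:functorialityprop} accepts the full isomorphism at once; and the quasi-isomorphism on hom spaces should not be attributed to invariance of bilinearized contact cohomology --- $\hom(\e_1,\e_2)$ contains the short chords $x^+,y^+$ and is not a bilinearized LCH complex. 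The needed fact is the direct observation that the kernel of the projection induced by a stabilization is spanned by $(a_{r+1}^{12})^{\vee}$ and $(a_{r+2}^{12})^{\vee}$ with $m_1\bigl((a_{r+2}^{12})^{\vee}\bigr) = (a_{r+1}^{12})^{\vee}$, hence acyclic.)

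The genuine gaps are in the other two statements. For independence of $f$: when $f_0$ and $f_1$ have different numbers of local maxima, the internal weak link gradings on $\alg(\Lambda_{f_0})$ and $\alg(\Lambda_{f_1})$ have different numbers of components, so Proposition \ref{prop:functorialityprop} (which requires equal numbers) is simply unavailable; and a birth--death event of a critical pair of $f$ is \emph{not} an algebraic stabilization --- it creates or destroys an invertible generator $t_k$ together with the pair $(x_k,y_k)$, and $\dd x_k^{ij}$ is the $(i,j)$ entry of $\Delta_k^{-1}Y\Delta_k X_k - X_kY$ rather than $y_k^{ij}$ on the nose. The paper has to write down explicit consistent sequences by hand (e.g.\ $f^{(m)}(\Delta X) = (\Delta_1X_1)\cdots(\Delta_MX_M)$ in Proposition \ref{prop:independentoff}) and verify acyclicity of the kernel directly; your reduction does not cover this case. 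For the comparison with $\Aug(\Lambda_{xz})$: exhibiting, for each $m$, a labelled Legendrian isotopy $\Lambda^m_f \simeq \Lambda^m_{xz}$ yields via Mishachev a stable tame isomorphism of each $\alg(\Lambda^m_{xz})$ with $\alg(\Lambda^m_f)$, but with no control whatsoever on consistency across $m$ --- and consistency is the entire content, since $\Aug$ is built from the consistent sequence rather than from the isotopy classes of the individual $m$-copies. You cannot fall back on Proposition \ref{prop:functorialityprop} here either, because $\alg(\Lambda^{\bullet}_{xz})$ is not of the form produced by Proposition \ref{prop:xysequence}: the cusp generators $c_k^{ij}$ play the roles of the $Y$'s for $i<j$ and of entries of an $A$-matrix for $i\geq j$. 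The paper's Proposition \ref{prop:xyandxz} instead constructs one explicit consistent sequence of \dga{} inclusions $\alg(\Lambda^m_{xz}) \hookrightarrow \alg(\Lambda^m_f)$ (sending $B_k \mapsto Y_k$ and $C_k \mapsto \pi_{\mathrm{low}}(A_{r+k})$) and checks the chain-map and quasi-isomorphism properties by hand; some such explicit construction is unavoidable and is missing from your proposal.
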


The proof of Theorem \ref{prop:Invariance} is carried out in the following steps.
First, we show in Proposition \ref{prop:xyinvariance} that the categories defined using $f: \Lambda \rightarrow \R$
with a single local maximum on each component are invariant (up to $A_\infty$ equivalence) under Legendrian isotopy.
In Propositions \ref{prop:independentoff} and \ref{prop:xyandxz} we show that for fixed $\Lambda$ the
categories $\Aug(\Lambda_f)$ are independent of $f$ and, assuming $\Lambda$ is in plat position, are $A_\infty$
equivalent to $\Aug(\Lambda_{xz})$.  In proving Propositions \ref{prop:xyinvariance}-\ref{prop:xyandxz}, we continue to assume that base points are placed on the $\Lambda^m_f$ near local maxima of $f$ as indicated in Section \ref{sssec:lagrcopy}.  This assumption is removed in Proposition \ref{prop:invarianceBasePoints} where we show that both of the categories $\Aug(\Lambda_f)$ and $\Aug(\Lambda_{xz})$ are independent of the choice of base points on $\Lambda$.

\begin{proposition}
Let $\Lambda_0,\Lambda_1 \subset J^1(\bR)$ be Legendrian isotopic, and for $i=1,2$ let $f_i: \Lambda_i \rightarrow \R$ be a Morse function with a single local maximum on each component.  Then the augmentation categories $\Aug((\Lambda_0)_{f_0})$ and $\Aug((\Lambda_1)_{f_1})$  are $A_\infty$ equivalent.
\label{prop:xyinvariance}
\end{proposition}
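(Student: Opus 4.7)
The plan is to reduce Legendrian isotopy invariance to a purely algebraic statement about DGAs with weak link gradings and then invoke the Chekanov--Mishachev invariance of Legendrian contact homology. When $f_i$ has a single local maximum on each component of $\Lambda_i$, the single base point per component makes the internal weak link grading of Proposition~\ref{prop:mcopyWeakLinkGrading} coincide with the ordinary component link grading on $\alg((\Lambda_i)_{f_i})$. By Proposition~\ref{prop:mcopyMultiple}, the consistent sequence of $m$-copy DGAs that defines $\Aug((\Lambda_i)_{f_i})$ is produced from $\alg((\Lambda_i)_{f_i})$, equipped with this weak link grading, by the purely algebraic construction of Proposition~\ref{prop:xysequence}. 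So it is enough to show the following algebraic fact: if two semi-free DGAs with matching weak link gradings are related by a link-grading-preserving stable tame isomorphism fixing the invertible generators, then the augmentation categories produced by Definition~\ref{def:aug-from-consistent-sequence} are $A_\infty$ equivalent.

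Theorem~\ref{thm:CEDGAInvariance} with the refinement of Proposition~\ref{prop:InvarianceOfLinkGrading} supplies exactly such a link-grading-preserving tame isomorphism $\phi\colon S\alg((\Lambda_0)_{f_0}) \to S\alg((\Lambda_1)_{f_1})$ between iterated stabilizations, fixing each $t_j$. The algebraic statement then splits into two independent parts. First, Proposition~\ref{prop:functorialityprop} applied to $\phi$ and to $\phi^{-1}$ produces mutually inverse $A_\infty$ functors between the augmentation categories built from $S\alg((\Lambda_0)_{f_0})$ and from $S\alg((\Lambda_1)_{f_1})$, hence an $A_\infty$ equivalence. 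Second --- and this is the crux --- one needs a stabilization invariance: whenever $S\alg$ is obtained from $\alg$ by adjoining a pair of generators $x,y$ with $\partial x = y$, $\partial y = 0$, and $x,y$ assigned to some single link grading class $\sS^{ij}$, the canonical DGA inclusion $\alg \hookrightarrow S\alg$ respects the weak link grading and the invertible generators, so by Proposition~\ref{prop:functorialityprop} it induces an $A_\infty$ functor
\[
\Aug((S\alg)^{(\bullet)}) \longrightarrow \Aug(\alg^{(\bullet)}),
\]
which one must show is an $A_\infty$ equivalence.

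I expect the stabilization invariance to be the main technical obstacle. The plan for it is as follows. By absorbing an extra stabilization if necessary, we may assume $|x|\neq 0$, so that any augmentation of $S\alg$ must send both $x$ and $y$ to $0$; this gives a canonical bijection between augmentations of $\alg$ and of $S\alg$, and hence essential surjectivity of the induced functor on cohomology categories. For the hom complexes, inspection of the matrix formulas of Proposition~\ref{prop:xysequence} applied to $S\alg$ shows that the extra generators in each $m$-copy --- the entries of a new pair of upper-triangular matrices attached to $x$ and $y$, together with the associated short-chord generators --- contribute an acyclic direct summand to each morphism complex of $\Aug((S\alg)^{(\bullet)})$, and that this summand splits off compatibly with all higher $A_\infty$ operations because the new generators appear only linearly in the differentials of Proposition~\ref{prop:xysequence}. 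Essential surjectivity together with this quasi-full-faithfulness then gives the $A_\infty$ equivalence by the criterion recalled at the end of Section~\ref{sec:a-infinity}. Composing the stabilization equivalence for $\Lambda_0$, the equivalence induced by $\phi$, and the inverse stabilization equivalence for $\Lambda_1$ yields the proposition.
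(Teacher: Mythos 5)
Your proposal follows essentially the same route as the paper's proof: reduce via the internal link grading and Propositions~\ref{prop:mcopyMultiple}, \ref{prop:InvarianceOfLinkGrading}, and \ref{prop:functorialityprop} to the purely algebraic claim that a single stabilization $\alg \hookrightarrow S(\alg)$ induces an $A_\infty$ equivalence of augmentation categories, established by checking surjectivity on objects and acyclicity of the kernel on hom complexes. One correction to your description of that kernel: the stabilization generators are Reeb-chord-type generators, so in the $m$-copy they contribute full $m\times m$ matrices $A_{r+1},A_{r+2}$ and \emph{no} new short-chord generators (those arise only from base points); the kernel of $\hom(\epsilon_1,\epsilon_2) \to \hom(i^*\epsilon_1,i^*\epsilon_2)$ is just the two-dimensional span of $(a_{r+1}^{12})^\vee$ and $(a_{r+2}^{12})^\vee$, acyclic because $m_1$ carries one to the other, and your degree-shifting maneuver is both unavailable (the degree of a stabilization is not yours to choose) and unnecessary, since surjectivity of $\epsilon\mapsto i^*\epsilon$ already gives essential surjectivity.
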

\begin{proof}
Suppose that the links $\Lambda_i$ have components $\Lambda_i = \sqcup_{j = 1}^{c} \Lambda_{i,j}$ and that there is a Legendrian isotopy from $\Lambda_0$ to $\Lambda_1$ that takes $\Lambda_{0,j}$ to $\Lambda_{1,j}$ for all $1 \leq j \leq c$.  Then each \dga{} $(\alg(\Lambda_i), \partial)$ fits into the setting of Proposition \ref{prop:xysequence} with weak link grading given by the internal grading on $\Lambda_i$.
(The generator  $t_j$ corresponds to the unique base point on the $j$th component.)  Moreover, by Proposition \ref{prop:mcopyMultiple} the augmentation category $\Aug((\Lambda_i)_{f_i})$ agrees with the category $\Aug( \alg(\Lambda_i))$ that is constructed as a consequence of Proposition \ref{prop:xysequence}.

According to Proposition \ref{prop:InvarianceOfLinkGrading}, after stabilizing both $\alg(\Lambda_0)$ and $\alg(\Lambda_1)$ some (possibly different) number of times, they become isomorphic by a
\dga{} map that takes $t_j$ to $t_j$ and generators to linear
combinations of composable words, i.e. it satisfies the hypothesis of
the map  $f$ from Proposition \ref{prop:functorialityprop}.  The
construction from Proposition \ref{prop:functorialityprop} then shows
that the $A_\infty$ categories associated to these stabilized \dgas{}
are isomorphic.  Thus it suffices to show that if
$(S(\alg), \partial')$ is an algebraic stabilization of $(\alg, \partial)$, then $\Aug( S(\alg))$ and $\Aug( \alg)$ are $A_\infty$ equivalent.

Recall that $S(\alg)$ has the same generators as
$\alg$ but with two additional generators $a_{r+1},a_{r+2}$, and
$\partial'$ is defined so that $(\alg,\partial)$ is a sub-\dga{} and $\partial'(a_{r+1}) =
a_{r+2}$, $\partial'(a_{r+2}) = 0$.
The $A_\infty$-functor $\Aug(S(\alg)) \rightarrow \Aug(\alg)$ induced by the inclusion $i:\alg \hookrightarrow S(\alg)$ is surjective on objects.  (Any augmentation of $\alg$ extends to an augmentation of $S(\alg)$ by sending the two new generators to $0$.)  Moreover, for any $\epsilon_1, \e_2 \in \Aug(S(\alg))$ the map
\[
\hom(i^*\epsilon_1, i^*\e_2) \rightarrow \hom(\e_1,\e_2)
\]
is simply the projection with kernel spanned by $\{(a^{12}_{r+1})^{\vee}, (a^{12}_{r+2})^{\vee}\}$.  This is a quasi-isomorphism since, independent of $\e_1$ and $\e_2$, $m_1(a^{12}_{r+2})^{\vee} =  (a^{12}_{r+1})^{\vee}$.  Thus the  corresponding cohomology functor is indeed an equivalence.
\end{proof}

\begin{figure}
\labellist
\large\hair 2pt
\pinlabel $*$ at 45 23
\pinlabel $*_1$ at 211 7
\pinlabel $*_2$ at 229 16
\pinlabel $*_M$ at 261 30
\endlabellist
\centering
\includegraphics[scale=.8]{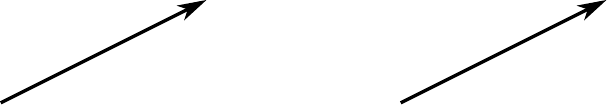}
\caption{
Locations of the local maxima of $f_0$ (left) and $f_1$ (right).
}
\label{fig:Basepoints}
\end{figure}

\begin{proposition}  \label{prop:independentoff}
For fixed $\Lambda \in J^1(\mathbb{R})$, the $A_\infty$ category $\Aug(\Lambda_f)$ is independent of the choice of $f$.
\end{proposition}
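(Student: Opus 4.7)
The strategy is to reduce the statement to Proposition~\ref{prop:xyinvariance}, which already proves the claim when both Morse functions have exactly one local maximum per component of $\Lambda$. Call such a Morse function \emph{minimal}; it suffices to show that for any Morse function $f$, the category $\Aug(\Lambda_f)$ is $A_\infty$ equivalent to $\Aug(\Lambda_{f_0})$ for some minimal $f_0$. Since any two Morse functions on the $1$-manifold $\Lambda$ can be joined by a generic path of smooth functions, one-parameter Morse theory shows that it is enough to handle two elementary moves along the path: (a) reorderings of critical points (and thus base points) along $\Lambda$, and (b) births or deaths of canceling max--min pairs.

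Case (a) splits into two subcases. If two critical points of $f$ swap, or if a critical point slides along $\Lambda$ without passing any crossing of $\pi_{xy}(\Lambda)$, then the combinatorial data defining $\alg(\Lambda^m_f)$ is literally unchanged and $\Aug(\Lambda_f)$ is constant. If a critical point does slide past a crossing of $\pi_{xy}(\Lambda)$, then a base point of $\alg(\Lambda_f)$ moves past a Reeb chord, which alters the \dga{} by the ``base point across a crossing'' tame isomorphism in the spirit of \cite{NgR}. This isomorphism preserves the weak internal grading and sends each generator to a composable word, so by Proposition~\ref{prop:functorialityprop} it extends canonically to an isomorphism of the corresponding consistent sequences, inducing an $A_\infty$ equivalence on augmentation categories.

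The essential new step is case (b): adding an adjacent pair $(p,q)$ consisting of a local max and local min to produce a new Morse function $f'$. In $\alg(\Lambda_{f'})$ this adds a single new base point $t_{\mathrm{new}}$, while in $\alg(\Lambda^m_{f'})$ it adds a ``dip'' with short chords $x^{ij}_{\mathrm{new}}, y^{ij}_{\mathrm{new}}$ and invertibles $(t^i_{\mathrm{new}})^{\pm 1}$, as in Proposition~\ref{prop:mcopyMultiple} and Figure~\ref{fig:trefoil-3-xy}. The plan is to construct an explicit \dga{} morphism $\phi : \alg(\Lambda_f) \to \alg(\Lambda_{f'})$ that sends the pre-existing base point adjacent to the new dip to the appropriately ordered product with $t_{\mathrm{new}}$, and is the identity on all other generators; this is forced by the requirement that holomorphic disks passing through the pre-existing base point in $\alg(\Lambda_f)$ correspond to disks passing through both base points in $\alg(\Lambda_{f'})$. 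After verifying that $\phi$ respects a common (suitably coarsened) weak link grading, Proposition~\ref{prop:functorialityprop} extends it to a consistent sequence $\phi^{(m)}$ and hence to an $A_\infty$ functor $F : \Aug(\Lambda_{f'}) \to \Aug(\Lambda_f)$.

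To show $F$ is an $A_\infty$ equivalence, essential surjectivity on cohomology is immediate: any augmentation $\epsilon$ of $\alg(\Lambda_f)$ lifts to an augmentation $\epsilon'$ of $\alg(\Lambda_{f'})$ by choosing $\epsilon'(t_{\mathrm{new}})$ to be any unit of $\coeffs$ together with a compensating adjustment on the pre-existing base point, giving $F(\epsilon') = \epsilon$. For fully faithfulness, the key observation is that on each morphism complex the newly introduced dual generators $(x^{ij}_{\mathrm{new}})^\vee, (y^{ij}_{\mathrm{new}})^\vee$ form an acyclic summand, a consequence of the formulas $\partial^m X_k = \Delta_k^{-1} Y_{r(t_k)} \Delta_k X_k - X_k Y_{c(t_k)}$ and $\partial^m Y_k = Y_k^2$ in Proposition~\ref{prop:mcopyMultiple}, whose leading order at the new dip pairs each $x^{ij}_{\mathrm{new}}$ with its partner $y^{ij}_{\mathrm{new}}$. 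The main obstacle is the bookkeeping required to track this acyclicity through the twisting by $(t^i_{\mathrm{new}})^{\pm 1}$ and through the higher $A_\infty$ products, and to verify that $\phi$ genuinely induces a consistent morphism of sequences after the necessary refinement of link gradings; once case (b) is settled, a straightforward induction on the number of excess local maxima of $f$ reduces to the minimal case handled by Proposition~\ref{prop:xyinvariance}.
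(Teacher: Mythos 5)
Your overall strategy is essentially the paper's: compare an arbitrary $f$ to a single-maximum Morse function by an explicit \dga{} morphism that relates the base points (sending the old $t$ to an ordered product involving the new $t$'s), deduce an $A_\infty$ functor, check surjectivity on objects by adjusting the value on the new base point, and check full faithfulness by showing the kernel of $F_1$ is acyclic. The paper does all $M$ maxima at once via the telescoping formula $f^{(m)}(\Delta X) = (\Delta_1X_1)\cdots(\Delta_MX_M)$, where the kernel of $F_1$ is spanned by $(y_1^{12})^\vee,\dots,(y_{M-1}^{12})^\vee$ together with their $m_1$-images (certain combinations of the $(x_k^{12})^\vee$), whereas you do one birth/death at a time and induct; that difference is cosmetic. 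Your case (a) (relocating base points, including past crossings) is handled in the paper exactly as you describe, via the $a_l\mapsto t_i^{-1}a_l$ (or $a_l\mapsto a_l t_i$) isomorphisms and Proposition~\ref{prop:functorialityprop}.

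The genuine gap is in how you produce the consistent sequence $\phi^{(m)}$ in case (b). Proposition~\ref{prop:functorialityprop} cannot be applied there, even after coarsening link gradings: its construction requires $f^{(m)}(\Delta_k)=\Delta_k$ and $f^{(m)}(Y_k)=Y_k$, i.e.\ it presupposes a bijection between the invertible generators (base points) of source and target, and the number of $X_k,Y_k$ families in the $m$-copy algebra of Proposition~\ref{prop:xysequence} is tied to the number of base points, not to the link grading. A birth/death changes that number, so the morphism you need — sending the single $Y$ to one of the two new $Y$'s and $\Delta X$ to a product $(\Delta_{\mathrm{old}}X_{\mathrm{old}})(\Delta_{\mathrm{new}}X_{\mathrm{new}})$ — does not fit that template. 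One must define $\phi^{(m)}$ directly by such matrix formulas and verify by hand both that each $\phi^{(m)}$ is a chain map (this uses a telescoping Leibniz computation for $\partial(\Delta_1X_1\cdots\Delta_MX_M)$ and the fact that the $YA_k - (-1)^{|a_k|}A_kY$ term picks out the last base point on the arc) and that the sequence is consistent. This is exactly the content the paper supplies and that your proposal defers to an inapplicable proposition; it is fixable, but it is the main technical work of the proof rather than mere bookkeeping. A smaller imprecision: the kernel of $F_1$ is not literally spanned by the new $(x^{ij})^\vee,(y^{ij})^\vee$ — it is spanned by the excess $y^\vee$'s and their $m_1$-images, which mix old and new $x^\vee$'s with coefficients $\epsilon_1(t)^{-1}\epsilon_2(t)$ — though it is indeed acyclic.
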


\begin{proof}
In Proposition \ref{prop:mcopyMultiple} the \dgas{} $\alg(\Lambda^m_f)$ are computed based on the location of base points placed at local maxima of $f$.  To simplify notation, we suppose that $\Lambda$ has a single component; a similar argument applies in the multi-component case.

Fix a Morse function $f_0$ with a single local maximum at $*$, and begin by considering the case of a second Morse function $f_1$ that has local maxima located at base points $*_1 \ldots, *_M$ that appear, in this order, on a small arc that contains $*$ and is disjoint from all crossings of $\pi_{xy}(\Lambda)$.  See Figure \ref{fig:Basepoints}.  Then there is a consistent sequence of \dga{} morphisms
\[
f^{(m)}: (\alg(\Lambda^m_{f_0}), \partial) \rightarrow (\alg(\Lambda^m_{f_0}), \partial)
\]
determined uniquely on generators by the matrix formulas
\begin{align*}
f^{(m)}(Y) &= Y_M \\
f^{(m)}(\Delta X) &= (\Delta_1 X_1)(\Delta_2 X_2) \cdots (\Delta_M
X_M) \\
f^{(m)}(A_k) &= A_k,  \quad 1 \leq k \leq r.
\end{align*}
Note that considering the diagonal entries of $f^{(m)}$ shows that $f^{(m)}(\Delta) = \Delta_1 \Delta_2 \cdots \Delta_M$.  In particular, $f^{(1)}(t) = t_1 \cdots t_M$.  The consistency of the sequence follows as usual from the uniformity of the matrix formulas.

We check that the extension of $f^{(m)}$ as an algebra homomorphism is a \dga{} map.  Note that for a Reeb chord $a_i$ of $\Lambda$, the only difference between the differential $\partial a_i$ in $\alg(\Lambda_{f_0})$ and $\alg(\Lambda_{f_1})$ is that words associated to holomorphic disks have all occurrences of $t$ replaced with $t_1 \cdots t_m$.  When comparing $\partial A_k$ in  $\operatorname{Mat}(m,\alg(\Lambda^m_{f_0}))$ and $\operatorname{Mat}(m,\alg(\Lambda^m_{f_1}))$, this results in all occurrences of $\Delta X$ being replaced with $(\Delta_1 X_1)(\Delta_2 X_2) \cdots (\Delta_M X_M)$.  Moreover, the $Y A_k - (-1)^{|a_k|} A_k Y$ term becomes $Y_M A_k - (-1)^{|a_k|} A_k Y_M$ since when approaching the arc containing the base points $*_1, \ldots, *_M$ in a manner that is opposite to the orientation of $\Lambda$, it is always $*_M$ that is reached first.  Together, these observations show that
\[
\partial f^{(m)} (A_k) = f^{(m)} \partial ( A_k).
\]
That $\partial f^{(m)} (Y) = f^{(m)} \partial (Y)$ is a immediate direct calculation.  Finally, note that using the Leibniz rule
\begin{align*}
\partial f^{(m)} (\Delta X) &= \partial [(\Delta_1 X_1)(\Delta_2 X_2) \cdots (\Delta_M X_M)] \\
&=  [\partial (\Delta_1 X_1)](\Delta_2 X_2) \cdots (\Delta_M X_M) + (\Delta_1 X_1)[\partial(\Delta_2 X_2)] \cdots (\Delta_M X_M)  \\
&\phantom{= }+ \ldots + (\Delta_1 X_1)(\Delta_2 X_2)\cdots [\partial(\Delta_M X_M)]
\end{align*}
and the sum telescopes to leave
\begin{align*}
Y_M(\Delta_1 X_1)(\Delta_2 X_2) \cdots (\Delta_M X_M) - (\Delta_1 X_1)(\Delta_2 X_2) \cdots (\Delta_M X_M) Y_M \\
=f^{(m)}(Y \Delta X - \Delta X Y) = f^{(m)}\partial (\Delta X).
\end{align*}

We check that the induced $A_\infty$ functor $F$, as in Proposition \ref{prop:consistentF}, is an $A_\infty$ equivalence.  The correspondence $\epsilon \rightarrow (f^{(1)})^* \epsilon$ is surjective on objects: Given $\epsilon': (\alg(\Lambda_{f_0}), \partial) \rightarrow (\coeffs, 0)$, we can define $\epsilon(t_1) = \epsilon'(t)$ and $\epsilon(t_k) = 1$ for $2 \leq k \leq M$ and $\epsilon(a_k) = \epsilon'(a_k)$ for $1\leq k \leq r$. The resulting augmentation of $\alg(\Lambda_{f_1})$ satisfies $f^*\epsilon = \epsilon'$.

Next, we verify that for $\e_1, \e_2 \in \Aug(\Lambda_{f_1})$, $F$
gives a quasi-isomorphism $F_1: \hom(\e_1, \e_2) \rightarrow \hom(f^*
\e_1, f^* \e_2)$.  We compute from the definitions:
\begin{align*}
F_1((x_1^{12})^{\vee}) & = \e_1(t_2 \cdots t_M)^{-1} \e_2(t_2 \ldots t_M) (x^{12})^{\vee} \\
F_1((x_2^{12})^{\vee}) &=  \e_1(t_3 \cdots t_M)^{-1} \e_2(t_3 \ldots t_M) (x^{12})^{\vee} \\
 &  \quad \vdots \\
F_1((x_M^{12})^{\vee}) &= (x^{12})^{\vee} \\
F_1((y_k^{12})^{\vee}) &= 0,  \quad 1 \leq k \leq M-1  \\
F_1((y_M^{12})^{\vee}) &= (y^{12})^{\vee} \\
F_1((a_k^{ij})^{\vee}) &= (a_k^{ij})^{\vee} \quad 1 \leq i,j \leq 2,~ 1\leq k \leq r,
\end{align*}
so $F_1$ is clearly surjective.
In addition, the differential $m_1 : \hom(\e_1, \e_2)\rightarrow \hom(\e_1, \e_2)$ satisfies
\[
m_1((y_k^{12})^{\vee}) = \epsilon_1(t_{k+1})^{-1} \epsilon_2(t_{k+1}) (x^{12}_{k+1})^\vee - (x^{12}_k)^\vee
\]
for $1 \leq k \leq M-1$, and it follows that $\ker(F_1)$ is free with basis
\[
\{ (y_1^{12})^{\vee}, \ldots, (y_{M-1}^{12})^{\vee}, m_1((y_1^{12})^{\vee}), \ldots, m_1((y_{M-1}^{12})^{\vee}) \}.
\]
Thus $\ker(F_1)$ is clearly acyclic, and the induced map on cohomology $F_1:H^*\hom(\e_1, \e_2) \rightarrow H^*\hom(f^* \e_1, f^* \e_2)$ is an isomorphism since it fits into an exact triangle with third term $H^*\ker(F_1) \cong 0$.

To complete the proof, we now show that the $A_\infty$-category is
unchanged up to isomorphism when the location of the base points is
changed.  Let $\Lambda_0$ and $\Lambda_1$ denote the same Legendrian
but with two different collections of base points $(*_1, \ldots, *_M)$
and $(*_1', \ldots, *_{M}')$ which appear cyclically ordered.  It
suffices to consider the case where the locations of the base points
agree except that $*_i'$ is obtained by pushing $*_i$ in the direction of the orientation of $\Lambda$ so that it passes through a
crossing $a_l$.

In the case that $*_i$ and $*_i'$ lie on the overstrand of $a_l$, we have a  \dga{} isomorphism
\[
f: (\alg(\Lambda_0), \partial) \rightarrow (\alg(\Lambda_1), \partial)
\]
given by
\[
f(a_l) = (t_i)^{-1} a_l
\]
and $f^{(m)}(x)  = x$ for any generator other than $a_l$, as in
\cite{NgR}.  To see that the $f^{(m)}$ are chain maps, note that the
holomorphic disks for $\Lambda_0$ and $\Lambda_1$ are identical, and the words associated to disks change only for disks with corners at $a_l$.
%This change propagates to the differentials of $\Lambda_0^{(m)}$ and $\Lambda_1^{(m)}$, according to
Note also that this isomorphism is compatible with the internal gradings on $\Lambda_0$ and $\Lambda_1$ which differ only on $a_l$.  Therefore,
 Proposition \ref{prop:functorialityprop} shows that $\Aug(\Lambda_0)$ and $\Aug(\Lambda_1)$ are isomorphic.
%so that $f^{(m)}$ is a chain map for all $m$.

When $*_i$ and $*_i'$ sit on the understrand of $a_l$, similar considerations show that a DGA isomorphism with
\[
f(a_l) = a_l t_i
\]
 leads to an isomorphism of $A_\infty$-categories.
\end{proof}

\begin{proposition} \label{prop:xyandxz}
Suppose that  $\Lambda \subset J^1(\bR)$ has its front projection in preferred plat position.  Then the category $\Aug(\Lambda_{xz})$
is $A_\infty$ equivalent to $\Aug(\Lambda_f)$ for any Morse function $f$. 
%\sayDR{Changed to the weaker statement that the $xz$-version has unital cohomology category rather than is strictly unital.  Probably the stronger statement is true...  but do we need it? } \sayLN{I've actually now deleted the bit about unital cohomology category, which seems out of place here.}
% and in particular
%%is strictly unital
%the cohomology category of $\Aug(\Lambda_{xz})$ is unital.
\end{proposition}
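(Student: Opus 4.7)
The plan is to reduce the claimed equivalence to Legendrian isotopy invariance applied uniformly across all $m$-copies. By Proposition \ref{prop:independentoff}, the $A_\infty$-equivalence class of $\Aug(\Lambda_f)$ is independent of the Morse function $f$, so we are free to choose $f$ adapted to the preferred plat form of $\Lambda$: take $f$ with one local maximum and one local minimum on each component, with the local maxima placed immediately to the left (in orientation) of chosen right cusps and the local minima placed immediately to the right of chosen left cusps, and simultaneously relocate the base points of $\Lambda$ to these local maxima (using the base-point moves from the proof of Proposition \ref{prop:independentoff}). With this choice, the $xy$-projections of $\Lambda^m_f$ and of the resolution of $\Lambda^m_{xz}$ will look identical away from small neighborhoods of the cusp regions.

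Next, for each $m \geq 1$ we construct a Legendrian isotopy of labeled $m$-component links from $\Lambda^m_f$ to the resolution of $\Lambda^m_{xz}$, sending each labeled copy to the corresponding labeled copy by vertical ordering. The isotopy is purely local near the cusps: near each right cusp, the half-twist of $\Lambda^m_f$ coming from the local maximum of $f$ is combined with the adjacent $m$ nested right cusps and moved into the half-twist-plus-cusps pattern produced by the resolution of $\Lambda^m_{xz}$; near each left cusp, the half-twist of $\Lambda^m_f$ from a local minimum of $f$ is removed through a sequence of Reidemeister II and III moves, possibly after algebraic stabilization. By Proposition \ref{prop:InvarianceOfLinkGrading}, each such isotopy induces a stable tame isomorphism $\alg(\Lambda^m_f) \cong \alg(\Lambda^m_{xz})$ that preserves the link grading and sends base-point generators $t_k$ to themselves.

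To conclude, we must arrange that the resulting family of stable tame isomorphisms forms a consistent sequence in the sense of Definition \ref{def:consistentsequencemorphisms}. Because every local move in the isotopy involves only one or two copies of $\Lambda$ at a time, the isotopies can be chosen in an $m$-uniform manner, so that for any inclusion $h_I:[m]\hookrightarrow[n]$ the isotopy for the $n$-copy restricted to the copies indexed by $I$ agrees with the isotopy for the $m$-copy; the stabilization generators introduced along the way can be labeled compatibly under $h_I$ as well. Applying Proposition \ref{prop:consistentF}, together with Proposition \ref{prop:functorialityprop} to handle the stabilizations uniformly, then produces an $A_\infty$-functor $\Aug(\Lambda_{xz}) \to \Aug(\Lambda_f)$, which is essentially surjective because each stable tame isomorphism is a bijection on augmentations and fully faithful because it induces a quasi-isomorphism on the link-graded components. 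The strict unitality of $\Aug(\Lambda_{xz})$ then follows by transporting the strict unit from Corollary \ref{cor:unital} across the equivalence.

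The main obstacle is the consistency established in the previous paragraph: while the Legendrian isotopies themselves are straightforward to arrange uniformly in $m$, the particular stabilization generators introduced by the proof of Proposition \ref{prop:InvarianceOfLinkGrading} may depend on the order in which Reidemeister moves are carried out, and coordinating them so that they descend compatibly under all of the structure maps $h_I$ requires careful bookkeeping. An alternative, stabilization-free approach would construct the sequence of \dga{} morphisms directly via uniform matrix formulas in the spirit of Proposition \ref{prop:xysequence} and Proposition \ref{prop:functorialityprop}, exploiting the fact that the $m$-copy Lagrangian \dga{} admits such a matricial description with respect to the internal weak link grading on $\alg(\Lambda_f)$, and matching this against an analogous (though less symmetric) matrix presentation of $\alg(\Lambda^m_{xz})$.
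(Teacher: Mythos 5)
Your strategy---isotope $\Lambda^m_f$ to the resolution of $\Lambda^m_{xz}$ for each $m$ and invoke Proposition \ref{prop:InvarianceOfLinkGrading}---stalls exactly at the point you flag as ``the main obstacle,'' and that obstacle is not a bookkeeping nuisance but the entire content of the proposition. Proposition \ref{prop:InvarianceOfLinkGrading} is a statement about a single link: for each fixed $m$ it produces \emph{some} stable tame isomorphism after \emph{some} number of stabilizations, with no control over how these choices vary with $m$. To feed the result into Proposition \ref{prop:consistentF} you need a consistent sequence in the sense of Definition \ref{def:consistentsequencemorphisms}, i.e.\ strict commutativity of the squares \eqref{eq:consistency} for every inclusion $h_I:[m]\hookrightarrow[n]$, at the level of the actual \dga{} maps. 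The isotopy removing the left-cusp half-twists of the $n$-copy involves Reidemeister moves among all $\binom{n}{2}$ pairs of copies, so the induced tame isomorphism for the $n$-copy is not built from the $m$-copy ones in any evident way, and the stabilization generators it introduces have no given placement in the link grading or the co-$\Delta_+$ structure. Asserting that ``the isotopies can be chosen in an $m$-uniform manner'' is precisely the claim that needs proof, and nothing in the paper's machinery (which treats stabilization only for $\alg^{(1)}$, in Proposition \ref{prop:xyinvariance}, before applying the functorial $m$-copy construction) supplies it. A secondary gap: even granting consistency, ``fully faithful because it induces a quasi-isomorphism on the link-graded components'' is asserted rather than shown; for a stable tame isomorphism one must identify the kernel/cokernel on the hom complexes and check acyclicity, as is done in Proposition \ref{prop:xyinvariance}.

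The paper sidesteps all of this by never isotoping and never stabilizing. It fixes $f(x,y,z)=x$, so that local maxima sit at right cusps and local minima at left cusps; then the $xy$-projections of $\Lambda^m_f$ and $\Lambda^m_{xz}$ coincide away from the right-cusp region, and one can write down an explicit consistent sequence of \dga{} \emph{inclusions} $f^{(m)}:\alg(\Lambda^m_{xz})\hookrightarrow\alg(\Lambda^m_f)$ by uniform matrix formulas ($B_k\mapsto Y_k$, $C_k\mapsto\pi_{\mathrm{low}}(A_{r+k})$, $A_k\mapsto A_k$, $\Delta_k\mapsto\Delta_k$); consistency is automatic because the formulas are the same for all $m$, the chain-map property is a direct disk count, and $F_1$ is a surjection whose kernel, spanned by the $(x_k^{12})^\vee$ and $(a_{r+k}^{12})^\vee$, is visibly acyclic. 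This is the ``alternative, stabilization-free approach'' you mention in your last sentence; to make your argument complete you would need to either carry that out or genuinely solve the uniformity problem for the stable tame isomorphisms, and the former is much the easier road.
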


\begin{proof}
Again, we suppose that $\Lambda$ has a single component, as a similar argument applies in the multi-component case.

We compare  $\Aug(\Lambda_{xz})$ with
$\Aug(\Lambda_f)$
for the function $f(x,y,z) = x$ whose local minima are at left cusps and local maxima are at right cusps.  Label crossings of $\pi_{xz}(\Lambda)$ as $a_1, \ldots, a_r$.  Label left and right cusps of $\Lambda$ as $b_1, \ldots, b_q$ and $c_1, \ldots, c_q$ so that, when the front projection is traced according to its orientation, the cusps appear in order, with $b_r$ immediately following $c_r$ for all $1 \leq r \leq q$.  Assuming the resolution procedure has been applied, we label the crossings of the $xy$-projection, $\pi_{xy}(\Lambda)$, as $a_1, \ldots, a_r, a_{r+1}, \ldots, a_{r+q}$ so that the crossings $a_{r+1}, \ldots, a_{r+q}$ correspond to the right cusps $c_1, \ldots, c_q$.  We assume that the base points $*_1, \ldots, *_q$, which are located at the far right of the loops that appear on $\pi_{xy}(L)$ in place of right cusps,  are labeled in the same manner as the $c_1, \ldots, c_q$.

Collect generators of $\alg(\Lambda_f^m)$ as usual into matrices $A_k, X_k, Y_k, \Delta_k$, and form matrices $A_k, B_k, C_k, \Delta_k$ out of the generators of $\alg(\Lambda_{xz}^m)$.  Note that $B_k$ is strictly upper triangular, while $C_k$ is lower triangular with diagonal entries given by the generators $c_k^{ii}$ that correspond to the right cusps of $\Lambda^m$.

There is a consistent sequence of \dga{} inclusions
\[
f^{(m)}: (\alg(\Lambda^m_{xz}), \partial ) \rightarrow (\alg(\Lambda^m_f), \partial)
\]
obtained by identifying generators so that we have
\begin{align*}
f^{(m)}(A_k) &= A_k &  f^{(m)}(B_k) &= Y_k \\
f^{(m)}(C_k) &= \pi_{\mathrm{low}}(A_{r+k}) &  f^{(m)}(\Delta_k) &= \Delta_k,
\end{align*}
where $\pi_{\mathrm{low}}(A_{r+k})$ is $A_{r+k}$ with all entries above the main diagonal replaced by $0$.
To verify that these identifications provide a chain map, note that the $xy$-projections of $\Lambda^m_{xz}$ and $\Lambda^m_f$ are identical to the left of the location of the crossings associated with right cusps.  Moreover, for crossings that appear in this portion of the diagram, all disks involved in the computation of differentials are entirely to the left of the crossings from right cusps as well.   Thus $\partial f^{(m)} = f^{(m)} \partial$ follows when applied to any of the matrices $A_k$ or $B_k$.  As in the proof of Proposition \ref{prop:mcopyMultiple}, examining thin and thick disks that begin at generators $c_k^{ij}$ leads to the matrix formula
\[
\partial C_k = \pi_{\mathrm{low}}\left(\tilde{\Phi}(\partial(c_k))  +  B_{k-1} C_k + C_k B_k\right)
\]
where $\tilde{\Phi}: \alg(\Lambda) \rightarrow \operatorname{Mat}(m, \alg(\Lambda^m_{xz}))$ denotes the ring homomorphism with $\tilde{\Phi}(a_k) = A_k$ for $1\leq k \leq r$ and $\tilde{\Phi}(t_k^{\pm1}) = \Delta_k^{\pm1}$.  (None of the $c_k$ appear in differentials of generators of $\alg(\Lambda)$ due to the plat position assumption.)  Notice that, for $1 \leq k \leq q$, $\pi_{\mathrm{low}}\left(\tilde{\Phi}(\partial(c_k))\right)$ agrees with $\pi_{\mathrm{low}} \left( \Phi (\partial(a_{r+k}))\right)$ (here $\Phi$ is from Proposition \ref{prop:mcopyMultiple})  because the only appearance of any of the $t_i$ in $\partial c_k = \partial a_{r+k}$ is as a single $t_k^{\pm1}$ term coming from the disk without negative punctures whose boundary maps to the loop to the right of $c_{r+k}$.  Moreover,
\[
\pi_{\mathrm{low}}( \tilde{\Phi}(t_k^{\pm1})) = \pi_{\mathrm{low}}(\Delta_k^{\pm1}) = \pi_{\mathrm{low}}((\Delta_k X_k)^{\pm1}) = \pi_{\mathrm{low}}( \Phi(t_k^{\pm1})),
\]
and $\tilde{\Phi}$ and $\Phi$ agree on all other generators that appear in $\partial c_k$.  Finally, we note that
\[ f^{(m)}\left(\pi_{\mathrm{low}}(B_{k-1} C_k + C_k B_k) \right) = \pi_{\mathrm{low}}(Y_{k-1} A_{r+k} + A_{r+k} Y_k)
\]
because none of the entries $a_{r+k}^{ij}$ with $i<j$ can appear below
the diagonal in $Y_{k-1} A_{r+k} + A_{r+k} Y_k$.  Combined with the
previous observation, this implies that $\partial f^{(m)}(C_k) = f^{(m)} \partial(C_k)$.

We claim that the $A_\infty$ functor $F:\Aug(\Lambda_f) \rightarrow \Aug(\Lambda_{xz})$ arising from Proposition \ref{prop:consistentF} is an $A_\infty$ equivalence.  Indeed, since $f^{(1)}$ is an isomorphism, $F$ is bijective on objects.  The maps $F_1: \hom(\epsilon_1, \epsilon_2) \rightarrow \hom(f^*\epsilon_1, f^*\epsilon)$ are surjections with $\ker(F_1) = \mbox{Span}_{\coeffs} \{(x_k^{12})^{\vee}, (a_{r+k}^{12})^{\vee} \,|\, 1\leq k \leq q  \}$.  Moreover, we have
$m_1 (x_k^{12})^{\vee} = \epsilon_1(t_k) (a_{r+k}^{12})^{\vee}$  (resp. $-\epsilon_2(t_k)^{-1} (a_{r+k}^{12})^{\vee}$) when the upper strand at $c_k$ points into (resp. away from) the cusp point.  It is therefore clear that $\ker(F_1)$ is acyclic, so that $F_1$ is a quasi-isomorphism.
\end{proof}

\begin{proposition}  \label{prop:invarianceBasePoints}
The categories $\Aug(\Lambda_f)$ and $\Aug(\Lambda_{xz})$ are independent of the number of base points chosen on $\Lambda$ as well as their location, provided each component of $\Lambda$ has at least one base point.
\end{proposition}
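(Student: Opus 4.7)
Plan: The proof splits into two parts, one for each category, with the $\Aug(\Lambda_f)$ case being essentially immediate and the $\Aug(\Lambda_{xz})$ case requiring the real work.

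For $\Aug(\Lambda_f)$, the argument is immediate from Proposition~\ref{prop:independentoff}. Given any admissible set of base points $B$ on $\Lambda$ (at least one point per component), I would choose a Morse function $f_B$ whose set of local maxima is exactly $B$. Under the definition in Section~\ref{sssec:lagrcopy}, the category $\Aug(\Lambda_{f_B})$ is built using base points placed at these local maxima, so it coincides with the category associated to $B$. Proposition~\ref{prop:independentoff} asserts independence of $f$, and hence of $B$.

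For $\Aug(\Lambda_{xz})$, where base points on $\Lambda$ are independent of any perturbation, I would reduce to two elementary moves which together generate all admissible changes of base points: (i) sliding a base point across a crossing of the front diagram, and (ii) adding or removing a base point on a component that already carries one. Move (i) is handled by the same \dga{} isomorphism used in the final paragraph of the proof of Proposition~\ref{prop:independentoff}: if $*_i$ slides past a crossing $a_l$, set $f(a_l) = t_i^{\pm 1}a_l$ or $a_l t_i^{\pm 1}$ (with signs and placement dictated by the strand and direction) and $f(s)=s$ for every other generator. Extended uniformly to the $m$-copies via matrix formulas on the $\Delta_i$, this gives a consistent sequence of \dga{} isomorphisms respecting the link grading, and Proposition~\ref{prop:functorialityprop} yields an induced $A_\infty$-isomorphism. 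For move (ii), writing $B' = B\sqcup\{*_{M+1}\}$, I would define
\[
f^{(m)}: \alg(\Lambda^m_{xz},B') \to \alg(\Lambda^m_{xz},B)
\]
by $f^{(m)}(t^i_{M+1})=1$ for all $i$ and by the identity on all other generators. This is a chain map because the differentials of Reeb chord generators in $\alg(\Lambda^m_{xz},B')$ differ from those in $\alg(\Lambda^m_{xz},B)$ only by insertion of factors $t^i_{M+1}$ at each point where the boundary of a contributing holomorphic disk crosses a lift of $*_{M+1}$, and these factors are sent to $1$. The sequence is consistent (the formulas are uniform in $m$) and respects the link grading since each $t^i_{M+1}$ is diagonal with index $(i,i)$, so Proposition~\ref{prop:functorialityprop} produces an $A_\infty$-functor $F: \Aug(\Lambda_{xz},B)\to\Aug(\Lambda_{xz},B')$.

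To conclude, I must check $F$ is an $A_\infty$-equivalence. Fully faithfulness is direct: because $f^{(2)}$ is the identity on Reeb chord generators, the induced map $F_1:\hom^B(\epsilon_1,\epsilon_2)\to\hom^{B'}(f^*\epsilon_1,f^*\epsilon_2)$ is the identity on underlying graded modules and commutes with $m_1$, hence is an isomorphism of chain complexes. The main obstacle is essential surjectivity: showing that every augmentation $\epsilon'$ of $\alg(\Lambda,B')$ is isomorphic in $H^0\Aug(\Lambda_{xz},B')$ to some $f^*\epsilon$, i.e.\ to one with $\epsilon'(t_{M+1})=1$. My plan here is to first apply move (i) to slide $*_{M+1}$ until it is adjacent to an existing base point $*_i$ on the same component (which, by the already-established equivalence from (i), may be done without loss of generality), and then construct an explicit degree-$0$ cycle in $\hom(\epsilon',f^*\epsilon)$ for a suitable $\epsilon$ --- built from the fact that, in this configuration, the scalar $\epsilon'(t_{M+1})$ can be absorbed into $\epsilon'(t_i)$ via a composition of \dga{} automorphisms mimicking the base-point moving maps of Proposition~\ref{prop:independentoff} --- and check that its image in cohomology is invertible. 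Verifying that this cycle is closed and induces an isomorphism in $H^0\Aug$ is the principal technical step of the proof; everything else reduces to the formal machinery of Section~\ref{sec:augcatalg}.
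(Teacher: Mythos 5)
Your overall architecture (reduce to elementary moves, realize each move by a consistent sequence of \dga{} morphisms, and invoke Propositions \ref{prop:consistentF} and \ref{prop:functorialityprop}) is the paper's, and your move (i) --- sliding a base point past a crossing via $a_l \mapsto t_i^{-1}a_l$ or $a_l t_i$ --- matches the paper's argument essentially verbatim. Two smaller omissions before the main one: for $\Aug(\Lambda_{xz})$ you must also slide base points past (right) cusps, and for $\Aug(\Lambda_f)$ past local maxima and minima of $f$, since the proposition is meant to allow base points away from the critical points of $f$ --- a case your reduction to Proposition \ref{prop:independentoff} (``choose $f_B$ with maxima at $B$'') does not cover. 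The paper handles all these features uniformly by collecting the Reeb chords at the feature into a matrix $(w^{ij})$ and setting $f^{(m)}(w^{ij}) = (t_1^i)^{-1}w^{ij}$ or $w^{ij}t_1^j$.

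The genuine gap is move (ii). You orient the \dga{} morphism from the $(M+1)$-base-point algebra to the $M$-base-point algebra by sending $t_{M+1}^i \mapsto 1$, so the induced functor goes $\Aug(\Lambda,B) \to \Aug(\Lambda,B')$, and essential surjectivity becomes the claim that every augmentation with $\epsilon'(t_{M+1}) \neq 1$ is isomorphic in $H^0\Aug(\Lambda,B')$ to one with $\epsilon'(t_{M+1})=1$. You flag this as the principal technical step but do not carry it out, and the route you sketch does not close it: a \dga{} automorphism of $\alg(\Lambda,B')$ absorbing $\epsilon'(t_{M+1})$ into $\epsilon'(t_i)$ would only induce an auto-equivalence of the category carrying one object to the other, not an isomorphism between those two objects, and the required invertible degree-$0$ cocycle is never exhibited. (Note also that the two augmentations disagree on the $t$'s, so they are not \dga{} homotopic, and Proposition \ref{prop:Homotopy} is in any case proved only for a single base point and appears later in the paper.) The paper avoids all of this by reversing the arrow: it maps the one-base-point algebra into the many-base-point algebra by $t_1^i \mapsto t_1^i t_2^i \cdots t_M^i$, so the induced functor goes from the category with many base points to the one with few; essential surjectivity is then immediate (extend any augmentation by $\epsilon(t_k)=1$ for $k \geq 2$), and the map on hom spaces is an isomorphism already at the chain level because the Reeb chord generators are fixed. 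Reversing your construction in this way repairs the proof.
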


\begin{proof}
For simplicity, we assume $\Lambda$ is connected.  Let $\Lambda_0$ and $\Lambda_1$ denote $\Lambda$ equipped with two different collections $(*_1, \ldots, *_M)$ and $(*_1', \ldots, *_{M'}')$ of base points.  First, we suppose that these base points have the same number and appear in the same cyclic order along $\Lambda$.   We claim that the categories of $\Lambda_0$ and $\Lambda_1$ are isomorphic.  To show this, it suffices to consider the case where $*_k = *_k'$ for $k \geq 2$ and $*'_1$ is obtained by pushing $*_1$ in the direction of the orientation of $\Lambda$ either through a crossing, past a local maximum or local minimum of $f$ (in the case of the $xy$-perturbed category), or past a cusp of $\Lambda$ (in the case of the $xz$ category).  The proof is uniform for all of these cases.

For each $m \geq 1$, we always have some (possibly upper triangular) matrix $(w^{ij})$ of Reeb chords on $\Lambda^m$ from the $j$-th copy of $\Lambda$ to the $i$-th copy, and the movement of $*_1$ to $*_1'$ results in sliding $m$ base points $t^1_1, \ldots, t^m_1$ through this collection of Reeb chords.  As discussed in the proof of Proposition \ref{prop:independentoff}, we then have isomorphisms $f^{(m)}: \alg(\Lambda^m) \rightarrow \alg(\Lambda^m)$ satisfying
\begin{align*}
f^{(m)}(w^{ij}) &= (t^{i}_1)^{-1} w^{ij} && \mbox{for all $i, j$ and $m$, or}\\
f^{(m)}(w^{ij}) &= w^{ij} t^{j}_1 && \mbox{for all $i, j$ and $m$,}
\end{align*}
and fixing all other generators.  Clearly, the $f^{(m)}$ form a consistent sequence of \dga{} isomorphisms, and the isomorphism of the augmentation categories follows from Proposition \ref{prop:consistentF}.

Finally, to make the number of base points the same, it suffices to consider the case where $\Lambda_0$ has a single base point, $*_1$, and $\Lambda_1$ has base points $*_1, \ldots, *_M$ located in a small interval around $*_1$ as in Figure \ref{fig:Basepoints}.  Then, for $m \geq 1$, we have \dga{} morphisms $f^{(m)}: \alg(\Lambda^m) \rightarrow \alg(\Lambda^m)$ fixing all Reeb chords and with
\[
f^{(m)}(t^i_1) = t^i_1t^i_2 \ldots t^i_M,  \quad \mbox{for all $1 \leq i \leq m$}.
\]
The $f^{(m)}$ clearly form a consistent sequence, so there is an $A_\infty$ functor $F: \Aug(\Lambda_1) \rightarrow \Aug(\Lambda_0)$ induced by Proposition \ref{prop:consistentF}.  As in the proof of Proposition \ref{prop:independentoff}, $F$ is surjective on objects.  Moreover, $F$ induces an isomorphism on all hom spaces (before taking cohomology), and is thus an equivalence.
\end{proof}

\subsection{Examples}
\label{ssec:exs}

Here we present computations of the augmentation category\footnote{For some computations of the sheaf category of a similar spirit, see \cite[section 7.2]{STZ}.}
 for the Legendrian unknot and the Legendrian trefoil, as well as an application of the augmentation category to the Legendrian mirror problem.

These calculations require computing the \dga{} for the $m$-copy of the knot.
For this purpose, each of the $m$-copy perturbations described in Section~\ref{ssec:augcatdef}, front projection $m$-copy and Lagrangian projection $m$-copy, has its advantages and disadvantages. The advantage of the Lagrangian $m$-copy is that its \dga{} can be computed directly from the \dga{} of the original knot by Proposition~\ref{prop:mcopyMultiple}; for reference, we summarize this computation and the resulting definition of $\Aug(\Lambda)$ in Section \ref{sec:aug-knot}, assuming $\Lambda$ is a knot with a single base point. The advantage of the front $m$-copy is that it has fewer Reeb chords and thus simplifies computations somewhat: that is, if we begin with the front projection of the knot, resolving and then taking the Lagrangian $m$-copy results in more crossings (because of the $x,y$ crossings) than taking the front $m$-copy and the resolving. We compute for the unknot using the Lagrangian $m$-copy and for the trefoil using the front $m$-copy, to illustrate both.

\begin{convention}
We recall
$\hom(\epsilon_1, \epsilon_2) = C_{12}^\vee$ and $\homBC(\epsilon_2, \epsilon_1) = C_{21}^\vee$.
Often our notational convention would require elements of
$C_{12}^\vee$ to be written in the form $(a^{12})^\vee$,
but when viewing them as elements of $\hom(\epsilon_1, \epsilon_2)$, we denote
them simply as $a^+$.
\label{conv:pm}
Likewise, an element of $C_{21}^\vee$, which would otherwise be denoted as $(a^{21})^\vee$,
we will instead write as $a^- \in \homBC(\epsilon_2, \epsilon_1)$.
\end{convention}
This convention is made both to decrease indices, and to decrease cognitive dissonance associated
with the relabeling of strands required by the definition of composition, as in \eqref{eq:composition}.

\subsubsection{The augmentation category in terms of Lagrangian $m$-copies}
\label{sec:aug-knot}

Since the construction and proof of invariance of the augmentation category involved a large amount of technical details, we record here a complete description of it in the simplest case, namely a Legendrian knot with a single base point, in terms of the \dga{} associated to its Lagrangian projection.  This is an application of Definition \ref{def:aug-from-consistent-sequence} to the corresponding consistent sequence of \dgas{} from Proposition \ref{prop:mcopyMultiple}.

\begin{proposition} \label{prop:complete-aug-description}
Let $\Lambda$ be a Legendrian knot with a single base point, and let $(\alg(\Lambda),\dd)$ be its C--E \dga{}, constructed from a Lagrangian projection of $\Lambda$, which is generated by $\cS = \cR \sqcup \cT$ where $\cR = \{a_1,\dots,a_r\}$ and $\cT = \{t,t^{-1}\}$, with only the relation $t\cdot t^{-1} = t^{-1}\cdot t = 1$.  Then the objects of $\Aug(\Lambda,\coeffs)$ are exactly the augmentations of $\alg(\Lambda)$, i.e.\ the \dga{} morphisms $\epsilon: \alg(\Lambda) \to \coeffs$.  Each $\hom(\epsilon_1,\epsilon_2)$ is freely generated over $\coeffs$ by elements $a_k^+$ ($1 \leq k \leq r$), $x^+$, and $y^+$, with $|a_k^+| = |a_k|+1$, $|x^+| = 1$, and $|y^+| = 0$.

We describe the composition maps in terms of the corresponding \dgas{} $(\alg^{m},\dd^{m})$ of the $m$-copies of $\Lambda$, which are defined as follows.  The generators of $\alg^m$ are
\begin{enumerate}
\item $(t^i)^{\pm 1}$ for $1 \leq i \leq m$, with $|t^i| = 0$;
\item $a^{ij}_k$ for $1 \leq i,j \leq m$ and $1 \leq k \leq r$, with $|a^{ij}_k| = |a_k|$;
\item $x^{ij}$ for $1 \leq i < j \leq m$, with $|x^{ij}|=0$;
\item $y^{ij}$ for $1 \leq i < j \leq m$, with $|y^{ij}|=-1$,
\end{enumerate}
and the only relations among them are $t^i \cdot(t^i)^{-1} =
(t^i)^{-1}\cdot t^i = 1$ for each $i$.  If we assemble these into
$m\times m$ matrices $A_k$, $X$, $Y$, and
$\Delta=\operatorname{Diag}(t^1,\dots,t^m)$ as before, where $X$ is
upper triangular with all diagonal entries equal to 1 and $Y$ is
strictly upper triangular, then the differential $\dd^m$ satisfies
\begin{align*}
\dd^m(A_k) &= \Phi(\dd a_k) + YA_k - (-1)^{|a_k|}A_k Y \\
\dd^m(X) &= \Delta^{-1} Y \Delta X - XY \\
\dd^m(Y) &= Y^2
\end{align*}
where $\Phi$ is the graded algebra homomorphism defined by $\Phi(a_k)=A_k$ and $\Phi(t) = \Delta X$.

To determine the composition maps
\[ m_k: \hom(\epsilon_k,\epsilon_{k+1}) \otimes \dots \otimes \hom(\epsilon_2,\epsilon_3) \otimes \hom(\epsilon_1,\epsilon_2) \to \hom(\epsilon_1,\epsilon_{k+1}), \]
recall that a tuple of augmentations
$(\epsilon_1,\dots,\epsilon_{k+1})$ of $\alg(\Lambda)$ produces an
augmentation $\epsilon: \alg^{k+1} \to \coeffs$ by setting
$\epsilon(a_j^{ii}) = \epsilon_i(a_j)$, $\epsilon((t^i)^{\pm 1}) =
\epsilon_i(t^{\pm 1})$, and $\epsilon=0$ for all other generators.  We define a twisted \dga{} $((\alg^{k+1})^\epsilon, \dd^{k+1}_\epsilon)$ by noting that $\dd^{k+1}$ descends to $(\alg^{k+1})^\epsilon := (\alg^{k+1} \otimes \coeffs) / (t^i = \epsilon(t^i))$ and letting $\dd^{k+1}_\epsilon = \phi_\epsilon \circ \dd^{k+1} \circ \phi_\epsilon^{-1}$, where $\phi_\epsilon(a)=a+\epsilon(a)$.  Then
\[ m_k(\alpha_{k}^+, \dots, \alpha_{2}^+, \alpha_{1}^+) = (-1)^{\sigma} \sum_{a\in\cR\cup \{x,y\}} a^+ \cdot \operatorname{Coeff}_{\alpha_{1}^{12}\alpha_{2}^{23}\dots \alpha_{k}^{k,k+1}}(\dd^{k+1}_\epsilon a^{1,k+1}), \]
where $\alpha_i \in \{a_1,a_2,\dots,a_r,x,y\}$ for each $i$, and
$\sigma = k(k-1)/2 + \sum_{p<q}
|\alpha_p^+||\alpha_q^+|+|\alpha_{k-1}^+|+|\alpha_{k-3}^+|+ \cdots$.
\end{proposition}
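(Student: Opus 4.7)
The plan is to unpack this proposition as a direct synthesis of earlier results, since it is essentially a bookkeeping translation of Definitions \ref{def:aug-from-consistent-sequence} and \ref{def:augplus-xy-xz} into fully explicit form for the special case of a single base point. First, I would choose the Morse function $f: \Lambda \to \bR$ on the connected Legendrian $\Lambda$ to have exactly one local maximum (placed at the base point) and one local minimum; this is the simplest case of the Lagrangian projection $m$-copy and makes $M=1$. By Proposition \ref{prop:xyinvariance} combined with Proposition \ref{prop:independentoff}, $\Aug(\Lambda_f, \coeffs)$ computes $\Aug(\Lambda, \coeffs)$ up to $A_\infty$-equivalence.

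Next, I would invoke Proposition \ref{prop:mcopyMultiple} to identify the explicit DGA $(\alg^m, \dd^m)$ stated in the proposition with $(\alg(\Lambda^m_f), \dd^m)$: since $M=1$ there is a single matrix $\Delta$, a single $X$, and a single $Y$, with differentials exactly
\[
\dd^m A_k = \Phi(\dd a_k) + Y A_k - (-1)^{|a_k|} A_k Y, \qquad \dd^m X = \Delta^{-1} Y \Delta X - XY, \qquad \dd^m Y = Y^2,
\]
as claimed. Then Definition \ref{def:augplus-xy-xz} applied to the consistent sequence $\alg^{(\bullet)} = \alg(\Lambda^{\bullet}_f)$ identifies the objects of $\Aug(\Lambda, \coeffs)$ with augmentations of $\alg(\Lambda) = \alg^1$. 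For the morphism spaces, $\hom(\epsilon_1, \epsilon_2) := C_{12}^\vee$ where $C_{12}$ is the free $\coeffs$-module on the generators of $\alg^2$ of link bigrading $(1,2)$; in the single-base-point case these are precisely $a_k^{12}$ ($1\le k\le r$), $x^{12}$, and $y^{12}$, and we set $a_k^+ := (a_k^{12})^\vee$, $x^+ := (x^{12})^\vee$, $y^+ := (y^{12})^\vee$, following Convention \ref{conv:pm}. The degree shift $|a^\vee| = |a|+1$ from Section \ref{ssec:dga-augs} then yields the listed degrees of $a_k^+$, $x^+$, $y^+$.

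Finally, for the compositions, I would feed the diagonal augmentation $\epsilon = (\epsilon_1, \ldots, \epsilon_{k+1})$ of $\alg^{k+1}$ into the definition \eqref{eq:composition} of $m_k$: the identifications $h_{i,i+1}: C^\vee_{12} \xrightarrow{\sim} C^\vee_{i,i+1}$ provided by the consistent-sequence structure of Proposition \ref{prop:Thealgmabove} send $\alpha^+$ to $(\alpha^{i,i+1})^\vee$, and $h_{1,k+1}^{-1}: C^\vee_{1,k+1} \xrightarrow{\sim} C^\vee_{12}$ sends $(a^{1,k+1})^\vee$ to $a^+$. Then the general formula \eqref{eq:ms} applied inside $(\alg^{k+1})^\epsilon$ with twisted differential $\dd^{k+1}_\epsilon = \phi_\epsilon \circ \dd^{k+1} \circ \phi_\epsilon^{-1}$ produces
\[
m_k(\alpha_k^+,\ldots,\alpha_1^+) = (-1)^\sigma \sum_{a\in \rR\cup\{x,y\}} a^+ \cdot \mathrm{Coeff}_{\alpha_1^{12}\alpha_2^{23}\cdots \alpha_k^{k,k+1}}(\dd^{k+1}_\epsilon a^{1,k+1}),
\]
with $\sigma = k(k-1)/2 + \sum_{p<q} |\alpha_p^+||\alpha_q^+| + |\alpha_{k-1}^+|+|\alpha_{k-3}^+|+\cdots$ exactly as in the formula following \eqref{eq:ms}.

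There is no real obstacle here: once the correspondences above are set up the statement reduces to definition-chasing, and the only mild care needed is to match the degree and sign conventions (particularly the $|a^\vee| = |a|+1$ shift and the Koszul sign $\sigma$) against the conventions used earlier. In particular the proposition does not require any new invariance or compatibility argument beyond what has already been established in Section \ref{ssec:invariance}.
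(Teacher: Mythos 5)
Your proposal is correct and follows exactly the route the paper takes: the paper gives no separate proof of this proposition, explicitly presenting it as "an application of Definition \ref{def:aug-from-consistent-sequence} to the corresponding consistent sequence of \dgas{} from Proposition \ref{prop:mcopyMultiple}," which is precisely the definition-chasing you carry out (with the $M=1$ specialization, the degree shift $|a^\vee|=|a|+1$, and formula \eqref{eq:ms} for the compositions). Nothing further is needed.
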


\begin{remark}
The construction of each $(\alg^m,\dd^m)$ can be expressed more concisely as follows.  Having defined the graded algebra homomorphism $\Phi: \alg(\Lambda) \to \alg^m \otimes \operatorname{End}(\bZ^m)$ and the elements $A_k, X, Y, \Delta$, the differential $\dd^m$ is equivalent to a differential on $\alg^m \otimes \operatorname{End}(\bZ^m)$ once we know that $\operatorname{End}(\bZ^m)$ has the trivial differential.  It is characterized by the facts that $\dd^m \Delta = 0$; that $-Y$ is a Maurer-Cartan element, i.e.\ that
\[ \dd^m(-Y) + \frac{1}{2}[-Y,-Y] = 0; \]
and that if we define a map $D\Phi: \alg(\Lambda) \to \alg^m\otimes \operatorname{End}(\bZ^m)$ by $D\Phi = \dd^m \Phi - \Phi \dd$, then
\[ D\Phi + \operatorname{ad}(-Y) \circ \Phi = 0. \]
Here $D\Phi$ is a $(\Phi,\Phi)$-derivation, meaning that $D\Phi(ab) = D\Phi(a)\cdot \Phi(b) + (-1)^{|a|}\Phi(a)\cdot D\Phi(b)$, and $[\cdot,\cdot]$ denotes the graded commutator $[A,B] = AB - (-1)^{|A||B|}BA$.
\end{remark}

\subsubsection{Unknot}
\label{sec:unknot}
We first compute the augmentation categories $\Augpm(\Lambda,\coeffs)$
for the standard
Legendrian unknot $\Lambda$ shown in Figure~\ref{fig:unknot-xy}, and any coefficients $\coeffs$, using the
Lagrangian projection $m$-copy and via Proposition~\ref{prop:complete-aug-description}.
Then the
\dga{} for $\Lambda$ is generated by
$t^{\pm 1}$ and a single Reeb chord $a$, with $|t|=0$, $|a|=1$, and
\[
\partial(a) = 1+t^{-1}.
\]
This has a unique augmentation $\epsilon$ to $\coeffs$, with
$\epsilon(a)=0$ and $\epsilon(t) = -1$.

\begin{figure}
\labellist
\footnotesize\hair 2pt
\pinlabel $a$ at 102 63
\pinlabel $a^{21}$ at 409 91
\pinlabel $a^{12}$ at 409 38
\pinlabel $a^{11}$ at 381 64
\pinlabel $a^{22}$ at 437 64
\pinlabel $x^{12}$ at 481 126
\pinlabel $y^{12}$ at 513 126
\pinlabel $\color{red}{\Lambda_1}$ at 301 86
\pinlabel $\color{blue}{\Lambda_2}$ at 272 113
\endlabellist
\centering
\includegraphics[width=\textwidth]{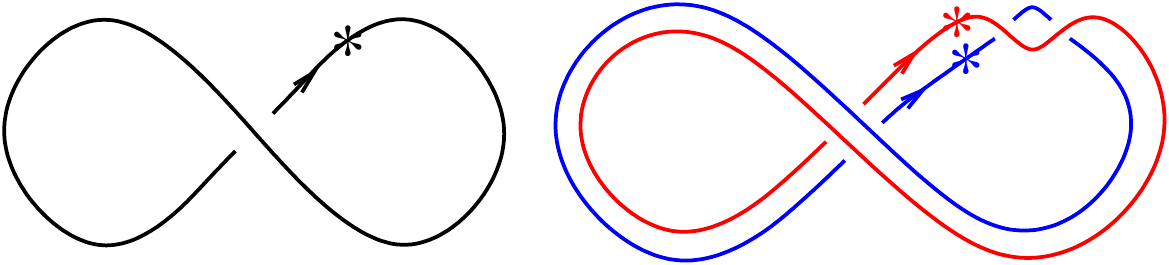}
\caption{
The Legendrian unknot $\Lambda$ (left) and its $2$-copy $\Lambda^2$
(right), in the $xy$ projection, with base points and Reeb chords labeled.
}
\label{fig:unknot-xy}
\end{figure}

We can read the \dga{} for the $m$-copy of $\Lambda$ from
Proposition~\ref{prop:complete-aug-description}. For $m=2$, there are $6$ Reeb chords
$a^{11},a^{12},a^{21},a^{22},x^{12},y^{12}$ with $|a^{ij}|=1$,
$|x^{12}|=0$, $|y^{12}|=-1$, and the differential is
\begin{alignat*}{2}
\partial(a^{11}) &= 1+(t^1)^{-1}+y^{12}a^{21} & \qquad \partial(x^{12}) &=
(t^1)^{-1}y^{12}t^2-y^{12} \\
\partial(a^{12}) &= -x^{12}(t^2)^{-1}+y^{12}a^{22}+a^{11}y^{12} &
\partial(y^{12}) &= 0 \\
\partial(a^{21}) &= 0 &&\\
\partial(a^{22}) &= 1+(t^2)^{-1}+a^{21}y^{12}. &&
\end{alignat*}
Note that the differential on $\Lambda^2$ can also be read by
inspection from Figure~\ref{fig:unknot-xy}.

We have
\begin{alignat*}{3}
\LCC{0}(\epsilon,\epsilon) &= \langle y^+ \rangle & \qquad
\LCC{1}(\epsilon,\epsilon) &= \langle x^+ \rangle & \qquad
\LCC{2}(\epsilon,\epsilon) &= \langle a^+ \rangle \\
&&
\LCCBC{2}(\epsilon,\epsilon) &= \langle a^- \rangle &
\end{alignat*}
and all other $\LCCpm{*}(\epsilon,\epsilon)$ are $0$.
The linear part $\partial_{(\epsilon,\epsilon)}^{\lin}$ of the differential $\partial_{(\epsilon,\epsilon)}$ on
$C_{12} = \langle a^{12},x^{12},y^{12}\rangle$ is given by
$\partial_{(\epsilon,\epsilon)}^{\lin}(a^{12}) =
x^{12}$,
$\partial_{(\epsilon,\epsilon)}^{\lin}(x^{12})
= \partial_{(\epsilon,\epsilon)}^{\lin}(y^{12}) = 0$, while on
$C_{21} = \langle a^{21} \rangle$ it is identically
zero. Dualizing gives differentials $m_1$ on $\hompm^*$ with $m_1(x^+)=a^+$ in $\LCC{}$ and $m_1=0$ otherwise, and $m_1=0$ on $\LCCBC{}$; thus
\[
\LCH{0}(\epsilon,\epsilon) \cong \langle y^+ \rangle
\qquad
\LCHBC{2}(\epsilon,\epsilon) \cong \langle a^- \rangle
\]
and $\LCHpm{*}(\epsilon,\epsilon)=0$ otherwise. (Recall from
Convention~\ref{conv:pm} that $a^+,a^-$ represent
$(a^{12})^\vee,(a^{21})^\vee$ in $\hom,\homBC$, respectively.)

It is evident that the augmentation category $\AugBC(\Lambda,\coeffs)$
is non-unital -- there are no degree zero morphisms at all.  Indeed, all
higher compositions $m_k$, $k \geq 2$, on $\homBC(\epsilon,\epsilon)$
must vanish
for degree reasons. To calculate the composition maps on
$\Aug(\Lambda,\coeffs)$, we need the differential for the $3$-copy
$\Lambda^3$. Again from Proposition~\ref{prop:complete-aug-description}, the relevant
part of the differential for $\Lambda^3$ is
\begin{align*}
\partial(a^{13}) &=
-x^{13}(t^3)^{-1}+x^{12}x^{23}(t^3)^{-1}+y^{12}a^{23}+y^{13}a^{33}+a^{11}y^{13}+a^{12}y^{23} \\
\partial(x^{13}) &= (t^1)^{-1}y^{13}t^3+(t^1)^{-1}y^{12}t^2 x^{23}
-y^{13}-x^{12}y^{23} \\
\partial(y^{13}) &= y^{12}y^{23}.
\end{align*}
Augmenting each copy by $\epsilon$ sends each $t^i$ to $-1$, which
by \eqref{eq:ms} leads to
\begin{align*}
m_2(x^+,x^+) &= a^+ \\
m_2(y^+,a^+) = m_2(a^+,y^+) &= -a^+ \\
m_2(y^+,x^+) = m_2(x^+,y^+) &= -x^+ \\
m_2(y^+,y^+) &= -y^+.
\end{align*}
Note in particular that in $\Aug(\Lambda,\coeffs)$, $-y^+$ is the unit, in agreement with
Theorem~\ref{thm:unital}.

One can check from Proposition~\ref{prop:complete-aug-description} that
\[
m_k(x^+,x^+,\ldots,x^+) = (-1)^{\lfloor (k-1)/2\rfloor} a^+
\]
and all other higher products $m_k$ vanish for $k \geq 3$: the only
contributions to $m_k$ come from entries of $(\Delta X)^{-1}$.

\begin{remark}
If we instead choose the opposite orientation for $\Lambda$ (which does not change $\Lambda$ up to Legendrian isotopy), then the differential for $\Lambda$ contains no negative powers of $t$, and no words of length $\geq 2$; it follows that in the resulting $A_\infty$ category $\Aug$, $m_k$ vanishes identically for $k \geq 3$.
\end{remark}

\subsubsection{Trefoil} \label{sssec:TrefoilEx}

Here we compute the augmentation categories to $\bZ/2$ for the
right-handed trefoil $\Lambda$ shown in Figure~\ref{fig:trefoil-1},
using the front projection $m$-copy, cf.\ Section~\ref{sssec:frontcopy}.
Place a single base point at the right cusp $c_1$
(i.e., along the loop at $c_1$ in the $xy$ resolution of the front),
and set $t=-1$ to reduce to coefficient ring $\bZ$ (we will keep the
signs for reference, although for our calculation it suffices to
reduce mod $2$ everywhere). Then the \dga{} for $\Lambda$ is generated by $c_1,c_2,a_1,a_2,a_3$, with $|c_1|=|c_2|=1$ and $|a_1|=|a_2|=|a_3|=0$, with differential
\begin{align*}
\partial (c_1) &= -1+a_1+a_3+a_1a_2a_3 \\
\partial(c_2) &= 1-a_1-a_3-a_3a_2a_1 \\
\partial(a_1) &= \partial(a_2) = \partial(a_3) = 0.
\end{align*}
There are five augmentations $\epsilon_1,\epsilon_2,\epsilon_3,\epsilon_4,\epsilon_5$ from this \dga{} to $\bZ/2$: $\epsilon_i(c_j) = 0$ for all $i,j$, and the augmentations are determined by where they send $(a_1,a_2,a_3)$: $\epsilon_1 = (1,0,0)$,
$\epsilon_2 = (1,1,0)$, $\epsilon_3 = (0,0,1)$, $\epsilon_4 = (0,1,1)$, $\epsilon_5 = (1,1,1)$.

Next consider the double $\Lambda^2$ of the trefoil as shown in Figure~\ref{fig:trefoil-2-front}. For completeness, we give here the full differential on mixed Reeb chords of $\Lambda^2$ (over $\bZ$, with base points at $c_1^{11}$ and $c_1^{22}$):
\begin{alignat*}{2}
\partial(c_1^{12}) &= 0 &
\partial(c_1^{21}) &= a_1^{21}(1+a_2^{11}a_3^{11}+a_2^{12}a_3^{21})
+ a_1^{22}(a_2^{21}a_3^{11}+a_2^{22}a_3^{21})+a_3^{21} \\
\partial(c_2^{12}) &= 0 &
\partial(c_2^{21}) &= -a_3^{21}(1+a_2^{12}a_1^{21}+a_2^{11}a_1^{11})
-a_3^{22}(a_2^{21}a_1^{11}+a_2^{22}a_1^{21})-a_1^{21} \\
\partial(a_1^{12}) &= c_1^{12}a_1^{22}-a_1^{11}c_2^{12} &
\qquad \partial(a_1^{21}) &= 0\\
\partial(a_2^{12}) &= c_2^{12}a_2^{22}-a_2^{11}c_1^{12} &
\partial(a_2^{21}) &= 0 \\
\partial(a_3^{12}) &= c_1^{12}a_3^{22}-a_3^{11}c_2^{12} &
\partial(a_3^{21}) &= 0.
\end{alignat*}

For any augmentations $\epsilon_i,\epsilon_j$, we have
\begin{alignat*}{2}
\LCC{0}(\epsilon_i,\epsilon_j) &\cong (\bZ/2) \langle c_1^+,c_2^+ \rangle & \qquad \LCCBC{1}(\epsilon_i,\epsilon_j) &\cong (\bZ/2) \langle a_1^{-},a_2^{-},a_3^{-} \rangle \\
\LCC{1}(\epsilon_i,\epsilon_j) &\cong (\bZ/2) \langle a_1^+,a_2^+,a_3^+ \rangle & \qquad
\LCCBC{2}(\epsilon_i,\epsilon_j) &\cong (\bZ/2) \langle c_1^-,c_2^- \rangle
\end{alignat*}
and $\LCCpm{*}(\epsilon_i,\epsilon_j)=0$ otherwise.
The
linear part $\partial_{(\epsilon_1,\epsilon_1)}^{\lin}$ of the differential $\partial_{(\epsilon_1,\epsilon_1)}$ on $C_{12}$ sends $a_1^{12}$ to $c_1^{12}+c_2^{12}$ and the other four generators $c_1^{12},c_2^{12},a_2^{12},a_3^{12}$ to $0$, while $\partial_{(\epsilon_1,\epsilon_1)}$ on $C_{21}$ sends $c_1^{21}$ to $a_1^{21}+a_3^{21}$, $c_2^{21}$ to $a_1^{21}+a_3^{21}$, and $a_1^{21},a_2^{21},a_3^{21}$ to $0$.
Dualizing gives
\begin{alignat*}{2}
\LCH{0}(\epsilon_1,\epsilon_1) &\cong (\bZ/2)\langle [c_1^+ +c_2^+]\rangle & \qquad
\LCHBC{1}(\epsilon_1,\epsilon_1) &\cong (\bZ/2)\langle [a_1^- +a_3^-],[a_2^-]\rangle \\
\LCH{1}(\epsilon_1,\epsilon_1) &\cong (\bZ/2)\langle [a_2^+],[a_3^+]\rangle & \qquad
\LCHBC{2}(\epsilon_1,\epsilon_1) &\cong (\bZ/2)\langle [c_1^-]\rangle
\end{alignat*}
and $\LCHpm{*}(\epsilon_1,\epsilon_1)=0$ otherwise.
As in the previous example, note that $\LCH{*}(\epsilon_1,\epsilon_1)$
has support in degree $0$, while $\LCHBC{*}(\epsilon_1,\epsilon_1)$ does not.

A similar computation with the pair of augmentations $(\epsilon_1,\epsilon_2)$ gives, on $C_{12}$,
$\partial_{(\epsilon_1,\epsilon_2)}^{\lin}(a_1^{12}) =
c_1^{12}+c_2^{12}$,
$\partial_{(\epsilon_1,\epsilon_2)}^{\lin}(a_2^{12}) = c_2^{12}$, and
$\partial_{\epsilon_1,\epsilon_2}^{\lin} = 0$ on other generators.
On $C_{21}$, we have $\partial_{(\epsilon_2,\epsilon_1)}^{\lin}(c_1^{21}) = a_1^{21}+a_3^{21}$, $\partial_{(\epsilon_2,\epsilon_1)}^{\lin}(c_2^{21}) = a_1^{21}$, and $\partial_{(\epsilon_2,\epsilon_1)}^{\lin} = 0$ on other generators. Thus we have:
\begin{alignat*}{2}
\LCH{1}(\epsilon_1,\epsilon_2) &\cong (\bZ/2)\langle [a_3^+]\rangle & \qquad
\LCHBC{1}(\epsilon_1,\epsilon_2) &\cong (\bZ/2)\langle [a_2^-]\rangle
\end{alignat*}
and $\LCHpm{*}(\epsilon_1,\epsilon_2)=0$ otherwise.

\begin{remark}
Note that either of $\LCH{*}(\epsilon_1,\epsilon_1) \not\cong \LCH{*}(\epsilon_1,\epsilon_2)$ or
$\LCHBC{*}(\epsilon_1,\epsilon_1) \not\cong \LCHBC{*}(\epsilon_1,\epsilon_2)$ implies that $\epsilon_1 \not\cong \epsilon_2$ in $\Aug$: see Section~\ref{ssec:augisom} below for a discussion of isomorphism in $\Aug$. Indeed, an analogous computation shows that all five augmentations $\epsilon_1,\epsilon_2,\epsilon_3,\epsilon_4,\epsilon_5$ are nonisomorphic. (The analogous statement in $\AugBC$ was established in \cite[\S 5]{BC}.) As shown in \cite{EHK}, these five augmentations correspond to five Lagrangian fillings of the trefoil, and these fillings are all distinct; compare the discussion in \cite[\S 5]{BC} as well as Corollary~\ref{cor:filling-equiv} below.
\end{remark}

\begin{figure}
\labellist
\footnotesize\hair 2pt
\pinlabel $\color{red}{\Lambda_1}$ at 145 36
\pinlabel $\color{blue}{\Lambda_2}$ at 145 21
\pinlabel $\color{figuregreen}{\Lambda_3}$ at 145 7
\pinlabel $c_1^{13}$ at 41 155
\pinlabel $c_1^{12}$ at 20 172
\pinlabel $c_1^{23}$ at 20 157
\pinlabel $c_2^{13}$ at 41 92
\pinlabel $c_2^{12}$ at 20 101
\pinlabel $c_2^{23}$ at 20 87
\pinlabel $a_1^{13}$ at 72 122
\pinlabel $a_2^{13}$ at 166 122
\pinlabel $a_3^{13}$ at 260 122
\pinlabel $c_1^{31}$ at 245 153
\pinlabel $c_2^{31}$ at 245 91
\endlabellist
\centering
\includegraphics[scale=1.25]{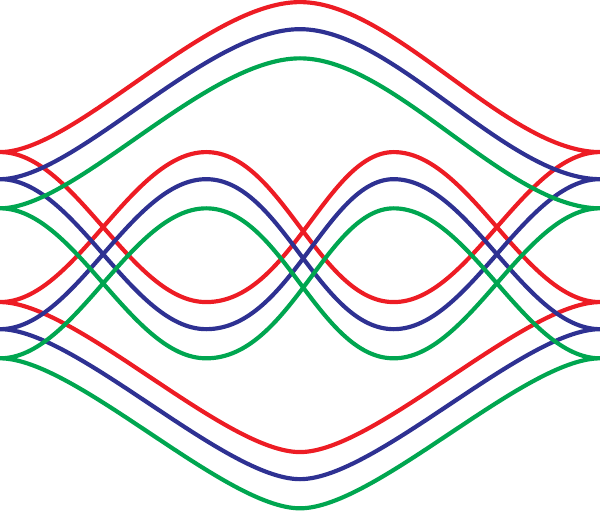}
\caption{
The $3$-copy of the Legendrian trefoil, in the front projection, with some Reeb chords labeled.
}
\label{fig:trefoil-3-front}
\end{figure}

We now compute $m_2$ as a product on
$\hompm(\epsilon_1,\epsilon_1)$. For this we use the front projection
$3$-copy $\Lambda^3$ of $\Lambda$, as shown in
Figure~\ref{fig:trefoil-3-front}. The relevant portion of the
differential for $\Lambda^3$ (with irrelevant signs) is:
\begin{alignat*}{2}
\partial(c_1^{13}) &= c_1^{12}c_1^{23}
& \quad
\partial(c_1^{31}) &= a_1^{33}a_2^{32}a_3^{21} +
a_1^{32}a_2^{22}a_3^{21} + a_1^{32}a_2^{21}a_3^{11} \\
\partial(c_2^{13}) &= c_2^{12}c_2^{23} &
\partial(c_2^{31}) &= -a_3^{33}a_2^{32}a_1^{21} -
a_3^{32}a_2^{22}a_1^{21} - a_3^{32}a_2^{21}a_1^{11} \\
\partial(a_1^{13}) &= c_1^{12}a_1^{23}-a_1^{12}c_2^{23} && \\
\partial(a_2^{13}) &= c_2^{12}a_2^{23}-a_2^{12}c_1^{23} && \\
\partial(a_3^{13}) &= c_1^{12}a_3^{23}-a_3^{12}c_2^{23}. &&
\end{alignat*}
Linearizing with respect to the augmentation $(\epsilon_1,\epsilon_1,\epsilon_1)$ on $\Lambda^3$, we find that the nonzero parts of $m_2 : \hom(\epsilon_1,\epsilon_1) \otimes \hom(\epsilon_1,\epsilon_1) \to \hom(\epsilon_1,\epsilon_1)$ are $m_2(c_1^+,c_1^+) = c_1^+$, $m_2(c_2^+,c_2^+) = c_2^+$, $m_2(c_2^+,a_1^+) = m_2(a_1^+,c_1^+) = a_1^+$, $m_2(c_1^+,a_2^+) = m_2(a_2^+,c_2^+) = a_2^+$, and $m_2(c_2^+,a_3^+) = m_2(a_3^+,c_1^+) = a_3^+$. This gives the following multiplication $m_2$ on $\LCH{*}(\epsilon_1,\epsilon_1)$:
\[
\begin{array}{|c|ccc|} \hline
m_2 & \lbrack c_1^+ +c_2^+ \rbrack & \lbrack a_2^+ \rbrack & \lbrack a_3^+ \rbrack \\ \hline
\lbrack c_1^+ +c_2^+ \rbrack & \lbrack c_1^+ +c_2^+ \rbrack & \lbrack a_2^+ \rbrack & \lbrack a_3^+ \rbrack \\
\lbrack a_2^+ \rbrack & \lbrack a_2^+ \rbrack & 0 & 0 \\
\lbrack a_3^+ \rbrack &\lbrack a_3^+ \rbrack & 0 & 0 \\ \hline
\end{array}
\]
Thus $[c_1^+ +c_2^+]$ acts as the identity in $\LCH{*}(\epsilon_1,\epsilon_1)$, exactly as predicted in Remark~\ref{rem:front-unit}.

For composition in $\AugBC$,
the nonzero parts of $m_2 : \homBC(\epsilon_1,\epsilon_1) \otimes \homBC(\epsilon_1,\epsilon_1) \to \homBC(\epsilon_1,\epsilon_1)$ are $m_2(a_3^-,a_2^-) = c_1^-$ and $m_2(a_2^-,a_3^-) = c_2^-$. This gives the following multiplication on $\LCHBC{*}(\epsilon_1,\epsilon_1)$:
\[
\begin{array}{|c|ccc|} \hline
m_2 & \lbrack a_1^- + a_3^- \rbrack & \lbrack a_2^- \rbrack & \lbrack c_1^- \rbrack \\ \hline
\lbrack a_1^- + a_3^- \rbrack & 0 & \lbrack c_1^- \rbrack & 0 \\
\lbrack a_2^- \rbrack & \lbrack c_1^- \rbrack & 0 & 0 \\
\lbrack c_1^- \rbrack & 0 & 0 & 0 \\ \hline
\end{array}
\]
This last multiplication table illustrates Sabloff duality \cite{Sabloff}: cohomology classes pair together, off of the fundamental class $[c_1^-]$.

\subsubsection{$m(9_{45})$} \label{sssec:m945}

Let $\Lambda$ be the Legendrian knot in Figure~\ref{fig:m945}. This is
of topological type $m(9_{45})$, and has previously appeared in work
of Melvin and Shrestha \cite{MelvinShrestha}, as the mirror diagram
for $9_{45}$, as well as in the Legendrian knot atlas \cite{atlas},
where it appears as the second diagram for $m(9_{45})$. In particular,
Melvin and Shrestha note that $\Lambda$ has two different linearized
contact homologies (see the discussion following
\cite[Theorem~4.2]{MelvinShrestha}).

\begin{figure}
\centering
\labellist
\small\hair 2pt
\pinlabel $a_1$ [l] at 294 179
\pinlabel $a_2$ [l] at 294 106
\pinlabel $a_3$ [l] at 294 70
\pinlabel $a_4$ [l] at 294 34
\pinlabel $a_5$ at 258 106
\pinlabel $a_6$ at 257 67
\pinlabel $a_7$ at 235 161
\pinlabel $a_8$ at 172 161
\pinlabel $a_9$ at 169 91
\pinlabel $a_{10}$ at 131 125
\pinlabel $a_{11}$ at 75 125
\pinlabel $a_{12}$ at 48 157
\pinlabel $a_{13}$ at 44 94
\endlabellist
\includegraphics[scale=0.6]{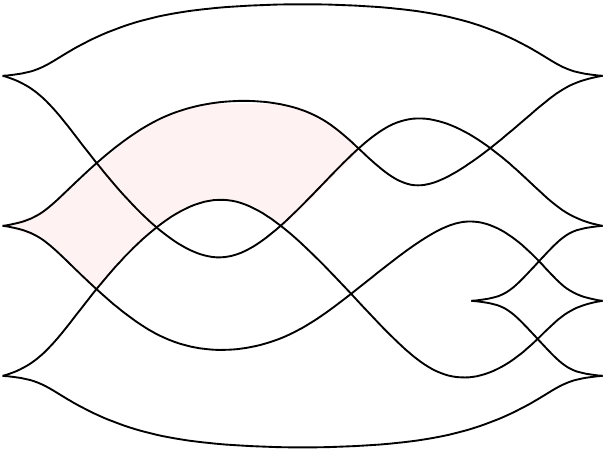}
\hspace{5ex}
\caption{
The knot $m(9_{45})$, with one particular disk shaded.}
\label{fig:m945}
\end{figure}

We can use the multiplicative structure on $\Aug$ to prove the
following, which was unknown until now according to the tabulation in
\cite{atlas}:

\begin{proposition}
$\Lambda$ is not isotopic to its Legendrian mirror.
\label{prop:m945}
\end{proposition}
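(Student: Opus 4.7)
The plan is as follows. The Legendrian mirror involution $\sigma:\bR^3\to\bR^3$, $\sigma(x,y,z)=(x,-y,-z)$, reverses the Reeb vector field $\partial_z$. Consequently, when we form the $m$-copy of $\sigma\Lambda$ and label its components from top to bottom in $z$, we recover the same links as the $m$-copies of $\Lambda$ but labeled in the opposite order. By Definitions~\ref{def:augplus-xy-xz} and~\ref{def:augbc}, this yields an $A_\infty$ equivalence $\Aug(\sigma\Lambda,\coeffs)\simeq \AugBC(\Lambda,\coeffs)$. Combined with Theorem~\ref{prop:Invariance}, a putative Legendrian isotopy $\Lambda\simeq\sigma\Lambda$ would therefore force an $A_\infty$ equivalence
\[
\Aug(\Lambda,\coeffs)\;\simeq\;\AugBC(\Lambda,\coeffs).
\]
So to prove Proposition~\ref{prop:m945} it suffices to exhibit an $A_\infty$ invariant of these categories that takes different values on the two sides, for some coefficient ring $\coeffs$.

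The invariant we use is the multiset of graded Poincar\'e polynomials of endomorphism cohomology, taken over isomorphism classes of objects in the cohomology category. By Proposition~\ref{prop:ainftyunit} this multiset is preserved by any $A_\infty$ equivalence. By Theorem~\ref{thm:properties} the Poincar\'e polynomials $P^\pm_\epsilon(t)$ of $H^*\hompm(\epsilon,\epsilon)$ are constrained by the exact triangle
\[
\homBC(\epsilon,\epsilon)\to\hom(\epsilon,\epsilon)\to H^*(\Lambda;\coeffs)\xrightarrow{[1]},
\]
with $H^*(\Lambda;\coeffs)\cong \coeffs\oplus\coeffs[-1]$ contributing $1+t$. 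Concretely, to carry this out we first enumerate all augmentations $\epsilon:\alg(\Lambda)\to\coeffs$ (taking $\coeffs=\bZ/2$ or a prime field of sufficient size to see all augmentations); these are determined by assigning values to the generators $a_1,\dots,a_{13}$ in Figure~\ref{fig:m945} subject to $\epsilon(\partial a_i)=0$. Next, for each pair $(\epsilon_i,\epsilon_j)$ we build the $2$-copy \dga{} via the resolution construction and compute the linearized differentials on $C_{12}$ and $C_{21}$; dualizing gives the graded vector spaces $H^*\hom(\epsilon_i,\epsilon_j)$ and $H^*\homBC(\epsilon_j,\epsilon_i)$. Grouping augmentations by isomorphism class in each of $H^*\Aug(\Lambda,\coeffs)$ and $H^*\AugBC(\Lambda,\coeffs)$, using the criterion that $\epsilon_i\cong\epsilon_j$ requires a class $[\alpha]\in H^0\hompm(\epsilon_i,\epsilon_j)$ invertible in the cohomology category, we then read off the two multisets $\{P^+_{[\epsilon]}(t)\}$ and $\{P^-_{[\epsilon]}(t)\}$ and compare.

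The main obstacle is the bookkeeping: one must compute $\partial$ on each of the $52$ mixed Reeb chords of the $2$-copy of a $13$-crossing diagram. However, Melvin and Shrestha's calculation in \cite{MelvinShrestha} has already exhibited two distinct linearized Poincar\'e polynomials for $\Lambda$, giving (by the exact triangle above) at least two distinct polynomials on each of the $\Aug$ and $\AugBC$ sides. The delicate step is to show that, even after accounting for the $1+t$ correction from $H^*(\Lambda;\coeffs)$ and for the possible collapsing of isomorphism classes in each category, the two multisets fail to match. The expected outcome is that one category has an isomorphism class whose Poincar\'e polynomial is not realized in the other, giving the desired contradiction and proving $\Lambda\not\simeq\sigma\Lambda$.
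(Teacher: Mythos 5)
There are two genuine gaps here, either of which is fatal. First, your reduction $\Aug(\sigma\Lambda,\coeffs)\simeq\AugBC(\Lambda,\coeffs)$ is false. The mirror $\sigma(x,y,z)=(x,-y,-z)$ does not merely relabel the copies of the $m$-copy; it reverses the orientation of the Lagrangian projection, so (over $\bZ/2$) it replaces the \dga{} by the \dga{} with all words reversed. The resulting relation is of the form $\Aug(\sigma\Lambda)\simeq\Aug(\Lambda)^{\mathrm{op}}$, not $\AugBC(\Lambda)$: the distinction between $\Aug$ and $\AugBC$ comes from which short Reeb chords of the perturbed $2$-copy enter the hom spaces, not from reversing multiplication. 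A concrete counterexample to your claim is the standard unknot, which is isotopic to its Legendrian mirror: $\Aug$ of the unknot is unital with $H^*\hom(\epsilon,\epsilon)$ supported in degree $0$, while $\AugBC$ of the unknot has $H^*\homBC(\epsilon,\epsilon)$ supported in degree $2$ and is not unital (Section~\ref{sec:unknot}), so these two categories are not equivalent.

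Second, even granting some correct reduction, the invariant you propose --- the multiset of graded Poincar\'e polynomials of $H^*\hom(\epsilon,\epsilon)$ over isomorphism classes --- cannot work. Reversing all words in the differential does not change its linear part at an augmentation, so the (bi)linearized complexes of $\Lambda$ and $\sigma\Lambda$ are literally transposes of one another and all graded dimensions $H^*\hompm(\epsilon_i,\epsilon_j)$ coincide; the Melvin--Shrestha data you invoke is therefore identical on both sides and no mismatch of multisets can occur. What does change under word reversal is the \emph{order} of arguments in $m_2$, and this is exactly what the paper exploits: it exhibits an augmentation $\epsilon$ with $m_2:\LCH{-1}(\epsilon,\epsilon)\otimes\LCH{2}(\epsilon,\epsilon)\to\LCH{1}(\epsilon,\epsilon)$ nonzero (via $\partial a_8=a_{13}a_{10}$), and checks by inspecting disks that $m_2:\LCH{2}\otimes\LCH{-1}\to\LCH{1}$ vanishes for every augmentation, which is the corresponding statement for the mirror. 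You need a ring-structure (noncommutativity) argument of this kind; no additive invariant of the hom spaces will do.
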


\noindent
Here the \textit{Legendrian mirror} of a Legendrian knot in $\R^3$ is
its image under the diffeomorphism $(x,y,z) \mapsto (x,-y,-z)$. The
problem of distinguishing Legendrian knots from their mirrors is known
to be quite subtle; see, e.g., \cite{NgCLI} and \cite{CEKSW}. It was already noted in \cite{CEKSW} that the ring structure on $\AugBC$ for $\Lambda$ is noncommutative, and we will use this noncommutativity here.

\begin{proof}[Proof of Proposition~\ref{prop:m945}]
We use the calculation of the augmentation category from
Proposition~\ref{prop:complete-aug-description}, where we resolve $\Lambda$ to produce a Lagrangian projection, and choose any
orientation and base point. We claim that: (1) there is an augmentation $\epsilon$ from $\mathcal{A}(\Lambda)$ to $\bZ/2$ for which
\[
m_2 :\thinspace \LCH{-1}(\epsilon,\epsilon)
\otimes \LCH{2}(\epsilon,\epsilon) \to \LCH{1}(\epsilon,\epsilon)
\]
is nonzero; and (2) there is no such augmentation for the Legendrian mirror of $\Lambda$. Since $\Lambda$ and its Legendrian mirror have the same \dga{} over $\bZ/2$ but with the order of multiplication reversed, (2) is equivalent to
$m_2 :\thinspace \LCH{2}(\epsilon,\epsilon)
\otimes \LCH{-1}(\epsilon,\epsilon) \to \LCH{1}(\epsilon,\epsilon)$ being zero for all augmentations $\epsilon$ for $\Lambda$.

To establish (2), note that the only Reeb chord of $\Lambda$ of degree $-2$ is $a_{10}$, while the Reeb chords of degree $1$ are $a_1,a_2,a_3,a_4,a_{13}$. By inspection, for $i \in \{1,2,3,4,13\}$, there is no disk whose negative corners include $a_{10}$ and $a_i$, with $a_{10}$ appearing first, and so $m_2(a_i^+,a_{10}^+) = 0$. Since $[a_{10}^+]$ generates $\LCC{-1}$ and $[a_i^+]$ generate $\LCC{2}$ for $i \in \{1,2,3,4,13\}$, (2) follows.

It remains to prove (1).
There are five augmentations from $\mathcal{A}(\Lambda)$ to $\bZ/2$. Two of these are given (on the degree $0$ generators) by
$\epsilon(a_5)=\epsilon(a_6)=\epsilon(a_7)=1$, $\epsilon(a_{12})=0$,
and $\epsilon(a_8) = 0 \text{ or } 1$. (The other three have $\LCH{2}(\epsilon,\epsilon) = 0$.) Let $\epsilon$ be either of
these two augmentations; then
\begin{align*}
\LCH{2}(\epsilon,\epsilon) &\cong (\bZ/2)\langle [a_{13}^+]\rangle \\
\LCH{1}(\epsilon,\epsilon) &\cong (\bZ/2)\langle
[a_8^+],[a_{12}^++(1+\epsilon(a_8))a_7^+]\rangle \\
\LCH{-1}(\epsilon,\epsilon) &\cong (\bZ/2)\langle [a_{10}^+]\rangle.
\end{align*}
%\begin{alignat*}{2}
%\LCH{3}(\epsilon,\epsilon) &\cong (\bZ/2)\langle [a_{11}^+]\rangle & \qquad
%\LCH{1}(\epsilon,\epsilon) &\cong (\bZ/2)\langle
%[a_8^+],[a_{12}^++(1+\epsilon(a_8))a_7^+]\rangle \\
%\LCH{2}(\epsilon,\epsilon) &\cong (\bZ/2)\langle [a_{13}^+]\rangle & \qquad
%\LCH{0}(\epsilon,\epsilon) &\cong (\bZ/2)\langle [a_9^+],[y^+]\rangle  \\
%& &
%\LCH{-1}(\epsilon,\epsilon) &\cong (\bZ/2)\langle [a_{10}^+]\rangle.
%\end{alignat*}
Now the fact that $\partial(a_8) = a_{13}a_{10}$ (the relevant disk is
shaded in Figure~\ref{fig:m945}) leads to
$m_2(a_{10}^+,a_{13}^+) = a_8^+$, and thus $m_2 :\thinspace \LCH{-1}
\otimes \LCH{2} \to \LCH{1}$ is nonzero.% On the other hand, there is
%no corresponding disk where $a_{10}$ precedes $a_{13}$ as a negative
%corner, and so $m_2 :\thinspace \LCH{2} \otimes \LCH{-1} \to \LCH{1}$
%is zero, and indeed this last statement holds for any of the $5$ augmentations.
%
%On the other hand, the Legendrian mirror of $\Lambda$ has the same \dga{}
%as $\Lambda$ over $\Z/2$, but with the order of multiplication
%reversed. It follows that for the mirror, there is an augmentation for
%which $m_2 :\thinspace \LCH{2} \otimes \LCH{-1} \to \LCH{1}$ is nonzero.
\end{proof}

\begin{remark}
It turns out that for either of the two augmentations specified in the
proof of Proposition~\ref{prop:m945},
$m_2 :\thinspace \LCH{i} \otimes \LCH{j} \to
\LCH{i+j}$ is nonzero for $(i,j) = (-1,2)$, $(1,-1)$, and $(2,1)$, but
zero for $(i,j) = (2,-1)$, $(-1,1)$, and $(1,2)$; any of these can be
used to prove the result. In addition, we note
that the same proof also works if we use $\AugBC$ instead of $\Aug$.
\end{remark} 
\newpage
%!TEX root = augmain.tex

\section{Properties of the augmentation category}
\label{sec:augprops}

This section explores certain properties of the augmentation category
$\Aug(\Lambda,\coeffs)$ defined in Section~\ref{sec:augcat}. In
Section~\ref{ssec:augstructure}, we give categorical formulations
of Sabloff duality and the duality exact sequence, and also explain
the relation of the cohomology and compactly supported cohomology
of a Lagrangian filling to the $+/-$ endomorphism spaces of the corresponding
augmentation. Some of the results from Section~\ref{ssec:augstructure}
are very similar if not essentially identical to previously known
results in the literature, and in Section~\ref{ssec:dictionary}, we
provide a dictionary that allows comparison.
Finally, in Section~\ref{ssec:augisom}, we
discuss relations between different notions of equivalence of
augmentations, showing in particular that being isomorphic in $\Aug$ is
the same as being \dga{} homotopic.

\subsection{Duality and long exact sequences}

\label{ssec:augstructure}

Let $\Lambda \subset J^1(\bR)$ be a Legendrian link.
Here we examine the relationship between the positive and negative
augmentation categories $\Augpm(\Lambda,\coeffs)$; recall from
Proposition~\ref{prop:neg-is-BC} that $\AugBC(\Lambda,\coeffs)$ is the
Bourgeois--Chantraine augmentation category.
We note that many of the results in this subsection are inspired by,
and sometimes essentially identical to, previously known results, and
we will attempt to include citations wherever appropriate.

\subsubsection{Exact sequence relating the hom spaces}

\begin{proposition}
There is a morphism of non-unital $A_\infty$ categories
$\AugBC(\Lambda) \to \Aug(\Lambda)$ carrying
\label{prop:nu-morphism}
the augmentations to themselves.
\end{proposition}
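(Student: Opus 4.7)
My plan is to construct the functor $F: \AugBC(\Lambda) \to \Aug(\Lambda)$ directly, exploiting the fact that both categories are extracted from the same consistent sequence of DGAs $\{\alg(\Lambda^m)\}$ supplied by Proposition~\ref{prop:mcopyseq}. On objects, $F$ will simply be the identity, sending each augmentation $\epsilon$ to itself. The substance of the argument is in defining the higher structure maps $F_n$ and verifying the $A_\infty$ functor equations. A key point is that the proposition only asserts that $F$ is a morphism of \emph{non-unital} $A_\infty$ categories, so there is no need to preserve units (which is good, since $\AugBC$ is typically not unital).

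The starting observation is that, by Proposition~\ref{prop:linkcat}, for each diagonal augmentation of an $(m+1)$-copy DGA $\alg(\Lambda^{m+1})$ the space $\bigoplus_{i,j}C^\vee_{ij}$ carries an ambient $A_\infty$ algebra structure. The $\Aug$-compositions live in the summands with $i<j$ (chords running ``upward'' in the link grading), while the $\AugBC$-compositions live in the summands with $i>j$, and there are also mixed $A_\infty$ operations between them. The bifunctor structures of Proposition~\ref{prop:bifunctor} already exploit some of these mixed compositions. My plan is to define $F_n$ as further such mixed compositions. Concretely, I would use the strict unit $e_\epsilon\in\hom(\epsilon,\epsilon)$ furnished by Corollary~\ref{cor:unital} as a bridging element and set
\[
F_n(\alpha_n,\dots,\alpha_1) \;:=\; m^{\mathrm{mix}}_{n+1}\bigl(e_{\epsilon_{n+1}},\alpha_n,\dots,\alpha_1\bigr),
\]
the ambient mixed composition computed in a suitable enlargement of the $(n+1)$-copy (for example an $(n+2)$-copy where $\epsilon_{n+1}$ is repeated) with link grading chosen so that the $\homBC$ inputs and the $\hom$ output are realized as the appropriate $C^\vee_{ij}$ summands. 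For $n=1$ this recovers the composition of Proposition~\ref{prop:bifunctor}, which is immediately a chain map by the $A_\infty$ relation for $m_1m_2$ together with $m_1(e_\epsilon)=0$.

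Verification of the $A_\infty$ functor equations then proceeds by writing them out and repeatedly applying the ambient $A_\infty$ relations of Proposition~\ref{prop:linkcat} along with the strict unitality properties of $e_\epsilon$ from Proposition~\ref{thm:unital}: most importantly, $m_k(\cdots,e_\epsilon,\cdots)=0$ for $k\ge 3$, because the generators $y^{ij}_k$ appear in the differentials on the $m$-copy DGAs only in words of length at most two in the non-invertible generators. The consistency axioms of Definition~\ref{def:consistent} ensure that these constructions are compatible across different copy sizes.

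The main technical obstacle will be the bookkeeping of link-grading positions and Koszul signs: choosing the right enlarged copy and the correct placement of $e_{\epsilon_{n+1}}$ so that each $\alpha_i\in\homBC(\epsilon_i,\epsilon_{i+1})$ lands in the appropriate $C^\vee_{k,\ell}$ summand and the output lives in $\hom(\epsilon_1,\epsilon_{n+1})$, and then checking that the resulting towers of mixed compositions really do satisfy the $A_\infty$ functor relations term by term. Once the setup is arranged correctly, the functor relations should reduce to straightforward consequences of the ambient $A_\infty$ relations, with all terms not already in the relation annihilated by strict unitality.
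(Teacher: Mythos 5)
Your proposal is correct and follows essentially the same route as the paper: the paper defines the functor on morphisms by specializing the mixed composition $m_2\colon \hom(\epsilon_1,\epsilon_1)\otimes\homBC(\epsilon_2,\epsilon_1)\to\hom(\epsilon_2,\epsilon_1)$ from Proposition~\ref{prop:bifunctor} at the identity element, and obtains the higher terms $F_n$ from analogous mixed compositions in higher copy numbers, exactly as you do with $e_{\epsilon_{n+1}}$. Your write-up just makes explicit the sign/bookkeeping and strict-unitality details that the paper leaves to the reader.
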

\begin{proof}
In Proposition \ref{prop:bifunctor}, we observed that from the 3-copy, we obtained a map
$$m_2: \hom(\epsilon_1,\epsilon_3) \otimes \homBC(\epsilon_2,\epsilon_1) 
=
C_{13}^\vee \otimes C_{21}^\vee  \to C_{23}^\vee = \hom(\epsilon_2, \epsilon_3).$$
Taking $\epsilon_1 = \epsilon_3$ and specializing to $\mathrm{id} \in  \hom(\epsilon_1,\epsilon_3 = \epsilon_1)$,
we get a map $$\homBC(\epsilon_2,\epsilon_1) \to \hom(\epsilon_2, \epsilon_3 = \epsilon_1).$$
The higher data characterizing an $A_\infty$ functor and related compatibilities
comes from similar compositions obtained from higher numbers of copies.
\end{proof}

\begin{proposition} \label{prop:pmexactseq}
Let $\Lambda \subset J^1(\bR)$ be a
Legendrian link, and let
$\Aug(\Lambda_f)$ and $\AugBC(\Lambda)$ be the
positive augmentation
category as constructed in Definition~\ref{def:augplus-xy-xz} (with some
Morse function $f$ chosen on $\Lambda$), and the
negative augmentation category as constructed in Definition~\ref{def:AugBC}.
Let $\epsilon_1,\epsilon_2$ be augmentations of $\Lambda$, and suppose
that $\epsilon_1,\epsilon_2$ agree on $\mathcal{T}$ (that is, on the $t_k$'s).
Then the map
determined by the functor from Proposition~\ref{prop:nu-morphism}
fits into a short exact sequence of chain complexes 
\[
0 \to \homBC(\epsilon_1, \epsilon_2) \to \hom(\epsilon_1, \epsilon_2)
\to C^*(\Lambda) \to 0,
\]
where $C^*(\Lambda)$ is a chain complex computing the ordinary cohomology of $\Lambda$.
It follows there is a long exact sequence
\[
\cdots \to H^{i-1}(\Lambda) \to H^i \homBC(\epsilon_1, \epsilon_2)
\to H^i \hom(\epsilon_1, \epsilon_2) \to H^{i} (\Lambda) \to \cdots.
\]
\end{proposition}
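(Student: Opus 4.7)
The plan is to work in the Lagrangian projection $m$-copy model of $\Aug(\Lambda)$, where Proposition~\ref{prop:mcopyMultiple} provides explicit matrix formulas for the differentials of $\alg(\Lambda^m)$. In the 2-copy, $C_{12}$ is freely generated over $\coeffs$ by ``long chords'' $\{a_l^{12}\}$, in bijection with Reeb chords of $\Lambda$, together with ``short chords'' $\{x_k^{12}, y_k^{12}\}$ indexed by critical points of the Morse function $f$; by contrast, $C_{21}$ contains only long chord generators, since every short Reeb chord runs from the lower copy to the upper.

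The first observation is that the short chords span a subcomplex $C_{12}^{\mathrm{short}} \subset C_{12}$: the matrix formulas for $\dd^2(X_k)$ and $\dd^2(Y_k)$ in Proposition~\ref{prop:mcopyMultiple} involve only short chord generators. Dualizing gives a short exact sequence of chain complexes
\[
0 \to (C_{12}/C_{12}^{\mathrm{short}})^\vee \to \hom(\epsilon_1,\epsilon_2) \xrightarrow{\pi} (C_{12}^{\mathrm{short}})^\vee \to 0,
\]
in which the subcomplex is generated by long chord duals $\{a_l^+\}$ and the quotient by short chord duals $\{x_k^+, y_k^+\}$.

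I would next identify the quotient $(C_{12}^{\mathrm{short}})^\vee$ with a chain complex computing $H^*(\Lambda;\coeffs)$. Here the hypothesis $\epsilon_1|_{\cT} = \epsilon_2|_{\cT}$ is essential: under it, the scalars $\epsilon_i(t_k)$ cancel in the linearization of $\dd^2(X_k) = \Delta_k^{-1} Y_{r(t_k)} \Delta_k X_k - X_k Y_{c(t_k)}$, yielding $\dd^2_{\mathrm{lin}}(x_k^{12}) = y_{r(t_k)}^{12} - y_{c(t_k)}^{12}$. Dualizing and projecting, the induced differential on $(C_{12}^{\mathrm{short}})^\vee$ sends each $y_k^+$ to a signed sum of adjacent $x_j^+$, which is precisely the Morse coboundary of the cell decomposition of $\Lambda$ determined by $f$. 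The cohomology is therefore $H^*(\Lambda;\coeffs)$. (Without the hypothesis on $\cT$, one would instead obtain cohomology with local coefficients twisted by $\epsilon_1/\epsilon_2$.)

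The final step is to identify the subcomplex $(C_{12}/C_{12}^{\mathrm{short}})^\vee$ with $\homBC(\epsilon_1,\epsilon_2)$ via the chain map $\phi$ of Proposition~\ref{prop:nu-morphism}. Both sides are freely generated over $\coeffs$ by duals of the same set of long Reeb chords of $\Lambda$. By direct computation in the 3-copy with diagonal augmentation $(\epsilon_2,\epsilon_1,\epsilon_2)$, applying $\phi(\alpha^-) = m_2(e_{\epsilon_2}, \alpha^-)$ with $e_{\epsilon_2} = -\sum_j (y_j^{13})^\vee$, the only bilinear terms of the form $\alpha^{21} \cdot y^{13}$ that appear in $\dd^3(c^{23})$ come from the $-(-1)^{|a|}(A_l Y)_{23}$ piece of the matrix formula in Proposition~\ref{prop:mcopyMultiple}. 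This forces $\phi$ to land in the long chord subcomplex and to induce an isomorphism of chain complexes onto it. Substituting $\homBC(\epsilon_1,\epsilon_2)$ for $(C_{12}/C_{12}^{\mathrm{short}})^\vee$ in the short exact sequence above yields the desired SES, and the long exact sequence in cohomology is then immediate.

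The main obstacle is this last identification: while the underlying module bijection $a_l^- \leftrightarrow a_l^+$ is obvious, the differentials on $\homBC$ and on the long chord subcomplex of $\hom$ are given respectively by $\Phi(\dd\,\cdot)_{21}$ and $\Phi(\dd\,\cdot)_{12}$ linearizations, which place the $\epsilon_1$'s and $\epsilon_2$'s in swapped positions. The compatibility is ultimately forced by the fact that $\phi$ is a chain map --- a consequence of $m_1(e_{\epsilon_2})=0$ (verified using the hypothesis on $\cT$) combined with the $A_\infty$ bimodule relations from Proposition~\ref{prop:bifunctor} --- and this is what needs to be carefully checked.
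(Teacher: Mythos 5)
Your proposal follows essentially the same route as the paper's proof: the same decomposition of $\hom(\epsilon_1,\epsilon_2)$ into the long-chord subcomplex (identified with $\homBC(\epsilon_1,\epsilon_2)$) and the short-chord quotient (identified with the Morse cochain complex of $\Lambda$, using the hypothesis on $\cT$ exactly where the paper does), all read off from the matrix formulas of Proposition~\ref{prop:mcopyMultiple}. The only difference is that you explicitly verify that the unit-multiplication map of Proposition~\ref{prop:nu-morphism} realizes the inclusion $a_k^-\mapsto\pm a_k^+$ onto that subcomplex, a point the paper treats as an immediate identification.
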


\begin{proof}
The proof consists of explicitly writing the complex $\LCC{*}(\epsilon_1,\epsilon_2)$ as a mapping cone:
\[
\LCC{*}(\epsilon_1,\epsilon_2) = \operatorname{Cone} \left(C^{*+1}(\Lambda) \to
\LCCBC{*}(\epsilon_1,\epsilon_2)\right).
\]
For simplicity, we assume that $\Lambda$ is a single-component knot;
the multi-component case is a straightforward generalization.
Let $\Lambda^2$ be the Lagrangian projection $2$-copy of $\Lambda$. In
the notation of Proposition~\ref{prop:complete-aug-description},
$\hom(\epsilon_1,\epsilon_2) = C_{12}^{(\epsilon_1,\epsilon_2)}$ is
generated by the $a_k^+$'s
as well as $x^+$ and $y^+$, while $\homBC(\epsilon_1,\epsilon_2)
= C_{21}^{(\epsilon_2,\epsilon_1)}$ is generated by just the
$a_k^-$'s: that is, if we identify $a_k^+=a_k^-=a_k^\vee$, then
\[
\hom(\epsilon_1,\epsilon_2) = \homBC(\epsilon_1,\epsilon_2) \oplus
\langle x^+,y^+ \rangle.
\]

The differential $m_1^+$ on
$\hom(\epsilon_1,\epsilon_2)$ is given by dualizing the linear part
of the twisted differential $\partial^2_{(\epsilon_1,\epsilon_2)}$ on
$C^{12}_{(\epsilon_1,\epsilon_2)}$, while the differential $m_1^-$ on
$\homBC(\epsilon_1,\epsilon_2)$ is given by dualizing the linear part
of $\partial^2_{(\epsilon_2,\epsilon_1)}$ on
$C^{21}_{(\epsilon_2,\epsilon_1)}$. Inspecting
Proposition~\ref{prop:mcopyMultiple} gives that $m_1^+$ and $m_1^-$ coincide
on the $a_k^\vee$'s, while
$m_1^+(y^+) = 0$ and $m_1^+(x^+) \in \langle
a_1^+,\ldots,a_r^+ \rangle$ as in the proof of
Proposition~\ref{thm:unital}. (Note that for $m_1^+(y^+)=0$, we need
the fact that $\epsilon_1(t)=\epsilon_2(t)$, which is true by
assumption.)
The quotient
complex $\langle
x^+,y^+ \rangle$ of $\hom(\epsilon_1,\epsilon_2)$ is then the
usual Morse complex $C^*(S^1)=C^*(\Lambda)$, and the statement about
the mapping cone follows.
\end{proof}

\begin{remark}
The condition in Proposition~\ref{prop:pmexactseq}
that $\epsilon_1$ and $\epsilon_2$ agree on
$\mathcal{T}$ is automatically satisfied for any single-component knot
with a Morse function with a unique minimum and maximum: in this case,
there is only one $t$, and $\epsilon_1(t) = \epsilon_2(t) = -1$ by a
result of Leverson \cite{Leverson}. Here we implicitly assume that the
augmentation categories are of $\bZ$-graded augmentations, although
the same is true for $(\bZ/m)$-graded augmentations if $m$ is even.
However, Proposition~\ref{prop:pmexactseq} fails to hold for
multi-component links if we remove the assumption that
$\epsilon_1,\epsilon_2$ must agree on $\mathcal{T}$.
\end{remark}

\subsubsection{Sabloff duality}
Here we present a repackaging of Sabloff duality \cite{Sabloff,EESab}
in our language. Roughly speaking, Sabloff duality states that
linearized contact homology and linearized contact cohomology fit into
a long exact sequence with the homology of the Legendrian.
In our notation, linearized contact
cohomology is the cohomology of $\homBC$, while linearized contact
homology is the homology of the dual to $\homBC$; see
Section~\ref{ssec:dictionary}.

For a  cochain complex $K$, we write $K^\dagger$ to denote the cochain complex obtained by dualizing the underlying
vector space and differential and negating all the gradings.  By comparison, if $K$ were a chain complex, we would write
$K^*$ to denote the cochain complex obtained by dualizing the underlying vector space and differential, but
leaving all the gradings alone.
We now have the following result, which can roughly be summarized as
``homology in $\AugBC$ is cohomology in $\Aug$.''

\begin{proposition}
There is a quasi-isomorphism $\homBC(\epsilon_2, \epsilon_1)^\dagger[-2]  \xrightarrow{\sim} \hom(\epsilon_1, \epsilon_2)$.
\label{prop:duality}
\end{proposition}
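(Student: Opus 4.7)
The plan is to construct an explicit chain-level quasi-isomorphism $\Theta$ and to verify that it respects the exact triangle of Proposition~\ref{prop:pmexactseq}, thereby reducing the statement to Poincar\'e duality on the closed 1-manifold $\Lambda$ together with a standard linearized-disk comparison.

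First I would fix the Lagrangian projection 2-copy $\Lambda^2_f$ as in Section~\ref{sssec:lagrcopy} and unpack $\hom(\epsilon_1,\epsilon_2)=C_{12}^\vee$ and $\homBC(\epsilon_2,\epsilon_1)=C_{21}^\vee$ generator-by-generator via Proposition~\ref{prop:mcopyMultiple}. On generators, there is a natural bijection $a_k^{12}\leftrightarrow a_k^{21}$ between the long chord parts of $C_{12}$ and $C_{21}$, while the short chords $x^{12},y^{12}$ contribute only to $C_{12}$. This latter asymmetry is exactly what the shift $\dagger[-2]$ accommodates, and it will correspond to the cohomology of $\Lambda$ under the exact triangle of Proposition~\ref{prop:pmexactseq}. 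I would then define $\Theta$ as a direct analogue of the classical Sabloff duality chain map: on a dualized generator $(a_k^{21})^{\vee\dagger}$, $\Theta$ returns a linear combination of generators of $C_{12}^\vee$ weighted by rigid holomorphic disks in the symplectization $\mathbb{R}\times J^1(\mathbb{R})$ having exactly two positive corners (one at a chord of $C^{12}$, one at a chord of $C^{21}$) and negative corners that are pure, decorated by $\epsilon_1$ on $\Lambda_1$-chords and $\epsilon_2$ on $\Lambda_2$-chords.

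Next, I would verify that $\Theta$ is a chain map by the standard boundary analysis of one-dimensional moduli spaces of two-positive-puncture disks; breaking at a negative puncture contributes either a $m_1$ in the $\hom$-factor or an $m_1$ in the $\homBC$-factor (with an overall sign from the $\dagger$ dualization). The short chords $x^{12}, y^{12}$ of $\hom$ will appear naturally as the image of certain degenerate two-positive-puncture disks, playing exactly the role of the Morse-theoretic contributions in Proposition~\ref{prop:mcopyMultiple}.

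To show $\Theta$ is a quasi-isomorphism, I would fit it into a morphism of exact triangles: applying Proposition~\ref{prop:pmexactseq} to the pair $(\epsilon_2,\epsilon_1)$ and dualizing by $(-)^\dagger[-2]$ yields a triangle
\[
C^*(\Lambda)^\dagger[-2]\to \hom(\epsilon_2,\epsilon_1)^\dagger[-2]\to \homBC(\epsilon_2,\epsilon_1)^\dagger[-2]\xrightarrow{[1]}
\]
while the triangle for $(\epsilon_1,\epsilon_2)$ reads
\[
\homBC(\epsilon_1,\epsilon_2)\to\hom(\epsilon_1,\epsilon_2)\to C^*(\Lambda)\xrightarrow{[1]}.
\]
Under the $(\pm,\pm)$-duality, the first triangle should map to a rotation of the second via $\Theta$ (and its variant with $\epsilon_1,\epsilon_2$ swapped), with the connecting map on cohomology identified with Poincar\'e duality $H^{2-*}(\Lambda)\cong H^*(\Lambda)$ for the closed 1-manifold $\Lambda$. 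The five-lemma then yields the quasi-isomorphism; to handle the hypothesis of Proposition~\ref{prop:pmexactseq} that $\epsilon_1,\epsilon_2$ agree on $\mathcal{T}$, I would first reduce to the case of one base point per component via Proposition~\ref{prop:invarianceBasePoints} and invoke Leverson's result that $\epsilon(t)=-1$ is forced.

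The main obstacle is the explicit identification of the chain-level map $\Theta$: constructing it requires a careful moduli-theoretic input (two-positive-puncture disks) and transversality for these disks in the perturbed 2-copy, and then the sign comparison with the dualized linearized differential must be reconciled with the conventions of Section~\ref{sec:augcatalg}. Once this is in place, the identification with Poincar\'e duality on $C^*(\Lambda)$ and the five-lemma argument are formal.
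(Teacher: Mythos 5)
Your construction of $\Theta$ via two-positive-puncture disks is the classical Ekholm--Etnyre--Sabloff duality map and is a reasonable chain-level candidate, but the argument that it is a quasi-isomorphism has a circularity. In your proposed morphism of triangles, only the vertical arrow on the $C^*(\Lambda)$ terms is independently known to be a quasi-isomorphism (Poincar\'e duality on the $1$-manifold); the other two vertical arrows are $\Theta$ itself and ``its variant with $\epsilon_1,\epsilon_2$ swapped,'' i.e.\ the map $\hom(\epsilon_2,\epsilon_1)^\dagger[-2]\to\homBC(\epsilon_1,\epsilon_2)$, which is (the dual of) precisely the statement being proved for the reversed pair. The five lemma needs two of the three to be known, so as written the argument does not close. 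What is missing is the actual source of acyclicity that drives the duality: in the paper's proof one takes the $2$-copy $\overline{\Lambda}^{(2)}$ with $\Lambda_1$ pushed far above $\Lambda_2$ in the $z$-direction, observes that the length filtration gives a short exact sequence of complexes $0\to C^{21}_{-*-1}\to\overline{C}^{12}\to C^{12}\to 0$, and uses that $\overline{\Lambda}^{(2)}$ is isotopic to a link with \emph{no} mixed Reeb chords, so the middle term is acyclic and the connecting map of the associated triangle is automatically a quasi-isomorphism. This separation trick also converts your two-positive-puncture disks into ordinary one-positive-puncture DGA disks (the two positive corners at a $\mathcal{R}^{21}$ chord become a positive/negative pair in $\overline{\mathcal{R}}^{12}$), so no new moduli spaces or transversality arguments are needed.

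A secondary issue: routing the proof through Proposition~\ref{prop:pmexactseq} imports its hypothesis that $\epsilon_1$ and $\epsilon_2$ agree on $\mathcal{T}$, which Proposition~\ref{prop:duality} does not assume; Leverson's theorem only disposes of this for one-component knots, so your argument would not cover general links or pairs of augmentations. The paper's proof avoids this hypothesis entirely.
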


\begin{proof} 
This proof is given in \cite{EESab} (though not in the language stated here); we include the proof in our language for the convenience of the reader.
Let $\Lambda^{(2)}$ be an (appropriately perturbed) $2$-copy of $\Lambda$.  Let
$\overline{\Lambda}^{(2)}$ the link with the same $xy$ projection as $\Lambda^{(2)}$, but
with $\Lambda_1$ lying very far above $\Lambda_2$ in the $z$ direction.

We write $\overline{C}$ for the space spanned by the Reeb chords of $\overline{\Lambda}^{(2)}$.
Note that, since these are in correspondence with self intersections in the $xy$ projection, which
is the same for $\Lambda^{(2)}$ and $\overline{\Lambda}^{(2)}$, the Reeb chords of these links are
in bijection.  However, in $\Lambda^{(2)}$, all Reeb chords go from $\Lambda_2$ to $\Lambda_1$,
so $\overline{C}^{21} = 0$.  Note that if $r \in \mathcal{R}^{21}$ corresponds to a chord $\overline{r} \in \overline{\mathcal{R}}^{12}$, then
$\mu(\overline{r}) = - \mu(r) - 1$ because the Reeb chord is now oppositely oriented between Maslov potentials, and moreover
is a minimizer of front projection distance if it was previously a maximizer, and vice versa.  We will write
$C^{21}_{-*-1}$ to indicate the graded module with this corrected grading.   We have explained that,
as a graded module,
\[
\overline{C}^{12} = C^{12} \oplus C^{21}_{-*-1}.
\]

Let $\epsilon_1, \epsilon_2$ be augmentations of $\alg(\Lambda)$.  We write $\epsilon = (\epsilon_1, \epsilon_2)$ for
the corresponding  augmentation of $\Lambda^{(2)}$ and $\overline{\epsilon} = (\epsilon_1, \epsilon_2)$ for the
corresponding augmentation of $\overline{\Lambda}^{(2)}$.
If we pass from $\Lambda^{(2)}$ to $\overline{\Lambda}^{(2)}$ by moving the components further apart by some large
distance $Z$ in the $z$ direction, then every Reeb chord of $\overline{\Lambda}^{(2)}$ corresponding to a generator of
$C_{12}$ has length larger than $Z$, and every Reeb chord corresponding to a generator of $C_{21}^*$ has length smaller
than $Z$.  Because the differential is filtered by chord length, it follows that we have an exact sequence of dg modules
\[
0 \to (C^{21}_{-*-1}, \partial_{\overline{\epsilon}}|_{C^{21}_{-*-1}}) \to (\overline{C}^{12}, \partial_{\overline{\epsilon}}) \to
(C^{12},  \partial_{\overline{\epsilon}} ) \to 0.
\]
Here $(C^{12},  \partial_{\overline{\epsilon}})$ means the dg module which is $C^{12}$ equipped with the quotient differential
coming from the exact sequence.

Now geometric considerations imply that
\[
(C^{12},  \partial_{\overline{\epsilon}})  =
(C^{12}, \partial_{\epsilon}|_{C^{12}}):
\]
if $r \in \mathcal{R}^{12} \subset \overline{\mathcal{R}}^{12}$, then
$\partial_{\overline{\epsilon}}(r)$ in $\overline{C}^{12}$ counts
disks with a negative corner at some $r' \in
\overline{\mathcal{R}}^{12}$, which is either in $\mathcal{R}^{12}$
(in which case it contributes equally to $\partial_{\epsilon}$ and the quotient differential
$\partial_{\overline{\epsilon}}$) or in $\mathcal{R}^{21}$ (in which
it does not contribute to either, since in $\Lambda^{(2)}$ it
corresponds to a disk with two positive punctures).
Additionally, we have
\[
(C^{21}_{-*-1}, \partial_{\overline{\epsilon}}|_{C^{21}_{-*-1}})^* =
((C_{21}^*, \partial^*_\epsilon|_{C_{21}^*})[-1])^\dag:
\]
this is a manifestation of the fact that when we push the two copies
past each other, a disk with a positive and a negative corner at
chords in $\mathcal{R}^{21}$ becomes a disk with a positive and a
negative corner at chords in $\overline{\mathcal{R}}^{12}$, but with
the positive and negative corners switched.

Dualizing and shifting, we have
\[
0 \to \hom(\epsilon_1, \epsilon_2) \to
(\overline{C}^{12}, \partial_{\overline{\epsilon}})^*[-1] \to
\homBC(\epsilon_2, \epsilon_1)^\dag[-1] \to 0.
\]
View the central term as a mapping cone to obtain a morphism
$\homBC(\epsilon_2, \epsilon_1)^\dag[-2] \to \hom(\epsilon_1, \epsilon_2)$.
Since $\overline{\Lambda}^2$
can be isotoped
so that there are no Reeb chords between the two components, the central term is acyclic, so this morphism
is a quasi-isomorphism.
\end{proof}

\begin{remark}
Proposition~\ref{prop:duality} holds for $n$-dimensional Legendrians
as well, with $n+1$ replacing $2$.
\end{remark}

\begin{corollary}
We have $H^*\hom(\epsilon_1,\epsilon_2) \cong
H^{*-2}\homBC(\epsilon_2,\epsilon_1)^\dagger$: the cohomology of the
hom spaces in $\Aug$ is isomorphic to (bi)linearized Legendrian
contact homology.
\label{cor:hom-is-LCH}
\end{corollary}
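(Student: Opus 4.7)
The plan is to deduce the corollary as an immediate consequence of Proposition~\ref{prop:duality} together with the identification of the Bourgeois--Chantraine hom spaces with (bi)linearized Legendrian contact homology.

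First, apply cohomology to the quasi-isomorphism
\[
\homBC(\epsilon_2, \epsilon_1)^\dag[-2] \xrightarrow{\sim} \hom(\epsilon_1, \epsilon_2)
\]
furnished by Proposition~\ref{prop:duality}. Using the convention $K[n]^i = K^{i+n}$ on shifts, this yields
\[
H^i\hom(\epsilon_1,\epsilon_2) \;\cong\; H^i\!\left(\homBC(\epsilon_2,\epsilon_1)^\dag[-2]\right) \;\cong\; H^{i-2}\homBC(\epsilon_2,\epsilon_1)^\dag,
\]
which is the stated isomorphism.

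For the second half of the statement, one needs to match $H^*\homBC(\epsilon_2,\epsilon_1)$ with (bi)linearized Legendrian contact homology as it appears in the literature. By Proposition~\ref{prop:neg-is-BC}, $\AugBC(\Lambda,\coeffs)$ agrees with the Bourgeois--Chantraine augmentation category, and by construction its morphism complex $\homBC(\epsilon_2,\epsilon_1)$ is obtained by dualizing the bilinearization $\partial^{\lin}_{(\epsilon_2,\epsilon_1)}$ of the C--E differential on mixed chords in $C_{21}$ of the $2$-copy, with the perturbation chosen so that no short chords appear in $C_{21}$. The homology of this dual complex is, by definition, the bilinearized Legendrian contact homology $LCH_*^{\epsilon_2,\epsilon_1}(\Lambda)$ (up to the conventional grading shift used in the literature, which one should record carefully).

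The only step requiring more than bookkeeping is confirming that the shift and dualization conventions used here match the ones in the standard references on (bi)linearized LCH, so that ``$H^*\homBC$'' agrees with what is traditionally called linearized contact homology rather than cohomology. I expect this to be the main obstacle---not a serious one, but the one place where care is needed, since swapping $\dag$ versus $*$ and tracking the $[-2]$ shift through the Sabloff duality sequence are precisely the conventions at issue. Once these conventions are pinned down, the corollary follows at once from the computation above.
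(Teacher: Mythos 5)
Your proposal is correct and matches the paper's (implicit) argument exactly: the corollary is stated as an immediate consequence of Proposition~\ref{prop:duality} by passing to cohomology, with the identification of $H^*\homBC(\epsilon_2,\epsilon_1)^\dagger$ as bilinearized Legendrian contact homology deferred to the convention dictionary in Section~\ref{ssec:dictionary}. The one point you flag as needing care --- the shift and dualization conventions --- is precisely what that dictionary records, so nothing further is required.
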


Here bilinearized Legendrian contact homology, as constructed in
\cite{BC}, is the cohomology of $\homBC^\dagger$;
see Section~\ref{ssec:dictionary} below for the precise equality, and
for further discussion of the
relation of Proposition~\ref{prop:duality} to Sabloff duality.

\subsubsection{Fillings}

As described in the Introduction, an important source of augmentations
is exact Lagrangian fillings, whose definition we recall here. For a
contact manifold $(V,\alpha)$, the cylinder $\bR \times V$ is a
symplectic manifold with symplectic form $\omega = d(e^t\alpha)$,
where $t$ is the $\bR$ coordinate. Let $\Lambda \subset V$ be
Legendrian. A \textit{Lagrangian filling} of $\Lambda$ is a compact
$L \subset (-\infty,0] \times V$ such that $\omega|_L = 0$, $L \cap
\{t=0\} = \{0\} \times \Lambda$, and $L \cup ([0,\infty) \times
\Lambda)$ is smooth. The filling is \textit{exact} if $e^t\alpha|_L$
is an exact $1$-form and its primitive is constant for $t \gg 0$ and for $t \ll 0$, cf.\ \cite{Cha-exact}. As part of the functoriality of Symplectic Field
Theory, any exact Lagrangian filling of a Legendrian $\Lambda$ induces
an augmentation of the \dga{} for $\Lambda$; see e.g. \cite{Ekh09}, and
\cite{EHK} for the special case $V = J^1(\bR)$.

We now restrict as usual to this special case.
For an augmentation obtained from an exact Lagrangian filling,
$\Hom_\pm$ is determined by the topology of the filling.  This is essentially a result of \cite{Ekh09} (see also \cite[\S4.1]{BC});
translated into our language, it becomes the following:

\begin{proposition}
\label{prop:plusminusexseq}
Suppose that $L$ is an exact Lagrangian filling of $\Lambda$ in
$(-\infty,0]\times J^1(\bR)$, with Maslov number $0$, and let
$\epsilon_L$ be the augmentation of $\Lambda$ corresponding to the filling. Then
\[
H^k \hom(\epsilon_L, \epsilon_L) \cong H^k(L),
\hspace{5ex}
H^k \homBC(\epsilon_L, \epsilon_L) \cong H^k(L,\Lambda),
\]
and the long exact sequence
\[
\cdots \to H^{k-1}(\Lambda) \to H^k \homBC(\epsilon_L, \epsilon_L) \to H^k \hom(\epsilon_L, \epsilon_L) \to H^k(\Lambda) \to \cdots
\]
is the standard long exact sequence in relative cohomology.
\end{proposition}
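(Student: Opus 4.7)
My plan is to deduce the three claims of the proposition from a single input --- the identification of $\homBC$ with relative cohomology due to Ekholm --- together with the exact triangle of Proposition~\ref{prop:pmexactseq}. The whole argument essentially amounts to saying: ``the Bourgeois--Chantraine half is handled by \cite{Ekh09} (and \cite[\S 4.1]{BC}), the plus/minus exact triangle is Proposition~\ref{prop:pmexactseq}, and the rest is the five lemma.''

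The first step is to invoke Ekholm's theorem \cite{Ekh09} (in the form used by Bourgeois--Chantraine \cite[\S 4.1]{BC}), which asserts that for the augmentation $\epsilon_L$ induced by an exact Lagrangian filling $L$ with Maslov number $0$, there is a natural isomorphism
\[
H^k\homBC(\epsilon_L,\epsilon_L) \;\cong\; H^k(L,\Lambda).
\]
This is the translation into our conventions (and via Proposition~\ref{prop:neg-is-BC}) of Ekholm's identification of linearized Legendrian contact cohomology with the cohomology $H^*(L,\Lambda) = H^*_c(L\setminus\Lambda)$ of the pair, which arose geometrically as the cone on $HF_{-\varepsilon}(L,L)\to HF_\infty(L,L)$ sketched in the Introduction. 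Since the linearized \dga{} computing $\homBC(\epsilon_L,\epsilon_L)$ agrees (on the nose) with the linearized Chekanov--Eliashberg complex for which Ekholm proves the isomorphism, this step is a direct appeal to existing literature.

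Next, I plug this into the long exact sequence from Proposition~\ref{prop:pmexactseq} to obtain a six-term sequence
\[
\cdots \to H^{k-1}(\Lambda) \xrightarrow{\;\delta\;} H^k(L,\Lambda) \to H^k\hom(\epsilon_L,\epsilon_L) \to H^k(\Lambda) \xrightarrow{\;\delta\;} H^{k+1}(L,\Lambda)\to\cdots,
\]
and the central task is to identify the connecting map $\delta$ with the standard coboundary in the long exact sequence of the pair $(L,\Lambda)$. Once this identification is made, the five lemma applied to the map between the two long exact sequences --- Ekholm's on $\homBC$ and the topological one on $H^*(L,\Lambda)\to H^*(L)\to H^*(\Lambda)$ --- produces an isomorphism $H^k\hom(\epsilon_L,\epsilon_L) \cong H^k(L)$ compatible with the remaining maps. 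This identification of maps is the step I expect to be the main obstacle.

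To identify $\delta$ geometrically, I would unwind the proof of Proposition~\ref{prop:pmexactseq}: the quotient complex $C^*(\Lambda)$ there is precisely the Morse cochain complex of $(\Lambda,f)$, generated by the short chords $x^+, y^+$ corresponding to the critical points of $f$, and the connecting map in the resulting long exact sequence counts mixed holomorphic disks that have one corner at a short chord and one corner at a long (Reeb) chord. Under the filling $L$, such disks glue to holomorphic curves in the symplectization with boundary on $L\cup L'$, where $L'$ is a small Hamiltonian pushoff of $L$ in the Reeb direction determined by $f$. By Ekholm's correspondence between augmentation disks and Morse/Floer flow lines for the filling, this count reproduces the geometric coboundary $H^{k-1}(\partial L) \to H^k(L,\partial L)$ in the pair $(L,\Lambda)$; equivalently, one can invoke the fact that the two exact triangles fit into a map induced by the functor $\AugBC\to\Aug$ of Proposition~\ref{prop:nu-morphism}, and both triangles are uniquely determined from their $H^*(L,\Lambda)$ and $H^*(\Lambda)$ terms together with the functoriality of the filling cobordism. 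With this identification in hand, the five lemma finishes both the isomorphism $H^k\hom(\epsilon_L,\epsilon_L)\cong H^k(L)$ and the identification of the long exact sequence with the standard one, completing the proof.
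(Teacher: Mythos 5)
Your overall strategy --- import the isomorphism $H^k\homBC(\epsilon_L,\epsilon_L)\cong H^k(L,\Lambda)$ from the literature and then bootstrap to the $\hom$ statement via the exact triangle of Proposition~\ref{prop:pmexactseq} and the five lemma --- is plausible, but it has a genuine gap exactly at the step you flag as ``the main obstacle,'' and neither of the two justifications you offer closes it. To run the five lemma you must first know that the square
\[
\begin{array}{ccc}
H^{k-1}(\Lambda) & \longrightarrow & H^k\homBC(\epsilon_L,\epsilon_L) \\
\big\| & & \big\downarrow{\scriptstyle\cong} \\
H^{k-1}(\Lambda) & \longrightarrow & H^k(L,\Lambda)
\end{array}
\]
commutes, i.e.\ that the connecting map of Proposition~\ref{prop:pmexactseq} agrees, under Ekholm's isomorphism, with the topological coboundary of the pair $(L,\Lambda)$. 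This is not cosmetic: without it even the bare isomorphism $H^k\hom(\epsilon_L,\epsilon_L)\cong H^k(L)$ fails, since $H^k\hom$ is an extension of $\ker\delta$ by $\operatorname{coker}\delta$ and so its rank depends on $\delta$. Your first justification (that the count of mixed disks with one corner at a short chord ``reproduces the geometric coboundary'' by Ekholm's correspondence) is an assertion of a chain-level compatibility that is not in the cited references in this form and would itself require a Floer-theoretic argument of roughly the same difficulty as the proposition. Your second justification --- that the two triangles are ``uniquely determined'' by their $H^*(L,\Lambda)$ and $H^*(\Lambda)$ terms --- is false: a distinguished triangle is not determined by two of its vertices (the connecting map is extra data). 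A smaller point: the paper itself notes that in \cite{Ekh09} the input isomorphism appears only as a conjecture, with proofs in later references, so the attribution should be adjusted.

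The paper sidesteps this entirely by working with a single acyclic complex. It invokes the decomposition of the wrapped Floer complex $CF_\bullet(L,L_+^{\eta,\epsilon})$ from \cite{DR} as a block-upper-triangular sum of a Morse complex for $L$, a Morse complex for $\Lambda$, and the linearized contact complex; the subquotients of this one complex are simultaneously identified with $C^\bullet(L)$, $C^\bullet(L,\Lambda)$, $\hom^{\bullet-1}(\epsilon_L,\epsilon_L)$, and $\homBC^{\bullet-1}(\epsilon_L,\epsilon_L)$, and acyclicity (vanishing of wrapped Floer homology for fillings) then yields all three statements at once --- including the identification of the long exact sequence --- with no separate computation of a connecting map. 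If you want to salvage your route, the honest way is to prove the chain-level statement you are asserting, and at that point you have essentially reconstructed the block-triangular complex the paper uses.
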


\begin{proof}
This result has appeared in various guises and degrees of completeness in
\cite{BC,DR,EHK,Ekh09} (in \cite{Ekh09} as a conjecture); the basic
result that linearized contact
homology for $\epsilon_L$ is the homology of $L$ is often attributed
to Seidel. For completeness, we indicate how to obtain the precise
statement of Proposition~\ref{prop:plusminusexseq} via wrapped Floer
homology, using the terminology and results from
\cite{DR}.

Theorem~6.2 in \cite{DR} expresses a wrapped Floer complex
$(CF_\bullet(L,L_+^{\eta,\epsilon}),\partial)$ as a direct sum
\[
CF_\bullet(L,L_+^{\eta,\epsilon}) \cong C^\bullet_{\mathit{Morse}}(F_+) \oplus
C^{\bullet-1}_{\mathit{Morse}}(f) \oplus CL^{\bullet-2}(\Lambda),
\]
where the differential $\partial$ is block upper triangular with
respect to this decomposition, so that $\partial$ maps each summand to
itself and to the summands to the right. In this decomposition,
$C^\bullet_{\mathit{Morse}}(F_+)$,
$C^\bullet_{\mathit{Morse}}(f)$, and
$\operatorname{Cone}(C^\bullet_{\mathit{Morse}}(F_+) \to
C^\bullet_{\mathit{Morse}}(f))$ are Morse complexes for
$C^\bullet(L)$, $C^\bullet(\Lambda)$, and $C^\bullet(L,\Lambda)$,
respectively. Furthermore, inspecting the definitions of \cite[\S 6.1.2]{DR} (and
recalling  we shift degree by $1$) gives
$CL^{\bullet-2}(\Lambda) = \homBC^{\bullet-1}(\epsilon_L, \epsilon_L)$ and
$\operatorname{Cone}(C^{\bullet}_{\mathit{Morse}}(f) \to
CL^{\bullet}(\Lambda)) = \hom^{\bullet+1}(\epsilon_L, \epsilon_L)$.

Now the wrapped Floer homology for the exact Lagrangian fillings
$L,L_+^{\eta,\epsilon}$ vanishes (see
e.g. \cite[Proposition~5.12]{DR}), and so the complex
$CF_\bullet(L,L_+^{\eta,\epsilon})$ is acyclic. It follows
from $CF_\bullet(L,L_+^{\eta,\epsilon}) \cong C^\bullet(L) \oplus
\hom^{\bullet-1}(\epsilon_L, \epsilon_L)$ that
$H^k \hom(\epsilon_L, \epsilon_L) \cong H^k(L)$, and from
$CF_\bullet(L,L_+^{\eta,\epsilon}) \cong C^{\bullet}(L,\Lambda) \oplus
\homBC^{\bullet-1}(\epsilon_L)$ that
$H^k \homBC(\epsilon_L, \epsilon_L) \cong H^k(L,\Lambda)$.
The statement about the long exact sequence similarly follows.
\end{proof}

\begin{remark}
Proposition~\ref{prop:plusminusexseq} relies on the Lagrangian filling
$L$ having Maslov number $0$, where the Maslov number of $L$ is the
$\gcd$ of the Maslov numbers of all closed loops in $L$; see
\cite{Ekh09,EHK}. However, a version of
Proposition~\ref{prop:plusminusexseq} holds for exact Lagrangian
fillings of arbitrary Maslov number $m$. In this case, $\epsilon_L$
is not graded but $m$-graded: that is, $\epsilon(a) = 0$ if $m
\nmid |a|$, but $\epsilon(a)$ can be nonzero if $|a|$ is a multiple of
$m$. The isomorphisms and long exact sequence in
Proposition~\ref{prop:plusminusexseq} continue to hold when all
gradings are taken mod $m$.
\end{remark}

\begin{remark}
\label{rmk:dgasigns}
Here we make an extended comment on signs as they relate to augmentations coming from fillings. For simplicity we restrict our discussion to a Legendrian knot $\Lambda \subset \R^3$ with DGA $(\cA,\dd)$, which we recall for emphasis is generated by Reeb chords of $\Lambda$ along with $t^{\pm 1}$. Given an exact Lagrangian filling $L$ of $\Lambda$, the augmentation $\epsilon_L$ as constructed in \cite{EHK} (cf.\ \cite{Ekh09}) is a map to $\field = \bZ/2$. This is lifted to an augmentation $\cA \to \bZ$ by Karlsson \cite{Karlsson-ori} by a choice of coherent orientations of various moduli spaces.

More precisely, what is constructed in \cite{Karlsson-ori} is an augmentation of the Chekanov--Eliashberg DGA of $\Lambda$, but taken with $\bZ$ coefficients. A natural way to define such a DGA is to `forget' the homology coefficients $t^{\pm 1}$ in $(\cA,\dd)$, which is to say, set $t=1$ in $(\cA,\dd)$ to yield a DGA $(\cA_1,\dd_1)$ where $\cA_1$ is the tensor algebra over $\bZ$ generated by Reeb chords. However, one could also set $t=-1$ in $(\cA,\dd)$ to yield another DGA $(\cA_{-1},\dd_{-1})$ with the same underlying algebra $\cA_{-1}=\cA_1$ but distinct differentials. For example, for the standard unknot, $\cA_1=\cA_{-1}$ is generated by a single Reeb chord $a$ with differential $\dd_1(a) = 2$ and $\dd_{-1}(a) = 0$.

To expand on this a bit further, signs in the differential in $(\cA,\dd)$ are determined geometrically by a choice of spin structure on the Legendrian $\Lambda$ \cite{EES-ori}. When $\Lambda$ is topologically $S^1$, there are two spin structures, one (called the ``Lie group spin structure'' in \cite{EES-ori}) coming from the canonical trivialization of $TS^1$, and the other (the ``null-cobordant spin structure'') from the unique spin structure on $D^2$ by viewing $S^1$ as its boundary. As shown in \cite[Theorem~4.29]{EES-ori}, the differentials resulting from the two spin structures are not independent, but are related by the automorphism of $\cA$ which sends $t$ to $-t$ and sends each Reeb chord to itself.

The standard combinatorial sign conventions for the Chekanov--Eliashberg DGA, as originally defined in \cite{ENS} and presented here in Section~\ref{ssec:dga-background}, correspond to the Lie group spin structure; see \cite[Theorem~4.32]{EES-ori} and \cite[Appendix~A]{NgSFT}. In the context of fillings, however, it is more natural to choose the null-cobordant spin structure. What Karlsson shows in \cite{Karlsson-ori} is that a filling induces an augmentation of the DGA over $\bZ$ obtained from forgetting the homology coefficients (setting $t=1$) in the DGA $(\cA,\dd)$ for the null-cobordant spin structure. In light of the preceding discussion about how changing spin structure negates $t$, this is the DGA $(\cA_{-1},\dd_{-1})$. In other words:

\begin{proposition}[\cite{Karlsson-ori}]
An exact Lagrangian filling $L$ of a Legendrian knot $\Lambda \subset \R^3$ induces an augmentation of the DGA $(\cA,\dd)$ of $\Lambda$, $\epsilon_L :\thinspace \cA \to \bZ$, satisfying $\epsilon_L(t) = -1$.
\label{prop:filling-signs}
\end{proposition}
\noindent
Note that this augmentation induces an augmentation to any field $\field$, also sending $t$ to $-1$; this is in line with the result of Leverson \cite{Leverson} that any augmentation to $\field$ must send $t$ to $-1$.

We conclude this remark by comparing with the sheaf picture.  As defined in \cite{STZ}, microlocal monodromy does not explicitly depend on the choice of a spin structure on $\Lambda$.  
However, from a more abstract point of view, microlocal monodromy is naturally valued in a category of (in general twisted) local systems on $\Lambda$, but the isomorphism with the category of local systems
is not entirely canonical.  (The autoequivalences of the identity functor of the category of chain complexes over a ring $k$ is naturally identified with $k^*$.  This leads to $H^1(X, k^*)$ acting by autoequivalences
on the category of local systems on $X$.  When $k = \mathbb{Z}$, this means that isomorphisms with the category of local systems of $X$ are a torsor over $H^1(X, \pm 1)$, just like spin structures.)  The work 
 \cite{STZ}  made a choice at the cusps which in effect fixes this isomorphism.  
A more abstract discussion of how such `brane structure' choices enter into microlocal sheaf theory can be found in \cite{JinTreumann}, which in turn was partially inspired by an account \cite{lurie-rotation} explaining 
among other things a homotopical setup well suited to understanding certain orientation choices in Floer theory. See also the discussion of obstruction classes in \cite[Part~10]{Guillermou}.

In any case, under the correspondence we will set up, the filling $L$ of $\Lambda$ yields a sheaf in $\cC_1(\Lambda; \mathbbm{k})$ microsupported along $\Lambda$. 
Moreover, the microlocal monodromy of the corresponding sheaf will be the restriction of the rank one local system on $L$ to the boundary $\Lambda$, hence trivial because 
this boundary circle is a commutator in $\pi_1(L)$. The correspondence between sheaves and augmentations sends this sheaf to $\epsilon_L$, and the triviality of the monodromy to the condition $\epsilon_L(t)=1$. 
This indicates that the choices made in \cite{STZ} to define microlocal monodromy correspond to the choice of 
 %in accordance with Proposition~\ref{prop:filling-signs} once we switch to 
 the null-cobordant spin structure on $\Lambda$.

%What Karlsson shows is that $L$ induces an augmentation of $(\A_{-1},\d_{-1})$ to $\Z$, or, stated another way, an augmentation of $(\A,\d)$ to $\Z$ sending $t$ to $-1$.
%
%To be more precise, let $(\A_1,\d_1)$ and $(\A_{-1},\d_{-1})$ be the DGAs obtained from $(\A,\d)$ by setting $t=1$ and $t=-1$ respectively: the algebras $\A_1$ and $\A_{-1}$ are both equal to the tensor algebra over $\Z$ generated by Reeb chords, but the differentials $\d_1$ and $\d_{-1}$ will generally be distinct because of the specialization of $t$.
%
%
%For Legendrian $\Lambda \subset \R^3$ with DGA $(\A,\d)$, it is proved in \cite{EHK} that an exact Lagrangian filling $L$ produces an augmentation $\epsilon_L$ from $\A$ to $\kk = \Z/2$. In order to work over more general fields, various moduli spaces must be coherently oriented in order to produce signs in the definition of $\epsilon_L$. This is done by Karlsson in \cite{Karlsson-ori}, with the conclusion that if the filling $L$ is spin, then the augmentation $\epsilon_L$ can be lifted to an augmentation $\A \to \Z$ (and thus induces an augmentation to any field $\kk$).
%

\end{remark}

\subsection{Dictionary and comparison to previously known results}
\label{ssec:dictionary}

Here we compare our notions and notations with pre-existing ones,
especially from \cite{BC}.
We have considered a number of constructions derived from the
Bourgeois--Chantraine category
$\AugBC(\Lambda,\coeffs)$ that previously appeared
in \cite{BC} or elsewhere in the literature. For convenience, we
present here a table translating between our notation and notation
from other sources, primarily \cite{BC}.

\vspace{2ex}

\centerline{
\renewcommand{\arraystretch}{1.2}
\begin{tabular}{|l|l|} \hline
Notation here & Notation in other sources \\ \hline\hline
$\AugBC(\Lambda,\coeffs)$ & Bourgeois--Chantraine augmentation
category \cite{BC} \\ \hline
$\homBC^*(\epsilon_1,\epsilon_2)$ &
$\operatorname{Hom}^{*-1}(\epsilon_2,\epsilon_1) = C^{*-1}_{\epsilon_1,\epsilon_2}$
\cite{BC} \\ \hline
$H^*\homBC(\epsilon_1,\epsilon_2)$ &
bilinearized Legendrian contact cohomology
$LCH^{*-1}_{\epsilon_1,\epsilon_2}(\Lambda)$ \cite{BC} \\ \hline
$H^*\homBC(\epsilon,\epsilon)$ & linearized Legendrian contact
cohomology $LCH^{*-1}_{\epsilon}(\Lambda)$ \cite{Sabloff,EESab} \\
\hline
$\homBC^*(\epsilon_1,\epsilon_2)^\dagger$ &
$C^{\epsilon_1,\epsilon_2}_{-*-1}$ \cite{BC} \\ \hline
$H^*\homBC(\epsilon_1,\epsilon_2)^\dagger$ &
bilinearized Legendrian contact homology
$LCH_{-*-1}^{\epsilon_1,\epsilon_2}(\Lambda)$ \cite{BC} \\ \hline
$H^*\homBC(\epsilon,\epsilon)^\dagger$ & linearized Legendrian contact
homology $LCH_{-*-1}^\epsilon(\Lambda)$ \cite{C} \\ \hline
\end{tabular}
}

\vspace{2ex}

Using this dictionary, we can interpret various results from the
literature in our language.
For instance, Sabloff duality, or more precisely the
Ekholm--Etnyre--Sabloff duality exact sequence
\cite[Theorem~1.1]{EESab} relating linearized Legendrian contact
homology and cohomology, is:
\[
\cdots \to H_{k+1}(\Lambda) \to LCH_\epsilon^{-k}(\Lambda) \to
LCH^k_\epsilon(\Lambda) \to H_k(\Lambda) \to \cdots.
\]
This was generalized in \cite[Theorem~1.5]{BC} to bilinearized contact
homology and
cohomology:
\[
\cdots \to H_{k+1}(\Lambda) \to
LCH_{\epsilon_2,\epsilon_1}^{-k}(\Lambda) \to
LCH^{\epsilon_1,\epsilon_2}_k(\Lambda) \to H_k(\Lambda) \to \cdots.
\]
Since this long exact sequence is derived from a
chain-level argument using mapping cones, we can dualize to give:
\[
\cdots \to H^k(\Lambda) \to LCH^k_{\epsilon_1,\epsilon_2}(\Lambda) \to
LCH_{-k}^{\epsilon_2,\epsilon_1} \to H^{k+1}(\Lambda) \to \cdots.
\]
But we have $LCH^k_{\epsilon_1,\epsilon_2}(\Lambda) =
H^{k+1}\homBC(\epsilon_1,\epsilon_2)$, while by Corollary~\ref{cor:hom-is-LCH},
\[
LCH_{-k}^{\epsilon_2,\epsilon_1}(\Lambda) = H^{k-1}
\homBC(\epsilon_2,\epsilon_1)^\dagger \cong H^{k+1}
\hom(\epsilon_1,\epsilon_2),
\]
and so
the last exact sequence now becomes the exact sequence in
Proposition~\ref{prop:pmexactseq}.

In the case when $\Lambda$ has an exact Lagrangian filling $L$ with
corresponding augmentation $\epsilon_L$, the
fundamental result that the linearized contact
cohomology is the homology of the filling is written in the literature
as:
\begin{equation}
LCH_{\epsilon_L}^{1-k}(\Lambda) \cong H_k(L).\label{eq:filling}
\end{equation}
As discussed in the proof of Proposition~\ref{prop:plusminusexseq},
this was first stated in \cite{Ekh09} and also appears in
\cite{BC,DR,EHK}. Now we have
$LCH_{\epsilon_L}^{1-k}(\Lambda) =
H^{2-k}\homBC(\epsilon_L,\epsilon_L)$, while
$H_k(L) \cong H^{2-k}(L,\Lambda)$ by Poincar\'e duality; thus
\eqref{eq:filling} agrees with our
Proposition~\ref{prop:plusminusexseq} (which, after all, was
essentially proven using \eqref{eq:filling}).

To summarize the relations between the various constructions in the
presence of a filling:
\begin{align*}
LCH^{\epsilon_L}_{1-k}(\Lambda) \cong H^k\hom(\epsilon_L,\epsilon_L)
&\cong H^k(L) \cong H_{2-k}(L,\Lambda) \\
LCH^{k-1}_{\epsilon_L}(\Lambda) \cong H^k\homBC(\epsilon_L,\epsilon_L)
&\cong H^k(L,\Lambda) \cong H_{2-k}(L).
\end{align*}

\begin{remark}
With the benefit of hindsight, the terminology ``linearized contact
cohomology'' applied to $H^*\homBC(\epsilon, \epsilon)$ is perhaps
less than optimal on general philosophical grounds: cohomology
should contain a unit, and $H^*\homBC(\epsilon, \epsilon)$ does not.
Moreover, in the case when $\epsilon=\epsilon_L$ is given by a filling
and so $H^*\homBC(\epsilon,\epsilon)$ has a geometric meaning, it is
compactly supported cohomology (or, by Poincar\'e duality, regular homology):
\[
H^*\homBC(\epsilon_L,\epsilon_L) \cong H^*(L,\Lambda) \cong H_{2-*}(L).
\]
By contrast, we have
\[
H^*\hom(\epsilon_L,\epsilon_L) \cong H^*(L),
\]
and so it may be more suggestive to refer to
$H^*\hom(\epsilon_L,\epsilon_L)$ rather than
$H^*\homBC(\epsilon_L,\epsilon_L)$ as linearized contact cohomology.

To push this slightly further, ``linearized contact homology''
$LCH_*^\epsilon(\Lambda)$ is
$H^{-*-1}\homBC(\epsilon,\epsilon)^\dagger$, which by
Proposition~\ref{prop:duality} is isomorphic to
$H^{-*+1}\hom(\epsilon,\epsilon)$. Thus linearized contact homology,
confusingly enough, is a unital ring, and indeed
for $\epsilon=\epsilon_L$ it is the \textit{cohomology} ring of $L$!
\end{remark}

\subsection{Equivalence of augmentations}
\label{ssec:augisom}

Having formed a unital category $\Aug$ from the set of augmentations, we have a
natural notion of when two augmentations are isomorphic.   Note that the following
are  equivalent by definition: isomorphism in $\Aug$, isomorphism in the cohomology
category $H^* \Aug$, and isomorphism in the degree zero part $H^0 \Aug$.

This notion implies in particular
that the corresponding linearized contact homologies are isomorphic:

\begin{proposition}[cf.\ {\cite[Theorem 1.4]{BC}}]
If
$\epsilon_1,\epsilon_2$ are isomorphic in $\Aug$, then
\[
\LCH{*}(\epsilon_1,\epsilon_3) \cong \LCH{*}(\epsilon_2,\epsilon_3)
\text{ and }
\LCH{*}(\epsilon_3,\epsilon_1) \cong \LCH{*}(\epsilon_3,\epsilon_2)
\]
for any augmentation $\epsilon_3$. In particular,
$$
\LCH{*}(\epsilon_1,\epsilon_1) \cong \LCH{*}(\epsilon_1,\epsilon_2) \cong
\LCH{*}(\epsilon_2,\epsilon_2).$$
\end{proposition}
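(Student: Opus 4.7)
The plan is to reduce this to a purely formal statement about unital (possibly non-strict) categories: in any unital category, if two objects are isomorphic then the hom spaces into (or out of) any third object are isomorphic via pre/post-composition with the isomorphism. By Corollary \ref{cor:unitalcohomology}, the cohomology category $H^*\Aug(\Lambda,\coeffs)$ is unital, so this applies and will give the desired isomorphisms.

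To execute this, I would first unpack the hypothesis: $\epsilon_1 \cong \epsilon_2$ in $\Aug$ means (by definition of isomorphism in an $A_\infty$ category via its cohomology category) that there exist classes $[f]\in H^0\hom(\epsilon_1,\epsilon_2)$ and $[g]\in H^0\hom(\epsilon_2,\epsilon_1)$ with $m_2([f],[g]) = [e_{\epsilon_2}]$ and $m_2([g],[f]) = [e_{\epsilon_1}]$, where $m_2$ denotes the induced associative composition on $H^*\Aug$ (cf.\ Proposition \ref{prop:ainftyunit}).

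Next, for any third augmentation $\epsilon_3$, I would define the left-multiplication maps
\[
L_f: H^*\hom(\epsilon_2,\epsilon_3) \to H^*\hom(\epsilon_1,\epsilon_3), \qquad [h] \mapsto m_2([h],[f]),
\]
\[
L_g: H^*\hom(\epsilon_1,\epsilon_3) \to H^*\hom(\epsilon_2,\epsilon_3), \qquad [h] \mapsto m_2([h],[g]).
\]
Associativity of $m_2$ on $H^*\Aug$ and the strict unit property then give
\[
L_f \circ L_g([h]) = m_2\bigl(m_2([h],[g]),[f]\bigr) = m_2\bigl([h], m_2([g],[f])\bigr) = m_2([h],[e_{\epsilon_1}]) = [h],
\]
and symmetrically $L_g \circ L_f = \mathrm{id}$, so $L_f$ and $L_g$ are mutually inverse isomorphisms, yielding $\LCH{*}(\epsilon_1,\epsilon_3) \cong \LCH{*}(\epsilon_2,\epsilon_3)$. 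An entirely analogous argument with the right-multiplication maps $[h]\mapsto m_2([f],[h])$ and $[h]\mapsto m_2([g],[h])$ produces $\LCH{*}(\epsilon_3,\epsilon_1) \cong \LCH{*}(\epsilon_3,\epsilon_2)$. The ``in particular'' clause is then obtained by applying these two isomorphisms with $\epsilon_3 = \epsilon_2$ and $\epsilon_3 = \epsilon_1$ respectively.

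There is no real obstacle here; the argument is a formal consequence of the unitality established in Corollary \ref{cor:unital} combined with the associativity of composition on cohomology. The only bookkeeping care required is to keep track of the order of arguments in $m_2$ (since our convention places the ``first'' morphism on the right), but this is purely notational and does not affect the substance of the proof.
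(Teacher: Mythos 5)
Your proposal is correct and is exactly the standard categorical argument the paper has in mind: its own proof of this proposition is literally the single word ``Obvious,'' relying on the unitality of $H^*\Aug$ (Corollary \ref{cor:unitalcohomology}) and associativity of $m_2$ on cohomology just as you do. You have simply written out the details that the authors chose to omit.
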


\begin{proof}
Obvious.
\end{proof}

We now investigate the relation of this notion to other notions of equivalence of augmentations
which have been introduced in the literature.
We will consider three notions of equivalence, of which (\ref{it:augBC})
and (\ref{it:dga}) will be defined below:
\begin{enumerate}
\item
isomorphism in $\Aug$; \label{it:aug}
\item
isomorphism in $\mathcal{Y} \AugBC$; \label{it:augBC}
\item
\dga{} homotopy. \label{it:dga}
\end{enumerate}
We will see that (\ref{it:aug}) implies (\ref{it:augBC}), and that
(\ref{it:aug}) and (\ref{it:dga}) are equivalent if $\Lambda$ is connected with a single base point; we do not know if
(\ref{it:augBC}) implies (\ref{it:aug}). A fourth notion of
equivalence, involving exponentials and necessitating that we work over a field of characteristic $0$, usually $\bR$
(see \cite{Bourgeois,BC}), is not addressed here. 

Note that (\ref{it:dga}) has been shown to be closely related to
isotopy of Lagrangians in the case where the augmentations come from
exact Lagrangian fillings: see \cite{EHK} and
Corollary~\ref{cor:filling-equiv} below.

\subsubsection{Isomorphism in $\mathcal{Y} \AugBC$}
In \cite{BC}, equivalence was defined using $\AugBC$ as follows.
While $\AugBC$ is not unital, the category of $\AugBC$-modules (functors to chain complexes)
is, and the Yoneda
construction $\epsilon \mapsto \homBC(\, \cdot \, , \epsilon)$ gives a morphism $\mathcal{Y}: \AugBC \to
{\AugBC}$-modules.
This morphism is cohomologically faithful but not cohomologically full since $\AugBC$ is non-unital.
We write $\mathcal{Y} \AugBC$ for the
full subcategory on the image objects.
In any case, \cite{BC} defined two augmentations to be equivalent if their images in $\mathcal{Y} \AugBC$ are isomorphic.
As noted in \cite[Theorem~1.4]{BC}, essentially by definition, if
$\mathcal{Y} \epsilon_1 \cong \mathcal{Y} \epsilon_2$ in $\mathcal{Y}
\AugBC$, then
\[
\LCHBC{*}(\epsilon_1,\epsilon_3) \cong \LCHBC{*}(\epsilon_2,\epsilon_3)
\text{ and }
\LCHBC{*}(\epsilon_3,\epsilon_1) \cong \LCHBC{*}(\epsilon_3,\epsilon_2)
\]
for any augmentation $\epsilon_3$.

\begin{proposition}
If $\epsilon_1 \cong \epsilon_2$ in $\Aug$, then
\label{prop:isom-pm}
$\mathcal{Y} \epsilon_1 \cong \mathcal{Y} \epsilon_2$ in $\mathcal{Y} \AugBC$.
\end{proposition}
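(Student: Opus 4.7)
The plan is to use the $A_\infty$ bifunctor structure of $\homBC$ on $\Aug$ (Proposition \ref{prop:bifunctor}) to lift an isomorphism in $\Aug$ to an $A_\infty$-module quasi-isomorphism between Yoneda modules. Fix cocycles $\alpha \in \hom^0(\epsilon_1,\epsilon_2)$ and $\beta \in \hom^0(\epsilon_2,\epsilon_1)$ such that $[m_2(\beta,\alpha)]=[e_{\epsilon_1}]$ and $[m_2(\alpha,\beta)]=[e_{\epsilon_2}]$ in $H^0\Aug$.

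I will build an $A_\infty$-module morphism $T: \mathcal{Y}\epsilon_1 \to \mathcal{Y}\epsilon_2$ of $\AugBC$-modules whose $0$-th component at $\epsilon'$ is the cochain map
\[
T_0^{\epsilon'} : \homBC(\epsilon',\epsilon_1) \to \homBC(\epsilon',\epsilon_2),\qquad \gamma \mapsto m_2(\alpha,\gamma),
\]
where $m_2$ is the mixed operation $\hom(\epsilon_1,\epsilon_2)\otimes \homBC(\epsilon',\epsilon_1)\to \homBC(\epsilon',\epsilon_2)$ from Proposition \ref{prop:bifunctor}. That $T_0^{\epsilon'}$ is a chain map follows from the second $A_\infty$ relation applied to $(\alpha,\gamma)$ using $m_1(\alpha)=0$. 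The higher components $T_k$ (for $k\geq 1$), which are needed because $m_2(\alpha,-)$ commutes with $\AugBC$-composition only up to homotopy, are defined analogously from mixed higher operations in the multi-copy algebras of Proposition \ref{prop:linkcat}: roughly,
\[
T_k^{\epsilon',\ldots}(\delta_k,\ldots,\delta_1,\gamma) = m_{k+2}\!\left(\alpha,\gamma,\delta_1,\ldots,\delta_k\right)
\]
(with appropriate signs and ordering dictated by the placement of $\hom$ and $\homBC$ generators within $\bigoplus_{i,j} C_{ij}^\vee$ of a suitable copy algebra). The $A_\infty$-module morphism axioms for $T$ then unpack to instances of the $A_\infty$ relations already satisfied in these copy algebras.

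To show $T$ is a quasi-isomorphism, it suffices to show that $T_0^{\epsilon'}$ induces an isomorphism on cohomology for every $\epsilon'$. Define $S_0^{\epsilon'}(\delta) := m_2(\beta,\delta)$ by symmetry; I claim $[T_0 S_0]=[S_0 T_0]=\mathrm{id}$ on cohomology. For the first, the $A_\infty$ associativity relation in the $4$-copy algebra for $(\epsilon',\epsilon_1,\epsilon_2,\epsilon_1)$ applied to $(\alpha,\beta,\gamma)$ yields
\[
m_2\!\big(\alpha,m_2(\beta,\gamma)\big)\;=\;\pm\, m_2\!\big(m_2(\alpha,\beta),\gamma\big)\;+\;m_1\!\big(m_3(\alpha,\beta,\gamma)\big)\;+\;(\text{terms in }m_1\text{-images}).
\]
Passing to cohomology and using $[m_2(\alpha,\beta)]=[e_{\epsilon_2}]$ reduces the problem to verifying that $e_{\epsilon_2}$ acts as identity on $\homBC(\epsilon',\epsilon_2)$ via the mixed $m_2$—which I expect to be the main obstacle below. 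Given this, $T_0 S_0$ is the identity on cohomology; the argument for $S_0 T_0$ is identical. Hence $T$ is an $A_\infty$-module quasi-isomorphism, and therefore an isomorphism in $H^*(\mathcal{Y}\AugBC)$.

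The main obstacle is the unitality claim just invoked: Theorem \ref{thm:unital} establishes strict unitality of $e_\epsilon = -\sum_j(y_j^{12})^\vee$ only with respect to the $m_2$ operation taking both inputs in $\hom$-spaces, but we need it to act as identity under the mixed $m_2 :\hom(\epsilon_2,\epsilon_2)\otimes \homBC(\epsilon',\epsilon_2)\to \homBC(\epsilon',\epsilon_2)$ as well. This should follow from a direct computation entirely analogous to the proof of Theorem \ref{thm:unital}, because the $-(-1)^{|a_k|}A_k Y$ contribution to the differential in the matrix formula of Proposition \ref{prop:mcopyMultiple} is universal across all link-grading blocks (in particular, it governs disks with a negative corner at $y_j^{12}$ regardless of whether the other negative corner lies in a $C_{ij}^\vee$ block with $i<j$ or $i>j$); one then checks that the higher operations $m_k(e_{\epsilon_2},\cdots)$ with at least one $\homBC$ input vanish, again by the fact that the $y_j^{12}$ appear in differentials only in words of length at most $2$ in the non-$t$ generators. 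This completes the reduction to the formalism already set up in Section~\ref{sec:augcatalg}.
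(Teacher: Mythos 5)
Your proposal is correct and follows essentially the same route as the paper: the paper's proof simply packages your construction as the unital morphism of categories $\mathcal{Y}_-:\Aug \to \mathcal{Y}\AugBC$, $\epsilon \mapsto \homBC(\,\cdot\,,\epsilon)$, coming from the bifunctor structure of Proposition~\ref{prop:bifunctor}, with the key input being exactly the fact you isolate — that the unit $e_\epsilon$ acts as the identity on $\homBC$ under the mixed $m_2$ — so that isomorphisms are automatically preserved. Your more explicit unpacking of the module morphism $T$ and the verification of mixed unitality via the $A_kY$ term in Proposition~\ref{prop:mcopyMultiple} are both sound and consistent with what the paper leaves implicit.
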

\begin{proof}
According to Proposition \ref{prop:bifunctor}, we have a map 
\begin{eqnarray*} \mathcal{Y}_-: \Aug & \to & \mathcal{Y} \AugBC \\
\epsilon & \mapsto & \homBC(\, \cdot \, , \epsilon).
\end{eqnarray*}
The fact that the identity in $\hom$ acts trivially on the space $\homBC$ under the morphisms in
Proposition \ref{prop:bifunctor} implies that this is a unital morphism of categories.  It follows that
the image of an isomorphism in $\Aug$ is an isomorphism in $\mathcal{Y} \AugBC$.
\end{proof}

\begin{corollary}
If $\epsilon_1 \cong \epsilon_2$ in $\Aug$, then
\[
\LCHBC{*}(\epsilon_1,\epsilon_3) \cong \LCHBC{*}(\epsilon_2,\epsilon_3)
\text{ and }
\LCHBC{*}(\epsilon_3,\epsilon_1) \cong \LCHBC{*}(\epsilon_3,\epsilon_2)
\]
for any augmentation $\epsilon_3$.
\end{corollary}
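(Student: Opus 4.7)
The plan is to deduce this corollary directly from Proposition~\ref{prop:isom-pm} together with the definitional behavior of the Yoneda embedding, essentially repeating the observation made after the definition of $\mathcal{Y}\AugBC$.

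First I would invoke Proposition~\ref{prop:isom-pm}: any isomorphism $\epsilon_1 \cong \epsilon_2$ in $\Aug$ is carried by the unital morphism $\mathcal{Y}_-: \Aug \to \mathcal{Y}\AugBC$ to an isomorphism $\mathcal{Y}\epsilon_1 \cong \mathcal{Y}\epsilon_2$. Unwinding the definition $\mathcal{Y}\epsilon = \homBC(\,\cdot\,,\epsilon)$, this isomorphism of $\AugBC$-modules says precisely that for every augmentation $\epsilon_3$, the chain complexes $\homBC(\epsilon_3,\epsilon_1)$ and $\homBC(\epsilon_3,\epsilon_2)$ are quasi-isomorphic. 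Passing to cohomology yields $\LCHBC{*}(\epsilon_3,\epsilon_1) \cong \LCHBC{*}(\epsilon_3,\epsilon_2)$, which is the second of the two claimed isomorphisms.

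For the first claimed isomorphism $\LCHBC{*}(\epsilon_1,\epsilon_3) \cong \LCHBC{*}(\epsilon_2,\epsilon_3)$, I would run the same argument on the other side. Proposition~\ref{prop:bifunctor} provides a symmetric bifunctor structure, so the contravariant Yoneda assignment $\epsilon \mapsto \homBC(\epsilon,\,\cdot\,)$ is equally well a unital morphism from $\Aug$ to $\AugBC^{\mathrm{op}}$-modules (the proof of Proposition~\ref{prop:isom-pm} transcribes verbatim, using the appropriate $m_2$ operation from Proposition~\ref{prop:bifunctor} specialized at the identity in $\hom(\epsilon,\epsilon)$). Evaluating the resulting isomorphism at $\epsilon_3$ gives the desired quasi-isomorphism of complexes. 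Alternatively, one can deduce this direction formally from the already-established second isomorphism by invoking the duality of Proposition~\ref{prop:duality}, which identifies $H^*\homBC(\epsilon_i,\epsilon_3)$ with a shift of the dual of $H^*\hom(\epsilon_3,\epsilon_i)$, and then applying the same Yoneda argument inside $\Aug$ itself.

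There is no real obstacle here: everything is essentially formal from Propositions~\ref{prop:bifunctor} and \ref{prop:isom-pm}. The only minor point requiring attention is to verify that the contravariant Yoneda embedding is genuinely unital, but this is immediate from the symmetric formulation of Proposition~\ref{prop:bifunctor} and the fact that the relevant identity element, viewed in $H^0\hom(\epsilon,\epsilon)$, acts trivially under all four of the mixed $m_2$ operations listed there.
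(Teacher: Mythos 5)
Your proof is correct and follows essentially the same route as the paper: the paper obtains the corollary by combining Proposition~\ref{prop:isom-pm} with the observation (attributed to \cite[Theorem~1.4]{BC}) that an isomorphism $\mathcal{Y}\epsilon_1 \cong \mathcal{Y}\epsilon_2$ in $\mathcal{Y}\AugBC$ yields both displayed isomorphisms. Your treatment of the first slot merely makes explicit, via the fourth $m_2$ operation of Proposition~\ref{prop:bifunctor}, what the paper delegates to that citation.
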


\subsubsection{\dga{} homotopy}
Another notion of equivalence that has appeared in the literature is
\dga{} homotopy \cite{Kal05, Henry2011, EHK,HR14}.
This arises from viewing augmentations as \dga{} maps from
$(\alg, \partial)$ to $(\coeffs, 0)$ and considering an appropriate
version of chain homotopy for \dga{} maps.

\begin{definition}
Two \dga{} maps $f_1,f_2: (\alg_1, \partial_1) \rightarrow (\alg_2, \partial_2)$ are \emph{\dga{} homotopic} if they are chain homotopic via a chain homotopy operator $K: \alg_1 \rightarrow \alg_2$ which is an $(f_1,f_2)$-derivation.  This means that
\begin{itemize}
\item $K$ has degree $+1$,
\item $f_1-f_2 = \partial_2 K +K \partial_1$, and
\item $K(x\cdot y) = K(x) \cdot f_2(y) + (-1)^{|x|} f_1(x)\cdot K(y)$ for all $x,y \in \alg_1$.
\end{itemize}
\end{definition}
Note that if $K$ is an $(f_1,f_2)$-derivation and $f_1$ and $f_2$ are \dga{} maps, then it suffices to check the second condition on a generating set for $\alg_1$.  In addition, if $\alg_1$ is freely generated by $a_1, \ldots, a_k$, then, once $f_1$ and $f_2$ are fixed, any choice of values $K(a_i) \in \alg_2$ extends uniquely to an $(f_1,f_2)$-derivation.  Although it is not immediate, \dga{} homotopy is an equivalence relation on the set of \dga{} morphisms from $(\alg_1, \partial_1)$ to $(\alg_2, \partial_2)$ (see e.g. \cite[Chapter 26]{FHT}).

We will show that if $\Lambda$ is a Legendrian knot with a single base point, then
two augmentations of $\alg(\Lambda)$ are isomorphic in $\Aug(\Lambda,\coeffs)$ if and
only if they are \dga{} homotopic as \dga{} maps to $(\coeffs,0)$. To do
this, we compute with $m$-copies constructed from
the Lagrangian projection as described in Section \ref{sec:aug-knot}.
For any $\e_1,\e_2 \in \Aug$, $\hom(\e_1,\e_2)$ is spanned as an $\coeffs$-module by elements $a_i^+, x^+,y^+$.  The $a_i^+$ are dual to the crossings $a^{12}_i$ of the $2$-copy, which are in bijection with the generators $a_1, \ldots, a_r$ of $\alg(\Lambda)$, while $x^+$ and $y^+$  are dual to the crossings $x^{12}$ and $y^{12}$ that arise from the perturbation process.

The definition of $\Aug$ together with the description of the differential in $\alg^m = \alg(\Lambda^m_f)$ from Proposition \ref{prop:complete-aug-description} lead to the following formulas.
\begin{lemma}  \label{lem:m1Homotopy}
In $\hom(\e_1,\e_2)$, we have
\begin{align*}
m_1 (a_i^+) &= \sum_{a_j, b_1, \ldots, b_n} \sum_{u \in \Delta(a_j; b_1, \ldots, b_n)} \sum_{1 \leq l \leq n} \delta_{b_l, a_i} \sigma_u \e_1(b_1\cdots b_{l-1}) \e_2(b_{l+1}\cdots b_n) a_j^+; \\
m_1 (y^+) &= (\epsilon_1(t)^{-1} \epsilon_2(t) - 1) x^+ + \sum_{i} \big(\e_2(a_i) - (-1)^{|a_i|}\e_1(a_i)\big)a_i^+; \\
m_1 (x^+) &\in \operatorname{Span}_\coeffs \{a_1^+, \ldots, a_r^+\}.
\end{align*}
Here we abuse notation slightly to allow the 
$b_i$ to include the base point on $\Lambda$ as well as the corresponding generators $t^{\pm 1}$.  The factor $\sigma_u \in \{\pm 1\}$ denotes the product of all orientation signs at the corners of the disk $u$, i.e. the coefficient of the monomial $w(u)$ (see Section \ref{ssec:dga-background}).
\end{lemma}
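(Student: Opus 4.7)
The plan is to unwind the explicit description of $m_1$ provided by Proposition~\ref{prop:complete-aug-description} and carry out a direct computation using the matrix formulas of Proposition~\ref{prop:mcopyMultiple}. For $k=1$ the sign exponent $\sigma$ vanishes, so the formula reduces to
$$m_1(\alpha^+) = \sum_{a \in \rR \cup \{x, y\}} a^+ \cdot \operatorname{Coeff}_{\alpha^{12}}(\dd^2_\epsilon a^{12}),$$
where $\epsilon$ denotes the diagonal augmentation $(\epsilon_1, \epsilon_2)$ on $\alg^2$. The remaining task is to identify, for each of the three types of generator $a^{12} \in \{a_j^{12}, x^{12}, y^{12}\}$, the coefficients of $a_i^{12}$, $x^{12}$, and $y^{12}$ in the twisted differential $\dd^2_\epsilon a^{12}$.

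For $m_1(a_i^+)$, the relevant computation takes place in the $(1,2)$-entry of $\dd^2 A_j = \Phi(\dd a_j) + Y A_j - (-1)^{|a_j|} A_j Y$. The two $Y$-terms contribute $y^{12} a_j^{22}$ and $a_j^{11} y^{12}$, neither of which involves $a_i^{12}$, so the $a_i^{12}$-coefficient comes entirely from $\Phi(\dd a_j)$. A monomial $\sigma_u\, b_1 \cdots b_n$ in $\dd a_j$ (arising from a disk $u \in \Delta(a_j; b_1, \ldots, b_n)$) maps under $\Phi$ to a product of matrices whose upper-triangular $(1,2)$-entry expands, by the standard Leibniz rule for triangular products, as
$$\sigma_u \sum_{l=1}^n \Phi(b_1)_{11} \cdots \Phi(b_{l-1})_{11}\, \Phi(b_l)_{12}\, \Phi(b_{l+1})_{22} \cdots \Phi(b_n)_{22}.$$
Because $\Phi(a_k)_{ii} = a_k^{ii}$ and $\Phi(t^{\pm 1})_{ii} = (t^i)^{\pm 1}$, twisting by $\epsilon$ and extracting the linear term in $a_i^{12}$ (which forces $b_l = a_i$) produces exactly $\sigma_u\, \epsilon_1(b_1 \cdots b_{l-1})\, \epsilon_2(b_{l+1} \cdots b_n)\, a_i^{12}$; dualizing gives the first claimed formula.

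For $m_1(y^+)$, I would extract the coefficient of $y^{12}$ in $\dd^2_\epsilon$ of each $C_{12}$-generator. From $\dd^2(X) = \Delta^{-1} Y \Delta X - X Y$ the $(1,2)$-entry is $(t^1)^{-1} y^{12} t^2 - y^{12}$, which after augmenting yields $(\epsilon_1(t)^{-1}\epsilon_2(t) - 1)\, y^{12}$, contributing the $x^+$ term. From $\dd^2(A_j)$ only the $Y$-piece contributes to a $y^{12}$-coefficient (the $(1,2)$-entries of $\Phi$ involve $a_k^{12}$ or $x^{12}$, never $y^{12}$), giving after augmentation $\epsilon_2(a_j) - (-1)^{|a_j|} \epsilon_1(a_j)$, which accounts for the $a_j^+$ summation. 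For $m_1(x^+)$, the analogous analysis shows: $\dd^2(Y) = Y^2 = 0$ since $Y$ is $2 \times 2$ strictly upper triangular, so the $y^+$-dualized entry vanishes; $\dd^2(x^{12})$ contains no $x^{12}$ linear term; and $\dd^2(a_j^{12})$ can contribute an $x^{12}$-coefficient only via the appearance of $\Phi(t^{\pm 1})_{12}$, which is proportional to $x^{12}$, inside $\Phi(\dd a_j)$. Hence $m_1(x^+) \in \operatorname{Span}_{\coeffs}\{a_1^+, \ldots, a_r^+\}$ as claimed.

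The verification is essentially a mechanical application of the matrix Leibniz rule combined with the explicit formula for $\Phi$. The only mild subtleties to watch are the signs appearing in the two $Y$-terms of $\dd^2 A_j$ and the observation that $\Phi(t^{\pm 1})_{12}$ feeds into the $x^{12}$-coefficient rather than the $y^{12}$-coefficient, which is what creates the asymmetry between $m_1(x^+)$ and $m_1(y^+)$ visible in the lemma.
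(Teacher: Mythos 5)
Your proposal is correct and follows exactly the route the paper intends: the paper states this lemma without proof, asserting only that it "follows from the definition of $\Aug$ together with Proposition~\ref{prop:complete-aug-description}," and your computation is precisely the unwinding of that proposition via the matrix formulas of Proposition~\ref{prop:mcopyMultiple} (extracting the $(1,2)$-entries of $\dd^2 A_j$, $\dd^2 X$, $\dd^2 Y$ and taking linear parts of the twisted differential). The sign bookkeeping in the $YA_j - (-1)^{|a_j|}A_jY$ terms and the observation that $\Phi(t^{\pm1})_{12}$ lands in the $x^{12}$-coefficient are handled correctly, so your write-up is a valid filling-in of the omitted verification.
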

We remark that if $\e_1$ and $\e_2$ are homotopic via the operator $K$, then we have $\e_1(t) - \e_2(t) = \dd_{\coeffs} K(t) + K(\dd t) = 0$ and so $\e_1(t) = \e_2(t)$.

We will also need the following properties of composition in $\Aug$.

\begin{lemma} \label{lem:m2Homotopy} 
Assume that the crossings $a_1, \ldots, a_r$ of the $xy$-projection of $\Lambda$ are labeled with increasing height, $h(a_1) \leq h(a_{2}) \leq \ldots \leq h(a_r)$.

For any $\e_1,\e_2,\e_3$, the composition $m_2: \hom(\e_2,\e_3) \otimes \hom(\e_1,\e_2) \rightarrow \hom(\e_1, \e_3)$ satisfies the following properties.
\begin{itemize}
\item $m_2(a_i^+, a_j^+)  \in \operatorname{Span}_{\coeffs} \left\{ a_l^+ \mid l \geq \max(i,j)\right\}$ for all $i$ and $j$, $1 \leq i,j \leq r$.
\item Each of $m_2(x^+, a_i^+)$, $m_2(a_i^+,x^+)$, and $m_2(x^+,x^+)$ belongs to $\operatorname{Span}_{\coeffs}\{a_l^+ \mid 1 \leq l \leq r\}$ for $1 \leq i \leq r$.
\item For any $\alpha \in \operatorname{Span}_{\coeffs}\{a_1^+, \ldots, a_r^+, x^+,y^+\}$, we have
\[
m_2(y^+, \alpha) = m_2(\alpha, y^+) = -\alpha.
\]
\end{itemize}
\end{lemma}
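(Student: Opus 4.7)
The plan is to address the three assertions more or less independently, in reverse order, using the explicit formulas for $\dd^m$ from Proposition~\ref{prop:complete-aug-description} together with the standard action filtration on $\alg(\Lambda)$.

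The third assertion is immediate from the proofs already in hand: the unit $e_\epsilon = -\sum_j (y^{12}_j)^\vee = -y^+$ constructed in Proposition~\ref{thm:unital} is a strict unit in $\Aug(\Lambda_f,\coeffs)$, and since Legendrian knots with one base point have a single $y$ generator per copy, we get $m_2(-y^+,\alpha) = m_2(\alpha,-y^+) = \alpha$ for any $\alpha$, whence the assertion.

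For the second assertion, I would unwind the formula
\[
m_2(\alpha_2^+,\alpha_1^+) = (-1)^\sigma \sum_{c \in \rR \cup \{x,y\}} c^+ \cdot \operatorname{Coeff}_{\alpha_1^{12}\alpha_2^{23}}\bigl(\dd^3_\epsilon c^{13}\bigr)
\]
and show that for the target generators $c = x$ and $c = y$, the relevant coefficient vanishes in every case $(\alpha_2,\alpha_1) \in \{(x,a_i),(a_i,x),(x,x)\}$. This is a direct inspection using Proposition~\ref{prop:complete-aug-description}: the $(1,3)$ entry of $\dd^3 X = \Delta^{-1}Y\Delta X - XY$ is a $\coeffs$-linear combination of $y^{12}x^{23}$, $y^{13}$, $x^{12}y^{23}$, and $y^{13}$ type terms, none of which involve an $a_i^{ij}$ generator, and $\dd^3 y^{13} = y^{12}y^{23}$ likewise contains no $a_i^{ij}$. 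Since $\phi_\epsilon$ leaves the off-diagonal $x^{ij},y^{ij}$ generators unchanged and only substitutes scalars for the diagonal $a_k^{ii}$ and $t^i$, no terms of the form $a_i^{12}x^{23}$, $x^{12}a_i^{23}$, or $x^{12}x^{23}$ can appear in $\dd^3_\epsilon x^{13}$ or $\dd^3_\epsilon y^{13}$. Hence the $x^+$ and $y^+$ coefficients of $m_2(x^+,a_i^+)$, $m_2(a_i^+,x^+)$, and $m_2(x^+,x^+)$ vanish.

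The first assertion is the main content and will use the action filtration on $\alg(\Lambda)$. The coefficient of $a_j^{12} a_i^{23}$ in $\dd^3_\epsilon c^{13}$ vanishes when $c \in \{x,y\}$ for the same reason as in the previous paragraph, so only $c = a_l$ can contribute. Next, the formula $\dd^3 A_l = \Phi(\dd a_l) + Y_{r(a_l)}A_l - (-1)^{|a_l|}A_l Y_{c(a_l)}$ shows that the $Y A_l$ and $A_l Y$ terms produce only length-two words of the form $y^{1k}a_l^{k3}$ and $a_l^{1k}y^{k3}$, which cannot contribute an $a_j^{12}a_i^{23}$ term; only the ``thick'' part $\Phi(\dd a_l)_{1,3}$ is relevant. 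Any contributing term arises from a holomorphic disk $u$ in $\Lambda$ with positive corner at $a_l$ and negative corners including $a_i$ and $a_j$ (and possibly further $a_k$'s of degree zero which are swallowed by the augmentation, together with powers of $t$ substituted via $\Phi(t) = \Delta X$). The standard area estimate gives $h(a_l) > h(a_i) + h(a_j) + \sum_s h(a_{k_s}) > \max(h(a_i),h(a_j))$ since all heights are positive; combined with the height-monotone labeling, this forces $l > \max(i,j)$, which is the claimed bound. The main (minor) technical point to verify is that the various $x$'s introduced by $\Phi(t^{\pm 1})$ do not accidentally produce an $a_j^{12}a_i^{23}$ monomial in $\dd^3_\epsilon a_l^{13}$ from a disk in $\Lambda$ whose action filtration would not control $l$; but such monomials can only come from length-two subwords of $\Phi$ of some word in $\alg(\Lambda)$ appearing in $\dd a_l$, and each $a_i^{12}$ or $a_j^{23}$ factor in such a subword must originate from an underlying $a_i$ or $a_j$ corner of the disk $u$, so the action estimate still applies.
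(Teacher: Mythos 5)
Your proof is correct and follows exactly the route the paper intends: the paper states this lemma without a written proof, asserting that it follows from the explicit $m$-copy differentials of Proposition~\ref{prop:complete-aug-description}, and your argument — strict unitality of $-y^+$ for the third bullet, the absence of $a$-generators in $\dd^3 x^{13}$ and $\dd^3 y^{13}$ for the second, and the positivity of disk areas in the height filtration (with the check that off-diagonal $a^{12}, a^{23}$ entries can only arise from genuine negative corners, not from $\Phi(t^{\pm 1})$) for the first — is precisely the intended filling-in. No gaps.
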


\begin{proposition}  \label{prop:HomotopyCo}
Consider an element $\alpha \in \hom^0(\e_1,\e_2)$ of the form
\[
\alpha = -y^+ - \sum_i K(a_i) a_i^+.
\]
Then $m_1(\alpha)=0$ if and only if the extension of $K$ to an $(\e_1,\e_2)$-derivation, $\widetilde{K}: \alg \rightarrow \coeffs$, is a \dga{} homotopy from $\e_1$ to $\e_2$.
\end{proposition}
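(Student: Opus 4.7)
The plan is to verify both directions of the equivalence by unpacking $m_1(\alpha)=0$ componentwise via Lemma~\ref{lem:m1Homotopy} and matching the resulting identities with the DGA homotopy condition $\e_1-\e_2 = \widetilde{K}\circ\partial$ applied to each generator of $\alg$.

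First I would note that $\alpha$ lies in $\hom^0(\e_1,\e_2)$, so $m_1(\alpha)$ lies in $\hom^1(\e_1,\e_2)$, which is spanned by $x^+$ together with those $a_j^+$ satisfying $|a_j|=0$. Expanding
\[
m_1(\alpha) = -m_1(y^+) - \sum_i K(a_i)\,m_1(a_i^+)
\]
via Lemma~\ref{lem:m1Homotopy}, the coefficient of $x^+$ is $-(\e_1(t)^{-1}\e_2(t)-1)$, which vanishes iff $\e_1(t)=\e_2(t)$; this matches the DGA homotopy condition applied to $t$, since $\partial t=0$ forces both sides of $\e_1(t)-\e_2(t)=\widetilde{K}(\partial t)$ to vanish together. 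Writing $m_1(a_i^+)=\sum_j c_{ij}\,a_j^+$ with the $c_{ij}$ read off from Lemma~\ref{lem:m1Homotopy}, the coefficient of $a_j^+$ (with $|a_j|=0$, so $(-1)^{|a_j|}=1$) becomes $-(\e_2(a_j)-\e_1(a_j)) - \sum_i K(a_i)\,c_{ij}$.

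The crux is identifying $\sum_i K(a_i)c_{ij}$ with $\widetilde{K}(\partial a_j)$. Substituting the explicit form of $c_{ij}$ yields
\[
\sum_i K(a_i) c_{ij} = \sum_{u \in \Delta(a_j;b_1,\ldots,b_n)} \sum_l \sigma_u\,\e_1(b_1\cdots b_{l-1})\,K(b_l)\,\e_2(b_{l+1}\cdots b_n),
\]
after extending $K$ by $K(t^{\pm 1})=0$. On the other hand, iterating the $(\e_1,\e_2)$-derivation rule gives
\[
\widetilde{K}(b_1\cdots b_n) = \sum_l (-1)^{|b_1|+\cdots+|b_{l-1}|}\,\e_1(b_1\cdots b_{l-1})\,\widetilde{K}(b_l)\,\e_2(b_{l+1}\cdots b_n),
\]
so the only discrepancy is the sign $(-1)^{|b_1|+\cdots+|b_{l-1}|}$. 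This is the one step of serious bookkeeping, and it is resolved by the degree constraints: $\widetilde{K}(b_l)\neq 0$ forces $|b_l|=-1$, while $\e_i(b_m)\neq 0$ forces $|b_m|=0$ (both for Reeb chords and for $t^{\pm 1}$). Since $|\partial a_j|=|a_j|-1=-1$ already matches $|b_l|$, every nontrivial contribution has $|b_m|=0$ for all $m\neq l$, so the exponent is $0$ and the sign is $+1$.

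Finally, I would observe that for any generator $a$ of $\alg$ with $|a|\neq 0$ the DGA homotopy identity $\e_1(a)-\e_2(a)=\widetilde{K}(\partial a)$ holds automatically: the left side vanishes because augmentations kill nonzero-degree generators, while the right side vanishes because $|\partial a|\neq -1$ and $\widetilde{K}$ kills everything of degree $\neq -1$. Hence $m_1(\alpha)=0$ is equivalent to the DGA homotopy equation holding on every generator of $\alg$, which is in turn equivalent to $\widetilde{K}$ being a DGA homotopy from $\e_1$ to $\e_2$. The main obstacle is precisely the sign-matching described above; once the degree constraints trivialize those signs, the equivalence falls out immediately from matching coefficients.
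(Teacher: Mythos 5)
Your proposal is correct and follows essentially the same route as the paper: expand $m_1(\alpha)$ via Lemma~\ref{lem:m1Homotopy}, match the coefficient of each $a_j^+$ against $\e_1(a_j)-\e_2(a_j) = \widetilde{K}(\partial a_j)$, and use the degree constraints ($\e_i$ kills nonzero-degree generators, $\widetilde{K}$ kills everything of degree $\neq -1$) to absorb the sign $(-1)^{|b_1\cdots b_{l-1}|}$ coming from the iterated derivation rule. Your explicit handling of the $x^+$ coefficient and of generators with $|a|\neq 0$ is slightly more spelled out than the paper's, but the argument is the same.
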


\begin{proof}
We note that $\e_1(a_i) = (-1)^{|a_i|}\e_1(a_i)$ for all $i$, since $\e_1$ is supported in grading $0$.  Using Lemma \ref{lem:m1Homotopy}, we compute
\begin{align*}
-m_1(\alpha) &= m_1 (y^+) + \sum_{i} K(a_i) m_1(a_i^+) \\
&= \sum_j\left[\e_2(a_j) - \e_1(a_j)\right]a_j^+ \\
 &\qquad + \sum_{i} K(a_i) \left(\sum_{a_j,b_1,\dots,b_n}\sum_{u \in \Delta(a_j; b_1, \ldots, b_n)} \sum_{\substack{1\leq l\leq n \\ b_l=a_i}} \sigma_u \e_1(b_1\cdots b_{l-1}) \e_2(b_{l+1}\cdots b_n) a_j^+ \right) \\
&= \sum_j\left[\e_2(a_j) - \e_1(a_j)\right]a_j^+ \\
 &\qquad + \sum_j \left(\sum_{\substack{b_1,\dots,b_n \\ u\in\Delta(a_j;b_1,\dots,b_n)}} \sum_{\substack{1 \leq l \leq n \\ b_l \neq t^{\pm 1}}} (-1)^{|b_1 \cdots b_{l-1}|} \sigma_u \e_1(b_1\cdots b_{l-1}) K(b_l) \e_2(b_{l+1}\cdots b_n)\right) a_j^+ \\
&= \sum_j \left[\e_2(a_j) - \e_1(a_j) + \widetilde{K} \circ \partial (a_j)\right] a_j^+
\end{align*}
where $\widetilde{K}$ denotes the unique $(\e_1,\e_2)$-derivation with $\widetilde{K}(a_j) = K(a_j)$.  (The innermost sum above is equal to $\widetilde{K}(\dd a_j)$ only once we also include the terms where $b_l = t^{\pm 1}$, but $K(t^{\pm 1}) = 0$ since it must be an element of $\coeffs$ with grading 1, so this does not change anything.)  Therefore, $m_1(\alpha)=0$ if and only if the equation
\[
\e_1 - \e_2 = \widetilde{K} \circ \partial
\]
holds when applied to generators, and the proposition follows.
\end{proof}

We can now state our result relating notions of equivalence.

\begin{proposition}  \label{prop:Homotopy} If $\Lambda$ is a knot with
  a single base point, then two augmentations $\e_1,\e_2:\alg(\Lambda)
  \rightarrow \coeffs$ are homotopic as \dga{} maps if and only if they
  are isomorphic in 
$\Aug(\Lambda)$.
\end{proposition}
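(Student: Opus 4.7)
The plan is to introduce a $\coeffs$-linear ``leading coefficient'' map $c : \hom^0(\epsilon_1,\epsilon_2) \to \coeffs$ assigning to a representative $\alpha$ the coefficient of $-y^+$, and then reduce both directions to Proposition \ref{prop:HomotopyCo}. First I will verify that $c$ descends to $H^0\hom(\epsilon_1,\epsilon_2)$: Lemma \ref{lem:m1Homotopy} shows that $m_1$ sends $\hom^{-1}$, which is spanned by the $a_i^+$ with $|a_i|=-2$, into the span of degree-$0$ Reeb chords $a_j^+$ with no $y^+$-component, so the coefficient of $-y^+$ is invariant under the addition of coboundaries. Second, I will verify that $c$ is multiplicative under $m_2$: Proposition \ref{prop:complete-aug-description} yields $\dd^3(y^{13})=y^{12}y^{23}$ in the $3$-copy algebra, so $m_2(a_i^+,a_j^+)$ has no $y^+$-component, and combined with the values of $m_2(y^+,-)$ and $m_2(-,y^+)$ in Lemma \ref{lem:m2Homotopy} this gives $c(m_2(\alpha,\beta))=c(\alpha)c(\beta)$. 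In particular, $c$ sends the unit $[-y^+]$ from Corollary \ref{cor:unital} to $1\in\coeffs$.

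The direction $(\Leftarrow)$ isomorphism $\Rightarrow$ homotopy is then immediate: if $[\alpha]$ has inverse $[\beta]$, then $c(\alpha)c(\beta)=1$, so $c(\alpha)\in\coeffs^\times$; the rescaled cochain $c(\alpha)^{-1}\alpha$ is a closed element of the form $-y^+-\sum_i K(a_i)a_i^+$, and extending $K$ to an $(\epsilon_1,\epsilon_2)$-derivation, Proposition \ref{prop:HomotopyCo} identifies $\widetilde K$ as a \dga{} homotopy from $\epsilon_1$ to $\epsilon_2$.

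For the direction $(\Rightarrow)$, I begin with a \dga{} homotopy $K$ from $\epsilon_1$ to $\epsilon_2$ and produce, via Proposition \ref{prop:HomotopyCo}, a closed $\alpha\in\hom^0(\epsilon_1,\epsilon_2)$ with $c(\alpha)=1$. Since \dga{} homotopy is a symmetric equivalence relation on \dga{} maps \cite[Ch.~26]{FHT}, a reverse homotopy $K':\epsilon_2\to\epsilon_1$ exists and yields a closed $\beta\in\hom^0(\epsilon_2,\epsilon_1)$ with $c(\beta)=1$. The compositions $m_2(\beta,\alpha)\in H^0\hom(\epsilon_1,\epsilon_1)$ and $m_2(\alpha,\beta)\in H^0\hom(\epsilon_2,\epsilon_2)$ therefore both have $c$-value $1$. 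Granting the key invertibility claim that any cohomology endomorphism class with $c$-value a unit is itself a unit, I invert $[m_2(\beta,\alpha)]$; associativity of $m_2$ in the cohomology category then makes a left inverse of $[\alpha]$ from $[\beta]$ composed with this inverse, and the symmetric construction supplies a right inverse. In the unital category these must coincide, giving a two-sided inverse.

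The main technical obstacle is the invertibility claim, which I plan to establish by showing that $\ker c$ is a nilpotent ideal in $H^0\hom(\epsilon,\epsilon)$: then any class $[-y^+]+[A]$ with $[A]\in\ker c$ is inverted by the truncating geometric series $\sum_{n\geq 0}(-[A])^{\cdot n}$. To prove nilpotency I will strengthen Lemma \ref{lem:m2Homotopy} via the symplectic action filtration. Using the invariance Theorem \ref{prop:Invariance}, I may assume that $h(a_1)<h(a_2)<\cdots<h(a_r)$ strictly; then any holomorphic disk contributing to $m_2(a_i^+,a_j^+)$ with output corner at $a_l^{13}$ has positive symplectic area, which forces $h(a_l)>h(a_i)+h(a_j)>\max(h(a_i),h(a_j))$ and hence $l>\max(i,j)$ strictly. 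Iterating this strict filtration on the finite basis $V=\mathrm{Span}_\coeffs\{a_i^+:|a_i|=-1\}$ gives $V^{\cdot (r+1)}=0$ at the cochain level, whence $\ker c$ is nilpotent on cohomology. This action refinement of Lemma \ref{lem:m2Homotopy} is the essential technical input beyond Proposition \ref{prop:HomotopyCo}.
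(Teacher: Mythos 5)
Your proof is correct, and the backward direction (isomorphism implies homotopy) is essentially the paper's argument repackaged: the paper likewise extracts the $y^+$-coefficients $c_\alpha, c_\beta$ with $c_\alpha c_\beta = 1$ from $m_2(\alpha,\beta) \equiv -y^+$, rescales, and invokes Proposition \ref{prop:HomotopyCo}; your multiplicative character $c$ is a clean way of organizing the same bookkeeping. The forward direction, however, genuinely diverges. The paper never appeals to symmetry of the \dga{} homotopy relation: starting from the single cocycle $\alpha = -y^+ - \sum K(a_i)a_i^+$, it solves $B = A + m_2(B,A)$ by induction on height to produce a cochain-level one-sided inverse $\beta$ with $m_2(\beta,\alpha) = -y^+$ exactly, then verifies $m_1(\beta)=0$ a posteriori using the $A_\infty$ relations together with the injectivity of $m_2(-,\alpha)$. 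You instead import the reverse homotopy $K'$ (so closedness of $\beta$ is automatic from Proposition \ref{prop:HomotopyCo}) and reduce invertibility of $[\beta][\alpha]$ to the statement that $\ker c$ is a nilpotent ideal of $H^0\hom(\e,\e)$, inverted by a truncated geometric series. Both arguments rest on the same finiteness input — the height filtration — and in fact both need the \emph{strict} form $m_2(a_i^+,a_j^+) \in \operatorname{Span}\{a_l^+ : l > \max(i,j)\}$: the paper's Lemma \ref{lem:m2Homotopy} states only $\geq$, but its inductive step ("determined by $A$ and those $B_j$ with $j<i$") tacitly uses the action-positivity argument you make explicit, so your refinement is a real (if small) service. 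The trade-offs: your route is more structural, exhibiting $H^0\hom(\e,\e)$ as $\coeffs$ plus a nilpotent ideal and deriving invertibility formally, but it leans on the external fact that \dga{} homotopy is symmetric (which the paper asserts with a citation but does not use in this proof); the paper's route is self-contained and constructive, producing the inverse cochain explicitly, at the cost of the somewhat fiddly verification that the constructed $\beta$ is closed and that left and right inverses agree. One cosmetic point: you do not need Theorem \ref{prop:Invariance} to assume distinct crossing heights — a generic Lagrangian projection already has them, and relabeling suffices.
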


\begin{proof} 
First, suppose that $\e_1$ and $\e_2$ are isomorphic in $H^*\Aug$.  In particular, there exist cocycles $\alpha \in \hom^0(\e_1,\e_2)$ and $\beta \in \hom^0(\e_2,\e_1)$ with $[m_2(\alpha,\beta)]= -[y^+]$ in $H^0\hom(\e_2,\e_2)$.
That is, $m_2(\alpha,\beta) + y^+ = m_1(\gamma)$ for some $\gamma \in
\hom(\e_2,\e_2)$.  Using Lemma \ref{lem:m1Homotopy}, we see that
$\langle m_1(\gamma), y^+\rangle= \langle m_1(\gamma), x^+\rangle =
0$, 
where $\langle m_1(\gamma), y^+\rangle$ denotes the coefficient of
$y^+$ in $m_1(\gamma)$ and so forth. Thus we can write
\[ m_2(\alpha, \beta) = -y^+ + \sum_{i} K(a_i)a_i^+ \]
for some $K(a_i) \in \coeffs$. 
(To see that $\langle m_1(\gamma), x^+\rangle =0$, we used the fact that we are working in $\hom(\e_2,\e_2)$, hence $\langle m_1(y^+), x^+\rangle = 0$.)  Moreover, Lemma \ref{lem:m2Homotopy}  shows that both $\alpha$ and $\beta$ must also have this same form, except that the $y^+$ coefficients need not be $-1$: we have $\langle \alpha, y^+\rangle = c_\alpha$ and $\langle \alpha, y^+\rangle = c_\beta$ for some $c_\alpha,c_\beta \in \coeffs^\times$ with $c_\alpha c_\beta = 1$.  (Note that $\langle\alpha,x^+\rangle = \langle\beta,x^+\rangle = 0$ because $\alpha$ and $\beta$ are both elements of $\hom^0$, whereas $|x^+|=1$.)  Replacing $\alpha$ and $\beta$ with $-c_\alpha^{-1}\alpha$ and $-c_\beta^{-1}\beta$ respectively preserves $m_2(\alpha,\beta)$ and $m_1(\alpha)=m_1(\beta)=0$, so we can assume that both $\alpha$ and $\beta$ have $y^+$-coefficient equal to $-1$ after all.  Now, since $m_1(\alpha)=0$,  Proposition \ref{prop:HomotopyCo} applies to show that $\e_1$ and $\e_2$ are homotopic.

Conversely, suppose that $\e_1$ and $\e_2$ are homotopic, with $K: \alg \rightarrow \coeffs$ an $(\e_1,\e_2)$-derivation with $\e_1-\e_2 = K \circ \partial$.  Note that since $\coeffs$  sits in grading $0$ when viewing $(\coeffs,0)$ as a \dga{}, we have $K(a_i) = 0$ unless $|a_i|=-1$ in $\alg$.  As $|a_i^+| = |a_i|+1$, it follows that
\[
\alpha = -y^+ - \sum_{i} K(a_i) a_i^+
\]
defines a cocycle in $\hom^0(\e_1,\e_2)$ by Proposition \ref{prop:HomotopyCo}.  We show that $[\alpha] \in H^0\hom(\e_1,\e_2)$ has a multiplicative inverse in $H^0\hom(\e_2,\e_1)$.  In fact, we prove a stronger statement by showing that there are elements $\beta, \gamma \in \hom^0(\e_2,\e_1)$ satisfying
\begin{equation} \label{eq:m1betagamma}
m_1(\beta) = m_1(\gamma) = 0
\end{equation}
and
\begin{equation} \label{eq:m2betagamma}
m_2(\beta, \alpha) = m_2(\alpha, \gamma) = -y^+.
\end{equation}
It will then follow that $[\beta] = [\gamma] \in H^0\hom(\e_2,\e_1)$ is the desired multiplicative inverse.  (It is not clear whether $\beta = \gamma$ as cochains, since the $m_2$ operations may not be associative if $m_3$ is nontrivial.)  We will construct $\beta$ of the form
\[
\beta = -y^+ + \sum_{i} B_i a_i^+,
\]
and omit the construction of $\gamma$ which is similar.

Writing $\alpha = -y^+ - A$ and $\beta = -y^+ + B$ with $A, B \in \operatorname{Span}_\coeffs\{a_1^+, \ldots, a_r^+\}$, we note, using Lemma \ref{lem:m2Homotopy}, that $m_2(\beta, \alpha) = -y^+$ is equivalent to
\[
B = A + m_2(B,A).
\]
The coefficients $B_i$ can then be defined inductively to satisfy this property.  Indeed, assuming $a_1, \ldots, a_r$ are labeled according to height, Lemma \ref{lem:m2Homotopy} shows that the coefficient of $a_i^+$ in $m_2(B,A)$ is determined by $A$ and those $B_j$ with $j<i$.

Now that we have found $\beta = -y^+ + B$ satisfying (\ref{eq:m2betagamma}), we verify (\ref{eq:m1betagamma}).  The $A_\infty$ relations on $\Aug(\Lambda)$ imply that
\[ m_1(-y^+) = m_1(m_2(\beta, \alpha)) = m_2(m_1(\beta), \alpha) + m_2(\beta, m_1(\alpha)), \]
and the left side is zero since we evaluate $m_1(-y^+)$ in $\hom(\e_1,\e_1)$, while the term $m_2(\beta,m_1(\alpha))$ on the right side is zero since $m_1(\alpha)=0$; hence
\[ m_2(m_1(\beta),\alpha) = 0. \]
We claim that $m_2(X, \alpha)=0$ implies that $X = 0$ for any $X \in \operatorname{Span}_{\coeffs}\{y^+,x^+, a_1^+, \ldots, a_r^+\}$; in the case $X=m_1(\beta)$, it will immediately follow that $m_1(\beta)=0$ as desired.
Using Lemma \ref{lem:m2Homotopy}, we have that $m_2(X,A) \in \operatorname{Span}_\coeffs\{a_1^+,\ldots,a_r^+\}$, so
\[
0 = m_2(X,\alpha) = m_2(X, -y^+ - A) = X - m_2(X,A),
\]
which implies that $X  = m_2(X,A) \in \operatorname{Span}_\coeffs\{a_1^+,\dots,a_n^+\}$ as well.  That $\langle X, a^+_i\rangle  =  0$ is then verified from the same equation using Lemma \ref{lem:m2Homotopy} and induction on height.
\end{proof}

\begin{corollary}
Let $L_1,L_2$ be exact Lagrangian fillings of a Legendrian knot
$\Lambda$ with trivial Maslov number, and let
$\epsilon_{L_1},\epsilon_{L_2}$ be the corresponding augmentations of
the \dga{} of $\Lambda$.
\label{cor:filling-equiv}
If $L_1,L_2$ are isotopic through exact
Lagrangian fillings, then $\epsilon_{L_1} \cong \epsilon_{L_2}$  in $\Aug(\Lambda)$.
\end{corollary}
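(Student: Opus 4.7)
\textbf{Proof proposal for Corollary \ref{cor:filling-equiv}.} The plan is to combine the result of Ekholm--Honda--K\'alm\'an \cite{EHK} that isotopic exact Lagrangian fillings of a Legendrian knot induce \dga{}-homotopic augmentations, with Proposition~\ref{prop:Homotopy} above, which promotes \dga{}-homotopy to isomorphism in $\Aug(\Lambda)$ whenever $\Lambda$ is a knot equipped with a single base point.

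First I would reduce to the case of a single base point. By Proposition~\ref{prop:invarianceBasePoints}, the category $\Aug(\Lambda)$ is independent (up to $A_\infty$ equivalence) of the choice of base points, provided each component carries at least one. Since $\Lambda$ is a knot, we may choose a single base point $*\in\Lambda$ and work with the corresponding \dga{} $\alg(\Lambda)$ generated by Reeb chords together with $t^{\pm 1}$. The equivalence of categories induced by a change of base points carries augmentations to augmentations and preserves the isomorphism relation, so it suffices to prove the statement in this setting.

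Next I would invoke \cite[Theorem~1.3]{EHK}: an exact Lagrangian filling $L$ of $\Lambda$ with trivial Maslov number, equipped with a spin structure and a choice of capping path for the base point, induces a \dga{} morphism $\epsilon_L :\thinspace \alg(\Lambda) \to \coeffs$, and a compactly supported isotopy $\{L_s\}_{s \in [0,1]}$ between two such fillings $L_0 = L_1$ and $L_1 = L_2$ yields a \dga{} homotopy between the corresponding augmentations $\epsilon_{L_1}$ and $\epsilon_{L_2}$. (Strictly, \cite{EHK} work over $\bZ/2$; the fact that the argument goes through with signs under the Maslov-$0$ hypothesis is standard, as used throughout Section~\ref{ssec:augstructure}.) Thus $\epsilon_{L_1}$ and $\epsilon_{L_2}$ are related by a chain homotopy $K$ that is an $(\epsilon_{L_1},\epsilon_{L_2})$-derivation.

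Finally, Proposition~\ref{prop:Homotopy} states that for a Legendrian knot with a single base point, two augmentations are isomorphic in $\Aug(\Lambda)$ if and only if they are \dga{}-homotopic. Applying this proposition to the homotopy produced in the previous step yields an isomorphism $\epsilon_{L_1} \cong \epsilon_{L_2}$ in $\Aug(\Lambda)$, as desired. The main work, as usual, is contained in the invocation of \cite{EHK}; there one must be somewhat careful that the specific geometric setup there (choice of Reeb-direction perturbation of $L$, choice of capping path at the base point, and spin structure) is compatible with the conventions used here to define $\epsilon_L$, but no essentially new ingredient is required.
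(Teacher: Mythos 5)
Your proposal is correct and follows the same route as the paper: the paper's proof simply cites \cite{EHK} for the fact that isotopic exact fillings give \dga{}-homotopic augmentations and then applies Proposition~\ref{prop:Homotopy}, exactly as you do. Your additional remarks about reducing to a single base point via Proposition~\ref{prop:invarianceBasePoints} and about sign/convention compatibility with \cite{EHK} are reasonable elaborations of steps the paper leaves implicit.
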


\begin{proof}
From \cite{EHK}, given these hypotheses,
$\epsilon_{L_1},\epsilon_{L_2}$ are \dga{} homotopic.
\end{proof}

\begin{remark}
As before, we can generalize Corollary~\ref{cor:filling-equiv} to
exact fillings of Maslov number $m$ as long as we consider $\Aug$ to
be $(\bZ/m)$-graded rather than $\bZ$-graded.
\end{remark}

\newpage
%!TEX root = augmain.tex

\section{Localization of the augmentation category}
\label{sec:bordered}

A preferred plat diagram of a Legendrian knot in $\R^3$ can be split along vertical lines which avoid the crossings, cusps,
and base points into a sequence of ``bordered'' plats.
Each of these bordered plats
was assigned a \dga{} in \cite{sivek-bordered}, generalizing the Chekanov--Eliashberg construction.\footnote{
In \cite{sivek-bordered}, mod 2 coefficients were used, and the language of co-sheaves was avoided.  There the vertical lines
bounding a bordered plat diagram $T$ on the left and right are assigned ``line algebras'' $I^L_n$ and $I^R_n$, and
 a ``type DA'' algebra $\alg(T)$ was associated to the oriented tangle $T$ along with natural \dga{} morphisms $I^L_n \to \alg(T)$
and $I^R_n \to \alg(T)$.  If $T$ decomposes into two smaller bordered plats as $T = T_1 \cup T_2$, with the two diagrams
intersecting along a single vertical line with $n$ points whose algebra is denoted $I^M_n$, then these morphisms fit into a pushout square
\begin{equation}
\label{eq:bordered-pushout}
\xymatrix{
I^M_n \ar[r] \ar[d] & \alg(T_2) \ar[d] \\
\alg(T_1) \ar[r] & \alg(T)
}
\end{equation}
and the corresponding morphisms $I^L_n \to \alg(T)$ and $I^R_n \to \alg(T)$ corresponding to the left and right boundary
lines of $T$ factor through the morphisms $I^L_n \to \alg(T_1)$ and $I^R_n \to \alg(T_2)$ respectively.

In the present treatment, the line algebras are the co-sections over a neighborhood of a boundary of the interval in question,
the \dga{} morphisms above are co-restrictions, and the pushout square reflects the cosheaf axiom.
}
Here we generalize the ideas of \cite{sivek-bordered} to yield the following result.

\begin{theorem} \label{thm:cosheaf}
Let $\Lambda \subset J^1(\R)$ be in preferred plat position.
Then there is a constructible co-sheaf of dg algebras
$\underline{\alg}(\Lambda)$
over $\R$ with global sections $\alg(\Lambda)$.

The sections
$\underline{\alg}(\Lambda)(U)$ are all semi-free, and we have a co-sheaf in the strict
sense that the underlying
graded algebras already form a co-sheaf.
\end{theorem}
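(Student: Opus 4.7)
The plan is to construct $\underline{\alg}(\Lambda)$ explicitly on the basis of open intervals in $\bR$, generalizing the bordered construction of \cite{sivek-bordered} to allow $\bZ$ coefficients, multiple base points, and invertible generators. Since $\Lambda$ is in preferred plat position, the set $X \subset \bR$ of $x$-coordinates of crossings, cusps, and base points is finite, and I may restrict attention to the basis of open intervals $U \subset \bR$ whose boundary $\partial U$ is disjoint from $X$. For such a $U$, I will define $\underline{\alg}(\Lambda)(U)$ to be the semi-free \dga{} on the following generators: (i) the crossings, right cusps, and base points of $\Lambda$ contained in $\pi_{xz}^{-1}(U)$, giving the usual internal generators together with invertible $t_k^{\pm 1}$; and (ii) for each vertical boundary line of $U$ and each ordered pair $i<j$ of strands crossing it, a ``line algebra'' generator $b_{ij}$ of degree equal to the relative Maslov potential. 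The differential will be defined by counting the same kind of admissible, partially bordered disks as in \cite{sivek-bordered}, with boundary corners allowed at the $b_{ij}$, and with signs read off from the orientation-sign convention fixed in Section~\ref{ssec:dga-background}.

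Next I would define the co-restriction maps. For $U \subset V$, the induced map $\underline{\alg}(\Lambda)(U) \to \underline{\alg}(\Lambda)(V)$ sends internal generators of $U$ to themselves. Each boundary generator $b_{ij}$ of $U$ is sent to a sum over all ``bordered disks'' in $V\setminus U$ whose positive puncture lies at $b_{ij}$ and whose negative punctures lie at internal generators and at boundary generators of $V$; equivalently, this is the differential truncated in the $V\setminus U$ region, plus the image of $b_{ij}$ as a generator when $\partial U \cap \partial V \neq \emptyset$. Functoriality of this assignment for chains of inclusions is a direct check on generators. That the global sections recover $\alg(\Lambda)$ follows because for $U = \bR$ there are no strands crossing the boundary (the plat closes up) and the admissible-disk count coincides with the usual $\partial$.

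The essential point is the cosheaf axiom. It suffices to check it for a cover $V = V_1 \cup V_2$ where $V_1\cap V_2$ is a neighborhood of a single vertical cutting line $\ell$ disjoint from $X$. At the level of underlying graded algebras this is immediate: the generators of $\underline{\alg}(\Lambda)(V)$ are exactly the disjoint union of the internal generators over $V_1$ and $V_2$, together with boundary generators at $\partial V$; and the boundary line algebra of $V_1\cap V_2$ is precisely the set of $b_{ij}$ along $\ell$ that are shared between $\underline{\alg}(\Lambda)(V_1)$ and $\underline{\alg}(\Lambda)(V_2)$. This exhibits $\underline{\alg}(\Lambda)(V)$ as the free-product pushout of the graded algebras over $\underline{\alg}(\Lambda)(V_1\cap V_2)$, which is exactly the content of the last sentence of the theorem.

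Compatibility with differentials, which is the technical heart of the argument, reduces to the following combinatorial claim: every admissible disk over $V$ contributing to $\partial$ either lies entirely over $V_1$ or entirely over $V_2$, or it meets the cutting line $\ell$ transversely in a collection of arcs, in which case it decomposes uniquely into a pair of bordered admissible disks over $V_1$ and $V_2$ whose shared corners are corresponding boundary generators $b_{ij}$ along $\ell$; conversely every such pair of bordered disks assembles into a unique $V$-disk. This is the main obstacle and the step that requires care, because one must also match orientation signs and $t$-coefficients across the cut. I would carry it out by the same inductive slicing argument as in \cite[\S 3]{sivek-bordered}, where the mod-2 version of exactly this statement is proved; the signs are handled uniformly because the orientation-sign conventions are local to each crossing and do not see the cutting line, and the $t$-generators are located at base points and hence entirely on one side of $\ell$. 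Granting this combinatorial claim, the pushout differential on $\underline{\alg}(\Lambda)(V_1)\sqcup_{\underline{\alg}(\Lambda)(V_1\cap V_2)}\underline{\alg}(\Lambda)(V_2)$ agrees with the differential on $\underline{\alg}(\Lambda)(V)$, completing the cosheaf verification.
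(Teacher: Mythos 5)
There is a genuine gap, and it lies in the very step you call ``immediate.'' You place line-algebra generators $b_{ij}$ on \emph{every} vertical boundary component of $U$, so that $\underline{\alg}(\Lambda)(V_1)$ has free generators along its right boundary $\ell$ and $\underline{\alg}(\Lambda)(V_2)$ has free generators along its left boundary $\ell$, while $\underline{\alg}(\Lambda)(V)$ has none along $\ell$. In the free-product pushout over the line algebra of $V_1\cap V_2$, identifying the two copies of the $\ell$-generators still leaves one copy of them as \emph{free} generators of the pushout; hence the underlying graded algebra of the pushout is strictly larger than that of $\underline{\alg}(\Lambda)(V)$, and the ``strict cosheaf of graded algebras'' claim fails for your symmetric construction. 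The paper's construction is deliberately asymmetric: $\alg(T)$ carries boundary generators $a_{ij}$ only for pairs of \emph{left} endpoints, the co-restriction from the overlap into the right-hand piece is literally an inclusion of free generators, and the co-restriction into the left-hand piece sends $b_{ij}$ to a disk count. The pushout then identifies the middle-line generators of the right piece with elements of the left piece, eliminating them from the generating set, and the generator count matches exactly. You would need to redesign your local algebras along these lines (or prove a nontrivial quasi-isomorphism statement, which is weaker than what the theorem asserts).

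Secondly, the compatibility of differentials --- which you correctly identify as the technical heart --- is not established by your proposal: you defer the disk-decomposition-with-signs argument to the mod-$2$ treatment of \cite{sivek-bordered}, but extending that combinatorial slicing to $\bZ$ coefficients with orientation signs, base points, and invertible generators is precisely the content that needs proof here. The paper avoids this combinatorics entirely: it proves $\partial^2=0$ for the bordered algebra, and the chain-map property of each co-restriction $\iota_{VU}$, by embedding the bordered tangle into a closed front (capping with $n$-copies of cusps) so that the bordered \dga{} becomes a sub-\dga{} of an honest Chekanov--Eliashberg \dga{}, and then reading off the needed identities from $\partial^2\beta_j=0$ in the closed link. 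With those two facts in hand, the pushout square itself is verified formally on generators. Also note that a disk crossing the cut line decomposes into \emph{one} disk on the right and possibly \emph{several} disks on the left (one for each boundary segment $[i,j]$ it occupies), not into a single pair as you state.
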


We will prove this result over the course of this section, but first we interpret it and draw several corollaries.

The statement means the following.  For each open set $U \subset \R$, there is a \dga{} $\underline{\alg}(\Lambda)(U)$.
When $V \subset U$, there is a map (defined by counting disks) $\iota_{VU}: \underline{\alg}(\Lambda)(V) \to \underline{\alg}(\Lambda)(U)$.
For $W \subset V \subset U$, one has $ \iota_{VU}\iota_{WV} = \iota_{WU}$.
Finally, when
$U = L \cup_{V} R$, one has a pushout in the category of \dga{}
\[ \underline{\alg}(\Lambda)(U) = \underline{\alg}(\Lambda)(L) \star_{\underline{\alg}(\Lambda)(V)} \underline{\alg}(\Lambda)(R). \]

Cosheaves are determined by their behavior on any base of the topology; to prove the theorem it suffices to
give the sections and corestrictions
for open intervals to open intervals
and prove the cosheaf axiom for overlaps of intervals.  We give a new construction of these sections, which
is equivalent to that of \cite{sivek-bordered} if we restrict to mod 2 coefficients.

\begin{corollary} \label{cor:aug-set-sheaf}
Augmentations form a sheaf of sets over $\bR_x$.
That is, $U \mapsto \mathrm{Hom}_{\mathrm{\dga{}}}(\underline{\alg}(\Lambda)(U), \coeffs)$ determines a sheaf.
\end{corollary}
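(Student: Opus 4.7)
The plan is to deduce this as a formal consequence of Theorem \ref{thm:cosheaf}, using that the contravariant functor $\mathrm{Hom}_{\mathrm{\dga{}}}(-,\coeffs)$ sends colimits of \dga{}s to limits of sets. Since a presheaf on $\bR$ is a sheaf iff it satisfies the sheaf condition on a base of the topology, it suffices to verify the axiom for a cover of an open interval $U$ by two open subintervals $L, R$ with overlap $V = L \cap R$ (still an interval, possibly empty, in which case the condition is vacuous).

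By Theorem \ref{thm:cosheaf}, the cosheaf condition gives the pushout
\[
\underline{\alg}(\Lambda)(U) \;=\; \underline{\alg}(\Lambda)(L)\;\star_{\underline{\alg}(\Lambda)(V)}\;\underline{\alg}(\Lambda)(R)
\]
in the category of \dga{}s over $\bZ$. Applying $\mathrm{Hom}_{\mathrm{\dga{}}}(-,\coeffs)$ and invoking the universal property of the pushout, I obtain
\[
\mathrm{Hom}_{\mathrm{\dga{}}}(\underline{\alg}(\Lambda)(U),\coeffs) \;=\; \mathrm{Hom}_{\mathrm{\dga{}}}(\underline{\alg}(\Lambda)(L),\coeffs)\;\times_{\mathrm{Hom}_{\mathrm{\dga{}}}(\underline{\alg}(\Lambda)(V),\coeffs)}\; \mathrm{Hom}_{\mathrm{\dga{}}}(\underline{\alg}(\Lambda)(R),\coeffs),
\]
which is exactly the sheaf condition: an augmentation of $\underline{\alg}(\Lambda)(U)$ is the same data as a pair of augmentations on $L$ and $R$ whose co-restrictions to $V$ agree. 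To upgrade from two-piece covers to general covers, one either iterates (using that the cosheaf condition for arbitrary covers of an interval is encoded by iterated pushouts along subintervals) or appeals to the general fact that a presheaf satisfying the sheaf axiom for two-piece covers in a base of the topology is a sheaf.

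There is essentially no obstacle beyond a small bookkeeping check: one should confirm that the pushout of \dga{}s appearing in Theorem \ref{thm:cosheaf} is indeed the pushout of the underlying \emph{unital associative} graded algebras equipped with compatible differentials, so that $\mathrm{Hom}_{\mathrm{\dga{}}}$ really does send it to a fiber product of sets. The theorem's assertion that the co-sheaf is strict at the level of underlying graded algebras (and the semi-freeness of all $\underline{\alg}(\Lambda)(U)$) makes this immediate: giving a \dga{} map out of a pushout of semi-free \dga{}s amounts to giving compatible maps from each piece, with no derived or homotopical subtleties to worry about.
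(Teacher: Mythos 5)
Your argument is correct and is essentially identical to the paper's own proof: both deduce the sheaf condition by applying $\mathrm{Hom}_{\mathrm{\dga{}}}(-,\coeffs)$ to the pushout square of Theorem~\ref{thm:cosheaf} and invoking its universal property to convert the colimit of \dgas{} into a fiber product of sets of augmentations. The additional remarks on bases of the topology and on strictness of the underlying graded algebras are harmless elaborations of what the paper leaves implicit.
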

\begin{proof}
Given $U = L \cup_V R$, suppose we have augmentations $\epsilon_L: \underline{\alg}(\Lambda)(L) \to \coeffs$ and $\epsilon_R: \underline{\alg}(\Lambda)(R) \to \coeffs$ such that $\epsilon_L|_V = \epsilon_L \circ \iota_{VL}$ equals $\epsilon_R|_V = \epsilon_R \circ \iota_{VR}$ as augmentations of $\underline{\alg}(\Lambda)(V)$.  By the pushout axiom above, there is a unique $\epsilon: \underline{\alg}(\Lambda)(U) \to \coeffs$ such that $\epsilon_L = \epsilon \circ \iota_{LU} = \epsilon|_L$ and $\epsilon_R = \epsilon \circ \iota_{RU} = \epsilon|_R$, verifying the gluing axiom.
\end{proof}

\begin{corollary} \label{cor:a-infty-alg-sheaf}
Fix a global augmentation $\epsilon: \alg(\Lambda) \to \coeffs$.  This induces local augmentations
$\epsilon|_U: \underline{\alg}(\Lambda)(U) \to \coeffs$, which determine $A_\infty$ algebras
$C^{\epsilon}(U)$.  The co-restriction maps of the \dga{} determine restriction maps of the $C^{\epsilon}(U)$,
and the association $U \to C^{\epsilon}(U)$ is a sheaf of $A_\infty$ algebras.
\end{corollary}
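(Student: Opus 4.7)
The plan has three steps: promote the cosheaf of \dgas{} to a presheaf of $A_\infty$ algebras via contravariant functoriality of $(\alg,\epsilon) \mapsto C^\epsilon$; identify the cosheaf pushout on generating sets with a pushout of free $\coeffs$-modules; and dualize to obtain a strict pullback of $A_\infty$ algebras.

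First I will fix $\epsilon: \alg(\Lambda) \to \coeffs$ and set $\epsilon|_U := \epsilon \circ \iota_{U}$, where $\iota_{U}: \underline{\alg}(\Lambda)(U) \to \alg(\Lambda)$ is the corestriction to global sections; functoriality of the cosheaf yields $\epsilon|_V = \epsilon|_U \circ \iota_{VU}$ whenever $V \subset U$. The $A_\infty$-algebra construction of Section \ref{ssec:dga-augs} is contravariantly functorial on augmentation-preserving \dga{} morphisms: given $f: \alg_1 \to \alg_2$ with $\epsilon_2 \circ f = \epsilon_1$, the induced map $(\alg_1)^{\epsilon_1}_+ \to (\alg_2)^{\epsilon_2}_+$ dualizes (via Proposition \ref{prop:bar}) to an $A_\infty$ morphism $C^{\epsilon_2} \to C^{\epsilon_1}$ whose $k$-th component records, up to sign, the coefficients of length-$k$ monomials appearing in the image under $f$ of each generator of $\alg_1$, cf.\ formula \eqref{eq:ms}. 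Since the bordered corestrictions $\iota_{VU}$ map generators to generators, only the linear component of the dual survives, yielding strictly linear chain maps $\rho_{UV}: C^{\epsilon|_U}(U) \to C^{\epsilon|_V}(V)$ that commute strictly with every $m_k$. Functoriality of duality gives $\rho_{LV} \circ \rho_{UL} = \rho_{UV}$, exhibiting $U \mapsto C^{\epsilon|_U}(U)$ as a presheaf of $A_\infty$ algebras.

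Next, for a basic overlap $U = L \cup R$ with $V := L \cap R$, Theorem \ref{thm:cosheaf} presents $\underline{\alg}(\Lambda)(U)$ as the \dga{} pushout $\underline{\alg}(\Lambda)(L) \star_{\underline{\alg}(\Lambda)(V)} \underline{\alg}(\Lambda)(R)$. Inspection of the bordered construction in the proof of the theorem shows that this is in fact a pushout of semi-free \dgas{} whose generating sets satisfy the set-theoretic pushout $\sS_U = \sS_L \sqcup_{\sS_V} \sS_R$. After imposing the augmentation and restricting to non-invertible generators, the module of generators fits into a pushout $C_U = C_L \oplus_{C_V} C_R$ of free $\coeffs$-modules, which dualizes to a pullback square $C_U^\vee \cong C_L^\vee \times_{C_V^\vee} C_R^\vee$ of graded $\coeffs$-modules.

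Finally I will verify that the $A_\infty$ operations respect this pullback decomposition. By formula \eqref{eq:ms}, the operation $m_k^U$ on $C_U^\vee$ extracts (up to sign) coefficients of length-$k$ monomials in $\partial_{\epsilon|_U} a$ for each generator $a \in \sS_U$. The cosheaf property implies that for $a \in \sS_L$, $\partial_{\epsilon|_U} a = \iota_{LU}(\partial_{\epsilon|_L} a)$, so all monomials live in $\sS_L$; analogously for $R$ and $V$. Consequently $m_k^U$ restricts on the $L$-component of the pullback to $m_k^L$, and similarly for $R$ and $V$, giving a strict pullback of $A_\infty$ algebras. The main obstacle I anticipate is making rigorous the claim that the bordered corestrictions act on generators (rather than by sending generators into longer words); this rigidity is what converts the strict \dga{} pushout into a strict $A_\infty$ pullback, and it must be extracted from the specific construction in the proof of Theorem \ref{thm:cosheaf} rather than invoked as a black box. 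Once this is in hand, the sheaf axiom follows by formal considerations.
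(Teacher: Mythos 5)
There is a genuine gap, and it sits exactly where you flagged your ``main obstacle'': the rigidity you need is false. The corestrictions $\iota_{VU}$ do \emph{not} send generators to generators. Only when $V$ shares the left endpoint of $U$ is $\iota_{VU}$ an inclusion of generating sets (Lemma~\ref{lem:left-sub-dga}); for a subinterval $V$ lying to the right, $\iota_{VU}$ sends each pair-of-left-endpoints generator $b_{ij}$ of $\underline{\alg}(\Lambda)(V)$ to a sum over disks propagating leftward, which generically contains words of length $\geq 2$. The paper's explicit example computes $\iota_{VU}(b_{12}) = t_1a_{14} + t_1a_{13}x + t_1a_{12}t_2^{-1}y$ for the tangle of Figure~\ref{fig:bordered-tangle}. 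Consequently the dual maps $C^{\epsilon}(U) \to C^{\epsilon}(V)$ are genuinely $A_\infty$ morphisms with nonvanishing higher components (the length-$k$ part of $\iota_{VU}$ on generators becomes the $k$-th component of the restriction), not strictly linear chain maps commuting with every $m_k$. This is not a technicality to be argued away: it is a central structural feature of the paper --- see Proposition~\ref{prop:crossres}, where the right restriction out of the crossing category is shown to have a nontrivial second-order term, and the discussion in the introduction explaining that this is precisely why the glued augmentation category is $A_\infty$ while the sheaf category is dg. Your steps 2 and 3 inherit the problem: since the $V$-generators are not carried to generators of $L$, the generating sets do not form a set-theoretic pushout $\sS_L \sqcup_{\sS_V} \sS_R$ in the sense you use, and for $a$ a crossing in $R$ the identity $\partial_{\epsilon|_U}a = \iota_{RU}(\partial_{\epsilon|_R}a)$ changes word lengths, so $m_k^U$ does not restrict componentwise to $m_k^R$.

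The repair is not to strictify but to work $A_\infty$-linearly throughout, which is how the paper's (very short) proof proceeds: by Proposition~\ref{prop:bar}, a morphism of augmented semi-free \dgas{} dualizes to an $A_\infty$ morphism of the corresponding $A_\infty$ algebras --- higher components and all --- and the \dga{} pushout of Theorem~\ref{thm:cosheaf} dualizes to an $A_\infty$ pullback. The corollary is asserting exactly the existence of these $A_\infty$ restriction morphisms and $A_\infty$ gluings, so once one accepts that ``sheaf of $A_\infty$ algebras'' means restrictions are $A_\infty$ morphisms and the gluing square is a pullback in that sense, the statement follows formally from the cosheaf theorem plus the bar-construction duality, with no rigidity claim needed.
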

\begin{proof}
This follows formally from Theorem \ref{thm:cosheaf} and Proposition \ref{prop:bar}.  Note the statement is asserting
the existence of $A_\infty$ restriction morphisms and $A_\infty$ pushouts.
\end{proof}

One of our definitions of the augmentation category of $\Lambda$ proceeded by first forming a front projection $m$-copy $\Lambda^m$
and then using the corresponding $C^{\epsilon}$ to define and compose homs.  Exactly the same construction can be made for a restriction $\Lambda|_{J^1(U)}$ for $U \subset \R$.  We note that since we work with the front projection rather than the Lagrangian projection, we do not have to choose perturbations of $\Lambda|_{J^1(U)}$.  We also do not require $\Lambda|_{J^1(U)}$ to contain any base points.

\begin{corollary} \label{cor:aug-sheaf}
There exists a presheaf of $A_\infty$ categories $\underline{\Aug}(\Lambda, \coeffs)$ with global sections
$\Aug(\Lambda, \coeffs)$, given by sending $U$ to the augmentation category of $\Lambda|_{J^1(U)}$.  Denoting
by $\underline{\Aug}(\Lambda, \coeffs)^\sim$ its sheafification, the map
$\underline{\Aug}(\Lambda, \coeffs)(U) \to \underline{\Aug}(\Lambda, \coeffs)^\sim(U)$ is fully faithful for all $U$.
\end{corollary}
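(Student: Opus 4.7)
The plan is to construct $\underline{\Aug}(\Lambda,\coeffs)$ directly via the $m$-copy construction applied to each restriction $\Lambda|_{J^1(U)}$, and then deduce the fully-faithfulness statement from the fact that both objects and hom-complexes already form sheaves, so that the passage to the sheafification does not alter them.

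First I would set up the presheaf. For an open $U \subset \bR_x$, put $\underline{\Aug}(\Lambda,\coeffs)(U) := \Aug(\Lambda|_{J^1(U)},\coeffs)$, using for definiteness the front-projection $m$-copy, which preserves preferred plat position. For each $m$, Theorem \ref{thm:cosheaf} applied to $\Lambda^m$ supplies the corestriction maps $\iota_{VU}^{(m)}\colon \underline{\alg}(\Lambda^m)(V)\to\underline{\alg}(\Lambda^m)(U)$, and these are visibly compatible with the consistent-sequence structure of Section \ref{sec:augcatalg} since the link grading and the co-$\Delta_+$ structure are defined pointwise in $x$. Proposition \ref{prop:consistentF} then promotes them to strict $A_\infty$ functors, and strict composition of corestrictions gives strict composition of restriction functors; this is the required presheaf of $A_\infty$ categories.

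Next I would identify the two sheaf properties that the presheaf already enjoys. By Corollary \ref{cor:aug-set-sheaf}, the object-presheaf $U\mapsto \mathrm{Ob}\,\underline{\Aug}(\Lambda)(U)$ is already a sheaf of sets. For any two objects $\epsilon_1,\epsilon_2$ of $\underline{\Aug}(\Lambda)(U)$, the hom-complex $\hom(\epsilon_1,\epsilon_2)=C_{12}^\vee$ is cut out of $C^{(\epsilon_1,\epsilon_2)}(U)$ by the $(1,2)$-part of the link grading on the $2$-copy $\Lambda^2$. Since the corestriction maps respect the link grading, this summand is preserved under restriction, and Corollary \ref{cor:a-infty-alg-sheaf} applied to $\Lambda^2$ with diagonal augmentation $(\epsilon_1,\epsilon_2)$ tells us that $C^{(\epsilon_1,\epsilon_2)}$ is a sheaf of $A_\infty$ algebras. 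Hence every hom-presheaf $U\mapsto \hom_{\underline{\Aug}(U)}(\epsilon_1|_U,\epsilon_2|_U)$ is already a sheaf of cochain complexes. The same argument with the $(k{+}1)$-copy shows the higher compositions $m_k$ are compatible with restriction.

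The fully-faithful statement is then formal. In $\underline{\Aug}^\sim(U)$, the hom-complex between two objects is by construction the sheafification of the corresponding hom-presheaf over $U$; but we have just shown this hom-presheaf is already a sheaf, so the natural map
\[
\hom_{\underline{\Aug}(U)}(\epsilon_1,\epsilon_2)\;\longrightarrow\;\hom_{\underline{\Aug}^\sim(U)}(\epsilon_1,\epsilon_2)
\]
is an isomorphism for every $U$ and every pair $\epsilon_1,\epsilon_2$, which is the required full faithfulness.

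The main subtlety to navigate is the interpretation of the sheafification of a presheaf of (strict) $A_\infty$ categories, where the higher-categorical nature of gluing could a priori introduce new objects or new morphisms. The route above sidesteps this issue by verifying the sheaf condition at the level of object-sets and hom-complexes separately, which is exactly what full faithfulness requires and is the sharpest statement one would expect; in particular, no gluing of the $A_\infty$ operations is needed beyond what is already guaranteed by the cosheaf structure on the underlying \dga{} of each $m$-copy.
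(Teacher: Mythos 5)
Your proposal is correct and follows essentially the same route as the paper: the paper's proof of this corollary is precisely the formal combination of Corollary~\ref{cor:aug-set-sheaf} (objects form a sheaf of sets) with Corollary~\ref{cor:a-infty-alg-sheaf} applied to the front projection $m$-copy for each $m$ (hom-complexes and compositions form sheaves), from which full faithfulness of the comparison with the sheafification is immediate. Your write-up just makes explicit the details — the use of Theorem~\ref{thm:cosheaf} for $\Lambda^m$, the compatibility with the link grading, and the functoriality from Proposition~\ref{prop:consistentF} — that the paper leaves implicit.
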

\begin{proof}
This follows formally from Corollary~\ref{cor:aug-set-sheaf} and from applying Corollary~\ref{cor:a-infty-alg-sheaf} to the front projection $m$-copy for each $m$.
\end{proof}

\begin{remark} \label{rem:notsheaf} The presheaf of categories $\underline{\Aug}(\Lambda, \coeffs)$ need {\em not} be a sheaf of categories.  That is,
the map $\underline{\Aug}(\Lambda, \coeffs)(U) \to \underline{\Aug}(\Lambda, \coeffs)^\sim(U)$ need not be essentially surjective.
In fact, it never will be unless $\Lambda$ carries enough base points.
This may seem strange given Corollary \ref{cor:aug-set-sheaf}, but the point is that objects of a fibre product of categories $\mathcal{B} \times_\mathcal{C} \mathcal{D}$
are not the fibre product of the sets of objects, i.e. not pairs $(b, d)$ such that $b|_\mathcal{C} = d|_\mathcal{C}$, but instead triples $(b, d, \phi)$ where
$\phi: b|_\mathcal{C} \cong d|_\mathcal{C}$ is an isomorphism in $\mathcal{C}$.  The objects of the more naive product, where $\phi$ is required to be the identity,
will suffice under the condition that the map $\mathcal{B} \to \mathcal{C}$ has the ``isomorphism lifting property,'' i.e., that any isomorphism $\phi(b) \sim c'$ in $\mathcal{C}$
lifts to an isomorphism $b \sim b'$ in $\mathcal{B}$.  We will ultimately show that this holds for restriction to the left when $\Lambda$ has base points at all the right cusps, and
conclude in this case that $\underline{\Aug}(\Lambda, \coeffs)$ is a sheaf.
\end{remark}

We now turn to proving Theorem \ref{thm:cosheaf}.
Let $U \subset \R$ be an open interval, and $T \subset J^1(U)$ be a Legendrian tangle transverse to $\partial J^1(U)$.
We will assume that $T$ is oriented, that its front projection is generic and
equipped with a Maslov potential $\mu$ such that two strands are oriented in the same
direction as they cross $\partial J^1(U)$ if and only if their Maslov potentials agree mod 2.
Suppose that $T$ also has $k\geq 0$ base points, labeled $*_{\alpha_1},*_{\alpha_2},\dots,*_{\alpha_k}$ for distinct positive integers $\alpha_j$.

We require that any right cusps in $T$ abut the unbounded region of $T$ containing all points with $z \ll 0$, which can be arranged by Reidemeister 2 moves, but which will certainly be the case if $T$ comes from a preferred plat.  We will let $n_L$ and $n_R$ denote the number of endpoints on the left and right sides of $T$, respectively.

\begin{definition}
\label{def:tangle-algebra}
The graded algebra $\alg(T)$ is freely generated over $\zz$ by the following elements:
\begin{itemize}
\item Pairs of left endpoints, denoted $a_{ij}$ for $1 \leq i < j \leq n_L$.
\item Crossings and right cusps of $T$.
\item A pair of elements $t_{\alpha_j}^{\pm 1}$ for each $j$, with $t_{\alpha_j}\cdot t_{\alpha_j}^{-1} = t_{\alpha_j}^{-1} \cdot t_{\alpha_j} = 1$.
\end{itemize}
These have gradings $|c| = \mu(s_{\mathrm{over}})-\mu(s_{\mathrm{under}})$ for crossings, $1$ for right cusps, and $0$ for $t_{\alpha_{j}}^{\pm 1}$, 
and $|a_{ij}| = \mu(i)-\mu(j)-1$.
We take the Maslov potential $\mu$ to be $\bZ/2r$-valued for some integer $r$, which may be zero; if $T$ comes from a Legendrian link $\Lambda$, as in Theorem \ref{thm:cosheaf}, then we will generally take $r$ to be the gcd of the rotation numbers of the components of $\Lambda$.

The differential $\dd$ is given on the $t_{\alpha_j}$ by  $\dd(t_{\alpha_j}^{\pm 1}) = 0$  and on the $a_{ij}$ by
\[ \dd a_{ij} = \sum_{i<k<j} (-1)^{|a_{ik}|+1}a_{ik}a_{kj}. \]
For crossings and right cusps, we define $\dd c$ in terms of the set $\Delta(c;b_1,\dots,b_l)$, which consists of embeddings
\[ u: (D_l^2,\partial D_l^2) \to (\R^2, \pi_{xz}(T)) \]
of a boundary-punctured disk $D^2_l = D^2 \smallsetminus \{p,q_1,\dots,q_l\}$ up to reparametrization.  These maps must satisfy $u(p) = c$; $u(q_i)$ is a crossing for each $i$, except that we can also allow the image $u(D_l^2)$ to limit to the segment $[i,j]$ of the left boundary of $T$ between points $i<j$ at a single puncture $q_k$; and the $x$-coordinate on $\overline{u(D_l^2)}$ has a unique local maximum at $c$ and local minima precisely along $[i,j]$ if it occurs in the image, or at a single left cusp otherwise.  For each such embedding we define $w(u)$ to be the product, in counterclockwise order from $c$ along the boundary of $\overline{u(D_l^2)}$, of the following terms:
\begin{itemize}
\item $c_j$ or $(-1)^{|c_j|+1}c_j$ at a corner $c_j$, depending on whether the disk occupies the top or bottom quadrant near $c_j$;
\item $t_j$ or $t_j^{-1}$ at a base point $\ast_j$, depending on whether the orientation of $u(\partial D_l^2)$ agrees or disagrees with that of $T$ near $\ast_j$;
\item $a_{ij}$ at the segment $[i,j]$ of the left boundary of $T$.
\end{itemize}
We then define $\dd c = \sum_{u} w(u)$, and note that if $c$ is a right cusp then this also includes an ``invisible'' disk $u$ with $w(u) = 1$ or $t_j^{\pm 1}$ depending on whether there is a base point $\ast_j$ at the cusp.
\end{definition}

We remark that the differential $\dd$ on $\alg(T)$ is defined exactly as in the usual link \dga{} from Section~\ref{ssec:dga-background}, except that we enlarge the collection $\Delta(c;b_1,\dots,b_l)$ of disks by also allowing the $x$-coordinate of a disk to have local minima along some segment $[i,j]$ of the left boundary of $T$, in which case it contributes a factor of $a_{ij}$, rather than at a left cusp.

\begin{example}
\begin{figure}[ht]
\labellist
\small \hair 2pt
\pinlabel $1$ [r] at 0 80
\pinlabel $2$ [r] at 0 64
\pinlabel $3$ [r] at 0 48
\pinlabel $4$ [r] at 0 32
\pinlabel $5$ [r] at 0 16
\pinlabel $1$ [l] at 97 80
\pinlabel $2$ [l] at 97 64
\pinlabel $3$ [l] at 97 48
\pinlabel $x$ [b] at 40 44
\pinlabel $y$ [b] at 57 59
\pinlabel $z$ [l] at 48 32
\pinlabel $*$ at 47 80
\pinlabel $*$ at 30 64
\tiny
\pinlabel $1$ at 51 77
\pinlabel $2$ at 34 61
\endlabellist
\begin{centering}
\includegraphics{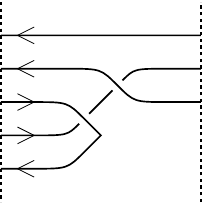}
\end{centering}
\caption{An example of a bordered front $T$.}
\label{fig:bordered-tangle}
\end{figure}
The oriented front $T$ in Figure~\ref{fig:bordered-tangle} has differential
\begin{align*}
\dd x &= a_{34} &
\dd a_{13} &= a_{12}a_{24} &
\dd a_{14} &= a_{12}a_{24} - a_{13}a_{34} \\
\dd y &= t_2a_{24} + t_2a_{23}x &
\dd a_{24} &= -a_{23}a_{34} &
\dd a_{25} &= -a_{23}a_{35} - a_{24}a_{45} \\
\dd z &= 1 + a_{35} - xa_{45} &
\dd a_{35} &= a_{34}a_{45} &
\dd a_{15} &= a_{12}a_{25} - a_{13}a_{35} - a_{14}a_{45}
\end{align*}
and $\dd a_{i,i+1} = 0$ for $1 \leq i \leq 4$.  Note that the orientation suffices to determine the signs, since $(-1)^{|c|+1} = -1$ (resp.\ $(-1)^{|a_{ij}|+1} = 1$) if and only if both strands through $c$ (resp.\ through points $i$ and $j$ on the left boundary of $T$) have the same orientation from left to right or vice versa.
\end{example}

\begin{proposition}
\label{prop:bordered-differential}
The differential $\dd$ on $\alg(T)$ has degree $-1$ and satisfies $\dd^2 = 0$.
\end{proposition}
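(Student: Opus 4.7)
The degree claim is a direct calculation on generators. For $t_{\alpha_j}^{\pm 1}$ there is nothing to check. For $a_{ij}$ we have $|a_{ij}|=\mu(i)-\mu(j)-1$ and $|a_{ik}a_{kj}|=(\mu(i)-\mu(k)-1)+(\mu(k)-\mu(j)-1)=|a_{ij}|-1$. For a crossing or right cusp $c$, each summand in $\dd c$ corresponds to a disk $u$; the index formula from the standard C--E setting (cf.\ Section~\ref{ssec:dga-background} and \cite{EES-nonisotopic}) shows that $|w(u)|=|c|-1$, once we observe that an $a_{ij}$ corner along $[i,j]$ plays the role that a left cusp would play in a closed-up diagram and contributes $\mu(i)-\mu(j)-1=|a_{ij}|$ to the degree reckoning.

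For $\dd^2=0$, I would treat the purely algebraic and the disk-counting generators separately. On the $a_{ij}$ the verification is a direct two-step expansion. Writing $\dd a_{ij}=\sum_{i<k<j}(-1)^{|a_{ik}|+1}a_{ik}a_{kj}$ and applying $\dd$ with the graded Leibniz rule gives, after relabeling the inner index,
\[
\dd^2 a_{ij}=\sum_{i<k<l<j}\Bigl((-1)^{|a_{il}|+|a_{ik}|}+(-1)^{|a_{kl}|+1}\Bigr)a_{ik}a_{kl}a_{lj},
\]
and the parenthesized coefficient vanishes because $|a_{il}|+|a_{ik}|+|a_{kl}|=2\mu(i)-2\mu(l)-3$ is odd. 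For crossings and right cusps $c$, I would argue $\dd^2c=0$ by classifying the codimension-one degenerations of the one-parameter moduli of embedded disks $u$ with a unique positive corner at $c$, following the combinatorial model of \cite{ENS}. Such degenerations fall into two classes: (i) the standard ``splitting at a Reeb chord/crossing,'' in which $u$ degenerates into two rigid disks meeting at a common corner at some other crossing or right cusp, and (ii) ``splitting along the left boundary,'' in which the image of $u$ has a boundary interval $[i,j]$ which pinches off along an intermediate endpoint $k$, producing two disks whose boundaries meet $[i,k]$ and $[k,j]$ respectively. The type (i) boundary terms match the Leibniz contribution of $\dd$ applied to crossing/cusp factors of the words $w(u)$, while the type (ii) boundary terms match the Leibniz contribution of $\dd$ applied to the $a_{ij}$ factors via the formula for $\dd a_{ij}$ above. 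Pairing these up as in the proof of $\dd^2=0$ for the usual Chekanov--Eliashberg \dga{} yields the result.

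The main obstacle will be sign bookkeeping: the orientation signs on disks (the $(-1)^{|c_j|+1}$ for bottom-quadrant corners, together with the base point orientation factors) must combine consistently with the $(-1)^{|a_{ik}|+1}$ convention in $\dd a_{ij}$ so that the type~(i) and type~(ii) boundary contributions cancel in pairs. The natural way to organize this is to view the generators $a_{ij}$ as the entries of a strictly upper-triangular matrix $A$ and to observe that $\dd A$ is, up to the prescribed signs, given by $A\wedge A$; the $\dd^2=0$ identity on the $a_{ij}$ then becomes the flatness of a Maurer--Cartan element, and the cancellation on crossings/cusps follows by running through each sign convention (the Reeb signs of \cite{ekholm-ng}) in parallel with the matrix bookkeeping. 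Once the sign conventions for disks with an $[i,j]$ boundary corner are fixed to be consistent with Definition~\ref{def:tangle-algebra}, the codimension-one gluing argument applies verbatim, finishing the proof.
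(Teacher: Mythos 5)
Your route is genuinely different from the paper's. You prove $\dd^2=0$ from scratch: directly on the $a_{ij}$ by expanding the quadratic formula, and on crossings/cusps by classifying codimension-one degenerations of one-parameter families of disks, including a new type of degeneration along the left boundary. The paper instead reduces everything to the already-known closed-link case: it glues the $n_L$-copy of a left cusp onto the left edge of $T$ (and closes up the right edge) to embed $T$ in a simple front diagram of a closed link $L$, identifies each $a_{ij}$ with the crossing $\alpha_{ij}$ of that left-cusp $n$-copy, and checks that disks limiting to the segment $[i,j]$ correspond bijectively to disks with a corner at $\alpha_{ij}$; then $\alg(T)$ is a sub-\dga{} of $\alg(L)$ and $\dd^2=0$ is inherited. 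The embedding trick buys you exactly the two things you flag as the ``main obstacle'': the sign consistency between the $(-1)^{|a_{ik}|+1}$ convention and the orientation signs on disks, and the boundary-degeneration analysis, both of which become instances of the established closed-link statement rather than new verifications. Your approach, if completed, would be more self-contained, but it requires redoing the hardest part of the Chekanov--Eliashberg $\dd^2=0$ argument in a bordered setting where the degeneration at the left edge is not literally ``verbatim'' the closed case.

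Two concrete issues. First, your displayed coefficient for $\dd^2 a_{ij}$ is off by a sign and, combined with your own parity observation, would \emph{not} vanish: the correct expansion gives the coefficient $(-1)^{|a_{ik}|+|a_{il}|}+(-1)^{|a_{kl}|}$ on $a_{ik}a_{kl}a_{lj}$ (the second term carries $(-1)^{|a_{kl}|}$, not $(-1)^{|a_{kl}|+1}$, since the Leibniz sign $(-1)^{|a_{ik}|}$ combines with the outer $(-1)^{|a_{ik}|+1}$ to give an overall $-1$ which cancels the inner $+1$); since $|a_{ik}|+|a_{il}|+|a_{kl}|$ is odd this does vanish, but as written your formula evaluates to $\pm 2$. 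Second, the crossing/cusp case is only a sketch: the pairing of type~(i) and type~(ii) boundary strata with the Leibniz terms, \emph{with signs}, is precisely the content of the proposition for those generators, and asserting that the gluing argument ``applies verbatim'' does not discharge it --- especially given that the one fully written-out sign computation in the proposal contains an error. Either carry out the boundary analysis with the orientation-sign conventions of Definition~\ref{def:tangle-algebra} in detail, or adopt the paper's embedding reduction, which disposes of both points at once.
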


\begin{proof}
The claim that $\deg(\dd)=-1$ is proved exactly as in \cite{sivek-bordered}.  In order to prove $\dd^2 = 0$, we will embed $T$ in a simple (in the sense of \cite{NgCLI}) front diagram for some closed, oriented Legendrian link $L$ so that $(\alg(T), \dd)$ is a sub-\dga{} of $(\alg(L), \dd)$, and then observe that we already know that $\dd^2 = 0$ in $\alg(L)$.

\begin{figure}[ht]
\labellist
\small \hair 2pt
\pinlabel $1$ [r] at 133 236
\pinlabel $2$ [r] at 133 220
\pinlabel $3$ [r] at 133 204
\pinlabel $4$ [r] at 133 188
\pinlabel $5$ [r] at 133 172
\pinlabel $1$ [r] at 229 236
\pinlabel $2$ [r] at 229 220
\pinlabel $3$ [r] at 229 204
\pinlabel $x$ [b] at 173 194
\pinlabel $y$ [b] at 189 209
\pinlabel $z$ [l] at 180 182
\pinlabel $*$ at 181 230.5
\pinlabel $*$ at 164 214.5
\pinlabel $\alpha_{12}$ [r] at 75 168
\pinlabel $\alpha_{23}$ [r] at 75 152
\pinlabel $\alpha_{34}$ [r] at 75 136
\pinlabel $\alpha_{45}$ [r] at 75 120
\pinlabel $\alpha_{15}$ [l] at 103 143
\pinlabel $\beta_1$ [r] at 249 246
\pinlabel $\beta_2$ [r] at 258 237
\pinlabel $\beta_3$ [r] at 267 229
\tiny
\pinlabel $1$ at 185 227
\pinlabel $2$ at 168 211
\endlabellist
\begin{centering}
\includegraphics{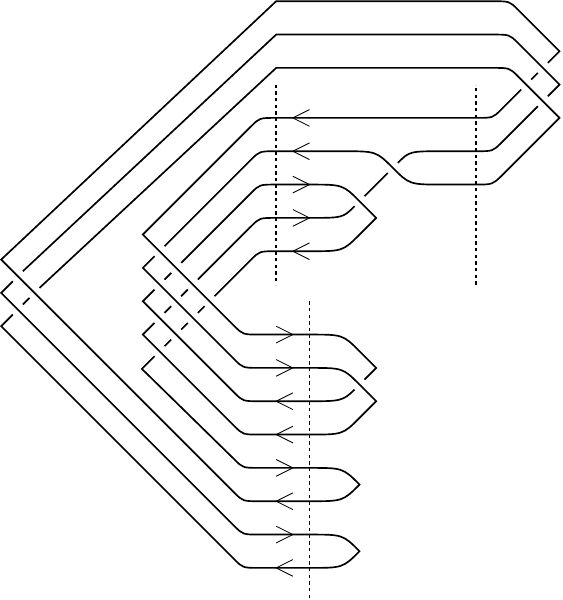}
\end{centering}
\caption{Embedding the bordered front $T$ in a simple front diagram of a closed link.}
\label{fig:bordered-embedding}
\end{figure}

Figure~\ref{fig:bordered-embedding} illustrates the construction of $L$. We glue the $n_L$-copy of a left cusp to the left edge of $T$, attaching the top $n_L$ endpoints to $T$, and similarly we glue the $n_R$-copy of a right cusp to the right edge of $T$ along the bottom $n_R$ endpoints.
We then attach the $n_R$-copy of a left cusp, placed to the left of this diagram, by gluing its top $n_R$ endpoints to those of the $n_R$-copy of the right cusp, as shown in the figure; the resulting tangle diagram has $n_L+n_R$ points on its boundary, which is represented as the dotted line at the bottom, and it is an easy exercise to check that the tangle is oriented to the left at as many endpoints as to the right.  Thus we can add some crossings and right cusps to the tangle in any way at all, as long as they intersect the tangle diagram exactly at its endpoints and the resulting link diagram is simple,
to produce the desired front for the link $L$.  Since $T$ embeds in $L$ as an oriented tangle, its Maslov potential $\mu$ mod 2 extends to a potential $\tilde{\mu}$ on the front diagram for $L$.

The $n_L$-copy of the left cusp which was glued to the left end of $T$ has ${n_L \choose 2}$ crossings; we will let $\alpha_{ij}$ denote the crossing between the strands connected to points $i$ and $j$ on the left boundary of $T$.  Then $|\alpha_{ij}| = \tilde{\mu}(i)-\tilde{\mu}(j)-1$ since the overcrossing strand has potential $\tilde{\mu}(i)-1$, so $|\alpha_{ij}| \equiv |a_{ij}|\pmod{2}$ and thus we verify that
\[ \dd \alpha_{ij} = \sum_{i<k<j} (-1)^{|a_{ik}|+1} \alpha_{ik} \alpha_{kj}. \]
Moreover, given a right cusp or crossing $c$ of $T$, any $u\in \Delta(c;b_1,\dots,b_l)$ which intersected this left boundary between points $i$ and $j$ now extends in $L$ to a unique disk with the same corners as before, except that the puncture along the dividing line is replaced by a corner filling the top quadrant at $\alpha_{ij}$.  Thus the differentials $\dd_{\alg(L)} \alpha_{ij}$ and $\dd_{\alg(L)} c$ are identical to $\dd_{\alg(T)}(a_{ij})$ and $\dd_{\alg(T)}(c)$, except that we have replaced each $a_{ij}$ with $\alpha_{ij}$, and this identifies $\alg(T)$ as a sub-\dga{} of $\alg(L)$ (after potentially reducing the gradings mod 2) as desired.
\end{proof}

\begin{remark} \label{def:line-algebra}
A particularly important special case occurs when $T$ contains no crossings, cusps, or base points at all; i.e., $T$ consists merely of $n$ horizontal strands.  The resulting algebra is termed the {\em line algebra}, and denoted $I_n$ or $I_n(\mu)$ to emphasize
the dependence of the grading on the Maslov potential.
It is generated freely over $\zz$ by ${n\choose 2}$ elements $a_{ij}$, $1 \leq i < j \leq n$, with grading $|a_{ij}| = \mu(i)-\mu(j)-1$ and differential
\[ \dd a_{ij} = \sum_{i < k < j} (-1)^{|a_{ik}|+1}a_{ik}a_{kj} = \sum_{i < k < j} (-1)^{\mu(i)-\mu(k)} a_{ik}a_{kj}. \]
\end{remark}

If $V \subset U$ is an open interval, $T|_V:= T|_{J^{1}(V)}$ retains the properties assumed above of $T$, and moreover inherits
a Maslov potential.  Thus there is an algebra $\alg(T|_V)$.  It admits maps to $\alg(T)$, as we explain:

\begin{lemma}\label{lem:left-sub-dga}
Let $V \subset U$ be an open interval extending to the left boundary of $U$.  Then $\alg(T|_V)$ is naturally a sub-\dga{} of $\alg(T)$.
\end{lemma}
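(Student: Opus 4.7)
The plan is to check that both the generating set and the differential of $\alg(T|_V)$ sit inside those of $\alg(T)$ compatibly. Since $V \subset U$ extends to the left boundary of $U$, the two tangles $T|_V$ and $T$ share the same left endpoints, so the generators $a_{ij}$ of $\alg(T|_V)$ indexed by pairs of left endpoints are literally generators of $\alg(T)$. Moreover, every crossing, right cusp, and base point of $T|_V$ is also one of $T$. This gives a natural inclusion of generating sets, which extends to an inclusion of graded algebras $\iota: \alg(T|_V) \hookrightarrow \alg(T)$, preserving the Maslov gradings since the Maslov potential on $T|_V$ is the restriction of the one on $T$.

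The content of the lemma is thus that $\iota$ intertwines the two differentials. For generators of the form $a_{ij}$ and $t_{\alpha_j}^{\pm 1}$, the differential is defined by the same purely algebraic formula on both sides, so there is nothing to check. The nontrivial step is to verify, for each crossing or right cusp $c$ of $T|_V$, that the sets $\Delta_{T|_V}(c; b_1, \ldots, b_l)$ and $\Delta_T(c; b_1, \ldots, b_l)$ (and the associated words $w(u)$) coincide.

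The forward inclusion $\Delta_{T|_V}(c;\cdot) \subset \Delta_T(c;\cdot)$ is clear, since a disk for $T|_V$ is a disk for $T$ via the inclusion of front projections. The reverse inclusion is the key geometric point and the main obstacle, and it is where the hypothesis that $V$ extends to the \emph{left} boundary of $U$ is used. Write $V = (a_L, b) \cap U$ with $a_L$ the left endpoint of $U$. For any $u \in \Delta_T(c; b_1, \ldots, b_l)$, the defining property is that the $x$-coordinate on $\overline{u(D_l^2)}$ has its unique local maximum at $c$, with $x(c) < b$ since $c \in T|_V$. Hence every point of $\overline{u(D_l^2)}$ has $x$-coordinate at most $x(c) < b$, and in particular lies in $\pi_{xz}(J^1(V))$. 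It follows that all corners of $u$ (including the local minima at left cusps of $T$ or segments $[i,j]$ of the left boundary) are in $T|_V$, so $u$ is a disk for $T|_V$ with the same word $w(u)$.

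Combining these observations, $\iota \circ \dd_{T|_V}(c) = \dd_T \circ \iota(c)$ on every generator $c$, and since both differentials satisfy the Leibniz rule, $\iota$ is a \dga{} morphism. It is injective on generators by construction and hence on the free algebra, exhibiting $\alg(T|_V)$ as a sub-\dga{} of $\alg(T)$.
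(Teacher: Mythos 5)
Your proof is correct and takes essentially the same approach as the paper, which disposes of the lemma in one sentence: the generators of $\alg(T|_V)$ are a subset of those of $\alg(T)$, and since every disk contributing to $\dd c$ has its unique local maximum of the $x$-coordinate at $c$, it lies entirely to the left of $c$ and hence in $V$. Your write-up simply makes explicit the details that the paper leaves to the reader.
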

\begin{proof}
The generators of $\alg(T|_V)$ are a subset of the generators of $\alg(T)$, and the differential only counts disks extending to the left, so
the differential of any generator of $\alg(T|_V)$ will be the same whether computed in $V$ or in $U$.
\end{proof}

In fact, there is a similar map for any subinterval.  It is defined as follows.  Let $V \subset U$ be a subinterval.  Then the map
$\iota_{VU}: \alg(T|_V) \to \alg(T)$ takes the generators in $\alg(T|_V)$ naming crossings, cusps, and base points in $T|_V$  to the corresponding generators
of $\alg(T)$.  The action on the pair-of-left-endpoint generators of $\alg(T|_V)$ -- denoted $b_{ij}$ to avoid confusion -- is however nontrivial:
$\iota_{VU}(b_{ij})$ counts disks extending from the left boundary of
$V$ to the left boundary of $U$, meeting the boundary of $V$ exactly along the interval named by $b_{ij}$.

More precisely, we define a set of disks $\Delta(b_{ij}; c_1,\dots,c_l)$ to consist of embeddings
\[ u: (D_l^2,\partial D_l^2) \to (\R^2, \pi_{xz}(T)) \]
which limit at the puncture $p\in\partial D^2$ to the segment of the left boundary of $V$ between points $i$ and $j$, and which otherwise satisfy the same conditions as the embeddings of disks used to define the differential on $\alg(T)$ for crossings.  We then define $\iota_{VU}$ for the generator $b_{ij}$ by
\[ \iota_{VU}(b_{ij}) = \sum_{u \in \Delta(b_{ij};c_1,\dots,c_l)} w(u). \]

\begin{example}
For the front in Figure~\ref{fig:bordered-tangle}, let $V \subset U$ denote a small open interval of the right endpoint of $U$, so that $T|_V$ has no crossings, cusps, or base points and $\alg(T|_V) = I_3$.  Then the morphism $\iota_{VU}: I_3 \to \alg(T)$ satisfies
\begin{align*}
\iota_{VU}(b_{12}) &= t_1a_{14} + t_1a_{13}x + t_1a_{12}t_2^{-1}y \\
\iota_{VU}(b_{13}) &= t_1a_{12}t_2^{-1} \\
\iota_{VU}(b_{23}) &= 0.
\end{align*}
\end{example}

\begin{proposition}
The above map $\iota_{VU}: \alg(T|_V) \to \alg(T)$ is a morphism of \dgas{}.
\end{proposition}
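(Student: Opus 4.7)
The plan is to verify the DGA morphism property $\partial\circ\iota_{VU}=\iota_{VU}\circ\partial$ separately on the three types of generators of $\alg(T|_V)$, reducing the substantive case to the identity $\partial^2=0$ already proved in Proposition~\ref{prop:bordered-differential}. Preservation of grading is immediate: crossings, cusps, and base-point generators are mapped to the same symbols in $\alg(T)$, and for $b_{ij}$ the standard index formula for bordered holomorphic disks (cf.\ the degree discussion in the proof of Proposition~\ref{prop:bordered-differential}) shows that every embedded disk contributing to $\iota_{VU}(b_{ij})$ has degree equal to $\mu(i)-\mu(j)-1=|b_{ij}|$, since the corners and base-point factors along its boundary contribute the expected Maslov differences.

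For a generator $c$ of $\alg(T|_V)$ that is a crossing, right cusp, or base-point-adjacent symbol, we have $\iota_{VU}(c)=c$, and the task is to show $\partial_T c=\iota_{VU}(\partial_{T|_V}c)$. Both sides count embedded disks in $T$ with positive corner at $c$: first I would sort such disks according to whether they remain in $T|_V$ or escape to the left of the line bounding $V$. Disks of the first type appear identically on both sides (those with left-boundary arcs on $\partial V$ contribute $b_{ij}$'s on the $T|_V$ side). For disks of the second type, a standard gluing/splitting argument at the vertical line over the left endpoint of $V$ shows that such a disk meets this line along a single interval $[i,j]$ and decomposes uniquely into a disk in $T|_V$ with a boundary puncture labeled by $b_{ij}$ (contributing to $\partial_{T|_V}c$) together with a disk in $T|_{U\setminus \overline V}$ with positive corner at that interval (contributing to $\iota_{VU}(b_{ij})$). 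The sign bookkeeping follows the same conventions as in the proof of Proposition~\ref{prop:bordered-differential}, and the total product $w(u)$ is invariant under this splitting.

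The substantive case is the generator $b_{ij}\in\alg(T|_V)$, where one must show
\[
\partial_T \iota_{VU}(b_{ij}) \;=\; \iota_{VU}(\partial_{T|_V} b_{ij}) \;=\; \sum_{i<k<j}(-1)^{|b_{ik}|+1}\,\iota_{VU}(b_{ik})\,\iota_{VU}(b_{kj}).
\]
I would prove this by the embedding trick from Proposition~\ref{prop:bordered-differential}: form an auxiliary tangle $\tilde T$ by gluing an $n_V$-copy of a left cusp onto the portion $T|_{U\setminus V}$ at the left boundary line of $V$, producing genuine Reeb-chord crossings $\beta_{ij}$ ($1\leq i<j\leq n_V$) in place of the boundary interval generators. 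Reading the differential of $\beta_{ij}$ directly from Definition~\ref{def:tangle-algebra} and the disk count for $\iota_{VU}$, one obtains
\[
\partial_{\tilde T}\beta_{ij} \;=\; \sum_{i<k<j}(-1)^{|\beta_{ik}|+1}\beta_{ik}\beta_{kj} \;+\; \iota_{VU}(b_{ij}),
\]
since the leftward-extending disks with positive corner at $\beta_{ij}$ are precisely the disks counted by $\iota_{VU}(b_{ij})$. After further embedding $\tilde T$ into a closed link as in Proposition~\ref{prop:bordered-differential}, one has $\partial_{\tilde T}^2\beta_{ij}=0$; expanding this identity and collecting the terms with no $\beta$-factors — i.e.\ projecting onto the sub-\dga{} generated by crossings, cusps, base points, and $a_{ij}$'s of $T$ — yields exactly the identity displayed above.

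The main obstacle is the sign verification in both the splitting argument and the expansion of $\partial_{\tilde T}^2\beta_{ij}=0$: the signs decorating each disk depend on the orientation-sign conventions chosen in Definition~\ref{def:tangle-algebra}, and one must check that gluing two disks along a common segment $[i,j]$ multiplies their weights with precisely the sign $(-1)^{|b_{ik}|+1}$ dictated by the formula for $\partial a_{ij}$. Once this sign compatibility is established — as was done in the mod~$2$ setting in \cite{sivek-bordered} and extended to signs by the conventions borrowed from \cite{ekholm-ng} — the remainder of the argument is formal.
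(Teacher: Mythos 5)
Your overall strategy --- check gradings, handle the crossing/cusp generators by splitting disks along the vertical line, and reduce the substantive case of the $b_{ij}$ to an identity $\partial^2=0$ in an auxiliary closed-up diagram --- is the same as the paper's, and the first two steps are fine. But the reduction in the $b_{ij}$ case contains a genuine error. Your key formula
\[
\partial_{\tilde T}\beta_{ij} \;=\; \sum_{i<k<j}(-1)^{|\beta_{ik}|+1}\beta_{ik}\beta_{kj} \;+\; \iota_{VU}(b_{ij})
\]
cannot hold in any \dga{}: since the monomials are graded by the number of $\beta$-factors and $\iota_{VU}(b_{ij})$ lies in the $\beta$-free subalgebra (which is closed under $\partial$, as all its disks extend leftward away from the $\beta$'s), the $\beta$-free component of $\partial^2\beta_{ij}$ is exactly $\partial\bigl(\iota_{VU}(b_{ij})\bigr)$ --- the terms $\iota_{VU}(b_{ik})\,\iota_{VU}(b_{kj})$ produced by differentiating $\beta_{ik}\beta_{kj}$ always carry a surviving $\beta$-factor and so never land in that component. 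Thus "collecting the terms with no $\beta$-factors" from $\partial^2\beta_{ij}=0$ yields $\partial\bigl(\iota_{VU}(b_{ij})\bigr)=0$, not the desired identity $\partial\bigl(\iota_{VU}(b_{ij})\bigr)=\iota_{VU}(\partial b_{ij})$; and since the right-hand side is generically nonzero, the displayed differential is in fact unrealizable. (Geometrically, there is also no way to attach an $n_V$-copy of a \emph{left} cusp to the right boundary of $T|_{U\setminus V}$, since left cusps have no left endpoints.)

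The repair, which is what the paper does, is to close up $T_L$ (the part of $T$ to the left of $V$) on the right by a plat of right cusps, producing \emph{single-indexed} generators $\beta_1,\dots,\beta_{n_R}$ whose differentials are \emph{linear} in the $\beta$'s:
\[
\partial\beta_j \;=\; \delta_{j,n_R} \;+\; \sum_{k<j}(-1)^{|\beta_k|+1}\,\beta_k\,\iota_{VU}(b_{kj}).
\]
Now $\partial^2\beta_j$ is again linear in the $\beta$'s, and extracting the coefficient of $\beta_i$ pairs the term $\beta_i\,\partial\bigl(\iota_{VU}(b_{ij})\bigr)$ against the terms $\beta_i\,\iota_{VU}(b_{ik})\,\iota_{VU}(b_{kj})$ coming from $(\partial\beta_k)\,\iota_{VU}(b_{kj})$, giving precisely $\partial\bigl(\iota_{VU}(b_{ij})\bigr)=\iota_{VU}(\partial b_{ij})$ with the correct signs. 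If you want to salvage a version of your double-indexed scheme, you would need the auxiliary generators to satisfy something like $\partial\beta_{ij}=\iota_{VU}(b_{ij})+\sum_k\pm\,\iota_{VU}(b_{ik})\,\beta_{kj}$, where the $\iota_{VU}$-factors already appear alongside single $\beta$'s; you would then have to exhibit a front realizing that differential, which your left-cusp gluing does not do.
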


\begin{proof}
It is straightforward to check that $\iota_{VU}(b_{ij})$ has grading $|b_{ij}|$, exactly as in \cite{sivek-bordered}.  In order to prove that $\dd \circ \iota_{VU} = \iota_{VU} \circ \dd$ for each of the generators $b_{ij}$, we embed the leftmost region $T_L$ of $T|_{U\smallsetminus V}$ (i.e.\ everything to the left of $V$) in the closed link $L$ shown in Figure~\ref{fig:bordered-embedding}, realizing $\alg(T_L)$ as a sub-\dga{} of $\alg(L)$ just as in the proof of Proposition~\ref{prop:bordered-differential}.  Let $n_R$ denote the number of endpoints on the right side of $T_L$, or equivalently the number of left endpoints of $T|_V$.  We identify the generator $\beta_i$ of $\alg(L)$, $1 \leq i \leq n_R$, as the crossing or right cusp of $L$ immediately to the right of $T_L$ on the strand through point $i$.  We note for the sake of determining signs that
\[ |\beta_{i}| = (\tilde{\mu}(n_R)+1) - \tilde{\mu}(i) \equiv \mu(i)-\mu(n_R)-1 \pmod{2}, \]
hence $(-1)^{|\beta_i| - |\beta_k|} = (-1)^{\mu(i)-\mu(k)} = (-1)^{|b_{ik}|+1}$.

We will now show that $\dd (\iota_{VU}(b_{ij})) = \iota_{VU}(\dd b_{ij})$ follows from $\dd^2 \beta_j=0$ for each $i<j$.  We first compute
\[ \dd \beta_j = \delta_{j,n_R} + \sum_{k=1}^{j-1} (-1)^{|\beta_k|+1} \beta_k \iota_{VU}(b_{kj}), \]
and then applying $\dd$ again yields
\begin{align*}
\dd^2 \beta_j &= \sum_{k=1}^{j-1} (-1)^{|\beta_k|+1}\left[
\left(\sum_{i=1}^{k-1} (-1)^{|\beta_i|+1} \beta_i \iota_{VU}(b_{ik})\right)\iota_{VU}(b_{kj}) + (-1)^{|\beta_k|}\beta_k \dd (\iota_{VU}(b_{kj}))\right] \\
&= \sum_{i=1}^{j-2} \beta_i\left(\sum_{i<k<j} (-1)^{|\beta_i|-|\beta_k|} \iota_{VU}(b_{ik})\iota_{VU}(b_{kj})\right) - \sum_{i=1}^{j-1} \beta_i \dd (\iota_{VU}(b_{ij})).
\end{align*}
Since this sum vanishes, so does the coefficient of $\beta_i$, which is equal to $\iota_{VU}(\dd b_{ij}) - \dd (\iota_{VU}(b_{ij}))$.  We conclude that $\iota_{VU}\circ \dd = \dd \circ \iota_{VU}$ on the subalgebra generated by the $b_{ij}$.

It remains to be seen that $\iota_{VU}(\partial c) = \partial(\iota_{VU}(c))$, where $c \in \alg(T|_V)$ is a generator corresponding to a crossing or right cusp of $T|_V$.  But we compute $\iota_{VU}(\partial c)$ by taking all of the appropriate embedded disks in $T|_V$, some of which may limit at punctures to the segment of the left boundary of $T|_V$ between strands $i$ and $j$, and replacing the corresponding $b_{ij}$ with the expression $\iota_{VU}(b_{ij})$.  The resulting expressions coming from all terms of $\partial c$ with a $b_{ij}$ factor count all of the embedded disks $u \in \Delta(c;b_1,\dots,b_l)$ in $T$ which cross the left end of $T|_V$ along the interval between strands $i$ and $j$.  Summing over all $i$ and $j$, as well as the terms with no $b_{ij}$ factor corresponding to disks in $T|_V$ which never reach the left end of $T|_V$, we see that $\iota_{VU}(\partial c)$ counts exactly the same embedded disks in $T$ as the expression $\partial (\iota_{VU}(c))$, hence the two are equal.
\end{proof}

Finally, we check that the co-restriction maps $\iota_{VU}$ satisfy the co-sheaf axiom.

\begin{theorem}
Let $U = L \cup_V R$, where $L, R$ are connected open subsets of $\bR$ with nonempty intersection $V$.  Then the diagram
\[ \xymatrix{
\alg(T|_V) \ar[r]^{\iota_{VR}} \ar[d]_{\iota_{VL}} & \alg(T|_R) \ar[d]^{\iota_{RU}} \\
\alg(T|_L) \ar[r]^{\iota_{LU}} & \alg(T)
} \]
commutes and is a pushout square in the category of \dgas{}.
\end{theorem}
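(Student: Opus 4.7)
The plan is to establish commutativity of the square and the universal property separately, using throughout the fact that $L$ extends to the left boundary of $U$ and $V$ extends to the left boundary of $R$, so that by Lemma~\ref{lem:left-sub-dga} the maps $\iota_{LU}$ and $\iota_{VR}$ are sub-\dga{} inclusions, while $\iota_{VL}$ and $\iota_{RU}$ are the genuinely nontrivial disk-counting maps.  Throughout, for clarity I will temporarily decorate pair-of-left-endpoints generators by the subtangle they belong to, so that $\alg(T|_V)$ is generated (in part) by $b_{ij}$, while $\alg(T|_R)$ is generated (in part) by $a^R_{ij}$, etc.

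For commutativity, I would verify $\iota_{LU} \circ \iota_{VL} = \iota_{RU} \circ \iota_{VR}$ on each generator of $\alg(T|_V)$.  On a crossing, right cusp, base point, or invertible generator in $T|_V$, both composites are just the inclusion of the corresponding generator into $\alg(T)$, so the check is immediate.  The substantive case is a pair-of-left-endpoints generator $b_{ij}$.  Here $\iota_{VR}(b_{ij}) = a^R_{ij}$ via the sub-\dga{} inclusion, and $\iota_{RU}(a^R_{ij})$ is a sum over disks in $T$ with maximum along the segment $[i,j]$ on the left boundary of $R$.  Any such disk extends only to the left of the cut $x = \inf R$, which lies in the interior of $L$, so each such disk lies entirely within $T|_L$.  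The same collection of disks is counted by $\iota_{VL}(b_{ij}) \in \alg(T|_L)$, and applying the sub-\dga{} inclusion $\iota_{LU}$ leaves the expression unchanged.  Hence the two composites agree.

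For the universal property, suppose we are given \dga{} maps $f_L : \alg(T|_L) \to \mathcal{B}$ and $f_R : \alg(T|_R) \to \mathcal{B}$ with $f_L \circ \iota_{VL} = f_R \circ \iota_{VR}$.  The generators of $\alg(T)$ partition naturally: crossings, right cusps, and base points of $T$ each lie in $T|_L$ or $T|_R$ (or both, if in $T|_V$), and the pair-of-left-endpoints generators of $T$ coincide via $\iota_{LU}$ with those of $\alg(T|_L)$.  This forces any extension $f$ with $f \circ \iota_{LU} = f_L$ and $f \circ \iota_{RU} = f_R$ on generators, giving uniqueness.  For existence, define $f$ on each generator $c$ of $\alg(T)$ by $f_L$ or $f_R$ according to which side $c$ comes from; consistency on generators lying in $T|_V$ is precisely the compatibility hypothesis applied to $c \in \alg(T|_V)$.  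The identity $f \circ \iota_{LU} = f_L$ is then automatic on generators of $\alg(T|_L)$, and the harder identity $f \circ \iota_{RU} = f_R$ evaluated on $a^R_{ij}$ reduces, via commutativity of the square proved above, to $f_L \circ \iota_{VL}(b_{ij}) = f_R \circ \iota_{VR}(b_{ij})$, which is again the compatibility hypothesis.

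It remains to verify that $f$ commutes with the differential, but this is automatic on generators: every generator $c$ of $\alg(T)$ has the form $\iota_{LU}(c')$ or $\iota_{RU}(c'')$, and since $\iota_{LU}, \iota_{RU}, f_L, f_R$ are all \dga{} morphisms we obtain $f(\dd c) = \dd f(c)$ by chasing this through.  I expect the main obstacle in the whole argument to be the commutativity check on the $b_{ij}$: it requires a careful geometric identification, on the nose, between disks in $T$ counted by $\iota_{RU}(a^R_{ij})$ and disks in $T|_L$ counted by $\iota_{VL}(b_{ij})$, including matching signs and monomial weights.  Everything else in the argument reduces to formal bookkeeping of generators and relations, made possible by the asymmetry between left and right in the construction of $\alg(T)$ and the resulting triviality of $\iota_{LU}$ and $\iota_{VR}$.
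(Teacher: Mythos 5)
Your proof is correct and takes essentially the same approach as the paper's: commutativity is checked generator by generator, with the only substantive case being the pair-of-left-endpoints generators $b_{ij}$, where both composites count the identical collection of disks (those with maximum along the left edge of $V$, which coincides with the left edge of $R$, extending leftward into $T|_L$), and the universal property follows by defining $f$ via $f_L$ on the subalgebra $\alg(T|_L)\subset\alg(T)$ and via $f_R$ on the remaining generators, with the compatibility hypothesis on $\alg(T|_V)$ handling the overlap. The paper's version is terser — it simply notes that the two disk counts are defined identically and that the chain-map check is routine — but the content is the same.
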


\begin{proof}
The proof is exactly as in \cite{sivek-bordered}.  If $c \in \alg(T|_V)$ is the generator corresponding to a crossing, cusp, or base point of $T|_V$, then both $\iota_{LU}(\iota_{VL}(c))$ and $\iota_{RU}(\iota_{VR}(c))$ equal the analogous generator of $\alg(T)$.  Otherwise, if $b_{ij} \in \alg(T|_V)$ is a pair-of-left-endpoints generator, then $\iota_{VR}(b_{ij})$ is the corresponding generator of $\alg(T|_R)$, and if we view $\alg(T|_L)$ as a subalgebra of $\alg(T)$ as in Lemma~\ref{lem:left-sub-dga} then $\iota_{VL}(b_{ij})$ and $\iota_{RU}(\iota_{VR}(b_{ij}))$ are defined identically.  Thus the diagram commutes.

Now suppose we have a commutative diagram of \dgas{} of the form
\[ \xymatrix{
\alg(T|_V) \ar[r]^{\iota_{VR}} \ar[d]_{\iota_{VL}} & \alg(T|_R) \ar[d]^{f_R} \\
\alg(T|_L) \ar[r]^{f_L} & \alg
} \]
where $\alg$ is some \dga{}.  If $f: \alg(T) \to \alg$ is a \dga{} morphism such that $f_L = f \circ \iota_{LU}$ and $f_R = f \circ \iota_{RU}$, then $f$ is uniquely determined by $f_L$ on the subalgebra $\alg(T|_L) \subset \alg(T)$, and since $f_L$ is a \dga{} morphism, so is $f|_{\alg(T|_L)}$.  Any generator $c \in \alg(T)$ which does not belong to $\alg(T|_L)$ corresponds to a crossing, cusp, or base point of $T|_R$, meaning that $c = \iota_{RU}(c')$ for some generator $c' \in \alg(T|_R)$, so we must have $f(c) = f_R(c')$ and
\[ \partial(f(c)) = \partial (f_R(c')) = f_R(\partial c')  = f(\iota_{RU}(\partial c')) = f(\partial(\iota_{RU}(c'))) = f(\partial c) \]
since $f_R$ and $\iota_{RU}$ are both chain maps.  It is easy to check that this specifies $f$ as a well-defined \dga{} morphism, and since it is unique we conclude that the diagram in the statement of this theorem is a pushout square.
\end{proof}

This completes the proof of Theorem \ref{thm:cosheaf}.

\newpage
%!TEX root = augmain.tex

\section{Augmentations are sheaves}
\label{sec:aug=sh}

It is known that some augmentations arise from geometry: given an exact symplectic filling $(W, L)$ of
$(V, \Lambda)$, we get an augmentation $\phi_{(W, L)}$ by sending each Reeb chord to the count of disks in $(W, L)$ ending on
that Reeb chord.  But not all augmentations can arise in this way; see
the Introduction.
It is natural to hope that more augmentations can be constructed by ``filling $\Lambda$ with an element of the derived category
of the Fukaya category,'' but making direct sense of this notion is difficult.  Instead we will pass
from the Fukaya category to an equivalent category $Sh(\Lambda,\mu;\coeffs)$ introduced
in \cite{STZ}: constructible sheaves on $\mathbb{R}^2$ whose singular support meets $T^\infty \bR^2$ in some subset of
$\Lambda \subset \bR^3 = T^{\infty, -}\bR^2 \subset T^\infty \bR^2$, with coefficients in $\coeffs.$

In this section we realize this hope, by identifying
the subcategory $\cC_1(\Lambda,\mu;\coeffs)\subset Sh(\Lambda,\mu;\coeffs)$ of
``microlocal rank-one sheaves'' having acyclic stalk when $z \ll 0$ --- i.e., those corresponding to
Lagrangian branes with rank-one bundles --- with the category of augmentations.

\begin{theorem} \label{thm:main} Let $\Lambda$ be a Legendrian knot or link whose front diagram is equipped with a $\bZ$-graded Maslov potential $\mu$.  Let $\field$ be a field.
Then there is an $A_\infty$ equivalence of categories $$\Aug(\Lambda, \mu;  \field) \to \cC_1(\Lambda,\mu;\field).$$
\end{theorem}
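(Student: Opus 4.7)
The plan is to prove the equivalence by localizing both sides over the $x$-line and matching them strip by strip. By Corollary~\ref{cor:aug-sheaf} and Theorem~\ref{thm:cosheaf}, the presheaf $U \mapsto \Aug(\Lambda|_{J^1(U)},\mu;\field)$ is computed locally from the cosheaf of bordered \dgas{}, and by the very definition of constructible sheaves, $U \mapsto \cC_1(\Lambda|_{J^1(U)},\mu;\field)$ is a sheaf (or at least built locally from $Sh_{\cS}$). Thus, after arranging $\Lambda$ in preferred plat position, I would chop $\bR_x$ into a finite collection of overlapping open intervals $U_1,\ldots,U_N$, each small enough that $\Lambda|_{J^1(U_i)}$ contains at most one ``feature'' — i.e., is of one of four types: (a) a trivial bordered tangle of $n$ horizontal strands, (b) a single crossing, (c) a collection of left cusps, or (d) a collection of right cusps — with overlaps $U_i \cap U_{i+1}$ of type (a).

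The body of the proof is then four local calculations, one for each type of strip. For a strip of type (a), both sides are readily identified with the category of complete flags in a rank-$n$ $\field$-module, graded by the Maslov potential (this is the case worked out in the example at the end of Section~\ref{sec:mumon}). For the crossing strip (b) and the cusp strips (c), (d), I would give explicit descriptions on both sides: on the sheaf side, via legible representatives as described in Figure~\ref{fig:legcats} and \cite{STZ}; on the augmentation side, via the bordered \dga{} of Section~\ref{sec:bordered}, computing $\hom$-spaces of the $m$-copies directly from the generators listed in Definition~\ref{def:tangle-algebra}. In each case, after writing down the relevant objects and morphisms, one verifies that the resulting (small) $A_\infty$ categories are equivalent. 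I would also check, for each of the two boundary restriction functors from a non-trivial strip to its type~(a) boundary, that the induced maps on both sides agree up to $A_\infty$-equivalence.

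To pass from local equivalences to a global one, I would invoke a standard descent argument: given a cover $U = L \cup_V R$ with equivalences of categories on $L$, $R$, and $V$ that intertwine the two restriction functors up to specified $A_\infty$-natural equivalences, the pushout categories are equivalent. Here the pushout on the augmentation side is guaranteed by Corollary~\ref{cor:a-infty-alg-sheaf} (the $A_\infty$ structures glue via the bordered cosheaf), and on the sheaf side by the sheaf axiom. Iterating this gluing across $U_1,\ldots,U_N$ produces the desired global $A_\infty$ equivalence $\Aug(\Lambda,\mu;\field) \xrightarrow{\sim} \cC_1(\Lambda,\mu;\field)$.

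The main obstacle will be the compatibility of restriction functors across overlaps. Both sides are dg categories locally, but the restriction functor to a trivial strip is in general only an $A_\infty$ functor, not a dg functor, because the bordered \dga{} structure maps $\iota_{VU}$ send the pair-of-endpoints generators $a_{ij}$ to genuine sums of composable words, which under dualization produce higher $F_k$ components. Keeping track of these $A_\infty$ corrections while verifying that the local equivalences intertwine the restrictions is what prevents a purely ``on the nose'' comparison and is precisely the reason the glued augmentation category is $A_\infty$ rather than dg. The case-by-case verifications for (b), (c), (d) — especially at right cusps, where the invisible disks and base points of Section~\ref{ssec:dga-background} must be matched with the cone condition in the legible sheaf model — are technical but essentially routine given the combinatorial descriptions on both sides.
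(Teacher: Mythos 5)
Your overall strategy is the paper's: put $\Lambda$ in plat position, localize both categories over $\bR_x$ using the bordered cosheaf of \dgas{} on one side and constructibility on the other, compute the four local models (trivial strands, one crossing, left cusps, right cusps) together with their restriction functors to the trivial-strand categories, and glue. The paper differs in one organizational respect: rather than comparing $\Aug$ and $\cC_1$ directly strip by strip, it interposes a third, much simpler sheaf of dg categories $MC$ (a categorical form of Morse complex sequences) and builds equivalences $\Aug \to MC$ and $MC \to \cC_1$ separately. That is a convenience, not a necessity, and your direct comparison could in principle be carried out. You also correctly identify where the $A_\infty$ (as opposed to dg) structure enters, namely the right restriction functor at a crossing.

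There is, however, a genuine gap in your descent step. Gluing local equivalences of categories over a cover $U = L \cup_V R$ produces an equivalence of the \emph{fibre products} $\cC(L) \times_{\cC(V)} \cC(R)$, whose objects are triples $(\xi_L,\xi_R,\phi)$ with $\phi$ an isomorphism $\xi_L|_V \cong \xi_R|_V$ --- not pairs with literally equal restrictions. Consequently what your argument identifies with $\cC_1(\Lambda,\mu;\field)$ is the global sections of the \emph{sheafification} $\underline{\Aug}{}^\sim$, which a priori has strictly more objects than $\Aug(\Lambda,\mu;\field)$; the map $\Aug \to \Aug^\sim$ is fully faithful but need not be essentially surjective. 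This is not a hypothetical worry: for the unknot with no base points, $\cC_1$ has an object while $\Aug$ has none, so the statement as you have argued it is false without an extra hypothesis. The paper closes this gap by placing a base point at every right cusp and verifying an isomorphism-lifting property for the left restriction functors (Lemma~\ref{lem:strictify}), which shows that every object of the lax fibre product is equivalent to a strict one, i.e.\ that $\underline{\Aug}$ is already a sheaf in that case; combined with the invariance of $\Aug$ under the choice and number of base points (Proposition~\ref{prop:invarianceBasePoints}), this yields the theorem for arbitrary $\Lambda$. Your appeal to Corollary~\ref{cor:a-infty-alg-sheaf} does not supply this: that corollary concerns the $A_\infty$ algebras attached to a single fixed global augmentation, whereas the issue is whether compatible \emph{local} augmentations (up to isomorphism, not equality) glue to a global one.
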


%\red{Check $\field$ versus $\coeffs$ usage throughout, e.g. Lemma 7.26 etc.}
\begin{remark}
As defined in Section~\ref{sec:augcat}, the augmentation category
$\Aug(\Lambda;\field)$ depends on a choice of Maslov potential $\mu$
on $\Lambda$, but we have suppressed $\mu$ in the notation up to
now. If $\Lambda$ is a single-component knot, then both categories in
Theorem~\ref{thm:main} are independent of the choice of $\mu$.
\end{remark}

\begin{remark}
More generally, one can consider $\Lambda$ equipped with a
$(\bZ/m)$-graded Maslov potential where $m\,|\,2r(\Lambda)$, and
define the category of $(\bZ/m)$-graded augmentations; see
Remark~\ref{rmk:grading}. There is a corresponding
 category of sheaves for $m$-periodic
complexes, and we expect that the equivalence in
Theorem~\ref{thm:main} would continue to hold in this more general
setting, with proof along similar lines.  However, in this paper, we
restrict ourselves to the
case of $\bZ$-graded Maslov potential; in particular, $\Lambda$ must
have rotation number $0$.
\end{remark}

\begin{proof}[Sketch of proof of Theorem~\ref{thm:main}]
(A detailed version comprises this entire section.)
As both the augmentation category and the sheaf category are known to transform by equivalences when the knot is altered by Hamiltonian
isotopy (from Theorem \ref{prop:Invariance} and \cite{STZ}, respectively), we may put the knot in any desired form.
Thus we take $\Lambda$ to be given by a front diagram in plat position, say
with $n$ left cusps and $n$ right cusps.  We stratify the $x$ line so that 
%Since both the categories $\Aug$ and $Sh$ \red{(and $\cC_1$)} are given by the global sections of sheaves of categories
%over the $x$-line, to define a map between them, it suffices to give this map on any base of the topology of the $x$-line, compatible with restriction.
above each open interval, the front diagram above them contains only one interesting feature of the knot.  That is, the picture
above this interval must be one of the four possibilities shown in
Figure \ref{fig:4figs}.

\begin{figure}
\includegraphics[scale = 1.2]{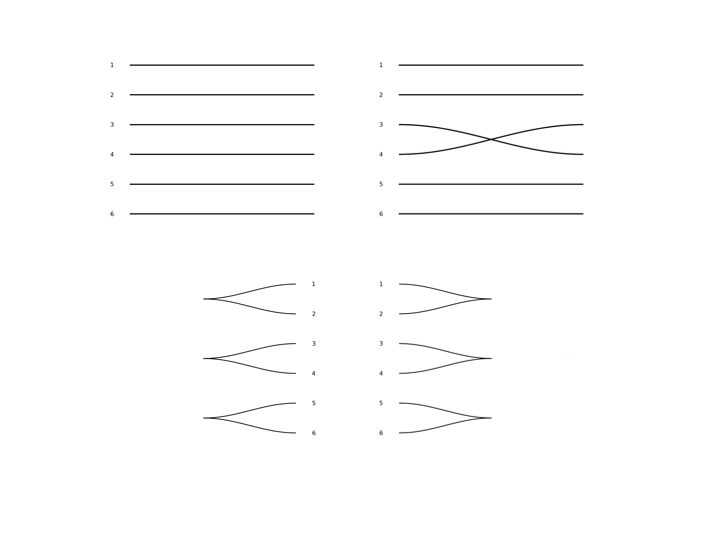}
\caption{Front diagrams for an $n$-strand knot, $n = 6.$  Clockwise from the upper left:
$n$-lines $\equiv_n$, crossing $\crossk$ (with $k=3$), right cusps $\succ$, left cusps $\prec$.
The numbers represent
strand labels, not Maslov potential values.}
\label{fig:4figs}
\end{figure}

To facilitate the proof we introduce in Section \ref{sec:MC} yet a third category, $MC$.  It is a categorical formulation of the Morse complex sequences
of Henry \cite{Henry2011}.  We define it locally, so it is by construction a sheaf on the $x$-line.  It is a dg category,
and significantly simpler than either the augmentation category or the sheaf category.

In Section \ref{sec:locAug} we calculate the local augmentation categories, and then define locally equivalences of $A_\infty$  categories
$\hh: \Aug \to MC$.  Then in Section \ref{sec:locSh} we calculate the sheaf categories, and produce equivalences of sheaves of dg categories
$\mathfrak{r}: MC \to \cC_1$.

Composing these functors and taking global sections, we learn that there is an equivalence
$\Aug^\sim \to \cC_1$, where $\Aug^\sim$ is the global sections {\em of the sheafification} of the
augmentation category.  This has, a priori, more objects than $\Aug$.  In fact this is already true for the unknot
without base points---the sheaf category, hence $\Aug^\sim$, has an object, whereas $\Aug$ does not unless $\coeffs$ has characteristic $2$ (see Remark~\ref{rmk:dgasigns}, where removing base points corresponds to setting $t=1$).
However,
by checking the criterion of Lemma \ref{lem:strictify} (by computing all local categories and restriction functors), we learn that when $\Lambda$ has a base point at each right cusp, the
augmentation category indeed forms a sheaf, giving the desired result.

%
%\begin{remark} Note that to verify Lemma \ref{lem:strictify} for the augmentation category,
%it is necessary to compute all the local categories and restriction functors.  This will
%complete the proof that augmentations are sheaves.
%\end{remark}

A more explicit way to describe what is going on is the following.  We split the front diagram of the knot
into a union of pieces $T_L \cup T_{k_1} \cup \dots \cup T_{k_m} \cup T_R$, where
\begin{itemize}
\item $T_L$ consists of all $n$ left cusps,
\item each $T_{k_i}$ contains $n$ strands, with a single crossing between the $k_i$-th and $(k_i+1)$-st strands, and
\item $T_R$ consists of all $n$ right cusps.
\end{itemize}
In each case we will determine the augmentation category of that piece, together with the relevant functors from it to the
augmentation categories of the associated line algebras.  These augmentation categories will form pullback squares dual to the
diagram \eqref{eq:bordered-pushout}, so that the augmentation category of $K$
can be recovered up to equivalence from this information.  We do the same for the sheaf category, and match local pieces.

More precisely, we must prove a compatibility among the equivalences and restrictions.  It suffices
to establish for each triple of
Maslov-graded bordered plats $(T,\mu_L) \rightarrow \;(\equiv,\mu)\; \leftarrow (T',\mu_R)$
the following diagram, commuting up to homotopies indicated by dotted lines:
\begin{equation}
\label{eq:bigdiagram}
 \xymatrix{\Aug(T,\mu_L)\ar[r]\ar[d]^{\hh}\ar@{.>}[dr]&\Aug(\equiv,\mu)\ar[d]^{\hh}&
\Aug(T',\mu_R)\ar[l]\ar[d]^{\hh}\ar@{.>}[dl]\\
MC(T,\mu_L)\ar[r]&MC(\equiv,\mu)&MC(T',\mu_R)\ar[l]\\
\cC_1(T,\mu_L)\ar[r]\ar[u]^{\mathfrak r}\ar@{.>}[ur]&\cC_1(\equiv,\mu)\ar[u]^{\mathfrak r}&
\cC_1(T',\mu_R)\ar[l]\ar[u]^{\mathfrak r}\ar@{.>}[ul] \\
} 
\end{equation}
(Note the homotopy may be the zero map.)  Here $T$ will be either $T_L$ or one of the $T_{k_i}$, while $T'$ is either a $T_{k_i}$ or $T_R$. 
Remember that each vertical line is an isomorphism.
\end{proof}

\subsection{The Morse complex category} $ $
\label{sec:MC}

We define a constructible sheaf of dg categories on the $x$-line, denoted $\underline{MC}$, by sheafifying the following local descriptions.
In this section ``$\mu$'' should be viewed as providing fictional Morse indices.  Throughout we work with a fixed ring $\coeffs$.

\subsubsection{Lines}

\begin{definition}For $\mu: \{1\ldots n\} \to \bZ$,
we write $\Bmu$ for the free graded $\coeffs$-module with basis $|1\rangle, \ldots, |n \rangle $ where $\deg |i \rangle  = - \mu(i)$, and decreasing filtration
${}^k \Bmu := \mathrm{Span}( |n \rangle, \ldots, |k+1 \rangle  )$.  That is,
$${}^0 \Bmu = V, \,\,\, {}^1 \Bmu = \mathrm{Span}( |n \rangle, \ldots, |2 \rangle  ),\,\,\,\,\,\,\, \cdots\,\,\,\,\,\,\,\,  {}^{n-1} \Bmu = \mathrm{Span} |n \rangle, \,\,\,
{}^n \Bmu = 0.$$
\end{definition}

\begin{remark} (To be read only when referring back to this section from the sheaf category section.)
The correspondence to sheaves takes ${}^k \Bmu$ to the stalk on the $k$-th line. \end{remark}

\begin{definition}
Fix an integer $n$ (the number of lines) and a function
$\mu:\{1,\ldots, n\} \to \bZ$.
\label{def:MClines}
We define $MC(\equiv; \mu)$ to be the dg category with:
\begin{itemize}
\item Objects: square-zero operators $d$ on $\Bmu$, which preserve the filtration on $\Bmu$ and are degree 1 with
respect to the grading on $\Bmu$.
\item Morphisms: $\Hom_{MC(\equiv; \mu)}(d, d')$ is $\Hom_{filt}(\Bmu, \Bmu)$ as a graded vector space; i.e., it
consists of the linear, filtration preserving maps
$\Bmu \to \Bmu$ and carries the usual grading of a Hom of graded vector spaces.
Only its differential depends on $d, d'$, and is $$D\phi = d' \circ \phi - (-1)^{|\phi|} \phi \circ d.$$
\item Composition: usual composition of maps.
\end{itemize}
That is, we allow maps $|j \rangle \langle i|$ for $i \le j$, i.e. lower triangular matrices, and
$$\deg |j \rangle \langle i| = \deg |j\rangle - \deg |i \rangle = \mu(i) - \mu(j)$$
and the differential is $D(|i \rangle \langle j|) = d' |i\rangle \langle j | - (-1)^{\mu(i) - \mu(j)} | i \rangle \langle j | d$.
\end{definition}

\begin{lemma} \label{lem:dansremark}
Assume $d \cong d' \in MC(\equiv; \mu)$.  Then, for any $k$,
\begin{itemize}
\item $\langle k + 1 | d | k \rangle = 0$ if and only if $\langle k + 1 | d' | k \rangle = 0$
\item $\langle k + 1 | d | k \rangle \in \coeffs^*$ if and only if $\langle k + 1 | d' | k \rangle \in \coeffs^*$
\end{itemize}
\end{lemma}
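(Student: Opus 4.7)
The plan is to exploit the strict lower triangular structure of morphisms in $MC(\equiv;\mu)$. Unpacking the definitions, any filtration-preserving endomorphism of $\Bmu$ is lower triangular with respect to the basis $|1\rangle,\ldots,|n\rangle$, since ${}^{j-1}\Bmu = \mathrm{Span}(|j\rangle,\ldots,|n\rangle)$ forces $\phi|j\rangle \in \mathrm{Span}(|j\rangle,\ldots,|n\rangle)$. Thus $d$, $d'$, and any morphism between them have only entries $\langle j|\cdot|i\rangle$ with $j\geq i$. Moreover, since $d$ and $d'$ are degree $+1$ and $\coeffs$ is ungraded, a matrix entry $\langle j|d|i\rangle$ can be nonzero only when $\mu(j) = \mu(i)-1$; in particular all diagonal entries of $d$ and $d'$ vanish.

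A first step is to verify that the diagonal entries $\langle i|\phi|i\rangle \in \coeffs$ of any isomorphism $\phi$ are units. Let $\psi$ be an inverse (strict, or up to homotopy in $H^0$). By lower triangularity, the only surviving term in the sum for $\langle i|\phi\psi|i\rangle$ is $\langle i|\phi|i\rangle\langle i|\psi|i\rangle$. If the isomorphism is only up to homotopy, then $\phi\psi - \mathrm{id} = D(h) = dh + hd$ for some $h$ of degree $-1$; chasing the constraints (lower triangularity on each factor forces the intermediate index to equal $i$, after which degree considerations with $d$ or $h$ give a contradiction) shows $\langle i|D(h)|i\rangle = 0$. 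Hence $\langle i|\phi|i\rangle \cdot \langle i|\psi|i\rangle = 1$, and in particular $\langle i|\phi|i\rangle \in \coeffs^*$.

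The core of the proof is then to extract the $(k+1,k)$-entry of the chain map relation $d'\phi = \phi d$. The sums on each side over an intermediate index $j$ are cut down by lower triangularity of both factors to $j \in \{k,k+1\}$, after which the diagonal vanishing $\langle k|d|k\rangle = \langle k+1|d'|k+1\rangle = 0$ kills one term on each side. What remains is
\[
\langle k|\phi|k\rangle \cdot \langle k+1|d'|k\rangle \;=\; \langle k+1|\phi|k+1\rangle \cdot \langle k+1|d|k\rangle.
\]
Since both diagonal factors of $\phi$ are units, the entries $\langle k+1|d|k\rangle$ and $\langle k+1|d'|k\rangle$ differ by multiplication by a unit, which gives both statements of the lemma at once.

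The main obstacle is pure bookkeeping: combining the filtration constraint (lower triangularity) with the degree constraint (incompatibility of certain pairs of indices with the ungraded $\coeffs$) so as to isolate a single surviving term on each side. No deeper input is needed.
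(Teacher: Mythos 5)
Your proof is correct and follows essentially the same route as the paper's: conjugation/intertwining by a lower-triangular isomorphism, extraction of the $(k+1,k)$-entry using strict lower-triangularity of $d,d'$, and invertibility of the diagonal entries of the isomorphism. The only difference is that you justify in more detail two points the paper takes for granted — that the diagonal entries of an isomorphism are units, and that the argument survives if the isomorphism only holds up to homotopy in $H^0$ — which is a harmless (indeed welcome) refinement rather than a new approach.
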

\begin{proof}
By assumption, we have $d = s^{-1}  d' s$ for some lower triangular matrix $s$.  As $d, d'$ are strictly
lower triangular, we have
$$\langle k + 1 | d | k \rangle = \langle k + 1 | s^{-1} d' s | k \rangle =
\langle k + 1 | s^{-1} | k+1 \rangle \cdot \langle k+1 | d' |k \rangle \cdot \langle k | s | k \rangle$$
and $\langle k + 1 | s^{-1} | k+1 \rangle,  \langle k | s | k \rangle \in \coeffs^*$ since $s$ is invertible.
\end{proof}

\begin{remark} Over a field, Barannikov has classified the isomorphism classes of Morse complexes:
each has a unique representative whose matrix in the basis $|i\rangle$ at most one nonzero entry in
each row and column, and moreover these are all $1$s.
\end{remark}

\subsubsection{Crossings}

We now describe the Morse complex category $MC(\crossk; \mu)$ associated to a crossing between the $k$-th and $(k+1)$-st strands.
It will be built from $MC(\equiv; \mu)$.  To define it we first note some equivalences between conditions.

\begin{lemma} \label{lem:crossobj}
Let $d \in MC(\equiv, \mu)$, and $z \in \coeffs$.  We write $\mu_R := \mu \circ (k, k+1) : \{1, \ldots, n \} \to \bZ$.  We use
$|i\rangle$ for the basis of $\Bmu$, and $|i_R \rangle$ for the basis of $\Bmu_R$.  We identify these vector spaces by
\begin{eqnarray*}
|i_R \rangle & = & |i\rangle \qquad \qquad \qquad i \ne k, k+1 \\
|k_R \rangle & = & |k+1 \rangle \\
|k+1_R \rangle & = & |k \rangle + z |k + 1 \rangle.
\end{eqnarray*}
  Then the following are equivalent.
\begin{itemize}
\item Under this identification, $d \in MC(\equiv, \mu_R)$.
\item We have $z = 0$ unless $\mu(k) = \mu(k+1)$, and we have $\langle k + 1 | d | k  \rangle = 0$.
\end{itemize}
\end{lemma}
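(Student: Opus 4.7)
The plan is to separate ``$d \in MC(\equiv,\mu_R)$'' into its three constitutive ingredients---square-zero, degree $1$ with respect to the grading determined by $\mu_R$, and preservation of the filtration ${}^\bullet\Bmu_R$---and check how each interacts with the hypothesis $d \in MC(\equiv,\mu)$ after the linear change of basis given in the statement. Because the change of basis is a linear isomorphism, the square-zero condition transports for free, so only the grading and filtration conditions require attention.

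First I would verify that the identification is a morphism of graded vector spaces precisely when $z = 0$ or $\mu(k) = \mu(k+1)$. For $i \neq k, k+1$ and for $|k_R\rangle = |k+1\rangle$, the assigned degree $-\mu_R$ matches the degree that the corresponding vector carries in $\Bmu$ by inspection. The only potentially problematic vector is $|k+1_R\rangle = |k\rangle + z|k+1\rangle$, which must be homogeneous of degree $-\mu_R(k+1) = -\mu(k)$; the summand $|k\rangle$ already lies in that degree, while $z|k+1\rangle$ lies in degree $-\mu(k+1)$, so homogeneity forces $z = 0$ unless $\mu(k) = \mu(k+1)$. When one of these holds, the identification is graded, so $d$ transports to a degree-$1$ operator on $\Bmu_R$ automatically.

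Next I would compare the two filtrations. For every $j \neq k$ the subspaces ${}^j\Bmu$ and ${}^j\Bmu_R$ coincide inside the common ambient space, so filtration preservation with respect to $\mu$ gives $d({}^j\Bmu_R) \subset {}^j\Bmu_R$ immediately. The only new condition concerns ${}^k\Bmu_R$, which differs from ${}^k\Bmu$ by trading $|k+1\rangle$ for $|k\rangle + z|k+1\rangle$. From $d \in MC(\equiv,\mu)$ one sees that $d$ is strictly lower triangular in the old basis---the degree condition kills the diagonal, and the old filtration kills everything above it---so $d|k+1\rangle$ lies in $\mathrm{span}(|n\rangle,\dots,|k+2\rangle) \subset {}^k\Bmu_R$, while $d|k\rangle$ lies in $\mathrm{span}(|n\rangle,\dots,|k+1\rangle)$ and fails to lie in ${}^k\Bmu_R$ exactly when its $|k+1\rangle = |k_R\rangle$ coefficient, namely $\langle k+1|d|k\rangle$, is nonzero.

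Combining these two observations yields both implications: for (2) $\Rightarrow$ (1), the condition $z = 0$ or $\mu(k) = \mu(k+1)$ makes the identification graded, whereupon $\langle k+1|d|k\rangle = 0$ gives preservation of ${}^k\Bmu_R$; (1) $\Rightarrow$ (2) reverses both steps. The main point to keep straight is the filtration step at $j = k$: the swap $|k_R\rangle = |k+1\rangle$ moves $|k+1\rangle$ \emph{out} of the new filtration subspace even though it lay in the old one, and this is precisely what makes the single coefficient $\langle k+1|d|k\rangle$ the obstruction.
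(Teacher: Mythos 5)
Your proposal is correct and follows essentially the same route as the paper's proof: both reduce the statement to (i) the observation that the identification is graded iff $z=0$ or $\mu(k)=\mu(k+1)$, and (ii) the observation that the filtrations ${}^j\Bmu$ and ${}^j\Bmu_R$ agree for $j\neq k$, so the only new filtration condition is at step $k$, where strict lower-triangularity of $d$ makes $\langle k+1|d|k\rangle$ the sole obstruction. Your justification that the diagonal of $d$ vanishes by the degree condition is, if anything, slightly cleaner than the paper's appeal to $d^2=0$.
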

\begin{proof}
The condition $d \in MC(\equiv, \mu_R)$ means that $\deg |j_R \rangle = - \mu_R(j)$ 
and
that $d$ preserves the decreasing filtration ${}^i \Bmu_R  =   \mathrm{Span}( |n_R \rangle, \ldots, |i+1_R \rangle  )$.
The first condition amounts to $z = 0$ unless $\mu(k) = \mu(k+1)$.  As
${}^i \Bmu_R  =   \mathrm{Span}( |n \rangle, \ldots, |i+1 \rangle  ) = {}^i \Bmu$ for $i \ne k$ but ${}^k \Bmu_R \ne {}^k \Bmu$,
the second condition is equivalent to $\langle k_R | d |k + 1_R\rangle  = 0$.
Changing basis and recalling that $d$ has square zero, hence $\langle k +1 | d | k+1 \rangle = 0$, this is equivalent
to $\langle k + 1 | d | k  \rangle = 0$.
\end{proof}

\begin{lemma} \label{lem:crosshom}
Let $d, z$ and $d', z'$ satisfy the  conditions of Lemma \ref{lem:crossobj}, and
let $\xi \in \Hom_{MC(\equiv; \mu)} (d, d')$, i.e., it is a filtration preserving linear map
$\xi: \Bmu \to \Bmu$.  Then the following are equivalent:
\begin{itemize}
\item The map $\xi$ preserves the filtration on $\Bmu_R$.
\item $ \xi |k + 1_R \rangle \in \mathrm{Span}(|n_R'\rangle, \ldots, |k+1_R' \rangle )$.
\item $ \langle k + 1 | \xi | k \rangle = z' \langle k | \xi | k \rangle - z \langle k+1 | \xi | k+1 \rangle $.
\end{itemize}
\end{lemma}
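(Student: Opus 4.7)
The proof will be a direct unpacking of the definitions, consisting essentially of computing the filtration on $\Bmu_R$ in terms of the original filtration on $\Bmu$ and then evaluating $\xi$ on the single new basis vector $|k+1_R\rangle$.

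The first step is to compare the two filtrations. Writing out the definitions of $|i_R\rangle$, one sees that
\[ {}^i\Bmu_R = {}^i\Bmu \qquad \text{for all } i \neq k, \]
since for $i=k-1$ the two bases span the same subspace and for $i \geq k+1$ only basis vectors $|j\rangle$ with $j\geq k+2$ are involved. The one exception is at level $k$, where
\[ {}^k\Bmu_R = {}^{k+1}\Bmu \oplus \field\langle |k\rangle + z|k+1\rangle\rangle. \]
Since $\xi$ already preserves the filtration on $\Bmu$ by hypothesis, it preserves ${}^i\Bmu_R$ for all $i \neq k$ automatically, and also preserves ${}^{k+1}\Bmu_R'={}^{k+1}\Bmu$. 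Thus to verify that $\xi$ preserves the filtration on $\Bmu_R$, it is necessary and sufficient that $\xi$ map the one extra vector $|k+1_R\rangle$ into ${}^k\Bmu_R'$. This gives the equivalence of the first two conditions.

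For the equivalence with the third condition, the plan is to compute $\xi(|k+1_R\rangle)$ directly. Using that $\xi$ preserves the original filtration, so that $\xi|k\rangle \in {}^{k-1}\Bmu$ and $\xi|k+1\rangle \in {}^k\Bmu$, expanding in the basis gives
\begin{align*}
\xi(|k+1_R\rangle) &= \xi|k\rangle + z\,\xi|k+1\rangle \\
&\equiv \langle k|\xi|k\rangle \cdot |k\rangle + \bigl(\langle k+1|\xi|k\rangle + z\langle k+1|\xi|k+1\rangle\bigr)\cdot |k+1\rangle \pmod{{}^{k+1}\Bmu},
\end{align*}
with all higher-indexed contributions already landing in ${}^{k+1}\Bmu = {}^{k+1}\Bmu_R'$. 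Since ${}^k\Bmu_R'/{}^{k+1}\Bmu_R'$ is the one-dimensional space spanned by the class of $|k+1_R'\rangle = |k\rangle + z'|k+1\rangle$, the condition $\xi(|k+1_R\rangle) \in {}^k\Bmu_R'$ is equivalent to the coefficient of $|k+1\rangle$ above being $z'$ times the coefficient of $|k\rangle$, i.e.\
\[ \langle k+1|\xi|k\rangle + z\langle k+1|\xi|k+1\rangle = z'\langle k|\xi|k\rangle, \]
which rearranges to the third displayed condition.

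The only thing to be checked is that the computation is compatible with the grading: when $\mu(k) \ne \mu(k+1)$, Lemma~\ref{lem:crossobj} forces $z = z' = 0$, and the diagonal terms $\langle k|\xi|k\rangle$ and $\langle k+1|\xi|k+1\rangle$ can be nonzero only if $|\xi|=0$; in that case the identity degenerates to $\langle k+1|\xi|k\rangle = 0$, which is consistent. Thus there is no real obstacle, and the proof amounts to the bookkeeping just sketched.
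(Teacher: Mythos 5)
Your proof is correct and follows essentially the same route as the paper's: both reduce the filtration-preservation question to the single step that changes at level $k$, and both extract the third condition by expanding $\xi(|k\rangle + z|k+1\rangle)$ and using that $\langle k|\xi|k+1\rangle = 0$ (equivalently, that $\xi|k+1\rangle \in {}^k\Bmu$). The closing remark about gradings is harmless but unnecessary.
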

\begin{proof}
The first and second are equivalent since by assumption $\xi$ already preserves the filtration on $\Bmu$, hence
all but possibly one of the steps of the filtration of $\Bmu_R$. To check whether this step is preserved, we need to check
$\left( z' \langle k| - \langle k+1 | \right) \xi \left(   |k \rangle + z | k + 1 \rangle \right)  = 0$; the fact that
$\langle k | \xi | k + 1 \rangle$ vanishes shows the equivalence of the second and third conditions.
\end{proof}

\begin{definition}
Fix an integer $n$ (the number of lines), and a
function $\mu: \{1,\ldots, n\} \to \bZ$ as before.  We write $MC(\crossk; \mu)$ for the dg
category whose objects are pairs $(d, z)$ for $d \in MC(\equiv, \mu)$ and $z \in \coeffs$, satisfying the equivalent
conditions of Lemma \ref{lem:crossobj}, and whose morphisms are those morphisms in $MC(\equiv, \mu)$ which satisfy the equivalent
conditions of Lemma \ref{lem:crosshom}.  The composition and differential are the restrictions of those of $MC(\equiv, \mu)$.
\end{definition}

\begin{definition}
There is an evident forgetful dg functor
\begin{eqnarray*} \rho_L : MC(\crossk; \mu) & \to & MC(\equiv; \mu) \\ (d,z) & \mapsto & d. \end{eqnarray*}
We define this to be
the restriction map to the left.

\label{def:MCcrossres}
Recall $\mu_R := \mu \circ (k, k+1)$, and the element $z$ gives an identification $\theta_z: \Bmu \to \Bmu_R$.
Essentially by definition, we also have a dg functor which on objects is
\begin{eqnarray*} \rho_R : MC(\crossk; \mu) & \to & MC(\equiv; \mu_R) \\ (d,z) & \mapsto & \theta_z \circ d \circ \theta_z^{-1}  \end{eqnarray*}
and on morphisms is given by
\begin{eqnarray*} \rho_R : \Hom_{MC(\small\crossk; \mu)} ((d,z), (d', z')) & \to & \Hom_{MC(\equiv; \mu)} (\theta_z \circ d \circ \theta_z^{-1},
\theta_{z'} \circ d' \circ \theta_{z'}^{-1})
\\ \xi & \mapsto & \theta_{z'} \circ \xi \circ \theta_z^{-1}.  \end{eqnarray*}
We define this to be the restriction map to the right.
\end{definition}

\begin{remark}
Both restrictions are injective on homs at the chain level, but of course need not be injective on homs after passing to cohomology. \end{remark}

\begin{proposition} \label{prop:mcrosssat}
Every object in $MC(\equiv; \mu)$ isomorphic to an object in the image of $\rho_L$ is already
in the image of $\rho_L$.  Similarly, every object in $MC(\equiv; \mu_R)$ isomorphic to an object in the image of
$\rho_R$ is already in the image of $\rho_R$.
\end{proposition}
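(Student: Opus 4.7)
The plan is to show that each restriction functor has image describable as the set of operators in $MC(\equiv;-)$ on which a single off-diagonal matrix entry vanishes, and then to invoke Lemma \ref{lem:dansremark}, which says precisely that such vanishing conditions are preserved under isomorphism.

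For $\rho_L$, the image consists of those $d$ admitting some $z$ with $(d,z) \in MC(\crossk;\mu)$. By Lemma \ref{lem:crossobj} this requires $\langle k+1|d|k\rangle = 0$, and conversely whenever this holds one may pair $d$ with $z=0$; thus the image of $\rho_L$ is exactly $\{d \in MC(\equiv;\mu) : \langle k+1|d|k\rangle = 0\}$, which Lemma \ref{lem:dansremark} shows is closed under isomorphism.

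For $\rho_R$, I would argue analogously. Given $(d,z) \in MC(\crossk;\mu)$, strict lower triangularity of $d$ in the $|i\rangle$ basis together with $|k_R\rangle = |k+1\rangle$ gives $d|k_R\rangle = d|k+1\rangle \in \mathrm{Span}(|k+2\rangle, \ldots, |n\rangle) = \mathrm{Span}(|(k+2)_R\rangle, \ldots, |n_R\rangle)$, whence $\langle (k+1)_R|\rho_R(d,z)|k_R\rangle = 0$. Conversely, given $d' \in MC(\equiv;\mu_R)$ with $\langle (k+1)_R|d'|k_R\rangle = 0$, taking $z=0$ reduces the identification of bases to the transposition $|k\rangle \leftrightarrow |k+1\rangle$; a short matrix-entry check then shows this vanishing is exactly what is needed for $d'$, read in the $|i\rangle$ basis, to remain strictly lower triangular and of degree $1$ (the degree bookkeeping is automatic since $\mu_R = \mu \circ (k,k+1)$), so that $(d',0)$ is a legitimate object of $MC(\crossk;\mu)$ with $\rho_R(d',0) = d'$. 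Thus the image of $\rho_R$ equals $\{d' \in MC(\equiv;\mu_R) : \langle (k+1)_R|d'|k_R\rangle = 0\}$, which Lemma \ref{lem:dansremark} applied in $MC(\equiv;\mu_R)$ shows is iso-closed.

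No serious obstacle is anticipated: all the substantive content is already concentrated in Lemma \ref{lem:dansremark}, and what remains is a routine basis-change computation that identifies the image of each restriction functor with a single matrix-entry vanishing condition of the type that lemma controls.
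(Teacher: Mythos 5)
Your proof is correct and takes essentially the same approach as the paper: the paper's (one-line) proof likewise characterizes the image of each restriction as the locus where the single entry $\langle k+1|d|k\rangle$ vanishes and then cites Lemma~\ref{lem:dansremark} to conclude that this locus is a union of isomorphism classes. Your write-up simply makes explicit the basis-change verification for $\rho_R$ that the paper compresses into the word ``similarly.''
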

\begin{proof}
Objects in the image are characterized by $\langle k + 1 | d | k \rangle = 0$; by Lemma \ref{lem:dansremark} this is a union of isomorphism
classes.
\end{proof}

\subsubsection{Cusps}

Let ``$\succ$'' denote a front diagram with $n$ right cusps.
Near the left, it is $2n$ horizontal lines, which we number $1,2,\dots,2n$ from top to bottom,
and each pair $2k-1,2k$ is connected by a right cusp.  We fix a function $\mu: \{1, \ldots, 2n\} \to \bZ$,
such that $\mu(2k) + 1= \mu(2k-1)$.

\begin{definition}
The category $MC(\succ, \mu)$ is the full subcategory of $MC(\equiv, \mu)$ on objects $d$ such that
$\langle 2k-1 | d | 2k \rangle \in \coeffs^*$ for all $1 \le k \le n$.  The left restriction map $\rho_L: MC(\succ, \mu) \to MC(\equiv, \mu)$ is
just the inclusion.
\end{definition}

\begin{proposition} \label{prop:onlyonecusp}
All objects in $MC(\succ; \mu)$ are isomorphic.
\end{proposition}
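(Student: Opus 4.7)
The plan is to single out a distinguished ``standard'' object $d_0 \in MC(\succ, \mu)$ and show that every other object is isomorphic to it; by transitivity, this proves all objects are mutually isomorphic. The natural model is the differential defined on the given basis of $\Bmu$ by $d_0|2k-1\rangle = |2k\rangle$ and $d_0|2k\rangle = 0$ for each $1 \le k \le n$. As a first step I would verify that $d_0$ indeed lies in $MC(\succ, \mu)$: it preserves the filtration, has degree $+1$ thanks to the relation $\mu(2k-1) = \mu(2k)+1$, squares to zero, and its cusp matrix element at each pair is the unit $1 \in \coeffs^{*}$.

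The key step is then to produce, for an arbitrary $d \in MC(\succ, \mu)$, an isomorphism $\xi : (\Bmu, d_0) \to (\Bmu, d)$. I propose a strikingly direct definition: set
\[
|(2k-1)^{\prime}\rangle \; := \; |2k-1\rangle, \qquad |(2k)^{\prime}\rangle \; := \; d\,|2k-1\rangle,
\]
and let $\xi$ be the change-of-basis sending $|j\rangle$ to $|j^{\prime}\rangle$. Then $d\,|(2k-1)^{\prime}\rangle = |(2k)^{\prime}\rangle$ holds tautologically, and $d\,|(2k)^{\prime}\rangle = d^{2}|2k-1\rangle = 0$, so $d$ acts as $d_0$ in the new basis and $\xi$ is automatically a chain map with respect to these two differentials.

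What remains is to check that $\xi$ is a morphism, and indeed an isomorphism, in $MC(\succ, \mu)$. The degree condition is immediate since $d|2k-1\rangle$ has degree $-\mu(2k-1)+1 = -\mu(2k)$, matching $|2k\rangle$. Filtration preservation of $\xi$ follows from filtration preservation of $d$: the element $d|2k-1\rangle$ lies in the appropriate filtration piece, and its expansion in the original basis has $|2k\rangle$-coefficient equal to the cusp matrix element, which is a unit by hypothesis, plus a tail supported in strictly smaller filtration pieces. This makes the change-of-basis matrix lower triangular with diagonal entries equal to $1$ in the odd positions and to the cusp units in the even positions, hence invertible.

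I do not anticipate any serious obstacle. The only real subtlety is confirming that the naive assignment $|(2k)^{\prime}\rangle := d|2k-1\rangle$ simultaneously satisfies all three conditions (grading, filtration, unit diagonal) on the nose, with no inductive normalization or Gaussian elimination required; and this is exactly the content of the unit-matrix-element hypothesis that singles out $MC(\succ, \mu)$ inside $MC(\equiv, \mu)$.
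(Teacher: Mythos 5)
Your proof is correct and takes essentially the same approach as the paper: both compare an arbitrary $d$ to the standard object $d_0 = \sum_k |2k\rangle\langle 2k-1|$ via a single explicit lower-triangular, degree-zero intertwiner whose diagonal entries are $1$'s and the unit cusp entries $\langle 2k|d|2k-1\rangle$. Indeed your change-of-basis map is $\xi = d_0^T d_0 + d\,d_0^T$, a mirror image of the paper's $u = d_0 d_0^T + d_0^T d$ (yours maps $(\Bmu,d_0)\to(\Bmu,d)$, the paper's goes the other way), and both verifications rest on exactly $d^2=d_0^2=0$ and the invertibility of the cusp matrix elements.
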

\begin{proof}
Let $d \in MC(\succ; \mu)$, we will show it is
isomorphic to $d_0 = \sum |2k \rangle \langle 2k-1|$.
Note that to do so means to given an invertible degree zero lower triangular matrix $u$, such that
$d_0 u = u d$.  We take $u := d_0 d_0^T + d_0^T d$, so that since $d^2 = d_0^2 = 0$, we have
$$d_0 u = d_0 (d_0 d_0^T + d_0^T d) =  d_0 d_0^T d = (d_0 d_0^T + d_0^T d ) d = u d. $$
Moreover, $u$ has degree zero since $\deg x^T = - \deg x$, and also $u$ is lower triangular
since $d, d_0$ are strictly lower triangular and $d_0^T$ has entries only on the first diagonal above
the main diagonal.  Finally, $u$ is invertible since its diagonal entries are either $1$s or the
$\langle 2k-1 | d | 2k \rangle$, which are invertible by definition.
\end{proof}

Similarly, for a diagram of left cusps, we define

\begin{definition}
The category $MC(\prec, \mu)$ is the full subcategory of $MC(\equiv, \mu)$ on objects $d$ such that
$\langle 2k-1 | d | 2k \rangle \in \coeffs^*$ for all $1 \le k \le n$.  The right restriction map $\rho_R: MC(\prec, \mu) \to MC(\equiv, \mu)$ is
just the inclusion.
\end{definition}

\begin{proposition}
All objects in $MC(\prec,\mu)$ are isomorphic.
\end{proposition}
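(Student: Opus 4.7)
The plan is to mirror the proof of Proposition \ref{prop:onlyonecusp} essentially verbatim, since the definition of $MC(\prec,\mu)$ imposes exactly the same algebraic condition on $d \in MC(\equiv,\mu)$ as the definition of $MC(\succ,\mu)$ — namely that $\langle 2k-1|d|2k\rangle \in \coeffs^*$ for $1 \le k \le n$ — and also imposes exactly the same Maslov grading condition $\mu(2k-1) = \mu(2k)+1$. The categorical statement to prove only concerns the existence of isomorphisms inside $MC(\equiv,\mu)$, i.e.\ the existence of invertible, degree zero, lower-triangular intertwiners; it does not involve the restriction functor $\rho_R$ at all. Hence the right-cusp argument applies without any modification.

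Concretely, I would fix the distinguished object $d_0 := \sum_{k=1}^n |2k\rangle\langle 2k-1|$, which plainly lies in $MC(\prec,\mu)$. Given any other $d \in MC(\prec,\mu)$, I would set
\[
u := d_0 d_0^T + d_0^T d
\]
and check the three properties:  (i)~$u$ is degree zero and lower triangular, since $d$ and $d_0$ are strictly lower triangular and $d_0^T$ is supported on the first superdiagonal, whose image under right-multiplication by $d$ lies in the lower triangle; (ii)~$u$ is invertible, because its diagonal entries are either $1$ (at indices not of the form $2k-1$) or equal to the units $\langle 2k-1|d|2k\rangle$; (iii)~$u$ intertwines $d$ and $d_0$, that is $d_0 u = u d$, by the same three-line computation
\[
d_0 u = d_0(d_0 d_0^T + d_0^T d) = d_0 d_0^T d = (d_0 d_0^T + d_0^T d)d = u d,
\]
using only $d^2 = 0$ and $d_0^2 = 0$.

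There is really no main obstacle: the only content that could conceivably fail is the grading bookkeeping, i.e.\ that $d_0^T$ has the right degree to make $u$ degree zero, and this is ensured by the cusp condition $\mu(2k-1) = \mu(2k)+1$, which matches the right-cusp convention. The statement about $\rho_R: MC(\prec,\mu) \to MC(\equiv,\mu)$ being an inclusion is not needed for the proof of isomorphism of objects. Thus every $d \in MC(\prec,\mu)$ is isomorphic to the fixed $d_0$, and consequently all objects of $MC(\prec,\mu)$ are mutually isomorphic.
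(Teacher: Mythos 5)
Your proof is correct and is exactly the argument the paper intends: the paper states this proposition without proof because $MC(\prec,\mu)$ and $MC(\succ,\mu)$ are defined by the identical condition on objects of $MC(\equiv,\mu)$ (only the direction of the restriction functor differs, which plays no role in the isomorphism question), so the right-cusp argument of Proposition~\ref{prop:onlyonecusp} with $d_0=\sum_k|2k\rangle\langle 2k-1|$ and $u=d_0d_0^T+d_0^Td$ transfers verbatim. Your checks of degree, triangularity, invertibility of the diagonal, and the intertwining identity match the paper's.
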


\subsubsection{Sheafifying the Morse complex category} \label{MCissheaf}

We observe that, comparing Lemma \ref{lem:dansremark} to the characterizations of the image maps
on the crossing and cusp categories, the condition of Lemma \ref{lem:strictify} is satisfied.  Thus,
we can discuss sections of the sheaf of Morse complex categories naively.

\subsection{Local calculations in the augmentation category}
\label{sec:locAug}

In this section, we determine the local augmentation categories for the line, crossing,
left cusp, and right cusp diagrams. We define the isomorphisms $\mathfrak{h}$ to the corresponding
local categories of $MC$ and study the compatibility with left and right restriction functors to $\equiv$, as in the diagram \eqref{eq:bigdiagram}.
We conclude by proving that the presheaf of
$\underline{\Aug}(\Lambda)$ is a sheaf when $\Lambda$ is a front diagram with
base points at all right cusps.

\begin{notation}
Recall that if $\alg(T)$ is the Chekanov--Eliashberg algebra of a tangle $T$, and $\epsilon_1, \epsilon_2: \alg(T) \to \coeffs$ are two
augmentations, then $\Hom_{\Aug}(\epsilon_1, \epsilon_2)$ is generated by symbols dual to the names of certain Reeb chords
in the 2-copy; specifically those
chords from $T$ (viewed as carrying $\epsilon_2$) to its pushoff in the positive Reeb direction (viewed as carrying $\epsilon_1$).

Thus if $x$ is a Reeb chord of $T$ itself, it gives rise to a ``long'' chord $x^{12}$ in the $2$-copy, and a corresponding generator
$x^{12} \in \alg(T^2)$.  There will however be additional ``short'' chords $y^{12}$ in the $2$-copy, and corresponding
generators $y^{12} \in \alg(T^2)$.
Recall from Convention~\ref{conv:pm} that we write their duals in $\Hom_{\Aug}(\epsilon_1,\epsilon_2)$
as $x^+ := (x^{12})^\vee$ and $y^+:= (y^{12})^\vee$ with $|x^+| = |x|+1$ and $|y^+|=|y|+1$.
\end{notation}

\begin{remark}
We will find that applying the differential to any generator of
any of the local \dgas{} gives a sum of monomials of word length at most 2.  It follows that
all higher compositions $m_k$ in the respective augmentation categories will vanish
for $k \geq 3$ --- that is, all the categories will in fact be dg categories.  The $A_\infty$ behavior,
from this point of view, comes entirely from the right restriction map on the crossing category, $\rho_R: \Aug(\crossk, \mu) \to \Aug(\equiv, \mu)$,
which is an $A_\infty$ but not a dg morphism, i.e. it does not respect composition on the nose, but only up to
homotopy --- see Theorem \ref{thm:crossmc}.
\end{remark}

\subsubsection{Lines} \label{sec:linealg}

We write $\equiv_n$ or just $\equiv$ for the front diagram consisting of $n$ horizontal lines, numbered $1 \ldots n$ from top to bottom.  (See Figure \ref{fig:4figs}, upper left.)
Fix a Maslov potential $\mu: \{1, \ldots, n\} \to \bZ$.  The algebra $\alg(\equiv, \mu)$ of this tangle is
freely generated by ${n \choose 2}$ elements $a_{ij}$, $1 \leq i < j \leq n$, with $|a_{ij}| = \mu(i) - \mu(j) - 1$, and
\[ \dd a_{ij} = \sum_{i < k < j} (-1)^{\mu(i)-\mu(k)} a_{ik} a_{kj} = \sum_{i < k < j} (-1)^{\mu(i)}a_{ik} \cdot (-1)^{\mu(k)}a_{kj}. \]
Throughout this section we will let $(-1)^\mu$ denote the matrix $\operatorname{diag}((-1)^{\mu(1)},(-1)^{\mu(2)},\dots,(-1)^{\mu(n)})$.

Package the generators into a strictly upper triangular matrix
\[ A := \left[ \begin{array}{ccccc}
0 & a_{12} & a_{13} & \dots & a_{1n} \\
0 & 0 & a_{23} & \dots & a_{2n} \\
\vdots & \vdots & \vdots & \ddots & \vdots \\
0 & 0 & 0 & \dots & a_{n-1,n} \\
0 & 0 & 0 & \dots & 0
\end{array} \right] = \sum_{i < j} a_{ij} |i  \rangle \langle j |  . \]
Then
$$\dd A = (-1)^{\mu} A (-1)^\mu A.$$

\begin{theorem} \label{thm:lineaugcategory}
There is a (strict) isomorphism of dg categories
\[ \hh: \Aug(\equiv, \mu) \to MC(\equiv, \mu). \]
It is given on objects by:
\[ [ \epsilon: \alg(\equiv, \mu) \to \coeffs] \mapsto [ d =  (-1)^\mu \epsilon(A)^T: \Bmu \to \Bmu] \]
and on morphisms $\Hom_{\Aug}(\e_1,\e_2) \to \Hom_{MC}(d_1,d_2)$ by
\[ a_{ij}^+ \mapsto (-1)^{(\mu(i)+1)\mu(j)+1} |j \rangle \langle i|. \]
In other words:
\begin{itemize}
\item
$\epsilon$ is an augmentation if and only if $(-1)^\mu \epsilon(A)^T$ is a degree one, square zero, filtration preserving operator on $\Bmu$.
\item For the $i, j$ for which there's an element $a_{ij}^+ \in \hom(\epsilon_1, \epsilon_2)$, the operator $|j \rangle \langle i|$ preserves
the filtration on $\Bmu$, and this induces an isomorphism on underlying spaces of morphisms.
\item  Degrees are preserved: $$\deg a_{ij}^+ = \mu(i) - \mu(j) = \deg (|j \rangle \langle i|).$$
\item The differential is preserved: $\hh \circ \mu_1 = d \circ \hh$, where
\[
d (|j \rangle \langle i|) = ((-1)^\mu\epsilon_2(A)^T)|j\rangle\langle
i| - (-1)^{\mu(i)-\mu(j)}|j\rangle\langle i|((-1)^\mu\epsilon_1(A)^T).
\]
\item The composition is preserved: i.e., the only nonvanishing compositions are
\[ m_2(a_{kj}^+, a_{ik}^+)  = (-1)^{|a_{kj}^+||a_{ik}^+|+1}a_{ij}^+ \]
compatibly with
\begin{align*}
| j \rangle \langle k | \circ |k \rangle \langle i| = |j \rangle \langle i |.
\end{align*}
\end{itemize}
\end{theorem}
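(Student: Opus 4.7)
The plan is to establish this strict dg isomorphism by direct computation using the bordered line algebra and its multi-copies. First we identify generators on both sides: the front-projection $m$-copy $\equiv_n^m$ is the bordered tangle $\equiv_{nm}$ with strands labeled $(k,\alpha)$ for $1\leq k\leq n$ and $1\leq\alpha\leq m$, ordered lexicographically so that copy $\alpha$ lies above copy $\alpha+1$ along each vertical, with Maslov potential extended by $\tilde\mu(k,\alpha)=\mu(k)$. The mixed generators in $\rR^{12}$ are exactly $a_{(i,1)(j,2)}$ for $1\leq i\leq j\leq n$, and their duals $a_{ij}^+$ have degree $\mu(i)-\mu(j)$, in bijection with the basis $\{|j\rangle\langle i|\}_{i\leq j}$ of $\Hom_{\mathrm{filt}}(\Bmu,\Bmu)$ in the same degree.

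For the object correspondence, starting from the matrix identity $\dd A=(-1)^\mu A(-1)^\mu A$ and applying $\epsilon$, the augmentation condition $\epsilon(\dd A)=0$ is equivalent, after transposing and using $((-1)^\mu)^2=I$, to $d^2=0$ for $d=(-1)^\mu\epsilon(A)^T$. Filtration preservation follows from $A$ being strictly upper triangular, which makes $d$ strictly lower triangular. Degree $+1$ is forced: for $d$ to have a nonzero entry one needs $\epsilon(a_{ij})\neq 0$, i.e., $|a_{ij}|=0$, so $\mu(i)-\mu(j)=1$, giving $\deg|j\rangle-\deg|i\rangle=+1$.

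For the morphism correspondence, the linear part of the twisted differential on $C_{12}$, obtained from $\dd\tilde A=(-1)^{\tilde\mu}\tilde A(-1)^{\tilde\mu}\tilde A$ after setting mixed generators to zero via $(\epsilon_1,\epsilon_2)$, works out to
\[
\dd^{\lin}(a_{ij}^{12})=\sum_{i<k\leq j}(-1)^{\mu(i)-\mu(k)}\epsilon_1(a_{ik})\,a_{kj}^{12}+\sum_{i\leq l<j}(-1)^{\mu(i)-\mu(l)}\,a_{il}^{12}\epsilon_2(a_{lj}).
\]
Dualizing and comparing with $D(|j\rangle\langle i|)=d_2|j\rangle\langle i|-(-1)^{\mu(i)-\mu(j)}|j\rangle\langle i|d_1$ under $\hh$ gives a sign-matching identity that holds mod 2, verifying $\hh\circ m_1=D\circ\hh$. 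For composition, the 3-copy contributes exactly one quadratic term $(-1)^{\mu(i)-\mu(j)}a_{ij'}^{12}a_{j'k''}^{23}$ to $\dd a_{ik''}^{13}$ for each $i\leq j\leq k$. Applying (\ref{eq:ms}) with $k=2$ yields $m_2(a_{jk}^+,a_{ij}^+)=(-1)^{|a_{jk}^+||a_{ij}^+|+1}a_{ik}^+$, matching the matrix composition $|k\rangle\langle j|\circ|j\rangle\langle i|=|k\rangle\langle i|$ under $\hh$; the requisite mod-2 identity reduces to manipulation using $\mu(j)^2\equiv\mu(j)$. Higher products $m_p$ for $p\geq 3$ vanish because $\dd A$ is purely quadratic in the generators, so no $(p+1)$-copy disk has $p$ distinct mixed corners, giving strict dg structures on both sides.

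The main obstacle is bookkeeping signs consistently: the Koszul signs in (\ref{eq:ms}) and those from the matrix identity $\dd A=(-1)^\mu A(-1)^\mu A$ conspire to produce the seemingly unmotivated formula $(-1)^{(\mu(i)+1)\mu(j)+1}$ in $\hh$, but each sign check is a routine mod-2 computation. Working throughout with matrix identities on the $m$-copies, rather than tracking individual indices, keeps the calculation tractable, and bijectivity of $\hh$ on objects and morphism spaces is automatic from the explicit formulas since both sides are parametrized by the same combinatorial data.
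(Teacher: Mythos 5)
Your proposal is correct and follows essentially the same route as the paper: the object correspondence via the matrix identity $\dd A = (-1)^\mu A(-1)^\mu A$, the morphism differential via the linearized twisted differential on the $2$-copy, the composition via the single quadratic term in the $3$-copy differential together with \eqref{eq:ms}, and vanishing of higher products because the differential is purely quadratic. The sign bookkeeping you describe is exactly the content of the paper's verification, so there is nothing further to add.
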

\begin{proof}
First we show that the map makes sense on objects, ie.
a map $\epsilon: \alg(\equiv, \mu)\to \coeffs$ is an augmentation if and only if $(-1)^\mu\epsilon(A)^T$ is a filtered degree 1 derivation on $\Bmu$.
As $\epsilon(A)$ is upper triangular, its transpose is lower triangular, hence preserves the filtration on $\Bmu$.
The term $(-1)^{\mu(j)} a_{ij} |j \rangle \langle i | $ contributes to $(-1)^\mu\epsilon(A)^T$ only if $|a_{ij}| = \mu(i) - \mu(j) - 1 = 0$, i.e.
only if $\deg |j \rangle \langle i | = \mu(i) - \mu(j) = 1$, so $(-1)^\mu\epsilon(A)^T$ is degree 1.
Finally, the condition $\epsilon \circ \dd = 0$
translates directly into $\epsilon((-1)^\mu A (-1)^\mu A) = \left((-1)^\mu \epsilon(A)\right)^2 = 0$, hence $\left((-1)^\mu \epsilon(A)^T\right)^2 = 0$.

Given two augmentations $\epsilon_1,\epsilon_2: \alg(\equiv, \mu) \to \coeffs$, we compute $\hom(\epsilon_1,\epsilon_2)$
by first building the two-copy,
whose algebra we denote $\alg^2(\equiv, \mu)$.
Its generator $a_{ij}^{rs}$ represents a segment {\bf to} the $r$-th copy of point $i$ {\bf from}
the $s$-th copy of point $j$ ($1 \leq r,s \leq 2$);
here we must have either $i<j$ or $i=j$ and $r<s$.
There are  ${2n\choose 2}$ such generators.

The Hom space is free on the generators $a_{ij}^+$, dual to the $a_{ij}^{12}$ and of degree
$$|a_{ij}^+| = |a_{ij}^{12}| +1 = \mu(i) - \mu(j).$$
Since $i \le j$, the image  $| j \rangle \langle i|$ of $a_{ij}^+$  is lower triangular and hence preserves
the filtration on $\Bmu$, so $\Phi$ is well defined and an isomorphism of underlying spaces.
The grading of $|j \rangle \langle i|$ as an endomorphism of $\Bmu$ is $\deg |j \rangle - \deg |i\rangle = \mu(i) - \mu(j) =
\deg a_{ij}^+$, so $\Phi$ is a graded map.

The differential on the Hom space is given, according
to \eqref{eq:ms}, by the formula
\[ m_1 (a_{ij}^+) = \sum_{\alpha \in \rR} \alpha^+ \cdot \mathrm{Coeff}_{a_{ij}^{12}} (\dd_\epsilon \alpha). \]
Here, $\epsilon = (\epsilon_1, \epsilon_2)$ is the
pure augmentation of $\alg^2(\equiv, \mu)$ defined by $\epsilon(a_{ij}^{11}) = \epsilon_1(a_{ij})$ and $\epsilon(a_{ij}^{22}) = \epsilon_2(a_{ij})$ for $i<j$, and $\epsilon(a_{ij}^{rs}) = 0$ otherwise.

For any generator $a_{ij}^{12}$ of $I^2_n$, $i \leq j$, we have
\[ \dd a_{ij}^{12} =  \sum_{i < k \leq j} (-1)^{|a^{11}_{ik}|+1}a_{ik}^{11} a_{kj}^{12} + \sum_{i\leq k < j} (-1)^{|a_{ik}^{12}|+1}a_{ik}^{12} a_{kj}^{22} \]
and since $\epsilon(a_{kj}^{12}) = \epsilon(a_{ik}^{12}) = 0$, keeping only linear terms in the twisted differential, we have
\begin{align*}
[\mbox{linear part}]\dd_{\epsilon} (a_{ij}^{12}) &= \sum_{i < k \leq j} (-1)^{|a_{ik}^{11}|+1}\epsilon_1(a_{ik})a^{12}_{kj} + \sum_{i \leq k < j} (-1)^{|a_{ik}^{12}|+1}a^{12}_{ik} \epsilon_2(a_{kj}) \\
&= \sum_{i < k \leq j} (-1)^{\mu(i)} \epsilon_1(a_{ik}) (-1)^{\mu(k)}\cdot a^{12}_{kj} + \sum_{i \leq k < j} (-1)^{\mu(i)}a^{12}_{ik} (-1)^{\mu(k)} \cdot \epsilon_2(a_{kj}).
\end{align*}
Packaging these generators into
\[ A^{[12]} = \left[ \begin{array}{ccccc}
a_{11}^{12} & a_{12}^{12} & a_{13}^{12} & \dots & a_{1n}^{12} \\
0 & a_{22}^{12} & a_{23}^{12} & \dots & a_{2n}^{12} \\
\vdots & \vdots & \vdots & \ddots & \vdots \\
0 & 0 & 0 & \dots & a_{n-1,n}^{12} \\
0 & 0 & 0 & \dots & a_{nn}^{12}
\end{array} \right] \]
this equation reads simply
\[ [\mbox{linear part}] \dd_{\epsilon} A^{[12]} = (-1)^\mu \epsilon_1(A) (-1)^\mu A^{[12]} + (-1)^\mu A^{[12]} (-1)^\mu \epsilon_2(A). \]
We however want to compute $m_1$.  This is given by
\begin{align}
m_1(a_{rs}^+) & = \sum_{i < r}  (-1)^{\mu(i)} \epsilon_1(a_{ir} ) (-1)^{\mu(r)} a_{is}^+ + \sum_{s < j} \epsilon_2(a_{sj}) (-1)^{\mu(r) + \mu(s)}
a_{rj}^+ \nonumber \\
& = - \sum_{i < r}  \epsilon_1(a_{ir} )  a_{is}^+ + (-1)^{\mu(r) + \mu(s)} \sum_{s < j} \epsilon_2(a_{sj})  a_{rj}^+. \label{eq:line-category-m1}
\end{align}
By comparison, we have
\begin{align*}
d(|s \rangle \langle r| ) & =
  \left((-1)^\mu\epsilon_2(A)^T\right) |s \rangle \langle r |
 - (-1)^{\mu(r) - \mu(s)}  |s \rangle \langle r | \left((-1)^\mu\epsilon_1(A)^T\right)  \\
& =
\left(\sum_{i<  j} (-1)^{\mu(j)} \epsilon_2(a_{i j})|j \rangle \langle i|  \right)  |s \rangle \langle r |
 - (-1)^{\mu(r) - \mu(s)}
|s \rangle \langle r | \left(\sum_{i<  j} (-1)^{\mu(j)}  \epsilon_1(a_{ij})
|j \rangle \langle i|  \right)\\
& =
\sum_{s<j} (-1)^{\mu(j)}  \epsilon_2(a_{sj})  |j \rangle  \langle r |
 - (-1)^{\mu(s)}
\sum_{i< r}  \epsilon_1(a_{ir}) |s \rangle \langle i| \\
& =
(-1)^{\mu(s)+1} \left(\sum_{i<  r}  \epsilon_1(a_{ir}) |s \rangle
  \langle i|
+ \sum_{j>s}  \epsilon_2(a_{sj})  |j \rangle  \langle r | \right)
\end{align*}
since $(-1)^{\mu(j)}\epsilon_2(a_{sj}) = 0$ unless $\mu(s) - \mu(j)=1$.  Multiplying both sides by $(-1)^{(\mu(r)+1)\mu(s)+1}$ and recalling that $\epsilon_1(a_{ir}) = 0$ (resp.\ $\epsilon_2(a_{sj}) = 0$) unless $\mu(i)=\mu(r)+1$ (resp.\ $\mu(s)=\mu(j)+1$), we have
\begin{align*}
d\left((-1)^{(\mu(r)+1)\mu(s)+1}|s\rangle\langle r|\right) &=
(-1)^{\mu(r)\mu(s)} \left(\sum_{i<  r}  \epsilon_1(a_{ir}) |s \rangle
  \langle i|
+ \sum_{s<j}  \epsilon_2(a_{sj})  |j \rangle  \langle r | \right) \\
&= \sum_{i<r} \epsilon_1(a_{ir}) \left((-1)^{(\mu(i)+1)\mu(s)} |s\rangle\langle i|\right) \\
&\qquad + \sum_{s<j} \epsilon_2(a_{sj}) \left((-1)^{(\mu(j)+1)\mu(r)} |j\rangle\langle r|\right) \\
&= -\sum_{i<r} \epsilon_1(a_{ir}) \left((-1)^{(\mu(i)+1)\mu(s)+1} |s\rangle\langle i|\right) \\
&\qquad + (-1)^{\mu(r)+\mu(s)}\sum_{s<j} \epsilon_2(a_{sj}) \left((-1)^{(\mu(r)+1)\mu(j)+1} |j\rangle\langle r|\right).
\end{align*}
So $\hh$ commutes with the differential on Hom spaces. It remains to show that $\hh$ commutes with the composition.

We consider the algebra $\alg^3(\equiv, \mu)$ associated to the $3$-copy.
This is generated by elements $a_{ij}^{rs}$ as before, but now we have $1 \leq r,s \leq 3$; in particular, we compute
\[ \dd a_{ij}^{13} = \sum_{i<k\leq j} (-1)^{|a_{ik}^{11}|+1}a_{ik}^{11}a_{kj}^{13} + \sum_{i\leq k\leq j} (-1)^{|a_{ik}^{12}|+1}a_{ik}^{12}a_{kj}^{23} + \sum_{i\leq k<j} (-1)^{|a_{ik}^{13}|+1}a_{ik}^{13}a_{kj}^{33}. \]
Since the differential contains only quadratic terms, the quadratic term of its linearization is the same as the original quadratic term.
Only terms of the form $(-1)^{|a_{ik}^{12}|+1}a^{12}_{ik} a^{23}_{kj}$ contribute to $m_2$.
Each of these terms can only appear in the differential of a single generator of the form $a^{13}_{ij}$.

By \eqref{eq:ms},
\[ m_2( a_{kj}^+, a_{ik}^+) = (-1)^{|a_{kj}^+||a_{ik}^+|+|a_{ik}^+|+1} \cdot (-1)^{\mu(i)-\mu(k)} a_{ij}^+ = (-1)^{|a_{kj}^+||a_{ik}^+|+1}a_{ij}^+. \]
If $k\neq k'$, the term $a_{k'j}^{23} a_{ik}^{12}$
does not appear in the differential of any generator of $\alg^3(\equiv, \mu)$.  It follows that
then  $m_2( a_{k'j}^+, a_{ik}^+) = 0$.  That is,
\[ m_2:  \hom(\epsilon_2,\epsilon_3) \otimes \hom(\epsilon_1, \epsilon_2) \to \hom(\epsilon_1,\epsilon_3) \]
is given by the formula
\[ a_{kj}^+ \otimes a_{ik}^+  \mapsto (-1)^{|a_{kj}^+||a_{ik}^+|+1}a_{ij}^+. \]
This is compatible with composition in $MC(\equiv,\mu)$ once one checks that the signs are correct, which amounts to verifying the identity
\begin{align*}
\left[(\mu(k)+1)\mu(j)+1\right] &+ \left[(\mu(i)+1)\mu(k)+1\right] \\
&\qquad \equiv \left[(\mu(k)-\mu(j))(\mu(i)-\mu(k))+1\right] + [(\mu(i)+1)\mu(j)+1]
\end{align*}
modulo 2.  Finally, as the differentials of all $m$-copy algebras have no cubic or higher terms, all higher compositions vanish.
\end{proof}

\subsubsection{Crossings}

Let the symbol $\crossk$ denote a bordered plat consisting of $n$ strands, numbered from 1 at the top to $n$ at the bottom along the left,
with a single crossing between strands $k$ and $k+1$.  (See Figure \ref{fig:4figs}, upper right.)
Fix a Maslov potential $\mu$.  We write $\alg(\crossk, \mu)$ for the
Chekanov--Eliashberg \dga{} of this tangle with Maslov potential $\mu$.

We will write $\mu_L$ and $\mu_R$ for the induced Maslov potentials along the left and right of the diagram, respectively.
Note that if $s_k = (k, k+1) \in S_n$, then $\mu_R = \mu_L \circ s_k$.  We write the co-restriction maps from the left and right line algebras
as

\begin{eqnarray*}
\iota_L: \alg(\equiv, \mu_L) & \to & \alg(\crossk, \mu) \\
\iota_R: \alg(\equiv, \mu_R) & \to & \alg(\crossk, \mu)
\end{eqnarray*}

We view this as identifying $\alg(\equiv, \mu_L)$ and the subalgebra
of $\alg(\crossk, \mu)$ generated by elements $a_{ij}$ indexed by pairs of left endpoints of lines.
The algebra $\alg(\crossk, \mu)$ has one more generator,
$c$, naming the crossing, with $\dd c = a_{k,k+1}$.

\begin{lemma} \label{lem:crossobj-left} The map
$$[\epsilon: \alg(\crossk, \mu) \to \coeffs] \mapsto [(\epsilon_L: \alg(\equiv, \mu_L) \to \coeffs, \epsilon(c))]$$
is a bijection between augmentations of $\alg(\crossk, \mu)$ and pairs of an augmentation of $\alg(\equiv, \mu_L)$ carrying
$a_{k, k+1} \to 0$ and an element $\epsilon(c) \in \coeffs$, where $\epsilon(c)$ vanishes unless
$|c| = 0$, i.e., unless $\mu(k) = \mu(k+1)$.
\end{lemma}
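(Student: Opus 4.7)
The plan is to unravel the bordered \dga{} from Section~\ref{sec:bordered} and read off the augmentation condition directly. By Definition~\ref{def:tangle-algebra} applied to the tangle $\crossk$, the algebra $\alg(\crossk,\mu)$ is freely generated over $\bZ$ by the pair-of-left-endpoints generators $a_{ij}$ with $1\le i<j\le n$, together with the single crossing generator $c$ (there are no base points, cusps, or other crossings). The co-restriction $\iota_L: \alg(\equiv,\mu_L) \hookrightarrow \alg(\crossk,\mu)$ identifies $\alg(\equiv,\mu_L)$ with the sub-\dga{} generated by the $a_{ij}$. My first step would be to compute $\dd c$ by enumerating the disks in $\Delta(c;b_1,\dots,b_l)$: there is exactly one, namely the triangle whose unique $x$-maximum is at $c$ and whose $x$-minima occupy the left-boundary segment $[k,k+1]$. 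With the sign conventions of Definition~\ref{def:tangle-algebra}, this gives $\dd c = \pm\, a_{k,k+1}$.

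Given this, I would argue the bijection by writing down both directions. An augmentation $\epsilon:\alg(\crossk,\mu)\to\coeffs$ is a graded unital algebra map annihilating generators of nonzero degree and satisfying $\epsilon\circ\dd=0$. Since $\alg(\crossk,\mu)$ is freely generated by the $a_{ij}$ and $c$ (modulo the free algebra relations), $\epsilon$ is uniquely determined by the pair $(\epsilon_L,\epsilon(c))$ where $\epsilon_L:=\epsilon\circ \iota_L$. Because $\iota_L$ is a \dga{} map, $\epsilon_L$ is automatically an augmentation of $\alg(\equiv,\mu_L)$; the grading condition on $c$ forces $\epsilon(c)=0$ unless $|c|=\mu(k)-\mu(k+1)=0$; and the equation $\epsilon(\dd c)=0$ gives $\epsilon_L(a_{k,k+1})=0$.

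For the converse, given any augmentation $\epsilon_L$ of $\alg(\equiv,\mu_L)$ with $\epsilon_L(a_{k,k+1})=0$ and any $z\in\coeffs$ (vanishing unless $\mu(k)=\mu(k+1)$), I would define $\epsilon$ on generators by $\epsilon(a_{ij}):=\epsilon_L(a_{ij})$ and $\epsilon(c):=z$, then extend multiplicatively. This is well-defined because the algebra is free on these generators; it has the correct grading by choice of $z$; and it is a chain map because it suffices to check $\epsilon\circ\dd=0$ on each generator — on the $a_{ij}$ this holds since $\epsilon_L$ is an augmentation, and on $c$ we have $\epsilon(\dd c)=\pm\epsilon_L(a_{k,k+1})=0$. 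The two constructions are manifestly inverse, yielding the claimed bijection.

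There is essentially no technical obstacle: the statement is a direct book-keeping consequence of the bordered construction plus the single disk computation $\dd c=\pm a_{k,k+1}$. The only thing to be a bit careful about is the sign, but it is immaterial to the lemma since we impose the condition $\epsilon_L(a_{k,k+1})=0$ either way.
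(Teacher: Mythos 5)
Your proposal is correct and follows essentially the same route as the paper: the paper likewise notes that $\alg(\crossk,\mu)$ is $\alg(\equiv,\mu_L)$ with one extra generator $c$ satisfying $\dd c = a_{k,k+1}$, so an augmentation is determined by $(\epsilon_L,\epsilon(c))$ subject only to the grading constraint on $\epsilon(c)$ and $\epsilon_L(a_{k,k+1})=\epsilon(\dd c)=0$. Your version merely spells out the inverse construction more explicitly, and your caution about the sign is harmless since the paper's conventions give $\dd c = a_{k,k+1}$ on the nose.
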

\begin{proof}   An augmentation of $\alg(\crossk, \mu)$ is determined by its restriction $\epsilon_L: \alg(\equiv, \mu_L) \to \coeffs$ and its value on $c$.  The augmentation
must annihilate $c$ unless $|c| = 0$.  Finally, the only condition imposed on the restriction $\epsilon_L$ is
$\epsilon(a_{k, k+1})  = \epsilon(\dd c)  = \dd \epsilon(c) = 0$.
\end{proof}

\begin{lemma} \label{lem:crossdiff}
Consider augmentations $\epsilon_1, \epsilon_2: \alg(\crossk, \mu) \to \coeffs$.  The space $\hom(\epsilon_1, \epsilon_2)$
has as a basis $a_{ij}^+$ and $c^+$.  The differential is given explicitly by
\begin{align*}
m_1(a_{rs}^+) &=  - \sum_{i < r}  \epsilon_1(a_{ir} )  \cdot a_{is}^+ + (-1)^{\mu(r) + \mu(s)} \sum_{s < j} \epsilon_2(a_{sj})  \cdot a_{rj}^+ \qquad \qquad
\{r,s\} \not \subset \{k, k+1\} \\
m_1(a_{k, k}^+) &= \epsilon_2(c) \cdot c^+ - \sum_{i < k}  \epsilon_1(a_{ik} ) \cdot a_{ik}^+ + \sum_{k < j} \epsilon_2(a_{kj})  \cdot a_{kj}^+ \\
m_1( a_{k, k+1}^+) &= c^+
- \sum_{i < k}  \epsilon_1(a_{ik} ) \cdot  a_{i,k+1}^+ + (-1)^{\mu(k) + \mu(k+1)} \sum_{k+1 < j} \epsilon_2(a_{k+1,j}) \cdot a_{kj}^+ \\
m_1(a_{k+1, k+1}^+) &= - \epsilon_1(c) \cdot c^+   - \sum_{i < k+1}  \epsilon_1(a_{i,k+1} ) \cdot a_{i,k+1}^+ +
\sum_{k+1 < j} \epsilon_2(a_{k+1,j})  \cdot a_{k+1,j}^+ \\
m_1(c^+) &= 0.
\end{align*}
\end{lemma}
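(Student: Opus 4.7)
The plan is to imitate the proof of Theorem~\ref{thm:lineaugcategory}. I form the two-copy algebra $\alg^2(\crossk,\mu)$, identify the summand $C_{12}$ whose dual is $\hom(\epsilon_1,\epsilon_2)$, compute the linear part of the twisted differential $\dd_\epsilon$ for the pure augmentation $\epsilon=(\epsilon_1,\epsilon_2)$, and read off $m_1$ from \eqref{eq:ms}. The $C_{12}$ summand is freely generated by the pair-of-left-endpoint elements $a_{ij}^{12}$ (long for $i<j$, short for $i=j$) together with a single lifted crossing generator $c^{12}$; dualizing gives the claimed basis $\{a_{ij}^+,c^+\}$ with the expected degrees.

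For generators $a_{rs}^{12}$ with $\{r,s\}\cap\{k,k+1\}=\emptyset$, no holomorphic disk contributing to the twisted differential of anything in $C_{12}$ can interact with the crossing region, so the enumeration reduces to the one carried out in Theorem~\ref{thm:lineaugcategory} and $m_1(a_{rs}^+)$ is given by \eqref{eq:line-category-m1}. The same is true for the non-$c^+$ summands of $m_1(a_{k,k}^+)$, $m_1(a_{k,k+1}^+)$ and $m_1(a_{k+1,k+1}^+)$: the disks realizing these contributions have boundary staying away from the crossing and are the same as in the line case, applied at the appropriate indices.

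It therefore remains to compute the coefficient of $c^+$ in each of the $m_1(a_{rs}^+)$ with $\{r,s\}\subset\{k,k+1\}$, and to verify $m_1(c^+)=0$. The latter is immediate: no generator in $C_{12}$ has a term involving $c^{12}$ in its two-copy differential, because the only Reeb chord of $\crossk$ itself is $c$ and there is no disk with positive corner at a pair-of-endpoint or short chord generator and a negative corner at $c$. The former reduces to computing $\dd c^{12}\in\alg^2(\crossk,\mu)$, which is determined by lifting to the two-copy the single triangle realizing $\dd c = a_{k,k+1}$ in the single copy. The lift contributes the linear term $a^{12}_{k,k+1}$ directly, together with two further quadratic terms of the form $a^{12}_{k,k}\,c^{22}$ and $-c^{11}\,a^{12}_{k+1,k+1}$, corresponding to the two ways the boundary of the triangle can switch between copies by passing through one of the new short chords at strand $k$ or $k+1$. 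Linearizing via $\epsilon_1(c^{11})=\epsilon_1(c)$ and $\epsilon_2(c^{22})=\epsilon_2(c)$ and applying \eqref{eq:ms} produces exactly the three $c^+$ contributions asserted in the statement.

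The main technical obstacle will be the local sign bookkeeping near the crossing: pinning down the complete list of lifted disks that contribute to $\dd c^{12}$, verifying that no additional disks in $\alg^2(\crossk,\mu)$ give contributions I have missed, and correctly combining the orientation signs attached to each such disk with the Koszul and shift signs introduced by \eqref{eq:ms}. Once these are settled the formulas in the lemma follow by direct substitution, and the vanishing of higher $m_k$ ($k\geq 2$) in subsequent calculations will follow as in Theorem~\ref{thm:lineaugcategory} from the fact that the two-copy differential contains only terms of word length at most two.
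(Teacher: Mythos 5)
Your proposal is correct and follows essentially the same route as the paper: form the $2$-copy $\alg^2(\crossk,\mu) = \alg^2(\equiv,\mu_L)\langle c^{11},c^{12},c^{21},c^{22}\rangle$, observe that the $a^{12}_{ij}$ carry the line-algebra differential, compute $\partial c^{12} = a^{12}_{k,k+1} + a^{12}_{kk}c^{22} - (-1)^{|c|}c^{11}a^{12}_{k+1,k+1}$, twist by $(\epsilon_1,\epsilon_2)$, take the linear part, and dualize via \eqref{eq:ms}. The only cosmetic gap is that your explicit case split covers $\{r,s\}\cap\{k,k+1\}=\emptyset$ rather than the full range $\{r,s\}\not\subset\{k,k+1\}$ of the first formula, but the mixed cases are handled by the identical line-algebra argument, so nothing is missing in substance.
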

\begin{proof}
To compute the Hom spaces, we study the $2$-copy, whose algebra we denote $\alg^2(\crossk, \mu)$.  This has underlying algebra
$$\alg^2(\crossk, \mu) = \alg^2(\equiv, \mu_L) \langle c^{11}, c^{12}, c^{21}, c^{22} \rangle.$$
The differential
restricted to $\alg^2(\equiv, \mu_L)$  is just the differential there, and
$$\partial c^{12} = a^{12}_{k, k+1} + a^{12}_{kk} c^{22} - (-1)^{|c|} c^{11} a^{12}_{k+1, k+1}.$$

Taking $\epsilon = (\epsilon_1 , \epsilon_2) : \alg^2(\crossk, \mu) \to \coeffs$ we find the twisted differentials of the $a^{12}_{ij}$ are as in the line algebra, and:

$$\partial_{\epsilon} c^{12} = a^{12}_{k, k+1} + a^{12}_{kk} (c^{22} + \epsilon_2(c)) - (-1)^{|c|} (c^{11} + \epsilon_1(c)) a^{12}_{k+1, k+1}$$
of which the linear part is
$$\partial_{\epsilon, 1} c^{12} = a^{12}_{k, k+1} + a^{12}_{kk} \epsilon_2(c) -  \epsilon_1(c) a^{12}_{k+1, k+1}$$
where we have observed that $\epsilon(c) = 0$ unless $|c| = 0$.
Dualizing gives the stated formulas.
\end{proof}

\begin{proposition}
The only nonzero compositions in the category $\Aug(\crossk, \mu)$
are:
\begin{gather*}
m_2(a_{kj}^+, a_{ik}^+) = (-1)^{|a_{kj}^+||a_{ik}^+|+1}a_{ij}^+ \\
m_2(c^+, a_{kk}^+) = -c^+ =  m_2(a_{k+1,k+1}^+,  c^+).
\end{gather*}
\end{proposition}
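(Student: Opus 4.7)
The plan is to compute all compositions directly from the 3-copy algebra $\alg^3(\crossk,\mu)$ using formula \eqref{eq:ms}, and to verify that higher $m_k$ vanish by observing that the relevant differentials remain purely quadratic in all $(k+1)$-copies. Concretely, $m_2(\alpha,\beta)$ where $\alpha^{\vee}$ and $\beta^{\vee}$ are duals of generators $y^{23}$ and $x^{12}$ of $\alg^3(\crossk,\mu)$ is determined by the coefficient of $x^{12}y^{23}$ in $\partial_{\epsilon}(z^{13})$ as $z^{13}$ ranges over generators of the 3-copy.

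First I would handle the compositions not involving $c^+$. The subalgebra of $\alg^3(\crossk,\mu)$ generated by the line-algebra generators $a_{ij}^{rs}$ is canonically identified, via the left co-restriction $\iota_L$ from Section~\ref{sec:bordered}, with $\alg^3(\equiv,\mu_L)$; the differential on this subalgebra agrees with the line algebra differential. Consequently, the products $m_2(a_{i'j'}^+,a_{ij}^+)$ — and the vanishing of all such products save $m_2(a_{kj}^+,a_{ik}^+)$ — follow immediately from Theorem~\ref{thm:lineaugcategory}, with the sign $(-1)^{|a_{kj}^+||a_{ik}^+|+1}$ already verified there. This same argument, applied to the $(k{+}1)$-copy for $k\ge 3$, shows that higher $m_k$ involving only line-algebra inputs vanish.

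Next I would compute the differential of $c^{13}$ in the 3-copy, extending the calculation of Lemma~\ref{lem:crossdiff} in the obvious way: as in that lemma, the disks with positive corner at $c^{13}$ are a triangle terminating on the segment $a^{13}_{k,k+1}$, together with bigons and triangles passing through copy $2$ at the short chords $a^{1r}_{kk}$ or $a^{r3}_{k+1,k+1}$. Following the same geometric analysis one finds
\[
\partial c^{13} \;=\; a^{13}_{k,k+1} + a^{12}_{kk}\,c^{23} + a^{13}_{kk}\,c^{33} - (-1)^{|c|}\bigl(c^{11} a^{13}_{k+1,k+1} + c^{12} a^{23}_{k+1,k+1}\bigr).
\]
The only quadratic terms contributing nontrivially to an $m_2$ are $a^{12}_{kk}c^{23}$ and $-(-1)^{|c|}c^{12}a^{23}_{k+1,k+1}$, which by \eqref{eq:ms} produce exactly $m_2(c^+,a_{kk}^+) = -c^+$ and $m_2(a_{k+1,k+1}^+,c^+) = -c^+$; a careful bookkeeping of the Koszul sign $\sigma$ and of $|c|=0$ (forced whenever $\epsilon(c)$ can be nonzero) yields the stated signs. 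The absence of any other quadratic terms mixing a ``copy-1-to-2'' and a ``copy-2-to-3'' generator in $\partial$ of any 3-copy generator shows no further nonzero $m_2$ exists involving $c^+$.

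Finally, I would observe that $\partial c^{1,k+1}$ in the $(k{+}1)$-copy, and similarly $\partial a^{1,k+1}_{ij}$, contain only quadratic terms in the mixed generators $a^{rs}_{pq}$, $c^{rs}$ (there are no cubic or higher disks, as is visible both geometrically and from the pattern of the formulas above extended to the $m$-copy). Hence all $m_k$ for $k\ge 3$ vanish identically, completing the list. The main subtlety in executing this plan is a careful bookkeeping of signs in the 3-copy differential of $c^{13}$ and in the application of \eqref{eq:ms}; everything else reduces to the line-algebra case already treated, or to a direct geometric enumeration of disks entirely analogous to Lemma~\ref{lem:crossdiff}.
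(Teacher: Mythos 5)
Your proposal is correct and follows essentially the same route as the paper: identify the $a$-generators' differentials with the line-algebra case, compute $\dd c^{13}$ in the $3$-copy (arriving at the identical formula), and conclude from the absence of cubic or higher terms that all $m_k$, $k\ge 3$, vanish. One small remark: the appeal to $|c|=0$ is unnecessary for the signs --- the factor $(-1)^{|c|}$ in the term $-(-1)^{|c|}c^{12}a^{23}_{k+1,k+1}$ cancels against the Koszul sign $\sigma$ coming from $|c^+|=|c|+1$, so $m_2(a_{k+1,k+1}^+,c^+)=-c^+$ holds for arbitrary $\mu$, as the statement requires.
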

\begin{proof}
In the algebra of the $3$-copy, the ``$a$'' generators have differentials as in the line algebra, and we have
\[ \dd c^{13} = a^{13}_{k,k+1} + a^{12}_{kk} c^{23} + a^{13}_{kk} c^{33} - (-1)^{|c|}c^{11}a^{13}_{k+1,k+1} - (-1)^{|c|}c^{12}a^{23}_{k+1,k+1}. \]
Since there are no terms higher than quadratic, the quadratic terms are not affected by twisting by the pure augmentation $\epsilon=(\epsilon_1,\epsilon_2,\epsilon_3)$.
Recalling that $|c^+| = |c|+1$ and that $|a_{kk}^+| = |a_{k+1,k+1}^+| = 0$ gives the desired formulas.
\end{proof}

We now study the restriction morphisms.  First, on objects:

\begin{proposition} \label{prop:crossresobs}
Let $\epsilon: \alg(\crossk, \mu) \to \coeffs$ be an augmentation.  Let $\epsilon_L, \epsilon_R$ be its restrictions to the line
algebras on the left and the right.  Take $A = \sum a_{ij} |i \rangle \langle j| $ and $B = \sum b_{ij} |i \rangle \langle j|$
to be strictly upper triangular
$n\times n$ matrices with entries $a_{ij}$ and $b_{ij}$ in position $(i,j)$,
collecting the respective generators of the left and right line algebras
as in Section \ref{sec:linealg}.  Let
\[ \phi := 1 + \epsilon(c) |k+1 \rangle \langle k| = \left[ \begin{array}{c|cc|c}
I_{k-1} & 0 & 0 & 0 \\
\hline
0 & 1 & 0 & 0 \\
0 & \epsilon(c) & 1 & 0 \\
\hline
0 & 0 & 0 & I_{n-(k+1)}
\end{array} \right] \]
and let $s_k = (k, k+1) \in S_n$.  Then
\[ \epsilon_R(B)  = s_k \cdot (\phi^T)^{-1} \cdot \epsilon_L(A) \cdot (\phi^T) \cdot s_k. \]
\end{proposition}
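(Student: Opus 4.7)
The plan is to compute $\iota_R(b_{ij})$ directly by disk counting and then collect the values $\epsilon(\iota_R(b_{ij})) = \epsilon_R(b_{ij})$ into matrix form. Because $\iota_L$ is simply the inclusion of the subalgebra generated by the $a_{ij}$, one has $\epsilon_L(A) = \epsilon(A)$, so the proposition is an assertion about $\epsilon\circ\iota_R$. Since $\crossk$ has only the single interior generator $c$, any embedded rigid disk with puncture limiting to the right-boundary segment between strands $i<j$ has either no corner at $c$ or exactly one corner at $c$ with negative Reeb sign; no disk has two or more corners at $c$, consistent with the vanishing $(\epsilon(c)\,|k\rangle\langle k{+}1|)^2=0$ hidden in the matrix formula.

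I would separately handle the two disk types. Disks without a corner at $c$ follow strands straight through the crossing, so they carry right-position $p$ to left-position $\sigma(p)$ with $\sigma = s_k$; such a disk exists precisely when $\sigma(i)<\sigma(j)$, and contributes $\pm a_{\sigma(i),\sigma(j)}$. Augmenting and packaging these contributions alone yields $s_k\,\epsilon(A)\,s_k$, which already accounts for the entire right-hand side when $|c|\neq 0$ (since then $\epsilon(c)=0$ and $\phi=1$). When $|c|=0$, disks with exactly one corner at $c$ also contribute; their allowed negative-Reeb-sign quadrants, combined with the requirement that the disk reach the left boundary, restrict them to three families: one giving $\pm a_{i,k}\cdot c$ (for $j=k+1$, $i<k$), one giving $\pm c\cdot a_{k+1,j}$ (for $i=k$, $j>k+1$), and an ``invisible'' disk giving $\pm c$ at $(i,j)=(k,k+1)$. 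Expanding
\[
(\phi^T)^{-1}\,\epsilon(A)\,\phi^T \;=\; \epsilon(A) \;+\; \epsilon(A)\cdot\epsilon(c)\,|k\rangle\langle k{+}1| \;-\; \epsilon(c)\,|k\rangle\langle k{+}1|\cdot\epsilon(A)
\]
(the quadratic term vanishes because $\epsilon(A)_{k,k}=0$) and then conjugating by $s_k$ produces precisely the column-$k{+}1$, row-$k$, and position-$(k{+}1,k)$ corrections listed above, matching the disk count family-by-family.

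The main obstacle will be sign bookkeeping, since the orientation signs at the corner $c$, the Koszul signs in the algebra, and the $(-1)^{\mu(\cdot)}$ twists in the conventions $\dd A = (-1)^\mu A (-1)^\mu A$ and $\dd c = a_{k,k+1}$ must all be reconciled simultaneously on both sides. I expect to handle this by working case-by-case through the three families of one-corner disks; alternatively, one can observe that both sides of the matrix identity define chain maps out of $\alg(\equiv,\mu_R)$ with identical leading behavior modulo $\epsilon(c)$, so that uniqueness of the extension of such a chain map---forced by the relations $\dd b_{\sigma(k),\sigma(k+1)}=0$ and its analogues at neighboring entries---pins down the $\epsilon(c)$-corrections as well, giving the stated formula.
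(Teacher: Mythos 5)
Your overall strategy is the paper's: compute $\iota_R$ on the generators $b_{ij}$ by enumerating disks (at most one corner at $c$ each), package the result as a matrix identity, and apply $\epsilon$ using $\epsilon(a_{k,k+1})=\epsilon(\dd c)=0$ and $\epsilon(c)=(-1)^{|c|}\epsilon(c)$. But your disk enumeration attaches the $\epsilon(c)$-corrections to the wrong generators, and this is not a mere bookkeeping slip --- as stated it would produce a different (incorrect) matrix. The disks with a corner at $c$ occur for $b_{ik}$ ($i<k$) and $b_{k+1,j}$ ($j>k+1$), not for $b_{i,k+1}$ and $b_{kj}$: one has $\iota_R(b_{ik}) = a_{i,k+1} + a_{ik}c$ and $\iota_R(b_{k+1,j}) = a_{kj} - (-1)^{|c|}c\,a_{k+1,j}$, whereas $\iota_R(b_{i,k+1})=a_{ik}$ and $\iota_R(b_{kj})=a_{k+1,j}$ with no correction. (Geometrically: the lower boundary of the disk for $b_{ik}$ arrives at $c$ along the strand occupying right-position $k$, and may either pass over the crossing to end at left-position $k+1$, or turn a corner in the top quadrant of $c$ and end at left-position $k$; the disk for $b_{i,k+1}$ arrives along the other strand and has no admissible corner, since that would occupy the east quadrant.) Moreover there is no ``invisible disk'' at a crossing --- those occur only at right cusps in the resolution construction --- so $\iota_R(b_{k,k+1})=0$, not $\pm c$. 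Your version would give $\epsilon_R(b_{k,k+1})=\pm\epsilon(c)$, contradicting the stated formula, whose $(k,k+1)$ entry is zero.

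The misplacement is consistent with a second error in your matrix matching: the corrections in $(\phi^T)^{-1}\epsilon(A)\phi^T$ do sit in column $k+1$ and row $k$, but $\epsilon_R(B)$ equals that matrix \emph{after} conjugation by $s_k$, which moves them to column $k$ and row $k+1$, i.e.\ to the entries $\epsilon_R(b_{ik})$ and $\epsilon_R(b_{k+1,j})$. So the family-by-family match you assert only works with the corrected disk count. Finally, your fallback argument (that both sides are chain maps agreeing modulo $\epsilon(c)$ and are pinned down by relations such as $\dd b_{k,k+1}=0$) is not a proof: a \dga{} map out of $\alg(\equiv,\mu_R)$ is far from determined by its reduction modulo $\epsilon(c)$, and $\dd b_{k,k+1}=0$ imposes no constraint at all. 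The honest route is the direct one: write out $\iota_R$ on all generators with signs, observe that it equals the conjugation formula plus the term $-a_{k,k+1}|k+1\rangle\langle k|$ (which dies under $\epsilon$), and conclude.
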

\begin{proof}
Denote the generators of the right line algebra $\alg(\equiv, \mu_R)$ by $b_{ij}$.  The right co-restriction morphism is given by:
\begin{eqnarray*}
\iota_R: \alg(\equiv, \mu_R) & \to & \alg(\crossk, \mu) \\
b_{ij} & \mapsto & a_{ij} \\
b_{ik} & \mapsto & a_{i, k+1} + a_{ik} c \\
b_{kj} & \mapsto & a_{k+1,j} \\
b_{i,k+1} & \mapsto & a_{ik} \\
b_{k+1,j} & \mapsto &  a_{kj} - (-1)^{|c|}ca_{k+1,j} \\
b_{k,k+1} & \mapsto & 0.
\end{eqnarray*}
The sign comes because each downward corner vertex with even grading contributes a factor of $-1$ to the sign of a disk,
so a downward corner at $c$ contributes $(-1)^{|c|+1}$.
We rewrite the above formula in matrix form as:
\[
B \mapsto
  \left[ \begin{array}{c|cc|c}
I_{k-1} & 0 & 0 & 0 \\
\hline
0 & 0 & 1 & 0 \\
0 & 1 & - (-1)^{|c|} c & 0 \\
\hline
0 & 0 & 0 & I_{n-(k+1)}
\end{array} \right] \cdot
A \cdot  \left[ \begin{array}{c|cc|c}
I_{k-1} & 0 & 0 & 0 \\
\hline
0 & c & 1 & 0 \\
0 & 1 & 0 & 0 \\
\hline
0 & 0 & 0 & I_{n-(k+1)}
\end{array} \right] - a_{k, k+1} |k +1 \rangle \langle k |.
\]
We now apply the augmentation and observe
$\epsilon(a_{k,k+1}) = \epsilon(\dd c) = 0$,
and  $\epsilon(c) = (-1)^{|c|} \epsilon(c)$ because $\epsilon(c)=0$ unless $|c|=0$.
\end{proof}

\begin{proposition} \label{prop:crossres}
Suppose we are given an element $\xi \in \Hom_{\Aug(\crossk, \mu)}(\epsilon, \epsilon')$.  We can restrict to the left or right, obtaining
$\xi_L \in \Hom_{\Aug(\equiv, \mu_L)}(\epsilon_L, \epsilon'_L) $ and $\xi_R \in \Hom_{\Aug(\equiv, \mu_R)}(\epsilon_R, \epsilon'_R) $.  We use
$a_{ij}^+$ to denote the generators of $\Hom_{\Aug(\crossk, \mu)}(\epsilon, \epsilon')$ or $\Hom_{\Aug(\equiv, \mu_L)}(\epsilon_L, \epsilon'_L)$,
and $b_{ij}^+$ to denote the generators of $\Hom_{\Aug(\equiv, \mu_R)}(\epsilon_R, \epsilon'_R)$.

Then the left restriction is just given by $a_{ij}^+ \mapsto a_{ij}^+$; it is a map of dg categories.

On the other hand, the right restriction, despite being between dg categories, has nontrival $A_\infty$ structure.
(See Section \ref{sec:a-infinity}.)
The first order term $\Hom_{\Aug(\crossk, \mu)}(\epsilon, \epsilon')  \to \Hom_{\Aug(\equiv, \mu_R)}(\epsilon_R, \epsilon'_R)$ is given by:
\begin{eqnarray*}
a_{ik}^+ & \mapsto & b_{i,k+1}^+ + \epsilon'(c) b_{ik}^+ \\
a_{kk}^+ & \mapsto &  b_{k+1,k+1}^+ \\
a_{kj}^+ & \mapsto &   b_{k+1,j}^+ \\
a_{i,k+1}^+ & \mapsto & b_{ik}^+ \\
a_{k+1, k+1}^+ & \mapsto &  b_{kk}^+\\
a_{k+1,j}^+ & \mapsto &  b_{kj}^+ - \epsilon(c) \cdot b_{k+1,j}^+ \\
a_{k,k+1}^+ & \mapsto & 0 \\
c^+ & \mapsto & \left(\sum_{i < k} \epsilon(a_{ik}) \cdot b_{ik}^+ \right) - (-1)^{|c|}
\left( \sum_{k+1 < j} \epsilon'(a_{k+1,j}) \cdot b_{k+1,j}^+ \right)
\end{eqnarray*}
for $i<k$ and $j>k+1$, and $a_{ij}^+ \mapsto b_{ij}^+$ for $i,j \notin \{k, k+1\}$.

The second order term
$\Hom_{\Aug(\crossk, \mu)}(\epsilon', \epsilon'')\otimes
 \Hom_{\Aug(\crossk, \mu)}(\epsilon, \epsilon')  \to \Hom_{\Aug(\equiv, \mu_R)}(\epsilon_R, \epsilon''_R)$
is defined by
\begin{align*}
c^+ \otimes a_{ik}^+ &\mapsto (-1)^{|c^+||a_{ik}^+| + |a_{ik}^+| + 1}\, b_{ik}^+, && i<k\\
a_{k+1,j}^+ \otimes c^+ &\mapsto (-1)^{|a_{k+1,j}^+||c^+| + 1}\, b_{k+1,j}^+, && j>k+1\\
\end{align*}
with all other tensor products of generators mapped to zero.
There are no higher order terms.
\end{proposition}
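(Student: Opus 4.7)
Both restriction morphisms are produced by Proposition~\ref{prop:consistentF} applied to consistent sequences of DGA co-restrictions $\iota_L^{(m)}\colon \alg(\equiv,\mu_L)^m \to \alg(\crossk,\mu)^m$ and $\iota_R^{(m)}\colon \alg(\equiv,\mu_R)^m \to \alg(\crossk,\mu)^m$ furnished by the cosheaf structure of Theorem~\ref{thm:cosheaf} on the $m$-copy of the crossing tangle. The plan is therefore to identify these DGA maps on the $m$-copies and then extract the $A_\infty$ components $F_k$ from the twisted map $\Phi_\e \circ \iota^{(k+1)} \circ \Phi_{\iota^*\e}^{-1}$ as prescribed in the proof of Proposition~\ref{prop:consistentF}.

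The left restriction is immediate: each $\iota_L^{(m)}$ is the tautological inclusion of the sub-DGA generated by pairs of left endpoints of the $m$-copy, so it commutes strictly with the $\Phi_\e$ twist and sends each generator to a single generator. Hence $F_1$ is the identity $a_{ij}^+ \mapsto a_{ij}^+$ and $F_k=0$ for every $k\ge 2$, giving a strict dg functor as claimed.

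For the right restriction, I would first extend the $1$-copy formula for $\iota_R$ recorded in the proof of Proposition~\ref{prop:crossresobs} to $\iota_R^{(m)}$ by the same disk-counting applied to the bordered $m$-copy of $\crossk$. The crucial structural output is that each $\iota_R^{(m)}(b_{ij}^{rs})$ is a sum of monomials of word length at most two in the generators of $\alg(\crossk,\mu)^m$. I would then read off $F_1$ from the linear part of $\Phi_\e(\iota_R^{(2)}(b^{12}_{ij}))$: the quadratic monomials $a^{12}_{ik}c^{22}$ and $c^{11}a^{12}_{k+1,j}$ in $\iota_R^{(2)}(b^{12}_{ik})$ and $\iota_R^{(2)}(b^{12}_{k+1,j})$ twist into the correction terms $\epsilon'(c)a^{12}_{ik}$ and $-\epsilon(c)a^{12}_{k+1,j}$, while the quadratic terms $a^{11}_{ik}c^{12}$ and $c^{12}a^{22}_{k+1,j}$ twist into $\epsilon(a_{ik})c^{12}$ and $-(-1)^{|c|}\epsilon'(a_{k+1,j})c^{12}$, which together assemble into $F_1(c^+)$; dualization then yields precisely the formulas in the statement. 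The component $F_2$ is extracted analogously from the $3$-copy: the only monomials in $\iota_R^{(3)}(b^{13}_{ij})$ whose twist contributes a genuinely bilinear map $C^{12}_\mathcal{B}\otimes C^{23}_\mathcal{B}\to C^{13}_\alg$ are $a^{12}_{ik}c^{23}$ inside $\iota_R^{(3)}(b^{13}_{ik})$ and $c^{12}a^{23}_{k+1,j}$ inside $\iota_R^{(3)}(b^{13}_{k+1,j})$, and dualizing with the Koszul convention of \eqref{eq:ms} gives the two bilinear formulas stated.

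Vanishing of $F_k$ for $k\ge 3$ follows because any monomial in $\iota_R^{(m)}(b^{1m}_{ij})$ contains at most two factors whose superscripts are mixed (of the form $(r,s)$ with $r<s$), whereas contributing to $F_k$ requires $k$ such factors --- one in each $C^{i,i+1}_\mathcal{B}$. The main obstacle will be sign bookkeeping, coordinating the Koszul signs of \eqref{eq:ms}, the $(-1)^{|c|+1}$ factors inherited from disk orientations at even-degree crossings in the co-restriction formulas, and the signs introduced by the $\Phi_\e$ twist; once these are consistently tracked, the proposition reduces to a direct computation on the $2$- and $3$-copies.
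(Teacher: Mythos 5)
Your proposal is correct and follows essentially the same route as the paper: the paper likewise writes out the co-restriction $\rho_R^2$ on the $2$-copy generators explicitly, twists by the diagonal augmentation, takes the linear part and dualizes to get $F_1$, then extracts $F_2$ from the $\ast^{12}\ast^{23}$ terms in the $3$-copy, with the vanishing of $F_k$ for $k\ge 3$ following (as you note, slightly more explicitly than the paper states it) from the fact that every monomial in the co-restriction has word length at most two and hence at most two mixed-index factors.
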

\begin{proof}
The statement about restriction to the left is obvious.

Examining the $2$-copy of $\crossk$, we can write the map $\rho_R^2 : \alg^2(\equiv, \mu_R) \to \alg^2(\crossk, \mu)$ as
\begin{eqnarray*}
b_{ik}^{12} & \mapsto & a^{11}_{ik}c^{12} + a^{12}_{ik}c^{22} + a^{12}_{i,k+1} \\
b_{kk}^{12} & \mapsto & a^{12}_{k+1,k+1} \\
b_{kj}^{12} & \mapsto & a^{12}_{k+1,j} \\
b_{i,k+1}^{12} & \mapsto & a^{12}_{ik} \\
b_{k+1, k+1}^{12} & \mapsto & a^{12}_{kk} \\
b_{k+1,j}^{12} & \mapsto &  a^{12}_{kj} - (-1)^{|c|}\left(c^{11}a^{12}_{k+1,j} + c^{12}a^{22}_{k+1,j}\right) \\
b_{k,k+1}^{12} & \mapsto & 0
\end{eqnarray*}
for all $i<k$ and $j>k+1$, and $b_{ij}^{12}  \mapsto  a_{ij}^{12}$ when $i,j \notin \{k, k+1\}$.

Twisting the differential by $\epsilon=(\epsilon,\epsilon')$ and taking the linear part gives
\begin{eqnarray*}
b_{ik}^{12} & \mapsto & \epsilon(a_{ik})c^{12} + a^{12}_{ik}\epsilon'(c) + a^{12}_{i,k+1} \\
b_{kk}^{12} & \mapsto & a^{12}_{k+1,k+1} \\
b_{kj}^{12} & \mapsto & a^{12}_{k+1,j}  \\
b_{i,k+1}^{12} & \mapsto & a^{12}_{ik} \\
b_{k+1, k+1}^{12} & \mapsto & a^{12}_{kk} \\
b_{k+1,j}^{12} & \mapsto &  a^{12}_{kj} - (-1)^{|c|}\left(\epsilon(c)a^{12}_{k+1,j} + c^{12}\epsilon'(a_{k+1,j})\right) \\
b_{k,k+1}^{12} & \mapsto & 0
\end{eqnarray*}
again with $i<k$ and $j>k+1$, and $b^{12}_{ij} \mapsto a^{12}_{ij}$ otherwise.

We now recall that $(-1)^{|c|} \epsilon(c) = \epsilon(c)$ and take duals to conclude.

The higher order term in the restriction functor comes from writing the inclusion
of the three-copy of the line algebra into the crossing algebra, then taking linear duals.
Explicitly, this inclusion is
\begin{align*}
b_{ij}^{13} &\mapsto a_{ij}^{13} && (i,j \not\in \{k,k+1\}) \\
b_{ik}^{13} &\mapsto a_{i,k+1}^{13} + a_{ik}^{11}c^{13} + a_{ik}^{12}c^{23} + a_{ik}^{13}c^{33} && (i < k) \\
b_{i,k+1}^{13} &\mapsto a_{ik}^{13} && (i < k) \\
b_{kj}^{13} &\mapsto a_{k+1,j}^{13} && (j > k+1) \\
b_{k+1,j}^{13} &\mapsto a_{kj}^{13} - (-1)^{|c|}\big( c^{11}a_{k+1,j}^{13} + c^{12}a_{k+1,j}^{23} + c^{13}a_{k+1,j}^{33} \big) && (j > k+1) \\
b_{kk}^{13} &\mapsto a_{k+1,k+1}^{13} && \\
b_{k+1,k+1}^{13} &\mapsto a_{kk}^{13} && \\
b_{k,k+1}^{13} &\mapsto 0. &&
\end{align*}
Selecting the terms of the form $\ast^{12}\ast^{23}$ and dualizing,
we conclude that the only higher parts of the restriction functor
are the terms stated.
\end{proof}

Consider the general element
$\xi =  \gamma \cdot c^+ + \sum_{i\le j} \alpha_{ji} \cdot a_{ij}^+ \in \Hom_{\Aug(\crossk,\mu)}(\epsilon, \epsilon')$.
We want to compare more explicitly $\xi_L$ and $\xi_R$.
To do this, we move to the Morse complex category, and consider
$\hh(\xi_L)$ and $\hh(\xi_R)$.  Note that these come to us as matrices.
Below we often adopt the convention for indices that $i < k < k+1 < j$, and for convenience we define $\sigma_{pq} = (-1)^{(\mu(p)+1)\mu(q) + 1}$, so that in $\Aug(\equiv,\mu)$ we have $\hh(a_{pq}) = \sigma_{pq}|q\rangle\langle p|$.

We have:
\[
\hh(\xi_L) =
\left[ \begin{array}{c|cc|c}
\sigma_{i_2i_1} \alpha_{i_1 i_2} & 0 & 0 & 0 \\
\hline
\sigma_{ik}\alpha_{ki} & \sigma_{kk}\alpha_{kk} & 0 & 0 \\
\sigma_{i,k+1}\alpha_{k+1,i} & \sigma_{k,k+1}\alpha_{k+1,k} & \sigma_{k+1,k+1}\alpha_{k+1, k+1} & 0 \\
\hline
\sigma_{ij}\alpha_{ji} & \sigma_{kj}\alpha_{jk} & \sigma_{k+1,j}\alpha_{j,k+1} & \sigma_{j_2j_1}\alpha_{j_1 j_2}
\end{array} \right],
\]
where the signs are defined using the Maslov potential $\mu = \mu_L$ on the left.
On the other hand, by the above proposition we have
\begin{align*}
\hh(\xi_R) & = \sum_{i \le j \notin \{k, k+1\}} \sigma_{ij}\alpha_{ji} |j \rangle \langle i| \\
& \qquad + \sum_{i < k}  \sigma_{i,k+1}\alpha_{k+1, i} |k \rangle \langle i|  + \alpha_{ki} \left( \sigma_{ik}|k + 1 \rangle \langle i|  + \sigma_{i,k+1}\epsilon'(c) |k \rangle \langle i|  \right)
\\
& \qquad + \sum_{k+1 < j} \sigma_{kj}\alpha_{jk} |j \rangle \langle k+1 |+  \alpha_{j, k+1} \left( \sigma_{k+1,j}|j \rangle \langle k| - \sigma_{kj}\epsilon(c) |j  \rangle \langle k+1| \right)
\\
& \qquad + \sigma_{kk}\alpha_{kk} |k+1 \rangle \langle k+1| + \sigma_{k+1,k+1}\alpha_{k+1, k+1} |k \rangle \langle k| \\
& \qquad + \gamma \left(  \sum_{i < k} \sigma_{i,k+1}\epsilon(a_{ik}) |k \rangle \langle i |  - (-1)^{|c|} \sum_{k+1 < j} \sigma_{kj}\epsilon'(a_{k+1,j}) | j \rangle \langle k+1| \right),
\end{align*}
where the signs $\sigma_{pq}$ are defined again in terms of $\mu_L$ for consistency; recall that $\mu_R = \mu_L \circ s_k$.  In matrix form, we have
\[
\hh(\xi_R) =  \left[ \begin{array}{c|cc|c}
\sigma_{i_2i_1}\alpha_{i_1 i_2} & 0 & 0 & 0 \\
\hline
\sigma_{i,k+1}x  & \sigma_{k+1,k+1}\alpha_{k+1,k+1} & 0 & 0 \\
\sigma_{ik}\alpha_{ki} & 0 & \sigma_{kk}\alpha_{kk} & 0 \\
\hline
\sigma_{ij}\alpha_{ji} & \sigma_{k+1,j}\alpha_{j,k+1} & \sigma_{kj}y  & \sigma_{j_2j_1}\alpha_{j_1 j_2}
\end{array} \right]
\]
where $x=\alpha_{k+1,i} + \epsilon'(c) \alpha_{ki} + \epsilon(a_{ik}) \gamma$ and $y=\alpha_{jk} - \epsilon(c) \alpha_{j,k+1} - (-1)^{|c|} \epsilon'(a_{k+1,j}) \gamma$.
So
\begin{equation} \label{eq:hcross}
\begin{split}
&(\phi')^{-1} s_k \hh(\xi_R) s_k \phi = \\
&\left[ \begin{array}{c|cc|c}
\sigma_{i_2i_1}\alpha_{i_1 i_2} & 0 & 0 & 0 \\
\hline
\sigma_{ik}\alpha_{ki}  & \sigma_{kk}\alpha_{kk} & 0 & 0 \\
\sigma_{i,k+1}(\alpha_{k+1,i} + \epsilon(a_{ik}) \gamma)  &
\sigma_{k+1,k+1}\epsilon(c) \alpha_{k+1, k+1}  - \sigma_{kk}
\epsilon'(c) \alpha_{kk} & \sigma_{k+1,k+1} \alpha_{k+1,k+1} & 0 \\
\hline
\sigma_{ij} \alpha_{ji} & \sigma_{kj}(\alpha_{jk}   - (-1)^{|c|}
\epsilon'(a_{k+1,j}) \gamma) & \sigma_{k+1,j}\alpha_{j,k+1}   &
\sigma_{j_2 j_1} \alpha_{j_1 j_2}
\end{array} \right],
\end{split}
\end{equation}
where again the Maslov potentials are for the left and not for the right.

\begin{theorem}
\label{thm:crossmc}
We define a morphism of $A_\infty$ categories
\[ \hh:  \Aug(\crossk, \mu) \to MC(\crossk, \mu) \]
on objects by
\[ \epsilon  \mapsto  (\hh(\epsilon_L), -\epsilon(c)) \]
and on morphisms $\xi \in \Hom(\epsilon,\epsilon')$ by
\[ \xi \mapsto (\phi')^{-1} s_k  \hh(\xi_R) s_k \phi, \]
where $\phi = 1 + \epsilon(c)|k+1\rangle\langle k|$ and $\phi' = 1 + \epsilon'(c)|k+1\rangle\langle k|$.  This morphism is a bijection on objects and an equivalence of categories.
It commutes with restriction in the following sense:
\begin{itemize}
\item
At the level of objects, $\hh$ commutes with restriction: $\hh(\epsilon)_L = \hh(\epsilon_L)$ and
$\hh(\epsilon)_R = \hh(\epsilon_R)$.
\item
At the level of morphisms, it commutes with restriction to the right:  $\hh(\xi_R) = \hh(\xi)_R$.
\item
At the level of morphisms, it commutes up to homotopy with restriction on the left: \[ \hh(\xi_L)  - \hh(\xi)_L  = (dH + Hm_1) \xi, \]
where $H$ is the homotopy given by sending $c^+ \mapsto \sigma_{k,k+1}|k\rangle \langle k+1|$ and all other generators to zero, i.e.,
\begin{align*}
H: \Hom_{\Aug(\crossk, \mu)}(\epsilon, \epsilon') & \to \Hom_{MC(\equiv, \mu)}(\hh(\epsilon_L), \hh(\epsilon'_L)) \\
\eta & \mapsto (-1)^{(\mu_L(k)+1)\mu_L(k+1)+1} (\mathrm{Coeff}_{c^+} \eta)  |k+1\rangle \langle k|.
\end{align*}
\end{itemize}
Higher order terms are determined by noting that the functor is just the right restriction map of the augmentation category --- which has higher terms (see Proposition \ref{prop:crossres}) --- followed by the isomorphism of augmentation and Morse complex line categories.
\end{theorem}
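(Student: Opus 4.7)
The plan is to verify each assertion in turn, exploiting the strict line equivalence of Theorem~\ref{thm:lineaugcategory} and the explicit descriptions of the restriction morphisms provided by Propositions~\ref{prop:crossresobs} and~\ref{prop:crossres}. The overall strategy is to present $\hh$ as the composition of the right-restriction $A_\infty$ functor on the augmentation side, the line isomorphism $\hh$ on the right, and the strict inverse (given by conjugation by $s_k\phi$ and $(\phi')^{-1}s_k$) of the right-restriction dg functor on the Morse-complex side; the remaining compatibility with left restriction is then an explicit matrix identity.

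First I would check well-definedness and bijectivity on objects. Lemma~\ref{lem:crossobj-left} identifies $\Aug(\crossk,\mu)$-objects with pairs $(\epsilon_L,\epsilon(c))$ such that $\epsilon_L(a_{k,k+1})=0$ and $\epsilon(c)=0$ unless $\mu(k)=\mu(k+1)$. The line case translates $\epsilon_L(a_{k,k+1})=0$ into $\langle k+1|\hh(\epsilon_L)|k\rangle=0$, so by Lemma~\ref{lem:crossobj} the pair $(\hh(\epsilon_L),-\epsilon(c))$ is an object of $MC(\crossk,\mu)$, and this correspondence is bijective because both constructions are bijective onto the same space of pairs. Essential surjectivity in the categorical sense then follows by combining this with Proposition~\ref{prop:mcrosssat}.

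Next I would verify well-definedness on morphisms and the functor equations. The matrix displayed in \eqref{eq:hcross} is lower triangular in the $\Bmu$ basis (giving preservation of the left filtration) and, after conjugating by the appropriate $\theta_{-\epsilon'(c)}$ and $\theta_{-\epsilon(c)}^{-1}$, reduces to $\hh(\xi_R)$ acting on $\Bmu_R$, so by Lemma~\ref{lem:crosshom} it also preserves the right filtration; hence $\hh(\xi)$ is a bona fide morphism in $MC(\crossk,\mu)$. The equality $\hh(\xi)_R=\hh(\xi_R)$ is then built into the definition, which makes the right-restriction square commute on the nose. The $A_\infty$ functor relations follow from the factorization described above: the right-restriction functor in $\Aug$ is an honest $A_\infty$ functor by Proposition~\ref{prop:crossres}, the line map and the conjugation are strict, and composing an $A_\infty$ functor with strict equivalences preserves the $A_\infty$ relations. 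The higher components of $\hh$ therefore coincide with those of $\rho_R$ transported through these strict maps, and full faithfulness on Homs is visible from the matrix description.

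The main obstacle is the remaining identity $\hh(\xi_L)-\hh(\xi)_L = (dH+Hm_1)(\xi)$, which does not factor through the composite above because left restriction only commutes with $\hh$ up to homotopy. I would carry this out by direct matrix bookkeeping: writing $\hh(\xi_L)$ directly from the line isomorphism applied to $\xi_L$, comparing with the matrix \eqref{eq:hcross} (which gives $\hh(\xi)_L$ after forgetting the $z$-coordinate), and noting that the discrepancies appear precisely in the $(k+1,k)$, $(k+1,i)$ for $i<k$, and $(j,k)$ for $j>k+1$ entries, with coefficients involving $\epsilon(c),\epsilon'(c),\epsilon(a_{ik}),\epsilon'(a_{k+1,j})$ and the coefficient $\gamma$ of $c^+$ in $\xi$. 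Each such discrepancy would then be matched with the corresponding entry of $dH(\xi)+Hm_1(\xi)$, using the formulas for $m_1$ in Lemma~\ref{lem:crossdiff} and for $d$ in Definition~\ref{def:MClines}. The book-keeping of signs, governed by $\mu$ and the conventions $\sigma_{pq}=(-1)^{(\mu(p)+1)\mu(q)+1}$, is the only nontrivial part of the argument; once the entries match termwise the theorem is proved.
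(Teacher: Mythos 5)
Your proposal is correct and follows essentially the same route as the paper: objects via Lemma~\ref{lem:crossobj-left} and the line isomorphism, well-definedness of morphisms via \eqref{eq:hcross} together with Lemma~\ref{lem:crosshom}, the $A_\infty$ relations via the factorization through $\rho_R$ composed with strict isomorphisms, and the left-restriction identity by direct matrix comparison with $dH+Hm_1$. The one point you gloss over is the claim that the equivalence on hom spaces is ``visible from the matrix description'': the chain-level map is not injective --- it kills the two-dimensional subspace spanned by $a_{k,k+1}^+$ and $m_1(a_{k,k+1}^+)$ --- so, as the paper does by comparing Lemma~\ref{lem:crossdiff} with Proposition~\ref{prop:crossres}, you should note that this kernel is acyclic and that the map is surjective onto $\Hom_{MC(\crossk,\mu)}$ by a dimension count, whence it is a quasi-isomorphism.
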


\begin{proof}
Lemma \ref{lem:crossobj-left} implies that, on objects, the map is well defined and a bijection.  Comparison
of \eqref{eq:hcross} and Lemma \ref{lem:crosshom} reveals that $(\phi')^{-1} s_k  \hh(\xi_R) s_k \phi$ is in fact
a morphism in $MC(\crossk, \mu)$.  The map was built from the A-infinity $\epsilon \mapsto \epsilon_R$ by composing with
isomorphisms, so is an A-infinity morphism.  Comparison of Lemma \ref{lem:crossdiff} with Proposition \ref{prop:crossres}
shows that the kernel of the map $\xi \mapsto \xi_R$ is exactly the two-dimensional space spanned by
$a_{k,k+1}^+$ and $m_1(a_{k,k+1}^+)$; the same is true for $\xi \mapsto (\phi')^{-1} s_k  \hh(\xi_R) s_k \phi$.
Counting dimensions, this is surjective to homs in $MC(\crossk, \mu)$.  Thus we have a map surjective on the chain level
which kills an acyclic piece; it is thus an equivalence.

We next check that $\hh$ commutes with restriction on the right. At the level of objects, by Proposition~\ref{prop:crossresobs}, we have
$\epsilon_R = s_k(\phi^T)^{-1}\epsilon_L(A)\phi^T s_k$, whence by Theorem~\ref{thm:lineaugcategory},
\[
\hh(\epsilon_R) = (-1)^{\mu_R} s_k \phi \epsilon_L(A)^T \phi^{-1} s_k.
\]
On the other hand, since $\hh(\epsilon) = ((-1)^{\mu_L}\epsilon(A)^T,-\epsilon(c))$, we compute from Definition~\ref{def:MCcrossres} that
\[
\hh(\epsilon)_R = \theta_{-\epsilon(c)} (-1)^{\mu_L}\epsilon_L(A)^T \theta_{-\epsilon(c)}^{-1},
\]
where $\theta_z$ is the identity matrix except for the $2\times 2$ block determined by rows $k,k+1$ and columns $k,k+1$, which is
$\left[\begin{smallmatrix} -z& 1\\1 & 0 \end{smallmatrix}\right]$. Matrix calculations show that $s_k\phi = \theta_{-\epsilon(c)}$
and $(-1)^{\mu_R}\theta_{-\epsilon(c)} = \theta_{-\epsilon(c)}(-1)^{\mu_L}$ (for the latter, note that 
$\mu_L(k)=\mu_R(k+1)$ and $\mu_L(k+1) = \mu_R(k)$ must be equal if $\epsilon(c) \neq 0$), and so $\hh(\epsilon_R) = \hh(\epsilon)_R$. At the level of morphisms,
$\hh$ commutes with right restriction essentially by definition:
\[
\hh(\xi)_R = ((\phi')^{-1} s_k  \hh(\xi_R) s_k \phi)_R= \theta_{-\epsilon'(c)}(\phi')^{-1} s_k  \hh(\xi_R) s_k \phi\theta_{-\epsilon(c)}^{-1} = \hh(\xi_R).
\]

For restriction on the left, note that $\hh(\epsilon_L) = \hh(\epsilon)_L$ by definition. It remains to show that $\hh$ commutes up to homotopy with left restriction on morphisms.
From \eqref{eq:hcross}, we find that
\[ \hh(\xi_L)  - \hh(\xi)_L =
\left[ \begin{array}{c|cc|c}
0 & 0 & 0 & 0 \\
\hline
0  & 0 & 0 & 0 \\
- \sigma_{i,k+1}\epsilon(a_{ik}) \gamma & \sigma_{k,k+1}\alpha_{k+1,k} + \sigma_{kk}\epsilon'(c) \alpha_{k,k} - \sigma_{k+1,k+1}\epsilon(c) \alpha_{k+1,k+1}  & 0 & 0 \\
\hline
0 &  \sigma_{kj}(-1)^{|c|} \epsilon'(a_{k+1,j}) \gamma  & 0 & 0
\end{array} \right].
\]
On the other hand we calculate
\begin{align*}
dH(\xi) &= \sigma_{k,k+1}\gamma \cdot d|k+1\rangle\langle k| \\
&= \sigma_{k,k+1} \gamma (-1)^{\mu(k+1)+1} \left(\sum_{i<k} \epsilon(a_{ik})|k+1\rangle\langle i|
+ \sum_{j>k+1} \epsilon'(a_{k+1,j})|j\rangle\langle k|\right) \\
&= \sum_{i<k} (-\sigma_{i,k+1} \epsilon(a_{ik})\gamma)|k+1\rangle\langle i|
+ \sum_{j>k+1} \sigma_{kj}(-1)^{|c|} \epsilon'(a_{k+1,j})\gamma|j\rangle\langle k| \\
&= \left[ \begin{array}{c|cc|c}
0 & 0 & 0 & 0 \\
\hline
0  & 0 & 0 & 0 \\
-  \sigma_{i,k+1}\epsilon(a_{ik})\gamma  & 0  & 0 & 0 \\
\hline
0 &   \sigma_{kj}(-1)^{|c|} \epsilon'(a_{k+1,j})\gamma  & 0 & 0
\end{array} \right],
\end{align*}
where we use the fact that $\epsilon(a_{ik}) = 0$ unless $\mu(i)-\mu(k)=1$ and $\epsilon'(a_{k+1,j})=0$ unless $\mu(k+1)-\mu(j) = 1$; and
\[ \mathrm{Coeff}_{c^+} m_1(\xi) = \alpha_{k+1, k} - \alpha_{k+1, k+1} \epsilon(c) + \alpha_{kk} \epsilon'(c). \]
We note that if $\epsilon(c) \neq 0$ or $\epsilon'(c) \neq 0$ then $\mu(k)=\mu(k+1)$, so $\sigma_{k,k+1}\epsilon(c) = \sigma_{k+1,k+1}\epsilon(c)$ and $\sigma_{k,k+1}\epsilon'(c)=\sigma_{kk}\epsilon'(c)$; thus multiplying this last equation by $\sigma_{k,k+1}|k+1\rangle\langle k|$ yields
\[ H(m_1(\xi)) = \left( \sigma_{k,k+1}\alpha_{k+1, k} - \sigma_{k+1,k+1} \alpha_{k+1, k+1} \epsilon(c) + \sigma_{kk}\alpha_{kk} \epsilon'(c) \right) |k+1\rangle\langle k|. \]
Thus we conclude that $\hh(\xi_L) - \hh(\xi)_L = (dH + Hm_1)\xi$.
\end{proof}

\subsubsection{Right cusps}

We now consider a bordered plat ``$\succ$'' which is the front projection of a set of $n$ right cusps.
Near the left, it is $2n$ horizontal lines, which we number $1,2,\dots,2n$ from top to bottom,
and each pair $2k-1,2k$ is connected by a right cusp; we place a base point $\ast_k$ at this cusp and let $\sigma_k = 1$ if the plat is oriented downward at this cusp or $\sigma_k = -1$ if it is oriented upward.  We fix a Maslov potential $\mu$, which is determined
by its restriction to the left $\mu_L: \{1, \ldots, 2n\} \to \bZ$.  The right cusps enforce that $\mu_L(2k) + 1= \mu_L(2k-1)$.

The left co-restriction $$\iota_L: \alg(\equiv, \mu_L)  \to  \alg(\succ, \mu)$$ identifies $\alg(\equiv, \mu_L)$ with a
subalgebra of $\alg(\succ, \mu)$ with ${2n \choose 2}$ generators $a_{ij}$.  The algebra $\alg(\succ, \mu)$ has $n$
additional generators $x_1,\dots,x_n$ naming the cusps, as well as generators $t_1,t_1^{-1},\dots,t_n,t_n^{-1}$ corresponding to the base points.  That is, the generator $x_k$ corresponds to the right cusp
connecting points $2k-1$ and $2k$, and has grading $|x_k|=1$ and satisfies $\dd x_k = t_k^{\sigma_k}+a_{2k-1,2k}$.
This ensures that if $\epsilon$ is an augmentation of $\alg(\succ, \mu)$, then $\epsilon(x_k)=0$ and $\epsilon(a_{2k-1,2k}) = -\epsilon(t_k)^{\sigma_k}$ for all $k$; since $t_k$ is invertible, so is $\epsilon(a_{2k-1,2k})$.

\begin{proposition}
The restriction $\rho_L: \Aug(\succ, \mu) \to \Aug(\equiv, \mu_L)$ is strictly fully faithful and an injection on objects.
Its image consists of all $\epsilon: \alg(\equiv, \mu_L) \to \coeffs$ such that $\epsilon(a_{2k-1,2k}) \in \coeffs^\times$ for $1\leq k \leq n$.
\end{proposition}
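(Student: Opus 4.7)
The plan has three movements: bijection on objects, an identification of the hom complexes, and a compatibility check for higher $A_\infty$ compositions.

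First, for the statement on objects, I would argue directly from the defining relations. Any augmentation $\epsilon$ of $\alg(\succ,\mu)$ must kill $x_k$ on degree grounds (since $|x_k|=1$), and the identity $0 = \epsilon(\dd x_k) = \epsilon(t_k)^{\sigma_k} + \epsilon(a_{2k-1,2k})$ forces $\epsilon(a_{2k-1,2k}) = -\epsilon(t_k)^{\sigma_k}$, which is a unit since $\epsilon(t_k)\in\coeffs^{\times}$. Conversely, given any augmentation $\epsilon_L$ of $\alg(\equiv,\mu_L)$ with $\epsilon_L(a_{2k-1,2k})\in\coeffs^{\times}$, I would define $\epsilon(x_k)=0$ and $\epsilon(t_k) := \bigl(-\epsilon_L(a_{2k-1,2k})\bigr)^{\sigma_k}$, and check that this extends $\epsilon_L$ to a \dga{} map uniquely. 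The injectivity on objects is then immediate, since $\epsilon$ is completely determined by the data $(\epsilon_L,\epsilon(t_1),\ldots,\epsilon(t_n))$ and the $t_k$-values are themselves forced by $\epsilon_L$.

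Second, for the identification of hom spaces I would analyze the front-projection $2$-copy $\succ^2$. The crucial geometric observation is that in the bordered tangle $\succ$ there are no left cusps at all, so, following the recipe of Section~\ref{sssec:frontcopy}, each right cusp $c_k$ contributes to $\succ^2$ the two pure cusps $c_k^{11},c_k^{22}$ and a single mixed crossing $c_k^{21} \in \rR^{21}$ just to the left of the cusp, together with the paired base points $t_k^1,t_k^2$; crucially, \textit{no} mixed generator $c_k^{12}$ appears, because such a generator would live at the paired left cusp, which is absent. Consequently the only generators of $C_{12}$ are the pair-of-left-endpoint generators $a_{ij}^{12}$, and these are in obvious bijection with the generators of $\hom(\epsilon_{1,L},\epsilon_{2,L})$ in $\Aug(\equiv,\mu_L)$. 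I would then verify that the restriction map $\rho_L$ sends the generator dual to $a_{ij}^{12}$ in $\hom_{\Aug(\succ)}(\epsilon_1,\epsilon_2)$ to the generator with the same name in $\hom_{\Aug(\equiv,\mu_L)}(\rho_L\epsilon_1,\rho_L\epsilon_2)$, so it is an isomorphism of graded modules.

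Third, I need to show strictness: that $\rho_L$ intertwines $m_1$ and all higher $m_k$ on the nose. Here I would inspect the twisted differential $\dd_{\epsilon}$ on $a_{ij}^{1,k+1}$ inside $\alg(\succ^{k+1})$: by Definition~\ref{def:tangle-algebra}, $\dd a_{ij}$ is the purely formal expression $\sum_{i<k<j}(-1)^{|a_{ik}|+1}a_{ik}a_{kj}$, an identity which is preserved under passage to the multi-copy tangle. Hence no disks ``wrap around'' a right cusp to introduce $x_k$-, $t_k$-, or $c_k^{21}$-contributions into $\dd a_{ij}^{1,k+1}$, and the linear (respectively $k$-linear) part of $\dd_{\epsilon}$ on the pair-of-left-endpoint generators agrees precisely with the corresponding operation in the line algebra. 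Dualizing and comparing with \eqref{eq:line-category-m1} and Theorem~\ref{thm:lineaugcategory} then shows $m_k^{\Aug(\succ)} = m_k^{\Aug(\equiv,\mu_L)} \circ \rho_L^{\otimes k}$ on the nose.

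The main obstacle is the combinatorial verification in step three: one must be sure that in the $(k{+}1)$-copy $\succ^{k+1}$ no holomorphic disk with positive puncture at a pair-of-left-endpoint generator $a^{1,k+1}_{ij}$ picks up any contribution from the right half of the diagram (i.e. from cusps, base points, or the mixed generators $c_\ell^{s,s-1}$). Once the formal-combinatorial definition of the pair-of-left-endpoint differential is invoked, this reduces to checking that the construction of the $m$-copy algebra respects the local nature of these generators, which is essentially the cosheaf property encoded by Theorem~\ref{thm:cosheaf} applied to the $m$-copy tangles.
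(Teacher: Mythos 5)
Your proposal is correct and follows essentially the same route as the paper: the object statement comes from $\epsilon(x_k)=0$ together with $\partial x_k = t_k^{\sigma_k}+a_{2k-1,2k}$, and full faithfulness comes from the observation that the $m$-copy of $\succ$ has no mixed Reeb chords in $\rR^{12}$ (no left cusps, hence no $c_k^{12}$), so the hom spaces are spanned entirely by the pair-of-left-endpoint generators and restriction is an isomorphism. Your third movement makes explicit what the paper leaves implicit — that strictness follows because $\partial a_{ij}$ is defined by the purely formal expression in Definition~\ref{def:tangle-algebra} and the co-restriction is literally an inclusion of generators — which is a fine, if slightly over-cautious, elaboration of the same argument.
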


\begin{proof}
Injectivity on objects follows from the fact that $\epsilon(x_k) = 0$; the characterization of the image follows from
the discussion immediately above the proposition.

To see full faithfulness, note that the $2$-copy of the plat $\succ$ contains no crossings where the overcrossing is on copy $1$ (the upper copy) and the undercrossing is on copy $2$ (the lower copy). Thus if $\e_1,\e_2$ are augmentations of $\succ$, then $\Hom(\e_1,\e_2)$ in $\Aug(\succ,\mu)$ and $\Hom(\rho_L(\e_1),\rho_L(\e_2))$ in $\Aug(\equiv,\mu_L)$ are both generated by the same generators $a_{ij}^+$ where $1\leq i\leq j\leq 2n$, and
$\rho_L$ is an isomorphism on Hom spaces.
%Full faithfulness of the restriction follows from the fact that the $m$-copy of this plat
%contains no ``$+$'' Reeb chords whatsoever, so the restriction on Hom spaces is an isomorphism.
\end{proof}

\begin{corollary}
The isomorphism $\hh:\Aug(\equiv, \mu_L) \to MC(\equiv, \mu_L)$ identifies $\Aug(\succ, \mu)$ with $MC(\succ, \mu)$.
\end{corollary}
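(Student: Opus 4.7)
The plan is to reduce the corollary to the identification of the two subcategories on the level of objects, using fullness and the isomorphism $\hh$ established on the ambient line categories.

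First, I would invoke the preceding proposition: the left restriction $\rho_L : \Aug(\succ, \mu) \hookrightarrow \Aug(\equiv, \mu_L)$ is strictly fully faithful and injective on objects, with image exactly the full subcategory of augmentations $\epsilon$ satisfying $\epsilon(a_{2k-1,2k}) \in \coeffs^\times$ for all $k$. By definition, $MC(\succ, \mu) \subset MC(\equiv, \mu_L)$ is likewise the full subcategory of $d$ with prescribed invertibility of the relevant subdiagonal matrix entries. Since $\hh : \Aug(\equiv, \mu_L) \xrightarrow{\sim} MC(\equiv, \mu_L)$ is an isomorphism of dg categories (Theorem \ref{thm:lineaugcategory}) and since the restriction of an isomorphism to corresponding full subcategories is again an isomorphism, the bulk of the argument is the object-level matching.

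For this matching, I would simply compute. Recall that $\hh(\epsilon) = d := (-1)^\mu \epsilon(A)^T$, where $A = \sum_{i<j} a_{ij}\,|i\rangle\langle j|$. Thus
\[
d \;=\; \sum_{i<j}(-1)^{\mu(j)}\,\epsilon(a_{ij})\,|j\rangle\langle i|,
\]
and in particular the relevant subdiagonal matrix entry at the $k$-th cusp is $(-1)^{\mu(2k)}\epsilon(a_{2k-1,2k})$. Hence $d$ lies in $MC(\succ, \mu)$ if and only if $\epsilon(a_{2k-1,2k}) \in \coeffs^\times$ for every $k$, which is precisely the characterization of the image of $\rho_L$ from the preceding proposition. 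Because both inclusions $\Aug(\succ,\mu) \hookrightarrow \Aug(\equiv,\mu_L)$ and $MC(\succ,\mu) \hookrightarrow MC(\equiv,\mu_L)$ are full and $\hh$ is already known to be an isomorphism on hom-complexes between matched objects, no further work on morphisms is required.

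No step here is really an obstacle; the content of the corollary is essentially bookkeeping, once the isomorphism $\hh$ of the line categories and the explicit description of $\Aug(\succ,\mu)$ are in hand. The only mild subtlety is keeping conventions straight between the upper-triangular matrix $A$ of generators on the augmentation side and the filtration-preserving (lower-triangular) matrix $d$ on the Morse-complex side, which is handled by the transpose and the $(-1)^\mu$ twist in the definition of $\hh$.
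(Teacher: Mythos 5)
Your proposal is correct and follows the same route as the paper, whose entire proof is the one-line instruction to compare the definition of $MC(\succ,\mu)$ with the characterization of the image of $\rho_L$ from the preceding proposition; you have simply spelled out that comparison (the computation $\langle 2k\,|\,d\,|\,2k-1\rangle = (-1)^{\mu(2k)}\epsilon(a_{2k-1,2k})$ and the observation that fullness of both inclusions reduces everything to objects). No gaps.
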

\begin{proof} Compare the definition of $MC(\succ, \mu)$ to the above proposition.
\end{proof}

We define $\hh: \Aug(\succ, \mu) \to MC(\succ, \mu)$ to be this restriction.

\begin{corollary}
All objects in $\Aug(\succ,\mu)$ are isomorphic.
\end{corollary}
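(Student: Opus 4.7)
The plan is to transport the statement through the equivalence $\hh : \Aug(\succ,\mu) \to MC(\succ,\mu)$ established in the corollary immediately preceding. Concretely, the previous corollary shows that $\hh$ is induced by the restriction $\rho_L$, which is strictly fully faithful and injective on objects, with image precisely the objects $\epsilon$ of $\Aug(\equiv,\mu_L)$ for which $\epsilon(a_{2k-1,2k}) \in \coeffs^{\times}$ for all $k$; under the isomorphism $\Aug(\equiv,\mu_L) \cong MC(\equiv,\mu_L)$ of Theorem~\ref{thm:lineaugcategory}, this condition becomes precisely the definition of $MC(\succ,\mu)$. Hence $\hh$ is an equivalence of categories (indeed a bijection on objects that is fully faithful on morphisms).

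Since equivalences preserve isomorphism classes of objects, it suffices to show that all objects of $MC(\succ,\mu)$ are isomorphic, which is exactly Proposition~\ref{prop:onlyonecusp}. Thus given any two augmentations $\epsilon,\epsilon' \in \Aug(\succ,\mu)$, the objects $\hh(\epsilon)$ and $\hh(\epsilon')$ are isomorphic in $MC(\succ,\mu)$, and applying $\hh^{-1}$ (on the level of isomorphism classes in the cohomology category) produces an isomorphism $\epsilon \cong \epsilon'$ in $H^0 \Aug(\succ,\mu)$, hence in $\Aug(\succ,\mu)$.

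There is essentially no obstacle: the only input beyond formal nonsense is Proposition~\ref{prop:onlyonecusp}, which was proved by writing down an explicit conjugating matrix $u = d_0 d_0^T + d_0^T d$ from any object $d$ to the standard model $d_0 = \sum_k |2k\rangle\langle 2k-1|$. If a more intrinsic argument within $\Aug(\succ,\mu)$ were desired, one could equivalently construct, for any two $\epsilon,\epsilon'$, an explicit degree zero $m_1$-cocycle in $\hom(\epsilon,\epsilon')$ whose class is invertible under $m_2$, using the fact that the invertibility of $\epsilon(a_{2k-1,2k})$ and $\epsilon'(a_{2k-1,2k})$ at each cusp provides the required non-degeneracy. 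However, routing the argument through $MC$ is cleaner and keeps the proof a one-line consequence of the preceding results.
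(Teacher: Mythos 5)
Your proof is correct and follows exactly the paper's route: the paper also deduces the statement by transporting through the identification $\hh: \Aug(\succ,\mu) \cong MC(\succ,\mu)$ from the preceding corollary and invoking Proposition~\ref{prop:onlyonecusp}. The extra detail you supply about why $\hh$ is an equivalence is already contained in the preceding corollary, so nothing further is needed.
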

\begin{proof}
We saw this for the Morse complex category in Proposition \ref{prop:onlyonecusp}.
\end{proof}

\subsubsection{Left cusps}

Let ``$\prec$'' denote the front projection of a set of $n$ left cusps.
Near the right, it is $2n$ horizontal lines, which we number $1,2,\dots,2n$ from top to bottom,
and each pair $2k-1,2k$ is connected by a left cusp.  We fix a Maslov potential $\mu$, which is determined
by its restriction to the right $\mu_R: \{1, \ldots, 2n\} \to \bZ$.  The left cusps enforce that $\mu_R(2k) + 1= \mu_R(2k-1)$.

The algebra $\alg(\prec, \mu)$ is simply the ground ring $\coeffs$; and hence there is a unique augmentation
$\epsilon: \alg(\prec, \mu) = \coeffs \xrightarrow{\operatorname{id}} \coeffs$.

The right co-restriction $\iota_R: \alg(\equiv, \mu_R) \to \alg(\prec, \mu)$ is given by the formula
\[ a_{ij}  \mapsto \begin{cases} 1 & (i,j)=(2k-1,2k) \\ 0 & \mathrm{otherwise}. \end{cases} \]
The restriction $\epsilon_R$ of the augmentation $\epsilon$ is given by the same formula.

To determine the $A_\infty$ structure
\[ m_p: \hom(\epsilon,\epsilon) \otimes \dots \otimes \hom(\epsilon,\epsilon) \to \hom(\epsilon,\epsilon) \]
on $\Aug(\prec, \mu)$, we consider the $(p+1)$-copy of $T_L$.  Here the $k$-th left cusp (i.e.\, the one connecting points $2k-1$ and $2k$ on the line $R$) gives rise to
${p+1 \choose 2}$ generators $y^{ij}_k$, each corresponding to a crossing of the $i$-th copy over the $j$-th copy for $i < j$.

\begin{proposition}
The chain complex $\Hom_{\prec}(\epsilon,\epsilon)$ is freely generated by the degree zero elements $y_1^+,\dots,y_n^+$,
and has vanishing differential.  The only nonvanishing composition is
$m_2(y_k^+, y_k^+) = -y_k^+$.
\end{proposition}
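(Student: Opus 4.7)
The plan is to compute directly in the $(p+1)$-copy $\prec^{p+1}$ using the bordered DGA construction of Section \ref{sec:bordered} and the composition formula \eqref{eq:ms}.

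First, I would identify the generators of $\hom(\epsilon,\epsilon) = C^\vee_{12}$. In $\prec^2$, the only Reeb chords in $C^{12}$ are the nested-cusp crossings $y_k^{12}$, one at each cusp $k$, between the lower strand of copy $1$ and the upper strand of copy $2$. The left-cusp relation $\mu(2k-1) = \mu(2k)+1$ gives $|y_k^{12}| = -1$, so $|y_k^+| = 0$. More generally, every generator of $\alg(\prec^{p+1})$ is a cusp crossing $y_k^{ij}$ of degree $-1$. This uniform degree already forces $m_p = 0$ for $p\neq 2$ on all-$y_k^+$ inputs: by \eqref{eq:ms}, the coefficient $\operatorname{Coeff}_{\alpha_1\cdots\alpha_p}(\dd a)$ can be nonzero only if $|a| = 1 - p$, and no generator has degree $0$ or $\leq -2$. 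In particular $m_1 = 0$, so the differential on $\hom(\epsilon,\epsilon)$ vanishes.

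The main computation is therefore $m_2(y_k^+, y_k^+)$, which by \eqref{eq:composition} and \eqref{eq:ms} reduces to the coefficient of $y_k^{12}\cdot y_k^{23}$ in $\dd y_k^{13}$ inside $\alg(\prec^3)$. I would exhibit a single ``quadrilateral'' disk at the $k$th cusp in the 3-copy, with corners $y_k^{13}$ (rightmost; the positive corner, which is a local maximum of $x$), $y_k^{12}$ (top), $y_k^{23}$ (bottom), and the cusp of copy $2$ (leftmost; the unique boundary minimum of $x$ permitted by Definition \ref{def:tangle-algebra}); its boundary consists of arcs of the strands $L_1, U_2, L_2, U_3$. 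Since all cusp crossings have odd degree $-1$, the top/bottom-quadrant sign factors $(-1)^{|c_j|+1}$ at both negative corners equal $+1$, and the counterclockwise traversal from $y_k^{13}$ produces $+\,y_k^{12}\cdot y_k^{23}$. Applying \eqref{eq:ms} with $\sigma = |y_k^+|^2 + |y_k^+| + 1 = 1$ yields $m_2(y_k^+, y_k^+) = -y_k^+$. The vanishing $m_2(y_k^+, y_l^+) = 0$ for $k\neq l$ is immediate because disks at distinct cusps live in disjoint vertical strips.

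The hardest step will be confirming that this quadrilateral is the only rigid disk contributing, and relatedly that $\dd y_k^{12} = 0$ in $\alg(\prec^2)$: any disk with positive corner at $y_k^{12}$ there would have to close off through two distinct cusps of copies $1$ and $2$, violating the single-cusp-minimum condition of Definition \ref{def:tangle-algebra}. The same enumeration in $\alg(\prec^{p+1})$ shows that the only contributions to $\dd y_k^{ij}$ are analogous quadrilaterals through a single intermediate copy $l$, giving $\dd y_k^{ij} = \sum_{i<l<j} y_k^{il}\,y_k^{lj}$---consistent with (and an independent confirmation of) the degree-based vanishing of higher $m_p$.
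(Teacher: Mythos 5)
Your proposal is correct and follows essentially the same route as the paper: compute the differential $\dd y_k^{ij} = \sum_{i<l<j} y_k^{il}y_k^{lj}$ in $\alg(\prec^{p+1})$ and dualize via \eqref{eq:ms}, with the sign $\sigma=1$ giving $m_2(y_k^+,y_k^+)=-y_k^+$. The extra details you supply (the explicit quadrilateral disk through the copy-$2$ cusp, and the degree count forcing $m_p=0$ for $p\neq 2$) are accurate elaborations of what the paper leaves implicit.
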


\begin{proof}
In the case $p=1$ above, corresponding to the $2$-copy of $\prec$, it is clear that $\dd y_k^{12} = 0$ for all $k$; it follows that
the dualized linearized differential also vanishes.
We have $|y_k^+| = |y_k^{12}| + 1 = 0$.

For the composition $m_p$, we study the differential on $\alg^{p+1}(\prec, \mu)$, which is
\[ \dd y^{ij}_k = \sum_{i<l<j} y^{il}_k y^{lj}_k, \]
the dualization of which gives the stated product (note the sign from \eqref{eq:ms}) and no more.
\end{proof}

\begin{proposition} The restriction map is
\begin{eqnarray*}
\rho_L: \Hom_{\prec}(\epsilon, \epsilon) & \to & \Hom_{\equiv}(\epsilon_L, \epsilon_L) \\
y_k^+ & \mapsto & a_{2k-1,2k-1}^+ + a_{2k,2k}^+.
\end{eqnarray*}
\end{proposition}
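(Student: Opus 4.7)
The plan is to unfold $\rho_L$ via the $A_\infty$ machinery of Section \ref{sec:augcatalg} and then enumerate disks in the 2-copy of $\prec$.

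By Proposition \ref{prop:mcopyseq} the $m$-copies form consistent sequences of DGAs, and the bordered cosheaf of Theorem \ref{thm:cosheaf} applied at each level yields a consistent sequence of co-restrictions $\iota^{(m)}: \alg^{(m)}(\equiv, \mu_R) \to \alg^{(m)}(\prec, \mu)$. Applying Proposition \ref{prop:consistentF} produces an $A_\infty$ functor $F: \Aug(\prec, \mu) \to \Aug(\equiv, \mu_R)$ which on objects sends $\epsilon$ to $\epsilon \circ \iota = \epsilon_L$; this is exactly $\rho_L$. The first-order component $F_1$ on Hom spaces is defined by dualizing the linear part of $\iota^{(2)}$ in the link-grading $(1,2)$ block. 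For $k=1$ the Koszul sign $(-1)^\sigma$ from \eqref{eq:ms} vanishes, so the coefficient of $a_{ij}^+$ in $\rho_L(y_k^+)$ coincides with the coefficient of $y_k^{12}$ in $\iota^{(2)}(a_{ij}^{12})$.

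Next I would identify the contributing disks. By the bordered disk count of Definition \ref{def:tangle-algebra}, $\iota^{(2)}(a_{ij}^{12})$ counts embedded disks in the 2-copy of $\prec$ whose boundary puncture limits to the vertical segment $[i,j]$ on the right (the $x$-maximum) and whose $x$-coordinate has a single local minimum at a left cusp. The crossing $y_k^{12}$ sits in the front 2-copy near the $k$-th pair of nested left cusps, between $\Lambda_1$'s lower strand (which continues to strand $2k$ on $\Lambda_1$ at the right) and $\Lambda_2$'s upper strand (which continues to strand $2k-1$ on $\Lambda_2$), with $|y_k^{12}| = -1$ and link grading $(1,2)$. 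A disk with a unique negative corner at $y_k^{12}$ must be a bigon: one boundary arc has to reach $y_k^{12}$ directly along a strand, while the other bends through a single cusp. A direct case analysis yields exactly two such bigons. At $[2k-1, 2k-1]$, the upper arc routes through $\Lambda_1$'s $k$-th cusp from strand $2k-1$ onto strand $2k$ and back to $y_k^{12}$, while the lower arc runs directly on $\Lambda_2$'s strand $2k-1$; the symmetric bigon at $[2k, 2k]$ routes its lower arc through $\Lambda_2$'s $k$-th cusp. For any other $(i,j)$: with no cusp in the boundary the two strands meeting at $y_k^{12}$ force $(i, j) = (2k, 2k-1)$, violating $i \leq j$; two cusps would violate uniqueness of the $x$-minimum; and cusps of index other than $k$ lead the arcs to strands disjoint from $y_k^{12}$.

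Finally I would verify the signs. The $[2k-1, 2k-1]$-bigon occupies the top quadrant at $y_k^{12}$ (the disk's interior lies above both strands near the corner), contributing $+1$; the $[2k, 2k]$-bigon occupies the bottom quadrant, contributing $(-1)^{|y_k^{12}|+1} = +1$ since $|y_k^{12}|=-1$ is odd. Combined with the trivial Koszul sign, this gives $\rho_L(y_k^+) = a_{2k-1, 2k-1}^+ + a_{2k, 2k}^+$. The hard part is neither the disk count nor the dualization, but rather the careful bookkeeping of strand routings at $y_k^{12}$; fortunately, the oddness of $|y_k^{12}|$ collapses the sign discrepancy between the two quadrants, so no cancellation occurs between the two bigons.
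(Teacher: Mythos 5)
Your proof is correct and takes essentially the same route as the paper's: the paper simply records that the $2$-copy co-restriction sends $a^{12}_{2k-1,2k-1}$ and $a^{12}_{2k,2k}$ to $y^{12}_k$ and all other mixed generators to zero, then dualizes. Your disk enumeration and sign check (the two bigons occupying the top and bottom quadrants at $y^{12}_k$, both contributing $+1$ because $|y^{12}_k|=-1$ is odd) just supply the details the paper leaves implicit.
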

\begin{proof}
The co-restriction map on the two-copies is
\begin{eqnarray*}
\iota_L:  \alg^2(\equiv, \mu_L) & \to & \alg^2(\prec, \mu) \\
a^{12}_{2k-1,2k-1} & \mapsto & y^{12}_k \\
a^{12}_{2k,2k} & \mapsto & y^{12}_k \\
a^{12}_{i \ne j} & \mapsto & 0.
\end{eqnarray*}
Dualizing gives the stated restriction map.
\end{proof}

\subsubsection{The augmentation category is a sheaf}

\begin{theorem} \label{thm:augisasheaf} Let $\Lambda$ be a front diagram with base points at all right cusps. 
Then the  presheaf of categories $\underline{\Aug}(\Lambda)$ is a sheaf.
\end{theorem}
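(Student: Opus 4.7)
The plan is to deduce the theorem from Lemma~\ref{lem:strictify} applied to the stratification of $\bR_x$ whose zero-dimensional strata are the $x$-coordinates of the crossings, cusps, and base points of $\Lambda$. By Corollary~\ref{cor:aug-sheaf}, the natural map $\underline{\Aug}(\Lambda)\to\underline{\Aug}(\Lambda)^{\sim}$ from the presheaf into its sheafification is already fully faithful on every open interval, so what remains is to prove essential surjectivity on objects. Unwinding the local calculations of this section, sections of $\underline{\Aug}(\Lambda)$ over an open interval are precisely the strict fibre product of stalk categories (in the sense of Lemma~\ref{lem:strictify}), while sections of the sheafification are the full fibre product; hence it suffices to verify the isomorphism lifting property for $\rho_L$ at each zero-dimensional stratum.

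I would dispatch the four kinds of strata in turn. At an open line (no feature), $\rho_L$ is the identity, and at an interior base point sitting on a single strand, $\rho_L$ is the forgetful map $(\xi,\epsilon(t))\mapsto\xi$, which is surjective on objects and an isomorphism on morphisms, so the lifting property is trivial in both cases. At a crossing, I would use the equivalence $\hh:\Aug(\crossk,\mu)\xrightarrow{\sim}MC(\crossk,\mu)$ of Theorem~\ref{thm:crossmc} to translate the question into the Morse complex picture: given an object $(d,z)$ of $MC(\crossk,\mu)$ and an isomorphism $\phi:d\xrightarrow{\sim}d'$ in $MC(\equiv,\mu_L)$, Lemma~\ref{lem:dansremark} guarantees $\langle k+1|d'|k\rangle=0$, so $(d',z')$ defines an object of $MC(\crossk,\mu)$ for any $z'$, and the compatibility condition of Lemma~\ref{lem:crosshom} can be solved explicitly as
\[
z'=\langle k|\phi|k\rangle^{-1}\bigl(\langle k+1|\phi|k\rangle+z\,\langle k+1|\phi|k+1\rangle\bigr),
\]
which is well defined since $\langle k|\phi|k\rangle$ is a diagonal entry of an invertible lower-triangular matrix. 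At a left cusp, $\rho_L$ goes into the category of the empty front, which has a unique object and only the identity morphism, so the claim is vacuous.

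The main obstacle, and the reason we require base points at every right cusp, is the right cusp stratum. There $\rho_L:\Aug(\succ,\mu)\to\Aug(\equiv,\mu_L)$ is strictly fully faithful with image the augmentations $\eta$ satisfying $\eta(a_{2k-1,2k})\in\coeffs^{\times}$ for every $k$. By Lemma~\ref{lem:dansremark} (transported through $\hh$), invertibility of these entries is preserved under isomorphism, so this image is closed under isomorphism in $\Aug(\equiv,\mu_L)$; any $\eta'$ isomorphic to $\rho_L(\xi)$ therefore lifts uniquely to $\xi'\in\Aug(\succ,\mu)$ via $\epsilon(t_k)=(-\eta'(a_{2k-1,2k}))^{\sigma_k}$. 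The base points are essential precisely here: without them, the relation $\dd x_k=1+a_{2k-1,2k}$ would pin $\epsilon(a_{2k-1,2k})=-1$, and the image of $\rho_L$ would fail to be saturated under isomorphism. With $\xi'$ identified, strict full faithfulness of $\rho_L$ produces a unique morphism $\psi:\xi\to\xi'$ with $\rho_L(\psi)=\phi$, and since fully faithful functors reflect isomorphisms, $\psi$ is itself an isomorphism.

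With all cases in hand, Lemma~\ref{lem:strictify} applies and yields the essential surjectivity of $\underline{\Aug}(\Lambda)\to\underline{\Aug}(\Lambda)^{\sim}$, proving the theorem.
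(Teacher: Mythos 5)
Your proof is correct and follows essentially the same route as the paper: Corollary~\ref{cor:aug-sheaf} for full faithfulness, then Lemma~\ref{lem:strictify} with the isomorphism-lifting property checked by transporting to the Morse complex category and invoking Lemma~\ref{lem:dansremark}. The paper compresses the case-by-case verification into the remark that the functors $\hh$ are bijections on objects and that Lemma~\ref{lem:strictify} was already verified for $MC$; your explicit treatment of the crossing and right-cusp strata (including the formula for $z'$ and the role of the base points) simply spells out what the paper leaves implicit.
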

\begin{proof}
Given Corollary \ref{cor:aug-sheaf}, it remains to check that sections have sufficiently many objects, which
can be checked using the condition of Lemma \ref{lem:strictify}.  On objects, the local morphisms to the Morse
complex category were literally isomorphisms, so we may check in the Morse complex category.  In Section \ref{MCissheaf}, we noted that the hypothesis of  \ref{lem:strictify} holds for the Morse complex category.
\end{proof}

\subsection{Local calculations in the sheaf category}
\label{sec:locSh}

In this section, we determine the local sheaf categories for the line, crossing,
left cusp, and right cusp diagrams.  We define the isomorphisms $\mathfrak{r}$ to the corresponding
local categories of $MC$ and study the compatibility with left and right restriction functors to $\equiv$, as in the diagram \eqref{eq:bigdiagram}.

In fact, for convenience we use a slight variant $MC'$ on $MC$.  The difference is only at the cusps, 
and is that $MC'(\prec)$ is the full subcategory on the object called $d_0$ in 
Proposition \ref{prop:onlyonecusp}, and similarly for $MC'(\succ)$.  
By the same proposition, the inclusion of this subcategory is a quasi-equivalence.  Correspondingly
the global sections of the sheafifications of $MC, MC'$ agree.  We only distinguish between $MC$ and $MC'$ in the discussion of cusps. 

\begin{remark} The diagram of 
categories $MC'$ {\em does not} satisfy Lemma \ref{lem:strictify}; however this is irrelevant here 
because we will not be interested in directly computing global sections of the associated sheaf
of categories.  It is possible to avoid the use of $MC'$, but the construction of sheaves 
associated to the other objects of $MC(\succ)$ which strictly respect the restriction map is somewhat
more involved (one adds some auxiliary vertical strata to allow `handle slides', which however are 
invisible from the point of view of the microsupport).  
\end{remark}

In this section, it is essential for the arguments we give that the coefficients $\field$ form a field; this is because
we borrow from the theory of quiver representations. It is however conceivable that Theorem \ref{thm:main}
may hold for more general coefficient rings.

\subsubsection{Lines}

Let $I = (0,1)$ be the unit open interval and define $\strip:= I\times \bR$.
Let 
$\equiv_n$ be
the Legendrian associated to the front diagram consisting of $n$ horizontal lines --- see Figure \ref{fig:4figs} (upper left).

Recall that $Sh_{\equiv_n}(\strip)$ denotes the category of sheaves on $\strip = I\times \bR$ with singular support
meeting infinity in a subset of $\equiv_n$.
Objects of $Sh_{\equiv_n}$ can be constructed from representations (in
chain complexes) of the $A_{n+1}$ quiver, with nodes indexed and arrows oriented as follows:
$$ \begin{array}{ccccccc}
n & & n-1 & & & & 0 \\
\bullet & \to & \bullet & \to & \cdots & \to & \bullet \end{array} $$
To a representation $R$ of this quiver, i.e., a collection of chain complexes $R_i$ and morphisms
$$ R_n \to R_{n-1} \to \cdots \to R_0, $$
we write $Sh_{\equiv_n}(R)$ for the sheaf which has $R_0$ as its stalk in the upper region,
$R_i$ as its stalk along the $i$'th line and in
the region below it; downward generization maps identities, and upward generization maps given by
the quiver representation.  In fact this construction is an equivalence from the derived category of
representations of the $A_{n+1}$ quiver to $Sh_{\equiv_n}(\strip)$.  (See \cite[Sec. 3]{STZ}; essential
surjectivity is a special case of \cite[Prop. 3.22]{STZ}.)

Here we will prefer $A_{n+1}$ representations of a certain canonical form.  We recall that quiver
representations admit two-term projective resolutions. Explicitly, the irreducible projectives of the $A_{n+1}$ quiver are:

$$P_i := 0 \to \cdots \to 0 \to \field \to \field \to \cdots \to \field \to \field,$$

\noindent i.e.~a copy of $\field$ at all nodes $k \ge i$.  We have $\Hom(P_i, P_j) = 0$ for $i < j$ and $\field$ otherwise, and
$\Ext^{\ge 1}(P_i, P_j) = 0$.

On the other hand, the indecomposables of $Rep(A_{n})$ are \cite{G}: 
$$S_{ij} := P_i /P_{j+1} = \quad 0\to ... \to 0 \to \field \to \field \to ... \to \field \to \field \to 0 \to ... \to 0,$$
i.e., a copy of $\field$ at all nodes $k$ with $i\leq k\leq j$ and all maps identities --- and zero elsewhere.
These are of course quasi-isomorphic to
$$S_{ij}' := \left( \begin{array}{c} P_{j+1} \\ \downarrow \\ P_{i} \end{array} \right) =
\left( \begin{array}{ccccccccccccccccc}
 0 & \to & \cdots & \to & 0 & \to & 0     & \to & \cdots & \to & 0     & \to & \field &  \to & \cdots & \to & \field  \\
 \downarrow &       &           &       & \downarrow  &  &  \downarrow   &        &       &           & \downarrow           &      & \downarrow       &       &            &      & \downarrow \\
 0 & \to & \cdots & \to & 0 & \to & \field & \to & \cdots & \to & \field & \to & \field &  \to & \cdots & \to & \field
 \end{array} \right),
 $$
i.e. zero for nodes $k<i$, $\field$ for nodes $i\leq k\leq j$ and the acyclic complex $[\field\to\field]$ for $k>j$.

Since $Rep(A_{n+1})$ has cohomological dimension one, objects in its derived category split, hence any
representation in chain complexes is quasi-isomorphic to one of the form $\bigoplus S_{ij}[s]$, hence
quasi-isomorphic to one of the form $\bigoplus S'_{ij}[s]$.  (This latter object is just the minimal projective
resolution of the original object.)  We summarize properties of these as follows:

\begin{lemma} \label{lem:injective}
Over a field, every representation $R$ in chain complexes of the $A_{n+1}$ quiver is quasi-isomorphic to a representation
$$R'_{n} \to \cdots \to R'_{i+1} \to R'_{i} \to \cdots \to R'_0$$
such that:
\begin{itemize}
\item The (vector space) quiver representation $R'$ in each cohomological degree is projective.
\item The maps $R'_{i+1} \to R'_{i}$ are injections on the graded vector spaces underlying the complexes.
\item The differential on $R'_{i}/R'_{i+1}$ is zero.
\end{itemize}
Above we employ the convention $R'_{n+1} = 0$.  Note in particular that
there is an isomorphism of underlying graded vector spaces
$$R'_j \cong \bigoplus_{i \ge j} R'_i/R'_{i+1} \cong \bigoplus_{i \ge j} H^*(\mathrm{Cone}(R'_{i+1} \to R'_{i})).$$
\end{lemma}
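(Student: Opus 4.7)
The plan is to leverage the fact that $\Rep(A_{n+1})$ over a field is hereditary (global dimension at most one), so that any chain complex $R$ in this category is quasi-isomorphic in the derived category to a direct sum $\bigoplus_c H^c(R)[-c]$. Combined with Gabriel's theorem, which identifies the indecomposables of $\Rep(A_{n+1})$ as the $S_{pq}$ with $0 \leq p \leq q \leq n$, this reduces the problem to constructing a canonical two-term model $\tilde S_{pq}$ for each indecomposable $S_{pq}$. The desired $R'$ will then be obtained as the direct sum of these models, since the three properties in the statement manifestly survive direct sums.

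For the model itself, I introduce for each $0 \leq l \leq n$ the representation $Q_l$ having $\field$ at nodes $k \leq l$ and $0$ at nodes $k > l$, with identity transition maps between adjacent $\field$'s. These $Q_l$ are the projectives relevant for the injectivity condition of the lemma: each of their transition maps is either an identity, the inclusion $0 \hookrightarrow \field$ at the top of the support, or a zero map between zero spaces, hence injective. The natural inclusion $Q_{p-1} \hookrightarrow Q_q$ has cokernel concentrated at $p \leq k \leq q$, equal to $S_{pq}$, so I set
\[
\tilde S_{pq} \;:=\; \bigl[\, Q_{p-1} \hookrightarrow Q_q \,\bigr]
\]
as a two-term complex with $Q_{p-1}$ in cohomological degree $-1$ and $Q_q$ in cohomological degree $0$; this is quasi-isomorphic to $S_{pq}$.

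The three properties of the conclusion are then checked node by node. Property (a) is automatic since each cohomological degree is just $Q_{p-1}$ or $Q_q$. Property (b) is immediate from the injectivity of transitions in $Q_l$. For property (c), the successive quotient $\tilde S_{pq}(k)/\tilde S_{pq}(k+1)$ is nonzero only at $k = p-1$ (in degree $-1$) and $k = q$ (in degree $0$); the induced differential vanishes at $k = p-1$ because $Q_q(p-1) = Q_q(p) = \field$ makes the target quotient zero, and at $k = q$ because $Q_{p-1}(q) = 0$ makes the source zero. Setting $R' := \bigoplus_\alpha \tilde S_{p_\alpha q_\alpha}[s_\alpha]$ then yields all three properties. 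The final identification
\[
R'_j \cong \bigoplus_{i \geq j} R'_i/R'_{i+1} \cong \bigoplus_{i \geq j} H^*\bigl(\Cone(R'_{i+1} \to R'_i)\bigr)
\]
follows by iterating the short exact sequences $0 \to R'_{i+1} \to R'_i \to R'_i/R'_{i+1} \to 0$, which split at the level of graded vector spaces thanks to (c), and noting that injectivity from (b) identifies each cone with the corresponding quotient.

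The only subtlety is correctly identifying the projectives relevant to the lemma --- these are the $Q_l$'s whose transition maps are injective by design, rather than any module-theoretic variant --- but once this convention is fixed, no significant obstacle remains, and the verification reduces to a direct case analysis at each node.
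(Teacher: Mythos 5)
Your proof is correct and follows essentially the same route as the paper: split the complex using hereditarity of $\Rep(A_{n+1})$, decompose each cohomology into interval modules by Gabriel's theorem, and replace each interval module by its two-term resolution by ``staircase'' projectives whose transition maps are injective. Your $Q_l$ and $\tilde S_{pq}$ are exactly the paper's $P_i$ and $S'_{ij}$ (up to the paper's own slightly inconsistent indexing of the projectives, which you rightly flag), and your node-by-node verification simply spells out what the paper dismisses as ``holds by inspection.''
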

\begin{proof}
The above construction shows every object is quasi-isomorphic to some $\bigoplus S'_{ij}[s]$ where $i > j$. The result
follows from its validity for each $S'_{ij}$, which holds by inspection.
\end{proof}

We now relate this to the category $MC(\equiv;\mu)$.

\begin{corollary} \label{cor:MCtoA}
There is a morphism $MC(\equiv_n; \mu) \to Rep_{ch}(A_{n+1})$, given on objects by sending the object $(\Bmu; d)$ to the
$A_n$ quiver representation which has the dg vector space ${}^i \Bmu$ at the node $i$.  The maps
are just inclusion of filtration steps.  Homs of the quiver representations are literally equal to homs of the
Morse complexes.

This map is fully faithful, and surjective onto the objects of $Rep_{ch}(A_{n+1}, \field)$ which (1) satisfy the conditions
of Lemma \ref{lem:injective} and (2) satisfy $R_{i-1} / R_i = \field[-\mu(i)]$.  It is essentially surjective
onto the portion of $Rep_{ch}(A_{n+1}, \field)$ in which
$\mathrm{Cone}(R_i \to R_{i-1}) = \field[-\mu(i)]$.
\end{corollary}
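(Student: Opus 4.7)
The plan is to proceed in three steps: verify well-definedness of the functor, establish full faithfulness essentially by inspection, and then tackle the image description and essential surjectivity by leveraging Lemma \ref{lem:injective}.

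First I would check that the assignment is a well-defined dg functor. Given $(\Bmu, d) \in MC(\equiv_n;\mu)$, the filtration ${}^n \Bmu \subset {}^{n-1}\Bmu \subset \cdots \subset {}^0\Bmu = \Bmu$ consists of subcomplexes because $d$ preserves the filtration by hypothesis, so placing ${}^i\Bmu$ at node $i$ with the inclusion maps as the arrows yields an honest object of $Rep_{ch}(A_{n+1})$. On morphisms, an element of $\Hom_{MC}(d,d')$ is by definition a filtration-preserving linear map $\Bmu \to \Bmu$, which is the same data as a compatible family of maps ${}^i\Bmu \to {}^i\Bmu$; the differentials on both sides agree tautologically. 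Thus the functor is literally an isomorphism on chain-level Hom spaces, giving full faithfulness.

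For the characterization of the strict image, suppose $R$ is a representation satisfying the two conditions: the maps $R_{i+1}\hookrightarrow R_i$ are injections on the underlying graded vector spaces, the differential on each $R_{i-1}/R_i$ is zero, and $R_{i-1}/R_i \cong \field[-\mu(i)]$. Then I can identify $R_i$ with a subspace of $R_0$, and the filtration $R_n \subset R_{n-1} \subset \cdots \subset R_0$ has successive quotients of total dimension $n$, concentrated in the appropriate degrees. Choosing a splitting of each quotient and collecting the generators produces a graded identification $R_0 \cong \Bmu$ under which the filtration matches $\{{}^i\Bmu\}$. The differential on $R_0$ preserves the filtration by assumption, so it defines an object of $MC(\equiv_n;\mu)$ whose image is $R$.

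For essential surjectivity, let $R$ be any representation in chain complexes with $\mathrm{Cone}(R_i \to R_{i-1}) \cong \field[-\mu(i)]$. I apply Lemma \ref{lem:injective} to produce a quasi-isomorphic $R'$ with injective maps, projective underlying graded pieces, and zero differential on each $R'_{i-1}/R'_i$. Since the cone is a quasi-isomorphism invariant and the quotient $R'_{i-1}/R'_i$ is quasi-isomorphic to $\mathrm{Cone}(R'_i \to R'_{i-1}) \simeq \mathrm{Cone}(R_i \to R_{i-1}) \cong \field[-\mu(i)]$, and since this quotient carries the zero differential, one has $R'_{i-1}/R'_i \cong \field[-\mu(i)]$ on the nose. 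Thus $R'$ satisfies the strict image conditions and lies in the image of the functor by the previous paragraph, completing essential surjectivity. The only mildly technical point is the grading/shift bookkeeping in matching $\field[-\mu(i)]$ to the one-dimensional quotient ${}^{i-1}\Bmu/{}^i\Bmu = \langle |i\rangle\rangle$ of degree $-\mu(i)$; this is routine but is the step most prone to sign or convention errors.
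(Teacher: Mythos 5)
Your proposal is correct and follows the same route as the paper, which simply asserts the functor's construction and full faithfulness as immediate from the definitions and proves essential surjectivity by citing Lemma \ref{lem:injective}; you have merely filled in the (routine) details of identifying a strict representation with some $({}^\bullet\Bmu, d)$ via a choice of splitting and of checking that the quotients $R'_{i-1}/R'_i$, having zero differential, coincide with the cones on the nose. No gaps.
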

\begin{proof}
Essential surjectivity follows from Lemma \ref{lem:injective}.
\end{proof}

We write $\cC_1(\equiv_n; \mu) \subset Sh_{\equiv_n}$ for the full subcategory whose objects have microlocal monodromy
dictated by the Maslov potential $\mu$ --- see Section \ref{sec:mumon}, or recall briefly
in this case that microlocal rank one means that the cone of the upward generization
map from the $i$-th line has rank one in degree $-\mu(i).$

\begin{corollary} \label{cor:rlines}
The functor $\mathfrak{r}: MC(\mu) \to \cC_1(\equiv; \mu)$ given by composing the functor of
Corollary \ref{cor:MCtoA} with the equivalence of \cite[Prop. 3.22]{STZ} is an equivalence.
\end{corollary}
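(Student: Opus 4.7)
The plan is to assemble the corollary from the two pieces of structure already in hand: the fully faithful embedding $MC(\equiv_n;\mu)\to Rep_{ch}(A_{n+1},\field)$ of Corollary~\ref{cor:MCtoA}, and the equivalence $Rep_{ch}(A_{n+1},\field)\xrightarrow{\sim}Sh_{\equiv_n}(\strip)$ of \cite[Prop.~3.22]{STZ}. Since a composition of fully faithful functors is fully faithful, the only real content is to pin down the essential image and check it is exactly $\cC_1(\equiv;\mu)$.

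First I would recall how the STZ equivalence matches microlocal data to quiver data. Under the functor $R\mapsto Sh_{\equiv_n}(R)$, the stalk of $Sh_{\equiv_n}(R)$ along the $i$-th line (and in the region immediately below it) is $R_i$, with the upward generization map equal to the arrow $R_i\to R_{i-1}$ of the representation. The microlocal monodromy along the $i$-th strand, as recalled in Section~\ref{sec:mumon}, is the cone of this upward generization, shifted by $-\mu(i)$. Thus $Sh_{\equiv_n}(R)\in \cC_1(\equiv;\mu)$ if and only if $\mathrm{Cone}(R_i\to R_{i-1})\simeq \field[-\mu(i)]$ for each $i$, together with the boundary condition that the stalk below the bottom strand is acyclic, i.e.\ $R_n\simeq 0$ in our indexing conventions (equivalently $\mathrm{Cone}(0\to R_n)\simeq\field[-\mu(n)]$ at the bottom strand).

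Next, I would invoke the second clause of Corollary~\ref{cor:MCtoA}: the functor $MC(\equiv_n;\mu)\to Rep_{ch}(A_{n+1},\field)$ is essentially surjective onto precisely the subcategory cut out by the condition $\mathrm{Cone}(R_i\to R_{i-1})\simeq \field[-\mu(i)]$. Composing with STZ's equivalence therefore lands exactly in $\cC_1(\equiv;\mu)$ and hits every object there up to isomorphism. Full faithfulness is automatic: Corollary~\ref{cor:MCtoA} states that $\mathrm{Hom}$ in $MC$ is literally equal to $\mathrm{Hom}$ of the associated quiver representations, and STZ is an equivalence, hence also fully faithful on morphism complexes. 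Chaining these together gives a quasi-isomorphism on all hom complexes.

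The only delicate point — which I would treat carefully but expect to be routine — is the sign/shift bookkeeping that identifies $\mu mon$ with the cone of the upward generization up to the prescribed $[-\mu(i)]$ shift, and the compatibility of the ``acyclic below'' convention built into $\cC_1$ with the indexing $R_n\simeq 0$ built into the projective form of Lemma~\ref{lem:injective}. Once these are aligned, the corollary follows formally: $\mathfrak r$ is fully faithful and essentially surjective, hence an equivalence of dg categories.
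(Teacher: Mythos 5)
Your proposal is correct and follows essentially the same route as the paper, which treats this corollary as immediate from Corollary~\ref{cor:MCtoA} (full faithfulness plus essential surjectivity onto the objects with $\mathrm{Cone}(R_i\to R_{i-1})=\field[-\mu(i)]$) composed with the STZ equivalence. You have simply spelled out the identification of that essential image with the microlocal-rank-one condition defining $\cC_1(\equiv;\mu)$, which the paper leaves implicit.
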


\subsubsection{Crossings}

Fix $n\geq 2$ and let $\crossk$ be a bordered plat consisting of $n$ strands with a single crossing between strands
$k$ and $k+1$ in the infinite vertical strip $\strip = I\times \bR.$  Fix a Maslov potential $\mu$.
We write $\cC_1(\crossk, \mu) \subset Sh_{\crossk}(\strip)$ for the category
of microlocal rank $1$ sheaves with vanishing stalks for $z \ll 0$.

By restriction to the first and second halves of the interval $I$, a sheaf $F\in \cC_1(\crossk, \mu)$ restricts to
a pair of objects $F_L$ and $F_R$ of the corresponding $n$-line sheaf categories, each
microlocal rank one with respect to the induced Maslov potentials $\mu_L$ and $\mu_R$.
These are related by $\mu_R = \mu_L \circ s_k$, where $s_k$ is the transposition of strands $k$ and $k+1$.

It is possible to build a sheaf in $Sh_{\crossk}(\strip)$ out of the following data:

\begin{definition}
A $\crossk$ triple on $n$ strands is a diagram $L \leftarrow M \rightarrow R$
of representations of $A_{n+1}$ in chain complexes as below:
$$\xymatrix{\vdots&\vdots&\vdots\\
L_{k-2}\ar[u]\ar@{=}[r]&M_{k-2}\ar[u]&\ar@{=}[l]R_{k-2}\ar[u]\\
L_{k-1}\ar[u]\ar@{=}[r]&M_{k-1}\ar[u]&\ar@{=}[l]R_{k-1}\ar[u]\\
L_k\ar[u]&\ar[l]M_k\ar[u]\ar[r]&R_k\ar[u]\\
L_{k+1}\ar[u]\ar@{=}[r]&M_{k+1}\ar@{=}[r]\ar@{=}[u]&R_{k+1}\ar[u]\\
L_{k+2}\ar[u]\ar@{=}[r]&M_{k+2}\ar@{=}[r]\ar[u]&R_{k+2}\ar[u]\\
\vdots\ar[u]&\vdots\ar[u]&\vdots\ar[u]}$$
such that $Tot = [M_{k+1}\rightarrow L_k \oplus R_k \rightarrow M_{k-1}]$ is
acyclic.
\end{definition}

A $\crossk$ triple determines an element of $Sh_{\crossk}(\strip)$.
To build the corresponding sheaf, the stalk along the $i$'th line and in the region below is $L_i$ on the left,
$M_i$ in the middle, and $R_i$ on the right; for $i \ne k$ these are all just equal.  The downward generization
map is the identity, and the upward generization map is the one pictured.  Finally, $M_k$ is the stalk at the crossing
and in the region below.   We will write $Sh_{\crossk}(L \leftarrow M \rightarrow R)$ for the corresponding sheaf.
As a special case of of \cite[Prop. 3.22]{STZ},
every object of $Sh_{\crossk}(\strip)$ is quasi-isomorphic to some $Sh_{\crossk}(L \leftarrow M \rightarrow R)$.
We sharpen this result as follows:

\begin{lemma} \label{lem:lmr}
Every object of $Sh_{\crossk}(\strip)$ is quasi-isomorphic to some  $Sh_{\crossk}(L \leftarrow M \rightarrow R)$,
in which  $L, M, R$ satisfy the conclusion of Lemma \ref{lem:injective}.
\end{lemma}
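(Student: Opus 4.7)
The plan is to start from the existing representation of any $F \in Sh_{\crossk}(\strip)$ as some crossing triple (which follows from \cite[Prop. 3.22]{STZ} just as in the discussion immediately preceding the lemma), and then perform a quasi-isomorphic replacement to put $L$, $M$, $R$ simultaneously into the nice form of Lemma~\ref{lem:injective}. The key organizational idea is to regard a crossing triple as a single representation (in chain complexes) of an enlarged quiver $\widetilde{Q}$, obtained from $A_{n+1}$ by replacing the vertex at position $k$ with three vertices $L_k, M_k, R_k$ and arrows $L_k \leftarrow M_k \rightarrow R_k$, together with the connecting arrows $M_{k+1} \to M_k$ and $L_k, R_k \to M_{k-1}$. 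This $\widetilde{Q}$ is again a finite acyclic tree, so $Rep_{ch}(\widetilde{Q}, \field)$ has cohomological dimension one and every object splits (up to quasi-isomorphism) as a direct sum of shifts of indecomposables.

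First, I would classify the indecomposables of $\widetilde{Q}$: as for any type $ADE$ quiver, these are in bijection with connected ``intervals'' of $\widetilde{Q}$, and each has an obvious minimal projective representative. For every such indecomposable, I would check directly that its minimal representative already has the form demanded by Lemma~\ref{lem:injective} in each of the three columns $L, M, R$. (The check is routine: the indecomposables restrict in each column to indecomposables, or zero, of the linear $A_{n+1}$ subquivers $L$, $M$, $R$, and the minimal projective form of an $A_{n+1}$ indecomposable is exactly $S'_{ij}$ as in the discussion before Lemma~\ref{lem:injective}.)

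Second, taking a direct sum over indecomposables gives, for an arbitrary crossing triple $L \leftarrow M \rightarrow R$, a quasi-isomorphic triple $L' \leftarrow M' \rightarrow R'$ with $L', M', R'$ each in the nice form of Lemma~\ref{lem:injective}. Because the quasi-isomorphism holds column-by-column, it induces a quasi-isomorphism of the associated sheaves $Sh_{\crossk}(L \leftarrow M \rightarrow R) \simeq Sh_{\crossk}(L' \leftarrow M' \rightarrow R')$. Moreover, the acyclicity of $Tot = [M_{k+1} \to L_k \oplus R_k \to M_{k-1}]$ is preserved since $Tot$ is a quasi-isomorphism-preserving functor on $Rep_{ch}(\widetilde{Q})$ and was acyclic for the original triple; hence the new triple is again a genuine crossing triple.

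The main obstacle will be the bookkeeping in the classification step: one must verify that every indecomposable of $\widetilde{Q}$ really does admit a minimal representative satisfying all three column conditions of Lemma~\ref{lem:injective} at once. A less elegant but workable alternative, if the indecomposable classification proves unwieldy, is to proceed in stages --- first replace $M$ by its minimal form $M'$ via Lemma~\ref{lem:injective}, lift $M \to L$ and $M \to R$ to $M' \to L, R$ using projectivity of the summands of $M'$, and then replace $L, R$ by their minimal forms, using the projectivity of each summand of $L', R'$ to adjust the maps from $M'$. Either way, the only real work is homological algebra over $\field$ for a type-$D$-like tree quiver.
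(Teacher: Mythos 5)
Your primary approach has a genuine gap at its foundation. The enlarged quiver $\widetilde{Q}$ you describe is not a tree: the vertices $M_k$, $L_k$, $R_k$, $M_{k-1}$ together with the arrows $M_k \to L_k \to M_{k-1}$ and $M_k \to R_k \to M_{k-1}$ form a four-cycle in the underlying undirected graph. It is therefore not of type $ADE$, its indecomposables are not classified by connected intervals (the cycle contains an $\widetilde{A}_3$ subquiver, so there are infinitely many indecomposables), and once you impose the commutativity relations forced by the sheaf/poset structure the resulting algebra has global dimension $2$, so the splitting of objects of the derived category into shifts of indecomposables --- the step that makes the argument work for $A_{n+1}$ in Lemma~\ref{lem:injective} --- is simply not available. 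So the classification-and-direct-sum strategy does not get off the ground.

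A second, independent problem is that the acyclicity of $Tot = [M_{k+1}\to L_k\oplus R_k\to M_{k-1}]$ cannot be deferred to a remark at the end: it is exactly what makes the lemma true. The paper's proof follows what you call the ``less elegant alternative'': replace $L$, $M$, $R$ by their minimal forms via Lemma~\ref{lem:injective} and lift the derived-category maps to chain maps using projectivity. But the real content is then to show that the resulting chain maps $L'_i \leftarrow M'_i \to R'_i$ are honest isomorphisms for $i\neq k$, as the definition of a $\crossk$ triple requires equalities away from row $k$. This is done by induction up the filtration, and the crucial passage from row $k+1$ to row $k-1$ uses the singular support condition at the crossing to exclude a summand of $L'_{k-1}/L'_{k+1}$ or $R'_{k-1}/R'_{k+1}$ isomorphic to a shift of the acyclic complex $[P_{k+1}\to P_{k-1}]$ --- geometrically, the constant sheaf stretched between strands $k$ and $k+1$ across the crossing. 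Your proposal never confronts this: after independently minimalizing the three columns, the connecting maps at rows $i\neq k$ are a priori only quasi-isomorphisms, and upgrading them to isomorphisms is precisely where the acyclicity of $Tot$ must be used.
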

\begin{proof}
%Begin with an object $\cF \in Sh_{\crossk}(\strip)$; pass to the quasi-isomorphic
%$Sh(L \leftarrow M \rightarrow R)$ provided by \cite[Prop. 3.22]{STZ}.  We may replace
%with quasi-isomorphic choices $L', M', R'$ by Lemma \ref{lem:injective}; then there exist
%corresponding maps in the derived category $L' \leftarrow M' \rightarrow R'$.
%Since
%$L', M', R'$ are projective resolutions, the maps
%$L' \leftarrow M' \rightarrow R'$ can be chosen to be maps of chain complexes.

Begin with an object $\cF \in Sh_{\crossk}(\strip)$; pass to the quasi-isomorphic
$Sh(L \leftarrow M \rightarrow R)$ provided by \cite[Prop. 3.22]{STZ}.  We may replace
with quasi-isomorphic choices $L', M', R'$ by Lemma \ref{lem:injective}; then there exist
corresponding maps in the derived category $L' \leftarrow M' \rightarrow R'$.
Since
$L', M', R'$ are projective resolutions, the maps
$L' \leftarrow M' \rightarrow R'$ can be chosen to be maps of chain complexes so that we have a diagram
\[
\xymatrix{L' \ar[d]^\cong_{\alpha_L} & M' \ar[r]^{g_R} \ar[l]_{g_L} \ar[d]^\cong_{\beta} & R' \ar[d]^\cong_{\alpha_R} \\
L & M \ar[r]^{f_R} \ar[l]_{f_L}  & R 
}
\]
commutative up to homotopy.  Next, choose homotopy operators $K_L:M' \rightarrow L$ and $K_R:M' \rightarrow R$ with
\[
f_L \circ \beta - \alpha_L \circ g_L = \partial_L K_L + K_L \partial_{M'}    \quad \mbox{and} \quad  f_R \circ \beta - \alpha_R \circ g_R = \partial_R K_R + K_R\partial_{M'}, 
\]  and consider the mapping cylinder $\mathit{Map}(\beta) = M' \oplus M'[-1] \oplus M$ which has differential $D(a, b, c) = (\partial_{M'}a - b, -\partial_{M'}b, \partial_Mc +\beta(b))$ and inclusions $i_1:M' \stackrel{\cong}{\rightarrow} \mathit{Map}(\beta)$ and $i_2:M \stackrel{\cong}{\rightarrow} \mathit{Map}(\beta)$  which are quasi-isomorphisms (since $\beta$ is a quasi-isomorphism).  We then arrive at a fully commutative diagram: 
\[
\xymatrixcolsep{8pc}\xymatrix{L' \ar[d]^\cong_{\alpha_L} & M' \ar[l]_{g_L} \ar[r]^{g_R} \ar[d]^\cong_{i_1} & R' \ar[d]^\cong_{\alpha_R} \\ L & \mathit{Map}(\beta) \ar[l]_{(\alpha_L \circ g_L) \oplus K_L \oplus f_L} \ar[r]^{(\alpha_R \circ g_R) \oplus K_R \oplus f_R}  & R  \\
L \ar[u]_\cong^{\mathit{id}} & M \ar[l]_{f_L} \ar[r]^{f_R} \ar[u]_\cong^{i_2} & R  \ar[u]_\cong^{\mathit{id}}
}
\]

It remains to show that the maps $L'_i \leftarrow M'_i \rightarrow R'_i$ are (not just quasi-)isomorphisms
for $i \ne k$.  For $i \ne k, k-1$, we have the following maps of exact sequences of complexes:

$$\xymatrix{
0  & 0 & 0 \\
L'_{i}/L'_{i+1} \ar[u] & M'_{i} / M'_{i+1} \ar[u] \ar[l]_{\sim} \ar[r]^{\sim}  & R'_{i} / R'_{i+1}\ar[u]\\
L'_{i}\ar[u] & M'_{i}\ar[u]  \ar[r] \ar[l] & R'_{i}\ar[u]\\
L'_{i+1}\ar[u] &M'_{i+1}\ar[u] \ar[r] \ar[l] &   R'_{i+1}\ar[u]\\
0 \ar[u] & 0 \ar[u] & 0 \ar[u] }
$$

All horizontal maps are quasi-isomorphisms because this was true for the original $L, M, R$,
but now by construction the $L'_{i}/L'_{i+1}, M'_{i} / M'_{i+1}, R'_{i} / R'_{i+1}$
are  isomorphic to their cohomologies, hence the maps in the top row  are isomorphisms.  Thus if the arrows
$L'_{i+1} \leftarrow M'_{i+1} \rightarrow R'_{i+1}$ are isomorphisms, so are the
$L'_{i} \leftarrow M'_{i} \rightarrow R'_{i}$.

We also have

$$\xymatrix{
0  & 0 & 0 \\
L'_{k-1}/L'_{k+1} \ar[u] & M'_{k-1} / M'_{k+1} \ar[u] \ar[l]_{\sim} \ar[r]^{\sim}  & R'_{k-1} / R'_{k+1}\ar[u]\\
L'_{k-1}\ar[u] & M'_{k-1}\ar[u]  \ar[r] \ar[l] & R'_{k-1}\ar[u]\\
L'_{k+1}\ar[u] &M'_{k+1}\ar[u] \ar[r] \ar[l] &   R'_{k+1}\ar[u]\\
0 \ar[u] & 0 \ar[u] & 0 \ar[u] }
$$

All horizontal maps are quasi-isomorphisms because the same was true for $L, M, R$.
By construction, $M'_{k-1} / M'_{k+1} = M'_{k-1} / M'_k$ is isomorphic to its cohomology.
The only way that $L'_{k-1} / L'_{k+1}$ or $R'_{k-1}/R'_{k+1}$ could fail to have the same property is if they contained
a summand which were equal to a shift of the object $[P_{k+1} \to P_{k-1}]$.  However the sheaf corresponding
to this summand -- namely the constant sheaf stretching between the $k$-th and $(k+1)$-st strands --
violates the singular support condition at the crossing, so it cannot appear.
We conclude that
$L'_{k-1} / L'_{k+1}$ and $R'_{k-1}/R'_{k+1}$ are isomorphic to their cohomologies, hence that the maps in the top
row are isomorphisms.  Thus, if
the maps $L'_{k+1} \leftarrow M'_{k+1} \rightarrow R'_{k+1}$ are isomorphisms, then so too are
$L'_{k-1} \leftarrow M'_{k-1} \rightarrow R'_{k-1}$.

By induction, we conclude that $L'_i \leftarrow M'_i \rightarrow R'_i$ are isomorphisms for all $i \ne k$.
\end{proof}

We now relate this to the category $MC(\crossk, \mu)$.  An element of this category is a differential $d: \Bmu_L \to \Bmu_L$
and an element $z \in \field$, from
which we built an identification $\theta_z: \Bmu_L \to \Bmu_R$ such that $\theta_z \circ d \circ (\theta_z)^{-1} \in MC(\equiv, \mu_R)$.
We build a $L \leftarrow M \rightarrow R$ triple by setting $L_k = {}^k \Bmu_L$ and $R_k = {}^k \Bmu_R$; the Hom spaces in
$MC(\crossk, \mu)$ can be evidently interpreted as maps between these diagrams of quiver representations.  As in
Corollary \ref{cor:rlines}, we can define a functor $\mathfrak{r}: MC(\crossk, \mu) \to \cC_1(\crossk, \mu)$ by composing
this with the equivalence of \cite[Prop. 3.22]{STZ}.

\begin{proposition}
The functor $\mathfrak{r}: MC(\crossk, \mu) \to \cC_1(\crossk, \mu)$ is an equivalence which commutes with
the restriction maps.
\end{proposition}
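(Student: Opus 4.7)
The plan is to prove the proposition in three stages: essential surjectivity, fully faithfulness, and compatibility with the restriction functors $\rho_L, \rho_R$ on both sides. Throughout, the strategy is to reduce $\cC_1(\crossk,\mu)$ to the combinatorial model of Lemma \ref{lem:lmr} and then match that model with the explicit description of objects of $MC(\crossk,\mu)$ coming from Lemmas \ref{lem:crossobj} and \ref{lem:crosshom}.

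First I would establish essential surjectivity. Given $F \in \cC_1(\crossk,\mu)$, Lemma \ref{lem:lmr} provides a quasi-isomorphic $Sh_{\crossk}(L \leftarrow M \rightarrow R)$ whose components satisfy the projectivity, injectivity, and vanishing-differential conditions of Lemma \ref{lem:injective}, with the horizontal arrows $L_i \leftarrow M_i \rightarrow R_i$ being isomorphisms for $i\neq k$. Via the equivalence of Corollary \ref{cor:rlines}, $L$ and $R$ arise from Morse complexes $(\Bmu_L, d_L)$ and $(\Bmu_R, d_R)$ with $L_i = {}^i\Bmu_L$ and $R_i = {}^i\Bmu_R$; microlocal rank-one implies the cones $\mathrm{Cone}(L_i \to L_{i-1})$ and $\mathrm{Cone}(R_i \to R_{i-1})$ are $\field[-\mu(i)]$, ensuring the grading and filtration conventions of Definition \ref{def:MClines}. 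At the crossing, the isomorphisms $L_i \cong M_i \cong R_i$ for $i\neq k$ and the acyclicity of the totalization force $M_k$ to be identified with ${}^{k+1}\Bmu_L = {}^{k+1}\Bmu_R$ (since $\mathrm{Cone}(M_k \to L_{k-1}\oplus R_{k-1}/M_{k-1})$ must vanish in cohomology), and then the two maps $M_k \hookrightarrow L_k$ and $M_k \hookrightarrow R_k$ differ by exactly the data of a change of basis $\theta_z: \Bmu_L \to \Bmu_R$ for a unique element $z \in \field$ (with $z = 0$ unless $\mu(k) = \mu(k+1)$ so as to preserve grading). This is precisely the object $(d_L, z) \in MC(\crossk,\mu)$ picked out by Lemma \ref{lem:crossobj}, and $\mathfrak{r}(d_L, z) \simeq F$.

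Next, I would verify fully faithfulness. A morphism of triples $L \leftarrow M \rightarrow R$ is a triple of chain maps compatible with the horizontal arrows. After imposing the canonical form above, a morphism is determined by its value $\xi$ on $L$, which must be filtration preserving (i.e.\ belongs to $\Hom_{MC(\equiv,\mu)}(d_L, d'_L)$), and which must additionally be compatible with the crossing data: this is exactly the condition from Lemma \ref{lem:crosshom}, namely $\langle k+1|\xi|k\rangle = z'\langle k|\xi|k\rangle - z\langle k+1|\xi|k+1\rangle$. The induced map on morphism complexes is therefore a chain-level isomorphism, and composition and differential on each side are the restrictions of those from the line categories, which match by Corollary \ref{cor:rlines}.

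Finally, compatibility with restrictions is essentially built into the construction. By definition of $\mathfrak{r}$, the left restriction of $\mathfrak{r}(d, z)$ is the sheaf associated with $L = ({}^\bullet\Bmu_L, d)$, which is exactly $\mathfrak{r}(\rho_L(d,z)) = \mathfrak{r}(d) \in \cC_1(\equiv,\mu_L)$; and the right restriction is the sheaf associated with $R = ({}^\bullet\Bmu_R, \theta_z \circ d \circ \theta_z^{-1})$, which is $\mathfrak{r}(\rho_R(d,z))$ by Definition \ref{def:MCcrossres}. The same identifications work on morphisms, since the horizontal arrows in a triple are by construction the left and right restrictions of the morphism. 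The main obstacle in the proof is the essential surjectivity step: one must carefully verify that the constraints on a microlocal rank-one triple at the crossing admit no more freedom than a single parameter $z$ (modulo the overall gauge encoded by isomorphisms of $d_L$), which in turn hinges on the singular-support analysis ruling out a $[P_{k+1} \to P_{k-1}]$ summand as in the proof of Lemma \ref{lem:lmr}. Once this is in place, the remaining steps are essentially bookkeeping against the definitions in Section \ref{sec:MC}.
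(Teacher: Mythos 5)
Your proposal is correct and follows essentially the same route as the paper, whose own proof is simply the observation that essential surjectivity follows from Lemma \ref{lem:lmr} and that compatibility with restrictions and full faithfulness hold by construction of $\mathfrak{r}$ (the Hom spaces of $MC(\crossk,\mu)$ being literally the maps of the associated diagrams of quiver representations). You have merely written out the details that the paper leaves implicit, in particular the matching of a canonical-form triple with the data $(d,z)$ of Lemma \ref{lem:crossobj} and of its morphisms with the condition of Lemma \ref{lem:crosshom}.
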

\begin{proof}
Essential surjectivity follows from Lemma \ref{lem:lmr}.  The equivalence commutes with restrictions by construction.
\end{proof}

\subsubsection{Cusps}

Let ``$\succ$'' be the right-cusp diagram on $2n$ strands, carrying a Maslov potential $\mu$.
Let $sh_\succ(\bR^2, \coeffs)_0 \subset sh_\succ(\bR^2, \coeffs)$ be the full subcategory of
sheaves with acyclic stalk to the far right.

Let $V_i$ be the locally closed subsets comprised of the upper stratum and interior
region of the $i$'th cusp (numbered increasing from top to bottom, so that the $i$'th cusp connects
the $2i-1, 2i$ strands.  Let $v: \coprod V_i \to \bR^2$ be the inclusion.

\begin{lemma}
Every object of $sh_\succ(\bR^2, \coeffs)_0$
is quasi-isomorphic to the extension by zero of a locally constant sheaf on $V$.  
\end{lemma}
\begin{proof}
The microsupport condition translates directly into the constraint that the sheaf is locally constant
on $V$. 

We recall in general that 
for the inclusion of a locally closed subset $s: S \to T$, there is the extension by zero 
functor $s_!: Sh(S) \to Sh(T)$ with the property that 
$$s_! \cF(U) = \begin{cases} \cF(U) & U \subset S \\ 0 & otherwise \end{cases}$$
The properties of this functor can be found in any standard reference, e.g. \cite[Chap. 2]{KS}, 
and it is always true that sheaves which have zero stalks in the complement of a locally
closed subset are extensions by zero under the inclusion.  
\end{proof}

\begin{corollary}
Choose one point in each component of $V$, and consider the corresponding map 
$sh_\succ(\bR^2, \coeffs) \cong (\coeffs-mod)^n$ given by taking stalks.  It is a quasi-equivalence.
\end{corollary} 
\begin{proof}
The extension by zero is fully faithful, so it suffices to restrict attention to the 
locally constant sheaves on $V$ itself.  Since $V$ is a union of contractible components, taking
one stalk at each defines an quasi-equivalence of categories. 
\end{proof}

\begin{corollary}
Consider the map $\textit{left}: sh_\succ(\bR^2, \coeffs) \to sh_\equiv(\bR^2, \coeffs)$ given by restriction 
to a neighborhood of the left edge.  It is fully faithful, and has essential image the category $S$ 
of sheaves with acyclic stalks except between lines $2i-1, 2i$. 
\end{corollary}
\begin{proof}
By the same reason as the previous corollary, the map
$S \to (\coeffs-mod)^n$ given by taking one stalk in each component is an equivalence.  We can
factor the map of the previous corollary as 
$sh_\succ(\bR^2, \coeffs) \to S \subset sh_\equiv(\bR^2, \coeffs) \to (\coeffs-mod)^n$ by choosing points
for stalks near the left edge; it follows that the map $sh_\succ(\bR^2, \coeffs) \to S$ is an equivalence. 
\end{proof} 

\begin{corollary} 
The category $\cC_1(\succ, \mu; \coeffs)$ is empty
unless $\mu_{2k} = \mu_{2k-1} - 1$.
% \sayVS{strands numbered top to bottom}.  
In this case $\cC_1(\succ, \mu; \coeffs)$ contains
up to isomorphism a unique object, whose stalks in the cusp regions are $\coeffs[-\mu_2], \coeffs[-\mu_4], \ldots, \coeffs[-\mu_{2n}]$.
%\sayVS{check sign of shift; conventions for maslov potential, etc.  I think what I wrote is correct if the constant sheaf in degree zero on the standard unknot corresponds to Maslov potential with 1 on the top line and 0 on the bottom, and the map from sheaf to maslov potential is to take minus the degree of the cone of the upward map. (I think this is the correct convention)}  \sayDR{What's written here seems consistent with Subsection 2.4.4 to me.}
\end{corollary}
\begin{proof}
We calculate the microstalks along the top strand of each cusp.  The sheaf on $V$ is locally constant; 
let the stalks in the $n$ cusp regions be $V_1, \ldots, V_n$.  Recalling the correspondence between
Maslov potential and degree of microstalk, we should have $V_i [1] = Cone(V_i \to 0) = \coeffs[\mu_{2i-1}]$ 
and $V_i = Cone(0 \to V_i) = \coeffs[\mu_{2i}]$.  
\end{proof}

Note that when nonempty, $\cC_1(\succ, \mu)$ contains a canonical object, namely the sheaf
which is {\em the} constant sheaf on each component on $V$, with an appropriate shift.  As follows
from the above, its endomorphisms are canonically the ring $k^n$.  

Recall that we write $MC'(\succ, \mu)$ for the full subcategory of $MC(\succ, \mu)$ containing
only the object $d_0$. 

\begin{proposition}
There is a commutative diagram with vertical diagrams equivalences
\begin{equation}
\label{eq:bigdiagram2}
 \xymatrix{MC(\equiv, \mu_L) \ar[d]  & \ar[l]^{\rho_L}  MC'(\succ, \mu) \ar[d]  \\
 \cC_1(\equiv, \mu_L) & \ar[l]^{\rho_L^\cC} \cC_1(\succ, \mu) \\
} 
\end{equation}
The left vertical arrow is that constructed in Corollary \ref{cor:MCtoA}. 
If the right categories are nonempty, the right vertical arrow sends the unique object $d_0$ of 
$MC'(\succ, \mu)$ to the canonical element of  $\cC_1(\succ, \mu)$. 
\end{proposition}
\begin{proof}
We have already seen that the horizontal arrows are fully faithful, and the left vertical arrow
is an equivalence.  We define the right vertical arrow through the corresponding fully
faithful embeddings.  (There is in any case no mystery about this arrow, both $d_0$ and the 
canonical element of $\cC_1(\succ, \mu)$ have endomorphisms $\coeffs^n$, where
the $i$'th component is canonically associated to the $i$'th cusp.)   
\end{proof}

\begin{remark}
We note that in the proposition we did not say whether we ask for homotopy commutativity or 
strict commutativity.  In fact it is irrelevant for our purposes: as we only ever consider maps
of linear diagrams of categories, and only check that these determine quasi-isomorphisms
by checking termwise (as opposed e.g. to trying to compose morphisms of diagrams), no higher
homotopical questions ever arise, so knowing the above square commutes up to some 
unspecified homotopy suffices.  On the other hand, by tracing through exactly how we 
associate a sheaf to an object of $\cC_1(\equiv, \mu_L)$, it is not difficult to describe the homotopy
explicitly. 
\end{remark}

The analogous statement holds for left cusps.

\subsection{Augmentations are sheaves}

Recall we assume that the right cusps are all equipped with base points.  We showed in 
in Theorem \ref{thm:augisasheaf} that under this hypothesis, the presheaf $\Aug$ is in fact a sheaf.  We did this by
verifying the hypothesis of Lemma \ref{lem:strictify} by explicitly computing the restriction maps, and then applying Proposition~\ref{prop:linesummary}.  (The necessity of doing
this was explained in Remark \ref{rem:notsheaf}.) 

A morphism of sheaves can be given by giving morphisms on
all sufficiently small open sets, compatibly with restriction.  The morphism may be checked to be an equivalence also on these sets.
We thusly defined morphisms
$\hh: \Aug \to MC$ and $\mathfrak{r}: MC \to \cC_1$, and showed
 each was an equivalence. 
In particular, we obtain an isomorphism of global sections $R\Gamma(\mathfrak{r} \hh) : \Aug(\Lambda) \cong \cC_1(\Lambda)$. 
 This completes the proof of Theorem \ref{thm:main}.

\newpage
%!TEX root = augmain.tex

\section{
Some exact sequences}
\label{sec:exseq}

This paper has established a host of relations among categories of sheaves, Lagrangians and augmentations.
Here we briefly discuss the Fukaya-theoretic viewpoint and gather the relationships in the
unifying Theorem
\ref{thm:bigexactsequence} below.

Let $X$ be a compact real analytic manifold.  Equip $T^*X$ with its canonical exact structure $\omega = -d\theta.$
Recall the infinitesimally wrapped Fukaya category $Fuk_\varepsilon(T^*X)$
from \cite{NZ}.  Its objects
are exact Lagrangian submanifolds equipped with local systems, brane structures, and perturbation data.
Morphisms of $Fuk_\varepsilon(T^*X)$, including higher morphisms, involving objects $L_1,\ldots,L_d$
are constructed by perturbing the Lagrangians, using a contractible fringed set $R_d\subset \bR_{>0}^d$ to organize
the perturbations.
A fringed set $R_d$ of dimension $d$ is a subset of $\bR_{>0}^d$ satisfying conditions defined inductively:
if $d=1,$ $R_1 = (0,r)$; if $d>1,$ then the projection of $R_d$ to the first $d-1$ coordinates is a fringed set,
and $(r_1,\ldots,r_d)\in R_d\Rightarrow (r_1,\ldots,r_d')\in R_d$ for all $0<r_d' < r_d.$
Loosely, to compute $\hfp(L_1,L_2)$ we must perturb $L_2$ more than $L_1;$
to compute compositions from
$\hfp(L_{d-1},L_d)\otimes \hfp(L_{d-2},L_{d-1})\otimes \cdots \otimes \hfp(L_1,L_2)$ and others, we perturb
by
\begin{equation}
\label{fringe}
\varepsilon_d > \varepsilon_{d-1} > \cdots > \varepsilon_1>0.
\end{equation}
The $d$-tuple of successive differences $\delta = (\varepsilon_1,\varepsilon_2-\varepsilon_1,\ldots,\varepsilon_d-\varepsilon_{d-1})$ lies in the
fringed set, $R_d$.\footnote{In fact the condition $\varepsilon_1>0$ is not necessary.
Only the relative positions of the perturbations is essential.}  The purpose of introducing this set $R_d$ is two-fold:
first, the perturbations bring intersections from infinity to finite space, so that the moduli spaces
defining compositions are compact; second, by perturbing in the Reeb direction
at infinity, morphisms compose as required for the isomorphism with the category of constructible sheaves.
So for the purposes of simply \emph{defining} a category, we can ignore the second of these purposes.
This leaves us with another choice of contractible set organizing the compositions.
We simply \emph{reverse} all the inequalities in \eqref{fringe} and \emph{negate} the definition of $\delta$ ---
it will then lie in a fringed set.
We call the category defined in this way the negatively wrapped Fukaya category, $Fuk_{-\varepsilon}(T^*X).$

Let us be a bit more specific and compare the two possibilities.
Recall from \cite{NZ} that we call a function $H:T^*X\to \bR$ a controlled Hamiltonian if $H(x,\xi) = |\xi|$ outside a compact set;
now let $\varphi_{H,t}$ denote Hamiltonian flow by $H$ for time $t.$
To compute the hom complex $\hfp(L,L')$, first choose controlled Hamiltonians
$H, H'$ and a fringed set
$R_2$ such that for all $\delta = (\varepsilon,\varepsilon'-\varepsilon)\in R_2$
we have $\varphi_{H,\varepsilon}(L)\cap
\varphi_{H',\varepsilon'}$ is transverse and contained in a compact subset of $T^*X.$
Now put $L_+ = \varphi_{H,\varepsilon}(L),$ $L'_+ = \varphi_{H',\varepsilon'}(L')$ (we suppress
the dependence on $\varepsilon,\varepsilon'$.
Then $\hfp(L,L')$ is defined by computing the Fukaya-Floer complex of the pair $(L_+,L'_+)$, counting
holomorphic strips in the usual way.
Alternatively, to study $\hfm(L,L')$ we choose controlled Hamiltonians $(H,H')$ and a fringed
set $R_2$ and require that for all $\delta = (\varepsilon,\varepsilon'-\varepsilon)$ in $R_2$,
$\varphi_{H,-\varepsilon}(L)\cap
\varphi_{H',-\varepsilon'}$ is transverse and contained in a compact subset of $T^*X.$
Then put $L_- = \varphi_{H,-\varepsilon}(L),$ $L'_- = \varphi_{H',-\varepsilon'}(L')$ and define
$\hfm(L,L')$ by the usual count of holomorphic strips.  Higher-order compositions in $UF_-$ are defined
exactly analogously to those in $Fuk_{\varepsilon}.$

\begin{remark}
$Fuk_{-\varepsilon}$ is not simply the opposite category of $Fuk_{\varepsilon}$, as no change has been made regarding the intersections
between Lagrangians which appear in compact space.  In particular,
reversing the order of the Lagrangians would have changed
the degrees of those intersections.
\end{remark}

When $X$ is not compact, we require that Lagrangian branes have compact image in $X$
or are the zero section outside a compact set.
With this set-up, the following lemma is then true by definition.  Let $L,L_+,L_-$ be as above.

\begin{lemma}
We have
$$\hfp(L,L_+)\cong \hfp(L,L)\cong \hfp(L_{-},L).$$
\end{lemma}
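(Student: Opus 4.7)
The plan is to observe that both isomorphisms follow essentially by definition, once one unpacks the perturbation scheme that defines $\hfp$ and uses the fact that $L_\pm$ is itself a Hamiltonian translate of $L$ by a fixed small amount.

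First, I would rewrite $\hfp(L, L_+)$ using the setup recalled just above the lemma. By definition, to compute $\hfp(L, L_+)$ one chooses controlled Hamiltonians $K_1, K_2$ and a fringed set $R_2 \subset \bR_{>0}^2$ such that, for every $(\varepsilon_1, \varepsilon_2 - \varepsilon_1) \in R_2$, the intersection $\varphi_{K_1,\varepsilon_1}(L) \cap \varphi_{K_2,\varepsilon_2}(L_+)$ is transverse and compactly supported, and one counts holomorphic strips between these two Lagrangians. Choosing $K_1 = K_2 = H$, the Hamiltonian already used to define $L_+ = \varphi_{H,\varepsilon}(L)$, gives $\varphi_{K_2,\varepsilon_2}(L_+) = \varphi_{H,\varepsilon_2+\varepsilon}(L)$, so the resulting pair is $(\varphi_{H,\varepsilon_1}(L), \varphi_{H,\varepsilon_2+\varepsilon}(L))$. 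This is exactly the data of perturbations of $L$ against itself used to compute $\hfp(L, L)$, with perturbation parameters $(\varepsilon_1, (\varepsilon_2+\varepsilon) - \varepsilon_1)$. One then verifies that the condition $(\varepsilon_1, \varepsilon_2-\varepsilon_1) \in R_2$ together with $\varepsilon > 0$ determines a fringed set of admissible parameters for $\hfp(L,L)$: the projection of the translated set onto the first coordinate is unchanged, and the shrinking condition in the second coordinate is preserved.

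Second, I would run the analogous argument for $\hfp(L_-, L)$. Using $K = H$ and $L_- = \varphi_{H,-\varepsilon}(L)$, the pair defining the Floer complex is $(\varphi_{H,-\varepsilon+\varepsilon_1}(L), \varphi_{H,\varepsilon_2}(L))$. By shrinking the fringed set if necessary so that $\varepsilon_1 > \varepsilon$, this is $(\varphi_{H,\varepsilon_1'}(L), \varphi_{H,\varepsilon_2}(L))$ with $\varepsilon_1' = \varepsilon_1 - \varepsilon > 0$ and $\varepsilon_1' < \varepsilon_2$, again exactly of the form used for $\hfp(L,L)$. The restriction $\varepsilon_1 > \varepsilon$ preserves the fringed property in $R_2$ (again because shrinking the second coordinate is allowed and the projection remains fringed).

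Finally, I would invoke the fact that the cochain complex defining $\hfp$, viewed as an object in the derived category of complexes of $\coeffs$-modules, is independent of the admissible choice of Hamiltonian and of the parameters within the chosen fringed set. Under the identifications above, the very same Floer complex (with the same strip counts) computes both $\hfp(L, L_+)$ and $\hfp(L, L)$, and likewise for $\hfp(L_-, L)$ and $\hfp(L, L)$. The main thing to check, and the only mildly delicate point, is that the translated parameter sets still qualify as fringed and still satisfy the transversality and compactness conditions required of admissible perturbation data; but this is immediate from the definition of fringed set and from the transversality already built into the definition of $\hfp$. There is no analytic obstacle, only bookkeeping with the fringed set.
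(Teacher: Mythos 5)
Your argument is correct and matches the paper's treatment, which simply declares the lemma ``true by definition'' of the perturbation scheme and notes that the symplectomorphism $\varphi_{H,\varepsilon}$ identifies $\hfp(L_-,L)$ with $\hfp(L,L_+)$; your unpacking, via taking the perturbing Hamiltonian to be the same $H$ used to define $L_\pm$ so that the perturbed pairs literally coincide with those computing $\hfp(L,L)$, is exactly the intended content. The one cosmetic wrinkle is that you cannot literally ``shrink the fringed set'' to force $\varepsilon_1>\varepsilon$ (its first-coordinate projection must be an interval of the form $(0,r)$), but this is immaterial because, as the paper's own footnote observes, only the relative positions of the perturbations matter --- or one can instead apply the global symplectomorphism $\varphi_{H,\varepsilon}$ to the pair $(L_-,L)$, as in the remark following the lemma.
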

\noindent Note the symplectomorphism $\varphi_{H,\varepsilon}$ gives an identification
$\hfp(L_-,L)\cong \hfp(L,L_+).$
Further, each of these spaces contains an element isomorphic
to the identity of the middle term, and we denote them
respectively by $id_+,id,id_-.$

\medskip

\begin{lemma}
\label{lem:sslemma}
Let $M$ be a real analytic manifold and let $\coeffs$ be a field;
%commutative ring; 
let $X = M\times \bR_z$, let
$F\in Sh_c(X;\coeffs)$
correspond to $L$ above under the microlocalization equivalence \cite{N,NZ},
and let $F_+, F_-$ correspond to $L_+,L_-$.  Then the following
quasi-isomorphisms also hold due to microlocalization:
$$\Hom_{Sh}(F,F_+) \cong \Hom_{Sh}(F,F) \cong \Hom_{Sh}(F_-,F).$$
\end{lemma}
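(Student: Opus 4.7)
The plan is to reduce Lemma~\ref{lem:sslemma} to the preceding lemma on Floer homology via the microlocalization equivalence $\mu: Sh(X;\coeffs) \to Fuk_\varepsilon(T^*X;\coeffs)$ of \cite{NZ,N}. The essential input is that, by construction, this equivalence identifies the sheaf hom with the infinitesimally positively wrapped Floer complex:
\[
\Hom_{Sh}(F,G) \;\cong\; \hfp(\mu^{-1}F,\mu^{-1}G).
\]
Granted this identification, the three sheaf homs in the statement translate to $\hfp(L,L_+)$, $\hfp(L,L)$, and $\hfp(L_-,L)$, and the preceding lemma (which is true by definition of $\hfp$) supplies the quasi-isomorphisms.

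First I would make precise that the sheaves $F_\pm$ appearing in the statement are indeed the images under $\mu^{-1}$ of the perturbed Lagrangians $L_\pm$. The controlled Hamiltonian flows $\varphi_{H,\pm\varepsilon}$ are sheaf-theoretically quantized (following Guillermou--Kashiwara--Schapira, as reviewed in \cite{GKS,STZ}) by a family of autoequivalences $T_{\pm\varepsilon}$ of $Sh(X;\coeffs)$, compatible under $\mu$ with the Hamiltonian flow on the Fukaya side. Thus $F_\pm := T_{\pm\varepsilon} F$ corresponds to $L_\pm = \varphi_{H,\pm\varepsilon}(L)$, and the small-$\varepsilon$ hypotheses for the fringed set on the Fukaya side match the small-$\varepsilon$ hypotheses one uses to ensure that $T_{\pm\varepsilon}F$ is well-defined and represents the same singular support isotopy class.

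Second, I would combine this dictionary with the preceding lemma. Under $\mu$,
\[
\Hom_{Sh}(F,F_+) \cong \hfp(L,L_+),\quad \Hom_{Sh}(F,F) \cong \hfp(L,L),\quad \Hom_{Sh}(F_-,F) \cong \hfp(L_-,L),
\]
and the preceding lemma identifies the three right-hand sides. Pulling back by the equivalence $\mu$ yields the three quasi-isomorphisms in the lemma; moreover the canonical elements $\mathrm{id}_+$, $\mathrm{id}$, $\mathrm{id}_-$ are carried to each other, so the identifications are compatible with the respective units.

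The main obstacle is purely bookkeeping: one must verify that the ``$+\varepsilon$'' built into the very definition of $Fuk_\varepsilon$ and the ``extra'' positive perturbation used to form $L_+$ really do combine consistently, i.e.\ that perturbing one more time in the positive Reeb direction produces a quasi-isomorphic Floer complex with the continuation map inducing the identity on cohomology. This is the content of the preceding lemma and is essentially definitional for $Fuk_\varepsilon$; the only subtlety on the sheaf side is ensuring that the GKS quantization $T_{\pm\varepsilon}$ commutes with $\mu$ up to the natural transformation produced by the monotone Reeb direction, which is standard. Once this compatibility is in hand, the proof reduces to invoking the preceding lemma.
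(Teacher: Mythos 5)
Your proposal is correct and follows essentially the same route as the paper, which in fact offers no written proof at all: the lemma is simply asserted to follow from the preceding Fukaya-side lemma by transporting the identifications through the microlocalization equivalence of \cite{NZ,N}. Your additional bookkeeping (matching $F_\pm$ with $L_\pm$ via the GKS-style quantization of the controlled Hamiltonian flow and checking compatibility of the units) is a reasonable fleshing-out of the details the paper leaves implicit.
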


Let $\Lambda\in J^1(\bR_x) \subset T^{\infty,-}(\bR_x\times \bR_z)$ be a Legendrian knot (or link)
with front diagram basepointed at all right cusps and with Maslov potential $\mu$.
First recall that from \cite{STZ} and Theorem \ref{thm:main} of
the present paper we have the following triangle of equivalences:
$$\xymatrix{Fuk_{\varepsilon}(T^*\bR^2,\Lambda,\mu;\coeffs)&&\Aug(\Lambda,\mu;\coeffs)\ar[ll]\ar[dl]_\cong^\psi\\&\cC_1(\Lambda,\mu;\coeffs)\ar[ul]_\cong^\mu}$$
The arrow across the top is defined to be the composition,
and as usual $\cC_1(\Lambda,\mu;\coeffs) \subset Sh(\bR^2,\Lambda,\mu;\coeffs)$
denotes the full subcategory of microlocal rank-one objects, as determined by $\mu.$

Now let $\Lambda \subset J^1(\bR)\subset T^{\infty,-}\bR^2$ be a Legendrian knot and
let $\mu$ be a Maslov potential.
Let $\epsilon\in \Aug(\Lambda,\mu;\coeffs)$ be an augmentation.  Let $F \in \cC_1(\Lambda,\mu;\coeffs)$ correspond to $\e$
under Theorem \ref{thm:main} and let $L \in Fuk_\varepsilon(T^*\bR^2,\Lambda;\coeffs)$
be a geometric Lagrangian object corresponding to $F$.  (Not all
such $L$ will be geometric.)
Write $\cL = \mu mon F$ for the microlocal monodromy local system, defined from
the Maslov potential $\mu$
(though note $End(\cL)_{Loc(\Lambda)}$ is canonical).
Let us denote for the moment $(A,B)_{\cC} := \Hom_{\cC}(A,B).$
Then we have the following.

\begin{theorem}
\label{thm:bigexactsequence}
$$\xymatrix{(L,L_-)_{Fuk_\varepsilon}\ar[r]^{\circ\,id_-}& (L,L)_{Fuk_\varepsilon} \ar[r]&{\sf Cone}(\circ\, id_-)\\
(F,F_-)_{Sh}\ar[r]^{\circ\,id_-}\ar[u]_{\cong}^\mu&(F,F)_{Sh}\ar[r]^{}\ar[u]_{\cong}^\mu
&{\sf Cone}(\circ\, id_-)\ar[u]_{\cong}^\mu&
\!\!\!\!\!\!\!\!\!\!\!\!\!\!\cong (\cL,\cL)_{Loc(\Lambda)}\ar@/^2pc/@{<->}[ddl]_\cong\\
(\epsilon,\epsilon)_{\mathcal Aug_-}\ar[r]^{can}\ar[d]^{\cong}_{\rho}\ar[u]_{\cong}^\psi&
(\epsilon,\epsilon)_{\Aug} \ar[u]_\cong^\psi \ar[r] \ar[d]^{\cong}_{\rho}&
{\sf Cone}(can)\ar[d]^{\cong}_\rho\ar[u]_\cong^\psi\\
C^*_c(L)\ar[r]^{\hookrightarrow}&C^*(L)\ar[r]&C^*(\Lambda)}$$
Here $\mu$ is short for the microlocalization theorem, which is a triangulated equivalence,
ensuring the isomorphism of cones.
Further, $\psi$ is the
isomorphism $Aug_+(\Lambda,\mu;\coeffs)\to \cC_1(\Lambda,\mu;\coeffs)$ proved in
Theorem \ref{thm:main},
and $\rho$ in the bottom row of vertical arrows indicates the isomorphism
proved in Proposition \ref{prop:plusminusexseq}.
The map ``$can$\!'' is the inclusion of \dgas{}
and the map $\hookrightarrow$ is inclusion of compactly supported forms.
Taking cohomology relates the rows to the long exact sequence
$H^*_c(L)\to H^*(L)\to H^*(\Lambda)\to$.\end{theorem}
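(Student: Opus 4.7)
The plan is that Theorem~\ref{thm:bigexactsequence} is essentially a compilation result: each row is tautologically an exact triangle (the third column is defined as a mapping cone of the second arrow), so the real content is (a) commutativity of the vertical squares, (b) verification that the three sets of vertical arrows are quasi-isomorphisms, and (c) identification of the cones with the geometric/topological objects indicated on the right. I would organize the argument column-by-column: first establish that the first two columns consist of equivalences intertwining the horizontal maps, then handle the third column by the octahedral/five-lemma argument once the first two squares commute, and finally give the geometric identifications.

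For the top two rows and the first two columns, the vertical equivalence $\mu$ is the microlocalization theorem of \cite{N,NZ}, and Lemma~\ref{lem:sslemma} matches the units $id_-$ on both sides, so $\mu$ carries the sheaf-theoretic composition $\circ\,id_-:\Hom_{Sh}(F,F_-)\to\Hom_{Sh}(F,F)$ to its Fukaya counterpart. For the middle two rows, the vertical equivalence $\psi$ is Theorem~\ref{thm:main}; here the square will commute provided the canonical morphism $\AugBC\to\Aug$ of Proposition~\ref{prop:nu-morphism} corresponds under $\psi$ to the sheaf-theoretic map $\circ\, id_-$. I would check this locally, strip by strip, using the same bordered/plat decomposition employed in the proof of Theorem~\ref{thm:main}: on each strip the correspondence reduces to an explicit matching of the perturbation data defining $\AugBC$ (bottom-to-top ordering of the $m$-copy) with the negative Reeb wrapping $r_{-\varepsilon}$ of Lemma~\ref{lem:sslemma}, which is exactly the geometric content of the $-$ versus $+$ distinction.

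For the bottom two rows, Proposition~\ref{prop:plusminusexseq} (applied in the geometric filling case of Proposition~\ref{prop:pmexactseq}) already produces the desired long exact sequence at the level of cohomology; upgrading the individual quasi-isomorphisms to a strictly commuting square in the third column then follows from the construction of $\rho$ via the wrapped Floer filtration in \cite{DR}, as reviewed in the proof of Proposition~\ref{prop:plusminusexseq}. Once the three left squares commute and their vertical arrows are quasi-isomorphisms, the induced maps on cones are automatically quasi-isomorphisms, which gives the vertical arrows in the third column and identifies $\mathsf{Cone}(can)$ with $C^*(\Lambda)$ via the pair $(L,\Lambda)$ sequence $H^*_c(L)\hookrightarrow H^*(L)\to H^*(\Lambda)$.

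The main obstacle — and the step that deserves the most care — is the curved identification $\mathsf{Cone}(\circ\,id_-)\cong (\cL,\cL)_{Loc(\Lambda)}$ on the Fukaya/sheaf side. I would prove this by interpreting the cone as the contribution near $T^\infty\bR^2$: pushing $L$ off itself in the negative Reeb direction kills all intersections in compact space and leaves a complex supported in a neighborhood of $\Lambda$, whose Floer differential counts short Reeb chords and hence (after microlocalization) computes $\uhom$ of the microlocal monodromy sheaves $\mu mon(F)=\cL$. Concretely, I would factor the identification through Section~\ref{sec:mumon}: apply $\mu mon$ to the exact triangle in the middle row to land in $Loc(\Lambda)$, and then check that the composite $\mathsf{Cone}\to\mu mon\,\mathsf{Cone}\to(\cL,\cL)_{Loc(\Lambda)}$ is a quasi-isomorphism locally on the front (lines, crossings, cusps), using the local models of Section~\ref{sec:locSh}. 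This is a finite local verification, parallel to the proof of Theorem~\ref{thm:main}, and once it is done the compatibility with $C^*(\Lambda)$ in the bottom row follows because on a connected component of $\Lambda$ the local system $\cL$ has rank one, so $(\cL,\cL)_{Loc(\Lambda)}=C^*(\Lambda;\coeffs)$.
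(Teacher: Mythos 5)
Your proposal is correct and follows essentially the same route as the paper: the paper's own proof is just three sentences, attributing the top, middle, and bottom families of vertical arrows to microlocalization \cite{NZ,N}, Theorem~\ref{thm:main}, and Proposition~\ref{prop:plusminusexseq} respectively, and treating the rest (commutativity of the squares, the induced maps on cones, and the identification of $\mathsf{Cone}(\circ\,id_-)$ with $(\cL,\cL)_{Loc(\Lambda)}$) as implicit. Your additional detail---the strip-by-strip check that $\AugBC\to\Aug$ matches $\circ\,id_-$ under $\psi$, and the local verification of the curved arrow via microlocal monodromy---fills in exactly the steps the paper leaves to the reader, in a manner consistent with its methods.
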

\begin{proof}
The top line of vertical arrows is microlocalization \cite{NZ,N}.  The middle
line is Theorem \ref{thm:main}.  The bottom line is proven in Proposition \ref{prop:plusminusexseq}.
\end{proof}

%\begin{remark}
%We comment on a potentially confusing point.
%In the special case where $\Lambda$ is a single-component knot and $\coeffs$ is a field,
%a result of Leverson \cite{Leverson} states that any augmentation $\epsilon$ of $\alg(\Lambda)$ must satisfy $\epsilon(t) = -1$.
%On the other hand, if the augmentation arises from a filling, the microlocal monodromy of the corresponding sheaf is
%necessarily trivial, since the microlocal monodromy is the restriction of the rank one local system on the filling to the boundary
%and this boundary circle is a commutator in the fundamental group of the filling surface.  In fact what is going on is that
%both the definition of the C--E \dga{} and the construction of microlocal monodromy secretly depend on the choice of a spin structure
%on the Legendrian, and different choices were made.
%\end{remark}

\newpage

\bibliographystyle{alpha}
\bibliography{aug-references}

\vspace{20mm}

\end{document}